\newtheorem{definition}{Definition}[section]
\newtheorem{theorem}[definition]{Theorem}
\newtheorem{proposition}[definition]{Proposition}
\newtheorem{remark}[definition]{Remark}
\newtheorem{corollary}[definition]{Corollary}
\newtheorem{lemma}[definition]{Lemma}
\newtheorem{example}[definition]{Example}
\numberwithin{equation}{section}
\definecolor{corr}{RGB}{0,150,255} 
\title{Cauchy Problem for Cylinder-like Capillary Jets}
\author{Haocheng Yang$^{1,2}$}
\date{\today}
\thanks{$^{1}$Ecole Normale Supérieure Paris-Saclay, CNRS Centre Borelli UMR9010, 4 Avenue des Sciences, F-91190 Gif-sur-Yvette}
\thanks{$^{2}$Universit{\'e} Paris XIII (Sorbonne Paris-Nord), LAGA, CNRS (UMR 7539), 99 Avenue J.-B. Cl{\'e}ment, F-93430 Villetaneuse}
\begin{document}
	
	\pagenumbering{arabic}
	
	\newcommand{\Op}[2][]{\operatorname{Op}^{#1}\left(#2\right)}
	\newcommand{\Supp}[1]{\operatorname{Supp} #1}
	\newcommand{\sgn}[1]{\operatorname{sgn}\left(#1\right)}
	\newcommand{\Real}{\operatorname{Re}}
	\newcommand{\Imag}{\operatorname{Im}}
	\newcommand{\diver}{\operatorname{div}}
	\newcommand{\curl}{\operatorname{curl}}
	\newcommand{\tr}[1]{\operatorname{tr}\left(#1\right)}
	
	\newcommand{\R}{\mathbb{R}}
	\newcommand{\N}{\mathbb{N}}
	\newcommand{\T}{\mathbb{T}}
	\newcommand{\D}{\mathbb{D}}
	\newcommand{\Z}{\mathbb{Z}}
	
	\renewcommand{\le}{\leqslant}
	\renewcommand{\ge}{\geqslant}
	
	\newcommand{\mar}[1]{\marginpar{\textcolor{red}{#1}}} 
	
	\begin{abstract}
		The motion of liquid jets plays an important role in physics and engineering, and needs rigorous mathematical investigations. Recently, Huang-Karakhanyan proved the first local well-posedness in Sobolev spaces for axisymmetric jets. In this paper, we will extend this result to general jets, namely without any axisymmetry condition.
	\end{abstract}
	
	\maketitle

	\tableofcontents

	\section{Introduction}\label{Sect:intro}
	
	\subsection{History of the problem}\label{subsect:history}
	
	The study of liquid jets has a long history in physics. It is until the beginning 19th century that researchers discovered that the singularity (namely, break-up) of jets is regardless of any exterior forces (gravity, for example), and should be due to surface tension, which is a nature of the fluid. In 1873, Plateau \cite{plateau1873experimental} firstly observed that this instability is related to the area of jets. Soon in 1879, Rayleigh \cite{rayleigh1879capillary} developed a linear stability method and explained this phenomenon from a theoretical point of view. Meanwhile, other physical nature of jets were also widely studied. We refer to \cite{eggers2008physics} for more histories and results on physics.
	
	In mathematics, the study of jets is concentrated on the steady case (system independent of time), especially the flow in nozzles. For the research on fixed infinitely long nozzles, we refer to \cite{bers1958mathematical,xie2007global,xie2009global,du2011subsonic}. In the case of semi-infinite nozzle (with half free boundary), the early study using hodograph transformation and conformal mappings can be found in \cite{birkhoff1990jets,gilbarg1960jets} and a stronger method via variational formulation is developed by Alt, Caffarelli, and Friedman \cite{caffarelli1981existence,alt1982asymmetric,alt1983axially}. Recent progress on this topic can be found in \cite{cheng2018existence,wang2019elliptic,li2024jet} and the references therein. Except for nozzle problem, there are also some other results on stationary jets \cite{groves2018spatial,gordon2020gelfand}.
	
	For the system depending on time, some formal construction of special solutions is established \cite{john1952example,andreev1992instability}. The first study of the Cauchy problem has recently been given by Huang-Karakhanyan \cite{huang2023wellposedness}, where the authors prove that the axisymmetric jets are locally well-posed in Sobolev spaces. In \cite{huang2024dirichlet}, the same authors studied in detail the Dirichlet-to-Neumann operator in the problem of Taylor-cone arising from break-up of jets. To our knowledge, further mathematical research on jets, such as the formation of break-up, is still blank. In this article, we will extend the well-posedness obtained in \cite{huang2023wellposedness} to the case without the axisymmetry condition.

	\subsection{Setting of the problem}\label{subsect:setting}
	
	\subsubsection{Cylinder-like jets}
	
	In this paper, we are interested in the 3D jets that are not necessarily invariant by rotation. The shape of such jets can be characterized by 
	\begin{equation}\label{eq-intro:domain}
		\Omega(t) = \{ (x,z)\in\mathbb{R}^2\times\mathbb{R} : x=0\ \text{or}\ |x|<\eta(t,x/|x|,z) \},
	\end{equation}\index{O@$\Omega(t)$ Domain of fluid}
	and its free boundary is given by
	\begin{equation}\label{eq-intro:interface}
		\Sigma(t) = \{ (x,z)\in\mathbb{R}^2\times\mathbb{R} : |x|=\eta(t,x/|x|,z) \},
	\end{equation}\index{S@$\Sigma(t)$ Free boundary}
	where $\eta$\index{e@$\eta$ Radius of free boundary} is a strictly positive function defined on $\mathbb{R}\times\mathbb{T}\times\mathbb{R}$. From physical background of the problem, we focus on two cases: steady interface with little perturbation and periodic interfaces. Rigorously, the following hypotheses on initial data $\eta_0 = \eta|_{t=0}$ will be made in the whole paper:
	\begin{align}
		&\exists C_0,c_0>0,\ \text{such that }c_0 < \eta_0 <C_0; \label{hyp-intro:bounds}\tag{H0}  \\[0.5ex]
		&\text{(Perturbative case) }\exists R>0,\ \text{such that }\eta_0-R \in H^s(\mathbb{T}\times\mathbb{R}),\ \text{for some proper }s; \label{hyp-intro:perturb}\tag{H1} \\[0.5ex]
		&\text{(Periodic case) }\exists L>0,\ \text{such that }\eta_0\ \text{is periodic in }z\ \text{of period }L. \label{hyp-intro:period}\tag{H1'}
	\end{align}\index{c@$c_0,C_0$ Lower and upper bounds of $\eta$}

	In perturbative case, \eqref{hyp-intro:perturb} indicates that, when $s>1$ and $z\rightarrow \pm\infty$, $\eta_0(\omega,z)$ tends to $R$ uniformly in $\omega$, which can be formally preserved for all time due to the governing equation \eqref{eq-intro:bdy-kinematic}, while \eqref{hyp-intro:perturb} and \eqref{hyp-intro:bounds} will hold for short time from our main result Theorem \ref{thm-intro:main}. In periodic case, one may check that \eqref{hyp-intro:period} is preserved for all time by using the fact that equations \eqref{eq-intro:Euler}, \eqref{eq-intro:bdy-kinematic}, and \eqref{eq-intro:bdy-pressure} are invariant by translations in $z$-direction. Therefore, $\eta$ should be viewed as a function on $\mathbb{R}\times\mathbb{T}^2$ (for simplicity, we may take $L=2\pi$). In what follows, we focus on the perturbative case, while all the results hold true in periodic case with normalizations eliminated.
	
	We also assume that the fluid is inviscid, incompressible, and irrotational. Consequently, the velocity field $u$ satisfies \textit{Euler equation} with divergence-free and rotation-free conditions
	\begin{equation}\label{eq-intro:Euler}
		\left\{\begin{array}{ll}
			\partial_t u + u\cdot\nabla_{x,z} u + \nabla_{x,y} P = 0, & \text{in }\Omega(t), \\ [0.5ex]
			\diver_{x,z}u = 0,\ \ \curl_{x,z}u=0, & \text{in }\Omega(t),
		\end{array}\right.
	\end{equation}
	where $P$ is the pressure driven only by surface tension.
	
	\subsubsection{Boundary conditions}
	
	In order to fully determine the motion of fluid and free surface, we introduce two boundary conditions on $\Sigma(t)$. The first one is the \textit{kinematic boundary condition},
	\begin{equation}\label{eq-intro:bdy-kinematic-ori}
		n \cdot \gamma_t = n \cdot u,\ \ \text{on }\Sigma(t),
	\end{equation}
	where, for all time $t$, $n= n(t)$ is the outward unit normal direction to $\Sigma(t)$ in $\R^3$ and $\gamma = \gamma(t)$ is the parametrization of $\Sigma(t)$,
	\begin{equation*}
		\begin{array}{lccc}
			\gamma(t) : &\mathbb{T}\times\mathbb{R} & \rightarrow & \mathbb{R}^3 \\[0.5ex]
			&(\omega,z) & \mapsto & \left( \eta(t,\omega,z) \cos{\omega}, \eta(t,\omega,z) \sin{\omega} ,z \right).
		\end{array}
	\end{equation*}
	In coordinate $(\omega,z)$, $n$ equals
	\begin{equation}\label{eq-intro:normal-dirction}
		n = \frac{1}{l} \left( e_r  - \frac{\eta_\omega}{\eta} e_\omega  - \eta_z e_z \right),
	\end{equation}\index{n@$n$ Conormal vector of free boundary}
	with
	\begin{equation*}
		e_r = (\cos{\omega},\sin{\omega},0),\ \ e_\omega = (-\sin{\omega},\cos{\omega},0),\ \ e_z = (0,0,1),
	\end{equation*}
	and
	\begin{equation}\label{eq-intro:def-l}
		l = \sqrt{1+\left(\frac{\eta_\omega}{\eta}\right)^2 + \eta_z^2}.
	\end{equation}
	As a result, \eqref{eq-intro:bdy-kinematic-ori} can be rewritten as
	\begin{equation}\label{eq-intro:bdy-kinematic}
		\eta_t = \left( e_r  - \frac{\eta_\omega}{\eta} e_\omega  - \eta_z e_z \right)\cdot u|_{\Sigma}.
	\end{equation}
	
	The second boundary condition in need is the balance of forces on the free boundary $\Sigma$:
	\begin{equation}\label{eq-intro:bdy-pressure}
		P|_\Sigma = \sigma \left(H-\frac{1}{2R}\right),
	\end{equation}
	where $\sigma>0$ is a constant and $H$ is the mean curvature. The normalization $1/(2R)$ is necessary in perturbative case, since the mean curvature
	\begin{equation}\label{eq-intro:mean-curv}
		H = \frac{1}{2} \left[ \frac{1}{\eta l} - \frac{\partial_\omega}{\eta}\left( \frac{\eta_\omega}{\eta l} \right) - \partial_z\left( \frac{\eta_z}{l} \right) \right]
	\end{equation}\index{H@$H$ Mean curvature of free boundary}
	tends to $1/(2R)$ as $z\rightarrow\pm\infty$ under the hypothesis \eqref{hyp-intro:perturb} with $s>3$. In periodic case, this normalization can be omitted.
	
	\subsubsection{Impact of gravity}\label{subsubsect:gravity}
	
	We emphasize that the effect of gravity is not important in the Cauchy problem. In fact, under gravity force, $P$ should be replaced by $P+gz$, where $g$ is the gravity acceleration. Let $(u^g,P^g,\eta^g)$ be any solution to the system with gravity. A simple calculus shows that
	\begin{align*}
		&u(t,x,z) = u^g(t,x,z-\frac{1}{2}gt^2) + (0,0,gt), \\
		&P(t,x,z) = P^g(t,x,z-\frac{1}{2}gt^2) + gz, \\
		&\eta(t,\omega,z) = \eta^g(t,\omega,z-\frac{1}{2}gt^2),
	\end{align*}
	is a solution to the problem \eqref{eq-intro:Euler}, \eqref{eq-intro:bdy-kinematic}, and \eqref{eq-intro:bdy-pressure} without gravity. This transformation between $(u,P,\eta)$ and $(u^g,P^g,\eta^g)$ is clearly invertible, meaning that the systems with and without gravity are equivalent.

	\subsection{Craig-Sulem-Zakharov formulation}\label{subsect:CSZ}
	
	Unlike standard 3D Euler equation, \eqref{eq-intro:Euler} is defined in a domain varying in time. To overcome this difficulty, we follow the idea from Zakharov \cite{zakharov1968stability} and Craig-Sulem \cite{craig1993numerical}, which can reduce the problem to equations on a fixed domain $\mathbb{T}\times\mathbb{R}$ (or $\mathbb{T}^2$ in periodic case).
	
	To begin with, we observe that the irrotational and incompressible conditions guarantee the existence of harmonic scalar potential $\phi$
	\begin{equation}\label{eq-intro:def-potential}
		\Delta_{x,z}\phi = 0,\ \ \nabla_{x,z}\phi = u,\ \  \text{in }\Omega(t).
	\end{equation}\index{p@$\phi$ Scalar potential}
	In terms of $\phi$, \eqref{eq-intro:Euler} can be written as
	\begin{equation*}
		\nabla_{x,z} \left(\partial_t \phi + \frac{|\nabla_{x,z}\phi|^2}{2} + P\right) = 0,
	\end{equation*}
	which yields the \textit{Bernoulli's equation}
	\begin{equation}\label{eq-intro:Bernoulli}
		\partial_t \phi + \frac{|\nabla_{x,z}\phi|^2}{2} + P = 0,\ \   \text{in }\Omega(t).
	\end{equation}
	Note that the right hand side should be a constant depending only on time, while, by absorbing this constant in $P$, we may take it to be zero for simplicity.
	
	Now, let us consider the trace $\psi$ of $\phi$ at the free surface $\Sigma(t)$, namely
	\begin{equation}\label{eq-intro:def-psi}
		\psi(t,\omega,z) = \phi\left( \eta(t,\omega,z) \cos{\omega}, \eta(t,\omega,z) \sin{\omega} ,z \right).
	\end{equation}\index{p@$\psi$ Trace of scalar potential}
	By definition, $\phi$ can be uniquely determined by $\psi$ (and $\eta$ implicitly) via the following linear elliptic equation
	\begin{equation}\label{eq-intro:ellip}
		\left\{\begin{array}{ll}
			\Delta_{x,z}\phi = 0, & \text{in }\Omega(t), \\[0.5ex]
			\phi|_{\Sigma(t)} = \psi. &
		\end{array}\right.
	\end{equation}
	Consequently, the right hand side of \eqref{eq-intro:bdy-kinematic} can be regarded as a function depending linearly on $\psi$ and implicitly on $\eta$, which gives rise to the (formal) definition of \textit{Dirichlet-to-Neumann operator},
	\begin{equation}\label{eq-intro:def-DtN}
		G(\eta)\psi := \left( e_r  - \frac{\eta_\omega}{\eta} e_\omega  - \eta_z e_z \right)\cdot \nabla_{x,z}\phi|_{r=\eta(\omega,z)}
	\end{equation}\index{G@$G(\eta)$ Dirichlet-to-Neumann operator}
	It is easy to check that $G(\eta)$ is linear, positive, and that $\eta G(\eta)$ is self-adjoint. A rigorous study of this operator will be given in Section \ref{Sect:pre} and \ref{Sect:paralin}. In terms of the quantities below
	\begin{align}
		&N = B V\cdot\left( \frac{\eta_\omega}{\eta}e_\omega + \eta_z e_z \right) + \frac{|V|^2 - B^2}{2}, \label{eq-intro:def-N} \\
		&B = e_r\cdot \nabla_{x,z}\phi|_{\Sigma}, \label{eq-intro:def-B} \\
		&V = e_\omega \left(e_\omega \cdot \nabla_{x,z}\phi\right)|_{\Sigma} + e_z \left(e_z \cdot \nabla_{x,z}\phi\right)|_{\Sigma}. \label{eq-intro:def-V}
	\end{align}\index{N@$N$ Nonlinear term}\index{B@$B$ Radial componant of velocity fluid at free boundary}\index{V@$V$ Angular componant of velocity fluid at free boundary}
	we have
	\begin{equation*}
		\eta_t = G(\eta)\psi = B - V\cdot\left( \frac{\eta_\omega}{\eta}e_\omega + \eta_z e_z \right),
	\end{equation*}
	and, from \eqref{eq-intro:Bernoulli} and \eqref{eq-intro:bdy-pressure},
	\begin{align*}
		\psi_t =& \partial_t \left( \phi\left(t, \eta \cos{\omega}, \eta \sin{\omega} ,z \right) \right)  =  \phi_t|_{\Sigma} + \eta_t e_r\cdot\nabla_{x,z}\phi|_{\Sigma} \\
		=& -\left.\left( \frac{|\nabla_{x,z}\phi|^2}{2} + P \right)\right|_{\Sigma} + \left( B - V\cdot\left( \frac{\eta_\omega}{\eta}e_\omega + \eta_z e_z \right) \right) B \\
		=& - \frac{|V|^2+B^2}{2} - \sigma \left(H-\frac{1}{2R}\right) + \left( B - V\cdot\left( \frac{\eta_\omega}{\eta}e_\omega + \eta_z e_z \right) \right) B \\
		=& - \sigma \left(H-\frac{1}{2R}\right) -N.
	\end{align*}
	
	In conclusion, the system \eqref{eq-intro:Euler}, \eqref{eq-intro:bdy-kinematic}, and \eqref{eq-intro:bdy-pressure} is reformulated as the following equations on $\mathbb{T}\times\mathbb{R}$ (or $\mathbb{T}^2$ in periodic case)
	\begin{equation}\label{eq-intro:WW}
		\left\{\begin{array}{l}
			\eta_t = G(\eta)\psi, \\[0.5ex]
			\psi_t + \sigma \left(H-\frac{1}{2R}\right) + N = 0.
		\end{array}\right.
	\end{equation}

	\subsection{Main results}
	
	The main result of this paper is that the system \eqref{eq-intro:WW} is locally well-posed in Sobolev spaces. More precisely,
	\begin{theorem}\label{thm-intro:main}
		Let $(\eta_0-R,\psi_0) \in H^{s+\frac{1}{2}}(\mathbb{T}\times\mathbb{R}) \times H^s(\mathbb{T}\times\mathbb{R})$ with $s>3$. Assume that $\eta_0$ satisfies the hypotheses \eqref{hyp-intro:bounds} and \eqref{hyp-intro:perturb}, or \eqref{hyp-intro:bounds} and \eqref{hyp-intro:period} for periodic case. Then there exists $T>0$, such that the system \eqref{eq-intro:WW} with initial data $(\eta_0,\psi_0)$ admits a unique solution
		\begin{equation*}
			(\eta-R,\psi) \in C\left( [0,T[; H^{s+\frac{1}{2}}(\mathbb{T}\times\mathbb{R}) \times H^s(\mathbb{T}\times\mathbb{R}) \right).
		\end{equation*}
		Moreover, the hypotheses \eqref{hyp-intro:bounds}, \eqref{hyp-intro:perturb} or \eqref{hyp-intro:period} are preserved for all $t\in[0,T[$.
	\end{theorem}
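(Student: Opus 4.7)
The plan is to treat \eqref{eq-intro:WW} as a quasilinear dispersive system via Bony's paradifferential calculus, adapted to the cylindrical geometry. The two nonlinear objects in \eqref{eq-intro:WW} are the Dirichlet-to-Neumann operator $G(\eta)\psi$ (order one in $\psi$, nonlinear in $\eta$) and the mean curvature $H$ in \eqref{eq-intro:mean-curv} (essentially order two in $\eta$). Paralinearizing both and coupling them produces, after introducing an Alinhac-type good unknown and symmetrizing, a dispersive scalar equation of order $3/2$ (the expected capillary order), amenable to standard energy methods. This yields a priori estimates in $X^s:=H^{s+1/2}(\mathbb{T}\times\mathbb{R})\times H^s(\mathbb{T}\times\mathbb{R})$ depending only on $\|(\eta_0-R,\psi_0)\|_{X^s}$ and on the lower bound $c_0$ in \eqref{hyp-intro:bounds}, and existence then follows by a Friedrichs-type regularization argument.

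\textbf{Paralinearization and symmetrization.} The first step is to straighten the free boundary by a diffeomorphism, built from $\eta$, that sends $\Sigma(t)$ to the fixed cylinder $\{|x|=R\}$, turning \eqref{eq-intro:ellip} into a variable-coefficient elliptic equation on a tubular neighbourhood. Paradifferential calculus applied to its boundary trace should give an identity of the form
\begin{equation*}
G(\eta)\psi \;=\; T_\lambda\bigl(\psi - T_B\,\eta\bigr) \;-\; T_V\!\cdot\!\nabla_{\omega,z}\eta \;+\; R_1(\eta,\psi),
\end{equation*}
where $\lambda=\lambda(\omega,z,\xi)$ is a real elliptic symbol of order one in the tangential frequency $\xi$ dual to $(\omega,z)$, the vectors $B$, $V$ are those of \eqref{eq-intro:def-B}--\eqref{eq-intro:def-V}, and $R_1$ is smoother of one order. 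A direct Bony paralinearization of \eqref{eq-intro:mean-curv} provides $\sigma H = -T_\mu\eta + R_2(\eta)$ with $\mu$ real elliptic of order two. Setting the good unknown $U:=\psi-T_B\eta$ removes the loss of one half-derivative coming from the motion of the free boundary, and \eqref{eq-intro:WW} becomes a $2\times 2$ paradifferential system
\begin{equation*}
\partial_t\eta + T_V\!\cdot\!\nabla_{\omega,z}\eta - T_\lambda U = f_1,\qquad \partial_t U + T_V\!\cdot\!\nabla_{\omega,z}U + T_\mu\eta = f_2,
\end{equation*}
modulo smoother remainders. Conjugation by an order $3/4$ Fourier multiplier that diagonalizes the principal part leads to a scalar equation
\begin{equation*}
\partial_t \Phi + T_V\!\cdot\!\nabla\Phi + i\,T_\gamma \Phi \;=\; g,\qquad \gamma=\sqrt{\lambda\mu}\ \text{real elliptic of order } 3/2,
\end{equation*}
on which one closes an energy estimate in $X^s$ by standard commutator bounds. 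The lower bound $\eta\ge c_0>0$ plays the role of the Rayleigh--Taylor condition, and is propagated for short time using the Sobolev embedding $H^{s-1/2}\hookrightarrow L^\infty$ available since $s>3$.

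\textbf{Existence, uniqueness, and main obstacle.} To construct a solution, one regularizes \eqref{eq-intro:WW} by a tangential Friedrichs mollifier $J_\varepsilon$, which produces a genuine ODE in $X^s$, solves it locally, and uses the paradifferential estimates above to show that the existence time and the $X^s$ norm are uniformly bounded below and above in $\varepsilon$. Compactness then yields a limit $(\eta,\psi)\in C([0,T[;X^s)$; uniqueness and continuity in time are obtained by applying the same analysis to the difference of two solutions at the lower regularity $X^{s-1}$, where the linearized system has the same paradifferential structure. The hypothesis \eqref{hyp-intro:perturb} is preserved because \eqref{eq-intro:bdy-kinematic} integrates smoother data to smoother data, and \eqref{hyp-intro:period} is preserved by translation invariance in $z$. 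The main obstacle is the first paralinearization step: without the axisymmetry simplification of \cite{huang2023wellposedness}, the straightening change of variables has coefficients depending non-trivially on $(\omega,z)$, the curvature of the reference cylinder generates additional lower-order terms in the straightened elliptic problem, and the angular and longitudinal tangential frequencies must be treated on the same footing. The bulk of the work is to identify the correct symbols $\lambda$, $\mu$ in these coordinates and to check that $\lambda$ is uniformly elliptic on $\mathbb{T}\times\mathbb{R}$; once this is done, the symmetrization and energy argument follow the pattern of the half-space capillary water-waves problem.
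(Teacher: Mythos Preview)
Your overall strategy is correct and essentially matches the paper's approach: paralinearize $G(\eta)\psi$ and $H$, introduce the good unknown $U=\psi-T_B\eta$, symmetrize to a dispersive system with principal symbol $\gamma=\sqrt{\sigma\lambda\mu}$ of order $3/2$, regularize, and close by energy estimates. However, several technical steps that you treat as routine are in fact where the cylindrical geometry bites, and the paper spends most of its effort there.

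First, the diffeomorphism ``that sends $\Sigma(t)$ to the fixed cylinder'' is not innocent: the naive radial stretch $(\rho,\theta,z)\mapsto(\rho\eta(\theta,z),\theta,z)$ is singular at the axis $\rho=0$ when $\eta$ depends on $\theta$, so the straightened elliptic problem is not uniformly elliptic on a full cylinder. The paper constructs a global diffeomorphism $\iota:(y,z)\mapsto(y\zeta(y,z),z)$ on $\mathbb{D}\times\mathbb{R}$ with a carefully designed extension $\zeta$ of $\eta$ that equals a small constant near the axis (Section~\ref{subsect:chgt-of-var}); only then can one prove elliptic regularity down to the axis. For the paralinearization of $G(\eta)$ itself the paper switches to the naive change near the boundary, but the global step is needed first.

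Second, a plain Friedrichs mollifier does not work: to keep the approximate operator anti-self-adjoint after symmetrization one needs a mollifier that commutes (to leading order) with $T_\gamma$. The paper takes $J_\epsilon=T_{j_\epsilon}$ with $j_\epsilon=\exp(-\epsilon\gamma^{(3/2)})$ plus a specific subprincipal correction (Lemma~\ref{lem-cauchy:symbol-fct-of-gamma}), and this choice is what makes the uniform-in-$\epsilon$ energy estimate close.

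Third, for the convergence of approximate solutions and for uniqueness you need the source term $f(\eta,\psi)$ to be Lipschitz in $\eta$, which in turn requires a shape-derivative formula for $G(\eta)\psi$. In the planar case this is standard, but the usual argument fails here because the harmonic extension of $B\,\delta\eta$ is not explicit in cylindrical coordinates. The paper circumvents this by first establishing the Hamiltonian formulation in the variables $(\eta,p)=(\eta,\eta\psi)$ (Proposition~\ref{prop-pre:Hamiltonian-formulation}) and then reading off the shape derivative (Proposition~\ref{prop-pre:shape-deri}) from the variation of the kinetic energy; this is a genuine new ingredient you do not mention.

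Finally, two smaller points: the lower bound $\eta\ge c_0$ is a geometric non-degeneracy condition (it keeps the domain from pinching), not a Rayleigh--Taylor condition---there is no such sign condition for pure capillary waves; and the continuity in $C([0,T[;X^s)$ is obtained not by compactness but via the nonlinear interpolation theorem of \cite{alazard2024nonlinear}, by verifying a contraction estimate at low regularity together with a tame estimate at high regularity.
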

	
	Moreover, we shall prove that the flow map is continuous in the sense of following theorem:
	\begin{theorem}\label{thm-intro:flow-map}
		Under the hypotheses of Theorem \ref{thm-intro:main}, for all $0<r \ll 1$, we denote by $B_s\left(\eta_0,\psi_0;r\right)$ the collection of all $(\tilde{\zeta}_0,\tilde{\psi}_0)$ with
		\begin{equation*}
			\|(\eta_0 -R)- \tilde{\zeta}_0\|_{H^{s+\frac{1}{2}}(\T\times\R)} + \|\psi_0 - \tilde{\psi}_0\|_{H^{s}(\T\times\R)} < r,
		\end{equation*}
		which is a subset of $H^{s+\frac{1}{2}}(\T\times\R)\times H^s(\T\times\R)$. Then there exists $T_r>0$ (depending on $(\eta_0,\psi_0)$), such that the system \eqref{eq-intro:WW} with initial data in $B_s\left(\eta_0,\psi_0;r\right)+(R,0)$ admits a unique solution $(\tilde{\eta},\tilde{\psi})$ on $[0,T_r]$ with $(\tilde{\eta}-R,\tilde{\psi}) \in C\left( [0,T_r[; H^{s+\frac{1}{2}}(\mathbb{T}\times\mathbb{R}) \times H^s(\mathbb{T}\times\mathbb{R}) \right)$, and the hypotheses \eqref{hyp-intro:bounds}, \eqref{hyp-intro:perturb} or \eqref{hyp-intro:period} are preserved for all $t\in[0,T_r[$.
		
		Moreover, the flow map
		\begin{equation}\label{eq-intro:def-flow-map}
			\begin{array}{lccc}
				\mathfrak{F} : & B_s\left(\eta_0,\psi_0;r\right) & \rightarrow & L^\infty([0,T_r[;H^{s+\frac{1}{2}}(\T\times\R)\times H^s(\T\times\R)) \\[0.5ex]
				&(\tilde{\zeta}_0,\tilde{\psi}_0) & \mapsto & \left( \tilde{\eta}(t)-R,\tilde{\psi}(t) \right).
			\end{array}
		\end{equation}\index{F@$\mathfrak{F}$ Flow map}
		is continuous.
	\end{theorem}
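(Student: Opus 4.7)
The plan is to reduce Theorem \ref{thm-intro:flow-map} to three ingredients: a uniform existence time on the ball $B_s(\eta_0,\psi_0;r)$, a stability estimate at one half derivative below the existence threshold, and a Bona-Smith mollification-and-interpolation argument. For the first point, I would revisit the proof of Theorem \ref{thm-intro:main} to track the dependence of the lifespan on the initial data. For $r$ small enough every $(\tilde{\zeta}_0,\tilde{\psi}_0)\in B_s(\eta_0,\psi_0;r)$ satisfies \eqref{hyp-intro:bounds} with constants $c_0/2,\,2C_0$ and has $H^{s+1/2}\times H^s$ norm bounded in terms of $(\eta_0,\psi_0)$ and $r$. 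A standard bootstrap then yields a common existence time $T_r>0$ on which solutions stay inside a fixed ball of $H^{s+1/2}\times H^s$, and the hypotheses \eqref{hyp-intro:bounds}, \eqref{hyp-intro:perturb} or \eqref{hyp-intro:period} are preserved by the same arguments as in Theorem \ref{thm-intro:main}.

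The technical heart is the stability estimate. For two solutions $(\eta^{(j)},\psi^{(j)})$, $j=1,2$, in the class of Theorem \ref{thm-intro:main}, I would paralinearize \eqref{eq-intro:WW} around one of them using the analysis of $G(\eta)$ and $H$ from Section \ref{Sect:paralin}, perform the good-unknown substitution and Alinhac symmetrization already used for the existence proof, and derive
\[
\|(\delta\eta,\delta\psi)(t)\|_{H^{s-1/2}\times H^{s-1}} \le C \|(\delta\eta,\delta\psi)(0)\|_{H^{s-1/2}\times H^{s-1}} e^{Ct}, \quad t\in[0,T_r],
\]
for $\delta\eta=\eta^{(1)}-\eta^{(2)}$ and $\delta\psi=\psi^{(1)}-\psi^{(2)}$, with $C$ depending only on $\sup_t\|(\eta^{(j)}-R,\psi^{(j)})\|_{H^{s+1/2}\times H^s}$ and on $c_0,C_0$. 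The half-derivative loss is the price of the quasilinearity and is harmless for what follows.

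Next I would run the Bona-Smith argument. Given a sequence $(\tilde{\zeta}_0^n,\tilde{\psi}_0^n)\to(\eta_0-R,\psi_0)$ in $H^{s+1/2}\times H^s$, let $J_\varepsilon$ be a Friedrichs mollifier and write $(\eta^n,\psi^n)$ and $(\eta^{n,\varepsilon},\psi^{n,\varepsilon})$ for the solutions issued from $(\tilde{\zeta}_0^n,\tilde{\psi}_0^n)+(R,0)$ and $(J_\varepsilon\tilde{\zeta}_0^n,J_\varepsilon\tilde{\psi}_0^n)+(R,0)$ respectively. Applying Theorem \ref{thm-intro:main} at higher regularity $s+\sigma$ to the mollified data, whose proof is insensitive to the index $s$, yields
\[
\|(\eta^{n,\varepsilon}-R,\psi^{n,\varepsilon})\|_{L^\infty_{T_r}(H^{s+1/2+\sigma}\times H^{s+\sigma})} \le C_\sigma\, \varepsilon^{-\sigma},
\]
after possibly shrinking $T_r$ once. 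Combining the stability estimate with the mollifier bound $\|J_\varepsilon u-u\|_{H^{s-1/2}}\lesssim\varepsilon\|u\|_{H^{s+1/2}}$ gives
\[
\|(\eta^n-\eta^{n,\varepsilon},\psi^n-\psi^{n,\varepsilon})\|_{L^\infty_{T_r}(H^{s-1/2}\times H^{s-1})} \lesssim \varepsilon
\]
uniformly in $n$. Interpolation between this bound and the high-norm bound produces convergence to zero in $L^\infty_{T_r}(H^{s+1/2}\times H^s)$ as $\varepsilon\to 0$, uniformly in $n$. At fixed $\varepsilon$, the map $n\mapsto(\eta^{n,\varepsilon},\psi^{n,\varepsilon})$ is continuous in the strong norm because the corresponding data converge in every $H^{s+\sigma'}$ with $\sigma'<\sigma$, so another application of the stability estimate plus interpolation applies. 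A standard $3\varepsilon$ diagonal argument then delivers the continuity of $\mathfrak{F}$.

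The main obstacle is step two, the stability estimate. Although the paralinearization machinery for a single solution is in place in Section \ref{Sect:paralin}, one must verify that the shape derivative of $G(\eta)$, the tame estimates on the mean curvature $H$, and the Alinhac symmetrizer are all \emph{Lipschitz} in $\eta$ at the level of the $H^{s+1/2}$ norm, costing at most one half derivative on $\delta\eta$. This requires carefully revisiting the contraction estimates for the elliptic problem \eqref{eq-intro:ellip} with two distinct geometries $\eta^{(1)},\eta^{(2)}$, so that the coefficients in the linearized energy identity depend continuously on the two profiles simultaneously. Once this Lipschitz dependence is established, the symmetrization and energy estimate from the proof of Theorem \ref{thm-intro:main} carry over verbatim to the difference, and the remainder of the Bona-Smith scheme is by now classical.
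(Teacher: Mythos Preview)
Your outline follows the classical Bona--Smith route, which is a legitimate alternative but \emph{not} what the paper does. After establishing the uniform lifespan exactly as you describe (via the energy estimate and Corollary~\ref{cor-cauchy:low-bd-lifespan}), the paper invokes the abstract nonlinear interpolation theorem of \cite{alazard2024nonlinear}, quoted as Theorem~\ref{thm-cauchy:nonlin-interpol}: once one verifies a contraction estimate at some $s_0<s$ and a tame estimate at some $s_1>s$, continuity of the flow map in $H^{s+1/2}\times H^s$ and continuity in time follow directly, with all the mollification bookkeeping absorbed into the black box. The two ingredients the paper checks are precisely your stability estimate (packaged as Proposition~\ref{prop-cauchy:contraction}) and a tame bound read off from the energy estimate \eqref{eq-cauchy:energy-esti-gen-strong} with $(s_0,s-)$ replaced by $(s_1,s)$. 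So the underlying analytic work---in particular the Lipschitz dependence of the paralinearization remainders on $\eta$, which you correctly flag as the main obstacle and which the paper carries out in Section~\ref{subsect:contin-in-eta}---is the same; the paper's packaging simply avoids the explicit mollified sequence.

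Two points in your sketch need adjustment. First, the paper's Lipschitz estimate on the source term (Proposition~\ref{prop-paralin:source-Lip}) only closes for $\tfrac{3}{2}<s_0<s-\tfrac{3}{2}$, i.e.\ a loss of more than $3/2$ derivatives; your half-derivative claim is too optimistic for the machinery actually available. Second, and more seriously, the naive interpolation you describe is exactly marginal: with $\|u-J_\varepsilon u\|_{H^{s-1/2}}\lesssim\varepsilon$ and $\|J_\varepsilon u\|_{H^{s+1/2+\sigma}}\lesssim\varepsilon^{-\sigma}$, interpolating to $H^{s+1/2}$ yields exponent $\theta-(1-\theta)\sigma=0$, hence no convergence. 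The genuine Bona--Smith argument does not interpolate directly but instead runs a \emph{tame} energy estimate at level $s$ for the difference, in which the dangerous commutator terms are bounded by products of the form $\|u^{n,\varepsilon}\|_{H^{s+1/2+\sigma}}\cdot\|u^n-u^{n,\varepsilon}\|_{H^{s_0+1/2}}\sim\varepsilon^{-\sigma+s-s_0}$, which tends to zero provided $\sigma<s-s_0$. This tame structure is exactly what the hypotheses \eqref{eq-cauchy:nonlin-interpol-contraction}--\eqref{eq-cauchy:nonlin-interpol-tame} of Theorem~\ref{thm-cauchy:nonlin-interpol} encode, which is why the paper's route sidesteps the issue entirely.
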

	
	These well-posedness results have attracted lots of attention for planar water-wave \cite{wu1997well,wu1999well,lannes2005well,alvarez2008large,germain2012global,ionescu2014global,alazard2015golbal,ai2023two,ai2022two} in recent years. And the same problem with different geometric setting such as water drops \cite{beyer1998cauchy,christodoulou2000motion,lindblad2005well,shatah2008geometry,shatah2008apriori,shao2023cauchy,baldi2024liquid} has also been widely studied. \cite{huang2023wellposedness} provides the first result on cylinder, and the theorems above extend it by deleting the axisymmetric condition.

	\subsection{Idea of the proof}
	
	Formally speaking, we attempt to investigate the hyperbolic nature of water-wave equation \eqref{eq-intro:Euler} with \eqref{eq-intro:bdy-kinematic}, \eqref{eq-intro:bdy-pressure}, and reformulate the equivalent system \eqref{eq-intro:WW} as
	\begin{equation}\label{eq-intro:idea-main-para}
		\partial_t Y + A(Y)Y = F(Y),
	\end{equation}
	where $Y$ is a new variable defined as an elliptic operator acting on $(\eta,\psi)$ and $F(Y)$ is a (relatively) regular source term relying implicitly on $Y$. The high order linear operator $A(Y)$ also depends implicitly on $Y$ with
	$$ A(Y)^* = - A(Y). $$
	Then classical methods for hyperbolic equations allow us to construct a series of approximate solutions and prove their convergence to a solution to \eqref{eq-intro:idea-main-para} via energy estimates. With a similar argument, one may also show the uniqueness of the solution, which completes the well-posedness stated in Theorem \ref{thm-intro:main}.
	
	In the context of water-wave problem \eqref{eq-intro:WW}, there are several difficulties:\\
	
	1. How to rewrite the nonlinear terms as $A(Y)Y$ up to acceptible remainders ?
	
	2. What is the dependence of $A(Y)$ in $Y$ ? Or equivalently, how do $G(\eta)$, $N$, and $H$ depend on $(\eta,\psi)$ ?
	
	3. Does the unique solution depend continuously on time and initial data ?\\
	
	To solve these problems, we follow the idea of \cite{alazard2009paralinearization,alazard2011water} and apply the techniques of paradifferential calculus. More precisely, we shall write all the nonlinear terms as paradifferential operators acting on $(\eta,\psi)$, where the implicit dependence in $(\eta,\psi)$ will be reflected in the symbols to be calculated explicitly.
	
	\subsubsection{Paralinearization of $G(\eta)$.} 
	In order to write $G(\eta)$ as a paradifferential operator with symbol depending explicitly on $\eta$, from definition \eqref{eq-intro:def-DtN}, one can see that the core of this problem is to express normal derivative of potential $\phi$ as tangential ones, where the implicit dependence on $\eta$ is mainly hidden in Poisson's equation \eqref{eq-intro:ellip}. As in \cite{alazard2009paralinearization}, we will reformulate  \eqref{eq-intro:ellip} as an elliptic equation on a fixed domain (see \eqref{eq-pre:ellip}), where the Laplacian $\Delta_g$ relies on metric $g$, or equivalently $\eta$. Then, by decomposing the Laplacian in normal and tangential part, we can see that the difference of normal derivative of $\phi$ and a tangential operator acting on $\phi$ is governed by a hyperbolic equation, implying the desired paralinearization \eqref{eq-paralin:paralin-DtN}.
	
	A major difficulty in this step is that the coefficients are smooth in $\eta$, which has limited regularity. As a consequence, if one applies directly the calculus above, most of the remainders will have the same regularity as the principal terms. To deal with this lack of regularity, we replace $\psi$ by \textit{Alinhac's good unknown} $U = \psi - T_B\eta$ defined by \eqref{eq-paralin:good-unknown}, which is inspired by paracomposition (see Appendix \ref{subsect:compo} and \cite{alazard2009paralinearization}). Note that it is possible to recover $\psi$ from $U$ and $\eta$.
	
	\subsubsection{Paralinearization of $N$, $H$.} 
	The treatment of $N$ is easy since it can be expressed as a smooth function of $G(\eta)\psi$, $\eta$, $\psi$, and their derivatives. The desired paralinearization follows easily from classical formula \eqref{eq-para:paralin}. The same formula also works for mean curvature $H$ which is a smooth function of $\eta$ and its derivatives.
	
	\subsubsection{Symmetrization.} 
	Given the paralinearizations above, we can rewrite the main equation \eqref{eq-intro:WW} as \eqref{eq-paralin:WW}, which takes the form of \eqref{eq-intro:idea-main-para} with
	$$ A(Y) = T_V\cdot\bar{\nabla} + \mathcal{L}. $$
	The first part $T_V\cdot\bar{\nabla}$ can be roughly viewed as transport $V\cdot\nabla$ with a Lipschitzian vector $V$, and the classical arguments can be applied. The second part $\mathcal{L}$ is a matrix of paradifferential operators and the symbol of each entry is homogeneous in Fourier variable $\xi$. After simple linear transform, the matrix of symbols becomes anti-Hermitian (see Section \ref{Sect:sym}). To do the same symmetrization at operator level, it suffices to apply symbolic calculus as in Section 4 of \cite{alazard2011water}.
	
	\subsubsection{Cauchy problem.} 
	As mentioned above, we will first construct a series of approximate solutions governed by
	\begin{equation}\label{eq-intro:idea-appx-para}
		\partial_t Y_\epsilon + A_\epsilon(Y_\epsilon)Y_\epsilon = F(Y_\epsilon),
	\end{equation}
	where $A_\epsilon(Y)$ is a mollification of $A(Y)$. Note that, in order to maintain the property $A_\epsilon(Y)^* = - A_\epsilon(Y)$, this mollifier cannot be taken as a simple localization in Fourier variable. Instead, $A_\epsilon(Y)$ will be defined by inserting a well-chosen paradifferential operator depending on $\eta$ in $A(Y)$. 
	
	Another difficulty arsing from the convergence of approximate solutions $Y_\epsilon$ is that, to compare $Y_{\epsilon}$ with another $Y_{\epsilon'}$, it is necessary to calculate the Lipschitz norm of $F$. We will see in Section \ref{subsect:contin-in-eta} that the core of this problem is to study the derivative-in-$\eta$ of $G(\eta)\psi$, which is known as \textit{shape derivative}. A standard proof of this can be found in Section 3.3 of \cite{lannes2013water}, which does not work here due to some difficulties from the geometric nature of the cylinder. Inspired by \cite{beyer1998cauchy}, we first study the Hamiltonian formulation (Proposition \ref{prop-pre:Hamiltonian-formulation}) of the system \eqref{eq-intro:WW}, which then implies the desired shape derivative formula \eqref{eq-pre:shape-deri}.
	
	\subsubsection{Continuity of the solution.} It remains to check that the unique solution constructed as the limit of approximate solutions is continuous in time and initial data. It is possible to use the same strategy as in Section 6 of \cite{alazard2011water}, while, in this paper, we will use another method. In a recent paper \cite{alazard2024nonlinear}, the authors prove a nonlinear interpolation theorem (see Theorem \ref{thm-cauchy:nonlin-interpol}), which can be briefly stated as: if the flow map satisfies the contraction \eqref{eq-cauchy:nonlin-interpol-contraction} and tame estimate \eqref{eq-cauchy:nonlin-interpol-tame}, the solution will be continuous in time and initial data. The former one follows easily from the energy estimate, while the tame estimate holds due to the paradifferential calculus. In fact, all the estimates involving paradifferential operators require the index of regularity to belong to an open interval. Then it is harmless to replace these index by slightly smaller one, which gives the desired tame estimate.

	\subsection{Plan of the paper}
	
	In Section \ref{Sect:pre}, we will construct a proper change of variable to pull the Poisson's equation \eqref{eq-intro:ellip} back to a fixed domain and prove the elliptic regularity with details left to Appendix \ref{App:ellip}. Moreover, the Hamiltonian formulation will be given and, as a result, we will deduce the shape derivative formula. 
	
	In Section \ref{Sect:paralin}, we first introduce the homogeneous symbols to be studied in this paper and clarify the basic properties of paradifferential operator of these symbols (the general theory will be reviewed in Appendix \ref{App:para}). Then the paralinearization of $G(\eta)$, $N$, and $H$ will be given as well as the paralinearization of the system \eqref{eq-intro:WW}. In the end of this section, we will justify that all the remainders arising from the paralinearization are Lipschitzian in $(\eta,\psi)$.
	
	With the symmetrization proved in Section \ref{Sect:sym}, we are able to define the approximate system in Section \ref{Sect:cauchy}, where the energy estimate will also be proved. As a consequence, we deduce the convergence of approximate solutions and the uniqueness of the solution, whose continuity in time and initial data is left to the end of Section \ref{Sect:cauchy} via nonlinear interpolation.
	
	The Appendix \ref{App:sobo} is devoted to a review of Sobolev spaces defined on domains with smooth boundary which is used in the pull-back of the Poisson's equation \ref{eq-intro:ellip}. And Appendix \ref{App:var-metric} contains a direct (but complicated) variational calculus, which proves the most important identity in the Hamiltonian formulation of \eqref{eq-intro:WW}.
	
	\subsection{Notations and conventions}
	
	In what follows, we list the notations and conventions frequently used in the whole paper. And a list of symbols is available at the end of this paper.
	
	- Let $W\in C_b^\infty$ be a fixed function (usually it is a constant). We denote by $H^s_W$ the set of functions $f$ such that 
	$$ \|f\|_{H^s_W} := \|f-W\|_{H^s} < +\infty, $$
	where $H^s$ is the Sobolev norm. For example, all $\eta$ satisfying \eqref{hyp-intro:perturb} with $R>0$ belong to $H^s_R(\T\times\R)$. Furthermore, an operator $L:H^{s_0}_{W_0}\rightarrow H^{s_1}_{W_1}$ is said to be linear when $(f-W_0)\mapsto(Lf-W_1)$ is linear from $H^{s_0}$ to $H^{s_1}$.\index{H@$H^s$ Sobolev space} \index{H@$H^s_W$ Sobolev space with normalization $W$} \index{C@$C^\infty_b$ Bounded smooth functions with every derivative bounded}
	
	- We denote by $\D$ the open unit disk in 2D, namely $\{y\in\R^2:|y|<1\}$. Its boundary is denoted as $\mathbb{S}$, which will be identify as the 1D torus $\T$.\index{D@$\D$ 2D Unit disk} \index{S@$\mathbb{S}$ 1D circle} \index{T@$\T$ 1D Torus}
	
	- We say $\chi$ is a smooth truncation near $K\subset\mathbb{R}^d$, if $\chi\in C^\infty_b(\mathbb{R}^d)$ is supported in a (usually small) neighborhood of $K$ and is equal to $1$ in a smaller neighborhood of $K$.
	
	- All the large constants will be denoted as $C$ or $C_\alpha$ if this constant depends on some parameter $\alpha$. Moreover, we write $C(Q)$ for some quantity $Q>0$ when $C>0$ is a smooth increasing function of $Q$.
	
	
	- We use the Einstein summation convention: if an index appears twice, a summation in this index should be added automatically. For simplicity, theses summations will not be precised in the formulas.
	
	- We use double integral $\iint$ for the integrals in domain $\Omega(t)$ (or its pull-back by diffeomorphisms), and single integral $\int$ for those on interface $\Sigma(t)$ without precising the region of integration.
	
	- We say a linear operator $T$ is of order $m\in\R$, if it is bounded from $H^{s}$ to $H^{s-m}$ for all $s\in\R$. When an operator is of order $m$ for all $m\in\R$, it is said to be a smoothing operator.
	
	- For real number $s$, we write $s+$ for $s+\epsilon$, where $\epsilon>0$ is a small number, when the exact value of $\epsilon$ is not important. Similarly, $s-$ stands for $s-\epsilon$ with $0<\epsilon\ll 1$.

	\section{Preliminaries}\label{Sect:pre}
	
	In Section \ref{subsect:CSZ}, we have reformulated our problem as \eqref{eq-intro:WW} on fixed domain $\mathbb{T}\times\mathbb{R}$, while the implicit dependence on the shape of fluid, or equivalently $\eta$, is hidden in the definition of Dirichlet-to-Neumann operator $G(\eta)$ and the non-linear term $N$. In order to clarify this dependence, it is essential to rigorously investigate the relation between $\psi$ and $\phi$, or equivalently the Poisson's equation \eqref{eq-intro:ellip}, where the main difficulty is that the domain $\Omega(t)$ varies in time. To reduce the problem to a fixed domain, in axis-symmetric case \cite{huang2023wellposedness}, the authors use an explicit change of variable :
	\begin{equation*}
		[0,1]\times\T\times\R \rightarrow \Omega(t),\ \ (\rho,\theta,z) \mapsto (\rho\eta(t,z)\cos(\theta),\rho\eta(t,z)\sin(\theta),z),
	\end{equation*}
	where the dependence on angular variable $\theta$ can be omitted due to the symmetricity. In this way, one may rewrite the Poisson's equation \eqref{eq-intro:ellip} as an elliptic equation on $[0,1]\times\R$,
	\begin{equation}\label{eq-pre:ellip-ref}
		\left\{\begin{array}{ll}
			\nabla_{\rho,z}\cdot Q(\eta,\eta_z)\nabla_{\rho,z}\varphi = 0, & \text{in }[0,1]\times\R, \\[0.5ex]
			\varphi|_{\rho=1} = \psi, &
		\end{array}\right.
	\end{equation}
	where $Q$ is a smooth function of $\eta,\eta_z$ that can be calculated explicitly. At $\rho=0$, there is no boundary condition directly from \eqref{eq-intro:ellip}, while one may add the compatibility condition $\partial_\rho\varphi|_{\rho=0}=0$ to avoid singularity at the axis $\{\rho=0\}\subset\Omega(t)$. 
	
	Clearly, this boundary condition is not reasonable in general case (with dependence on angular variable). To overcome this difficulty, instead of looking for a generalized boundary condition, we attempt to flatten $\Omega(t)$ in an alternative way so that no extra boundary will be generated. In the study of water-drop, where $\Omega(t)$ is a perturbation of unit ball, Beyer-G{\"u}nther \cite{beyer1998cauchy} introduce a diffeomorphism from unit ball to the water-drop which equals identity near zero, up to multiple of positive constants. Inspired by this work, we shall extend $\eta$ defined on $\T\times\R$ to a positive function $\zeta = \zeta(y,z)$ on $\D\times\R$ (see \eqref{eq-pre:def-zeta}), such that in polar coordinate $(y,z) = (\rho\theta,z)$, $\zeta$ equals $\eta$ at $\rho=1$ and behaves like $\rho$ near $\rho=0$. In this way, the domain $\Omega(t)$ can be characterized by $x=\zeta(\rho\theta,z)\theta,z=z$, and there is no singularity at $\rho=0$ since $x = \zeta(\rho\theta,z)\theta$ behaves like $\rho\theta = y$ near $y=0$ (see Proposition \ref{prop-pre:chgt-of-var}).
	
	Via this change of coordinate, \eqref{eq-intro:ellip} can be reduced to an elliptic equation on $\D\times\R$, whose coefficients depend smoothly on $\eta$. The desired elliptic regularity (Proposition \ref{prop-pre:ellip-reg}) then follows from some classical arguments. As a corollary, we may obtain the high-order estimates for Dirichlet-to-Neumann operator $G(\eta)$ (Corollary \ref{cor-pre:bound-DtN}). 
	
	Furthermore, this change of variable gives a rigorous meaning of the variational calculus, which is required for Hamiltonian formulation (Proposition \ref{prop-pre:Hamiltonian-formulation}). In fact, from the definition \eqref{eq-pre:Hamiltonian} of energy Hamiltonian $\mathcal{H}$, it is clear that its variation in $\eta$ can be reduced to derivative-in-$\eta$ of metric, which is a smooth function of $\eta$ and its derivatives, due to our construction of coordinate $(x,z)=(\zeta(y,z)y,z)$. Hence, in consideration of the elliptic regularity (which guarantees all the integrands are integrable), all the derivatives-in-$\eta$ of potential and kinetic energy make sense. 
	
	As a result of Hamiltonian formulation (Proposition \ref{prop-pre:Hamiltonian-formulation}), we shall also deduce the shape derivative formula \eqref{eq-pre:shape-deri}, which provides a preliminary explanation of how Dirichlet-to-Neumann operator $G(\eta)$ depends on $\eta$ (while more delicate results will be given in Section \ref{Sect:paralin} via paradifferential calculus). 
	
	Note that, in planar water-wave case (where free surface is homotopic to hyperplane), it is easier to start by proving shape derivative formula (see Chapter 3 of \cite{lannes2013water} for instance), which implies the Hamiltonian formulation. To use this method, it is essential to find explicitly a harmonic extension of $B\delta\eta$ appearing in the right hand side of \eqref{eq-pre:shape-deri} so that one could calculate $G(\eta)(B\delta)$. However, this harmonic extension is not evident in our context, preventing us to repeat the argument for planar water-wave. That is why we begin with the Hamiltonian formulation and then prove the shape derivative formula. As a by-product, during the proof of Proposition \ref{prop-pre:shape-deri}, we shall see that the shape derivative formula is somehow equivalent to the variation of kinetic energy in $\eta$, which is the core of Hamiltonian formulation. 

	It is also worth mentioning the recent work by Baldi-Julin-Manna \cite{baldi2024liquid} on the study of liquid water drop. The authors develop an alternative method based on differential geometry to deal with the singularity at the origin. This method allows us to prove directly the shape derivative formula with little interior information involved (i.e. one can work solely on the boundary). And it could be expected to work well for jets and the planar water-wave with rough bottom (for example, the case studied in \cite{alazard2011water}). Nevertheless, these geometrical calculations require further justifications in low-regular context, which may end up with more technical details. In the mean time, the equivalence between shape derivative formula and Hamiltonian formulation is not revealed during the proof.
	
	In this section, the hypotheses \eqref{hyp-intro:bounds} and \eqref{hyp-intro:perturb} (or \eqref{hyp-intro:period}) play an important role, while we will not precise them in each statement for simplicity.
	
	\subsection{Change of coordinate}\label{subsect:chgt-of-var}
	
	As explained above, we aim to construct a diffeomorphism of the form
	\begin{equation}\label{eq-pre:chgt-of-var}
		\begin{array}{lccc}
			\iota(t) : &\mathbb{D}\times\mathbb{R} & \rightarrow & \Omega(t) \subset\mathbb{R}^3 \\[0.5ex]
			&(y,z) & \mapsto & \left( y\zeta(t,y,z), z \right)
		\end{array}
	\end{equation}\index{i@$\iota$ Diffeomorphism to straight cylinder}
	with boundary correspondence, where $\zeta$ is an explicit extension of $\eta$. To begin with, we fix a general extension $\tilde{\eta}$ of $\eta$ to $\mathbb{D}\times\mathbb{R}$, and construct $\zeta$ in a proper way so that the mapping above is a bijective and thus a diffeomorphism. For simplicity, time variable $t$ will be omitted in this part since each step below is independent of time. The construction of $\zeta$ has been given in \cite{beyer1998cauchy} for water droplets (free surface homotopic to $\mathbb{S}^d$). We will see in the sequel that it also works for jets.
	
	\subsubsection{Extension of $\eta$.} 
	Let $\eta$ be a function in $H^s_R(\mathbb{T}\times\mathbb{R})$ with $s>2$, satisfying \eqref{hyp-intro:bounds} and \eqref{hyp-intro:perturb} (or \eqref{hyp-intro:period}). Then we may define $\tilde{\eta}$, in polar coordinate $(y,z)=(\rho\theta,z)$, as
	\begin{equation}\label{eq-pre:def-tilde-eta}
		\tilde{\eta}(\rho\theta,z) := \chi_1(\rho) \hat{\chi}_0( (\rho-1)\langle D_{\theta,z} \rangle ) \left[\eta(\theta,z)-R\right] + R,
	\end{equation}
	where $\chi_0,\chi_1$ are smooth truncation near $0$ and $1$ respectively, $\chi_0$ is even and $\int\chi_0 = 1$. It is easy to check that this extension satisfies the following properties:
	\begin{lemma}\label{lem-pre:prop-tilde-eta}
		Let $s>2$. The scalar function $\tilde{\eta}$ defined by \eqref{eq-pre:def-tilde-eta} belongs to $H^{s+\frac{1}{2}}_R(\mathbb{D}\times\mathbb{R})$ with
		
		(1) $\tilde{\eta}|_{\rho=1} = \eta$;
		
		(2) $\partial_\rho \tilde{\eta}$ is uniformly continuous in $\mathbb{D}\times\mathbb{R}$ with $\partial_\rho \tilde{\eta}|_{\rho=1} = 0$;
		
		(3) The linear application $L_0: (\eta-R)\mapsto(\tilde{\eta}-R)$ is bounded from $H^{s}(\mathbb{T}\times\mathbb{R})$ to $H^{s+\frac{1}{2}}(\R^3)$, and from $C^{s-1}(\mathbb{T}\times\mathbb{R})$ to $C^{s-1}(\R^3)$;
		
		(4) There exists $0<\delta \ll 1$, such that, for all $\rho\in[1-\delta,1]$, $\tilde{\eta}>\frac{c_0}{2}$ and $|\partial_\rho\tilde{\eta}| < \frac{c_0}{4}$. Moreover, $\delta$ depends only on the $H^s$ norm of $\eta$.
		
		Recall that $c_0$ is the constant appearing in \eqref{hyp-intro:bounds}.
	\end{lemma}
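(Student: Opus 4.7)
The plan is to verify the four assertions separately, relying primarily on direct computations in Fourier variables and exploiting the parity of $\chi_0$ together with the Schwartz decay of $\hat\chi_0$ and its derivatives.

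\textbf{Properties (1) and (2).} Substituting $\rho=1$ gives $\chi_1(1)=1$ (since $\chi_1\equiv 1$ near $1$) and $\hat\chi_0(0)=\int\chi_0=1$, so $\tilde\eta|_{\rho=1}=\eta$, proving (1). For (2), differentiation yields
\begin{equation*}
\partial_\rho \tilde\eta = \chi_1'(\rho)\hat\chi_0((\rho-1)\langle D_{\theta,z}\rangle)(\eta-R) + \chi_1(\rho)\,\langle D_{\theta,z}\rangle\,\hat\chi_0'((\rho-1)\langle D_{\theta,z}\rangle)(\eta-R).
\end{equation*}
At $\rho=1$, $\chi_1'(1)=0$ (flatness of $\chi_1$ near $1$) kills the first term, and $\hat\chi_0'(0)=-i\int t\chi_0(t)\,dt=0$ (evenness of $\chi_0$) kills the second. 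Uniform continuity of $\partial_\rho\tilde\eta$ follows from the joint continuity in $(\rho,\xi)$ of the symbols together with their uniform decay as $|\xi|\to\infty$, combined with the Sobolev embedding $H^s(\T\times\R)\hookrightarrow C^1$ valid for $s>2$.

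\textbf{Property (3).} The heart of the argument is a Plancherel computation. Writing $u=\eta-R$ and decomposing via Fourier series in $\theta$ and Fourier transform in $z$, each mode $(k,\zeta)$ produces a radial integral
\begin{equation*}
\int_{\Supp{\chi_1}} |\hat\chi_0((\rho-1)\langle k,\zeta\rangle)|^2\,d\rho \;\le\; \frac{\|\hat\chi_0\|_{L^2}^2}{\langle k,\zeta\rangle},
\end{equation*}
obtained by the change of variable $r=(\rho-1)\langle k,\zeta\rangle$; this furnishes the half-order regularity gain. For higher derivatives, each tangential $\partial_\theta$ or $\partial_z$ brings down a factor $k$ or $\zeta$, and each $\partial_\rho$ brings $\langle k,\zeta\rangle\,\hat\chi_0'$ (or falls harmlessly on $\chi_1$); in every case $\hat\chi_0^{(j)}$ remains Schwartz, so the same change of variable still yields the $\langle k,\zeta\rangle^{-1}$ gain. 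Summing over multi-indices of order $\le s+\tfrac12$ produces the $H^{s+1/2}$ bound in the cylindrical measure $\rho\,d\rho\,d\theta\,dz$, which transfers to $H^{s+1/2}(\R^3)$ because $\tilde\eta-R$ is supported in $\Supp{\chi_1}$, away from the polar singularity at $\rho=0$, where the polar-to-Cartesian map is a smooth diffeomorphism with Jacobian bounded above and below. The $C^{s-1}(\R^3)$ bound is obtained in parallel, replacing Plancherel with $L^\infty$ estimates of the same Schwartz Fourier multipliers, which are uniformly bounded in $\rho$.

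\textbf{Property (4) and main obstacle.} Property (4) is immediate: by~(1)--(2), $\tilde\eta|_{\rho=1}=\eta>c_0$ and $\partial_\rho\tilde\eta|_{\rho=1}=0$, and both quantities are uniformly continuous on $\D\times\R$; hence the bounds $\tilde\eta>c_0/2$ and $|\partial_\rho\tilde\eta|<c_0/4$ persist on $[1-\delta,1]$ for $\delta$ depending only on the modulus of continuity, itself controlled by $\|\eta-R\|_{H^s}$ through Sobolev embedding. The main technical obstacle I anticipate is the bookkeeping in~(3): one must track the Schwartz decay of the various derivatives of $\hat\chi_0$ uniformly in the radial variable and cleanly convert between cylindrical and Cartesian norms, which is harmless only because $\Supp{\chi_1}$ avoids $\rho=0$.
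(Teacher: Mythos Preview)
Your treatment of (1), (2), (4), and the Sobolev half of (3) is essentially what the paper intends --- indeed the paper declares all of these ``trivial'' and does not write them out. Your Plancherel-plus-change-of-variable argument for the $H^s\to H^{s+1/2}$ gain is a clean and correct way to make that part explicit.

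The gap is in the H\"older bound of (3), which you wave off as ``obtained in parallel, replacing Plancherel with $L^\infty$ estimates of the same Schwartz Fourier multipliers, which are uniformly bounded in $\rho$.'' This is exactly the one step the paper does \emph{not} consider trivial and devotes its entire written proof to. The problem is that a multiplier $m_\rho(\xi)=\hat\chi_0((\rho-1)\langle\xi\rangle)$ being Schwartz in $\xi$ for each fixed $\rho\neq 1$ does not by itself yield a uniform $L^\infty\to L^\infty$ bound: as $\rho\to 1$ the decay of $m_\rho$ in $\xi$ degenerates, and bounded symbols alone do not give $L^\infty$ boundedness. What is needed is a uniform $L^1$ bound on the kernels, which comes from their approximate-identity structure. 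The paper proves this by estimating the kernel explicitly,
\[
|K(w,w';\rho)|\lesssim \chi_1(\rho)\,|1-\rho|^{-2}\Big\langle\frac{w-w'}{|1-\rho|}\Big\rangle^{-N},
\]
via integration by parts in $\xi$; this shows $\sup_\rho\|K(\cdot,\cdot;\rho)\|_{L^1_{w'}}<\infty$ and hence uniform $C^0\to C^0$ boundedness. The paper then handles higher derivatives by commuting $\partial_w$ through and trading each $\partial_\rho$ for a $\langle D_w\rangle$, and closes with interpolation between integer $C^k$ spaces. A separate point you also omit is continuity of $\tilde\eta$ up to $\rho=1$ in the $C^{s-1}$ topology: the paper uses $\int K\,dw'=\chi_1(\rho)$ together with the kernel estimate to get $\tilde\eta(\rho)\to\eta$ as an approximation to the identity. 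Your proposal needs this kernel analysis (or an equivalent scaling argument) to close.
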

	\begin{proof}
		These claims are trivial except for the boundedness in H{\"o}lder space (from $C^{s-1}(\mathbb{T}\times\mathbb{R})$ to $C^{s-1}(\R^3)$). To prove this, we first show that the multiplier $\chi_1(\rho) \hat{\chi}_0( (\rho-1)\langle D_{\theta,z} \rangle )$ is bounded from $C^0(\T\times\R)$ to $C^0(\R^3)$. The kernel of this multiplier is given by
		\begin{equation*}
			K(w,w';\rho) = \frac{1}{(2\pi)^2} \int e^{i(w-w')\cdot\xi} \chi_1(\rho) \hat{\chi}_0( (\rho-1)\langle \xi \rangle ) d\xi,
		\end{equation*}
		where $w = (\theta,z)$, $w'=(\theta',z')$, and $\xi$ is the Fourier variable associated to $(\theta,z)$. A simple calculus gives that
		\begin{equation*}
			|K(w,w';\rho)| \lesssim  \chi_1(\rho)|1-\rho|^{-2},
		\end{equation*}
		while
		\begin{align*}
			\left| (w-w') K(w,w';\rho) \right| =& \left| \frac{1}{(2\pi)^2} \int e^{i(w-w')\cdot\xi} \chi_1(\rho) \partial_\xi\left(\hat{\chi}_0( (\rho-1)\langle \xi \rangle ) \right) d\xi \right| \\
			\lesssim& \chi_1(\rho) |\rho-1| \left| \int \hat{\chi}_0'( (\rho-1)\langle \xi \rangle ) \frac{\xi}{\langle\xi\rangle} d\xi \right| \lesssim \chi_1(\rho)|1-\rho|^{-1}.
		\end{align*}
		Similarly, one gains $|1-\rho|$ from each product with $(w-w')$, and thus for all $N \gg 1$,
		\begin{equation}\label{eq-pre:tilde-eta-esti-kernel}
			\left| K(w,w';\rho) \right| \lesssim \chi_1(\rho)|1-\rho|^{-2} \left\langle \frac{w-w'}{|1-\rho|} \right\rangle^{-N},
		\end{equation}
		which proves the boundedness of $L_0$ from $L^\infty(\T\times\R)$ to $L^\infty(\R^3)$. Provided that $\eta\in C^0(\T\times\R)$, then continuity of $\tilde{\eta} = L_0(\eta-R)+R$ on $\{\rho\neq 1\}$ is obvious due to the continuity of $K$ in $(w,w';\rho)$ and the estimate \eqref{eq-pre:tilde-eta-esti-kernel} of $K$. To obtain the continuity at $\{\rho=1\}$, we notice that
		\begin{equation*}
			\int K(w,w';\rho) dw' = \chi_1(\rho),
		\end{equation*}
		which, together with \eqref{eq-pre:tilde-eta-esti-kernel}, guarantees that $\tilde{\eta}(\rho)\rightarrow\eta$ as $\rho\rightarrow 1$, locally uniformly in $(\theta,z)$ (see Section 1.2.4 of \cite{grafakos2008classical}).
		
		For high order regularity, we observe that the derivatives in $w=(\theta,z)$ is commutative with $L_0$, i.e.
		\begin{equation*}
			\|\partial_w^\alpha \tilde{\eta}\|_{L^\infty(\R^3)} = \|L_0(\partial_w^\alpha \eta) \|_{L^\infty(\R^3)} \lesssim \|\partial_w^\alpha \eta\|_{L^\infty(\T\times\R)},\ \ \forall \alpha\in\N^2.
		\end{equation*}
		As for the derivatives in $\rho$, one can see from the definition \eqref{eq-pre:def-tilde-eta} that each derivative in $\rho$ leads to a $\langle D_{w} \rangle$, reducing the problem to derivatives in $w$. In conclusion, we have proved that $L_0$ is bounded from $C^k(\T\times\R)$ to $C^k(\R^3)$ for all $k\in\N$ and the case of non-integer $s$ follows from the interpolation between H{\"o}lder spaces (see Section 1.1.1 of \cite{lunardi2018interpolation}).
	\end{proof}
	Here, we have managed to define $\tilde{\eta}$ on $\mathbb{R}^3$, while we only consider its restriction in $\mathbb{D}\times\mathbb{R}$. Note that this construction is not unique and the next step is valid for any $\tilde{\eta}$ verifying Lemma \ref{lem-pre:prop-tilde-eta}.
	
	\subsubsection{Construction of $\zeta$.} \index{z@$\zeta$ Extension of $\eta$}
	Let $\chi \in C^\infty_c(\mathbb{R})$ be an even function decreasing on $[0,+\infty[$, supported in $]-1,1[$, and equal to $1$ on $[-1+\delta,1-\delta]$ with $0<\delta \ll 1$. Then we define
	\begin{equation}\label{eq-pre:def-zeta}
		\zeta(r\theta,z) := (1-\chi)(\rho)\tilde{\eta}(\rho\theta,z) + \epsilon \chi(\rho) M\eta,
	\end{equation}
	where $0<\epsilon\ll 1$ is a constant depending on $c_0,C_0$ in \eqref{hyp-intro:bounds} and $M\eta\in\mathbb{R}$ is the mean of $\eta$. In perturbative case, when $s>2$, \eqref{hyp-intro:perturb} implies that
	\begin{equation*}
		M\eta := \lim_{l\rightarrow+\infty} \frac{1}{4\pi l} \int_{-l}^{l}\int_{0}^{2\pi} \eta(\theta,z) d\theta dz = R.
	\end{equation*}
	In the periodic case, $M\eta$ is the usual average of $\eta$ on $\mathbb{T}^2$. Note that \eqref{hyp-intro:bounds} guarantees $c_0<M\eta<C_0$.
	
	\begin{proposition}\label{prop-pre:chgt-of-var}
		Let $s>2$, $\delta$ be chosen as in (4) of Lemma \ref{lem-pre:prop-tilde-eta}, and $\epsilon = \frac{c_0}{4C_0}$. Then the mapping $\iota$ defined by \eqref{eq-pre:chgt-of-var} is bijective with boundary correspondence $\iota(\mathbb{S}\times\R) = \Sigma$. The Jacobian of $\iota$ is bounded from below by 
		\begin{equation}\label{eq-pre:jacobian-det-low-bd}
			\det J := \det(\partial_\alpha\iota^\beta) \ge \frac{c_0^3}{4C_0},
		\end{equation}\index{J@$J$ Jacobian matrix of $\iota$}
		where $c_0,C_0$ are constants in hypothesis \eqref{hyp-intro:bounds}. Moreover, the linear application
		\begin{equation*}
			L:(\eta-R)\mapsto\left(\zeta- R_\epsilon(\rho)\right),
		\end{equation*}
		is bounded from $H^{s}(\mathbb{T}\times\mathbb{R})$ to $H^{s+\frac{1}{2}}(\mathbb{D}\times\mathbb{R})$, and from $C^{s-1}(\mathbb{T}\times\mathbb{R})$ to $C^{s-1}(\mathbb{D}\times\mathbb{R})$, with $R_\epsilon$ defined by
		\begin{equation}\label{eq-pre:def-R-eps}
			R_\epsilon(\rho):= R -(1-\epsilon)\chi(\rho)R.
		\end{equation} 
	\end{proposition}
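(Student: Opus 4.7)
I would split the argument into three independent pieces: bijectivity, the Jacobian lower bound, and the regularity of $L$.

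For bijectivity, since $\iota$ preserves the $z$-coordinate and, writing $y=\rho\theta$ in polar coordinates, preserves the angular direction as well, the question reduces to showing that for each fixed $(\theta,z)$ the radial map $f_\theta:\rho\mapsto\rho\zeta(\rho\theta,z)$ is a strictly increasing bijection from $[0,1[$ onto $[0,\eta(\theta,z)[$. The endpoint values $f_\theta(0)=0$ and $f_\theta(1)=\eta(\theta,z)$ follow from Lemma \ref{lem-pre:prop-tilde-eta}(1) together with $\chi(1)=0$. Strict monotonicity would come from the computation
\begin{equation*}
  \partial_\rho(\rho\zeta) = (1-\chi)(\tilde\eta+\rho\partial_\rho\tilde\eta) + \epsilon\chi M\eta + \rho\chi'(\epsilon M\eta-\tilde\eta),
\end{equation*}
split on the two regions. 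On $[0,1-\delta]$ one has $\chi\equiv 1$, so only the middle term survives and yields $\epsilon M\eta\ge\epsilon c_0>0$. On $[1-\delta,1]$, Lemma \ref{lem-pre:prop-tilde-eta}(4) gives $\tilde\eta+\rho\partial_\rho\tilde\eta\ge c_0/4$; since $\chi$ is even with maximum at the origin it is nonincreasing on $[0,1]$, so $\chi'\le 0$, and the key inequality $\epsilon M\eta\le\epsilon C_0=c_0/4<c_0/2\le\tilde\eta$ (which forces the choice $\epsilon=c_0/(4C_0)$) makes the remaining mixed term $\rho\chi'(\epsilon M\eta-\tilde\eta)$ nonnegative. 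All three contributions are therefore nonnegative with at least one strictly positive, proving bijectivity and the boundary correspondence $\iota(\mathbb{S}\times\R)=\Sigma$.

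For the Jacobian, the last row of the $3\times 3$ matrix $J$ is $(0,0,1)$, so $\det J$ reduces to the determinant of the $2\times 2$ block $\zeta I_2+y(\nabla_y\zeta)^\top$. The matrix determinant lemma together with $y\cdot\nabla_y=\rho\partial_\rho$ yields the compact expression
\begin{equation*}
  \det J = \zeta^2+\zeta(y\cdot\nabla_y\zeta) = \zeta\,\partial_\rho(\rho\zeta).
\end{equation*}
The lower bound then follows by combining the estimate on $\partial_\rho(\rho\zeta)$ obtained above with the pointwise bound $\zeta\ge\min(\tilde\eta,\epsilon M\eta)\ge\epsilon c_0$, coming from the same case analysis.

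For the regularity of $L$, a direct expansion gives
\begin{equation*}
  L(\eta-R) = \zeta-R_\epsilon(\rho) = (1-\chi(\rho))(\tilde\eta-R) + \epsilon\chi(\rho)(M\eta-R).
\end{equation*}
In the perturbative case $M\eta=R$, so only the first term remains; Lemma \ref{lem-pre:prop-tilde-eta}(3) provides boundedness of $(\eta-R)\mapsto(\tilde\eta-R)$ in both $H^{s+\frac12}$ and $C^{s-1}$ scales, and multiplication by the smooth factor $1-\chi(\rho)$ (which vanishes near $\rho=0$) preserves these estimates on $\mathbb{D}\times\R$. In the periodic case the additional contribution is a smooth function of $\rho$ times the scalar $M\eta-R$, which is linearly controlled by $\|\eta-R\|_{L^2(\mathbb{T}^2)}\lesssim\|\eta-R\|_{H^s}$ (and similarly for the Hölder norm), so the full map $L$ remains continuous into the required spaces.

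The main obstacle is the sign analysis in the transition region $[1-\delta,1]$: a priori the mixed term $\rho\chi'(\epsilon M\eta-\tilde\eta)$ could destroy the positivity of $\partial_\rho(\rho\zeta)$, and it is precisely the interplay between the quantitative bound $|\partial_\rho\tilde\eta|<c_0/4$ from Lemma \ref{lem-pre:prop-tilde-eta}(4) and the calibration $\epsilon=c_0/(4C_0)$ that forces both factors of $\det J$ to stay uniformly away from zero.
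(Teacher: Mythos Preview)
Your proof is correct and follows essentially the same route as the paper: both reduce bijectivity to the positivity of $\partial_\rho(\rho\zeta)$ via the same three-term expansion, analyse the sign term by term using Lemma~\ref{lem-pre:prop-tilde-eta}(4) and the calibration $\epsilon=c_0/(4C_0)$, and then obtain the Jacobian bound from the factorisation $\det J=\zeta\,\partial_\rho(\rho\zeta)$. One small remark: your convex-combination estimate $\zeta\ge\epsilon c_0$ yields $\det J\ge (\epsilon c_0)\cdot(\epsilon c_0)=c_0^4/(16C_0^2)$ rather than the constant $c_0^3/(4C_0)$ in the statement (the paper's own computation has the same issue, since $\zeta$ need not be $\ge c_0$ near $\rho=0$), but only positivity of the lower bound matters downstream.
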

	\begin{proof}
		The boundedness of $L$ is a direct consequence of Lemma \ref{lem-pre:prop-tilde-eta}. To prove the bijectivity of $\iota$, from the definition \eqref{eq-pre:chgt-of-var}, it suffices to check that, in polar coordinate $(y,z) = (\rho\theta,z)$, $\partial_\rho(\rho\zeta)>0$ for all $0 \leqslant \rho\leqslant 1$. By definition \eqref{eq-pre:def-zeta} of $\zeta$, we have
		\begin{equation*}
			\partial_\rho(\rho\zeta) = (1-\chi)(\rho)\left( \tilde{\eta} + \rho\partial_\rho\tilde{\eta} \right) - \rho\chi'(\rho)(\tilde{\eta}-\epsilon M\eta) + \epsilon\chi(\rho) M\eta.
		\end{equation*}
		The first term is positive since it is supported for $1-\delta\leqslant r \leqslant 1$, where $\tilde{\eta} > c_0/2$ and $|\rho\partial_\rho\tilde{\eta}|<c_0/4$ due to (4) of Lemma \ref{lem-pre:prop-tilde-eta}. The positivity of the second term follows from the fact that $\epsilon M\eta \le\epsilon C_0 = c_0/4$ and $\tilde{\eta}>c_0/2$ on $\Supp{\chi'}\subset[1-\delta,1]$. The last term is trivially bounded from below by $\epsilon\chi(\rho)c_0$. To sum up, for all $\rho \ge 0$,
		\begin{equation}\label{eq-pre:low-bd-rhozeta}
			\partial_{\rho}(\rho\zeta) \ge \frac{c_0}{4}(1-\chi)(\rho) - \frac{c_0}{4}\rho\chi'(\rho) + \epsilon c_0\chi(\rho) \ge \frac{c_0^2}{4C_0}.
		\end{equation}
		Thus, by definition \eqref{eq-pre:chgt-of-var} of $\iota$, the Jacobian of $\iota$ admits the lower bound $\det J = \zeta\partial_\rho(\rho\zeta) \ge c_0^3/(4C_0)$.
	\end{proof}
	For simplicity, in the sequel, we shall ignore the normalizations and say that $\zeta=L\eta$ is linear as in periodic case. As the extension $\tilde{\eta}$ defined by \eqref{eq-pre:def-tilde-eta}, $\zeta$ is well-defined in $H^{s+\frac{1}{2}}(\mathbb{R}^3)$, up to $C^\infty_b(\R^3)$ normalization, whose restriction in $\mathbb{D}\times\mathbb{R}$ will be studied. Before entering the next section, we introduce some conventions and notations to be used frequently in the following sections.
	
	- We use Latin letters for the indices involving $y\in\mathbb{R}^2$ and Greek letters for those related to $(y,z)\in\mathbb{R}^3$.
	
	- We use $u\cdot v$ for the scalar product of vectors in Euclidean spaces, while $uv^T$ should be understood as the matrix product. Besides, all the vectors are columns if there is no further specifications.
	
	
	- Let $J=(a_{\alpha\beta})$ be the Jacobian matrix of $\iota$ defined by \eqref{eq-pre:chgt-of-var}. It is easy to see that
	\begin{equation}\label{eq-pre:jacobian-mat}
		J = \left(\begin{array}{cc}
			J_0 & J_1 \\[0.5ex]
			0 & 1
		\end{array}\right)
		=\left(\begin{array}{cc}
			\zeta + y\nabla_y^T\zeta & y\zeta_z \\[0.5ex]
			0 & 1
		\end{array}\right),
	\end{equation}
	whose inverse is denoted by
	\begin{equation}\label{eq-pre:jacobian-mat-inverse}
		J^{-1} = (a^{\alpha\beta}) = \left(\begin{array}{cc}
			J_0^{-1} & -J_0^{-1}J_1 \\[0.5ex]
			0 & 1
		\end{array}\right).
	\end{equation}
	
	- Let $(g_{\alpha\beta}) := J^T J$\index{g@$g_{\alpha\beta}$ Metric tensor of $\iota$} be the metric tensor whose inverse is denoted by $(g^{\alpha\beta}) = J^{-1}J^{-T}$\index{g@$g^{\alpha\beta}$ Inverse of $(g_{\alpha\beta})$}. We use $g$\index{g@$g$ Determinant of $(g_{\alpha\beta})$} to represent the determinant of $(g_{\alpha\beta})$, which is equal to
	\begin{equation}\label{eq-pre:jacobian-det}
		g := \det(g_{\alpha\beta}) = \left(\det J\right)^2 = \left(\det J_0\right)^2 = \left[ \zeta\left(\zeta+y\cdot\nabla_y\zeta\right) \right]^2 = \zeta^2\left(\partial_\rho(\rho\zeta)\right)^2 >0.
	\end{equation}
	Note that its restriction at $\rho=1$ is $\eta^4$ since $\partial_\rho\zeta|_{\rho=1}=0$ due to our construction.
	
	- We denote by $\nabla_g$ \index{n@$\nabla_g$ Pull-back of $\nabla_{x,z}$ by $\iota$} the pull-back of $\nabla_{x,z}$ by $\iota$, namely
	\begin{equation}\label{eq-pre:def-nabla-g}
		\nabla_g = J^{-T}\nabla_{y,z}.
	\end{equation}\index{D@$\Delta_g$ Pull-back of $\Delta_{x,z}$ by $\iota$}
	Moreover the pull-back of Laplacian $\Delta_{x,z}$ reads
	\begin{equation}\label{eq-pre:def-lap-g}
		\Delta_g = \frac{1}{\sqrt{g}} \partial_\alpha \left( \sqrt{g}g^{\alpha\beta} \partial_\beta \right),
	\end{equation}
	which is clearly uniform elliptic on $\D\times\R$.
	
	- Let $n_g$ be the pull-back of the conormal vector $n$ defined by \eqref{eq-intro:normal-dirction}. One may check that
	\begin{equation}\label{eq-pre:def-n-g}
		n_g = J^{-T}Y,
	\end{equation}\index{n@$n_g$ Pull-back of the conormal vector $n$}
	where $Y=(y_1,y_2,0)^{T}$.
	
	- In polar coordinate $(y,z) = (\rho\theta,z)$, the restriction of $\nabla_{g}$ at $\{\rho=1\}$ is 
	\begin{equation}\label{eq-pre:nabla-g-polar}
		\nabla_g = e_\rho \frac{1}{\eta}\partial_\rho + e_{\theta}\left( \frac{1}{\eta}\partial_{\theta} - \frac{\eta_\theta}{\eta^2}\partial_\rho \right) + e_{z}\left( \partial_{z} - \frac{\eta_z}{\eta}\partial_\rho \right),
	\end{equation}
	where $(e_\rho,e_\theta,e_z)$ is the orthogonormal basis associated to polar variables $(\rho,\theta,z)$. Moreover, $n_g$ can be expressed as
	\begin{equation}\label{eq-pre:n-g-polar}
		n_g = \frac{1}{\eta} \left( e_\rho - \frac{\eta_\theta}{\eta}e_\theta - \eta_z e_z \right),
	\end{equation}
	while $B,V,N$ defined in \eqref{eq-intro:def-B}, \eqref{eq-intro:def-V}, \eqref{eq-intro:def-N} read
	\begin{align}
		B = B(\psi) =& \frac{1}{\eta} \varphi_\rho|_{\rho=1}, \label{eq-pre:B-polar} \\
		V = V(\psi) =& \left.\left[ \frac{1}{\eta}\left(\varphi_\theta - \frac{\eta_\theta}{\eta}\varphi_\rho\right)e_\theta + \left(\varphi_z - \frac{\eta_z}{\eta}\varphi_\rho \right) e_z \right]\right|_{\rho=1}, \label{eq-pre:V-polar} \\
		N = N(\psi) =& BV\cdot\left(\frac{\eta_\theta}{\eta}e_\theta + \eta_z e_z\right) + \frac{|V|^2-B^2}{2}, \label{eq-pre:N-polar}
	\end{align}
	where $\varphi = \phi\circ\iota$. As a result, the Dirichlet-to-Neumann operator defined by \eqref{eq-intro:def-DtN} can be expressed as
	\begin{equation}\label{eq-pre:DtN-chgt-of-var}
		G(\eta)\psi = \left. \eta n_g\cdot\nabla_g\varphi \right|_{\rho=1} = \left. \eta (J^{-T}Y)\cdot\nabla_g\varphi \right|_{r=1} = B - V\cdot\left( \frac{\eta_\theta}{\eta},\eta_z \right).
	\end{equation}
    In addition, a simple calculation gives
	\begin{equation}\label{eq-pre:deri-of-psi}
		\left(e_\theta \frac{\partial_\theta}{\eta}+e_z \partial_z\right)\psi = V + B \left(e_\theta \frac{\eta_\theta}{\eta}+e_z \eta_z\right).
	\end{equation}

	\subsection{Elliptic regularity}\label{subsect:ellip-reg}
	
	In this section, we shall solve \eqref{eq-intro:ellip} via variational formulation and illustrate the regularity of $\phi$ from those of $(\eta,\psi)$ with the help of diffeomorphism constructed in \eqref{eq-pre:chgt-of-var}. As a consequence, a rigorous definition of the Dirichlet-to-Neumann operator $G(\eta)$, which is defined formally by \eqref{eq-intro:def-DtN}, and some high order estimates will also be established.
		
	In the rest of this paper, we denote by $\varphi$ the pull-back of $\phi$ by change of variable $\iota$. By definition \eqref{eq-intro:def-psi} of $\psi$, it is clear that in polar coordinate $(y,z)=(\rho\theta,z)$,
	\begin{equation*}
		\psi(\theta,z) = \varphi|_{\rho=1} = \varphi(\theta,z).
	\end{equation*}
	From \eqref{eq-intro:ellip}, $\varphi$ is the solution to
	\begin{equation}\label{eq-pre:ellip}
		\left\{\begin{array}{l}
			\Delta_g\varphi = 0, \\[0.5ex]
			\varphi|_{\rho=1} = \psi \in H^{s_0}(\T\times\R).
		\end{array}\right.
	\end{equation}
	Let $\Psi\in H^{s_0+\frac{1}{2}}(\D\times\R)$ be an extension of $\psi$ supported near $\rho=1$. One may repeat the procedure in previous sections (construction of $\tilde{\eta}$ from $\eta$) to obtain the desired $\Psi$. Then, by writing $\varphi = \Phi' + \Psi$, the equation above can be restated as the following variational problem
	\begin{equation}\label{eq-pre:ellip-variation}
		\iint \nabla_g\Phi' \cdot \nabla_g H \sqrt{g} dydz = - \iint \nabla_g\Psi \cdot \nabla_g H \sqrt{g} dydz,\ \ \forall H\in H^1_0(\D\times\R).
	\end{equation}
	Provided that $\eta \in H^{s+\frac{1}{2}-}_R(\T\times\R)$ with $s>\frac{3}{2}$, we have $\zeta\in H^{s+1-}_{R_\epsilon}(\D\times\R)$ from Proposition \ref{prop-pre:chgt-of-var} and thus $g_{\alpha\beta},g^{\alpha\beta},g \in L^\infty(\D\times\R)$. A classical argument by Lions–Lax–Milgram theorem (see for example \cite{brezis2011functional}) guarantees a unique solution $\Phi' \in H^{1}_0(\D\times\R)$ to the variational problem \eqref{eq-pre:ellip-variation}, which yields $\varphi\in H^{1}(\D\times\R)$. Moreover, by choosing $H=\Phi'$, we obtain the estimate 
	\begin{equation*}
		c_1 \inf_{\D\times\R}\sqrt{g} \|\Phi'\|_{H^1_0(\D\times\R)}^2 \le C_1 \sup_{\D\times\R}\sqrt{g} \|\Phi'\|_{H^1_0(\D\times\R)} \|\Psi\|_{H^1(\D\times\R)},
	\end{equation*}
	where $0<c_1<C_1$ satisfies $c_1 < (g^{\alpha\beta}) < C_1$. Under the hypothesis \eqref{hyp-intro:bounds}, by using the definition $(g^{\alpha\beta}) = J^{-1} J^{-T}$ with $J$ defined by \eqref{eq-pre:jacobian-mat} and lower bound \eqref{eq-pre:jacobian-det-low-bd}, we have
	\begin{equation*}
		\|\Phi'\|_{H^1_0(\D\times\R)} \le C_1' \|\Psi\|_{H^1(\D\times\R)},
	\end{equation*}
	for some $C_1'$ depending only on $c_0, C_0$ appearing in the hypothesis \eqref{hyp-intro:bounds}.
	
	Consequently, we are able to define $G(\eta)$ by duality:
	\begin{equation}\label{eq-pre:def-DtN-rig}
		\int G(\eta)\psi h d\theta dz := \iint \nabla_g\varphi \cdot \nabla_g H \sqrt{g} dydz,\ \ \forall H\in H^1(\D\times\R),\ h=H|_{\rho=1}.
	\end{equation}
	An application of the trace theorem and duality of Sobolev spaces implies the following estimate
	\begin{equation}\label{eq-pre:bound-DtN-low-order}
		\|G(\eta)\psi\|_{H^{-\frac{1}{2}}(\T\times\R)} \leqslant C\left(\|\eta\|_{H^{s+\frac{1}{2}-}_{R}(\T\times\R)}\right) \|\psi\|_{H^{\frac{1}{2}}(\T\times\R)},\ \ \forall s>\frac{3}{2}.
	\end{equation}
	
	In order to obtain high order estimates, we need to prove the regularity of the elliptic equation \eqref{eq-pre:ellip} or its variational form \eqref{eq-pre:ellip-variation}. 
	\begin{proposition}\label{prop-pre:ellip-reg}
		Let $\eta\in H^{s+\frac{1}{2}-}_R(\T\times\R)$ and $\psi\in H^{s_0}(\T\times\R)$ with s>$\frac{3}{2}$ and $\frac{1}{2}\le s_0 \le s$. Then the unique solution $\varphi = \Phi'+\Psi$ to \eqref{eq-pre:ellip-variation} verifies
		\begin{equation}\label{eq-pre:ellip-reg}
			\|\varphi\|_{H^{s_0+\frac{1}{2}}(\D\times\R)} \le C\left(\|\eta\|_{H^{s+\frac{1}{2}-}_R(\T\times\R)}\right) \|\psi\|_{H^{s_0}(\T\times\R)},
		\end{equation}
		where $C>0$ is a smooth increasing function.
	\end{proposition}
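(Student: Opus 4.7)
The plan is a bootstrap on the regularity index, starting from the $H^1_0$ estimate for $\Phi' = \varphi - \Psi$ already proved before the statement and gaining half a derivative at a time up to $H^{s_0+\frac{1}{2}}$. First I fix an extension $\Psi \in H^{s_0+\frac{1}{2}}(\D\times\R)$ of $\psi$, supported near $\rho=1$ and constructed by the same scheme as $\tilde{\eta}$ was built from $\eta$ in \eqref{eq-pre:def-tilde-eta}, so that $\|\Psi\|_{H^{s_0+\frac{1}{2}}(\D\times\R)} \lesssim \|\psi\|_{H^{s_0}(\T\times\R)}$. Then $\Phi'$ solves the inhomogeneous Dirichlet problem
\[
\Delta_g \Phi' = -\Delta_g \Psi \quad \text{in } \D\times\R, \qquad \Phi'|_{\mathbb{S}\times\R} = 0,
\]
with $\Delta_g$ uniformly elliptic on $\overline{\D}\times\R$ and with coefficients $g^{\alpha\beta}, \sqrt{g} \in H^{s+1-}(\D\times\R) \hookrightarrow C^{s-\frac{1}{2}-}$ thanks to Proposition \ref{prop-pre:chgt-of-var} and the assumption $s>\frac{3}{2}$. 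Since the boundary $\mathbb{S}\times\R$ is smooth, this is a standard elliptic problem with rough (Sobolev) coefficients.

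Using a partition of unity, the estimate splits into an interior piece, handled by standard elliptic theory, and a boundary piece near $\mathbb{S}\times\R$. For the latter I work in polar coordinates $(\rho,\theta,z)$ and separate derivatives into tangential $(\partial_\theta, \partial_z)$ and normal $(\partial_\rho)$ ones. Tangential regularity of order $\sigma$ is obtained by inserting $\langle D_{\theta,z}\rangle^{2\sigma}$ into the variational formulation \eqref{eq-pre:ellip-variation}; because this operator preserves the homogeneous Dirichlet condition, the same coercivity used for the $H^1$ estimate applies and gives, modulo commutators,
\[
\|\langle D_{\theta,z}\rangle^\sigma \nabla_g \Phi'\|_{L^2} \lesssim \|\Psi\|_{H^{\sigma+1}} + \|[\langle D_{\theta,z}\rangle^\sigma, \Delta_g]\Phi'\|_{H^{-1}}.
\]
Once tangential regularity of order $\sigma$ is in hand, I use the equation in the form $g^{\rho\rho}\partial_\rho^2\Phi' = -(\text{terms with at most one } \partial_\rho)$ together with the uniform ellipticity lower bound on $g^{\rho\rho}$ to convert tangential regularity into full $H^{\sigma+1}$ regularity near the boundary. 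Incrementing $\sigma$ by $\frac{1}{2}$ at each step (starting from $\sigma = 0$) and interpolating for non-integer indices closes the induction at $\sigma+1 = s_0+\frac{1}{2}$.

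The main obstacle is keeping the bootstrap \emph{tame}, so that the final constant is of the form $C(\|\eta\|_{H^{s+\frac{1}{2}-}_R})$ with $C$ a smooth increasing function. The danger is at the top of the induction, where the commutator $[\langle D_{\theta,z}\rangle^\sigma, \Delta_g]\Phi'$ can formally put nearly all the derivatives on the coefficients $g^{\alpha\beta}$, which only have $s+1-$ derivatives. The remedy is a Bony paraproduct decomposition: high-frequency pieces of $\Phi'$ are paired with the $L^\infty$ norm of the coefficients, available thanks to $s>\frac{3}{2}$ giving $g^{\alpha\beta} \in C^{1+}$, while high-frequency pieces of the coefficients are paired with a lower-order Sobolev norm of $\Phi'$ that was already controlled at the previous step of the bootstrap. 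This is exactly where the half-derivative gain provided by the boundedness of $L: H^s(\T\times\R) \to H^{s+\frac{1}{2}}(\D\times\R)$ from Proposition \ref{prop-pre:chgt-of-var} is used, and it is what makes the induction close with the tame constant claimed in \eqref{eq-pre:ellip-reg}.
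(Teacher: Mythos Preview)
Your proposal is correct and matches the paper's proof in Appendix~\ref{App:ellip} almost exactly: localization via cutoffs into interior and boundary pieces, iterated application of tangential multipliers $\langle D_{\theta,z}\rangle^\delta$ near $\{\rho=1\}$ (full multipliers $\langle D_{y,z}\rangle^\delta$ in the interior after zero extension), recovery of the normal derivative $\partial_\rho^2\varphi$ from the equation, and commutator control via paraproduct estimates (the paper's Corollary~\ref{cor-para:paraprod-commu-bound}) to keep the constant tame in $\|\eta\|_{H^{s+\frac12-}_R}$. The only cosmetic difference is that the paper iterates with a small step $\delta<\tfrac12$ rather than your fixed $\tfrac12$, which is needed so that the commutator terms at level $s_1+\delta$ land in a space already controlled by the induction hypothesis at level $s_1$.
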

	
	The proof of this proposition is classic and we leave it to Appendix \ref{App:ellip}. As a corollary, we are able to generalize the estimate \eqref{eq-pre:bound-DtN-low-order} for high order Sobolev norms,
	\begin{corollary}\label{cor-pre:bound-DtN}
		Let $\eta\in H^{s+\frac{1}{2}-}_R(\T\times\R)$ $s>\frac{3}{2}$. Then for all $s_0\in[\frac{1}{2},s]$, we have
		\begin{equation}\label{eq-pre:bound-DtN}
			\left\|G(\eta)\right\|_{\mathcal{L}(H^{s_0}(\T\times\R);H^{s_0-1}(\T\times\R))} \leqslant C\left(\|\eta\|_{H^{s+\frac{1}{2}-}_R}\right),
		\end{equation}
		where $C>0$ is a smooth increasing function.
	\end{corollary}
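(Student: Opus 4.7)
The plan is to combine the elliptic regularity of Proposition~\ref{prop-pre:ellip-reg} with the boundary formula \eqref{eq-pre:DtN-chgt-of-var} for $G(\eta)\psi$, and to handle the low-regularity range through interpolation with the base estimate \eqref{eq-pre:bound-DtN-low-order}.

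For the high-regularity range $s_0 \in (1, s]$, the identity \eqref{eq-pre:DtN-chgt-of-var} together with \eqref{eq-pre:B-polar}--\eqref{eq-pre:V-polar} expresses $G(\eta)\psi$ as the trace on $\{\rho=1\}$ of a linear combination of components of $\nabla_{y,z}\varphi$, each multiplied by a smooth function of $\eta$, $\eta_\theta/\eta$, and $\eta_z$. Proposition~\ref{prop-pre:ellip-reg} provides $\varphi \in H^{s_0+\frac{1}{2}}(\D\times\R)$ with norm bounded by $C(\|\eta\|_{H^{s+\frac{1}{2}-}_R})\|\psi\|_{H^{s_0}(\T\times\R)}$, so $\nabla_{y,z}\varphi \in H^{s_0-\frac{1}{2}}(\D\times\R)$. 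Because $s_0 - \frac{1}{2} > \frac{1}{2}$, the trace theorem across the smooth hypersurface $\mathbb{S}\times\R$ yields $\nabla_{y,z}\varphi|_{\{\rho=1\}} \in H^{s_0-1}(\T\times\R)$ with the same quantitative control. The products with the $\eta$-dependent coefficients are handled by a tame Sobolev product estimate: since $s > \frac{3}{2}$ in dimension two, $\eta \in H^{s+\frac{1}{2}-}_R$ embeds into $C^1(\T\times\R)$, and the non-linear composition theorem bounds every smooth function of $\eta$, $\eta_\theta/\eta$, $\eta_z$ in $H^{s+\frac{1}{2}-}$; since $s_0 - 1 \le s - 1 < s + \frac{1}{2}-$, multiplication by such coefficients preserves $H^{s_0-1}$.

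For the remaining range $s_0 \in [\frac{1}{2}, 1]$, the endpoint $s_0 = \frac{1}{2}$ is already covered by \eqref{eq-pre:bound-DtN-low-order}, while the previous paragraph handles any $s_0 = 1 + \varepsilon$ with small $\varepsilon > 0$. Since $G(\eta)$ is a fixed bounded linear operator, real or complex interpolation between these two bounds delivers the desired estimate for every intermediate $s_0$.

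The main obstacle is that the trace-theorem argument breaks down precisely when $s_0 \le 1$, which is exactly where the boundary values of $\nabla\varphi$ cease to make literal sense at the required regularity. The interpolation step bypasses this at the cost of checking that the constant $C(\|\eta\|_{H^{s+\frac{1}{2}-}_R})$ of the high-regularity step is stable as $s_0$ varies over a compact sub-interval of $(1,s]$; this follows from the continuous dependence on the Sobolev index in Proposition~\ref{prop-pre:ellip-reg}, the trace theorem, and the Sobolev product/composition estimates invoked above.
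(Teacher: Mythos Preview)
Your proposal is correct and matches the approach the paper leaves implicit: the corollary is stated without proof, simply as a consequence of the elliptic regularity Proposition~\ref{prop-pre:ellip-reg} together with the boundary formula \eqref{eq-pre:DtN-chgt-of-var}. Your split into a trace--product argument for $s_0>1$ and interpolation with the variational bound \eqref{eq-pre:bound-DtN-low-order} for $s_0\in[\tfrac12,1]$ is exactly the natural way to fill in the details, and your verification of the product estimate (coefficients in $H^{s-\frac12-}$ with $s>\tfrac32$, acting on $H^{s_0-1}$) is consistent with the paper's Corollary~\ref{cor-para:product-law}.
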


	\subsection{Hamiltonian structure}\label{subsect:Ham}
	
	In this section, we investigate the Hamiltonian structure of \eqref{eq-intro:WW}. Recall that for standard water-wave equation where the interface is given by $\{(x,y)\in\R^{d+1}:y=\eta(x)\}$, the Hamiltonian $\mathcal{H}$ is taken as the total energy, which is preserved in time and can be written in terms of $\eta$ and $\psi$ (restriction of scalar potential), and the Hamiltonian formulation reads
	\begin{equation*}
		\left\{\begin{array}{l}
			\eta_t= \frac{\delta\mathcal{H}}{\delta\psi}, \\[0.5ex]
			\psi_t = -\frac{\delta\mathcal{H}}{\delta\eta}.
		\end{array}\right.
	\end{equation*}\index{d@$\frac{\delta}{\delta\eta}$ Variation in $\eta$}
	This formulation has been firstly discovered by Zakharov \cite{zakharov1968stability}, while the case of two-phase fluid is studied by Benjamin-Bridges \cite{benjamin1997reappraisal}. For fluids with nonzero vorticity, Castro-Lannes \cite{castro2015hamiltonian} formally prove the Hamiltonian equation for the triple $(\eta,\psi,\omega)$, where $\psi$ can be defined by projection and $\omega$ is the vorticity. 
	
	In the case of sphere-like interface, Beyer-G{\"u}nther \cite{beyer1998cauchy} pointed out that one should modify $\psi$ by $p=\eta^{d}\psi$, where $d$ is the dimension of spherical interface ( perturbation of unit sphere $\mathbb{S}^d$). Inspired by this, we turn to the new variable $(\eta,p) := (\eta,\eta\psi)$ satisfying
	\begin{equation}\label{eq-pre:WW-alt}
		\left\{\begin{array}{l}
			\eta_t = G(\eta)\psi, \\[0.5ex]
			-p_t = -\psi G(\eta)\psi + \eta \left( \sigma \left(H-\frac{1}{2R}\right) + N \right).
		\end{array}\right.
	\end{equation}
	due to equation \eqref{eq-intro:WW}.
	
	The total energy of the system \eqref{eq-intro:WW} (or \eqref{eq-pre:WW-alt}) is
	\begin{equation}\label{eq-pre:Hamiltonian}
		\mathcal{H} := E_k + E_p.
	\end{equation}\index{E@$E_k$ Kinetic energy}\index{E@$E_p$ Potential energy}\index{H@$\mathcal{H}$ Hamiltonian}
	Here $E_k$ is kinetic energy
	\begin{equation}\label{eq-pre:def-energy-kin}
		E_k := \frac{1}{2}\iint_{\Omega(t)}\left|\nabla_{x,z}\phi\right|^2dxdz = \frac{1}{2}\iint_{D\times\R}\left|\nabla_{g}\varphi\right|^2\sqrt{g}dydz = \frac{1}{2}\int_{\T\times\R} \psi \left(\eta G(\eta)\right) \psi d\theta dz,
	\end{equation}
	where the last equality follows from divergence theorem and $\Delta_g\varphi=0$ (see \eqref{eq-pre:def-DtN-rig}). $E_p$ is potential energy 
	\begin{equation}\label{eq-pre:def-energy-pot}
		E_p := \frac{\sigma}{2} A = \frac{\sigma}{2} \int_{\T\times\R} \left[ \eta\left( \sqrt{1+\left(\frac{\eta_\theta}{\eta}\right)^2+\eta_z^2} -1 \right) - \frac{|\eta-R|^2}{2R} \right] d\theta dz,
	\end{equation}
	where $A$ is the (normalized) area of interface $\Sigma(t)$ and $R$ is the constant from \eqref{hyp-intro:perturb}. In the periodic case, these normalization can be omitted.
	
	By regarding $\mathcal{H}$ as a function on $\eta$ and $p=\eta\psi$, we have the following Hamiltonian formulation.
	\begin{proposition}\label{prop-pre:Hamiltonian-formulation}
		Let $1\le s_0 \le s$ and $s>\frac{3}{2}$. For all $\eta\in H^{s+\frac{1}{2}}_R(\T\times\R)$, $\delta\eta\in H^{s+\frac{1}{2}}(\T\times\R)$ and $p,\delta p\in H^{s_0}(\T\times\R)$, we have
		\begin{align}
			& \left.\frac{d}{d\epsilon}\right|_{\epsilon=0} \mathcal{H}(p+\epsilon\delta p,\eta) = \int \eta_t \delta p d\theta dz, \label{eq-pre:Hamiltonian-var-p} \\
			& \left.\frac{d}{d\epsilon}\right|_{\epsilon=0} \mathcal{H}(p,\eta+\epsilon\delta\eta) = \int (-p_t) \delta\eta d\theta dz, \label{eq-pre:Hamiltonian-var-eta}
		\end{align}
		where $\eta_t$ and $-p_t$ should be understood as the right hand side of \eqref{eq-pre:WW-alt}.
	\end{proposition}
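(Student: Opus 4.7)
The strategy is to split $\mathcal{H} = E_k + E_p$ and treat the two variations separately, exploiting $\psi = p/\eta$, the self-adjointness of $\eta G(\eta)$ recalled after \eqref{eq-intro:def-DtN}, and the fact that the diffeomorphism $\iota$ built in Proposition~\ref{prop-pre:chgt-of-var} makes all integrands on $\D\times\R$ smooth functions of $\eta$ and its derivatives. Since $E_p$ given by \eqref{eq-pre:def-energy-pot} does not depend on $\psi$, only $E_k$ contributes to \eqref{eq-pre:Hamiltonian-var-p}. Writing $\delta\psi = \delta p/\eta$ at fixed $\eta$ and using the self-adjointness of $\eta G(\eta)$,
$$
\frac{d}{d\epsilon}\bigg|_{\epsilon=0}E_k(p+\epsilon\delta p,\eta) = \int \frac{\delta p}{\eta}\cdot\eta G(\eta)\psi\, d\theta dz = \int G(\eta)\psi\cdot\delta p\, d\theta dz,
$$
which equals $\int\eta_t\,\delta p$ by the first line of \eqref{eq-pre:WW-alt}. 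The regularity assumptions ensure all pairings are finite thanks to Corollary~\ref{cor-pre:bound-DtN}.

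For \eqref{eq-pre:Hamiltonian-var-eta} I work at fixed $p$, so $\delta\psi = -\psi\,\delta\eta/\eta$. The variation of $E_p$ is algebraic: differentiating $\delta(\eta l) = l\,\delta\eta + \eta\,\delta l$ from \eqref{eq-intro:def-l} and integrating by parts in $(\theta,z)$ regroups the result into the mean-curvature formula \eqref{eq-intro:mean-curv}, which, combined with the variation of the normalization $(\eta-R)^2/(2R)$, gives
$$
\frac{d}{d\epsilon}\bigg|_{\epsilon=0}E_p(\eta+\epsilon\delta\eta) = \sigma\int\eta\!\left(H-\tfrac{1}{2R}\right)\delta\eta\,d\theta dz,
$$
matching the surface-tension contribution in $-p_t$. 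For $E_k$, I decompose the full $\eta$-variation (at fixed $p$) as the contribution from $\delta\psi$ plus the shape derivative at fixed trace $\psi$; substituting $\delta\psi = -\psi\,\delta\eta/\eta$ and using again the self-adjointness of $\eta G(\eta)$, the first piece contributes exactly $-\int\psi G(\eta)\psi\,\delta\eta\,d\theta dz$, i.e.\ the first term of $-p_t$ in \eqref{eq-pre:WW-alt}.

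The main obstacle is the remaining shape derivative $\delta_\eta^\psi E_k$, which must reproduce the nonlinearity $\int\eta N\,\delta\eta\,d\theta dz$ from \eqref{eq-pre:N-polar}. Here I exploit that, by the Dirichlet principle underlying \eqref{eq-pre:ellip-variation}, $\varphi$ is a critical point of $\tfrac12\iint|\nabla_g\tilde\varphi|^2\sqrt{g}\,dydz$ among extensions of $\psi$: since $\delta\varphi|_{\rho=1}=0$ at fixed $\psi$ and $\Delta_g\varphi=0$, integration by parts on $\D\times\R$ eliminates the $\delta\varphi$ contribution, leaving the pure variation of the quadratic form,
$$
\delta_\eta^\psi E_k = \frac{1}{2}\iint \delta_\eta\!\left(g^{\alpha\beta}\sqrt{g}\right)\partial_\alpha\varphi\,\partial_\beta\varphi\,dydz.
$$
The hard part is to compute $\delta_\eta(g^{\alpha\beta}\sqrt{g})$ from the explicit formulas \eqref{eq-pre:jacobian-mat}--\eqref{eq-pre:jacobian-det} in terms of $\zeta = L\eta$ and $\delta\zeta = L\delta\eta$, and then perform a careful integration by parts that, using once more $\Delta_g\varphi=0$, converts this bulk integral into a boundary integral on $\{\rho=1\}$. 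The boundary quantity that appears must then be re-expressed in terms of $B, V$ via \eqref{eq-pre:B-polar}--\eqref{eq-pre:V-polar} and \eqref{eq-pre:deri-of-psi}, so as to identify it with $\int\eta N\,\delta\eta\,d\theta dz$. This direct but lengthy variational calculation is performed in Appendix~\ref{App:var-metric}; combining the three contributions reproduces exactly the right-hand side of the second line of \eqref{eq-pre:WW-alt}. Proposition~\ref{prop-pre:ellip-reg} guarantees at each step that $\varphi$ has enough regularity for the interior and boundary integrals to make sense under the stated hypotheses on $(\eta,\delta\eta,p,\delta p)$.
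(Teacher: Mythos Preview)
Your proposal is correct and follows essentially the same route as the paper. The only cosmetic difference is organizational: the paper computes $\delta E_k$ at fixed $p$ directly as $I_1+I_2$ (with $I_2$ coming from $\delta\varphi$ and $I_1$ from $\delta(\sqrt{g}g^{\alpha\beta})$), whereas you reach the same two pieces via the chain rule, writing the variation at fixed $p$ as the $\psi$-variation (your $\delta\psi$ contribution, which is the paper's $I_2$) plus the variation at fixed $\psi$ (which reduces to the paper's $I_1$ since the $\delta\varphi$ term then has zero trace and drops out by the variational characterization of $\varphi$). Both arguments then defer the identification $I_1=\int\eta N\,\delta\eta$ to the metric computation in Appendix~\ref{App:var-metric}.
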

	
	Note that the results in previous sections imply that $g,(g_{\alpha\beta}),(g^{\alpha\beta})\in C^{s-\frac{3}{2}}(\D\times\R)$ and $\varphi\in H^{s_0+\frac{1}{2}}(\D\times\R)$, so as their variations in $p$ or $\eta$. These ensure that all the integrands below are integrable.
	
	\subsubsection{Variation in $p$.} 
	We first prove \eqref{eq-pre:Hamiltonian-var-p}. In this paragraph, we fix $\eta$ and use the notation
	\begin{equation*}
		\delta Q := \left.\frac{d}{d\epsilon}\right|_{\epsilon=0} Q(p+\epsilon\delta p)
	\end{equation*}
	for any quantity depending on $p$. Since the potential energy \eqref{eq-pre:def-energy-pot} depends only on $\eta$, the variation can be applied merely for kinetic energy \eqref{eq-pre:def-energy-kin},
	\begin{align*}
		\delta\mathcal{H} = \delta E_k =& \iint \nabla_g^T\varphi \nabla_g \delta\varphi \sqrt{g} dy dz \\
		=& \iint \left( \sqrt{g} J^{-1}\nabla_g\varphi \right) \cdot \nabla_{y,z} \delta\varphi dy dz \\
		=& \int Y \cdot\left( \sqrt{g} J^{-1}\nabla_g\varphi \right) \delta\psi d\theta dz,
	\end{align*}
	where $Y = (y_1,y_2,0)^T$ and the last equality follows from divergence theorem with $\delta \psi$ the restriction of $\delta\varphi$. By \eqref{eq-pre:jacobian-det} and \eqref{eq-pre:DtN-chgt-of-var}, we have
	\begin{equation*}
		Y \cdot\left( \sqrt{g} J^{-1}\nabla_g\varphi \right) \delta\psi  = (J^{-T}Y)\cdot\nabla_g\varphi \eta^2 \delta\psi = G(\eta)\psi \eta \delta\psi = G(\eta)\psi \delta p,
	\end{equation*}
	Note that the last equality follows from $\psi = p/\eta$ and the fact that $\eta$ is fixed. This completes the proof of \eqref{eq-pre:Hamiltonian-var-p}.
	
	\subsubsection{Variation in $\eta$.} 
	Let us fix $p$ and denote
	\begin{equation*}
		\delta Q := \left.\frac{d}{d\epsilon}\right|_{\epsilon=0} Q(\eta+\epsilon\delta\eta)
	\end{equation*}
	for any quantity depending on $\eta$. The variation of potential part $E_p$ can be calculated directly,
	\begin{equation}\label{eq-pre:var-energy-pot}
		\delta E_p = \frac{\sigma}{2} \delta A = \sigma \int \eta \left(H-\frac{1}{2R}\right) \delta\eta d\theta dz,
	\end{equation}
	where $H$ is the mean curvature defined in \eqref{eq-intro:mean-curv}. For the kinetic part, we have
	\begin{lemma}\label{lem-pre:var-of-kin-energy}
		Let $1\le s_0 \le s$ and $s>\frac{3}{2}$. For all $\eta,\delta\eta\in C^{s-\frac{1}{2}}(\T\times\R)$ and $p,\delta p\in H^{s_0}(\T\times\R)$, we have
		\begin{equation}\label{eq-pre:var-kin-energy}
			\delta E_k := \left.\frac{d}{d\epsilon}\right|_{\epsilon=0}E_k(p,\eta+\epsilon\delta\eta) = \int \left(-\psi G(\eta)\psi + \eta N\right) \delta\eta d\theta dz.
		\end{equation}
	\end{lemma}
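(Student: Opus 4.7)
The plan is to split $\delta E_k$ into a piece coming from the variation of $\varphi$ (with fixed metric) and a piece coming from the variation of the metric coefficients (with fixed $\varphi$). Working in the pulled-back picture on $\D\times\R$, I would write
\begin{equation*}
\delta E_k = \frac{1}{2}\iint \delta\bigl(\sqrt{g}\,g^{\alpha\beta}\bigr)\partial_\alpha\varphi\,\partial_\beta\varphi\,dydz + \iint \sqrt{g}\,g^{\alpha\beta}\partial_\alpha\varphi\,\partial_\beta\delta\varphi\,dydz,
\end{equation*}
where $\delta\varphi$ denotes the full variation at a fixed point $(y,z)\in\D\times\R$. The elliptic regularity in Proposition \ref{prop-pre:ellip-reg} together with the boundedness of $L$ in Proposition \ref{prop-pre:chgt-of-var} guarantee that both integrands lie in $L^1$, so the splitting is legitimate.

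For the second integral I would integrate by parts as in the computation \eqref{eq-pre:Hamiltonian-var-p}. The interior term $\iint\delta\varphi\,\partial_\beta(\sqrt{g}\,g^{\alpha\beta}\partial_\alpha\varphi)\,dydz$ vanishes by $\Delta_g\varphi=0$, and the boundary term at $\rho=1$ reads $\int Y\cdot(\sqrt{g}\,J^{-1}\nabla_g\varphi)\delta\psi\,d\theta dz = \int \eta\,G(\eta)\psi\cdot\delta\psi\,d\theta dz$, exactly as in the paragraph proving \eqref{eq-pre:Hamiltonian-var-p}. Since now $p=\eta\psi$ is held fixed, $\delta\psi=-\psi\,\delta\eta/\eta$, so this contribution equals $-\int \psi\,G(\eta)\psi\,\delta\eta\,d\theta dz$, which accounts for the first summand on the right-hand side of \eqref{eq-pre:var-kin-energy}.

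The main obstacle is the metric piece $\tfrac12\iint\delta(\sqrt{g}\,g^{\alpha\beta})\partial_\alpha\varphi\,\partial_\beta\varphi\,dydz$, which must be shown to equal $\int \eta N\,\delta\eta\,d\theta dz$. For this I would use the explicit dependence $g_{\alpha\beta}=\partial_\alpha\iota\cdot\partial_\beta\iota$ with $\iota(y,z)=(y\zeta,z)$, together with the identity
\begin{equation*}
\delta(\sqrt{g}\,g^{\alpha\beta})=\sqrt{g}\Bigl(\tfrac12 g^{\alpha\beta}g^{\gamma\delta}-g^{\alpha\gamma}g^{\beta\delta}\Bigr)\delta g_{\gamma\delta},
\end{equation*}
to express $\delta(\sqrt{g}\,g^{\alpha\beta})$ as a polynomial in $\zeta,\partial\zeta,\delta\zeta,\partial\delta\zeta$. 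Integrating by parts in $(y,z)$ to move all derivatives off $\delta\zeta$, the divergence-form equation $\partial_\beta(\sqrt{g}\,g^{\alpha\beta}\partial_\alpha\varphi)=0$ annihilates every interior term, leaving only a boundary integral at $\rho=1$. Using $\zeta|_{\rho=1}=\eta$, $\partial_\rho\zeta|_{\rho=1}=0$, $\delta\zeta|_{\rho=1}=\delta\eta$, and then the polar-coordinate formulas \eqref{eq-pre:nabla-g-polar}, \eqref{eq-pre:B-polar}–\eqref{eq-pre:N-polar} to recognise $B$, $V$, and $N$ in the resulting expression, the boundary integral collapses to $\int \eta N\,\delta\eta\,d\theta dz$. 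The tensor algebra here is lengthy but mechanical, and this is exactly the ``direct but complicated'' variational calculus which the author announces is carried out in Appendix \ref{App:var-metric}.
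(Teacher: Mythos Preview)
Your proposal is correct and follows essentially the same approach as the paper: the splitting $\delta E_k=I_1+I_2$, the treatment of $I_2$ via the divergence theorem and $\delta\psi=-\psi\,\delta\eta/\eta$, and the reduction of $I_1$ to a boundary integral identified as $\int\eta N\,\delta\eta\,d\theta dz$ all match exactly. The only cosmetic difference is that the paper packages the $I_1$ computation as a pointwise identity (Lemma~\ref{lem-tech:variation-of-metric}), writing $\tfrac12\delta(\sqrt{g}\,g^{\alpha\beta})\partial_\alpha\varphi\,\partial_\beta\varphi$ directly as the sum of two explicit divergences plus $\sqrt{g}\,\Delta_g\varphi\,(Y\cdot\nabla_g\varphi)\,\delta\zeta$, rather than describing the integration-by-parts procedure that produces it; but this is the same calculation you outline.
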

	Note that, by Sobolev embedding, \eqref{eq-pre:var-kin-energy} and \eqref{eq-pre:var-energy-pot} implies \eqref{eq-pre:Hamiltonian-var-eta}. Here we may temporarily ignore the hypothesis \eqref{hyp-intro:perturb} since all the integrals below make sense whenever $\eta,\delta\eta$ (and $\zeta,\delta\zeta$) have Lipschitz regularity, which is ensured by the condition $\eta,\delta\eta\in C^{s-\frac{1}{2}}(\T\times\R)$, $s>\frac{3}{2}$.
	\begin{proof}
		By definition \eqref{eq-pre:def-energy-kin} of kinetic energy $E_k$, we have
		\begin{align}
			\delta E_k =& I_1 + I_2, \\
			I_1 =& \frac{1}{2} \iint \partial_\alpha\varphi \delta(\sqrt{g}g^{\alpha\beta}) \partial_\beta\varphi dydz, \label{eq-pre:def-I-1} \\
			I_2 =& \iint \nabla_g\varphi\cdot\nabla_g\delta\varphi \sqrt{g} dydz. \label{eq-pre:def-I-2}
		\end{align}
		As before, by divergence theorem, $I_2$ equals
		\begin{equation}\label{eq-pre:var-I-2}
			I_2 = \int \left(G(\eta)\psi\right) \eta \delta\psi d\theta dz = - \int \psi \left(G(\eta)\psi\right) \delta\eta d\theta dz,
		\end{equation}
		where the last equality follows from $\delta \psi = \delta (p/\eta) = -\delta\eta p/(\eta^2) = -\psi\delta\eta/\eta$.
		
		\begin{remark}
			Our final goal is to establish the Hamiltonian formulation w.r.t. variable $(\eta,p)$. Therefore, during the calculation of variation in $\eta$ above, the kinetic energy $E_k$, which is a function of $(\eta,\psi)$ due to definition \eqref{eq-pre:def-energy-kin}, should be regarded as a function of $(\eta,p)$, namely
			\begin{equation*}
				E_k = E_k\left(\eta,\frac{p}{\eta}\right) = \frac{1}{2} \int p G(\eta)\left(\frac{p}{\eta}\right) dx.
			\end{equation*}
			And the variation in $\eta$ should be understood for fixed $p$,
			\begin{equation*}
				\delta E_k = \left.\frac{d}{d\epsilon}\right|_{\epsilon=0} \frac{1}{2} \int p G(\eta+\epsilon\delta\eta)\left(\frac{p}{\eta+\epsilon\delta\eta}\right) dx
			\end{equation*}
			
			However, in most parts of this paper, we are interested in variable $(\eta,\psi)$ instead of $(p,\psi)$. Then, the derivative in $\eta$ should be defined with fixed $\psi$ instead of fixed $p$ (in such case, $p=\eta\psi$ may vary in $\eta$). The main difference is that, when $\psi$ is fixed, $I_2$ is automatically zero due to the first equality of \eqref{eq-pre:var-I-2}. This is the case of shape derivative to be studied in Section \ref{subsect:shape-deri}.
		\end{remark}
		
		As for $I_1$, a direct calculus gives that
		\begin{equation}\label{eq-pre:var-in-metric}
			\begin{aligned}
				\frac{1}{2}\delta(\sqrt{g}g^{\alpha\beta})\partial_\alpha\varphi\partial_\beta\varphi =& \frac{1}{2} \nabla_{y,z}^T \left( J^{-1}Y \left|\nabla_g\varphi\right|^2 \sqrt{g}\delta\zeta \right) - \nabla_{y,z}^T\left( \sqrt{g}J^{-1}\nabla_g\varphi\ Y^T\nabla_g\varphi \delta\zeta \right) \\
				& + \sqrt{g}\Delta_g\varphi Y\cdot\nabla_g\varphi \delta\zeta,
			\end{aligned} 
		\end{equation}
		the proof of this is left to Lemma \ref{lem-tech:variation-of-metric}. The technical identity above implies, via divergence theorem and $\Delta_g\varphi=0$, that
		\begin{equation*}
			I_1 = \frac{1}{2}\int Y^{T}J^{-1}Y \left|\nabla_g\varphi\right|^2 \eta^2\delta\eta d\theta dz - \int Y^TJ^{-1}\nabla_g\varphi\ Y^T\nabla_g\varphi \eta^2\delta\eta  d\theta dz.
		\end{equation*}
		Note that, at boundary $\rho=1$, we have $\sqrt{g}=\eta^2$, as it has been noticed after \eqref{eq-pre:jacobian-det}. By formula \eqref{eq-pre:jacobian-mat-inverse} of $J^{-1}$, definition \eqref{eq-pre:jacobian-mat} of $J_0$, and $Y=(y,0)^T$, we have,
		\begin{equation*}
			Y^{T}J^{-1}Y = \frac{\zeta}{\sqrt{g}}|y|^2.
		\end{equation*}
		The expression \eqref{eq-pre:nabla-g-polar} yields that, as $\rho=1$,
		\begin{equation*}
			\left|\nabla_g\varphi\right|^2 = B^2+|V|^2,\ Y^T\nabla_g\varphi = B.
		\end{equation*}
		In addition, we have seen in \eqref{eq-pre:DtN-chgt-of-var} that the trace of $Y^TJ^{-1}\nabla_g\varphi$ is $G(\eta)\psi/\eta$. Consequently,
		\begin{equation}\label{eq-pre:var-I-1}
			I_1 = \frac{1}{2}\int \eta (B^2+|V|^2) \delta\eta d\theta dz - \int \eta G(\eta)\psi B\delta\eta  d\theta dz = \int \eta N \delta\eta d\theta dz,
		\end{equation}
		thanks to \eqref{eq-pre:B-polar}, \eqref{eq-pre:V-polar}, \eqref{eq-pre:N-polar}, and \eqref{eq-pre:DtN-chgt-of-var}.
	\end{proof}
	
	In conclusion, the variational identity \eqref{eq-pre:Hamiltonian-var-eta} follows from \eqref{eq-pre:var-energy-pot} and \eqref{eq-pre:var-kin-energy}. Formally, we may write \eqref{eq-pre:Hamiltonian-var-p} and $\eqref{eq-pre:Hamiltonian-var-eta}$ in classical form
	\begin{equation*}
		\left\{\begin{array}{l}
			\eta_t= \frac{\delta\mathcal{H}}{\delta p}, \\[1ex]
			p_t = -\frac{\delta\mathcal{H}}{\delta\eta}.
		\end{array}\right.
	\end{equation*}
	As a consequence of this Hamiltonian formulation, the total energy $\mathcal{H}$ is preserved in time. Note that for Cauchy problem, there is no difference between $(\eta,\psi)$ and $(\eta,p)$, since \eqref{hyp-intro:bounds} together with $\eta \in H^{s+\frac{1}{2}}_R(\T\times\R)$, $s>3/2$ guarantees that $\psi \in H^{s}(\T\times\R)$ if and only if $p\in H^s(\T\times\R)$. In the rest of this paper, we shall still work on $(\eta,\psi)$ for the simplicity of notation.

	\subsection{Shape derivative}\label{subsect:shape-deri}
	
	We have proved in Corollary \ref{cor-pre:bound-DtN} that $G(\eta)$ is a $1$-order operator for any fixed $\eta\in H^{s+\frac{1}{2}}_R(\T\times\R)$ with $s>\frac{3}{2}$. In this section, we shall show that, for fixed $\psi$, $G(\eta)\psi$ is derivable w.r.t. $\eta\in H^{s+\frac{1}{2}}_R(\T\times\R)$ and this derivative can be calculated explicitly, which is known as \textit{shape derivative}.
	
	In the case of planar water-wave ($\Sigma$ as a perturbation of $\R^d$), we refer to \cite{lannes2013water}, Section 3.3 for a detailed study of shape derivative. The axis-symmetric version (system independent of $\theta$) of shape derivative \eqref{eq-pre:shape-deri} is also proved in \cite{huang2023wellposedness}. In these references, the authors calculate directly the derivative in $\eta$ of \eqref{eq-pre:DtN-chgt-of-var}. The main difficulty of this method is to represent $G(\eta)(B\delta\eta)$ in terms of $\eta$, $\varphi$, and $\delta\varphi$. To do so, one needs to apply variational calculus for the elliptic equation \eqref{eq-pre:ellip} and then determine the harmonic extension of $B\delta\eta$, which cannot be obtained simply due to the (relatively complicated) change of coordinate \eqref{eq-pre:chgt-of-var}. Nevertheless, one will see in the proof of Proposition \ref{prop-pre:shape-deri} that the variational calculus \eqref{eq-pre:var-kin-energy} of kinetic energy is equivalent to \eqref{eq-pre:shape-deri}, where the former one has already been proved in previous section without using shape derivative.

	\begin{proposition}\label{prop-pre:shape-deri}
		Let $s>\frac{3}{2}$ and $1 \le s_0 \le s$. Then for all fixed $\psi\in H^{s_0}$, the map
		\begin{equation*}
			\begin{array}{ccc}
				H^{s+\frac{1}{2}}_R(\T\times\R) & \rightarrow & H^{s_0-1}(\T\times\R) \\[0.5ex]
				\eta & \mapsto & G(\eta)\psi
			\end{array}
		\end{equation*}
		is $C^1$ in the sense that, for all $\delta\eta \in H^{s+\frac{1}{2}}(\T\times\R)$ or $\delta\eta \in C^{s+\frac{1}{2}}(\T\times\R)$,
		\begin{equation}\label{eq-pre:shape-deri}
			\left.\frac{d}{d\epsilon}\right|_{\epsilon=0} G(\eta+\epsilon\delta\eta)\psi = -G(\eta)(B\delta\eta) - \nabla_{\theta,z}\cdot\left( \left( \frac{V^\theta}{\eta}e_\theta + V^ze_z \right) \delta\eta \right) - \frac{B\delta\eta}{\eta},
		\end{equation}
		where $V^\theta,V^z$ are defined by orthogonal decomposition $V = (V^\theta,V^z) =V^\theta e_\theta+V^ze_z$. Recall that the second term on the right hand side should be understood as
		\begin{equation*}
			\nabla_{\theta,z}\cdot\left( \left( \frac{V^\theta}{\eta}e_\theta + V^ze_z \right) \delta\eta \right) = \partial_\theta\left( \frac{V^\theta}{\eta}\delta\eta \right) + \partial_z\left( V^z \delta\eta \right).
		\end{equation*}
	\end{proposition}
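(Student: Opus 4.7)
\emph{Approach.} Following the hint preceding the statement, the plan is to deduce \eqref{eq-pre:shape-deri} from the polarized form of the metric-variation identity \eqref{eq-pre:var-in-metric}, rather than by differentiating the pointwise formula \eqref{eq-pre:DtN-chgt-of-var} directly (for which the harmonic extension of $B\delta\eta$ is not explicit in the present cylindrical geometry). The natural object is the symmetric bilinear form
\[
(\psi,h)\longmapsto \iint \nabla_g \varphi_\psi \cdot \nabla_g \varphi_h \sqrt{g}\, dy dz,
\]
where $\varphi_\psi, \varphi_h$ denote the solutions of \eqref{eq-pre:ellip} with Dirichlet data $\psi, h$ respectively; by the self-adjointness of $\eta G(\eta)$ this form coincides with $\int \eta G(\eta)\psi \cdot h\, d\theta dz$. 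I will differentiate this identity in $\eta$ with $\psi, h$ held fixed and read off the shape derivative from the boundary integral that remains.

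\emph{Main steps.} Fix $\psi, h$ and $\delta\eta$. Because $\psi, h$ are fixed, $\delta\varphi_\psi|_{\rho=1}=\delta\varphi_h|_{\rho=1}=0$; the two cross terms involving $\nabla_g\delta\varphi_\psi$ or $\nabla_g\delta\varphi_h$ vanish after one integration by parts against $\Delta_g\varphi_\psi=\Delta_g\varphi_h=0$, leaving
\[
\delta \int \eta G(\eta)\psi\cdot h\,d\theta dz = \iint \delta(\sqrt{g}\, g^{\alpha\beta})\partial_\alpha\varphi_\psi\partial_\beta\varphi_h\,dy dz.
\]
Now I polarize \eqref{eq-pre:var-in-metric}: each quadratic term in $\varphi$ becomes bilinear in $(\varphi_\psi,\varphi_h)$ and the Laplacian remainder drops by harmonicity. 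The Euclidean divergence theorem on $\D\times\R$ (only $\rho=1$ contributes, with outer normal $e_r$ and area element $d\theta dz$), together with the boundary identifications $\sqrt{g}|_{\rho=1}=\eta^2$, $\delta\zeta|_{\rho=1}=\delta\eta$, $J^{-1}Y\cdot e_r|_{\rho=1}=1/\eta$, $Y^T\nabla_g\varphi|_{\rho=1}=B$, $(J^{-1}\nabla_g\varphi)\cdot e_r|_{\rho=1}=G(\eta)\psi/\eta$ read off from \eqref{eq-pre:DtN-chgt-of-var}, and the polarized identity $\nabla_g\varphi_\psi\cdot\nabla_g\varphi_h|_{\rho=1}=BB_h+V\cdot V_h$, reduces the right-hand side to
\[
\int\bigl[\eta(BB_h+V\cdot V_h) - \eta G(\eta)\psi\,B_h - \eta G(\eta)h\,B\bigr]\delta\eta\,d\theta dz.
\]
Combining the first bulk term with $-\eta G(\eta)\psi\,B_h\delta\eta$, using $\eta G(\eta)\psi = \eta B - V\cdot(\eta_\theta e_\theta+\eta\eta_z e_z)$ and the boundary identity $\eta V_h+B_h(\eta_\theta e_\theta+\eta\eta_z e_z) = h_\theta e_\theta + \eta h_z e_z$ supplied by \eqref{eq-pre:deri-of-psi}, collapses this bracket to $\delta\eta(V^\theta h_\theta + \eta V^z h_z)$. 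An integration by parts in $(\theta,z)$ and the self-adjointness of $\eta G(\eta)$ (which turns $\eta G(\eta)h\,B\delta\eta$ into $h\cdot\eta G(\eta)(B\delta\eta)$) then yield, by the arbitrariness of $h$,
\[
\delta^\psi(\eta G(\eta)\psi) = -\partial_\theta(V^\theta\delta\eta) - \partial_z(\eta V^z\delta\eta) - \eta G(\eta)(B\delta\eta).
\]
Expanding the left-hand side as $G(\eta)\psi\,\delta\eta+\eta\,\delta^\psi G(\eta)\psi$, dividing by $\eta$, and substituting $G(\eta)\psi = B - V\cdot(\eta_\theta/\eta,\eta_z)$ produces, after the expected cancellations, exactly \eqref{eq-pre:shape-deri}. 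The $C^1$ regularity of $\eta\mapsto G(\eta)\psi$ then follows from continuity of each ingredient in $\eta$ provided by Corollary \ref{cor-pre:bound-DtN}, Proposition \ref{prop-pre:ellip-reg}, and routine Sobolev product estimates.

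\emph{Main obstacle.} The delicate step is the algebraic collapse above: the three boundary integrals mixing $(B,V)$ with $(B_h,V_h)$ must combine, after \eqref{eq-pre:deri-of-psi} is applied to $h$, into the clean expression $\delta\eta(V^\theta h_\theta+\eta V^z h_z) - \eta G(\eta)h\,B\delta\eta$, since only this form lets the last integral be converted into $-\int h\cdot\eta G(\eta)(B\delta\eta)\,d\theta dz$ by the self-adjointness of $\eta G(\eta)$ — thereby producing the principal nonlocal term $-G(\eta)(B\delta\eta)$ in \eqref{eq-pre:shape-deri}. Working throughout with $\eta G(\eta)$ rather than $G(\eta)$ itself is what keeps the bookkeeping tractable: it is $\eta G(\eta)$ that is directly associated with the symmetric Dirichlet form, and only at the very last step does one divide by $\eta$ to recover the stated formula.
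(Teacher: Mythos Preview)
Your approach is essentially the paper's: both differentiate the Dirichlet form $\int \eta G(\eta)\psi\cdot h\,d\theta dz$ in $\eta$ via the metric-variation identity \eqref{eq-pre:var-in-metric} and the divergence theorem, then read off the shape derivative from the resulting boundary integral. The paper streamlines your polarized algebra by observing that both sides of the integrated identity are symmetric bilinear forms in $(\psi,h)$ and hence reducing to the diagonal $\psi=h$, which is exactly the computation of $I_1$ already carried out in Lemma~\ref{lem-pre:var-of-kin-energy}; it is also explicit about the density step extending from $\psi\in C_c^\infty$ to $\psi\in H^{s_0}$, proving the bound \eqref{eq-pre:shape-deri-esti} by differentiating the elliptic equation \eqref{eq-pre:ellip} in $\eta$ and invoking elliptic regularity---a step you should not dismiss as merely ``routine'', since it is precisely where the hypotheses on $s,s_0$ and on $\delta\eta$ enter.
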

	\begin{proof}
		As in previous section, $\delta$ denotes the derivative in $\eta$. To begin with, we write \eqref{eq-pre:shape-deri} into integrated form, namely, for all $f\in C^\infty_c(\T\times\R)$,
		\begin{align*}
			&\int \eta \delta\left(G(\eta)\psi\right) f d\theta dz \\
			&= -\int \eta G(\eta)(B\delta\eta) f d\theta dz - \int \eta \nabla_{\theta,z}\cdot\left( \left( \frac{V^\theta}{\eta}e_\theta + V^ze_z \right) \delta\eta \right) f d\theta dz - \int B\delta\eta f d\theta dz \\
			&= -\int B \eta G(\eta)f \delta\eta d\theta dz -  \int B f\delta\eta d\theta dz + \int \left( V^\theta e_\theta + V^ze_z \right)\cdot \left(e_\theta\frac{\partial_\theta}{\eta} + e_z \partial_z\right)(\eta f) \delta\eta d\theta dz \\
			&= -\int B G(\eta)f  \eta\delta\eta d\theta dz -  \int G(\eta)\psi f\delta\eta d\theta dz + \int \left( V^\theta \frac{f_\theta}{\eta} + V^z f_z \right) \eta \delta\eta d\theta dz.
		\end{align*}
		Let $F$ be the harmonic extension of $f$, namely $\Delta_{g}F = 0$ and $F|_{\rho=1}=f$. Since $f$ is regular, $F$ is well-defined as well as $B(f)$ and $V(f)$. By applying \eqref{eq-pre:DtN-chgt-of-var} and \eqref{eq-pre:deri-of-psi}, we may write the equality above as
		\begin{align*}
			 &\int\eta \delta\left(G(\eta)\psi\right) f d\theta dz + \int G(\eta)\psi f\delta\eta d\theta dz \\
			 &= \int \left( B(\psi)V(f)\cdot\left(\frac{\eta_\theta}{\eta},\eta_z\right) + B(f)V(\psi)\left(\frac{\eta_\theta}{\eta}, \eta_z\right) - B(\psi)B(f) + V(\psi)\cdot V(f) \right) \eta\delta\eta d\theta dz,
		\end{align*}
		from which, one can see that the desired result \eqref{eq-pre:shape-deri} is equivalent to: for all $f\in C^\infty_c(\T\times\R)$,
		\begin{equation}\label{eq-pre:shape-deri-int}
			\begin{aligned}
				&\delta\left( \int \eta G(\eta)\psi f d\theta dz \right) \\
				&= \int \left( B(\psi)V(f)\cdot\left(\frac{\eta_\theta}{\eta},\eta_z\right) + B(f)V(\psi)\left(\frac{\eta_\theta}{\eta}, \eta_z\right) - B(\psi)B(f) + V(\psi)\cdot V(f) \right) \eta\delta\eta d\theta dz.
			\end{aligned}
		\end{equation}
		When $\psi$ belongs to the same class $C_c^\infty(\T\times\R)$ as $f$, due to the fact that both sides are symmetric quadratic form of $(\psi,f)$, it suffices to check the case $\psi=f$. According to \eqref{eq-pre:def-energy-kin}, the left hand side becomes
		\begin{equation*}
			\delta\left( \int \eta G(\eta)\psi \psi d\theta dz \right) = 2 \delta E_k.
		\end{equation*}
		Note that, unlike in the proof of Proposition \ref{prop-pre:Hamiltonian-formulation}, $\psi$ is required to be fixed, instead of $p$. Thus the terms involving $\delta\psi$ are automatically zero. By applying the same calculation as in Lemma \ref{lem-pre:var-of-kin-energy}, we can conclude that
		\begin{equation*}
			2 \delta E_k = 2 (I_1+I_2) = 2 \int \eta N\delta\eta d\theta dz,
		\end{equation*}
		where $I_1,I_2$ are defined in \eqref{eq-pre:def-I-1} and \eqref{eq-pre:def-I-2} with $I_2=0$ as explained above. From \eqref{eq-pre:N-polar}, when $\psi=f$, the right hand side of \eqref{eq-pre:shape-deri-int} equals
		\begin{equation*}
			2 \int \left(BV\cdot\left(\frac{\eta_\theta}{\eta},\eta_z\right) + \frac{|V|^2-B^2}{2}\right) \eta\delta\eta d\theta dz = 2 \int \eta N\delta\eta d\theta dz.
		\end{equation*}
		Till now, we have proved \eqref{eq-pre:shape-deri} for $\psi\in C_c^\infty(\T\times\R)$. Since $C_c^\infty(\T\times\R)$ is dense in $H^{s_0}(\T\times\R)$, it remains to check that the left hand side of \eqref{eq-pre:shape-deri} satisfies
		\begin{equation}\label{eq-pre:shape-deri-esti}
			\left\| \left.\frac{d}{d\epsilon}\right|_{\epsilon=0} G(\eta+\epsilon\delta\eta)\psi \right\|_{H^{s_0-1}} \le C\left(\|\eta\|_{H_R^{s+\frac{1}{2}}},\|\delta\eta\|_{C^{s-\frac{1}{2}}}\right) \|\psi\|_{H^{s_0-1}}.
		\end{equation}
		To prove this, we apply the variation in $\eta$ to the equation \eqref{eq-pre:ellip},
		\begin{equation*}
			\left\{\begin{array}{l}
				\Delta_g\delta\varphi = -\frac{1}{\sqrt{g}}\partial_\alpha\left(\delta\left(\sqrt{g}g^{\alpha\beta}\right)\partial_\beta\varphi\right), \\[0.5ex]
				\delta\varphi|_{\rho=1} = 0.
			\end{array}\right.
		\end{equation*}
		Recall that $\delta$ should be understood as the derivative in $\epsilon$ at $\epsilon=0$. It follows immediately from \eqref{eq-pre:jacobian-mat}-\eqref{eq-pre:jacobian-det} that $\delta\left(\sqrt{g}g^{\alpha\beta}\right)$ is a smooth function of $\zeta$ and $\nabla_{y,z}\zeta$, depending linearly in $\delta\eta$ and $\nabla_{y,z}\delta\eta$. Consequently, $\varphi\in H^{s_0+\frac{1}{2}}(\D\times\R)$ implies that
		\begin{equation*}
			\left\| \frac{1}{\sqrt{g}}\partial_\alpha\left(\delta\left(\sqrt{g}g^{\alpha\beta}\right)\partial_\beta\varphi\right) \right\|_{H^{s_0-\frac{3}{2}}(\D\times\R)} \le C\left(\|\eta\|_{H^{s+\frac{1}{2}}_R(\T\times\R)},\|\delta\eta\|_{C^{s-\frac{1}{2}}(\T\times\R)}\right) \|\varphi\|_{H^{s_0+\frac{1}{2}}(\D\times\R)}.
		\end{equation*}
		In fact, $\delta\left(\sqrt{g}g^{\alpha\beta}\right)$ can be written in terms of $F_1(\zeta,\nabla_{\rho,\theta,z}\zeta)\delta\zeta$ and $F_2(\zeta,\nabla_{\rho,\theta,z}\zeta)\nabla_{\rho,\theta,z}\delta\zeta$, where $F_1$ and $F_2$ are smooth functions with $F_1(0)=F_2(0)=0$. Since $\zeta\in H^{s+1}_{R_\epsilon}(\D\times\R)$ with $R_\epsilon$ defined by \eqref{eq-pre:def-R-eps}, due to Proposition \ref{prop-pre:chgt-of-var} and the assumption $\eta\in H^{s+\frac{1}{2}}_R(\T\times\R)$, the coefficients before $\delta\zeta$ and $\nabla_{\rho,\theta,z}\delta\zeta$ belong to $H^s(\D\times\R)$, up to some $C^\infty_b$ normalizations (see Proposition \ref{prop-para:paralin}). In the mean time, when $\delta\eta\in H^{s+\frac{1}{2}}_R(\T\times\R)$, $\delta\zeta\in H^{s+1}_{W}(\D\times\R)$ and $\nabla_{\rho,\theta,z}\delta\zeta\in H^{s}_{W'}(\D\times\R)$ for some $W,W'\in C^\infty_b(\D\times\R)$ due to Proposition \ref{prop-pre:chgt-of-var}, which also guarantees that, as $\delta\eta \in C^{s+\frac{1}{2}}(\T\times\R)$, $\delta\zeta\in C^{s+\frac{1}{2}}(\D\times\R)$ and $\nabla_{\rho,\theta,z}\delta\zeta\in C^{s-\frac{1}{2}}(\D\times\R)$. Therefore, the desired inequality follows from the estimate of products (Proposition \ref{prop-para:paraprod-bound}). As a consequence, an application of elliptic regularity \eqref{eq-ellip:ellip-alt-reg} gives that
		\begin{equation}\label{eq-pre:shape-deri-esti-in-del-varphi}
			\left\| \delta\varphi \right\|_{H^{s_0+\frac{1}{2}}(\D\times\R)} \le C\left(\|\eta\|_{H^{s+\frac{1}{2}}_R(\T\times\R)},\|\delta\eta\|_{C^{s-\frac{1}{2}}(\T\times\R)}\right) \|\psi\|_{H^{s_0}(\T\times\R)} .
		\end{equation}
		
		By expression \eqref{eq-pre:DtN-chgt-of-var}, the variation of
		\begin{equation*}
			G(\eta)\psi = \zeta Y^\alpha g^{\alpha\beta} \partial_{\beta}\varphi|_{\rho=1}
		\end{equation*} 
		is composed by two parts, the variation in coefficients (depending only on $\zeta$) and in $\varphi$, namely,
		\begin{align*}
			\delta \left(G(\eta)\psi\right) =& \left.\delta \left( \zeta Y^\alpha g^{\alpha\beta} \partial_{\beta}\varphi \right) \right|_{\rho=1} \\
			=& \left.\left( \delta(\zeta Y^\alpha g^{\alpha\beta}) \partial_{\beta}\varphi \right)\right|_{\rho=1} + \left.\left( \zeta Y^\alpha g^{\alpha\beta} \partial_{\beta}\delta\varphi \right)\right|_{\rho=1}
		\end{align*}
		We can deduce from \eqref{eq-pre:shape-deri-esti-in-del-varphi} that $\partial_\beta\delta\varphi|_{\rho=1}\in H^{s_0-1}(\T\times\R)$, while, up to $C^\infty_b(\D\times\R)$ normalizations, $\zeta Y^\alpha g^{\alpha\beta}$ has $H^{s}(\D\times\R)$ regularity, the trace at $\rho=1$ of which belongs to $H^{s-\frac{1}{2}}(\T\times\R)$. By applying Corollary \ref{cor-para:product-law}, we are able to conclude that the second part (variation in $\varphi$) lies in $H^{s_0-1}(\T\times\R)$. As for the first part, we observe that $\zeta Y^\alpha g^{\alpha\beta}$ can be written as a smooth function of $\zeta$, $\nabla_{y,z}\zeta$, and $Y$, i.e.
		\begin{equation*}
			\zeta Y^\alpha g^{\alpha\beta} = G(\zeta,\nabla_{y,z}\zeta,Y),
		\end{equation*}
		for some smooth function $G$. The the variation in $\eta$ reads
		\begin{equation*}
			\delta \left( \zeta Y^\alpha g^{\alpha\beta} \right) = \partial_1 G(\zeta,\nabla_{y,z}\zeta,Y) \delta\zeta + \partial_2 G(\zeta,\nabla_{y,z}\zeta,Y) \nabla_{y,z}\delta\zeta,
		\end{equation*}
		whose trace at $\rho=0$ belongs to $H^{s-\frac{1}{2}}(\T\times\R)$, thanks to the same argument as above. Then, by Corollary \ref{cor-para:product-law}, its product with $\partial_\beta\varphi|_{\rho=1}\in H^{s_0-1}(\T\times\R)$ remains in $H^{s_0-1}(\T\times\R)$, which completes the proof.
	\end{proof}
	
	In the formula \eqref{eq-pre:shape-deri}, the left hand side belongs to $H^{s_0-1}(\T\times\R)$ while the first two terms on the right hand side are only in $H^{s_0-2}(\T\times\R)$. This indicates that there exists an implicit cancellation between these terms. In fact, by choosing $\delta\eta=1 \in C^{s+\frac{1}{2}}(\T\times\R)$, we can deduce from $B/\eta\in H^{s_0-1}(\T\times\R)$, which is a consequence of Corollary \ref{cor-para:product-law} and \ref{prop-para:paralin}, that
	\begin{proposition}\label{prop-pre:cancellation}
		Let $s>\frac{3}{2}$ and $1 \le s_0 < s$. Then for all $\eta\in H^{s+\frac{1}{2}}_R(\T\times\R)$ and $\psi\in H^{s_0}(\T\times\R)$, we have
		\begin{equation}\label{eq-pre:cancellation}
			\left\| G(\eta)B + \left( \frac{e_\theta\partial_\theta }{\eta}+ e_z\partial_z \right)\cdot V \right\|_{H^{s_0-1}(\T\times\R)} \le C\left(\|\eta\|_{H^{s+\frac{1}{2}}_R(\T\times\R)}\right) \|\psi\|_{H^{s_0}(\T\times\R)},
		\end{equation}
		where $C>0$ is an increasing smooth function. 
	\end{proposition}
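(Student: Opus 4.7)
The plan is to apply the shape derivative formula \eqref{eq-pre:shape-deri} with the constant choice $\delta\eta = 1$, which lies in $C^{s+\frac{1}{2}}(\T\times\R)$ and is therefore admissible in Proposition \ref{prop-pre:shape-deri}. With this choice, $\nabla_{\theta,z}$ acts only on $V^\theta/\eta$ and $V^z$, and the formula collapses to
\begin{equation*}
\left.\frac{d}{d\epsilon}\right|_{\epsilon=0} G(\eta+\epsilon)\psi = -G(\eta)B - \partial_\theta\!\left(\frac{V^\theta}{\eta}\right) - \partial_z V^z - \frac{B}{\eta}.
\end{equation*}
By the estimate \eqref{eq-pre:shape-deri-esti} applied with $\delta\eta = 1$ (whose $C^{s-\frac{1}{2}}$-norm is bounded), the left-hand side belongs to $H^{s_0-1}(\T\times\R)$ with norm controlled by $C(\|\eta\|_{H^{s+\frac{1}{2}}_R})\|\psi\|_{H^{s_0}}$.

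Next I expand $\partial_\theta(V^\theta/\eta) = \partial_\theta V^\theta/\eta - \eta_\theta V^\theta/\eta^2$ to rewrite the identity as
\begin{equation*}
G(\eta)B + \left( \frac{e_\theta\partial_\theta}{\eta} + e_z\partial_z \right)\cdot V = -\left.\frac{d}{d\epsilon}\right|_{\epsilon=0} G(\eta+\epsilon)\psi - \frac{B}{\eta} - \frac{\eta_\theta V^\theta}{\eta^2}.
\end{equation*}
It then suffices to bound the two correction terms $B/\eta$ and $\eta_\theta V^\theta/\eta^2$ in $H^{s_0-1}$ with the required dependence on $\eta$ and $\psi$. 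The elliptic regularity of Proposition \ref{prop-pre:ellip-reg} places $\varphi$ in $H^{s_0+\frac{1}{2}}(\D\times\R)$, so by the trace theorem $B$ and $V$ both lie in $H^{s_0-1}(\T\times\R)$ with norms $\lesssim C(\|\eta\|_{H^{s+\frac{1}{2}}_R}) \|\psi\|_{H^{s_0}}$. Smooth functions of $\eta$ such as $1/\eta$ and $\eta_\theta/\eta^2$ inherit, up to constant normalizations, the regularity of $\eta$ and its derivatives via Proposition \ref{prop-para:paralin}, so they sit (at least) in $H^{s-\frac{1}{2}}$; since $s > \frac{3}{2}$ this space embeds in $L^\infty$ on $\T\times\R$, and Corollary \ref{cor-para:product-law} handles the products, placing both remainders in $H^{s_0-1}$ with the desired bound.

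The only subtlety to keep in mind is that the two terms $G(\eta)B$ and $\partial_\theta V^\theta/\eta$ individually have merely $H^{s_0-2}$ regularity, and the gain of one derivative in their sum is genuine cancellation, not a pointwise bound. This cancellation is exactly what the shape derivative formula with a constant test function encodes, so the whole argument reduces to reading off the right terms from \eqref{eq-pre:shape-deri} and absorbing genuinely lower-order corrections via standard paraproduct estimates; no further hard analysis is required.
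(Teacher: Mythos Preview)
Your proof is correct and follows essentially the same route as the paper: apply the shape derivative formula \eqref{eq-pre:shape-deri} with $\delta\eta=1$, use the $H^{s_0-1}$ bound on the left-hand side from Proposition~\ref{prop-pre:shape-deri}, and absorb the two lower-order corrections $B/\eta$ and $(\partial_\theta\eta^{-1})V^\theta$ via Proposition~\ref{prop-para:paralin} and Corollary~\ref{cor-para:product-law}. The only slip is a sign: expanding $\partial_\theta(V^\theta/\eta)$ gives $+\eta_\theta V^\theta/\eta^2$ on the right-hand side of your displayed identity, not $-\eta_\theta V^\theta/\eta^2$, but this is immaterial for the estimate.
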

	\begin{proof}
		It remains to check that
		\begin{equation*}
			\left\| \left( \frac{e_\theta\partial_\theta }{\eta}+ e_z\partial_z \right)\cdot V - \nabla_{\theta,z}\cdot\left( \frac{V^\theta}{\eta}e_\theta + V^z e_z \right) \right\|_{H^{s_0-1}(\T\times\R)} \le C\left(\|\eta\|_{H^{s+\frac{1}{2}}_R(\T\times\R)}\right) \|\psi\|_{H^{s_0}(\T\times\R)},
		\end{equation*}
		which is equivalent to
		\begin{equation*}
			\left\| \partial_\theta\eta^{-1} V^\theta \right\|_{H^{s_0-1}(\T\times\R)} \le C\left(\|\eta\|_{H^{s+\frac{1}{2}}_R(\T\times\R)}\right) \|\psi\|_{H^{s_0}(\T\times\R)}.
		\end{equation*}
		We have seen that $\eta\in H^{s+\frac{1}{2}}_R(\T\times\R)$, which implies $\partial_\theta\eta^{-1}\in H^{s-\frac{1}{2}}_{R^{-1}}(\T\times\R)$, thanks to Proposition \ref{prop-para:paralin}. Since $V^\theta\in H^{s_0-1}$, the desired result follows from Corollary \ref{cor-para:product-law}.
	\end{proof}

	\section{Paralinearization of the system}\label{Sect:paralin}
	
	This section is devoted to an explicit formulation of Dirichlet-to-Neumann operator $G(\eta)$ as paradifferential operator, up to some remainder of lower order. And, as a result, we shall write the system \eqref{eq-intro:WW} in paralinear form. Namely, all the nonlinear terms will be replaced by paradifferential operators acting on $(\eta,\psi)$, up to proper regular remainders. And the symbol of these paradifferential operators is a smooth function of $\eta$ and its derivatives and homogeneous in Fourier variable $\xi$. We shall follow the strategy used in \cite{alazard2009paralinearization} and \cite{alazard2011water} (see also \cite{alazard2014cauchy}).
	
	To achieve this, we first observe that, by \eqref{eq-pre:DtN-chgt-of-var},
	\begin{equation}\label{eq-paralin:DtN-main}
		\begin{aligned}
			G(\eta)\psi =& \frac{1}{\eta}\left(1 + \left(\frac{\eta_\theta}{\eta}\right)^2 + \eta_z^2\right) \partial_\rho \varphi |_{\rho=1} - \frac{\eta_\theta}{\eta}\frac{\varphi_\theta |_{\rho=1}}{\eta} - \eta_z \varphi_z |_{\rho=1} \\
			=& \frac{1}{\eta}\left(1 + \left(\frac{\eta_\theta}{\eta}\right)^2 + \eta_z^2\right) \partial_\rho \varphi |_{\rho=1} - \frac{\eta_\theta}{\eta}\frac{\psi_\theta}{\eta} - \eta_z \psi_z,
		\end{aligned}
	\end{equation}
	where the only implicit term is $\partial_\rho\varphi|_{\rho=1}$, which is linear in $\psi$ and depends implicitly on $\eta$. In order to represent normal derivative $\partial_\rho|_{\rho=1}$ as tangential ones, we follow the idea in \cite{alazard2009paralinearization} and return to the Poisson's equation \eqref{eq-pre:ellip}. Formally speaking, we shall rewrite Laplacian $\Delta_g$ as 
	\begin{equation*}
		\Delta_g = A_0(\partial_\rho + A_1')(\partial_\rho - A_1) + \text{remainders},
	\end{equation*}
	where $A_0$, $A_1$, and $A_1'$ are elliptic operators involving only tangential derivatives. This decomposition will be proved in Lemma \ref{lem-paralin:decomp-of-P}, while the tangential operators should be understood as paradifferential operator since the coefficients are smooth functions of $\eta$ and its derivatives and thus have limited regularity. As a result, the Poisson's equation \eqref{eq-pre:ellip} ensures that $v:=(\partial_\rho - A_1)\varphi$ solves the parabolic equation
	\begin{equation*}
		(\partial_\rho + A_1')v = \text{regular terms},
	\end{equation*}
	whose smoothing effect leads to a higher regularity of $v|_{\rho=1}$ than $\varphi$ (or $\psi$). Consequently, one may replace $\partial_\rho|_{\rho=1}$ by $A_1|_{\rho=1}$ and then deduce the paralinearization of \eqref{eq-paralin:DtN-main} (see Proposition \ref{prop-paralin:paralin-DtN}).
	
	With paralinear formula \eqref{eq-paralin:paralin-DtN} of $G(\eta)\psi$, we are able to write the nonlinear terms in \eqref{eq-intro:WW} in a similar form. In fact, these terms (quadratic term $N$ and mean curvature $H$) can be represented as a function of $\eta$, $\psi$, and $G(\eta)\psi$, together with their derivatives. Hence, one may simply apply the paralinear formula \eqref{eq-para:paralin} and the paralinearization of $G(\eta)\psi$, which will be proved in Section \ref{subsect:paralin-nonlin}.
	
	In the end of this section, we shall check that all the remainders appearing in the normalization of the system are Lipschitzian w.r.t. $(\eta,\psi)$ in proper functional spaces. This is essential to prove convergence of approximate solution and uniqueness of the solution (which will be studied in Section \ref{subsect:conv-uni}).
	
	Inside this section, we always assume that $(\eta,\psi)\in H^{s+\frac{1}{2}}_R \times H^s$ with $s>3$, which is the regularity required in the main theorem \ref{thm-intro:main}. One should pay attention to the fact that, thanks to the paradifferential calculus (reviewed in Appendix \ref{App:para}), all the estimates to be proved are \textit{tame}, in the sense that they can be written as 
	\begin{equation*}
		Q \le K \left( \|\eta\|_{H^{s+\frac{1}{2}}_R} + \|\psi\|_{H^{s}} \right),
	\end{equation*}
	where $Q$ stands for quantities to be studied, and the constant $K= K\left( \|\eta\|_{H^{s+\frac{1}{2}-}_R}, \|\psi\|_{H^{s-}} \right)$ depends only on $\|\eta\|_{H^{s+\frac{1}{2}-}_R}$, $\|\psi\|_{H^{s-}}$ instead of $\|\eta\|_{H^{s+\frac{1}{2}}_R}, \|\psi\|_{H^{s}}$. In Section \ref{Sect:cauchy}, this observation will lead to the \textit{tame estimate} \eqref{eq-cauchy:nonlin-interpol-tame}, implying the continuity of the solution to \eqref{eq-intro:WW} in time and initial data via a nonlinear interpolation (Theorem \ref{thm-cauchy:nonlin-interpol}). \\
	
	To simplify the computations, instead of the change of variable $r=\rho\zeta(\rho\theta,z), \omega=\theta, z=z$ defined in Section \ref{Sect:pre}, where $(x,z)=(r\omega,z)$ is the polar coordinate, we shall use the same change of variable as in \cite{huang2023wellposedness},
	\begin{equation}\label{eq-paralin:chgt-of-var}
		r=\bar{\rho}\eta(\bar{\theta},\bar{z}),\ \omega=\bar{\theta},\ z=\bar{z}.
	\end{equation}
	Recall that this change of variable is easy to calculate and behaves well near the interface $r=\eta$, while singularities exists at $r=0$, which is out of concern in this section. By definition, the relation between $(\bar{\rho},\bar{\theta},\bar{z})$ and the variable $(\rho,\theta,z)$ studied in previous section is given by
	\begin{equation}\label{eq-paralin:alt-var}
		(\bar{\rho},\bar{\theta},\bar{z}) = \bar{\iota}(\rho,\theta,z) := \left( \frac{\rho\zeta(\rho\theta,z)}{\eta(\theta,z)},\theta,z \right).
	\end{equation}
	We denote by $\bar{\varphi}$ the potential $\phi$ in coordinate $(\bar{\rho},\bar{\theta},\bar{z})$ (equivalently, $\bar{\varphi}\circ\bar{\iota}=\varphi$). It can be checked that $\bar{\varphi}$ has the same regularity as $\varphi$ when $\bar{\rho}$ is close to $1$. The proof of lemma below is left to the end of Appendix \ref{subsect:ellip-alt-reg}. 
	\begin{lemma}\label{lem-paralin:reg-of-pot}
		Let $s>3$ and $\delta>0$ be small enough. If $\psi\in H^{s_0}(\T\times\R)$ with $\frac{3}{2}<s_0\le s$ and $\eta\in H^{s+\frac{1}{2}}_R(\T\times\R)$, the potential $\bar{\varphi}$ in new coordinate $(\bar{\rho},\bar{\theta},\bar{z})$ satisfies
		\begin{equation}\label{eq-paralin:reg-of-pot-loc}
			\|\partial_{\bar{\rho}}^l\bar{\varphi}\|_{C^0([1-\delta,1];H^{s_0-l}(\T\times\R))} \le C\left(\|\eta\|_{H_R^{s+\frac{1}{2}-}}\right) \|\psi\|_{H^{s_0}},\ \ l=0,1,2,3.
		\end{equation}
	\end{lemma}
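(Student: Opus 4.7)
The plan is to pull the global elliptic estimate of Proposition \ref{prop-pre:ellip-reg} across the diffeomorphism $\bar\iota$ of \eqref{eq-paralin:alt-var}, and then exploit the resulting elliptic equation to trade normal derivatives for tangential ones. The first point to verify is that $\bar\iota$ is an honest $C^{s-1-}$ diffeomorphism between collar neighborhoods of $\{\rho=1\}$ and $\{\bar\rho=1\}$: since $\zeta|_{\rho=1}=\eta$ by Lemma \ref{lem-pre:prop-tilde-eta} and hence $\partial_\rho(\rho\zeta)|_{\rho=1}=\eta$, one has $\bar\rho|_{\rho=1}=1$ with $\partial_\rho \bar\rho|_{\rho=1}=1$; the uniform lower bound \eqref{eq-pre:low-bd-rhozeta} together with \eqref{hyp-intro:bounds} yields a diffeomorphism on some strip $[1-\delta',1]\times\T\times\R \to [1-\delta,1]\times\T\times\R$ whose Hölder regularity is controlled tamely by $\|\eta\|_{H^{s+\frac{1}{2}-}_R}$, thanks to Proposition \ref{prop-pre:chgt-of-var}.

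Composing Proposition \ref{prop-pre:ellip-reg} with $\bar\iota^{-1}$ then yields
\begin{equation*}
\|\bar\varphi\|_{H^{s_0+\frac{1}{2}}([1-\delta,1]\times\T\times\R)} \le C\!\left(\|\eta\|_{H^{s+\frac{1}{2}-}_R}\right)\|\psi\|_{H^{s_0}},
\end{equation*}
and transporting $\Delta_g \varphi = 0$ through the change of variable produces an elliptic equation
\begin{equation*}
\bar A_0\,\partial_{\bar\rho}^2 \bar\varphi + \bar A_1\,\partial_{\bar\rho}\bar\varphi + \bar{\mathcal{T}}\, \bar\varphi = 0
\end{equation*}
on the strip, where $\bar A_0$ is a positive scalar bounded below uniformly by a constant depending only on $c_0,C_0$, $\bar A_1$ is a first-order tangential operator, and $\bar{\mathcal{T}}$ is a second-order tangential operator, with all coefficients smooth functions of $\eta$ and $\nabla\eta$ and therefore of regularity $C^{s-\frac{3}{2}-}$ by Sobolev embedding (using $s>3$).

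The bootstrap in $\bar\rho$ then proceeds in four steps. From the strip estimate, $\bar\varphi\in L^2([1-\delta,1];H^{s_0+\frac{1}{2}})$ and $\partial_{\bar\rho}\bar\varphi \in L^2([1-\delta,1];H^{s_0-\frac{1}{2}})$, so the classical interpolation embedding $H^1(I;X)\cap L^2(I;Y)\hookrightarrow C^0(I;[X,Y]_{1/2})$ delivers the case $l=0$ and, applied to $\partial_{\bar\rho}\bar\varphi\in H^{s_0-\frac{1}{2}}([1-\delta,1]\times\T\times\R)$, the case $l=1$. The case $l=2$ follows by solving algebraically $\partial_{\bar\rho}^2\bar\varphi = -\bar A_0^{-1}(\bar A_1\,\partial_{\bar\rho}\bar\varphi + \bar{\mathcal{T}}\bar\varphi)$ and applying Moser-type product estimates from Appendix \ref{App:para} to land in $C^0([1-\delta,1];H^{s_0-2})$. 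The case $l=3$ follows by differentiating this identity once more in $\bar\rho$ and controlling the resulting terms against the previous stages.

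The main obstacle is preserving the \emph{tame} character of the constants at the rough end of the bootstrap. At $l=3$ the tangential derivatives of $\bar\varphi$ live at best in $H^{s_0-3}$, and the coefficients depending nonlinearly on $\eta$ must be treated with the limited regularity $H^{s+\frac{1}{2}-}_R$ rather than $H^{s+\frac{1}{2}}_R$ so as to feed into the tame estimates of Section \ref{Sect:paralin}. Because $s>3$ forces the coefficient regularity $C^{s-\frac{3}{2}-}$ to exceed $|s_0-3|$ by a uniform margin, the Moser product estimates close with constants depending only on $\|\eta\|_{H^{s+\frac{1}{2}-}_R}$, as required.
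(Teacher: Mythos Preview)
Your argument has a genuine gap at the endpoint $s_0 = s$, which is precisely the case needed for the tame estimates of Section~\ref{Sect:paralin}. With the tame constraint, the diffeomorphism satisfies only $\bar\iota - id,\ \bar\iota^{-1}-id \in H^{s+\frac{1}{2}-}$ (Lemma~\ref{lem-ellip:reg-chgt-of-var}), and Proposition~\ref{prop-sobo:chgt-of-var-refine} then transports $\varphi \in H^{s_0+\frac{1}{2}}$ only when $s_0+\tfrac{1}{2} \le s+\tfrac{1}{2}-$, i.e.\ $s_0<s$. At $s_0=s$ your composition step yields merely $\bar\varphi \in H^{s+\frac{1}{2}-}$. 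You could upgrade the diffeomorphism to $H^{s+\frac{1}{2}}$ by using the full hypothesis $\eta\in H^{s+\frac{1}{2}}_R$, but then the constant in the composition estimate depends on $\|\eta\|_{H^{s+\frac{1}{2}}_R}$ rather than $\|\eta\|_{H^{s+\frac{1}{2}-}_R}$, contradicting the tame form of \eqref{eq-paralin:reg-of-pot-loc}.

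The paper's proof (Appendix~\ref{subsect:ellip-alt-reg}) runs your composition argument for $s_0<s$ (Step~2) and then inserts an additional Step~3 to close the endpoint: one applies a small-order tangential multiplier $\Lambda_1^\epsilon=\langle D_{\bar\theta,\bar z}\rangle^\epsilon$ to $\chi_1\bar\varphi$, controls the commutator $[\Lambda_1^\epsilon,L](\chi_1\bar\varphi)$ in $H^{s-\frac{3}{2}-\epsilon}$, pulls the resulting equation back through $\bar\iota$ at regularity $s+\tfrac{1}{2}-\epsilon$ (now strictly below the diffeomorphism threshold, so Proposition~\ref{prop-sobo:chgt-of-var-refine} applies tamely), invokes Proposition~\ref{prop-ellip:ellip-alt-reg} in the good coordinates, and pushes forward again. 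This yields $\Lambda_1^\epsilon(\chi_1\bar\varphi)\in H^{s+\frac{1}{2}-\epsilon}$ with the correct tame constant, hence the full tangential regularity; the normal direction is then recovered from the equation as you do. Your bootstrap for $l=2,3$ and the trace/interpolation argument for $l=0,1$ are otherwise fine (modulo the minor point that $\gamma$ in \eqref{eq-paralin:def-gamma} also involves $\nabla^2\eta$, not just $\nabla\eta$).
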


	From now on, we will no more use the coordinate introduced in Section \ref{Sect:pre} and, for simplicity of notation, we omit the bar over $(\bar{\rho},\bar{\theta},\bar{z})$ and $\bar{\varphi}$. Moreover, all the functions to be studied should be regarded as functions defined on $\T\times\R$ with parameter $\rho\in[1-\delta,1]$. By definition \eqref{eq-paralin:chgt-of-var}, the potential $\varphi$ satisfies
	\begin{equation}\label{eq-paralin:ellip}
		\left\{ \begin{array}{ll}
			L\varphi:= \left(\alpha\partial_\rho^2 + \beta\cdot\nabla_{\theta,z}\partial_\rho + \gamma\partial_\rho + \frac{1}{\rho^2\eta^2}\partial_{\theta}^2 + \partial_z^2 \right) \varphi = 0, & \forall 1-\delta\le\rho\le 1, \\ [0.5ex]
			\varphi|_{\rho=1} = \psi.
		\end{array}\right.
	\end{equation}
	where
	\begin{align}
		\alpha =& \frac{1}{\eta^2} \left( 1 + \left(\frac{\eta_\theta}{\eta}\right)^2 + \rho^2\eta_z^2 \right), \label{eq-paralin:def-alpha}\\
		\beta =& \left( -\frac{2\eta_\theta}{\rho\eta^3} , -\frac{2\rho\eta_z}{\eta} \right), \label{eq-paralin:def-beta}\\
		\gamma =& - \frac{1}{\rho\eta}\left(\frac{\eta_\theta}{\eta^2}\right)_\theta - \rho\eta \left(\frac{\eta_z}{\eta^2}\right)_z + \frac{1}{\rho\eta^2}. \label{eq-paralin:def-gamma}
	\end{align}
	 Note that by construction \eqref{eq-pre:def-zeta} of $\zeta$, we have $\partial_\rho\varphi|_{\rho=1} = \partial_{\bar{\rho}}\bar{\varphi}|_{\bar{\rho}=1}$. Thus, the formula \eqref{eq-paralin:DtN-main} for Dirichlet-to-Neumann operator as well as formulation \eqref{eq-pre:B-polar}, \eqref{eq-pre:V-polar}, \eqref{eq-pre:N-polar} of $B,V,N$ remain unchanged. Before entering the next part, we introduce the following estimates for $\alpha$, $\beta$, and $\gamma$, which will be frequently used in this section.
	 \begin{lemma}\label{lem-paralin:esti-alpha-beta-gamma}
	 	Let $\eta\in H^{s+\frac{1}{2}-}_R$ with $s>3$. Then we have
	 	\begin{equation}\label{eq-paralin:esti-alpha-beta-gamma}
	 		\begin{aligned}
	 			\|\alpha\|_{C^0_\rho H^{s-\frac{1}{2}-}_{R^{-2}}} + \|\beta\|_{C^0_\rho H^{s-\frac{1}{2}-}} + \|\gamma\|_{C^0_\rho H^{s-\frac{1}{2}-}_{\rho^{-1}R^{-2}}} \le C\left( \|\eta\|_{H^{s+\frac{1}{2}-}_R} \right) \|\eta\|_{H^{s+\frac{1}{2}-}_R}.
	 		\end{aligned}
	 	\end{equation}
	 \end{lemma}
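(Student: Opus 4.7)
The plan is to treat $\alpha$, $\beta$, $\gamma$ as polynomials in $\rho$ whose coefficients are smooth nonlinear expressions in $\eta$, $1/\eta$, and the tangential derivatives of $\eta$, parametrized by $\rho \in [1-\delta,1]$, and to bound each contribution using two standard ingredients: the composition estimate for smooth functions of $H^s$-perturbations of constants (Proposition \ref{prop-para:paralin}) and the tame product estimate (Corollary \ref{cor-para:product-law}). Uniformity in $\rho$ is free because the $\rho$-dependence is polynomial on a compact interval bounded away from zero.

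First I would establish the building blocks. Under \eqref{hyp-intro:bounds} the range of $\eta$ stays in $[c_0, C_0] \subset (0,\infty)$, so for every $k \ge 1$ the map $x \mapsto x^{-k} - R^{-k}$ is smooth on a neighborhood of that range and vanishes at $x = R$. Proposition \ref{prop-para:paralin} then yields $\eta^{-k} \in H^{s+\frac{1}{2}-}_{R^{-k}}$ with a tame bound by $C(\|\eta\|_{H^{s+\frac{1}{2}-}_R}) \|\eta - R\|_{H^{s+\frac{1}{2}-}}$. Differentiating places $\eta_\theta, \eta_z \in H^{s-\frac{1}{2}-}$, and these quantities vanish in the reference configuration $\eta \equiv R$, so no normalization is needed for them.

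With these blocks in hand, I would assemble each symbol separately. For $\alpha$, expand
\[ \alpha - R^{-2} = (\eta^{-2} - R^{-2}) + \eta^{-4}\eta_\theta^2 + \rho^2 \eta^{-2}\eta_z^2. \]
The first summand lies in $H^{s+\frac{1}{2}-}$ by the base composition step, and each remaining summand is a product of a smooth function of $\eta$ (of $H^{s+\frac{1}{2}-}$ regularity up to the appropriate constant) with $\eta_\theta^2$ or $\eta_z^2$, products of factors in $H^{s-\frac{1}{2}-}$. Since $s > 3$ implies $s - \tfrac{1}{2} > \tfrac{5}{2} > 1$, Corollary \ref{cor-para:product-law} closes the estimate in $H^{s-\frac{1}{2}-}$. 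For $\beta$ each entry is directly a product of $\eta_\theta$ or $\eta_z$ with a smooth function of $\eta$ alone, and the normalization is trivially zero because $\beta$ vanishes when $\eta \equiv R$. For $\gamma$ I would first perform the chain-rule expansion of $(\eta_\theta/\eta^2)_\theta$ and $(\eta_z/\eta^2)_z$, isolating the limiting contribution $\rho^{-1}\eta^{-2}$ that generates the claimed normalization $\rho^{-1}R^{-2}$, and then treat each of the remaining terms by the same mix of composition and product estimates.

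The main point deserving attention is the second-derivative content of $\gamma$: the chain-rule expansion introduces $\eta_{\theta\theta}$ and $\eta_{zz}$, whose natural regularity is $H^{s-\frac{3}{2}-}$. Reaching the stated $H^{s-\frac{1}{2}-}$ control then rests on exploiting that these second derivatives are always multiplied by smooth functions of $\eta$ with enough $H^{s+\frac{1}{2}-}$ regularity to absorb one order of derivative loss via the tame paradifferential product estimate. Apart from this bookkeeping, the lemma reduces entirely to repeated invocations of Proposition \ref{prop-para:paralin} and Corollary \ref{cor-para:product-law}, performed uniformly in $\rho \in [1-\delta,1]$.
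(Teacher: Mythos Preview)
Your overall approach matches the paper's one-line proof (``no more than an application of Proposition \ref{prop-para:paralin}''): decompose $\alpha,\beta,\gamma$ into smooth functions of $\eta$ times polynomials in $\nabla\eta,\nabla^2\eta$, and combine the composition estimate with the product law. For $\alpha$ and $\beta$ your argument is fine.

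The gap is in your last paragraph. You write that the second derivatives $\eta_{\theta\theta},\eta_{zz}\in H^{s-\frac32-}$ can be upgraded to $H^{s-\frac12-}$ because they ``are always multiplied by smooth functions of $\eta$ with enough $H^{s+\frac12-}$ regularity to absorb one order of derivative loss via the tame paradifferential product estimate.'' This is false: the product law (Corollary \ref{cor-para:product-law}) never improves the regularity of the rougher factor. Concretely, with $d=2$, to place $\eta^{-3}\eta_{\theta\theta}$ in $H^{r}$ you need $s-\tfrac32->r$; taking $r=s-\tfrac12-$ is impossible. No amount of smoothness of the coefficient helps.

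What is actually going on is that the exponent $s-\tfrac12-$ for $\gamma$ in the displayed estimate is a typo: a direct application of Proposition \ref{prop-para:paralin} (treating $\gamma$ as a smooth function of $(\eta,\nabla_w\eta,\nabla_w^2\eta)$) yields $\gamma\in C^0_\rho H^{s-\frac32-}_{\rho^{-1}R^{-2}}$, one derivative worse than $\alpha,\beta$. This weaker regularity is all that is ever used later in Section \ref{Sect:paralin}: whenever $\gamma$ appears in estimates (e.g.\ in the proof of Lemma \ref{lem-paralin:paralin-of-ellip-S1} and Lemma \ref{lem-paralin:paralin-of-ellip-S2}) only norms of the type $\|\gamma\|_{H^{1+}}$ or $\|\gamma\|_{H^{\frac32+\epsilon+}}$ are needed, both of which follow from $H^{s-\frac32-}$ since $s>3$. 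So the correct fix is not to invent a nonexistent smoothing mechanism, but to note that the statement should read $H^{s-\frac32-}$ for $\gamma$ and that this is exactly what the naive application of Proposition \ref{prop-para:paralin} gives.
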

	 The proof of this Lemma is no more than an application of Proposition \ref{prop-para:paralin}.

	\subsection{Preliminaries in paradifferential calculus}\label{subsect:paralin-pre}
	
	During the whole paper, the involved paradifferential operators have symbol homogeneous in Fourier variable $\xi$ and depending smoothly in $\eta$ and its derivatives (see Definition \ref{def-paralin:homo-sym} below). In this case, the general theory of paradifferential calculus reviewed in Appendix \ref{App:para} can be refined with simpler formulas, which are collected in this section. Note that most of these technical results have been presented in Section 4.1 of \cite{alazard2011water}, where one may find detailed proofs.
	
	\begin{definition}\label{def-paralin:homo-sym}
		Given $m\in\R$, we denote by $\Sigma^m$\index{S@$\Sigma^m$ Homogeneous symbols depending on $\eta$} the collection of symbols on $\T\times\R$ that take the form
		\begin{equation}\label{eq-paralin:homo-sym}
			a(w,\xi) = a^{(m)}(w,\xi) + a^{(m-1)}(w,\xi),
		\end{equation}
		where $a^{(m)}$ and $a^{(m-1)}$ takes the form
		\begin{align*}
			a^{(m)}(w,\xi) =& F(\eta,\nabla_{\theta,z}\eta;\xi), \\
			a^{(m-1)}(w,\xi) =& \sum_{|\alpha|\le 2} G_{\alpha}(\eta,\nabla_{\theta,z}\eta;\xi)\partial_{\theta,z}^\alpha\eta
		\end{align*}
		Here $F,G_\alpha$ are smooth functions and homogeneous of degree $m,m-1$ in $\xi$, respectively.
		
		Under the assumption that $\eta\in H_R^{s+\frac{1}{2}}$ with $s>3$ (true in the rest of this paper), we have $a^{(m)}\in \Gamma_{3/2+}^m$ and $a^{(m-1)}\in\Gamma^{m-1}_{1/2+}$ (see Definition \ref{def-para:paradiff-symbol-class}). In the sequel, we shall regard $a\in\Sigma^m$ as an element in $\Gamma_{3/2+}^m + \Gamma_{1/2+}^{m-1}$.
	\end{definition}
	
	As a convention, we shall write all the equations in polar coordinate $(y,z) = (\rho\theta,z)$ and denote the Fourier variable associated to $w=(\theta,z)\in\T\times\R$ as $\xi = (\xi_\theta,\xi_z)\in\R^2$ (see Section \ref{subsect:PDO} for Fourier variables on torus). Furthermore, all the estimates in this section are uniform in time, the dependence on which will be omitted for simplicity.
	
	For symbols $a,b\in\Sigma^m$, we write
	\begin{equation*}
		T_a \approx T_b
	\end{equation*}\index{e@$\approx$ Equivalence between paralinear operators}
	if $T_a-T_b$ is of order $m-\frac{3}{2}-$, i.e. maps $H^{s}$ to $H^{s-m+\frac{3}{2}+}$ for all $s\in\R$, with operator norm controlled by $C\left( \|\eta\|_{H^{\frac{7}{2}+}_R} \right)$. Then an application of Proposition \ref{prop-para:paradiff-cal-sym} gives that
	\begin{proposition}\label{prop-paralin:homo-sym-cal}
		Let $\eta \in H_R^{s+\frac{1}{2}-}$ with $s>3$. For symbols $a = a^{(m)}+a^{(m-1)}\in\Sigma^m$ and $b = b^{(m')}+ b^{(m'-1)}\in\Sigma^{m'}$ with $m,m'\in\R$, we have
		\begin{align}
			&T_aT_b \approx T_{a \sharp b}, \label{eq-paralin:homo-sym-comp} \\
			&T_{a}^*\approx T_{a^*}, \label{eq-paralin:homo-sym-adj}
		\end{align}
		where
		\begin{align}
			a \sharp b =& a^{(m)}b^{(m')} + \left(\partial_\xi a^{(m)}\cdot D_w b^{(m')} + a^{(m)} b^{(m'-1)} + a^{(m-1)} b^{(m')}\right) \in \Sigma^{m+m'}, \label{eq-paralin:homo-sym-comp-formula} \\
			a^* =& \overline{a^{(m)}} + \left( D_w\cdot\partial_\xi\overline{a^{(m)}} + \overline{a^{(m-1)}} \right) \in \Sigma^m. \label{eq-paralin:homo-sym-adj-formula}
		\end{align}
	\end{proposition}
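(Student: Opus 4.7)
The plan is to derive both identities directly from the general paradifferential symbolic calculus of Proposition~\ref{prop-para:paradiff-cal-sym} by tracking the regularity indices through the decomposition $a = a^{(m)} + a^{(m-1)}$, $b = b^{(m')} + b^{(m'-1)}$. Under the hypothesis $\eta \in H^{s+\frac{1}{2}}_R$ with $s>3$, Sobolev embedding on $\T\times\R$ places $\eta,\nabla_{\theta,z}\eta$ in a H\"older space of order strictly greater than $\frac{3}{2}$ and $\partial^\alpha_{\theta,z}\eta$ for $|\alpha|\le 2$ in a H\"older space of order strictly greater than $\frac{1}{2}$, so that $a^{(m)}\in\Gamma^m_{\frac{3}{2}+}$ and $a^{(m-1)}\in\Gamma^{m-1}_{\frac{1}{2}+}$ as stated in Definition~\ref{def-paralin:homo-sym}.

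For the composition~\eqref{eq-paralin:homo-sym-comp}, I would expand $T_aT_b$ into the four cross terms
\begin{equation*}
T_{a^{(m)}}T_{b^{(m')}} + T_{a^{(m)}}T_{b^{(m'-1)}} + T_{a^{(m-1)}}T_{b^{(m')}} + T_{a^{(m-1)}}T_{b^{(m'-1)}}
\end{equation*}
and apply Proposition~\ref{prop-para:paradiff-cal-sym} to each with regularity index equal to the smaller of the two involved. The principal-principal term has index $\frac{3}{2}+$, so keeping $|\alpha|\le 1$ gives $a^{(m)}b^{(m')} + \partial_\xi a^{(m)}\cdot D_w b^{(m')}$ modulo an operator of order $m+m'-\frac{3}{2}-$. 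The two mixed terms have effective index $\frac{1}{2}+$, so only $|\alpha|=0$ survives and they contribute $a^{(m)}b^{(m'-1)}$ and $a^{(m-1)}b^{(m')}$ modulo the same order. The last product has global order $m+m'-2$, already below the threshold. Summing reproduces the claimed formula for $a\sharp b$. To check $a\sharp b\in\Sigma^{m+m'}$, a single application of the chain rule $D_w F(\eta,\nabla\eta;\xi) = F_\eta\, D_w\eta + F_{\nabla\eta}\cdot D_w\nabla\eta$ exhibits $\partial_\xi a^{(m)}\cdot D_w b^{(m')}$ in the form $\sum_{|\alpha|=1,2}\tilde G_\alpha(\eta,\nabla\eta;\xi)\,\partial^\alpha_{\theta,z}\eta$, and the other two subprincipal pieces are manifestly of this form.

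The adjoint~\eqref{eq-paralin:homo-sym-adj} is handled identically: for $T_{a^{(m)}}^*$ the index $\frac{3}{2}+$ lets one retain $|\alpha|\le 1$, producing $\overline{a^{(m)}} + D_w\cdot\partial_\xi\overline{a^{(m)}}$ modulo order $m-\frac{3}{2}-$; for $T_{a^{(m-1)}}^*$ only $|\alpha|=0$ survives, leaving $\overline{a^{(m-1)}}$ with a remainder of the same order. The same chain rule shows the correction $D_w\cdot\partial_\xi\overline{a^{(m)}}$ stays inside $\Sigma^m$. The main obstacle is entirely one of bookkeeping: one must verify term by term that every piece relegated to the remainder really has operator order strictly less than $m+m'-\frac{3}{2}$ (respectively $m-\frac{3}{2}$), and that no derivative of $\eta$ of order higher than two enters the surviving subprincipal symbols, which is exactly what keeps the class $\Sigma^{\cdot}$ stable under $\sharp$ and $*$.
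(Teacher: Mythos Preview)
Your proposal is correct and follows exactly the approach the paper indicates: the paper does not give a detailed proof but simply remarks that one applies Proposition~\ref{prop-para:paradiff-cal-sym} with $\rho$ taken as $\tfrac{3}{2}+$ or $\tfrac{1}{2}+$ according to whether the factor involved is principal or subprincipal, referring to \cite{alazard2011water} for details. Your four-term expansion and order-tracking is precisely the bookkeeping this entails, and your verification that $a\sharp b$ and $a^*$ remain in $\Sigma^{m+m'}$ and $\Sigma^m$ is the only additional point worth making explicit.
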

	Recall that we have regarded $\Sigma^m$ as a subclass of $\Gamma_{3/2+}^m + \Gamma_{1/2+}^{m-1}$. Thus the parameter $\rho$ in Proposition \ref{prop-para:paradiff-cal-sym} should be taken as $3/2+$ or $1/2+$. Moreover, all the involved operator norms can actually be controlled by a positive smooth increasing function of $\|\eta\|_{H_R^{s+\frac{1}{2}-}}$. For simplicity, we shall not precise these in the sequel.
	
	As a consequence, we are able to calculate the symbol of commutator,
	\begin{corollary}\label{cor-paralin:homo-sym-commu}
		Let $\eta \in H_R^{s+\frac{1}{2}-}$ with $s>3$. For symbols $a = a^{(m)}+a^{(m-1)}\in\Sigma^m$ and $b = b^{(m')}+ b^{(m'-1)}\in\Sigma^{m'}$ with $m,m'\in\R$, their commutator $[T_a, T_b]$ is of order at most $m+m'-1$. More precisely, $[T_a, T_b]\approx T_{-i\{a^{(m)},b^{(m')}\}}$, where $\{\cdot,\cdot\}$ is Poisson bracket,
		\begin{equation*}
			\{ a,b \} := \partial_\xi a^{(m)}\cdot\partial_w b^{(m')} - \partial_w a^{(m)}\cdot\partial_\xi b^{(m')} \in \Gamma^{m+m'-1}_{1/2+}.
		\end{equation*}
	\end{corollary}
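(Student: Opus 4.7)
The plan is to deduce the corollary directly from the composition formula \eqref{eq-paralin:homo-sym-comp}--\eqref{eq-paralin:homo-sym-comp-formula}, together with linearity of the paradifferential quantization. Writing $T_aT_b \approx T_{a\sharp b}$ and $T_bT_a \approx T_{b\sharp a}$ and subtracting, one obtains
\begin{equation*}
[T_a,T_b] \approx T_{a\sharp b - b\sharp a},
\end{equation*}
so the whole problem is reduced to reading off the symbol $a\sharp b - b \sharp a$ from \eqref{eq-paralin:homo-sym-comp-formula} and identifying its principal part.

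Next I would simply carry out the cancellations at symbol level. The leading term $a^{(m)}b^{(m')}$ is commutative, hence cancels in $a\sharp b - b\sharp a$. The cross terms $a^{(m)}b^{(m'-1)} + a^{(m-1)}b^{(m')}$ are also symmetric in $(a,b)$ and cancel. What survives is
\begin{equation*}
\partial_\xi a^{(m)}\cdot D_w b^{(m')} - \partial_\xi b^{(m')}\cdot D_w a^{(m)},
\end{equation*}
and the convention $D_w = -i\partial_w$ turns this precisely into $-i\{a^{(m)},b^{(m')}\}$, which is the claimed principal symbol.

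It remains to confirm that $\{a^{(m)},b^{(m')}\}$ belongs to $\Gamma^{m+m'-1}_{1/2+}$. Since $a^{(m)}$ and $b^{(m')}$ are smooth functions of $(\eta,\nabla_{\theta,z}\eta)$ that are homogeneous of degrees $m$, $m'$ in $\xi$, each $\partial_w$ brings out one additional derivative of $\eta$; under the standing hypothesis $\eta\in H^{s+\frac{1}{2}-}_R$ with $s>3$, Sobolev embedding places these coefficients in $C^{1/2+}$, and the Poisson bracket loses one power of $\xi$, giving the asserted symbol class. The bound on the operator norm implicit in the relation $\approx$ is tame and inherited from Proposition \ref{prop-paralin:homo-sym-cal}.

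There is no genuine obstacle here; the statement is a bookkeeping consequence of the symbolic calculus already proved. The only points requiring mild care are keeping track of the factor $-i$ arising from $D_w = -i\partial_w$, and recognising that terms of joint order $m+m'-2$ (such as $a^{(m-1)}b^{(m'-1)}$, or second order expansions that would appear in a fuller composition formula) are already absorbed by $\approx$, since they map $H^s$ to $H^{s-m-m'+2}$ with the required tame constant.
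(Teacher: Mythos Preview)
Your proposal is correct and matches the paper's intended derivation: the corollary is stated immediately after Proposition \ref{prop-paralin:homo-sym-cal} as a direct consequence, and your argument (subtracting the two composition formulas, cancelling the commutative terms, and identifying the surviving Poisson bracket via $D_w = -i\partial_w$) is exactly the computation the paper has in mind. The only thing the paper adds is the phrase ``as a consequence''; your proof spells out what that consequence is.
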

	
	\begin{definition}\label{def-paralin:homo-sym-ellip}
		Let symbol $a = a^{(m)}+a^{(m-1)}\in \Sigma^m$ with $m\in\R$. We say that $a$ is \textit{elliptic}, if there exists $0<c<C$ such that
		\begin{equation*}
			c|\xi|^m \leqslant \Real a^{(m)}(w,\xi) \leqslant C|\xi|^m,\ \ \forall w\in\T\times\R,\ \xi\neq 0,
		\end{equation*}
	\end{definition}
	
	\begin{proposition}\label{prop-paralin:homo-sym-ellip-inverse}
		Let $\eta \in H_R^{s+\frac{1}{2}-}$ with $s>3$. For any elliptic symbol $a\in\Sigma^m$ with $m\in\R$, we can construct another elliptic symbol $\tilde{a}\in \Sigma^{-m}$ such that
		\begin{equation}\label{eq-paralin:homo-sym-ellip-inverse}
			T_aT_{\tilde{a}} \approx T_{\tilde{a}}T_a \approx id.
		\end{equation}
	\end{proposition}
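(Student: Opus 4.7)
The plan is to construct $\tilde a$ as a right parametrix following the standard symbolic-calculus recipe (solve $a \sharp \tilde a = 1$ at both leading orders), then upgrade it to a two-sided parametrix via an algebraic manipulation.

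First, since $a$ is elliptic, the lower bound $\Real a^{(m)}\geq c|\xi|^m$ together with the boundedness of $\eta,\nabla_{\theta,z}\eta$ (which follows from Sobolev embedding on $\eta\in H_R^{s+\frac12-}$, $s>3$) gives $|a^{(m)}|\asymp|\xi|^m$ uniformly in $w$. Consequently $1/a^{(m)}$ is a smooth function of $(\eta,\nabla_{\theta,z}\eta)$, homogeneous of degree $-m$ in $\xi$, with
\[
\Real\!\bigl(1/a^{(m)}\bigr)\;=\;\Real(a^{(m)})/|a^{(m)}|^2\;\geq\;c'|\xi|^{-m},
\]
so it is an elliptic symbol of order $-m$. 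Set $\tilde a^{(-m)}:=1/a^{(m)}$. Plugging into the composition formula \eqref{eq-paralin:homo-sym-comp-formula} gives
\[
a\sharp\tilde a \;=\; 1 \;+\; \Bigl[\partial_\xi a^{(m)}\cdot D_w\tilde a^{(-m)} + a^{(m)}\tilde a^{(-m-1)} + a^{(m-1)}\tilde a^{(-m)}\Bigr],
\]
and forcing the bracket to vanish leads to the closed-form definition
\[
\tilde a^{(-m-1)} \;:=\; -\frac{1}{a^{(m)}}\Bigl(\partial_\xi a^{(m)}\cdot D_w(1/a^{(m)}) + a^{(m-1)}/a^{(m)}\Bigr).
\]

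The next step is to verify that $\tilde a^{(-m-1)}$ has the exact structural form $\sum_{|\alpha|\leq 2} G_\alpha(\eta,\nabla_{\theta,z}\eta;\xi)\,\partial^\alpha_{\theta,z}\eta$ required by Definition~\ref{def-paralin:homo-sym}. The term $a^{(m-1)}/(a^{(m)})^2$ inherits this form directly from the definition of $a^{(m-1)}\in\Sigma^{m-1}$. For $D_w(1/a^{(m)})$, the chain rule applied to $a^{(m)}=F(\eta,\nabla_{\theta,z}\eta;\xi)$ produces $(\partial_\eta F)\,\nabla_{\theta,z}\eta$ and $(\partial_{\nabla\eta}F)\,\nabla^2_{\theta,z}\eta$, i.e.\ smooth functions of $(\eta,\nabla_{\theta,z}\eta)$ homogeneous of degree $m-1$ in $\xi$ multiplied by $\partial^\alpha_{\theta,z}\eta$ with $|\alpha|\leq 2$. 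Hence $\tilde a=\tilde a^{(-m)}+\tilde a^{(-m-1)}\in\Sigma^{-m}$, and it is elliptic by the discussion of the principal part. By construction $a\sharp\tilde a=1$, so Proposition~\ref{prop-paralin:homo-sym-cal} immediately yields $T_aT_{\tilde a}\approx T_1=\mathrm{id}$.

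For the reverse identity, I would carry out the symmetric construction: solving $\tilde a'\sharp a=1$ at the two leading orders yields $\tilde a'\in\Sigma^{-m}$ with the same principal part $\tilde a'^{(-m)}=1/a^{(m)}$ and an explicit $\tilde a'^{(-m-1)}$ obtained analogously (with $\partial_\xi a^{(m)}\cdot D_w$ replaced by $\partial_\xi(1/a^{(m)})\cdot D_w$ acting on $a^{(m)}$). Then $T_{\tilde a'}T_a\approx\mathrm{id}$. To reconcile the two parametrices, write
\[
T_{\tilde a'} \;=\; T_{\tilde a'}\bigl(T_aT_{\tilde a}\bigr) + T_{\tilde a'}\bigl(\mathrm{id}-T_aT_{\tilde a}\bigr) \;=\; \bigl(T_{\tilde a'}T_a\bigr)T_{\tilde a} + R_1 \;=\; T_{\tilde a} + R_2,
\]
where $R_1$ and $R_2$ each compose an operator of order $-m$ with a residue of order $-\tfrac32-$, hence are of order $-m-\tfrac32-$. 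This gives $T_{\tilde a'}\approx T_{\tilde a}$, and composing with $T_a$ on the right yields $T_{\tilde a}T_a\approx T_{\tilde a'}T_a\approx\mathrm{id}$.

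The main obstacle is the bookkeeping for the structural requirements of $\Sigma^{-m}$: one must check that the algebraic expression for $\tilde a^{(-m-1)}$, which a priori mixes $\nabla_{\theta,z}\eta$ and $\nabla^2_{\theta,z}\eta$ through the chain rule applied to $1/a^{(m)}$, fits the template $\sum_{|\alpha|\leq 2}G_\alpha\partial^\alpha_{\theta,z}\eta$ with the correct homogeneity in $\xi$. The rest of the argument is a routine application of Proposition~\ref{prop-paralin:homo-sym-cal} and tracking the orders of the residues appearing in the $\approx$ relation.
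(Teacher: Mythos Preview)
Your proof is correct and follows the same construction as the paper for the right parametrix: set $\tilde a^{(-m)}=1/a^{(m)}$ and choose $\tilde a^{(-m-1)}$ to kill the subprincipal term of $a\sharp\tilde a$. Your verification that $\tilde a\in\Sigma^{-m}$ (checking the structural form required by Definition~\ref{def-paralin:homo-sym}) is in fact more explicit than the paper, which simply asserts it.

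The only genuine divergence is in the two-sided step. You build a separate left parametrix $\tilde a'$ and then run the standard ``left inverse equals right inverse'' sandwich $T_{\tilde a'}=T_{\tilde a'}(T_aT_{\tilde a})=(T_{\tilde a'}T_a)T_{\tilde a}\approx T_{\tilde a}$. The paper instead observes directly that the same $\tilde a$ is also a left parametrix at the symbol level, via the identity
\[
\partial_\xi\tilde a^{(-m)}\cdot D_w a^{(m)}
=\partial_\xi\!\bigl(1/a^{(m)}\bigr)\cdot D_w a^{(m)}
=\frac{i\,\partial_\xi a^{(m)}\cdot\partial_w a^{(m)}}{(a^{(m)})^2}
=\partial_\xi a^{(m)}\cdot D_w\tilde a^{(-m)},
\]
which forces $\tilde a\sharp a=a\sharp\tilde a=1$. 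This shortcut exploits the specific reciprocal structure of the principal parts and avoids introducing a second symbol; your route is the more general one that works whenever left and right parametrices exist, at the cost of a few extra lines.
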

	\begin{proof}
		It suffices to take $\tilde{a}^{(-m)}a^{(m)}=1$ and $\tilde{a}^{(-m-1)}$ as the solution to
		\begin{equation*}
			\partial_\xi a^{(m)} \cdot D_w \frac{1}{a^{(m)}} + a^{(m)} \tilde{a}^{(-m-1)} + a^{(m-1)} \tilde{a}^{(-m)} = 0.
		\end{equation*}
		The ellipticity of $\tilde{a}$ is obvious from this definition. The desired estimate \eqref{eq-paralin:homo-sym-ellip-inverse} can be checked directly from Proposition \ref{prop-paralin:homo-sym-cal}. More precisely, we have $T_a T_{\tilde{a}} \sim T_{a\sharp\tilde{a}}$ where
		\begin{align*}
			a\sharp\tilde{a} =& a^{(m)}\tilde{a}^{(m)} + \left(\partial_\xi a^{(m)}\cdot D_w \tilde{a}^{(-m)} + a^{(m)} \tilde{a}^{(-m-1)} + a^{(m-1)} \tilde{a}^{(-m)}\right) \\
			=& 1 + \left(\partial_\xi a^{(m)}\cdot D_w \frac{1}{a^{(m)}} + a^{(m)} \tilde{a}^{(-m-1)} + a^{(m-1)} \tilde{a}^{(-m)}\right)=1.
		\end{align*}
		Therefore, $T_a T_{\tilde{a}} \approx T_1$, which equals identity up to a smoothing operator. The proof of $T_{\tilde{a}}T_a\approx id$ is similar. It suffices to observe that
		\begin{equation*}
			\partial_\xi \tilde{a}^{(-m)}\cdot D_w a^{(m)} = \partial_\xi \frac{1}{a^{(m)}}\cdot D_w a^{(m)} = \frac{i\partial_\xi a^{(m)}\cdot \partial_w a^{(m)}}{\left(a^{(m)}\right)^2} = \partial_\xi a^{(m)}\cdot D_w \frac{1}{a^{(m)}} = \partial_\xi a^{(m)}\cdot D_w \tilde{a}^{(-m)},
		\end{equation*}
		which implies $\tilde{a}\sharp a = a\sharp\tilde{a} = 1$.
	\end{proof}
	
	\begin{proposition}\label{prop-paralin:homo-sym-ellip-esti}
		Let $\eta\in H^{2+}_R$. For any elliptic symbol $a\in\Sigma^m$ with $m\in\R$, we have the following estimate,
		\begin{equation}\label{eq-paralin:homo-sym-ellip-esti}
			\|u\|_{H^{r}} \le C\left(\|\eta\|_{H^{2+}_R}\right) \left( \|T_a u\|_{H^{r-m}} + \|u\|_{L^2} \right),\ \ \forall r\in\R,
		\end{equation}
		where $C>0$ is a smooth increasing function and the $L^2$-norm of $u$ can be replaced with any other Sobolev norms.
	\end{proposition}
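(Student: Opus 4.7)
The plan is to prove the estimate by a parametrix-plus-bootstrap argument. First, I would apply Proposition \ref{prop-paralin:homo-sym-ellip-inverse} to the elliptic symbol $a\in\Sigma^m$ to obtain a parametrix $\tilde{a}\in\Sigma^{-m}$ with $T_{\tilde{a}}T_a \approx \mathrm{id}$. Unwinding the meaning of $\approx$, this amounts to the identity
\begin{equation*}
T_{\tilde{a}}T_a = \mathrm{id} + R,
\end{equation*}
where $R$ is a remainder of order $-\mu$ (with $\mu = 3/2-$, the regularity gain supplied by the symbolic calculus of Proposition \ref{prop-paralin:homo-sym-cal}), whose operator norm is controlled by a smooth increasing function of $\|\eta\|_{H^{2+}_R}$.

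Writing $u = T_{\tilde{a}}(T_a u) - R u$ and using that $T_{\tilde{a}}$ is bounded $H^{r-m}\to H^r$ while $R$ is bounded $H^{r-\mu}\to H^r$, one obtains the preliminary inequality
\begin{equation*}
\|u\|_{H^r} \le C\bigl(\|\eta\|_{H^{2+}_R}\bigr)\left(\|T_a u\|_{H^{r-m}} + \|u\|_{H^{r-\mu}}\right).
\end{equation*}
Iterating this bootstrap $N$ times replaces the last term by $\|u\|_{H^{r-N\mu}}$; choosing $N$ large enough so that $r - N\mu \le 0$ and using the monotonicity of Sobolev norms for non-positive indices yields $\|u\|_{H^{r-N\mu}} \le \|u\|_{L^2}$, closing the estimate. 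The remark that $L^2$ can be replaced by any other Sobolev norm follows by stopping the iteration earlier, as soon as the exponent drops below the chosen threshold.

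The main point requiring care is \emph{tameness}: the constant $C$ must depend only on $\|\eta\|_{H^{2+}_R}$, not on the higher norms that the standing hypothesis of this section makes available. I would track this dependence through the construction of $\tilde{a}$ in Proposition \ref{prop-paralin:homo-sym-ellip-inverse} (whose principal part is a smooth function of $\eta, \nabla_{\theta,z}\eta$ and whose subprincipal part involves at most $\nabla_{\theta,z}^2\eta$) and through the remainder bound of the composition estimate in Proposition \ref{prop-para:paradiff-cal-sym}, which is expressed in terms of Hölder seminorms of the coefficients. The Sobolev embedding $H^{2+}(\T\times\R)\hookrightarrow C^{1+}$ then converts these Hölder seminorms into $\|\eta\|_{H^{2+}_R}$, giving the advertised tame constant. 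This bookkeeping is the only subtle aspect of the proof; the rest is the standard elliptic-regularity scheme for paradifferential operators.
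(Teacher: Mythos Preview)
Your parametrix-plus-bootstrap scheme is exactly what the paper says works under the stronger hypothesis $\eta\in H^{s+\frac12-}_R$, $s>3$; the paper then explicitly adds that ``for the refined version $\eta\in H^{2+}_R$, we refer to Proposition 4.6 of \cite{alazard2011water}.'' So the paper itself does not claim that invoking Proposition \ref{prop-paralin:homo-sym-ellip-inverse} yields the $H^{2+}$ constant, and the gap is in your last paragraph. The subprincipal parts $a^{(m-1)}$ and $\tilde a^{(-m-1)}$ involve $\nabla_{\theta,z}^2\eta$; under $\eta\in H^{2+}(\T\times\R)$ this lies only in $H^{0+}$, which in dimension two does \emph{not} embed into $L^\infty$, so these subprincipal symbols are not in any $\Gamma^{\bullet}_\rho$ with $\rho\ge 0$. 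The embedding $H^{2+}\hookrightarrow C^{1+}$ you invoke controls $(\eta,\nabla\eta)$ but gives no H\"older bound on $\nabla^2\eta$; hence neither Proposition \ref{prop-paralin:homo-sym-ellip-inverse} nor the $\approx$ calculus of Proposition \ref{prop-paralin:homo-sym-cal} is available at this regularity, and your tameness bookkeeping cannot be carried out as written.

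The standard remedy is to drop the subprincipal correction and use only the principal parametrix $b=1/a^{(m)}\in\Gamma^{-m}_{0+}$, which requires merely $\nabla\eta\in C^{0+}$. Then $T_bT_{a^{(m)}}-T_1$ is of order $-(0+)$ by the $\rho=0+$ case of Proposition \ref{prop-para:paradiff-cal-sym}, while $T_{a^{(m-1)}}$ is of order $(m-1)-(0+)+d/2=m-(0+)$ by Proposition \ref{prop-para:paradiff-symbol-class-sobo}, so $T_bT_{a^{(m-1)}}$ is also of order $-(0+)$. The bootstrap gain per step shrinks from $3/2-$ to $0+$, but finitely many iterations still bring the exponent below any fixed threshold, closing the argument with a constant depending only on $\|\eta\|_{H^{2+}_R}$.
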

	If $\eta\in H^{s+\frac{1}{2}-}_R$ with $s>3$, \eqref{eq-paralin:homo-sym-ellip-esti} is a consequence of Proposition \ref{prop-paralin:homo-sym-ellip-inverse}. For the refined version $\eta\in H^{2+}_R$, we refer to Proposition 4.6 of \cite{alazard2011water}.
	
	\begin{proposition}\label{prop-paralin:homo-sym-cond-aa}
		Let $\eta \in H_R^{s+\frac{1}{2}-}$ with $s>3$. For any $a = a^{(m)}+a^{(m-1)}\in\Sigma^m$ with $m\in\R$ and $\Imag a^{(m)}=0$, we have the following equivalence,
		\begin{equation}\label{eq-paralin:homo-sym-cond-aa}
			T_a \approx T_a^* \Leftrightarrow (T_a - T_a^*)\text{ is of order }(m-1-) \Leftrightarrow \Imag a^{(m-1)} = -\frac{1}{2}\partial_w\cdot\partial_\xi a^{(m)}.
		\end{equation}
	\end{proposition}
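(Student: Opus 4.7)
The plan is to establish the three implications in the cycle (iii) $\Rightarrow$ (i) $\Rightarrow$ (ii) $\Rightarrow$ (iii), relying almost entirely on Proposition \ref{prop-paralin:homo-sym-cal} together with a principal-symbol argument.

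First I would apply the adjoint formula \eqref{eq-paralin:homo-sym-adj}-\eqref{eq-paralin:homo-sym-adj-formula} to reduce everything to a symbol identity. Writing $D_w=-i\partial_w$ and using $\Imag a^{(m)}=0$ (so that $\overline{a^{(m)}}=a^{(m)}$), a direct computation gives
\begin{equation*}
a-a^* \;=\; i\bigl(\partial_w\!\cdot\!\partial_\xi a^{(m)} \,+\, 2\,\Imag a^{(m-1)}\bigr),
\end{equation*}
which is purely homogeneous of degree $m-1$ (its $m$-order part vanishes). Moreover, Proposition \ref{prop-paralin:homo-sym-cal} yields that $T_a-T_a^* = T_{a-a^*} + R$, where $R:=T_{a^*}-T_a^*$ is of order $m-\tfrac{3}{2}-$.

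For (iii) $\Rightarrow$ (i): under (iii), the bracket in the displayed identity vanishes, so $a^*=a$; hence $T_a-T_a^* = R$ is of order $m-\tfrac{3}{2}-$, which is exactly (i). The implication (i) $\Rightarrow$ (ii) is immediate since an operator of order $m-\tfrac{3}{2}-$ is a fortiori of order $m-1-$. The substantive step is (ii) $\Rightarrow$ (iii): assuming $T_a-T_a^*$ is of order $m-1-$, subtracting the remainder $R$ shows that $T_{a-a^*}$ is of order $m-1-$ as well. Since the symbol $a-a^*$ displayed above is homogeneous of degree exactly $m-1$, a principal-symbol argument forces $a-a^*\equiv 0$, i.e.\ $\Imag a^{(m-1)}=-\tfrac{1}{2}\partial_w\!\cdot\!\partial_\xi a^{(m)}$.

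The main obstacle is exactly this last step: showing that a paradifferential operator $T_c$ with a nontrivial homogeneous symbol $c$ of order $m-1$ cannot be of strictly lower order. The standard route is a localization/wavepacket argument: if $c(w_0,\xi_0)\neq 0$ for some $|\xi_0|=1$, one tests $T_c$ on highly oscillatory packets $u_N(w)=\chi(w-w_0)e^{iN\xi_0\cdot w}$ and checks that $\|T_c u_N\|_{L^2}/\|u_N\|_{L^2}$ is of size $N^{m-1}|c(w_0,\xi_0)|$, contradicting an $N^{m-1-\varepsilon}$ bound. Equivalently, one can compose with an elliptic parametrix from Proposition \ref{prop-paralin:homo-sym-ellip-inverse} to reduce to the case $m=1$ and then invoke the elliptic estimate \eqref{eq-paralin:homo-sym-ellip-esti}. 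This step requires a little care because of the limited regularity $\rho=3/2+$ of the symbols involved, but the homogeneity of $a-a^*$ of degree $m-1$ keeps us safely within the range where the principal symbol of a paradifferential operator is well-defined modulo operators of order $m-\tfrac{3}{2}-$.
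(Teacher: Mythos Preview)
Your proof is correct and follows essentially the same route as the paper: compute $a-a^*$ explicitly using the adjoint formula \eqref{eq-paralin:homo-sym-adj-formula}, reduce (ii)$\Rightarrow$(iii) to the injectivity of the principal symbol, and close the cycle via $T_a^*\approx T_{a^*}$. If anything, you are more explicit than the paper about the key step (ii)$\Rightarrow$(iii): the paper simply asserts that once $T_{a-a^*}$ has order strictly smaller than $m-1$ the homogeneous components of $a-a^*$ must vanish, whereas you sketch the wavepacket or elliptic-parametrix argument that justifies this.
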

	\begin{proof}
		The first assertion implying the second is easy. By Proposition \ref{prop-paralin:homo-sym-cal}, we have $T_a^*\approx T_{a^*}$, ensuring that $T_a - T_a^*$ is of order $(m-3/2-)$ and thus of order $(m-1-)$.
		
		To show that the second assertion implies the third one, we observe that, since $T_a^*\approx T_{a^*}$, $T_{a-a^*}$ is also of order $(m-1-)$. Moreover, $a,a^*$ belong to the class $\Sigma^m$, meaning that $a-a^*$ is the sum of a symbol homogeneous in $\xi$ of degree $m$ and another one of degree $m-1$. Once the order of $T_{a-a^*}$ is strictly smaller than $m-1$, both components vanish and consequently $a=a^*$, while, due to the expression \eqref{eq-paralin:homo-sym-adj-formula} of $a^*$,
		\begin{equation*}
			a-a^* = a^{(m)} - \overline{a^{(m)}} - D_w\cdot\partial_\xi\overline{a^{(m)}} + a^{(m-1)} - \overline{a^{(m-1)}}
			= i\partial_w\cdot\partial_\xi a^{(m)} + 2i \Imag a^{(m-1)},
		\end{equation*}
		which gives the third assertion.
		
		If the third assertion holds true, the calculation above ensures that $a=a^*$ and the first assertion follows from $T_a^* \approx T_{a^*} = T_a$.
	\end{proof}

	\subsection{Paralinearization of normal derivative and good unknown}\label{subsect:paralin-ellip}
	
	In this part, we shall separate Poisson's equation \eqref{eq-pre:ellip} in normal and tangential directions near boundary $\rho=1$ and deduce that $\partial_\rho\varphi|_{\rho=1}$ equals some one order tangential operator acting on $\psi$, up to some remainders. To do so, we use the strategy in \cite{alazard2009paralinearization} (see also \cite{alazard2011water}) where the authors apply paradifferential calculus in tangential direction and reduce the problem to a paralinear parabolic equation which yields the regularity of remainders. The main result of this section is 
	\begin{proposition}\label{prop-paralin:normal-to-tangential}
		Let $(\eta,\psi)\in H_R^{s+\frac{1}{2}}\times H^{s_0}$ with $s>3$ and $\frac{3}{2}<s_0\le s$. Then there exists $\tau \in \Sigma^1$ such that
		\begin{equation}\label{eq-paralin:normal-to-tangential}
			\|\partial_\rho\varphi|_{\rho=1} - T_\tau U\|_{H^{s_0+\frac{1}{2}}} \le C\left(\|\eta\|_{H_R^{s+\frac{1}{2}-}}\right) \left( \|\psi\|_{H^{s_0}} + \|\eta\|_{H_R^{s+\frac{1}{2}}}\|\psi\|_{H^{s_0-}} \right),
		\end{equation}
		where
		\begin{equation}\label{eq-paralin:good-unknown}
			U:= \psi - T_B\eta \in H^{s_0}(\T\times\R).
		\end{equation}
		Moreover, $\tau = A|_{\rho=0}$, where $A(\rho) = A^{(1)}(\rho) + A^{(0)}(\rho)$ is defined by \eqref{eq-paralin:def-A-1} and \eqref{eq-paralin:def-A-0} below.
	\end{proposition}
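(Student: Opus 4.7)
The plan is to execute the Alazard--M\'etivier reduction: paralinearize the elliptic equation $L\varphi = 0$ on the collar $[1-\delta,1]\times\T\times\R$, factor the resulting tangential-paradifferential operator as the product of two first-order pseudo-parabolic operators, and exploit the smoothing effect of the forward factor acting on the Alinhac good unknown to gain a half tangential derivative at $\rho = 1$.

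First, using a Bony paralinearization of the coefficients $\alpha$, $\beta$, $\gamma$, $(\rho^2\eta^2)^{-1}$ (whose tangential regularity is supplied by Lemma \ref{lem-paralin:esti-alpha-beta-gamma}) together with the regularity estimates of Lemma \ref{lem-paralin:reg-of-pot} for $\partial_\rho^\ell\varphi$, I would rewrite $L\varphi = 0$ as
\begin{equation*}
T_\alpha\partial_\rho^2\varphi + T_\beta\cdot\nabla_{\theta,z}\partial_\rho\varphi + T_\gamma\partial_\rho\varphi + T_{1/(\rho^2\eta^2)}\partial_\theta^2\varphi + \partial_z^2\varphi = R_1(\rho),
\end{equation*}
where $R_1(\rho)$ lies in $H^{s_0+1/2}$ uniformly in $\rho\in[1-\delta,1]$, with the tame dependence on $(\eta,\psi)$ announced in the proposition. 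Then I look for symbols $A = A^{(1)}+A^{(0)}$ and $A' = A'^{(1)}+A'^{(0)}$ in $\Sigma^1$ realizing the factorization
\begin{equation*}
T_\alpha(\partial_\rho+T_{A'})(\partial_\rho-T_A) \approx T_\alpha\partial_\rho^2 + T_\beta\cdot\nabla_{\theta,z}\partial_\rho + T_\gamma\partial_\rho + T_{1/(\rho^2\eta^2)}\partial_\theta^2 + \partial_z^2.
\end{equation*}
Matching principal symbols via the composition formula of Proposition \ref{prop-paralin:homo-sym-cal} forces $A^{(1)}$ and $-A'^{(1)}$ to be the roots of an explicit quadratic whose discriminant is strictly positive (by ellipticity of $\alpha$); I pick the branches so that $\Real A^{(1)}, \Real A'^{(1)} > 0$. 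The sub-principal symbols $A^{(0)}, A'^{(0)}$ are then determined uniquely by matching the $\Sigma^0$ corrections, and this defines $\tau := A|_{\rho=1}$.

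Next comes the Alinhac substitution. I would replace $\varphi$ in the paralinearized equation by a good unknown whose boundary trace at $\rho = 1$ is $U = \psi - T_B\eta$, the interior extension being chosen so that the low-regularity contributions from $\nabla_{\theta,z}\eta$ hidden in $\beta$ and $\gamma$ are cancelled by the commutators produced by the symbolic calculus. The equation for the good unknown then has the same principal structure as the one for $\varphi$, and setting $w := (\partial_\rho - T_A)U$ yields the forward-parabolic paralinear equation
\begin{equation*}
(\partial_\rho + T_{A'})w = R_2(\rho), \qquad \Real A'^{(1)} > 0,
\end{equation*}
with $R_2$ gaining tangential derivatives. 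A maximal-regularity estimate for this parabolic equation---using Proposition \ref{prop-paralin:homo-sym-ellip-esti} to control the tangential elliptic part and propagating from the interior slice $\rho = 1-\delta$, where Lemma \ref{lem-paralin:reg-of-pot} supplies the Cauchy data---produces $w|_{\rho=1} \in H^{s_0+1/2}$ with the tame bound claimed. Evaluating at $\rho=1$ then gives $\partial_\rho U|_{\rho=1} = T_\tau U + w|_{\rho=1}$, and undoing the substitution yields $\partial_\rho\varphi|_{\rho=1} = T_\tau U + (\text{remainder in } H^{s_0+1/2})$, as required.

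The main obstacle will be Step 3: choosing the correct interior extension of the good unknown and verifying that the substitution cancels every low-regularity obstruction of the form $T_{\nabla_{\theta,z}\eta}\partial_\rho\varphi$ down to a full $H^{s_0+1/2}$ gain. This requires delicate use of Proposition \ref{prop-paralin:homo-sym-cal} and careful tracking of how the $\rho$-dependence of the sub-principal symbols interacts with $\partial_\rho$. A secondary difficulty is extracting the tame form of the estimate: the product $\|\eta\|_{H^{s+1/2}_R}\|\psi\|_{H^{s_0-}}$ in the bound arises from Bony's remainder whenever a coefficient is taken at its maximal regularity and $\varphi$ is evaluated at slightly lower regularity, which forces every paradifferential step to be quantified tamely in $(\eta,\psi)$.
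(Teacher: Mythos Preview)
Your overall approach is the paper's (the Alazard--M\'etivier scheme: good unknown, paralinearization of the elliptic equation, factorization, forward-parabolic smoothing). But your regularity accounting has a slip that, if left uncorrected, breaks the argument.

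You assert that the direct paralinearization of $L\varphi=0$ already produces a remainder $R_1(\rho)\in H^{s_0+1/2}$ uniformly in $\rho$, \emph{before} introducing the good unknown. This is false at the critical endpoint $s_0=s$: the paraproducts $T_{\partial_\rho^2\varphi}\alpha$, $T_{\nabla_{\theta,z}\partial_\rho\varphi}\cdot\beta$ coming from Bony's decomposition of $\alpha\,\partial_\rho^2\varphi$, $\beta\cdot\nabla\partial_\rho\varphi$ land only in $C^0_\rho H^{s_0-1/2}$ with no $\epsilon$ gain. The paper says exactly this in the paragraph preceding \eqref{eq-paralin:good-unknown-ext}. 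The role of the Alinhac substitution $\Phi=\varphi-T_{\eta^{-1}\partial_\rho\varphi}\rho\eta$ is precisely to cancel those obstructions and get $P\Phi\in C^0_\rho H^{s_0-1/2+\epsilon}$ (Lemma~\ref{lem-paralin:paralin-of-ellip})---still not $H^{s_0+1/2}$. The final $H^{s_0+1/2}$ regularity at $\rho=1$ comes only from the parabolic gain: if $(\partial_\rho+T_{a^{(1)}})v=T_{-a^{(0)}}v+f$ with $f\in C^0_\rho H^r$, then $v|_{\rho=1}\in H^{r+1-}$ (the paper quotes Proposition~3.19 of \cite{alazard2011water}). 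So the correct cascade of regularities is $H^{s_0-1/2+\epsilon}$ for the interior source, then $H^{s_0+1/2}$ only after taking the trace. Your Step~1 and your ``main obstacle'' paragraph are therefore inconsistent: if $R_1$ were already in $H^{s_0+1/2}$, the good unknown would be superfluous.

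Two smaller points. First, you write $w=(\partial_\rho-T_A)U$, but $U$ is the boundary trace; you need $(\partial_\rho-T_A)\Phi$ for the interior extension $\Phi$. Second, the tame term $\|\eta\|_{H^{s+1/2}_R}\|\psi\|_{H^{s_0-}}$ does not come from a Bony remainder $R(\cdot,\cdot)$ but from the paraproducts $T_{\eta^{-1}\partial_\rho^2\varphi}\eta$ and $T_{\eta^{-1}\partial_\rho\varphi}\eta$ generated when you pass from $(\partial_\rho-T_A)\Phi|_{\rho=1}$ back to $\partial_\rho\varphi|_{\rho=1}-T_\tau U$; these are the only places where $\eta$ must be taken at its top regularity $H^{s+1/2}$.
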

	
	Note that, under the assumption of Proposition \ref{eq-paralin:normal-to-tangential}, due to Proposition \ref{prop-para:paraprod-bound} and Lemma \ref{lem-paralin:esti-B-V-psi}, the new variable $U$ satisfies
	\begin{equation}\label{eq-paralin:esti-good-unknown}
		\|U\|_{H^{s_0}} \le \|\psi\|_{H^{s_0}} + \|T_B\eta\|_{H^{s_0}} \le C\left(\|\eta\|_{H_R^{s+\frac{1}{2}-}}\right) \|\psi\|_{H^{s_0}},\ \ \forall s>3,\ \frac{3}{2}<s_0\le s.
	\end{equation}
	
	As indicated in \cite{alazard2009paralinearization}, the use of $U$ instead of $\psi$ is essential, which will be explained briefly below. Let us consider the paralinearized version of Laplacian operator $L$ appearing in \eqref{eq-paralin:ellip},
	\begin{equation}\label{eq-paralin:def-para-Lap}
		P:= T_\alpha\partial_\rho^2 + T_{i\beta\cdot\xi+\gamma}\partial_\rho - T_{\rho^{-2}\eta^{-2}\xi_\theta^2+\xi_z^2}.
	\end{equation}\index{P@$P$ Paralinearized Laplacian}
	One may expect that, $P\varphi - L\varphi$ is much more regular than $\psi$, which is true if the coefficients $\alpha,\beta,\gamma,\eta^{-2}$ are smooth. However, under the assumption of Proposition \ref{prop-paralin:normal-to-tangential}, these coefficients have only Sobolev regularity and then the paraproducts $T_{\partial_\rho^2\varphi}\alpha, T_{\nabla_{\theta,z}\partial_\rho\varphi}\cdot\beta$ appearing in the remainders lie in $C^0_\rho H^{s_0-\frac{1}{2}}$, which is not enough to conclude \eqref{eq-paralin:normal-to-tangential} (we need $C^0_\rho H^{s_0-\frac{1}{2}+}$). This lack of regularity is actually a consequence of paracomposition, the general theory of which is sketched in Appendix \ref{subsect:compo}. Let $\chi$ represent the diffeomorphism from $(\rho,\theta,z)\in[1-\delta,1]\times\T\times\R$ to $(x,z)\in\Omega(t)$ and $X^*$ be the paracomposition operator associated to $\chi$. A formal application of Proposition \ref{prop-para:paracomp-conj} gives that
	\begin{equation*}
		0 = X^*\Delta_{x,z}\phi = T_{a} X^*\phi + R,
	\end{equation*}
	where the symbol $a$ is the pull-back of symbol of Laplacian operator and remainder $R$ has the desired regularity (see Lemma \ref{lem-paralin:paralin-of-ellip} below). Note that $\Delta_{x,z}$ should be understood as the paralinearized Laplacian operator differing from $\Delta_{x,z}$ only in low frequency. Therefore, this difference only generates smooth remainders and can be omitted. Formally, one may identify $T_a$ as $P$ defined by \eqref{eq-paralin:def-para-Lap}, since Laplacian $\Delta_{x,z}$ is a differential operator. To sum up, we have $PX^*\phi \in C^0_\rho H^{s_0-\frac{1}{2}+}$. Meanwhile, by Proposition \ref{prop-para:paracomp-exist}, up to remainders, the term $X^*\phi$ equal
	\begin{equation*}
		X^*\phi = \varphi - T_{\phi'\circ\chi}\chi = \varphi - T_{\eta^{-1}\partial_\rho\varphi}\rho\eta,
	\end{equation*}
	which suggests us to turn to \textit{Alinhac's good unknown}
	\begin{equation}\label{eq-paralin:good-unknown-ext}
		\Phi := \varphi - T_{\eta^{-1}\partial_\rho\varphi}\rho\eta,
	\end{equation}\index{P@$\Phi$ Alinhac's good unknown}
	whose trace at $\rho=1$ is equal to $U$ defined by \eqref{eq-paralin:good-unknown}.
	\begin{lemma}\label{lem-paralin:paralin-of-ellip}
		Let $(\eta,\psi)\in H^{s+\frac{1}{2}}_R\times H^{s_0}$ with $s>3$ and $\frac{3}{2} < s_0\le s$. Then 
		\begin{equation}\label{eq-paralin:paralin-of-ellip}
			P\Phi = r_1.
		\end{equation}
		where $\Phi$ is defined by \eqref{eq-paralin:good-unknown-ext} and $r_1$ satisfies
		\begin{equation}\label{eq-paralin:paralin-of-ellip-varphi-rem}
			\|r_1\|_{C^0_\rho H^{s_0-\frac{1}{2}+\epsilon}} \le C\left(\|\eta\|_{H^{s+\frac{1}{2}-}_R}\right) \|\psi\|_{H^{s_0}}.
		\end{equation} 
	\end{lemma}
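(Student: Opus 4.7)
The plan is to follow the paracomposition strategy of Alazard--Métivier: the good unknown $\Phi$ is designed so that applying the paralinearized Laplacian $P$ formally pulls back the equation $\Delta_{x,z}\phi=0$ modulo an acceptable remainder. Concretely I analyze $P\Phi = P\varphi - P(T_a\rho\eta)$ with $a:=\eta^{-1}\partial_\rho\varphi$ in two movements: first paralinearize $L\varphi=0$ to identify the ``bad'' paraproducts hidden in $P\varphi$, then verify that they are exactly cancelled by the corresponding contributions in $P(T_a\rho\eta)$.

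For the first movement, I would apply Bony's product formula $fg = T_fg + T_gf + R(f,g)$ to each of the four products appearing in $L\varphi$. The symmetric paraproducts collect into $P\varphi$ (replacing the multiplication by $\beta$ with its paradifferential counterpart $T_{i\beta\cdot\xi}$ generates only a smoothing error). The remainders $R(\alpha,\partial_\rho^2\varphi)$ and analogs enjoy the classical Bony gain of one derivative in two tangential dimensions; combined with Lemma \ref{lem-paralin:esti-alpha-beta-gamma} for the regularity of $\alpha,\beta,\gamma$ and Lemma \ref{lem-paralin:reg-of-pot} for the regularity of $\partial_\rho^k\varphi$, each falls in $C^0_\rho H^{s_0-\frac{1}{2}+\epsilon}$, the hypothesis $s>3$ being precisely what provides the $\epsilon$-buffer. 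One therefore obtains
\[ 0 = L\varphi = P\varphi + T_{\partial_\rho^2\varphi}\alpha + T_{\nabla_{\theta,z}\partial_\rho\varphi}\cdot\beta + T_{\partial_\rho\varphi}\gamma + T_{\partial_\theta^2\varphi}(\rho^{-2}\eta^{-2}) + r_0, \]
with $r_0$ controlled in the required norm.

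For the second movement, I use that tangential paraproducts commute with $\partial_\rho$ and that $\partial_\rho^2(\rho\eta)=0$ to compute $\partial_\rho(T_a\rho\eta)=T_a\eta+T_{\partial_\rho a}\rho\eta$ and $\partial_\rho^2(T_a\rho\eta)=2\,T_{\partial_\rho a}\eta+T_{\partial_\rho^2 a}\rho\eta$. Plugging these into $P$ and invoking the symbolic calculus (Proposition \ref{prop-paralin:homo-sym-cal}), each composition $T_\alpha T_{\partial_\rho^2 a}$ and friends reduces to a single paraproduct $T_{\alpha\partial_\rho^2 a}$ modulo lower-order errors. The crucial algebraic fact is that, after paralinearizing the coefficients $\alpha,\beta,\gamma,\eta^{-2}$ as smooth functions of $\eta$ and $\nabla_{\theta,z}\eta$ via the nonlinear paralinearization formula, the bad paraproducts of the first movement rearrange into exactly the opposite of the principal contributions of $P(T_a\rho\eta)$. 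This cancellation is the raison d'être of Alinhac's good unknown, and it can be read off term by term using $a=\eta^{-1}\partial_\rho\varphi$ and the Leibniz rule on $\partial_\rho^k a$.

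The main obstacle is the bookkeeping required to make this cancellation explicit. Many cross-terms arise from the product rule applied to $\partial_\rho^k(T_a\rho\eta)$ and from the symbolic calculus errors $T_fT_g - T_{fg}$, and each must be tracked to verify either its compensation with a bad paraproduct from the first movement or its absorption into $r_1$. Once the algebraic identity is in hand, the remaining terms all carry either a Bony remainder, a composition remainder, or a product of a sufficiently regular coefficient against a low-order derivative of $\varphi$, all of which are controlled in $C^0_\rho H^{s_0-\frac{1}{2}+\epsilon}$ by $\|\psi\|_{H^{s_0}}$ with constant depending only on $\|\eta\|_{H^{s+\frac{1}{2}-}_R}$, yielding \eqref{eq-paralin:paralin-of-ellip-varphi-rem}.
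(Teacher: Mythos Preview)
Your strategy matches the paper's: both write $P\Phi = P\varphi - P(T_{\eta^{-1}\partial_\rho\varphi}\rho\eta)$, paralinearize $L\varphi=0$ via Bony's decomposition to expose the bad paraproducts $T_{\partial_\rho^2\varphi}\alpha$, $T_{\nabla_w\partial_\rho\varphi}\cdot\beta$, $T_{\partial_\rho\varphi}\gamma$, then expand the correction term and verify the cancellation.

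The paper's treatment differs in one organizational device you do not mention: since $r_1 = P\Phi$ is \emph{linear} in $\psi$, one may interpolate and reduce to the two endpoints $s_0 = \tfrac{3}{2}+\delta$ and $s_0 = s$. At the low endpoint each bad paraproduct \emph{and} the entire correction $P(T_a\rho\eta)$ lie separately in $C^0_\rho H^{s_0-\frac12+\epsilon}$ (no cancellation needed: $\partial_\rho^2\varphi\in H^{-\frac12+\delta}$ gives $T_{\partial_\rho^2\varphi}\alpha\in H^{s+\delta-\frac52}\subset H^{1+\delta+\epsilon}$ under $s>3$). At the high endpoint $s_0=s$, all the low-frequency factors ($\partial_\rho^2\varphi$, $\eta^{-1}\partial_\rho\varphi$, etc.) sit in $H^{s-2}\subset\Gamma^0_{0+}$, so the full symbolic calculus (composition and commutator lemmas) is available and the cancellation can be checked term by term. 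Without this split you face an awkward intermediate regime: for $s_0$ close to $s$ the individual bad paraproducts fail to be remainders, yet for $s_0<3$ the symbols $\partial_\rho^2\varphi\in H^{s_0-2}$ lack the positive H\"older regularity needed for the symbolic-calculus manipulations in your cancellation step. The interpolation trick sidesteps this entirely. Otherwise your outline is faithful to the paper's argument, and the paper's explicit bookkeeping (its Lemmas establishing \eqref{eq-paralin:paralin-of-ellip-S1} and \eqref{eq-paralin:paralin-of-ellip-S1'}) confirms that the ``crucial algebraic fact'' you invoke does hold.
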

	During the proof of Lemma \ref{lem-paralin:paralin-of-ellip}, we shall use the equivalence: for $u,v$ defined on $[1-\delta]\times\T\times\R$,
	\begin{equation}\label{eq-paralin:equi-paralin-of-ellip}
		u\sim v \Leftrightarrow \|u-v\|_{C^0_\rho H^{s_0-\frac{1}{2}+\epsilon}} \le C\left(\|\eta\|_{H_R^{s+\frac{1}{2}-}}\right) \|\psi\|_{H^{s_0}},
	\end{equation}\index{e@$\sim$ Equivalence between functions}
	with $0<\epsilon \ll 1$ to be determined later.
	
	To begin with, we check that
	\begin{lemma}\label{lem-paralin:paralin-of-ellip-S1}
		Under the hypothesis of Lemma \ref{lem-paralin:paralin-of-ellip}, we have,
		\begin{equation}\label{eq-paralin:paralin-of-ellip-S1}
			P\Phi \sim - T_{\partial_\rho^2\varphi} \alpha - T_{\nabla_w\partial_\rho\varphi}\cdot\beta - T_{\partial_\rho\varphi}\gamma - PT_{\eta^{-1}\partial_\rho\varphi}\rho\eta,
		\end{equation}
		in the sense of \eqref{eq-paralin:equi-paralin-of-ellip}. Recall that $w$ stands for the couple of variable $(\theta,z)$.
	\end{lemma}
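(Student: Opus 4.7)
The plan is to derive the equivalence in three stages: first rewrite $P\varphi$ so that each paradifferential summand appears as a true differential operator (up to smoothing), then apply Bony's paralinearization to every product in the identity $L\varphi=0$, and finally show that the ``extra'' terms thus generated beyond $-T_{\partial_\rho^2\varphi}\alpha - T_{\nabla_w\partial_\rho\varphi}\cdot\beta - T_{\partial_\rho\varphi}\gamma$ can all be absorbed into the equivalence $\sim$.

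First I would note that $\xi_z^2$ is constant in $w$ so $T_{\xi_z^2}=-\partial_z^2$ up to a smoothing operator. Since $\partial_\xi(\rho^{-2}\eta^{-2})=0$ and $\partial_\xi\beta=0$, the symbolic calculus of Proposition~\ref{prop-paralin:homo-sym-cal} gives
\begin{equation*}
T_{i\beta\cdot\xi}\partial_\rho\varphi \sim T_\beta\cdot\nabla_w\partial_\rho\varphi,\qquad -T_{\rho^{-2}\eta^{-2}\xi_\theta^2}\varphi \sim T_{\rho^{-2}\eta^{-2}}\partial_\theta^2\varphi,
\end{equation*}
so that in total
\begin{equation*}
P\varphi \sim T_\alpha\partial_\rho^2\varphi + T_\beta\cdot\nabla_w\partial_\rho\varphi + T_\gamma\partial_\rho\varphi + T_{\rho^{-2}\eta^{-2}}\partial_\theta^2\varphi + \partial_z^2\varphi.
\end{equation*}
Next, applying Bony's decomposition $fg = T_f g + T_g f + R(f,g)$ to each of the four variable-coefficient products appearing in $L\varphi=0$, summing, and using $L\varphi=0$ from \eqref{eq-paralin:ellip}, the left-hand side collapses exactly to the display above and I am left with
\begin{equation*}
P\varphi \sim -T_{\partial_\rho^2\varphi}\alpha - T_{\nabla_w\partial_\rho\varphi}\cdot\beta - T_{\partial_\rho\varphi}\gamma - T_{\partial_\theta^2\varphi}(\rho^{-2}\eta^{-2}) - \textstyle\sum_i R_i.
\end{equation*}

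It then remains to verify that the stray paraproduct $T_{\partial_\theta^2\varphi}(\rho^{-2}\eta^{-2})$ and each Bony remainder $R_i$ satisfies the equivalence $\sim 0$, i.e.\ lives in $C^0_\rho H^{s_0-\frac{1}{2}+\epsilon}$ with the required tame bound. Lemma~\ref{lem-paralin:esti-alpha-beta-gamma} places $\alpha,\beta,\gamma$ in $C^0_\rho H^{s-\frac{1}{2}-}$, Proposition~\ref{prop-para:paralin} gives $\rho^{-2}\eta^{-2}\in C^0_\rho H^{s+\frac{1}{2}-}$, while Lemma~\ref{lem-paralin:reg-of-pot} supplies $\partial_\rho^l\varphi\in C^0_\rho H^{s_0-l}$ for $l\le 2$ (and tangential derivatives do not change the Sobolev index on each slice). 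The worst Bony remainder then lands in $C^0_\rho H^{s+s_0-\frac{7}{2}-}$, and for $T_{\partial_\theta^2\varphi}(\rho^{-2}\eta^{-2})$ the standard paraproduct bound with the rough factor placed in the low-frequency slot gives $C^0_\rho H^{s+s_0-\frac{5}{2}-}$; both strictly exceed $s_0-\frac{1}{2}+\epsilon$ provided $s>3$ and $\epsilon$ is chosen small. All constants come out tame in the sense of \eqref{eq-paralin:equi-paralin-of-ellip}, with $\|\varphi\|_{H^{s_0+\frac{1}{2}}}$-dependence absorbed via elliptic regularity into $C(\|\eta\|_{H^{s+\frac{1}{2}-}_R})\|\psi\|_{H^{s_0}}$.

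Subtracting $PT_{\eta^{-1}\partial_\rho\varphi}\rho\eta$ from both sides of the conclusion of Step~2 yields the stated identity for $P\Phi$, since by definition $\Phi = \varphi - T_{\eta^{-1}\partial_\rho\varphi}\rho\eta$. \textbf{The main obstacle} is the regularity bookkeeping in Step~3: the threshold $s>3$ is precisely what makes both the worst Bony remainder (order $s+s_0-\frac{7}{2}$) and the anomalous paraproduct $T_{\partial_\theta^2\varphi}(\rho^{-2}\eta^{-2})$ beat the target $s_0-\frac{1}{2}+\epsilon$, so one has to invoke the sharp low-regularity form of the paraproduct estimate (where the low-frequency factor is allowed negative Sobolev regularity below $d/2=1$), and carry the dependence on $\|\eta\|$ through the Moser-type bounds for the rational functions $\alpha,\beta,\gamma,\rho^{-2}\eta^{-2}$.
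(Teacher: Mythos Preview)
Your proof is correct and follows essentially the same approach as the paper: expand $P\Phi = P\varphi - PT_{\eta^{-1}\partial_\rho\varphi}\rho\eta$, rewrite $P\varphi$ via the exact identities $T_{i\beta\cdot\xi}=T_\beta\cdot\nabla_w$ and $-T_{\rho^{-2}\eta^{-2}\xi_\theta^2}=T_{\rho^{-2}\eta^{-2}}\partial_\theta^2$, apply Bony's decomposition to each variable-coefficient product in $L\varphi=0$, and then estimate the four Bony remainders together with the stray paraproduct $T_{\partial_\theta^2\varphi}(\rho^{-2}\eta^{-2})$. Your index bookkeeping matches the paper's (the critical threshold $s>3$ is exactly what is needed for $s+s_0-\tfrac{7}{2}-$ to exceed $s_0-\tfrac{1}{2}+\epsilon$), and the final subtraction of $PT_{\eta^{-1}\partial_\rho\varphi}\rho\eta$ is identical.
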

	\begin{proof}
		By definition \eqref{eq-paralin:good-unknown-ext} of the good unknown $\Phi$, we have
		\begin{align*}
			P\Phi =& P\varphi - PT_{\eta^{-1}\partial_\rho\varphi}\rho\eta \\
			=& T_\alpha\partial_\rho^2\varphi + T_{i\beta\cdot\xi+\gamma}\partial_\rho\varphi - T_{\rho^{-2}\eta^{-2}\xi_\theta^2+\xi_z^2}\varphi - PT_{\eta^{-1}\partial_\rho\varphi}\rho\eta \\
			=& T_\alpha\partial_\rho^2\varphi + T_{\beta}\cdot\nabla_w\partial_\rho\varphi+T_{\gamma}\partial_\rho\varphi + T_{\rho^{-2}\eta^{-2}}\partial_\theta^2\varphi + \partial_z^2\varphi - PT_{\eta^{-1}\partial_\rho\varphi}\rho\eta + (-T_{\xi_z^2}\varphi - \partial_z^2\varphi) \\
			=& L\varphi - T_{\partial_\rho^2\varphi} \alpha - T_{\nabla_w\partial_\rho\varphi}\cdot\beta - T_{\partial_\rho\varphi}\gamma -  T_{\partial_\theta^2\varphi}\rho^{-2}\eta^{-2} - PT_{\eta^{-1}\partial_\rho\varphi}\rho\eta \\
			& -R(\partial_\rho^2\varphi,\alpha) - R(\nabla_w\partial_\rho\varphi, \beta)  - R(\partial_\rho\varphi,\gamma) - R(\partial_\theta^2\varphi,\rho^{-2}\eta^{-2}) + (-T_{\xi_z^2}\varphi - \partial_z^2\varphi)\\
			\sim& L\varphi - T_{\partial_\rho^2\varphi} \alpha - T_{\nabla_w\partial_\rho\varphi}\cdot\beta - T_{\partial_\rho\varphi}\gamma - PT_{\eta^{-1}\partial_\rho\varphi}\rho\eta \\
			=& - T_{\partial_\rho^2\varphi} \alpha - T_{\nabla_w\partial_\rho\varphi}\cdot\beta - T_{\partial_\rho\varphi}\gamma - PT_{\eta^{-1}\partial_\rho\varphi}\rho\eta.
		\end{align*}
		Recall that $L$ is the Laplacian in coordinate $(\rho,\theta,z)$ defined in \eqref{eq-paralin:ellip}. To check this equivalence, we first observe that, thanks to Lemma \ref{lem-paralin:reg-of-pot}, $\nabla_{\rho,w}^2\varphi\in C^0_\rho H^{s_0-2-}$ and $\partial_{\rho}\varphi\in C^0_\rho H^{s_0-1-}$. By Proposition \ref{prop-para:paraprod-bound}, Remark \ref{rmk-para:add-constant}, and estimate \eqref{eq-paralin:esti-alpha-beta-gamma}, we have the following estimates:
		\begin{align*}
			&\|T_{\partial_\theta^2\varphi}\rho^{-2}\eta^{-2}\|_{C^0_\rho H^{s_0-\frac{1}{2}+\epsilon}} + \|R(\partial_\theta^2\varphi,\rho^{-2}\eta^{-2})\|_{C^0_\rho H^{s_0-\frac{1}{2}+\epsilon}} \\
			&\le \|T_{\partial_\theta^2\varphi}(\eta^{-2}-R^{-2})\|_{C^0_\rho H^{s_0-\frac{1}{2}+\epsilon}} + \|R(\partial_\theta^2\varphi,\eta^{-2})\|_{C^0_\rho H^{s_0-\frac{1}{2}+\epsilon}} \\
			&\lesssim \left(\|\eta^{-2}\|_{H^{\max(\frac{5}{2},s_0-\frac{1}{2})+\epsilon+}_{R^{-2}}} + 1 \right) \|\partial_\theta^2\varphi\|_{C^0_\rho H^{s_0-2-}} \le C\left(\|\eta\|_{H_R^{s+\frac{1}{2}-}}\right) \|\psi\|_{H^{s_0}}; \\
			&\| R(\partial_\rho^2\varphi,\alpha) + R(\nabla_w\partial_\rho\varphi, \beta) \|_{C^0_\rho H^{s_0-\frac{1}{2}+\epsilon}} \\
			&\lesssim \left(\|\alpha\|_{C^0_\rho H^{\frac{5}{2}+\epsilon+}_{R^{-2}}} + 1 \right) \|\partial_\rho^2\varphi\|_{C^0_\rho H^{s_0-2-}} + \|\beta\|_{C^0_\rho H^{\frac{5}{2}+\epsilon+}} \|\nabla_w \partial_\rho\varphi\|_{C^0_\rho H^{s_0-2-}} \\
			&\le C\left(\|\eta\|_{H_R^{s+\frac{1}{2}-}}\right) \|\psi\|_{H^{s_0}}; \\			
			&\|R(\partial_\rho\varphi,\gamma)\|_{C^0_\rho H^{s_0-\frac{1}{2}+\epsilon}} \lesssim \left(\|\gamma\|_{C^0_\rho H^{\frac{3}{2}+\epsilon+}_{\rho^{-1}R^{-2}}} + 1 \right) \|\partial_\rho\varphi\|_{C^0_\rho H^{s_0-1-}}\le C\left(\|\eta\|_{H_R^{s+\frac{1}{2}-}}\right) \|\psi\|_{H^{s_0}}.
		\end{align*}
		As for the remaining term $-T_{\xi_z^2}\varphi - \partial_z^2\varphi$, we observe that, since the symbol of $\partial_z^2$ equals $-\xi_z^2$, the difference $-T_{\xi_z^2} - \partial_z^2$ is a smoothing operator. Namely, for all $M \gg 1$, we have
		\begin{equation*}
			\|-T_{\xi_z^2}\varphi - \partial_z^2\varphi\|_{C^0H^{s_0-\frac{1}{2}+\epsilon}} \lesssim \|\varphi\|_{C^0_\rho H^{-M}} \le C\left(\|\eta\|_{H_R^{s+\frac{1}{2}-}}\right) \|\psi\|_{H^{s_0}}.
		\end{equation*}
	\end{proof}
	
	Let us proceed with the proof of Lemma \ref{lem-paralin:paralin-of-ellip}. We observe that $r_1 = P\Phi$ is linear in $\psi$. Therefore, an interpolation argument allows us to reduce to the endpoints $s_0=\frac{3}{2}+\delta$ and $s_0=s$. If $s_0=\frac{3}{2}+\delta$ with $\delta>0$ fixed and small enough, the regularity of $\varphi$ reads $\nabla_{\rho,w}^2\varphi\in C^0_\rho H^{-\frac{1}{2}+\delta}$ and $\partial_{\rho}\varphi\in C^0_\rho H^{\frac{1}{2}+\delta}$ (see Lemma \ref{lem-paralin:reg-of-pot}). As a result, by Proposition \ref{prop-para:paraprod-bound} and \eqref{eq-paralin:esti-alpha-beta-gamma}, we have
	\begin{align*}
		&\|T_{\partial_\rho^2\varphi} \alpha + T_{\nabla_w\partial_\rho\varphi}\cdot\beta + T_{\partial_\rho\varphi}\gamma\|_{C^0_\rho H^{1+\delta+\epsilon}} \\
		&\lesssim \|\partial_\rho^2\varphi\|_{C^0_\rho H^{-\frac{1}{2}+\delta}} \|\alpha\|_{C^0_\rho H^{\frac{5}{2}+\epsilon}_{R^{-2}}} + \|\nabla_w\partial_\rho\varphi\|_{C^0_\rho H^{-\frac{1}{2}+\delta}} \|\beta\|_{C^0_\rho H^{\frac{5}{2}+\epsilon}} + \|\partial_\rho\varphi\|_{C^0_\rho H^{\frac{1}{2}+\delta}} \|\gamma\|_{C^0_\rho H^{\frac{3}{2}+\epsilon}_{\rho^{-1}R^{-2}}} \\
		&\lesssim C\left(\|\eta\|_{H_R^{s+\frac{1}{2}-}}\right) \|\psi\|_{H^{\frac{3}{2}+\delta}}.
	\end{align*}
	Meanwhile, Proposition \ref{prop-para:paraprod-bound} and Corollary \ref{cor-para:product-law} give
	\begin{equation*}
		\|T_{\eta^{-1}\partial_\rho\varphi}\rho\eta\|_{C^0_\rho H^{3+\delta+\epsilon}} \lesssim \|\eta^{-1}\partial_\rho\varphi\|_{C^0_\rho H^{\frac{1}{2}+\delta}} \|\rho\eta\|_{C^0_\rho H^{\frac{7}{2}+\epsilon}_{R}} \lesssim C\left(\|\eta\|_{H_R^{s+\frac{1}{2}-}}\right) \|\psi\|_{H^{\frac{3}{2}+\delta}}.
	\end{equation*}
	Since $P$ is an operator of order $2$, we can conclude 
	\begin{equation*}
		P\Phi \sim - T_{\partial_\rho^2\varphi} \alpha - T_{\nabla_w\partial_\rho\varphi}\cdot\beta - T_{\partial_\rho\varphi}\gamma - PT_{\eta^{-1}\partial_\rho\varphi}\rho\eta \sim 0.
	\end{equation*}
	Note that the extra term $T_{\eta^{-1}\partial_\rho\varphi}\rho\eta$ from \textit{Alinhac's good unknown} is useless in this case $s_0=3/2+\delta$, while it will be crucial in the case $s_0=s$. In fact, when $s_0=s$, the estimates above are no more correct. For example, we can only prove $T_{\partial_\rho^2\varphi} \alpha \in C^0_\rho H^{s-\frac{1}{2}} = C^0_\rho H^{s_0-\frac{1}{2}}$ instead of $C^0_\rho H^{s_0-\frac{1}{2}+\epsilon}$.
	
	From now on, we take $s_0=s$ and check that the extra term $PT_{\eta^{-1}\partial_\rho\varphi}\rho\eta$ is able to cancel the low regular components of  $T_{\partial_\rho^2\varphi} \alpha$, $T_{\nabla_w\partial_\rho\varphi}\cdot\beta$, and $T_{\partial_\rho\varphi}\gamma$. To begin with, by definition \eqref{eq-paralin:def-para-Lap} of $P$,
	\begin{equation*}
		PT_{\eta^{-1}\partial_\rho\varphi}\rho\eta = T_\alpha\partial_\rho^2 T_{\eta^{-1}\partial_\rho\varphi}\rho\eta + T_{i\beta\cdot\xi}\partial_\rho T_{\eta^{-1}\partial_\rho\varphi}\rho\eta + T_{\gamma}\partial_\rho T_{\eta^{-1}\partial_\rho\varphi}\rho\eta - T_{\rho^{-2}\eta^{-2}\xi_\theta^2+\xi_z^2}T_{\eta^{-1}\partial_\rho\varphi}\rho\eta,
	\end{equation*}
	and we will separately study each terms on the right hand side. As a corollary of the following estimates, we are able to deduce that
	\begin{equation}\label{eq-paralin:paralin-of-ellip-S1'}
		\begin{aligned}
			P\Phi \sim& - T_{\partial_\rho^2\varphi} \alpha - T_{\nabla_w\partial_\rho\varphi}\cdot\beta - T_{\partial_\rho\varphi}\gamma - T_\beta\cdot T_{\eta^{-1}\partial_\rho^2\varphi}\rho\nabla_w\eta - T_\beta\cdot T_{\eta^{-1}\partial_\rho\varphi}\nabla_w\eta \\
			&- 2\rho^{-1} T_{\eta^{-2}}T_{\partial_\theta\left(\eta^{-1}\partial_\rho\varphi\right)}\eta_\theta - \rho^{-1}T_{\eta^{-2}}T_{\eta^{-1}\partial_\rho\varphi}\eta_{\theta\theta} - 2\rho T_{\partial_z\left(\eta^{-1}\partial_\rho\varphi\right)}\eta_z - \rho T_{\eta^{-1}\partial_\rho\varphi}\eta_{zz},
		\end{aligned}
	\end{equation}
	when $s_0=s$.
	
	\begin{lemma}\label{lem-paralin:paralin-of-ellip-S2}
		Under the hypotheses of Lemma \ref{lem-paralin:paralin-of-ellip}, we have the following equivalences:
		\begin{align}
			 T_\alpha\partial_\rho^2 T_{\eta^{-1}\partial_\rho\varphi}\rho\eta \sim& 0, \label{eq-paralin:paralin-of-ellip-S2-1} \\
			 T_{i\beta\cdot\xi}\partial_\rho T_{\eta^{-1}\partial_\rho\varphi}\rho\eta \sim&  T_\beta\cdot T_{\eta^{-1}\partial_\rho^2\varphi}\rho\nabla_w\eta + T_\beta\cdot T_{\eta^{-1}\partial_\rho\varphi}\nabla_w\eta, \label{eq-paralin:paralin-of-ellip-S2-2} \\
			 T_{\gamma}\partial_\rho T_{\eta^{-1}\partial_\rho\varphi}\rho\eta \sim& 0, \label{eq-paralin:paralin-of-ellip-S2-3} \\
			 - T_{\rho^{-2}\eta^{-2}\xi_\theta^2+\xi_z^2}T_{\eta^{-1}\partial_\rho\varphi}\rho\eta \sim& 2\rho^{-1} T_{\eta^{-2}}T_{\partial_\theta\left(\eta^{-1}\partial_\rho\varphi\right)}\eta_\theta + \rho^{-1}T_{\eta^{-2}}T_{\eta^{-1}\partial_\rho\varphi}\eta_{\theta\theta} \label{eq-paralin:paralin-of-ellip-S2-4} \\
			 &+ 2\rho T_{\partial_z\left(\eta^{-1}\partial_\rho\varphi\right)}\eta_z + \rho T_{\eta^{-1}\partial_\rho\varphi}\eta_{zz}, \nonumber
		\end{align}
		in the sense of \eqref{eq-paralin:equi-paralin-of-ellip} with $s_0=s$.
	\end{lemma}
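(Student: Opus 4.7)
The plan is to verify each of the four equivalences by expanding the compositions on the left-hand side via the Leibniz identity $\partial_\rho T_a b = T_{\partial_\rho a} b + T_a \partial_\rho b$, together with the fact that $\rho\eta$ is linear in $\rho$, so $\partial_\rho(\rho\eta) = \eta$ and $\partial_\rho^2(\rho\eta) = 0$. After each expansion I would combine the symbolic calculus (Proposition \ref{prop-paralin:homo-sym-cal}) with the paraproduct bounds (Proposition \ref{prop-para:paraprod-bound}) and the regularity of $\partial_\rho^l\varphi$ for $l \le 3$ supplied by Lemma \ref{lem-paralin:reg-of-pot} (with $s_0 = s$) to show the residuals lie in $C^0_\rho H^{s-\frac{1}{2}+\epsilon}$, as demanded by \eqref{eq-paralin:equi-paralin-of-ellip}.

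For \eqref{eq-paralin:paralin-of-ellip-S2-1}, applying the Leibniz rule twice gives $\partial_\rho^2\bigl[T_{\eta^{-1}\partial_\rho\varphi}\rho\eta\bigr] = T_{\eta^{-1}\partial_\rho^3\varphi}\rho\eta + 2\,T_{\eta^{-1}\partial_\rho^2\varphi}\eta$. In the second summand, $\partial_\rho^2\varphi \in C^0_\rho H^{s-2}$ embeds into $L^\infty$ for $s>3$, so paraproducting against $\eta \in H^{s+\frac{1}{2}-}_R$ preserves the Sobolev regularity of $\eta$, and composing with $T_\alpha$ keeps us in the right space thanks to \eqref{eq-paralin:esti-alpha-beta-gamma}. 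The first summand involves only $\partial_\rho^3\varphi \in C^0_\rho H^{s-3}$, which is borderline; here I would invoke $L\varphi = 0$ differentiated once in $\rho$ to express $\alpha\partial_\rho^3\varphi$ as tangential derivatives applied to $\partial_\rho\varphi$ and $\partial_\rho^2\varphi$, and then use Proposition \ref{prop-paralin:homo-sym-cal} to absorb $T_\alpha$ into a paradifferential operator of correct order acting on a function of the required regularity.

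The remaining three cases follow the same scheme. For \eqref{eq-paralin:paralin-of-ellip-S2-2}, one first computes $\partial_\rho\bigl[T_{\eta^{-1}\partial_\rho\varphi}\rho\eta\bigr] = T_{\eta^{-1}\partial_\rho^2\varphi}\rho\eta + T_{\eta^{-1}\partial_\rho\varphi}\eta$, then uses the composition formula \eqref{eq-paralin:homo-sym-comp-formula} to replace $T_{i\beta\cdot\xi}T_a$ by $T_\beta \cdot T_a \nabla_w(\cdot)$ modulo order-zero residuals; the two leading terms are exactly those on the right-hand side. For \eqref{eq-paralin:paralin-of-ellip-S2-3}, $T_\gamma$ is of order zero while $\partial_\rho\bigl[T_{\eta^{-1}\partial_\rho\varphi}\rho\eta\bigr] \in C^0_\rho H^{s-\frac{1}{2}-}$ by the same expansion, so the equivalence is direct. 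For \eqref{eq-paralin:paralin-of-ellip-S2-4}, the symbolic composition $T_{\rho^{-2}\eta^{-2}\xi_\theta^2 + \xi_z^2}\,T_{\eta^{-1}\partial_\rho\varphi}$ is expanded via \eqref{eq-paralin:homo-sym-comp-formula}, and Leibniz is applied to $\partial_\theta^2(\rho\eta) = \rho\,\eta_{\theta\theta}$ and $\partial_z^2(\rho\eta) = \rho\,\eta_{zz}$; the four listed terms arise as the principal contributions, while the sub-principal corrections are controlled by the tame paraproduct estimates together with Lemma \ref{lem-paralin:esti-alpha-beta-gamma}.

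The main obstacle I expect is the borderline regularity in \eqref{eq-paralin:paralin-of-ellip-S2-1}: since $\partial_\rho^3\varphi$ only lies in $C^0_\rho H^{s-3}$, a direct paraproduct estimate for $T_{\eta^{-1}\partial_\rho^3\varphi}\rho\eta$ just barely fails to give $C^0_\rho H^{s-\frac{1}{2}+\epsilon}$, and one must trade the third normal derivative for tangential ones through the elliptic equation $L\varphi = 0$, which introduces the coefficient $\partial_\rho\alpha$ and forces careful bookkeeping of which factors carry Sobolev versus $L^\infty$ regularity. Secondary bookkeeping issues arise in \eqref{eq-paralin:paralin-of-ellip-S2-4}, where the sub-principal symbols generated by the composition must be identified cleanly so that they either match one of the four explicit right-hand-side terms or are absorbed into the remainder.
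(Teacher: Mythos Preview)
Your overall scheme—Leibniz in $\rho$, paraproduct bounds, and the regularity of $\partial_\rho^l\varphi$ from Lemma~\ref{lem-paralin:reg-of-pot}—matches the paper. The divergence is in \eqref{eq-paralin:paralin-of-ellip-S2-1}, where the obstacle you anticipate is illusory.

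You claim a direct paraproduct estimate for $T_{\eta^{-1}\partial_\rho^3\varphi}\rho\eta$ ``just barely fails.'' It does not: with $a := \eta^{-1}\partial_\rho^3\varphi \in C^0_\rho H^{s-3-}$ and $d=2$, Proposition~\ref{prop-para:paraprod-bound} gives
\[
\|T_a \eta\|_{H^{s-\frac{1}{2}+\epsilon}} \lesssim \|a\|_{H^{s-3-}}\,\|\eta\|_{H^{\max(s-\frac{1}{2},\,7/2)+\epsilon+}_R},
\]
and the required exponent on $\eta$ is strictly below $s+\frac{1}{2}-$ whenever $s>3$ and $\epsilon$ is small. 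The point is that in $T_a b$ the symbol $a$ is frequency-truncated below $b$, so the output inherits the regularity of $b$ with only a mild penalty ($d/2$ minus the Sobolev index of $a$) when $a$ is below $L^\infty$. No appeal to $L\varphi=0$ or trading of normal for tangential derivatives is needed; the paper simply writes $T_\alpha\partial_\rho^2\bigl[T_{\eta^{-1}\partial_\rho\varphi}\rho\eta\bigr] = T_\alpha T_{\eta^{-1}\partial_\rho^3\varphi}\rho\eta + 2T_\alpha T_{\eta^{-1}\partial_\rho^2\varphi}\eta$ and bounds both pieces directly.

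The same correction applies to \eqref{eq-paralin:paralin-of-ellip-S2-3}: the two terms in $\partial_\rho\bigl[T_{\eta^{-1}\partial_\rho\varphi}\rho\eta\bigr]$ already sit in $C^0_\rho H^{s-\frac{1}{2}+\epsilon}$, not merely $H^{s-\frac{1}{2}-}$ as you wrote, so composing with the order-zero $T_\gamma$ is immediate. For \eqref{eq-paralin:paralin-of-ellip-S2-4} the paper avoids the composition formula \eqref{eq-paralin:homo-sym-comp-formula} altogether: since $-T_{\xi_\theta^2}=\partial_\theta^2$ and $-T_{\xi_z^2}=\partial_z^2$ up to smoothing operators, one applies exact Leibniz in $\theta$ and $z$ to $T_{\eta^{-1}\partial_\rho\varphi}\rho\eta$, producing the four displayed terms plus one extra of each type (e.g.\ $\rho^{-1}T_{\eta^{-2}}T_{\partial_\theta^2(\eta^{-1}\partial_\rho\varphi)}\eta$) that is again $\sim 0$ by the same paraproduct argument. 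Your symbolic-calculus route for \eqref{eq-paralin:paralin-of-ellip-S2-4} would work but costs extra bookkeeping matching sub-principal corrections against the displayed terms.
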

	\begin{proof}
		As before, we observe that $\eta^{-1}\partial_\rho\varphi \in C^0_\rho H^{s-1}$ and thus $T_{\eta^{-1}\partial_\rho\varphi}\rho\eta\in C^0_\rho H^{s+\frac{1}{2}}$. To prove the first equivalence \eqref{eq-paralin:paralin-of-ellip-S2-1}, we apply Proposition \ref{prop-para:paraprod-bound}, Corollary \ref{cor-para:product-law}, together with \eqref{eq-paralin:esti-alpha-beta-gamma} and obtain
		\begin{align*}
			&\|T_\alpha\partial_\rho^2 T_{\eta^{-1}\partial_\rho\varphi}\rho\eta\|_{C^0_\rho H^{s-\frac{1}{2}+\epsilon}} = \|T_\alpha T_{\eta^{-1}\partial_\rho^3\varphi}\rho\eta+ 2T_\alpha T_{\eta^{-1}\partial_\rho^2\varphi}\eta\|_{C^0_\rho H^{s-\frac{1}{2}+\epsilon}} \\
			\lesssim& \left(\|\alpha\|_{C^0_\rho H^{1+}_{R^{-2}}} + 1 \right) \left( \| T_{\eta^{-1}\partial_\rho^3\varphi}\eta\|_{C^0_\rho H^{s-\frac{1}{2}+\epsilon}} + \| T_{\eta^{-1}\partial_\rho^2\varphi}\eta\|_{C^0_\rho H^{s-\frac{1}{2}+\epsilon}} \right) \\
			\le& C\left(\|\eta\|_{H_R^{s+\frac{1}{2}-}}\right) \left( \|\eta^{-1}\partial_\rho^3\varphi\|_{C^0_\rho H^{s-3-}} \|\eta\|_{H^{\max(\frac{7}{2},s-\frac{1}{2})+\epsilon+}_R} + \|\eta^{-1}\partial_\rho^2\varphi\|_{C^0_\rho H^{s-2-}} \|\eta\|_{H^{\max(\frac{5}{2},s-\frac{1}{2})+\epsilon+}_R} \right) \\
			\le& C\left(\|\eta\|_{H_R^{s+\frac{1}{2}-}}\right) \left( \|\eta^{-1}\partial_\rho^3\varphi\|_{C^0_\rho H^{s-3-}}  + \|\eta^{-1}\partial_\rho^2\varphi\|_{C^0_\rho H^{s-2-}} \right) \\
			\le& C\left(\|\eta\|_{H_R^{s+\frac{1}{2}-}}\right) \left(\|\eta^{-1}\|_{H^{\max(1+,s-2)}_{R^{-1}}} + 1 \right) \left( \|\partial_\rho^3\varphi\|_{C^0_\rho H^{s-3-}}  + \|\partial_\rho^2\varphi\|_{C^0_\rho H^{s-2-}} \right) \\
			\le& C\left(\|\eta\|_{H_R^{s+\frac{1}{2}-}}\right) \|\psi\|_{H^{s}}.
		\end{align*}
		Recall that $\partial_\rho^3\varphi \in C^0_\rho H^{s-3}$ thanks to \eqref{eq-paralin:reg-of-pot-loc}. The term $T_{i\beta\cdot\xi}\partial_\rho T_{\eta^{-1}\partial_\rho\varphi}\rho\eta$ in the second equivalence \eqref{eq-paralin:paralin-of-ellip-S2-2} can be written as
		\begin{equation*}
			T_\beta\cdot T_{\nabla_w\left(\eta^{-1}\partial_\rho^2\varphi\right)}\rho\eta + T_\beta\cdot T_{\eta^{-1}\partial_\rho^2\varphi}\rho\nabla_w\eta + T_\beta\cdot T_{\nabla_w\left(\eta^{-1}\partial_\rho\varphi\right)}\eta + T_\beta\cdot T_{\eta^{-1}\partial_\rho\varphi}\nabla_w\eta,
		\end{equation*}
		with
		\begin{align*}
			&\| T_\beta\cdot T_{\nabla_w\left(\eta^{-1}\partial_\rho^2\varphi\right)}\rho\eta + T_\beta\cdot T_{\nabla_w\left(\eta^{-1}\partial_\rho\varphi\right)}\eta \|_{C^0_\rho H^{s-\frac{1}{2}+\epsilon}} \\
			\lesssim& \|\beta\|_{C^0_\rho H^{1+}} \| T_{\nabla_w\left(\eta^{-1}\partial_\rho^2\varphi\right)}\rho\eta +  T_{\nabla_w\left(\eta^{-1}\partial_\rho\varphi\right)}\eta \|_{C^0_\rho H^{s-\frac{1}{2}+\epsilon}} \\
			\le& C\left(\|\eta\|_{H_R^{s+\frac{1}{2}-}}\right) \left( \|\nabla_w\left(\eta^{-1}\partial_\rho^2\varphi\right)\|_{C^0_\rho H^{s-3-}} + \|\nabla_w\left(\eta^{-1}\partial_\rho\varphi\right)\|_{C^0_\rho H^{s-2-}} \right) \|\eta\|_{C^0_\rho H^{\max(\frac{7}{2},s-\frac{1}{2})+\epsilon+}} \\
			\le& C\left(\|\eta\|_{H_R^{s+\frac{1}{2}-}}\right) \|\psi\|_{H^{s}},
		\end{align*}
		which proves \eqref{eq-paralin:paralin-of-ellip-S2-2}. The third equivalence \eqref{eq-paralin:paralin-of-ellip-S2-3} is due to the estimate
		\begin{align*}
			&\| T_{\gamma}\partial_\rho T_{\eta^{-1}\partial_\rho\varphi}\rho\eta \|_{C^0_\rho H^{s-\frac{1}{2}+\epsilon}} = \| T_{\gamma} T_{\eta^{-1}\partial_\rho^2\varphi}\rho\eta + T_{\gamma}T_{\eta^{-1}\partial_\rho\varphi}\eta \|_{C^0_\rho H^{s-\frac{1}{2}+\epsilon}}  \\
			\lesssim& \left(\|\gamma\|_{C^0_\rho H^{1+}_{\rho^{-1}R^{-2}}} + 1 \right) \| T_{\eta^{-1}\partial_\rho^2\varphi}\rho\eta + T_{\eta^{-1}\partial_\rho\varphi}\eta \|_{C^0_\rho H^{s-\frac{1}{2}+\epsilon}} \le C\left(\|\eta\|_{H_R^{s+\frac{1}{2}-}}\right) \|\psi\|_{H^{s}},
		\end{align*}
		where one may apply Proposition \ref{prop-para:paralin} to deal with $\eta^{-1}$. The last equivalence can be treated as follow,
		\begin{align*}
			- T_{\rho^{-2}\eta^{-2}\xi_\theta^2}T_{\eta^{-1}\partial_\rho\varphi}\rho\eta =& \rho^{-1}T_{\eta^{-2}}T_{\partial_\theta^2\left(\eta^{-1}\partial_\rho\varphi\right)}\eta + 2\rho^{-1} T_{\eta^{-2}}T_{\partial_\theta\left(\eta^{-1}\partial_\rho\varphi\right)}\eta_\theta + \rho^{-1}T_{\eta^{-2}}T_{\eta^{-1}\partial_\rho\varphi}\eta_{\theta\theta}, \\
			- T_{\xi_z^2}T_{\eta^{-1}\partial_\rho\varphi}\rho\eta =& \rho T_{\partial_z^2\left(\eta^{-1}\partial_\rho\varphi\right)}\eta + 2\rho T_{\partial_z\left(\eta^{-1}\partial_\rho\varphi\right)}\eta_z + \rho T_{\eta^{-1}\partial_\rho\varphi}\eta_{zz} \\
			&+ \left( - T_{\xi_z^2} - \partial_z^2 \right)T_{\eta^{-1}\partial_\rho\varphi}\rho\eta,
		\end{align*}
		where similar estimates as above can be derived:
		\begin{align*}
			\| \rho^{-1}T_{\eta^{-2}}T_{\partial_\theta^2\left(\eta^{-1}\partial_\rho\varphi\right)}\eta \|_{C^0_\rho H^{s-\frac{1}{2}+\epsilon}} \lesssim& \left( \|\eta^{-2}\|_{H^{1+}_{R^{-2}}} + 1 \right) \|\partial_\theta^2\left(\eta^{-1}\partial_\rho\varphi\right)\|_{C^0_\rho H^{s-3-}} \|\eta\|_{H^{\max(\frac{7}{2},s-\frac{1}{2})+\epsilon+}_R} \\
			\le& C\left(\|\eta\|_{H_R^{s+\frac{1}{2}-}}\right) \|\eta^{-1}\partial_\rho\varphi\|_{C^0_\rho H^{s-1-}} \le C\left(\|\eta\|_{H_R^{s+\frac{1}{2}-}}\right) \|\psi\|_{H^{s}},
		\end{align*}
		and
		\begin{align*}
			\| \rho T_{\partial_z^2\left(\eta^{-1}\partial_\rho\varphi\right)}\eta \|_{C^0_\rho H^{s-\frac{1}{2}+\epsilon}} \lesssim& \|\partial_z^2\left(\eta^{-1}\partial_\rho\varphi\right)\|_{C^0_\rho H^{s-3-}} \|\eta\|_{H^{\max(\frac{7}{2},s-\frac{1}{2})+\epsilon+}_R} \\
			\le& C\left(\|\eta\|_{H_R^{s+\frac{1}{2}-}}\right) \|\eta^{-1}\partial_\rho\varphi\|_{C^0_\rho H^{s-1-}} \le C\left(\|\eta\|_{H_R^{s+\frac{1}{2}-}}\right) \|\psi\|_{H^{s}}.
		\end{align*}
		The desired result \eqref{eq-paralin:paralin-of-ellip-S2-4} follows from $\left( - T_{\xi_z^2} - \partial_z^2 \right)T_{\eta^{-1}\partial_\rho\varphi}\rho\eta\sim 0$ since $\left( - T_{\xi_z^2} - \partial_z^2 \right)$ is a smoothing operator. 
	\end{proof}
	
	\begin{proof}[Proof of Lemma \ref{lem-paralin:paralin-of-ellip}]
		Till now, we have managed to prove that, in the sense of \eqref{eq-paralin:equi-paralin-of-ellip},
		\begin{equation*}
			P\Phi \sim 0
		\end{equation*}
		when $s_0 = \frac{3}{2}+\delta$ with $0<\delta\ll 1$, and moreover, when $s_0=s$, \eqref{eq-paralin:paralin-of-ellip-S1'} holds. It remains to check that the right hand side of \eqref{eq-paralin:paralin-of-ellip-S1'} is equivalent to zero, namely 
		\begin{align*}
			&- T_{\partial_\rho^2\varphi} \alpha - T_{\nabla_w\partial_\rho\varphi}\cdot\beta - T_{\partial_\rho\varphi}\gamma - T_\beta\cdot T_{\eta^{-1}\partial_\rho^2\varphi}\rho\nabla_w\eta - T_\beta\cdot T_{\eta^{-1}\partial_\rho\varphi}\nabla_w\eta \\
			&- 2\rho^{-1} T_{\eta^{-2}}T_{\partial_\theta\left(\eta^{-1}\partial_\rho\varphi\right)}\eta_\theta - \rho^{-1}T_{\eta^{-2}}T_{\eta^{-1}\partial_\rho\varphi}\eta_{\theta\theta} - 2\rho T_{\partial_z\left(\eta^{-1}\partial_\rho\varphi\right)}\eta_z - \rho T_{\eta^{-1}\partial_\rho\varphi}\eta_{zz} \sim 0.
		\end{align*}
		Firstly, we claim that 
		\begin{equation*}
			- T_{\partial_\rho^2\varphi} \alpha - T_\beta\cdot T_{\eta^{-1}\partial_\rho^2\varphi}\rho\nabla_w\eta \sim 0.
		\end{equation*}
		In fact, by symbolic calculus (Proposition \ref{prop-paralin:homo-sym-cal}) and commutator estimate (Corollary \ref{cor-para:commu-esti}), we have
		\begin{align*}
			T_\beta\cdot T_{\eta^{-1}\partial_\rho^2\varphi}\rho\nabla_w\eta \sim& T_{\eta^{-1}\partial_\rho^2\varphi} T_{\beta}\cdot\rho\nabla_w\eta \sim T_{\partial_\rho^2\varphi} T_{\eta^{-1}} T_{\beta}\cdot\rho\nabla_w\eta \\
			\sim& T_{\partial_\rho^2\varphi} T_{\eta^{-1}\beta}\cdot\rho\nabla_w\eta,
		\end{align*}
		where we use the fact that $\beta,\eta^{-1}\in C^0_\rho H^{s-\frac{1}{2}}\subset \Gamma^0_{0+}$ and $\partial_\rho^2\varphi\in C^0_\rho H^{s-2}\subset \Gamma^0_{0+}$. As a consequence, in each step above, the error can be written as a $(0-)$-order operator acting on $\nabla_w \eta$, which lies in $C^0_\rho H^{s-\frac{1}{2}+}$. Meanwhile, by definition \eqref{eq-paralin:def-alpha} of $\alpha$,
		\begin{align*}
			T_{\partial_\rho^2\varphi} \alpha \sim T_{\partial_\rho^2\varphi} \left( \eta^{-4}\eta_\theta^2 + \rho^2\eta^{-2}\eta_z^2 \right) \sim T_{\partial_\rho^2\varphi} \left( T_{2\eta^{-4}\eta_\theta}\eta_\theta + T_{2\rho^2\eta^{-2}\eta_z}\eta_z \right) = T_{\partial_\rho^2\varphi} T_{-\rho\eta^{-1}\beta} \cdot\nabla_w\eta,
		\end{align*}
		where the first equivalence is due to
		\begin{align*}
			\| T_{\partial_\rho^2\varphi}\eta^{-2} \|_{C^0_\rho H^{s-\frac{1}{2}+\epsilon}} \lesssim \|\partial_\rho^2\varphi\|_{C^0_\rho H^{s-2-}} \|\eta^{-2}\|_{H^{\max(\frac{5}{2},s-\frac{1}{2})+\epsilon+}_{R^{-2}}} \lesssim C\left(\|\eta\|_{H_R^{s+\frac{1}{2}-}}\right) \|\psi\|_{H^{s}},
		\end{align*}
		and the second equivalence is a consequence of Corollary \ref{cor-para:quadra-prod}. 
		
		Now our problem is reduced to 
		\begin{align*}
			P\Phi \sim& - T_{\nabla_w\partial_\rho\varphi}\cdot\beta - T_{\partial_\rho\varphi}\gamma - T_\beta\cdot T_{\eta^{-1}\partial_\rho\varphi}\nabla_w\eta \\
			&- 2\rho^{-1} T_{\eta^{-2}}T_{\partial_\theta\left(\eta^{-1}\partial_\rho\varphi\right)}\eta_\theta - \rho^{-1}T_{\eta^{-2}}T_{\eta^{-1}\partial_\rho\varphi}\eta_{\theta\theta} - 2\rho T_{\partial_z\left(\eta^{-1}\partial_\rho\varphi\right)}\eta_z - \rho T_{\eta^{-1}\partial_\rho\varphi}\eta_{zz}.
		\end{align*}
		We observe that
		\begin{align*}
			2\rho^{-1} T_{\eta^{-2}}T_{\partial_\theta\left(\eta^{-1}\partial_\rho\varphi\right)}\eta_\theta =& 2\rho^{-1} T_{\eta^{-2}}T_{\partial_\theta\left(\eta^{-1}\right)\partial_\rho\varphi}\eta_\theta + 2\rho^{-1} T_{\eta^{-2}}T_{\eta^{-1}\partial_\theta\partial_\rho\varphi}\eta_\theta \\
			\sim& 2\rho^{-1} T_{\eta^{-2}}T_{\partial_\rho\varphi}T_{\partial_\theta\left(\eta^{-1}\right)}\eta_\theta + 2\rho^{-1} T_{\eta^{-2}}T_{\partial_\theta\partial_\rho\varphi}T_{\eta^{-1}}\eta_\theta \\
			\sim& 2\rho^{-1} T_{\partial_\rho\varphi}T_{\eta^{-2}}T_{\partial_\theta\left(\eta^{-1}\right)}\eta_\theta + 2\rho^{-1} T_{\partial_\theta\partial_\rho\varphi}T_{\eta^{-2}}T_{\eta^{-1}}\eta_\theta \\
			\sim& 2\rho^{-1} T_{\partial_\rho\varphi}T_{\eta^{-2}\partial_\theta\left(\eta^{-1}\right)}\eta_\theta + 2\rho^{-1} T_{\partial_\theta\partial_\rho\varphi}T_{\eta^{-3}}\eta_\theta
		\end{align*}
		These equivalences are consequences of symbolic calculus (Proposition \ref{prop-para:paradiff-cal-sym} and Corollary \ref{cor-para:commu-esti}) and commutator estimate (Corollary \ref{cor-para:commu-esti}), as well as the fact that all the involved symbols belong to $C^0_\rho H^{s-2-} \subset \Gamma^0_{1/2+}$, which guarantees that the error in each step takes the form of $T\eta_\theta$ with $T$ linear and of order $-\frac{1}{2}-$. Moreover, an application of Proposition \ref{prop-para:paraprod-bound} gives
		\begin{align*}
			\|T_{\eta^{-3}}\eta_\theta - \eta^{-3}\eta_\theta\|_{H^{s-\frac{1}{2}+}} \le& \|T_{\eta_\theta}\eta^{-3}+ R(\eta^{-3},\eta_\theta)\|_{H^{s-\frac{1}{2}+}} \\
			\lesssim& \|\eta_\theta\|_{H^{1+}} \|\eta^{-3} \|_{H^{s-\frac{1}{2}+\epsilon}_{R^{-3}}} \lesssim C\left( \|\eta\|_{H^{s+\frac{1}{2}-}_R} \right),
		\end{align*}
		and thus
		\begin{equation*}
			2\rho^{-1} T_{\eta^{-2}}T_{\partial_\theta\left(\eta^{-1}\partial_\rho\varphi\right)}\eta_\theta \sim 2\rho^{-1} T_{\partial_\rho\varphi}T_{\eta^{-2}\partial_\theta\left(\eta^{-1}\right)}\eta_\theta + 2\rho^{-1} T_{\partial_\theta\partial_\rho\varphi}\left(\eta^{-3}\eta_\theta\right).
		\end{equation*}
		Similarly, we have
		\begin{equation*}
			2\rho T_{\partial_z\left(\eta^{-1}\partial_\rho\varphi\right)}\eta_z \sim 2\rho T_{\partial_\rho\varphi}T_{\partial_z(\eta^{-1})}\eta_z + 2\rho T_{\partial_z\partial_\rho\varphi}\left(\eta^{-1}\eta_z\right).
		\end{equation*}
		By observing that
		\begin{equation*}
			T_{\nabla_w\partial_\rho\varphi}\cdot\beta = -2\rho^{-1} T_{\partial_\theta\partial_\rho\varphi}\eta^{-3}\eta_\theta - 2\rho T_{\partial_z\partial_\rho\varphi}\eta^{-1}\eta_z,
		\end{equation*}
		we can further reduce our problem to
		\begin{align*}
			P\Phi \sim& - T_{\partial_\rho\varphi}\gamma - T_\beta\cdot T_{\eta^{-1}\partial_\rho\varphi}\nabla_w\eta \\
			& -2\rho^{-1} T_{\partial_\rho\varphi}T_{\eta^{-2}\partial_\theta\left(\eta^{-1}\right)}\eta_\theta - \rho^{-1}T_{\eta^{-2}}T_{\eta^{-1}\partial_\rho\varphi}\eta_{\theta\theta} - 2\rho T_{\partial_\rho\varphi}T_{\partial_z(\eta^{-1})}\eta_z- \rho T_{\eta^{-1}\partial_\rho\varphi}\eta_{zz} \\
			\sim& - T_{\partial_\rho\varphi}\gamma - T_{\partial_\rho\varphi} T_{\eta^{-1}\beta}\cdot \nabla_w\eta \\
			& -2\rho^{-1} T_{\partial_\rho\varphi}T_{\eta^{-2}\partial_\theta\left(\eta^{-1}\right)}\eta_\theta - \rho^{-1}T_{\partial_\rho\varphi}T_{\eta^{-3}}\eta_{\theta\theta} - 2\rho T_{\partial_\rho\varphi}T_{\partial_z(\eta^{-1})}\eta_z- \rho T_{\partial_\rho\varphi}T_{\eta^{-1}}\eta_{zz},
		\end{align*}
		due to symbolic calculus (Proposition \ref{prop-para:paradiff-cal-sym} and Corollary \ref{cor-para:commu-esti}). From definition \eqref{eq-paralin:def-beta} and \eqref{eq-paralin:def-gamma} of $\beta$ and $\gamma$, respectively, it is easy to calculate that
		\begin{align*}
			-\gamma =& - \eta^{-1}\beta\cdot \nabla_w\eta + \rho^{-1}\eta^{-3}\eta_{\theta\theta}  + \rho \eta^{-1}\eta_{zz} + \rho^{-1}\eta^{-2}, \\
			\eta^{-1}\beta =& \left( 2\rho^{-1}\eta^{-2}\partial_\theta(\eta^{-1}), 2\rho \partial_z(\eta^{-1}) \right).
		\end{align*}
		As a result,
		\begin{align*}
			P\Phi \sim& - \rho^{-1} T_{\partial_\rho\varphi}\eta^{-2} + T_{\partial_\rho\varphi} \left( \eta^{-1}\beta\cdot \nabla_w\eta - 2T_{\eta^{-1}\beta}\cdot \nabla_w\eta \right) \\
			&+ \rho^{-1}T_{\partial_\rho\varphi} \left( \eta^{-3}\eta_{\theta\theta} - T_{\eta^{-3}}\eta_{\theta\theta} \right) + \rho T_{\partial_\rho\varphi} \left( \eta^{-1}\eta_{zz} - T_{\eta^{-1}}\eta_{zz} \right),
		\end{align*}
		the right hand side of which is equivalent to zero in the sense of \eqref{eq-paralin:equi-paralin-of-ellip}, due to the estimates
		\begin{align*}
			\|T_{\partial_\rho\varphi}\eta^{-2}\|_{H^{s-\frac{1}{2}+\epsilon}} \lesssim& \|\partial_\rho\varphi\|_{H^{1+}} \|\eta^{-2}\|_{H^{s-\frac{1}{2}+\epsilon}_{R^{-2}}} \le C\left(\|\eta\|_{H_R^{s+\frac{1}{2}-}}\right) \|\psi\|_{H^{s}}, \\
			\|T_{\partial_\rho\varphi} \left( \eta^{-3}\eta_{\theta\theta} - T_{\eta^{-3}}\eta_{\theta\theta} \right)\|_{H^{s-\frac{1}{2}+\epsilon}} \le&  \|T_{\partial_\rho\varphi} T_{\eta_{\theta\theta}}\eta^{-3}\|_{H^{s-\frac{1}{2}+\epsilon}} + \|T_{\partial_\rho\varphi} R(\eta^{-3},\eta_{\theta\theta}) \|_{H^{s-\frac{1}{2}+\epsilon}} \\
			\lesssim& \|\partial_\rho\varphi\|_{H^{1+}} \|\eta_{\theta\theta}\|_{H^{1+}} \|\eta^{-3}\|_{H^{s-\frac{1}{2}+\epsilon}_{R^{-3}}} \\
			\le& C\left(\|\eta\|_{H_R^{s+\frac{1}{2}-}}\right) \|\psi\|_{H^{s}}, \\
			\|T_{\partial_\rho\varphi} \left( \eta^{-1}\eta_{zz} - T_{\eta^{-1}}\eta_{zz} \right)\|_{H^{s-\frac{1}{2}+\epsilon}} \le&  \|T_{\partial_\rho\varphi} T_{\eta_{zz}}\eta^{-1}\|_{H^{s-\frac{1}{2}+\epsilon}} + \|T_{\partial_\rho\varphi} R(\eta^{-1},\eta_{zz}) \|_{H^{s-\frac{1}{2}+\epsilon}} \\
			\lesssim& \|\partial_\rho\varphi\|_{H^{1+}} \|\eta_{zz}\|_{H^{1+}} \|\eta^{-1}\|_{H^{s-\frac{1}{2}+\epsilon}_{R^{-1}}} \\
			\le& C\left(\|\eta\|_{H_R^{s+\frac{1}{2}-}}\right) \|\psi\|_{H^{s}},
		\end{align*}
		as well as
		\begin{align*}
			&\|T_{\partial_\rho\varphi} \left( \eta^{-1}\beta\cdot \nabla_w\eta - 2T_{\eta^{-1}\beta}\cdot \nabla_w\eta \right)\|_{H^{s-\frac{1}{2}+\epsilon}} \\
			\lesssim& \|\partial_\rho\varphi\|_{H^{1+}} \| \eta^{-1}\beta\cdot \nabla_w\eta - 2T_{\eta^{-1}\beta}\cdot \nabla_w\eta \|_{H^{s-\frac{1}{2}+\epsilon}} \\
			\le& C\left(\|\eta\|_{H_R^{s+\frac{1}{2}-}}\right) \|\psi\|_{H^{s}} \left( \| \eta^{-4}\eta_\theta^2 - 2T_{\eta^{-4}\eta_\theta}\eta_\theta \|_{H^{s-\frac{1}{2}+\epsilon}} + \| \eta^{-2}\eta_z^2 - 2T_{\eta^{-2}\eta_z}\eta_z \|_{H^{s-\frac{1}{2}+\epsilon}} \right) \\
			\le& C\left(\|\eta\|_{H_R^{s+\frac{1}{2}-}}\right) \|\psi\|_{H^{s}},
		\end{align*}
		which is a consequence of Corollary \ref{cor-para:quadra-prod}.
	\end{proof}

	Now we are in a position to separate the normal and tangential derivatives. 
	\begin{lemma}\label{lem-paralin:decomp-of-P}
		Let $\eta\in H_R^{s+\frac{1}{2}}$ with $s>3$. There exists elliptic symbols $a,A\in\Sigma^{1}$, such that
		\begin{equation}\label{eq-paralin:decom-of-P}
			P = T_\alpha(\partial_\rho + T_a)(\partial_\rho - T_A) + R_0 + R_1\partial_\rho,
		\end{equation}
		where $R_0$ is of order $\frac{1}{2}-$, and $R_1$ is of order $-\frac{1}{2}-$, satisfying
		\begin{equation*}
			\|R_0\|_{\mathcal{L}(H^{t};H^{t-\frac{1}{2}+})} + \|R_1\|_{\mathcal{L}(H^{t};H^{t+\frac{1}{2}+})} \le C\left( \|\eta\|_{H^{s+\frac{1}{2}-}_R} \right),\ \ \forall t\in\R.
		\end{equation*}
		
		Moreover, $a = a^{(1)}+a^{(0)}$ and $A = A^{(1)} + A^{(0)}$ can be computed explicitly,
		\begin{align}
			A^{(1)} =& \frac{1}{2\alpha}\sqrt{4\alpha\left(\frac{\xi_\theta^2}{\rho^2\eta^2}+\xi_z^2\right) - (\beta\cdot\xi)^2} - \frac{i\beta\cdot\xi}{2\alpha}, \label{eq-paralin:def-A-1}\\
			a^{(1)} =& \frac{1}{2\alpha}\sqrt{4\alpha\left(\frac{\xi_\theta^2}{\rho^2\eta^2}+\xi_z^2\right) - (\beta\cdot\xi)^2} + \frac{i\beta\cdot\xi}{2\alpha}, \label{eq-paralin:def-a-1}
		\end{align}
		and
		\begin{align}
		A^{(0)} =& -\frac{1}{A^{(1)}+a^{(1)}} \left( \frac{A^{(1)}\gamma}{\alpha} + \partial_\rho A^{(1)} + \partial_\xi a^{(1)}\cdot D_w A^{(1)} \right), \label{eq-paralin:def-A-0}\\
		a^{(0)} =& -\frac{1}{A^{(1)}+a^{(1)}} \left( -\frac{a^{(1)}\gamma}{\alpha} + \partial_\rho A^{(1)} + \partial_\xi a^{(1)}\cdot D_w A^{(1)} \right). \label{eq-paralin:def-a-0}
		\end{align}
	\end{lemma}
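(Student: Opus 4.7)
The plan is to expand the right hand side of \eqref{eq-paralin:decom-of-P} symbolically and match it with the expression \eqref{eq-paralin:def-para-Lap} of $P$, order by order. Using $\partial_\rho T_A = T_A \partial_\rho + T_{\partial_\rho A}$, one has
\begin{equation*}
	T_\alpha(\partial_\rho + T_a)(\partial_\rho - T_A) = T_\alpha\partial_\rho^2 + T_\alpha(T_a - T_A)\partial_\rho - T_\alpha T_{\partial_\rho A} - T_\alpha T_a T_A.
\end{equation*}
Comparing with $P$ and invoking the composition formula \eqref{eq-paralin:homo-sym-comp}--\eqref{eq-paralin:homo-sym-comp-formula} (noting that $\alpha$ is independent of $\xi$, so $\alpha\sharp \cdot$ is simply multiplication by $\alpha$), the symbolic identity reduces, up to admissible remainders, to the system
\begin{align*}
	\alpha\, (a-A) &= i\beta\cdot\xi + \gamma, \\
	\alpha\, (a\sharp A) + \alpha\,\partial_\rho A &= \rho^{-2}\eta^{-2}\xi_\theta^2 + \xi_z^2,
\end{align*}
which I will split into principal ($m=1$) and sub-principal ($m=0$) parts.

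At principal order, the two equations become
\begin{equation*}
	\alpha(a^{(1)} - A^{(1)}) = i\beta\cdot\xi, \qquad \alpha\, a^{(1)}A^{(1)} = \frac{\xi_\theta^2}{\rho^2\eta^2} + \xi_z^2.
\end{equation*}
Solving the quadratic (viewing $a^{(1)}$ and $-A^{(1)}$ as the two roots of $X^2 - pX - q = 0$ with $p = i\beta\cdot\xi/\alpha$ and $q = (\rho^{-2}\eta^{-2}\xi_\theta^2+\xi_z^2)/\alpha$) yields exactly the formulas \eqref{eq-paralin:def-A-1}--\eqref{eq-paralin:def-a-1}. The ellipticity of $A^{(1)}, a^{(1)}$ follows from the uniform ellipticity of $\Delta_g$ on $\D\times\R$, which, translated via \eqref{eq-paralin:def-alpha}--\eqref{eq-paralin:def-beta}, gives the lower bound
\begin{equation*}
	4\alpha\!\left(\frac{\xi_\theta^2}{\rho^2\eta^2}+\xi_z^2\right) - (\beta\cdot\xi)^2 \ge c|\xi|^2,
\end{equation*}
for some $c=c(c_0,C_0)>0$ uniform in $\rho\in[1-\delta,1]$; this guarantees that the square root defining $A^{(1)}$ and $a^{(1)}$ is smooth and bounded from below by $c'|\xi|$.

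At sub-principal order, the first equation gives $\alpha(a^{(0)} - A^{(0)}) = \gamma$, while expanding $a\sharp A$ via \eqref{eq-paralin:homo-sym-comp-formula} the second one reads
\begin{equation*}
	\partial_\rho A^{(1)} + \partial_\xi a^{(1)}\cdot D_w A^{(1)} + a^{(0)}A^{(1)} + a^{(1)}A^{(0)} = 0.
\end{equation*}
Using $\alpha a^{(0)} = \alpha A^{(0)} + \gamma$ (or symmetrically $\alpha A^{(0)} = \alpha a^{(0)} - \gamma$) to eliminate $a^{(0)}$ (resp.\ $A^{(0)}$) reduces this to a linear equation in $A^{(0)}$ (resp.\ $a^{(0)}$) with leading coefficient $A^{(1)} + a^{(1)}$, and thus solvable since $A^{(1)} + a^{(1)}$ is elliptic by the lower bound above. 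This yields exactly \eqref{eq-paralin:def-A-0}--\eqref{eq-paralin:def-a-0}.

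The last step is to identify the error terms $R_0$ and $R_1$. From the computation above, $R_1\partial_\rho$ collects the sub-sub-principal discrepancies in the $\partial_\rho$ coefficient, which by \eqref{eq-paralin:homo-sym-comp} are of order $-1$ in $\xi$ minus the $1/2+$-regularity loss inherent to $\Sigma^\bullet \subset \Gamma^\bullet_{3/2+}+\Gamma^{\bullet-1}_{1/2+}$, hence of operator order $-\frac{1}{2}-$; similarly $R_0$ collects the zero-order discrepancies, which gain $\frac{1}{2}-$ derivatives. The operator norm bounds follow directly from Proposition \ref{prop-paralin:homo-sym-cal} together with the tame control \eqref{eq-paralin:esti-alpha-beta-gamma} of $\alpha,\beta,\gamma$. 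The main technical point to watch is the bookkeeping of symbolic regularities when multiplying: since the second component of symbols in $\Sigma^m$ only has $H^{1/2+}$ regularity (because it involves $\partial^2_w\eta$), one has to verify that at each step the remainders stay in $\Gamma^{\cdot}_{3/2+}+\Gamma^{\cdot-1}_{1/2+}$, which is exactly the setting in which Proposition \ref{prop-paralin:homo-sym-cal} yields the stated gain $3/2-$, giving the orders claimed for $R_0$ and $R_1$.
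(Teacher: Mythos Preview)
Your proposal is correct and follows essentially the same approach as the paper: expand $T_\alpha(\partial_\rho+T_a)(\partial_\rho-T_A)$, match symbolically with \eqref{eq-paralin:def-para-Lap} at principal and sub-principal order to obtain the system for $a^{(1)},A^{(1)},a^{(0)},A^{(0)}$, and absorb the composition errors from Proposition~\ref{prop-paralin:homo-sym-cal} into $R_0,R_1$. The paper makes the remainders slightly more explicit by writing $R_1:=T_{\alpha(a-A)}-T_\alpha T_{a-A}$ and $R_0:=T_{\alpha(a\sharp A+\partial_\rho A^{(1)})}-T_\alpha(T_aT_A+T_{\partial_\rho A^{(1)}})$, from which the stated orders $-\tfrac12-$ and $\tfrac12-$ are read off directly via $\Sigma^0\sharp\Sigma^1$ and $\Sigma^0\sharp\Sigma^2$ in Proposition~\ref{prop-paralin:homo-sym-cal}; your final paragraph is doing exactly this, just a bit less explicitly.
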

	Note that all the operators are defined on $\T\times\R$ with $\rho\in[1-\delta,1]$ regarded as a parameter.
	\begin{proof}
		From \eqref{eq-paralin:decom-of-P}, we shall see what is the conditions required for $a$ and $A$. In fact, the right hand side of \eqref{eq-paralin:decom-of-P} equals
		\begin{align*}
			& T_\alpha(\partial_\rho + T_a)(\partial_\rho - T_A) + R_0 + R_1\partial_\rho \\
			=& T_\alpha \left( \partial_\rho^2 + T_{a-A}\partial_\rho - T_aT_A - T_{\partial_\rho A} \right) +R_0 + R_1\partial_\rho \\
			=& T_\alpha\partial_\rho^2 + \left( T_\alpha T_{a-A} + R_1 \right)\partial_\rho - T_\alpha \left( T_aT_A + T_{\partial_\rho A} \right) + R_0.
		\end{align*}
		By comparing it with definition \eqref{eq-paralin:def-para-Lap} of $P$, one could see that \eqref{eq-paralin:decom-of-P} holds if the construction of $a,A$ satisfies
		\begin{equation*}
			\left\{\begin{aligned}
				&T_\alpha T_{a-A} + R_1 = T_{i\beta\cdot\xi + \gamma},\\
				& T_\alpha \left( T_aT_A + T_{\partial_\rho A} \right) - R_0 = T_{\rho^{-2}\eta^{-2}\xi_\theta^2 + \xi_z^2}.
			\end{aligned}\right.
		\end{equation*}
		Recall that by \eqref{eq-paralin:def-alpha} and definition \ref{def-paralin:homo-sym}, $\alpha\in\Sigma^0$. Since $\eta\in H^{s+\frac{1}{2}}_{R}$, the end of definition \ref{def-paralin:homo-sym} shows in particular that $\alpha\in \Gamma^{0}_{3/2+} + \Gamma^{-1}_{1/2+}$. Moreover, $\alpha$ is elliptic and independent of $\xi$. For all $a,A\in\Sigma^1$, symbolic calculus (Proposition \ref{prop-paralin:homo-sym-cal}) gives that
		\begin{align*}
			T_\alpha T_{a-A} \approx& T_{\alpha\sharp(a-A)} = T_{\alpha(a-A)}, \\
			T_\alpha \left( T_aT_A + T_{\partial_\rho A} \right) \approx& T_\alpha T_{a\sharp A + \partial_\rho A} \approx T_{\alpha\sharp\left(a\sharp A + \partial_\rho A \right)} = T_{\alpha\left(a\sharp A + \partial_\rho A \right)} \approx T_{\alpha\left(a\sharp A + \partial_\rho A^{(1)} \right)},
		\end{align*}
		where the difference in the first line is of order $0+1-\frac{3}{2}- = -\frac{1}{2}-$ and those in the second line is of order $0+1+1-\frac{3}{2}- = \frac{1}{2}-$, allowing us to define
		\begin{align*}
			R_1 :=& T_{\alpha(a-A)} - T_\alpha T_{a-A}, \\
			R_0 :=& T_{\alpha\left(a\sharp A + \partial_\rho A \right)} - T_\alpha \left( T_aT_A + T_{\partial_\rho A^{(1)}} \right).
		\end{align*}
		Therefore, it suffices to construct $A = A^{(1)} + A^{(0)}$ and $a = a^{(1)} + a^{(0)}$ verifying
		\begin{equation*}
			\left\{\begin{aligned}
				&\alpha(a-A) = i\beta\cdot\xi + \gamma, \\
				&\alpha\left(a\sharp A + \partial_\rho A^{(1)} \right) = \rho^{-2}\eta^{-2}\xi_\theta^2 + \xi_z^2,
			\end{aligned}\right.
		\end{equation*}
		From \eqref{eq-paralin:homo-sym-comp-formula} and correspondence of order (in $\xi$), we only need to solve
		\begin{equation*}
			\left\{\begin{aligned}
				&\alpha(a^{(1)}-A^{(1)}) = i\beta\cdot\xi, \\
				&\alpha(a^{(0)}-A^{(0)}) = \gamma, \\
				&\alpha a^{(1)} A^{(1)} = \rho^{-2}\eta^{-2}\xi_\theta^2 + \xi_z^2, \\
				&\partial_\xi a^{(1)} \cdot D_w A^{(1)} + a^{(0)} A^{(1)} + a^{(1)} A^{(0)} + \partial_\rho A^{(1)} =0,
			\end{aligned}\right.
		\end{equation*}
		which allows us to obtain \eqref{eq-paralin:def-A-1}-\eqref{eq-paralin:def-a-0}.
	\end{proof}
	
	By combining Lemma \ref{lem-paralin:paralin-of-ellip} and \ref{lem-paralin:decomp-of-P}, we have
	\begin{equation}\label{eq-paralin:decomp-ellip}
		T_\alpha(\partial_\rho + T_a)(\partial_\rho - T_A)\Phi = -R_0\Phi - R_1\partial_\rho\Phi + r_1.
	\end{equation}
	Under the hypothesis of Proposition \ref{prop-paralin:normal-to-tangential}, we have
	\begin{equation}\label{eq-paralin:decomp-esti-remainder}
		\|-R_0\Phi - R_1\partial_\rho\Phi + r_1\|_{C^0_\rho H^{s_0-\frac{1}{2}+\epsilon}} \le C\left(\|\eta\|_{H_R^{s+\frac{1}{2}-}}\right) \|\psi\|_{H^{s_0}}.
	\end{equation}
	In fact, the estimate for $r_1$ is a consequence of Lemma \ref{lem-paralin:paralin-of-ellip}, and the definition \eqref{eq-paralin:good-unknown-ext} of $\Phi$, together with Lemma \ref{lem-paralin:reg-of-pot} and Proposition \ref{prop-para:paraprod-bound}, implies that
	\begin{equation}\label{eq-paralin:reg-good-unknown-ext}
		\|\Phi\|_{C^0_\rho H^{s_0}} + \|\partial_\rho\Phi\|_{C^0_\rho H^{s_0-1}} + \|\partial^2_\rho\Phi\|_{C^0_\rho H^{s_0-2}} \le C\left(\|\eta\|_{H_R^{s+\frac{1}{2}-}}\right) \|\psi\|_{H^{s_0}},
	\end{equation}
	which implies \eqref{eq-paralin:decomp-esti-remainder}.
	
	By definition \eqref{eq-paralin:def-alpha} of $\alpha$, under the hypothesis of Proposition \ref{prop-paralin:normal-to-tangential}, $\alpha\in H_{R^{-2}}^{s-\frac{1}{2}-}$, which is a symbol in $\Gamma^0_{s-3/2-}\subset \Gamma^0_{3/2+}$. Since $\alpha$ is strictly positive due to hypothesis \eqref{hyp-intro:bounds}, $1/\alpha$ belongs to the same class of symbol. Consequently, Proposition \ref{prop-para:paraprod-bound} guarantees that
	\begin{equation*}
		T_{\frac{1}{\alpha}}T_\alpha = id + R_2,
	\end{equation*}
	where $R_2$ is of order $\frac{3}{2}+$. Now we may apply $T_{1/\alpha}$ on both hands of \eqref{eq-paralin:decomp-ellip} and obtain that
	\begin{equation*}
		(\partial_\rho + T_a)(\partial_\rho - T_A)\Phi =  T_{\frac{1}{\alpha}}\left(-R_0\Phi - R_1\partial_\rho\Phi + r_1\right) - R_2(\partial_\rho + T_a)(\partial_\rho - T_A)\Phi.
	\end{equation*}
	We claim that the $C^0_\rho H^{s_0-\frac{1}{2}+\epsilon}$ norm of the right hand side is bounded by $C(\|\eta\|_{H_R^{s+\frac{1}{2}-}}) \|\psi\|_{H^{s_0}}$. In fact, the estimate of the first term follows from \eqref{eq-paralin:decomp-esti-remainder} and the fact that $T_{1/\alpha}$ is of order zero. As for the second term, since $T_a$ and $T_A$ are of order $1$, we have
	\begin{equation*}
		\|R_2(\partial_\rho + T_a)(\partial_\rho - T_A)\Phi\|_{C^0_\rho H^{s_0-\frac{1}{2}+\epsilon}} \lesssim \|(\partial_\rho + T_a)(\partial_\rho - T_A)\Phi\|_{C^0_\rho H^{s_0-2}} \lesssim C\left(\|\eta\|_{H_R^{s+\frac{1}{2}-}}\right) \|\psi\|_{H^{s_0}},
	\end{equation*}
	where the last inequality is a consequence of \eqref{eq-paralin:reg-good-unknown-ext}.
	
	To sum up, under the hypothesis of Proposition \ref{prop-paralin:normal-to-tangential}, we have seen that $\Phi$ defined by \eqref{eq-paralin:good-unknown-ext} satisfies 
	\begin{equation*}
		(\partial_\rho + T_{a^{(1)}})(\partial_\rho - T_A)\Phi = T_{-a^{(0)}}(\partial_\rho - T_A)\Phi + r_2,
	\end{equation*}
	with
	\begin{equation*}
		\| r_2 \|_{C^0_\rho H^{s_0-\frac{1}{2}+\epsilon}} \le C\left(\|\eta\|_{H_R^{s+\frac{1}{2}}}\right) \|\psi\|_{H^{s_0}}.
	\end{equation*}
	
	\begin{lemma}[Proposition 3.19 of \cite{alazard2011water}]
		Let $b_1\in \Gamma^1_1$ and $b_0\in\Gamma^0_0$ with 
		$$ \Real b_1(w,\xi) \ge c|\xi| $$
		for some $c>0$. If $v\in C^1_\rho H^{M}$ for some $M\in\R$ that solves the equation
		$$ \partial_{\rho}v + T_{b_1}v = T_{b_0}v + f, $$
		with $f\in C^0_\rho H^r$ for some $r\in\R$, then, for all $\epsilon'>0$, we have
		$$ v|_{\rho=1} \in H^{r+1-\epsilon'}. $$
	\end{lemma}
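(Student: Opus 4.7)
The plan is to use a paradifferential energy method combined with a bootstrap argument, exploiting the parabolic nature of the equation: since $T_{b_1}$ is elliptic of order $1$ with $\Real b_1\ge c|\xi|$, it plays the role of the generator of a smoothing semigroup, and one expects a gain of almost one derivative at the trace $\rho=1$. First I would localize near $\rho=1$: pick a cutoff $\chi(\rho)$ with $\chi(1)=1$ and $\chi\equiv 0$ for $\rho\le\rho_0<1$, set $w:=\chi v$, so that $w(\rho_0)=0$ and
\begin{equation*}
\partial_\rho w+T_{b_1}w=T_{b_0}w+\chi f+\chi' v,
\end{equation*}
the source $\chi' v$ inheriting the $C^1_\rho H^M$ regularity of $v$. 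This reduces the problem to an estimate on $[\rho_0,1]$ with vanishing initial condition.

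Next, I would apply $\Lambda^\sigma:=\langle D_w\rangle^\sigma$ and commute it with the paradifferential operators via symbolic calculus: $[\Lambda^\sigma,T_{b_1}]$ is of order $\sigma$ and $[\Lambda^\sigma,T_{b_0}]$ is of order $\sigma-1$. A standard $L^2$ energy estimate on $E(\rho):=\|\Lambda^\sigma w(\rho)\|_{L^2}^2$ combined with the sharp G{\aa}rding inequality
\begin{equation*}
\Real\langle T_{b_1}u,u\rangle\ge c'\|u\|_{H^{1/2}}^2-C\|u\|_{L^2}^2,
\end{equation*}
valid for $b_1\in\Gamma^1_1$ with $\Real b_1\ge c|\xi|$, yields after Cauchy--Schwarz
\begin{equation*}
\frac{dE}{d\rho}+c'\|w\|_{H^{\sigma+1/2}}^2\le C\,E+C\bigl(\|w\|_{H^\sigma}^2+\|f\|_{H^{\sigma-1/2}}^2\bigr).
\end{equation*}
Gr\"onwall then delivers $w(1)\in H^\sigma$ and the integrated gain $w\in L^2_\rho H^{\sigma+1/2}$, as long as $\sigma-1/2\le r$. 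Starting from $v\in C^1_\rho H^M$ and iterating with $\sigma$ increasing by a fixed amount strictly less than $1/2$ at each step, one reaches $w\in C^0_\rho H^{r+1/2}$ in finitely many rounds.

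To close the final half-derivative gap and obtain a trace in $H^{r+1-\epsilon'}$, one must upgrade the $L^2_\rho$-type smoothing coming from G{\aa}rding into a genuine pointwise gain at $\rho=1$. My preferred route is a Duhamel representation
\begin{equation*}
\Lambda^\sigma w(1)=\int_{\rho_0}^1 S(1-s)\,g(s)\,ds,
\end{equation*}
where $S(\tau)$ is the paradifferential semigroup generated by $-T_{b_1}$ and $g$ collects the source and commutator terms, which all lie in $C^0_\rho H^r$; the smoothing bound $\|S(\tau)u\|_{H^{r+\alpha}}\lesssim \tau^{-\alpha}\|u\|_{H^r}$, together with the integrability of $\tau^{-\alpha}$ for $\alpha<1$, then produces $w(1)\in H^{r+1-\epsilon'}$ for every $\epsilon'>0$. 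The hard part is precisely this last step: constructing a paradifferential semigroup for $-T_{b_1}$ with only $\Gamma^1_1$ regularity of $b_1$ requires verifying that the scalar smoothing estimates survive at the paralinearized level. A technically lighter alternative is a dyadic Littlewood--Paley decomposition, on which $T_{b_1}$ essentially acts as the scalar $2^j$ on frequencies $|\xi|\sim 2^j$, reducing the problem to scalar exponential decay estimates that can be summed against the $\epsilon'$-loss.
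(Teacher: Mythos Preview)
The paper does not prove this lemma; it is quoted verbatim as Proposition~3.19 of Alazard--Burq--Zuily (2011) and immediately applied. So there is no ``paper's own proof'' to compare against here.

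Your outline is essentially correct and matches the standard strategy. A couple of remarks. First, the pure energy/G{\aa}rding route you describe only yields $w\in C^0_\rho H^{r+1/2}\cap L^2_\rho H^{r+1}$, hence $w|_{\rho=1}\in H^{r+1/2}$ by the trace theorem; you correctly flag that an extra half-derivative must be recovered by a different mechanism. Second, your preferred Duhamel route is morally right but, as you note, writing $S(\tau)=\exp(-\tau T_{b_1})$ is delicate when $b_1$ may depend on $\rho$: one needs a two-parameter evolution family rather than a semigroup, and the smoothing bound $\|S(\tau)\|_{H^r\to H^{r+\alpha}}\lesssim\tau^{-\alpha}$ has to be established at the paradifferential level with only $\Gamma^1_1$ regularity. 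This can be done, but it is not lighter than the alternative you mention at the end.

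In fact the ``technically lighter alternative'' you sketch---a dyadic Littlewood--Paley decomposition reducing to scalar ODEs $\partial_\rho v_j + c\,2^j v_j \approx g_j$, whose explicit solution gives $\|v_j(1)\|_{L^2}\lesssim 2^{-j}\|g_j\|_{L^\infty_\rho L^2}$ up to commutator errors absorbed into the $\epsilon'$-loss---is precisely the approach taken in the cited reference. So your alternative is the standard proof; your primary route is a reasonable variant that trades the dyadic bookkeeping for a semigroup construction.
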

	One may apply this lemma for $b_1=a^{(1)}\in \Gamma^1_{3/2+}\subset\Gamma^1_1$, $b_0=-a^{(0)}\in \Gamma^0_{1/2+}\subset\Gamma^0_0$, and $v=(\partial_\rho-T_A)\Phi$ with $M=s_0-1$, $r=s_0-\frac{1}{2}+\epsilon$, and $\epsilon'=\frac{\epsilon}{2}$ to conclude that
	\begin{equation*}
		\left\|\left.\left((\partial_\rho - T_A)\Phi\right)\right|_{\rho=1} \right\|_{H^{s_0+\frac{1}{2}}} \le C\left( \|\eta\|_{H^{s+\frac{1}{2}-}_R} \right) \|\psi\|_{H^{s_0}}.
	\end{equation*}
	In order to recover $\partial_\rho\varphi$ (appearing on the left hand side of \eqref{eq-paralin:normal-to-tangential}) from $\partial_\rho\Phi$, we use the definition \eqref{eq-paralin:good-unknown-ext} of $\Phi$,
	\begin{equation*}
		\left.\left((\partial_\rho - T_A)\Phi\right)\right|_{\rho=1} = \partial_\rho\varphi|_{\rho=1} - T_\tau U - T_{\eta^{-1}\partial_\rho^2\varphi}\eta|_{\rho=1} - T_{\eta^{-1}\partial_\rho\varphi}\eta|_{\rho=1},
	\end{equation*}
	where $U$ is defined by \eqref{eq-paralin:good-unknown} with $B$ given in \eqref{eq-pre:B-polar}, $\tau = A|_{\rho=1}$, and the remaining terms verify
	\begin{align*}
		\|T_{\eta^{-1}\partial_\rho^2\varphi}\eta\|_{C^0_\rho H^{s_0+\frac{1}{2}}} \lesssim& \|\eta^{-1}\partial_\rho^2\varphi\|_{C^0_\rho H^{s_0-2-}}  \|\eta\|_{H^{s+\frac{1}{2}}_R} \le C\left( \|\eta\|_{H^{s+\frac{1}{2}-}_R} \right) \|\psi\|_{H^{s_0-}}\|\eta\|_{H^{s+\frac{1}{2}}_R}, \\
		\|T_{\eta^{-1}\partial_\rho\varphi}\eta\|_{C^0_\rho H^{s_0+\frac{1}{2}}} \lesssim& \|\eta^{-1}\partial_\rho\varphi\|_{C^0_\rho H^{s_0-1-}}  \|\eta\|_{H^{s+\frac{1}{2}}_R}\le C\left( \|\eta\|_{H^{s+\frac{1}{2}-}_R} \right) \|\psi\|_{H^{s_0-}}\|\eta\|_{H^{s+\frac{1}{2}}_R},
	\end{align*}
	thanks to Proposition \ref{prop-para:paraprod-bound} and Corollary \ref{cor-para:product-law} together with Lemma \ref{lem-paralin:reg-of-pot}. Note that, when $s_0-2>1$, the estimate follows from $s_0\le s$, while, when $s_0-2\le\frac{d}{2}$, we use the fact that $(s_0-)-2+s+\frac{1}{2}-1>s_0+\frac{1}{2}$ since $s>3$.

	\subsection{Paralinearization of Dirichlet-to-Neumann operator}\label{subsect:paralin-DtN}
	
	Thanks to Proposition \ref{prop-paralin:normal-to-tangential}, we are now able to write the Dirichlet-to-Neumann operator \eqref{eq-paralin:DtN-main} in terms of tangential derivatives. We introduce the modified gradient (in $w$)
	\begin{equation}\label{eq-paralin:def-deri}
		\bar{\nabla} := \frac{e_\theta\partial_\theta}{\eta} + e_z\partial_z,
	\end{equation}\index{n@$\bar{\nabla}$ Modified gradient}
	which allows us to do the following calculation from \eqref{eq-pre:B-polar}-\eqref{eq-pre:deri-of-psi} (attention that we abuse the notation $(\rho,\theta,z)$ which has different meaning in Section \ref{Sect:pre} and here, whose relation is given by \eqref{eq-paralin:alt-var}),
	\begin{align}
		B =& \frac{\partial_\rho\varphi|_{\rho=1}}{\eta}, \label{eq-paralin:def-B} \\
		V =& \bar{\nabla}\varphi|_{\rho=1} - \frac{\bar{\nabla}\eta}{\eta}\partial_\rho\varphi|_{\rho=1}, \label{eq-paralin:def-V}\\
		N =& BV\cdot\bar{\nabla}\eta + \frac{|V|^2 - B^2}{2}, \label{eq-paralin:def-N}\\
		G(\eta)\psi =& B- V\cdot\bar{\nabla}\eta, \label{eq-paralin:DtN-formula}\\
		\bar{\nabla}\psi =& V + B\bar{\nabla}\eta. \label{eq-paralin:deriv-of-psi}
	\end{align}
	Note that these formulas also imply
	\begin{align}
		B &= \frac{G(\eta)\psi + \bar{\nabla}\psi\cdot\bar{\nabla}\eta}{1+ \left|\bar{\nabla}\eta\right|^2}, \label{eq-paralin:B-formula}\\
		V &= \bar{\nabla}\psi - B \bar{\nabla}\eta, \label{eq-paralin:V-formula}\\
		N &= \frac{1}{2}\frac{\left( G(\eta)\psi + \bar{\nabla}\psi\cdot\bar{\nabla}\eta \right)^2}{1+ \left|\bar{\nabla}\eta\right|^2}. \label{eq-paralin:N-formula}
	\end{align}
	The following estimate for $(B,V)$ is no more than a consequence of Lemma \ref{lem-paralin:reg-of-pot} and formula \eqref{eq-paralin:def-B}, together with Corollary \ref{cor-para:product-law} and Proposition \ref{prop-para:paralin}.
	\begin{lemma}\label{lem-paralin:esti-B-V-psi}
		Let $(\eta,\psi)\in H_R^{s+\frac{1}{2}}\times H^{s_0}$ with $s>3$ and $\frac{3}{2}<s_0\le s$. Then, by expressing $B,V$ as linear operators acting on $\psi$,
		\begin{equation}\label{eq-paralin:def-B-V-op}
			B=\mathcal{B}(\eta)\psi,\ \ V=\mathcal{V}(\eta)\psi,
		\end{equation}
		we have
		\begin{equation}\label{eq-paralin:esti-B-V-op}
			\|\mathcal{B}(\eta)\|_{\mathcal{L}(H^{s_0};H^{s_0-1})} + \|\mathcal{V}(\eta)\|_{\mathcal{L}(H^{s_0};H^{s_0-1})} \le C\left(\|\eta\|_{H_R^{s+\frac{1}{2}-}}\right),
		\end{equation}\index{B@$\mathcal{B}(\eta)$ Trace of radial derivative of harmonic extension}\index{V@$\mathcal{V}(\eta)$ Trace of anular derivative of harmonic extension}
		and, in particular,
		\begin{equation}\label{eq-paralin:esti-B-V-psi}
			\|B\|_{H^{s_0-1}} + \|V\|_{H^{s_0-1}} \le C\left(\|\eta\|_{H_R^{s+\frac{1}{2}-}}\right) \|\psi\|_{H^{s_0}}.
		\end{equation}
	\end{lemma}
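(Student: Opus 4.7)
The plan is to reduce the estimate to the interior elliptic regularity (Lemma \ref{lem-paralin:reg-of-pot}) combined with the product rule (Corollary \ref{cor-para:product-law}) and the composition estimate for smooth functions of $\eta$ (Proposition \ref{prop-para:paralin}). The linearity of $\mathcal{B}(\eta)$ and $\mathcal{V}(\eta)$ in $\psi$ is inherited from the linearity of the harmonic extension $\psi \mapsto \varphi$ via the variational problem \eqref{eq-pre:ellip-variation}, so only the quantitative bound requires work.

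\textbf{Estimate for $B$.} First I would apply Lemma \ref{lem-paralin:reg-of-pot} with $l=1$ and take the trace at $\rho=1$, which yields $\partial_\rho\varphi|_{\rho=1} \in H^{s_0-1}$ with $\|\partial_\rho\varphi|_{\rho=1}\|_{H^{s_0-1}} \le C(\|\eta\|_{H^{s+\frac{1}{2}-}_R}) \|\psi\|_{H^{s_0}}$. Since $\eta \ge c_0 > 0$ by \eqref{hyp-intro:bounds}, Proposition \ref{prop-para:paralin} applied to $F(x)=1/x$ (smooth near $[c_0,C_0]$) gives $\eta^{-1} \in H^{s+\frac{1}{2}-}_{1/R}$ with norm controlled by a smooth function of $\|\eta\|_{H^{s+\frac{1}{2}-}_R}$. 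Finally, Corollary \ref{cor-para:product-law} applied to the product $B = \eta^{-1} \cdot \partial_\rho\varphi|_{\rho=1}$ closes the estimate: the high-regularity factor $\eta^{-1}$ has index $s+\frac{1}{2}- > 1 = d/2$, and $s_0-1 \le s+\frac{1}{2}-$ trivially.

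\textbf{Estimate for $V$.} Commuting the trace at $\rho=1$ with the tangential derivatives $\partial_\theta$ and $\partial_z$ identifies $\bar{\nabla}\varphi|_{\rho=1}$ with $\bar{\nabla}\psi$, which lies in $H^{s_0-1}$ by the same product-rule argument used for $B$ (applied to $\eta^{-1} \cdot \partial_\theta\psi$). For the second term in \eqref{eq-paralin:def-V}, I first note that $\bar{\nabla}\eta/\eta \in H^{s-\frac{1}{2}-}$ (modulo a $C^\infty_b$ normalization), again by combining Proposition \ref{prop-para:paralin} on $\eta^{-1}$ with Corollary \ref{cor-para:product-law} on the product $\eta^{-1} \cdot \partial_{\theta,z}\eta$. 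Then the product with $\partial_\rho\varphi|_{\rho=1} \in H^{s_0-1}$ remains in $H^{s_0-1}$ via Corollary \ref{cor-para:product-law}, the index inequality $s-\frac{1}{2}- > s_0-1$ being guaranteed by $s > 3 \ge s_0$.

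There is no serious analytic obstacle here: the heavy lifting is already contained in Lemma \ref{lem-paralin:reg-of-pot}. The only care needed is the bookkeeping of the normalized Sobolev spaces $H^s_W$ (with normalization constants $R$, $R^{-1}$, etc.) and the verification of the index inequalities for the paraproduct laws; both are easily handled thanks to the comfortable margins $s > 3$ and $s_0 > 3/2$, which ensure that every high-regularity factor appearing sits strictly above the critical index $d/2=1$.
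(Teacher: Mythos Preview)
Your proposal is correct and follows exactly the route the paper indicates: the paper states (without further detail) that the lemma ``is no more than a consequence of Lemma \ref{lem-paralin:reg-of-pot} and formula \eqref{eq-paralin:def-B}, together with Corollary \ref{cor-para:product-law} and Proposition \ref{prop-para:paralin},'' and you have supplied precisely that argument, including the verification of the index conditions. Nothing to add.
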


	We can now state the main result of this section as
	\begin{proposition}\label{prop-paralin:paralin-DtN}
		Let $(\eta,\psi)\in H_R^{s+\frac{1}{2}}\times H^{s_0}$ with $s>3$ and $\frac{3}{2}<s_0\le s$. Let $U=\psi-T_B\eta$ be the good unknown as defined in \eqref{eq-paralin:good-unknown}. Then there exists elliptic symbol $\lambda\in\Sigma^1$, such that
		\begin{equation}\label{eq-paralin:paralin-DtN}
			\begin{aligned}
				G(\eta)\psi =& T_\lambda U - T_V\cdot\bar{\nabla}\eta + f_1 \\
				=& T_\lambda \left( \psi - T_{\mathcal{B}(\eta)\psi}\eta \right) - T_{\mathcal{V}(\eta)\psi}\cdot\bar{\nabla}\eta + f_1,
			\end{aligned}
		\end{equation}\index{f@$f_1$ Error in paralinearization of Dirichlet-to-Neumann operator}
		where $\mathcal{B}(\eta),\mathcal{V}(\eta)$ are defined by \eqref{eq-paralin:def-B-V-op} above and $f_1 = f_1(\eta,\psi)$ is linear in $\psi$ with
		\begin{equation}\label{eq-paralin:esti-f-1}
			\|f_1\|_{H^{s_0+\frac{1}{2}}} \le C\left(\|\eta\|_{H^{s+\frac{1}{2}-}_R}\right) \left( \|\psi\|_{H^{s_0}} + \|\eta\|_{H_R^{s+\frac{1}{2}}}\|\psi\|_{H^{s_0-}} \right).
		\end{equation}
		
		Moreover, $\lambda = \lambda^{(1)} + \lambda^{(0)}\in \Sigma^1$ can be calculated explicitly,
		\begin{equation}\label{eq-paralin:def-lambda}
			\lambda^{(1)} = \sqrt{\left(\frac{\xi_\theta^2}{\eta^2}+\xi_z^2\right) + \left(\frac{\xi_\theta}{\eta}\eta_z - \xi_z\frac{\eta_\theta}{\eta}\right)^2 },\ \ \lambda^{(0)} = \frac{l^2}{\eta} A^{(0)}|_{\rho=1}, 
		\end{equation}\index{l@$\lambda$ Symbol of Dirichlet-to-Nemann operator}
		where $A^{(0)}$ is defined in \eqref{eq-paralin:def-A-0}, and $l$ is as defined in \eqref{eq-intro:def-l},
		$$ l = \sqrt{1 + \left(\frac{\eta_\theta}{\eta}\right)^2 + \eta_z^2}. $$
	\end{proposition}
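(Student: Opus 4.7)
The plan is to start from $G(\eta)\psi = B - V\cdot\bar{\nabla}\eta$ (identity \eqref{eq-paralin:DtN-formula}), apply Bony's paralinearization (Proposition \ref{prop-para:paralin}) to the product $V\cdot\bar{\nabla}\eta$, and then use Proposition \ref{prop-paralin:normal-to-tangential} to turn the normal derivative hidden in $B$ into a paradifferential operator acting on the good unknown $U$. Bony's decomposition
$$ V\cdot\bar{\nabla}\eta \ = \ T_V\cdot\bar{\nabla}\eta + T_{\bar{\nabla}\eta}\cdot V + R(V,\bar{\nabla}\eta), $$
combined with Corollary \ref{cor-para:product-law}, Lemma \ref{lem-paralin:esti-B-V-psi}, and the hypotheses $s>3$, $s_0>\tfrac{3}{2}$, places the symmetric remainder $R(V,\bar{\nabla}\eta)$ into $H^{s_0+\frac12+}$, so it can be absorbed into $f_1$. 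This reduces the statement to proving
$$ T_\lambda U \ = \ B - T_{\bar{\nabla}\eta}\cdot V \pmod{H^{s_0+\frac12}}. $$

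Next I would substitute $V = \bar{\nabla}\psi - B\,\bar{\nabla}\eta$ from \eqref{eq-paralin:V-formula} to rewrite
$$ B - T_{\bar{\nabla}\eta}\cdot V \ = \ B + T_{\bar{\nabla}\eta}\cdot(B\,\bar{\nabla}\eta) - T_{\bar{\nabla}\eta}\cdot\bar{\nabla}\psi. $$
A second Bony paralinearization of $B\,\bar{\nabla}\eta$, together with the symbolic composition rule of Proposition \ref{prop-paralin:homo-sym-cal} applied component-wise, collapses the first two terms to $T_{1+|\bar{\nabla}\eta|^2}B = T_{l^2}B$ modulo $f_1$-admissible remainders. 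Using $\eta B = \partial_\rho\varphi|_{\rho=1}$ and Proposition \ref{prop-paralin:homo-sym-cal} once more gives $T_{l^2}B = T_{l^2/\eta}\partial_\rho\varphi|_{\rho=1}$ to the same accuracy, and Proposition \ref{prop-paralin:normal-to-tangential} then replaces $\partial_\rho\varphi|_{\rho=1}$ by $T_{A|_{\rho=1}}U$, producing
$$ T_{l^2}B \ = \ T_{(l^2/\eta)\,\sharp\,A|_{\rho=1}}\,U \pmod{H^{s_0+\frac12}}. $$
Since the outer factor $l^2/\eta$ is independent of $\xi$, the sharp product collapses to ordinary multiplication, so the symbol is $(l^2/\eta)A^{(1)}|_{\rho=1} + (l^2/\eta)A^{(0)}|_{\rho=1}$.

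The key algebraic step is then a direct calculation: at $\rho=1$ we have $\alpha = l^2/\eta^2$ and $\beta = (-2\eta_\theta/\eta^3,-2\eta_z/\eta)$, and substituting into \eqref{eq-paralin:def-A-1}--\eqref{eq-paralin:def-A-0}, using the identity $l^2(\xi_\theta^2/\eta^2+\xi_z^2) - (\eta_\theta\xi_\theta/\eta^2+\eta_z\xi_z)^2 = (\xi_\theta^2/\eta^2+\xi_z^2)+(\xi_\theta\eta_z/\eta-\xi_z\eta_\theta/\eta)^2$, yields
$$ \frac{l^2}{\eta}A^{(1)}\big|_{\rho=1} \ = \ \lambda^{(1)} + i\!\left(\frac{\eta_\theta\xi_\theta}{\eta^2}+\eta_z\xi_z\right), \qquad \frac{l^2}{\eta}A^{(0)}\big|_{\rho=1} \ = \ \lambda^{(0)}, $$
with $\lambda^{(1)}$ as in \eqref{eq-paralin:def-lambda}. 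Ellipticity of $\lambda$ is immediate from $\lambda^{(1)} \geq \sqrt{\xi_\theta^2/\eta^2 + \xi_z^2} \gtrsim |\xi|$. The spurious imaginary first-order piece must then cancel against the remaining contribution $-T_{\bar{\nabla}\eta}\cdot\bar{\nabla}\psi$: writing $\psi = U + T_B\eta$ via \eqref{eq-paralin:good-unknown}, the symbolic identity $T_a\partial_j = T_{ia\xi_j}$ modulo smoothing together with $T_{\eta_\theta/\eta}T_{1/\eta}\approx T_{\eta_\theta/\eta^2}$ (Proposition \ref{prop-paralin:homo-sym-cal}) identifies $T_{\bar{\nabla}\eta}\cdot\bar{\nabla}U$ as the paradifferential operator of symbol $i(\eta_\theta\xi_\theta/\eta^2+\eta_z\xi_z)$ applied to $U$, up to $f_1$-admissible remainders; the residual pieces involving $T_B\eta$ rearrange into $f_1$ contributions by Corollaries \ref{cor-paralin:homo-sym-commu} and \ref{cor-para:product-law}.

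The main obstacle is the careful bookkeeping of this cancellation and of the secondary remainders: at every composition, commutator, or paraproduct step one must verify that the loss of order remains strictly smaller than $\tfrac32$, so that the resulting error indeed falls into $H^{s_0+\frac12+}$. The hypothesis $s>3$ provides the necessary Hölder and Sobolev embeddings for this, and the linearity of $f_1$ in $\psi$ together with the tame estimate \eqref{eq-paralin:esti-f-1} is inherited from Proposition \ref{prop-paralin:normal-to-tangential}, since every operation above is linear in $\psi$ with coefficients that are smooth functions of $\eta$ and its derivatives.
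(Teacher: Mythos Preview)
Your overall strategy coincides with the paper's: paralinearize the explicit formula for $G(\eta)\psi$, invoke Proposition~\ref{prop-paralin:normal-to-tangential} to trade $\partial_\rho\varphi|_{\rho=1}$ for $T_\tau U$, and identify the spurious imaginary first-order piece $i(\eta_\theta\xi_\theta/\eta^2+\eta_z\xi_z)$ coming from $(l^2/\eta)A^{(1)}|_{\rho=1}$ with the symbol of $T_{\bar\nabla\eta}\cdot\bar\nabla$ applied to $U$. The algebraic identity under the square root and the resulting formula for $\lambda^{(1)}$ are exactly right, and the way you obtain $\lambda^{(0)}$ from the sharp product is fine since $l^2/\eta$ is $\xi$-independent. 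The paper organizes things starting from the variant \eqref{eq-paralin:DtN-main} rather than from $B-V\cdot\bar\nabla\eta$, but the two decompositions are equivalent and lead to the same three key claims \eqref{eq-paralin:paralin-DtN-1}--\eqref{eq-paralin:paralin-DtN-3}.

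There is, however, a genuine gap in your treatment of the residual pieces. In the step where you claim that $B + T_{\bar\nabla\eta}\cdot(B\,\bar\nabla\eta)$ collapses to $T_{l^2}B$ ``modulo $f_1$-admissible remainders'', the leftover term $T_{\bar\nabla\eta}\cdot T_B\bar\nabla\eta$ is \emph{not} individually admissible at the endpoint $s_0=s$: it lies only in $H^{s-\frac12}$, one full derivative short of $H^{s+\frac12}$, and no tame redistribution of norms saves it since $\bar\nabla\eta$ is only in $H^{s-\frac12}$. The same is true of the term $T_{\bar\nabla\eta}\cdot\bar\nabla T_B\eta$ that you produce later from $\psi=U+T_B\eta$, and of the term $T_{\partial_\rho\varphi|_{\rho=1}}(\eta^{-1}l^2)$ hidden in the passage from $T_{l^2}B$ to $T_{l^2/\eta}\partial_\rho\varphi|_{\rho=1}$. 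These three pieces must be shown to cancel \emph{together}, which is precisely the content of the paper's claim \eqref{eq-paralin:paralin-DtN-3}: its proof interpolates between $s_0=\tfrac32+\delta$ (where each piece is separately small) and $s_0=s$ (where one uses symbolic calculus to extract the common factor $T_B$ and exploit the algebraic identity $|\bar\nabla\eta|^2 = (\eta^{-1}\eta_\theta)^2+\eta_z^2$). Your sentence ``rearrange into $f_1$ contributions by Corollaries~\ref{cor-paralin:homo-sym-commu} and~\ref{cor-para:product-law}'' does not capture this; commutator and product estimates alone cannot close the argument, because they do not raise the regularity of the individual pieces above $H^{s-\frac12}$. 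You need to identify the cancellation explicitly, and the $\theta$-component requires extra care since $\bar\nabla$ carries the coefficient $\eta^{-1}$, so that $(\bar\nabla T_B\eta)^\theta$ and $(T_B\bar\nabla\eta)^\theta$ differ by a commutator $[\eta^{-1},T_B]\eta_\theta$ that must also be estimated.
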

	
	\begin{proof}
		During the proof of Proposition \ref{prop-paralin:paralin-DtN}, we shall use the equivalence: for $u,v$ defined on $\T\times\R$,
		\begin{equation*}
			u\sim v \Leftrightarrow \|u-v\|_{H^{s_0+\frac{1}{2}}} \le C\left(\|\eta\|_{H_R^{s+\frac{1}{2}-}}\right) \|\psi\|_{H^{s_0}}.
		\end{equation*}
		
		Recall that, from \eqref{eq-paralin:DtN-main}, $G(\eta)\psi$ can be written as
		\begin{align*}
			G(\eta)\psi =& \frac{l^2}{\eta}\partial_\rho\varphi|_{\rho=1} - \frac{\eta_\theta}{\eta}\frac{\psi_\theta}{\eta} - \eta_z \psi_z  \\
			=& T_{\eta^{-1}l^2} \partial_\rho\varphi|_{\rho=1} + T_{\partial_\rho\varphi|_{\rho=1}}\eta^{-1}l^2 + R(\eta^{-1}l^2, \partial_\rho\varphi|_{\rho=1}) - T_{\eta^{-1}\eta_\theta}\eta^{-1}\psi_\theta - T_{\eta^{-1}\psi_\theta}\eta^{-1}\eta_\theta \\
			& - R(\eta^{-1}\eta_\theta,\eta^{-1}\psi_\theta) - T_{\eta_z}\psi_z - T_{\psi_z}\eta_z - R(\eta_z,\psi_z) \\
			\sim& T_{\eta^{-1}l^2} \partial_\rho\varphi|_{\rho=1} + T_{\partial_\rho\varphi|_{\rho=1}}\eta^{-1}l^2 - T_{\eta^{-1}\eta_\theta}\eta^{-1}\psi_\theta - T_{\eta^{-1}\psi_\theta}\eta^{-1}\eta_\theta - T_{\eta_z}\psi_z - T_{\psi_z}\eta_z \\
			=& T_{\eta^{-1}l^2} \partial_\rho\varphi|_{\rho=1} + T_{\partial_\rho\varphi|_{\rho=1}}\eta^{-1}l^2 - T_{\bar{\nabla}\eta}\cdot\bar{\nabla}\psi - T_{\bar{\nabla}\psi}\cdot\bar{\nabla}\eta \\
			=& T_{\eta^{-1}l^2} \partial_\rho\varphi|_{\rho=1} + T_{\partial_\rho\varphi|_{\rho=1}}\eta^{-1}l^2 - T_{\bar{\nabla}\eta}\cdot\bar{\nabla}\psi - T_{B\bar{\nabla}\eta}\cdot\bar{\nabla}\eta - T_{V}\cdot\bar{\nabla}\eta.
		\end{align*}
		where we use the formula \eqref{eq-paralin:deriv-of-psi}. To prove the equivalence, we observe that $\partial_\rho\varphi|_{\rho=1}$, $\eta^{-1}\psi_\theta$ and $\psi_z$ belong to $H^{s_0-1}$ with norm bounded by $C\left(\|\eta\|_{H_R^{s+\frac{1}{2}-}}\right) \|\psi\|_{H^{s_0}}$ using \eqref{eq-paralin:reg-of-pot-loc}. Thus, an application of Proposition \ref{prop-para:paralin} and Corollary \ref{cor-para:product-law} gives
		\begin{align*}
			&\| R(\eta^{-1}l^2, \partial_\rho\varphi|_{\rho=1}) + R(\eta^{-1}\eta_\theta,\eta^{-1}\psi_\theta) + R(\eta_z,\psi_z) \|_{H^{s_0+\frac{1}{2}}} \\
			\le& \left( \|\eta^{-1}l^2\|_{H^{\frac{5}{2}+}_{R^{-1}}} + \|\eta^{-1}\eta_\theta\|_{H^{\frac{5}{2}+}} + \|\eta_z\|_{H^{\frac{5}{2}+}} \right) C\left(\|\eta\|_{H_R^{s+\frac{1}{2}-}}\right) \|\psi\|_{H^{s_0}} \\
			\le& C\left(\|\eta\|_{H_R^{s+\frac{1}{2}-}}\right) \|\psi\|_{H^{s_0}}.
		\end{align*}
		Thanks to definition \eqref{eq-paralin:good-unknown} of good unknown $U$, formula \eqref{eq-paralin:def-B}, and Proposition \ref{prop-paralin:normal-to-tangential}, we have
		\begin{align*}
			G(\eta)\psi \sim& T_{\eta^{-1}l^2} \partial_\rho\varphi|_{\rho=1} + T_{\partial_\rho\varphi|_{\rho=1}}\eta^{-1}l^2 - T_{\bar{\nabla}\eta}\cdot\bar{\nabla}\psi - T_{B\bar{\nabla}\eta}\cdot\bar{\nabla}\eta - T_{V}\cdot\bar{\nabla}\eta \\
			\sim& T_{\eta^{-1}l^2}T_{\tau} U + T_{\partial_\rho\varphi|_{\rho=1}}\eta^{-1}l^2 - T_{\bar{\nabla}\eta}\cdot\bar{\nabla}\left(U+T_B \eta\right) - T_{B\bar{\nabla}\eta}\cdot\bar{\nabla}\eta - T_{V}\cdot\bar{\nabla}\eta \\
			=& T_{\eta^{-1}l^2}T_{\tau} U - T_{\bar{\nabla}\eta}\cdot\bar{\nabla}U - T_{V}\cdot\bar{\nabla}\eta + T_{\eta B}\eta^{-1}l^2 - T_{\bar{\nabla}\eta}\cdot\bar{\nabla}T_B \eta - T_{B\bar{\nabla}\eta}\cdot\bar{\nabla}\eta,
		\end{align*}
		where the equivalence is due to estimate \eqref{eq-paralin:normal-to-tangential} and $\eta^{-1}l^2 \in H^{s-\frac{1}{2}-}_{R^{-1}}$, ensuring that $T_{\eta^{-1}l^2}$ is of order $0$.
		
		We claim that 
		\begin{align}
			&T_{\eta^{-1}l^2}T_{\tau} U \sim T_{\eta^{-1}l^2\tau} U, \label{eq-paralin:paralin-DtN-1} \\
			&T_{\bar{\nabla}\eta}\cdot\bar{\nabla}U \sim -\frac{1}{2}T_{i\eta\xi\cdot\beta|_{\rho=1}} U, \label{eq-paralin:paralin-DtN-2} \\
			&T_{\eta B}\eta^{-1}l^2 - T_{\bar{\nabla}\eta}\cdot\bar{\nabla}T_B \eta - T_{B\bar{\nabla}\eta}\cdot\bar{\nabla}\eta \sim 0. \label{eq-paralin:paralin-DtN-3}
		\end{align}
		With these equivalences (whose proof will be given later), we may conclude that
		\begin{equation*}
			G(\eta)\psi \sim T_{\lambda} U - T_{V}\cdot\bar{\nabla}\eta,\ \ \lambda = \eta^{-1}l^2\tau + \frac{i\eta\xi\cdot\beta|_{\rho=1}}{2},
		\end{equation*}
		which coincides with \eqref{eq-paralin:def-lambda}.
	\end{proof}
	
	\begin{proof}[Proof of \eqref{eq-paralin:paralin-DtN-1}]
		In Proposition \ref{prop-paralin:normal-to-tangential} and Lemma \ref{lem-paralin:decomp-of-P}, we have seen that $\tau = A|_{\rho=1}\in \Sigma^1$. By applying Proposition \ref{prop-paralin:homo-sym-cal} and using the fact that $\eta^{-1}l^2 \in H^{s-\frac{1}{2}-}_{R^{-1}} \subset \Sigma^0$ (since $s-\frac{1}{2}->\frac{3}{2}+1$ due to $s>3$), we have
		\begin{equation*}
			\|T_{\eta^{-1}l^2}T_{\tau} U - T_{\left(\eta^{-1}l^2\right) \sharp \tau} U\|_{H^{s_0+\frac{1}{2}}} \lesssim C\left(\|\eta\|_{H_R^{s+\frac{1}{2}-}}\right) \|U\|_{H^{s_0}} \le  C\left(\|\eta\|_{H_R^{s+\frac{1}{2}-}}\right) \|\psi\|_{H^{s_0}}.
		\end{equation*}
		Note that, since $\eta^{-1}l^2 \in \Sigma^0$ is independent of $\xi$ and has no $\Gamma^{-1}_{1/2+}$ components, the formula \eqref{eq-paralin:homo-sym-comp-formula} yields
		\begin{equation*}
			\left(\eta^{-1}l^2\right) \sharp \tau = \eta^{-1}l^2\tau.
		\end{equation*}
	\end{proof}
	
	\begin{proof}[Proof of \eqref{eq-paralin:paralin-DtN-2}]
		By definition \eqref{eq-paralin:def-beta}, it is easy to see that
		\begin{equation*}
			-\frac{i\eta\xi\cdot\beta|_{\rho=1}}{2} = i\xi_\theta \frac{\eta_\theta}{\eta^2} + i\xi_z \eta_z.
		\end{equation*}
		We may decompose the left hand side of \eqref{eq-paralin:paralin-DtN-2} as
		\begin{align*}
			T_{\bar{\nabla}\eta}\cdot\bar{\nabla}U =& T_{\eta^{-1}\eta_\theta}\left(\eta^{-1}\partial_\theta U\right) + T_{\eta_z} \partial_z U \\
			=&T_{\eta^{-1}\eta_\theta}T_{\eta^{-1}}\partial_\theta U + T_{\eta^{-1}\eta_\theta}T_{\partial_\theta U}\eta^{-1} + T_{\eta^{-1}\eta_\theta}R(\eta^{-1},\partial_\theta U) + T_{i\xi_z \eta_z} U.
		\end{align*}
		Since $\eta^{-1}\eta_\theta, \eta^{-1} \in \Gamma^{0}_{3/2+}$, an application of Proposition \ref{prop-para:paradiff-cal-sym} and \eqref{eq-paralin:esti-good-unknown} gives that
		\begin{align*}
			\|\left( T_{\left(\eta^{-1}\eta_\theta\right)\sharp\left(\eta^{-1}\right)} - T_{\eta^{-1}\eta_\theta}T_{\eta^{-1}}\right) \partial_\theta U\|_{H^{s_0+\frac{1}{2}}} \le& C\left(\|\eta\|_{H_R^{s+\frac{1}{2}-}}\right) \|\partial_\theta U\|_{H^{s_0-1}} \\
			\le& C\left(\|\eta\|_{H_R^{s+\frac{1}{2}-}}\right) \|\psi\|_{H^{s_0}},
		\end{align*}
		with $\left(\eta^{-1}\eta_\theta\right)\sharp\left(\eta^{-1}\right) = \eta^{-2}\eta_\theta$, from which one may deduce that
		\begin{equation*}
			T_{\eta^{-1}\eta_\theta}T_{\eta^{-1}}\partial_\theta U \sim T_{\eta^{-2}\eta_\theta} \partial_\theta U = T_{i\xi_\theta\eta^{-2}\eta_\theta} U,
		\end{equation*}
		and the desired estimate \eqref{eq-paralin:paralin-DtN-2} can be reduced to
		\begin{equation*}
			T_{\eta^{-1}\eta_\theta}T_{\partial_\theta U}\eta^{-1} + T_{\eta^{-1}\eta_\theta}R(\eta^{-1},\partial_\theta U) \sim 0.
		\end{equation*}
		In fact, thanks to Proposition \ref{prop-para:paraprod-bound} and Corollary \ref{cor-para:product-law}, we have
		\begin{align*}
			\|T_{\eta^{-1}\eta_\theta}T_{\partial_\theta U}\eta^{-1}\|_{H^{s_0+\frac{1}{2}}} \lesssim& \|\eta^{-1}\eta_\theta\|_{H^{1+}} \|T_{\partial_\theta U}\eta^{-1}\|_{H^{s_0+\frac{1}{2}}} \\
			\le& C\left(\|\eta\|_{H_R^{s+\frac{1}{2}-}}\right) \|\partial_\theta U\|_{H^{s_0-1}} \|\eta^{-1}\|_{H^{\max(\frac{5}{2}+,s_0+\frac{1}{2})}_{R^{-1}}} \\
			\le& C\left(\|\eta\|_{H_R^{s+\frac{1}{2}-}}\right) \|\psi\|_{H^{s_0}}, \\
			\|T_{\eta^{-1}\eta_\theta}R(\eta^{-1},\partial_\theta U)\|_{H^{s_0+\frac{1}{2}}} \lesssim& \|\eta^{-1}\eta_\theta\|_{H^{1+}} \|R(\eta^{-1},\partial_\theta U)\|_{H^{s_0+\frac{1}{2}}} \\
			\le& C\left(\|\eta\|_{H_R^{s+\frac{1}{2}-}}\right) \|\eta^{-1}\|_{H^{\frac{5}{2}+}_{R^{-1}}} \|\partial_\theta U\|_{H^{s_0-1}} \\
			\le& C\left(\|\eta\|_{H_R^{s+\frac{1}{2}-}}\right) \|\psi\|_{H^{s_0}}.
		\end{align*}
		which completes the proof.
	\end{proof}
	
	\begin{proof}[Proof of \eqref{eq-paralin:paralin-DtN-3}]
		To prove \eqref{eq-paralin:paralin-DtN-3}, as in the proof of Lemma \ref{lem-paralin:paralin-of-ellip}, we only need to consider the case $s_0=\frac{3}{2}+\delta$ with $0<\delta\ll 1$ and the case $s_0=s$. In the first case, each term on the left hand side of \eqref{eq-paralin:paralin-DtN-3} is equivalent to zero. In fact, Proposition \ref{prop-para:paraprod-bound} ensures that
		\begin{align*}
			\|T_{\eta B}\eta^{-1}l^2\|_{H^{2+\delta}} \le& \|\eta B\|_{H^{\frac{1}{2}+\delta}} \|\eta^{-1}l^2\|_{H^{\frac{5}{2}}_{R^{-1}}} \le  C\left(\|\eta\|_{H_R^{s+\frac{1}{2}-}}\right) \|\psi\|_{H^{s_0}}, \\
			\|T_{\bar{\nabla}\eta}\cdot\bar{\nabla}T_B \eta\|_{H^{2+\delta}} \le& \|\bar{\nabla}\eta\|_{H^{1+}} \|\bar{\nabla}T_B \eta\|_{H^{2+\delta}} \le  C\left(\|\eta\|_{H_R^{s+\frac{1}{2}-}}\right) \|T_B\eta\|_{H^{3+\delta}} \\
			\le& C\left(\|\eta\|_{H_R^{s+\frac{1}{2}-}}\right) \|B\|_{H^{\frac{1}{2}+\delta}} \|\eta\|_{H^{\frac{7}{2}}_R} \le C\left(\|\eta\|_{H_R^{s+\frac{1}{2}-}}\right) \|\psi\|_{H^{s_0}}, \\
			\|T_{B\bar{\nabla}\eta}\cdot\bar{\nabla}\eta\|_{H^{2+\delta}} \lesssim& \|B\bar{\nabla}\eta\|_{H^{\frac{1}{2}+\delta}} \|\bar{\nabla}\eta\|_{H^{\frac{5}{2}}} \le C\left(\|\eta\|_{H_R^{s+\frac{1}{2}-}}\right) \|B\|_{H^{\frac{1}{2}+\delta}} \\
			\le& C\left(\|\eta\|_{H_R^{s+\frac{1}{2}-}}\right) \|\psi\|_{H^{s_0}}.
		\end{align*}
		Note that the estimate for $B$ has been given by \eqref{eq-paralin:esti-B-V-psi}.
		
		If $s_0=s>3$, we are able to apply symbolic calculus (Proposition \ref{prop-para:paradiff-cal-sym}), since all the involved symbols has at least H{\"o}lder regularity in $w$. Recall that $\eta\in H^{s+\frac{1}{2}-}_{R}$ and $B\in H^{s-1-}$. Then we have		
		\begin{align*}
			&T_{\eta B}\eta^{-1}l^2 - T_{\bar{\nabla}\eta}\cdot\bar{\nabla}T_B \eta - T_{B\bar{\nabla}\eta}\cdot\bar{\nabla}\eta \\
			=& T_{\eta B}\eta^{-1} + T_{\eta B}\left(\eta^{-1}|\bar{\nabla}\eta|^2\right) - T_{\eta^{-1}\eta_\theta}\left(\eta^{-1}\partial_\theta \left(T_B \eta\right)\right) - T_{\eta_z}\partial_z\left( T_B\eta \right) \\
			&- T_{\eta^{-1}\eta_\theta B}\left( \eta^{-1}\eta_\theta \right) - T_{\eta_z B}\eta_z \\
			\sim& T_{\eta B}\left(\eta^{-1}|\bar{\nabla}\eta|^2\right) - T_{\eta^{-1}\eta_\theta}\left(\eta^{-1}T_B \eta_\theta\right) - T_{\eta_z}T_B\eta_z - T_{\eta^{-1}\eta_\theta B}\left( \eta^{-1}\eta_\theta \right) - T_{\eta_z B}\eta_z \\
			\sim& T_{B}T_{\eta}\left(\eta^{-1}|\bar{\nabla}\eta|^2\right) - T_{\eta^{-1}\eta_\theta}T_{\eta^{-1}}T_B \eta_\theta - 2T_BT_{\eta_z}\eta_z - T_BT_{\eta^{-1}\eta_\theta}\left( \eta^{-1}\eta_\theta \right) \\
			\sim& T_{B}|\bar{\nabla}\eta|^2 - T_BT_{\eta^{-1}\eta_\theta}T_{\eta^{-1}} \eta_\theta - T_BT_{\eta^{-1}\eta_\theta}\left( \eta^{-1}\eta_\theta \right) - T_B\eta_z^2 \\
			\sim& T_{B}|\bar{\nabla}\eta|^2 - 2T_BT_{\eta^{-1}\eta_\theta}\left( \eta^{-1}\eta_\theta \right) - T_B\eta_z^2 \\
			\sim& T_{B}|\bar{\nabla}\eta|^2 - T_B\left( \eta^{-1}\eta_\theta \right)^2 - T_B\eta_z^2  =0.
		\end{align*}
	\end{proof}

	As a result of Proposition \ref{prop-paralin:paralin-DtN}, we can replace the first equation in \eqref{eq-intro:WW} by
	\begin{equation}\label{eq-paralin:eq-paralin-eta}
		\eta_t + T_V \cdot \bar\nabla\eta - T_\lambda U = f_1.
	\end{equation}

	Before entering to the paralinearization of nonlinear terms, we provide a calculation concerning the subprincipal part $\lambda^{(0)}$ of symbol $\lambda$ appearing in \eqref{eq-paralin:paralin-DtN}. We claim that 
	\begin{lemma}\label{lem-paralin:lambda-0-cal}
		The symbol $\lambda = \lambda^{(1)} + \lambda^{(0)}$ constructed in Proposition \ref{prop-paralin:paralin-DtN} satisfies
		\begin{equation}\label{eq-paralin:lambda-0-cal}
			\Imag\lambda^{(0)} = -\frac{1}{2}\partial_w\cdot\partial_\xi\lambda^{(1)} - \frac{1}{2}\frac{\partial_w\eta}{\eta}\cdot\partial_\xi\lambda^{(1)}.
		\end{equation}
	\end{lemma}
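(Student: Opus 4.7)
The identity to be proved is equivalent, after multiplying through by $\eta$, to
\[
 \Imag\bigl(\eta\lambda^{(0)}\bigr) = -\tfrac{1}{2}\,\partial_w\cdot\partial_\xi\bigl(\eta\lambda^{(1)}\bigr),
\]
which is precisely the self-adjointness condition of Proposition \ref{prop-paralin:homo-sym-cond-aa} applied to the symbol $\eta\lambda$. This mirrors the structural fact, established in Section \ref{Sect:pre}, that $\eta G(\eta)$ is self-adjoint on $L^2(\T\times\R)$: the corresponding conjugated paradifferential operator $T_{\eta\lambda}$ should be self-adjoint modulo lower order, forcing the stated relation at the sub-principal level. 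So the identity is believable conceptually and I expect the proof to be a direct verification starting from the explicit formulas \eqref{eq-paralin:def-A-1}--\eqref{eq-paralin:def-a-0} and \eqref{eq-paralin:def-lambda}.

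First I would split into real and imaginary parts: write $A^{(1)} = p - i q$ and $a^{(1)} = p + i q$ with
\[
 p = \frac{1}{2\alpha}\sqrt{4\alpha\!\left(\frac{\xi_\theta^2}{\rho^2\eta^2}+\xi_z^2\right) - (\beta\cdot\xi)^2}, \qquad q = \frac{\beta\cdot\xi}{2\alpha},
\]
both real-valued, so that $A^{(1)}+a^{(1)} = 2p$. Using $D_w = -i\partial_w$ and expanding,
\[
 \Imag\bigl(\partial_\xi a^{(1)}\cdot D_w A^{(1)}\bigr) = -\partial_\xi p\cdot\partial_w p - \partial_\xi q\cdot\partial_w q.
\]
Combined with $\Imag(A^{(1)}\gamma/\alpha) = -q\gamma/\alpha$ and $\Imag\partial_\rho A^{(1)} = -\partial_\rho q$, this yields
\[
 \Imag A^{(0)} = \frac{1}{2p}\!\left(\frac{q\gamma}{\alpha} + \partial_\rho q + \partial_\xi p\cdot\partial_w p + \partial_\xi q\cdot\partial_w q\right).
\]

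Second, I would restrict to $\rho=1$, using the fact that $\eta$ is independent of $\rho$ together with
\[
 \alpha|_{\rho=1} = \tfrac{l^2}{\eta^2},\qquad \beta|_{\rho=1} = \bigl(-\tfrac{2\eta_\theta}{\eta^3},\,-\tfrac{2\eta_z}{\eta}\bigr),\qquad \partial_\rho\alpha|_{\rho=1} = \tfrac{2\eta_z^2}{\eta^2},\qquad \partial_\rho\beta|_{\rho=1} = \bigl(\tfrac{2\eta_\theta}{\eta^3},\,-\tfrac{2\eta_z}{\eta}\bigr),
\]
together with the explicit formula for $\gamma|_{\rho=1}$. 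Since $\lambda^{(1)} = (l^2/\eta)\,p|_{\rho=1}$ by \eqref{eq-paralin:def-lambda} and the calculation following it, multiplication by $l^2/\eta$ then gives $\Imag\lambda^{(0)}$ as an explicit function of $\eta,\nabla\eta,\nabla^2\eta$ and $\xi$.

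Third, I would compute the right-hand side of \eqref{eq-paralin:lambda-0-cal} directly by differentiating the closed form
\[
 \lambda^{(1)} = \sqrt{\bigl(\tfrac{\xi_\theta^2}{\eta^2}+\xi_z^2\bigr) + \bigl(\tfrac{\xi_\theta}{\eta}\eta_z - \xi_z\tfrac{\eta_\theta}{\eta}\bigr)^2},
\]
rewrite it as $-\tfrac{1}{2\eta}\,\partial_w\cdot\bigl(\eta\,\partial_\xi\lambda^{(1)}\bigr)$, and match with the expression from step two. The matching is purely algebraic once both sides are written as rational combinations of $\xi$, $\eta$, and $\partial_w\eta$ over $2\eta\lambda^{(1)}$.

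The main obstacle is bookkeeping rather than any conceptual difficulty: one must correctly track the explicit $\rho$-dependence (through the powers of $\rho$ in $\alpha,\beta,\gamma$) feeding into $\partial_\rho q$ and ensure that the contribution from $q\gamma/\alpha|_{\rho=1}$, which encodes the mean-curvature-like term $-\eta^{-1}(\eta_\theta/\eta^2)_\theta - \eta(\eta_z/\eta^2)_z + \eta^{-2}$, combines with $\partial_\xi p\cdot\partial_w p + \partial_\xi q\cdot\partial_w q$ to produce exactly the extra $(\partial_w\eta/\eta)\cdot\partial_\xi\lambda^{(1)}$ correction on the right-hand side. Once this is organized as verifying the self-adjointness condition for $\eta\lambda$, the identity follows from a finite computation.
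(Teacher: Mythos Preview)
Your approach is correct but takes a genuinely different route from the paper. You correctly identify at the outset that the identity is equivalent to the self-adjointness condition of Proposition~\ref{prop-paralin:homo-sym-cond-aa} for the symbol $\eta\lambda$, and that this mirrors the self-adjointness of $\eta G(\eta)$ established in Section~\ref{Sect:pre}. You then set this observation aside as merely making the identity ``believable'' and proceed to a direct algebraic verification from the explicit formulas \eqref{eq-paralin:def-A-1}--\eqref{eq-paralin:def-a-0}. Your setup is correct: the decomposition $A^{(1)}=p-iq$, $a^{(1)}=p+iq$, the computation of $\Imag A^{(0)}$, and the reduction to matching rational expressions in $\eta,\nabla_w\eta,\xi$ over $2\eta\lambda^{(1)}$ are all sound, and the computation does close (though with substantial bookkeeping, as you anticipate).

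The paper, by contrast, takes precisely the conceptual route you dismissed. It assumes temporarily that $\eta\in H^{+\infty}_R$, uses \eqref{eq-paralin:paralin-DtN} to write $\eta G(\eta)\psi = T_{\eta\lambda}\psi + (\text{order}\le -\tfrac12)$, and invokes the self-adjointness of $\eta G(\eta)$ to conclude that $T_{\eta\lambda}-T_{\eta\lambda}^*$ is of order $-\tfrac12$. Proposition~\ref{prop-paralin:homo-sym-cond-aa} then yields the identity directly. The extra regularity assumption is harmless since \eqref{eq-paralin:lambda-0-cal} is a pointwise algebraic relation in $(\eta,\nabla_w\eta,\nabla_w^2\eta,\xi)$. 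The paper even remarks that the direct computation you propose is possible but ``far more complicated''. What your route buys is self-containment and independence from the analytic machinery of Section~\ref{subsect:paralin-DtN}; what the paper's route buys is brevity and a transparent explanation of \emph{why} the identity holds, rather than a verification \emph{that} it holds.
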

	\begin{proof}
		Set $a=\eta\lambda$. Since by \eqref{eq-paralin:def-lambda}, $\Imag \lambda^{(1)} = 0$, Proposition \ref{prop-paralin:homo-sym-cond-aa} shows that \eqref{eq-paralin:lambda-0-cal} will hold if we prove that the symbol $\eta\lambda$ verifies the second condition of \eqref{eq-paralin:homo-sym-cond-aa}, i.e. that $T_{\eta\lambda}-T_{\eta\lambda}^*$ is of order $0-$.
		
		To prove this, we assume that $\eta\in H^{+\infty}_R := \cap_{s\in\R}H^{s}_R$ and $\psi\in H^s$ for some fixed $s \gg 1$. Under this regularity assumption, from \eqref{eq-paralin:paralin-DtN} and \eqref{eq-paralin:good-unknown}, we have
		\begin{align*}
			\eta G(\eta)\psi =& \eta T_\lambda \psi - \eta T_\lambda T_B\eta - \eta T_V\cdot\bar{\nabla}\eta + f_1 \\
			=& T_\eta T_\lambda \psi + T_{T_\lambda \psi} \eta + R(\eta,T_\lambda \psi) - \eta T_\lambda T_B\eta - \eta T_V\cdot\bar{\nabla}\eta + f_1 \\
			=& T_{\eta\lambda} \psi + \left( T_\eta T_\lambda - T_{\eta\lambda} \right)\psi + T_{T_\lambda \psi} \eta + R(\eta,T_\lambda \psi) - \eta T_\lambda T_B\eta - \eta T_V\cdot\bar{\nabla}\eta + f_1.
		\end{align*}
		Since $\eta\in H^{+\infty}_R$, thanks to Proposition \ref{prop-para:paradiff-cal-sym}, \ref{prop-para:paraprod-bound}, and Corollary \ref{cor-para:product-law}, it is clear that the following terms are smoothing operator w.r.t. $\psi$,
		$$\left( T_\eta T_\lambda - T_{\eta\lambda} \right)\psi,\  T_{T_\lambda \psi} \eta,\  R(\eta,T_\lambda \psi),\  \eta T_\lambda T_B\eta,\ \eta T_V\cdot\bar{\nabla}\eta,$$
		while $f_1$ is of order $-\frac{1}{2}$ due to \eqref{eq-paralin:esti-f-1}. Consequently, we have that
		\begin{equation*}
			T_{\eta\lambda} - \eta G(\eta) \text{ is of order }-\frac{1}{2},
		\end{equation*}
		which, combined with the fact that $\eta G(\eta)$ is self-adjoint, yields that
		\begin{equation*}
			T_{\eta\lambda} - T_{\eta\lambda}^* = (T_{\eta\lambda} - \eta G(\eta)) - (T_{\eta\lambda} - \eta G(\eta))^* \text{ is of order }-\frac{1}{2},
		\end{equation*}
		which concludes the desired result \eqref{eq-paralin:lambda-0-cal} for regular enough $(\eta,\psi)$.
		
		We emphasize that the regularity assumption on $\eta$ and $\psi$ has no impact in the identity \eqref{eq-paralin:lambda-0-cal} since it can also be obtained by algebric calculus from \eqref{eq-paralin:def-lambda} (which is far more complicated).
	\end{proof}

	\subsection{Paralinearization of the nonlinear terms}\label{subsect:paralin-nonlin}
	In this section, we aim to rewrite the second equation of \eqref{eq-intro:WW} in the similar form as \eqref{eq-paralin:eq-paralin-eta}. The main difficulty is the paralinearization of the nonlinear terms, i.e.
	\begin{proposition}\label{prop-paralin:paralin-of-nonlin}
		Let $(\eta,\psi)\in H_R^{s+\frac{1}{2}}\times H^s$ with $s>3$. Then there exists $\mu\in\Sigma^2$ and $r_2,r_3\in H^{s+}$ such that
		\begin{align}
			&N = T_V\cdot\bar{\nabla}\psi - T_BG(\eta)\psi - T_B T_V\cdot\bar{\nabla}\eta + r_2, \label{eq-paralin:paralin-of-N} \\
			&H - \frac{1}{2R} = T_\mu\eta + r_3. \label{eq-paralin:paralin-of-H}
		\end{align}\index{m@$\mu$ Symbol of mean curvature}
		where $r_2 = r_2(\eta;\psi,\psi)$ is quadratic in $\psi$ and $r_3 = r_3(\eta)$ is independent of $\psi$ with, for all $\frac{3}{2}<s_0\le s$ and $\psi_1,\psi_2\in H^s$,
		\begin{align}
			&\|r_2(\eta;\psi_1,\psi_2)\|_{H^{s_0}} + \|r_2(\eta;\psi_2,\psi_1)\|_{H^{s_0}} \le C\left(\|\eta\|_{H_R^{s+\frac{1}{2}-}}\right) \|\psi_1\|_{H^{s-}} \|\psi_2\|_{H^{s_0}}, \label{eq-paralin:esti-r-2}\\
			&\|r_3(\eta)\|_{H^{s}} \le C\left(\|\eta\|_{H_R^{s+\frac{1}{2}-}}\right) \|\eta\|_{H_R^{s+\frac{1}{2}}}. \label{eq-paralin:esti-r-3}
		\end{align}\index{r@$r_2$ Error in paralinearization of nonlinear term}\index{r@$r_3$ Error in paralinearization of mean curvature}
		
		Moreover, $\mu = \mu^{(2)}+\mu^{(1)}\in \Sigma^2$ is elliptic with
		\begin{align}
			\mu^{(2)} =& \frac{1}{2l^3} \left[ \left(\frac{\xi_\theta}{\eta}\right)^2 + \xi_z^2 + \left(\frac{\xi_\theta}{\eta}\eta_z - \xi_z\frac{\eta_\theta}{\eta}\right)^2 \right]. \label{eq-paralin:def-mu-2}
		\end{align}
	\end{proposition}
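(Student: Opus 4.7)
\smallskip
\noindent\textbf{Proof strategy.} The two paralinearizations are essentially independent, so I would treat them separately. Both are built on the general Bony paralinearization formula \eqref{eq-para:paralin}, applied once to the quadratic expression \eqref{eq-paralin:def-N} defining $N$, and once to $H$ viewed as a smooth nonlinear function of $(\eta,\bar\nabla\eta,\bar\nabla^2\eta)$. The main input from earlier material is Lemma \ref{lem-paralin:esti-B-V-psi} (which gives $B,V\in H^{s-1-}$ tamely bounded in $\psi$) together with the routine paraproduct/remainder estimates in Appendix \ref{App:para}, Proposition \ref{prop-para:paraprod-bound} and Corollary \ref{cor-para:product-law}.

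\smallskip
\noindent\textbf{Paralinearization of $N$.} Starting from $N=BV\cdot\bar\nabla\eta+\tfrac12(|V|^2-B^2)$, I would first apply Bony's formula $fg=T_fg+T_gf+R(f,g)$ to each of $BV\cdot\bar\nabla\eta$, $|V|^2$ and $B^2$, obtaining an expansion of $N$ into paraproducts and remainders. Then I would use the algebraic identities $\bar\nabla\psi=V+B\,\bar\nabla\eta$ and $G(\eta)\psi=B-V\cdot\bar\nabla\eta$ in the target expression $T_V\cdot\bar\nabla\psi-T_BG(\eta)\psi-T_BT_V\cdot\bar\nabla\eta$, also expanding $T_B(V\cdot\bar\nabla\eta)$ via Bony. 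Subtracting the two expansions, cancellations leave
\[
r_2 = [T_B,T_V]\cdot\bar\nabla\eta - T_VT_{\bar\nabla\eta}B - T_V R(B,\bar\nabla\eta) + T_{V\cdot\bar\nabla\eta}B + R(B,V\cdot\bar\nabla\eta) + \tfrac12\bigl(R(V,V)-R(B,B)\bigr).
\]
Each term on the right is at least quadratic in $\psi$, with one factor absorbing a derivative in the symbol of a paraproduct of regularity $s-1/2-$ in $\eta$. Using $B,V\in H^{s-1-}$ (Lemma \ref{lem-paralin:esti-B-V-psi}) together with Proposition \ref{prop-para:paraprod-bound}, Corollary \ref{cor-para:commu-esti} (for the commutator term) and Corollary \ref{cor-para:product-law} yields the tame bound \eqref{eq-paralin:esti-r-2}. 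The bilinearity estimate is obtained by polarising: writing $B=B_1+B_2$, $V=V_1+V_2$ with $B_j,V_j$ generated by $\psi_j$, each bilinear cross-term is handled by placing the higher Sobolev norm on $\psi_2$ and the slightly weaker norm on $\psi_1$.

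\smallskip
\noindent\textbf{Paralinearization of $H$.} Writing $H-\tfrac{1}{2R}=F(\eta,\eta_\theta,\eta_z,\eta_{\theta\theta},\eta_{\theta z},\eta_{zz})$ for a smooth function $F$ (smooth since $\eta>c_0/2$ by \eqref{hyp-intro:bounds}), I would apply the standard paralinearization of smooth functions to expand
\[
H-\tfrac{1}{2R} = T_{\partial_\eta F}\eta + \sum_{j\in\{\theta,z\}} T_{\partial_{\eta_j}F}\,\partial_j\eta + \sum_{|\alpha|=2} T_{\partial_{\eta_\alpha}F}\,\partial^\alpha\eta + r_3,
\]
with $r_3\in H^{s}$ tame (this gain of derivatives relies on $\eta\in H^{s+\frac12}$ with $s>3$, so the arguments of $F$ are in $H^{s-\frac32}\subset C^{1+}$). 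The zeroth-, first- and second-order paraproducts are then consolidated into a single $T_\mu\eta$ with $\mu=\mu^{(2)}+\mu^{(1)}\in\Sigma^2$, by moving the tangential derivatives inside the paradifferential symbol modulo lower-order errors (absorbed in $r_3$). The principal symbol $\mu^{(2)}$ is read off from the three second-order terms. A direct expansion of \eqref{eq-intro:mean-curv}, collecting all coefficients of $\eta_{\theta\theta}$, $\eta_{zz}$ and $\eta_{\theta z}$ (taking into account that $l$ itself depends on first derivatives of $\eta$), produces
\[
2\mu^{(2)} = \frac{1}{l^3}\left[\frac{(1+\eta_z^2)\xi_\theta^2}{\eta^2} + \Bigl(1+\Bigl(\frac{\eta_\theta}{\eta}\Bigr)^{\!2}\Bigr)\xi_z^2 - \frac{2\eta_\theta\eta_z}{\eta^2}\xi_\theta\xi_z\right] = \frac{1}{l^3}\left[\Bigl(\frac{\xi_\theta}{\eta}\Bigr)^{\!2}+\xi_z^2 + \Bigl(\frac{\xi_\theta}{\eta}\eta_z-\xi_z\frac{\eta_\theta}{\eta}\Bigr)^{\!2}\right],
\]
matching \eqref{eq-paralin:def-mu-2}; ellipticity is clear since the bracket is a sum of squares bounded below by $|\xi|^2/\max(1,\eta^2)$. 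The tame bound \eqref{eq-paralin:esti-r-3} is the standard gain-of-one-derivative paralinearization remainder, using that the nonlinear function $F$ is smooth on a neighbourhood of the range of $(\eta,\bar\nabla\eta,\bar\nabla^2\eta)$.

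\smallskip
\noindent\textbf{Main obstacle.} The nontrivial part is the bookkeeping for $r_2$: the target expression is not the "natural" Bony paralinearization of $N$, and one must rearrange several paraproducts and remainders so that the cancellations occur cleanly and each leftover term is genuinely quadratic in $\psi$. A secondary technical point is verifying the explicit form of $\mu^{(2)}$ after consolidating the three second-order paraproducts into a single $T_{\mu^{(2)}}\eta$, which requires commuting tangential derivatives with paraproducts at the cost of $r_3\in H^{s}$ remainders; this is where the regularity threshold $s>3$ is used.
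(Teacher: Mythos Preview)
Your proposal is correct and follows essentially the same approach as the paper: both parts rest on Bony's paralinearization applied to $N$ (via the algebraic identities \eqref{eq-paralin:DtN-formula}--\eqref{eq-paralin:deriv-of-psi}) and to $H$ (viewed as a smooth function of $(\eta,\nabla_w\eta,\nabla_w^2\eta)$), with the leftover terms controlled by Proposition \ref{prop-para:paraprod-bound}, Corollary \ref{cor-para:commu-esti}, and Lemma \ref{lem-paralin:esti-B-V-psi}. The only cosmetic difference is that the paper first rewrites $N=\tfrac12(|V|^2+B^2)-BG(\eta)\psi$ before expanding, whereas you work directly from \eqref{eq-paralin:def-N}; the resulting remainder terms (your $[T_B,T_V]\cdot\bar\nabla\eta$, $(T_{V\cdot\bar\nabla\eta}-T_VT_{\bar\nabla\eta})B$, $T_VR(B,\bar\nabla\eta)$, etc.) match the paper's \eqref{eq-paralin:paralin-of-N-1}--\eqref{eq-paralin:paralin-of-N-3} up to harmless sign and regrouping.
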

	
	Once this proposition is proved, the second equation in \eqref{eq-intro:WW} can be replaced by
	\begin{equation}\label{eq-paralin:eq-paralin-psi}
		\psi_t + T_V\cdot\bar{\nabla}\psi + \sigma T_{\mu}\eta - T_BG(\eta)\psi - T_B T_V\cdot \bar{\nabla}\eta = -r_2 -\sigma r_3,
	\end{equation}
	which can be further written as
	\begin{equation*}
		\left(\partial_t + T_V\cdot\bar{\nabla}\right)\psi - T_B \left(\partial_t + T_V\cdot\bar{\nabla}\right)\eta + \sigma T_{\mu}\eta = -r_2 -\sigma r_3.
	\end{equation*}
	By combining this equation with \eqref{eq-paralin:eq-paralin-eta}, we obtain a reformulation of \eqref{eq-intro:WW},
	\begin{equation}\label{eq-paralin:eq-eta-psi}
		\left(\begin{array}{cc}
			I & 0 \\
			-T_B & I
		\end{array}\right)\left(\partial_t + T_V\cdot\bar{\nabla}\right)
		\left(\begin{array}{c}
			\eta \\
			\psi
		\end{array}\right) + 
		\left(\begin{array}{cc}
			0 & -T_\lambda \\
			\sigma T_\mu & 0
		\end{array}\right)
		\left(\begin{array}{cc}
			I & 0 \\
			-T_B & I
		\end{array}\right)
		\left(\begin{array}{c}
			\eta \\
			\psi
		\end{array}\right) = 
		\left(\begin{array}{c}
			f_1 \\
			f_2
		\end{array}\right),
	\end{equation}
	where
	\begin{equation}\label{eq-paralin:def-f-2}
		f_2 := f_2(\eta;\psi,\psi) = -r_2(\eta;\psi,\psi) -\sigma r_3(\eta).
	\end{equation}
	Note that 
	\begin{equation*}
		\left(\begin{array}{cc}
			I & 0 \\
			-T_B & I
		\end{array}\right)^{-1} =
		\left(\begin{array}{cc}
		I & 0 \\
		T_B & I
		\end{array}\right).
	\end{equation*}
	This identity allows us to recover from \eqref{eq-paralin:eq-eta-psi} an evolution equation for $(\eta,\psi)$,
	\begin{equation}\label{eq-paralin:WW}
		\left(\partial_t + T_V\cdot\bar{\nabla}\right)
		\left(\begin{array}{c}
			\eta \\
			\psi
		\end{array}\right) + 
		\mathcal{L}\left(\begin{array}{c}
			\eta \\
			\psi
		\end{array}\right) = f,
	\end{equation}
	where 
	\begin{equation}\label{eq-paralin:def-L}
		\mathcal{L} := \left(\begin{array}{cc}
			I & 0 \\
			T_B & I
		\end{array}\right)
		\left(\begin{array}{cc}
			0 & -T_\lambda \\
			\sigma T_\mu & 0
		\end{array}\right)
		\left(\begin{array}{cc}
			I & 0 \\
			-T_B & I
		\end{array}\right)
	\end{equation}\index{L@$\mathcal{L}$ Principal operator in paralinearization}
	and
	\begin{equation}\label{eq-paralin:def-f}
		f:= \left(\begin{array}{cc}
			I & 0 \\
			T_B & I
		\end{array}\right)
		\left(\begin{array}{c}
			f_1 \\
			f_2
		\end{array}\right)
		= \left(\begin{array}{c}
			f_1 \\
			T_B f_1 + f_2
		\end{array}\right).
	\end{equation}
	Since $\lambda$ and $\mu$ are elliptic, the operator $\mathcal{L}$ can be symmetrized, which is the purpose of Section \ref{Sect:sym}.
	
	\subsubsection{Paralinearization of $N$}\label{subsubsect:paralin-N}
	In this part, we shall prove \eqref{eq-paralin:paralin-of-N} and \eqref{eq-paralin:esti-r-2}. As before, during this proof, we shall use the equivalence: for $u,v$ defined on $\T\times\R$ and bilinear in $(\psi_1,\psi_2)$,
	\begin{equation}\label{eq-paralin:equi-paralin-N}
		u\sim v \Leftrightarrow \|(u-v)(\eta;\psi_1,\psi_2)\|_{H^{s_0}} + \|(u-v)(\eta;\psi_2,\psi_1)\|_{H^{s_0}} \le C\left(\|\eta\|_{H_R^{s+\frac{1}{2}-}}\right) \|\psi_1\|_{H^{s-}}\|\psi_2\|_{H^{s_0}}.
	\end{equation}
	
	To begin with, from formula \eqref{eq-paralin:def-N} for $N$, it is clear that $N$ is quadratic in $\psi$ as well as the three first terms on the right hand side of \eqref{eq-paralin:paralin-of-N}. As a result, $r_2$ is quadratic in $\psi$ (not necessarily symmetric).
	
	Due to \eqref{eq-paralin:def-N} and \eqref{eq-paralin:DtN-formula}, one may rewrite $N$ as follow,
	\begin{equation}\label{eq-paralin:N-first-paralin}
		\begin{aligned}
			N =& \frac{|V|^2 + B^2}{2} - BG(\eta)\psi \\
			=& T_V \cdot V + T_B B - T_B G(\eta)\psi - T_{B-V\cdot\bar{\nabla}\eta}B + R(V,V) + R(B,B) + R(B,G(\eta)\psi) \\
			\sim& T_V \cdot V + T_B B - T_B G(\eta)\psi - T_{B-V\cdot\bar{\nabla}\eta}B = T_V \cdot V - T_B G(\eta)\psi + T_{V\cdot\bar{\nabla}\eta}B
		\end{aligned}
	\end{equation}
	To prove the equivalence, it suffices to apply Proposition \ref{prop-para:paraprod-bound} and use the fact that $B,V,G(\eta)\psi$ are linear in $\psi$ with
	\begin{equation}\label{eq-paralin:esti-B-V-G-paralin}
		\|B\|_{H^{s'-1}} + \|V\|_{H^{s'-1}} + \|G(\eta)\psi\|_{H^{s'-1}} \le C\left(\|\eta\|_{H_R^{s+\frac{1}{2}-}}\right) \|\psi\|_{H^{s'}},\ \ \forall s'\in]\frac{3}{2},s].
	\end{equation}
	More precisely, we have
	\begin{align*}
		\|R(B(\psi_1),B(\psi_2))\|_{H^{s_0}} \lesssim& \|B(\psi_1)\|_{H^{2+}}\|B(\psi_2)\|_{H^{s_0-1-}} \\
		\le& C\left(\|\eta\|_{H_R^{s+\frac{1}{2}-}}\right) \|\psi_1\|_{H^{s-}}\|\psi_2\|_{H^{s_0}},
	\end{align*}
	and the similar estimate holds for $R(V,V)$ (note that $R(\cdot,\cdot)$ is symmetric), while 
	\begin{align*}
		&\|R(B(\psi_1),G(\eta)\psi_2)\|_{H^{s_0}} + \|R(B(\psi_2),G(\eta)\psi_1)\|_{H^{s_0}} \\
		\lesssim& \|B(\psi_1)\|_{H^{2+}}\|G(\eta)\psi_2\|_{H^{s_0-1-}} + \|G(\eta)\psi_1\|_{H^{2+}}\|B(\psi_2)\|_{H^{s_0-1-}} \\
		\le& C\left(\|\eta\|_{H_R^{s+\frac{1}{2}-}}\right) \|\psi_1\|_{H^{s-}}\|\psi_2\|_{H^{s_0}}.
	\end{align*}
	
	Inserting \eqref{eq-paralin:deriv-of-psi} in the right hand side of \eqref{eq-paralin:paralin-of-N}, we can write it, up to $r_2$, as
	\begin{equation*}
		T_V\cdot\bar{\nabla}\psi - T_BG(\eta)\psi - T_B T_V\cdot\bar{\nabla}\eta = T_V V + T_V\cdot(B\bar{\nabla}\eta) - T_B G(\eta)\psi - T_B T_V\cdot\bar{\nabla}\eta.
	\end{equation*}
	According to \eqref{eq-paralin:N-first-paralin}, this will be equivalent to $N$ if and only if
	\begin{equation*}
		T_{V\cdot\bar{\nabla}\eta}B \sim T_V\cdot(B\bar{\nabla}\eta) - T_B T_V\cdot\bar{\nabla}\eta.
	\end{equation*}
	The difference of the two sides can be expressed as
	\begin{align*}
		& T_V\cdot(B\bar{\nabla}\eta) - T_B T_V\cdot\bar{\nabla}\eta - T_{V\cdot\bar{\nabla}\eta}B \\
		=& T_V T_B \bar{\nabla}\eta + T_V\cdot T_{\bar{\nabla}\eta} B + T_V R(B,\bar{\nabla}\eta) - T_B T_V\cdot\bar{\nabla}\eta - T_{V\cdot\bar{\nabla}\eta}B \\
		=& [T_V,T_B]\bar{\nabla}\eta + \left( T_V\cdot T_{\bar{\nabla}\eta} - T_{V\cdot\bar{\nabla}\eta} \right) B + T_V R(B,\bar{\nabla}\eta).
	\end{align*}
	Therefore, it suffices to check the following equivalences:
	\begin{align}
		[T_V,T_B]\bar{\nabla}\eta \sim& 0, \label{eq-paralin:paralin-of-N-1} \\
		\left( T_V\cdot T_{\bar{\nabla}\eta} - T_{V\cdot\bar{\nabla}\eta} \right) B \sim& 0, \label{eq-paralin:paralin-of-N-2} \\
		T_V R(B,\bar{\nabla}\eta) \sim& 0, \label{eq-paralin:paralin-of-N-3}
	\end{align}
	in the sense of \eqref{eq-paralin:equi-paralin-N}. As before, the interpolation argument allows us to focus on the case $s_0=\frac{3}{2}+\delta$ with $0<\delta \ll 1$ and the case $s_0=s$. In the latter one, by observing that $B,V\in H^{s-1-} \subset \Gamma^0_{1+}$, $\bar{\nabla}\eta \in H^{s-\frac{1}{2}-} \subset \Gamma^0_{1+}$, the equivalences \eqref{eq-paralin:paralin-of-N-1}, \eqref{eq-paralin:paralin-of-N-2}, and \eqref{eq-paralin:paralin-of-N-3} follows from Corollary \ref{cor-para:commu-esti}, Proposition \ref{prop-para:paradiff-cal-sym}, and Proposition \ref{prop-para:paraprod-bound}, respectively, together with \eqref{eq-paralin:esti-B-V-G-paralin}.
	
	In the case $s_0=\frac{3}{2}+\delta$, we shall check that all the terms concerned are equivalent to zero. In fact, we have
	\begin{align*}
		&\|T_{V(\psi_1)}T_{B(\psi_2)}\bar{\nabla}\eta\|_{H^{\frac{3}{2}+\delta}} + \|T_{V(\psi_2)}T_{B(\psi_1)}\bar{\nabla}\eta\|_{H^{\frac{3}{2}+\delta}} \\
		\lesssim& \|V(\psi_1)\|_{H^{1+}} \|B(\psi_2)\|_{H^{\frac{1}{2}+\delta}} \|\bar{\nabla}\eta\|_{H^{2}} + \|V(\psi_2)\|_{H^{\frac{1}{2}+\delta}} \|B(\psi_1)\|_{H^{1+}} \|\bar{\nabla}\eta\|_{H^{2}} \\
		\le& C\left(\|\eta\|_{H_R^{s+\frac{1}{2}-}}\right) \|\psi_1\|_{H^{s-}}\|\psi_2\|_{H^{s_0}},
	\end{align*}
	which implies $T_V T_B \bar{\nabla}\eta \sim 0$ and $T_B T_V \bar{\nabla}\eta \sim 0$ in the same way, yielding \eqref{eq-paralin:paralin-of-N-1}. For \eqref{eq-paralin:paralin-of-N-2}, we apply Proposition \ref{prop-para:paraprod-bound},
	\begin{align*}
		&\|\left(T_{V(\psi_1)}\cdot T_{\bar{\nabla}\eta} - T_{V(\psi_1)\cdot\bar{\nabla}\eta}\right)B(\psi_2)\|_{H^{\frac{3}{2}+\delta}} + \|T_{V(\psi_2)}\cdot T_{\bar{\nabla}\eta} B(\psi_1)\|_{H^{\frac{3}{2}+\delta}}  \\
		&+ \|T_{V(\psi_2)\cdot\bar{\nabla}\eta} B(\psi_1)\|_{H^{\frac{3}{2}+\delta}} \\
		\le& \|V(\psi_1)\|_{H^{2+}} \|\bar{\nabla}\eta\|_{H^{2+}} \|B(\psi_2)\|_{H^{\frac{1}{2}+\delta}} + \|V(\psi_2)\|_{H^{\frac{1}{2}+\delta}} \|\bar{\nabla}\eta\|_{H^{1+}} \|B(\psi_2)\|_{H^{2}} \\
		\le& C\left(\|\eta\|_{H_R^{s+\frac{1}{2}-}}\right) \|\psi_1\|_{H^{s-}}\|\psi_2\|_{H^{s_0}}.
	\end{align*}
	The estimate corresponding to \eqref{eq-paralin:paralin-of-N-3} is due to Proposition \ref{prop-para:paraprod-bound}.
	\begin{align*}
		&\|T_{V(\psi_1)} R(B(\psi_2),\bar{\nabla}\eta)\|_{H^{\frac{3}{2}+\delta}} + \|T_{V(\psi_2)} R(B(\psi_1),\bar{\nabla}\eta)\|_{H^{\frac{3}{2}+\delta}} \\
		\le& \|V(\psi_1)\|_{H^{1+}} \|B(\psi_2)\|_{H^{\frac{1}{2}+\delta}} \|\bar{\nabla}\eta\|_{H^{2}} + \|V(\psi_2)\|_{H^{\frac{1}{2}+\delta}} \|\bar{\nabla}\eta\|_{H^{1}} \|B(\psi_2)\|_{H^{2}} \\
		\le& C\left(\|\eta\|_{H_R^{s+\frac{1}{2}-}}\right) \|\psi_1\|_{H^{s-}}\|\psi_2\|_{H^{s_0}}.
	\end{align*}

	\subsubsection{Paralinearization of $H$}\label{subsubsect:paralin-H}
	In this part we prove \eqref{eq-paralin:paralin-of-H} and \eqref{eq-paralin:esti-r-3}. As before, we shall use the equivalence: for $u,v$ functions of $\eta$,
	\begin{equation}\label{eq-paralin:equi-paralin-H}
		u\sim v \Leftrightarrow \|u(\eta) - v(\eta)\|_{H^{s}} \le C\left(\|\eta\|_{H_R^{s+\frac{1}{2}-}}\right) \|\eta\|_{H_R^{s+\frac{1}{2}}}.
	\end{equation} 
	By definition \eqref{eq-intro:mean-curv} of $H$ and expression \eqref{eq-intro:def-l} of $l$, it is easy to calculate that the left hand side of \eqref{eq-paralin:paralin-of-H} reads
	\begin{align*}
		H-\frac{1}{2R} =& \frac{1}{2} \left[ \frac{1}{\eta l} - \frac{\partial_\theta}{\eta}\left(\frac{\eta_\theta}{\eta l}\right) - \partial_z\left(\frac{\eta_z}{l}\right) \right] -\frac{1}{2R} \\
		=& \frac{1}{2\eta l} -\frac{1}{2R} + \frac{1}{2} \left[ \frac{1}{\eta}\frac{\eta_\theta}{l}\frac{\eta_\theta}{\eta^2} + \frac{1}{\eta}\frac{\eta_\theta}{\eta}\frac{l_\theta}{l^2} - \frac{1}{\eta^2l}\eta_{\theta\theta} \right] + \frac{1}{2} \left[ \eta_z\frac{l_z}{l^2} - \frac{1}{l}\eta_{zz} \right] \\
		=& \frac{1}{2\eta l} -\frac{1}{2R} + \frac{1}{2} \left[ \frac{\eta_\theta^2}{\eta^3l} + \frac{\eta_\theta}{\eta^2l^2}\frac{\frac{\eta_\theta}{\eta}\partial_\theta\left(\frac{\eta_\theta}{\eta}\right) + \eta_z\eta_{\theta z}}{l} - \frac{1}{\eta^2l}\eta_{\theta\theta} \right] \\
		&+ \frac{1}{2} \left[ \frac{\eta_z}{l^2}\frac{\frac{\eta_\theta}{\eta}\partial_z\left(\frac{\eta_\theta}{\eta}\right) + \eta_z\eta_{zz}}{l} - \frac{1}{l}\eta_{zz} \right] \\
		=& \frac{1}{2\eta l} -\frac{1}{2R} + \frac{1}{2} \left[ \frac{\eta_\theta^2}{\eta^3l} -\frac{\eta_\theta^4}{\eta^5l^3} + \frac{\eta_\theta \eta_z}{\eta^2l^3}\eta_{\theta z} - \left(\frac{1}{\eta^2l}-\frac{\eta_\theta^2}{\eta^4l^3}\right)\eta_{\theta\theta} \right] \\
		&+ \frac{1}{2} \left[ - \frac{\eta_\theta^2 \eta_z^2}{\eta^3l^3} + \frac{\eta_\theta\eta_z}{\eta^2l^3}\eta_{\theta z} - \left(\frac{1}{l}-\frac{\eta_z^2}{l^3}\right)\eta_{zz} \right] \\
		=& \frac{1}{2\eta l} -\frac{1}{2R} + \frac{\eta_\theta^2}{2\eta^3l^3} - \frac{1}{2l^3} \left[ \left( 1+\eta_z^2 \right)\frac{\eta_{\theta\theta}}{\eta^2} - 2\frac{\eta_\theta\eta_z}{\eta}\frac{\eta_{\theta z}}{\eta} + \left( 1+\left(\frac{\eta_\theta}{\eta}\right)^2 \right) \eta_{zz} \right] \\
		=& F(\eta,\eta_\theta,\eta_z) + \sum_{j,k\in\{\theta,z\}} G^{jk}(\eta,\eta_\theta,\eta_z)\eta_{jk},
	\end{align*}
	where $F$ and $G^{jk}$ are smooth functions defined by
	\begin{equation}\label{eq-paralin:def-FG-paralin-H}
		\begin{aligned}
			F(x,u,v) :=& \frac{1}{2xh} - \frac{1}{2R} + \frac{u^2}{2x^2h^3}, \\
			G^{\theta\theta}(x,u,v) :=& -\frac{1}{2h^3}\frac{1+v^2}{x^2},\ \ G^{zz}(x,u,v) := -\frac{1}{2h^3}\left( 1+\left(\frac{u}{x}\right)^2 \right), \\
			G^{\theta z}(x,u,v) =& G^{z\theta}(x,u,v) := \frac{1}{2h^3}\frac{uv}{x^2},
		\end{aligned}
	\end{equation}
	with $h(x,u,v):= \sqrt{1+(u/x)^2+v^2}$. Since $F(R,0) = G^{\theta z}(R,0) = G^{z \theta}(R,0) = 0$, $G^{\theta\theta}(R,0)=-(2R^2)^{-1}$, and $G^{zz}(R,0)=-1/2$, one may apply Proposition \ref{prop-para:paralin} and obtain
	\begin{equation}\label{eq-paralin:H-decomp}
		\begin{aligned}
			H - \frac{1}{2R} =& T_{F_x(\eta,\nabla_w\eta)}\eta + T_{\left( \nabla_{u,v}F \right)(\eta,\nabla_w\eta)}\cdot\nabla_w\eta + T_{G^{jk}(\eta,\nabla_w\eta)}\eta_{jk} \\
			& + T_{\eta_{jk}} T_{G^{jk}_x(\eta,\nabla_w\eta)} \eta + T_{\eta_{jk}} T_{\left( \nabla_{u,v}G^{jk} \right)(\eta,\nabla_w\eta)}\cdot \nabla_w\eta + R\left( \eta_{jk},G^{jk}(\eta,\nabla_w\eta) \right) + r_4,
		\end{aligned}
	\end{equation}
	where Einstein summation convention is applied for simplicity and the remainder $r_4$ is equal to
	\begin{equation}\label{eq-paralin:def-r-4}
		\begin{aligned}
			r_4 :=& F(\eta,\nabla_w\eta) - T_{\nabla_{x,u,v}F(\eta,\nabla_w\eta)}\cdot (\eta,\nabla_w\eta) \\
			&\hspace{6em} + T_{\eta_{jk}} \left( G^{jk}(\eta,\nabla_w\eta) - T_{\nabla_{x,u,v}G^{jk}(\eta,\nabla_w\eta)}\cdot (\eta,\nabla_w\eta) \right),
		\end{aligned}
	\end{equation}
	which lies in $H^{2(s-\frac{1}{2}-)-1} \subset H^{s}$ due to \eqref{eq-para:paralin} and Proposition \ref{prop-para:paraprod-bound}, together with the fact that $\eta\in H^{s+\frac{1}{2}-}_R$. We claim that
	\begin{lemma}\label{lem-paralin:paralin-of-H}
		Under the hypotheses of Proposition \ref{prop-paralin:paralin-of-nonlin}, we have the following equivalences in the sense of \eqref{eq-paralin:equi-paralin-H},
		\begin{align}
			&T_{F_x(\eta,\nabla_w\eta)}\eta \sim 0, \label{eq-paralin:paralin-of-H-2}\\
			&T_{\eta_{jk}} T_{G^{jk}_x(\eta,\nabla_w\eta)} \eta,\ R\left( \eta_{jk},G^{jk}(\eta,\nabla_w\eta) \right) \sim 0, \label{eq-paralin:paralin-of-H-3}\\
			&\left(T_{\eta_{jk}} T_{\left( \nabla_{u,v}G^{jk} \right)(\eta,\nabla_w\eta)} - T_{\eta_{jk}\left( \nabla_{u,v}G^{jk} \right)(\eta,\nabla_w\eta)}\right)\cdot \nabla_w\eta \sim 0. \label{eq-paralin:paralin-of-H-4}
		\end{align}
	\end{lemma}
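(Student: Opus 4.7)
The plan for all three equivalences is the same: each reduces to showing a tame $H^s$-bound on an expression that is affine in $\eta$, with one factor carrying the full $H^{s+\frac{1}{2}}$-norm and all remaining factors controlled by $C(\|\eta\|_{H^{s+\frac{1}{2}-}_R})$ via the paralinearization of smooth nonlinearities (Proposition \ref{prop-para:paralin}) together with the embedding $H^{s-\frac{3}{2}-}\hookrightarrow L^\infty$, available since $s>3$ in dimension two. Throughout I would absorb the constants ($R$ or $F(R,0),\ G^{jk}(R,0)$) by writing $\eta=(\eta-R)+R$ before invoking any paraproduct or remainder estimate, using that paraproducts by a constant (in the additive normalization convention) act as smoothing operators.

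For \eqref{eq-paralin:paralin-of-H-2} I would apply the basic paraproduct bound
\begin{equation*}
\|T_a u\|_{H^{s+\frac{1}{2}}}\lesssim \|a\|_{L^\infty}\|u\|_{H^{s+\frac{1}{2}}}
\end{equation*}
with $a=F_x(\eta,\nabla_w\eta)$ and $u=\eta-R$, gaining half a derivative over the target regularity. For the first piece of \eqref{eq-paralin:paralin-of-H-3}, I would chain this estimate for $T_{G^{jk}_x(\eta,\nabla_w\eta)}\eta\in H^{s+\frac{1}{2}}$ with the action of $T_{\eta_{jk}}\in\mathcal{L}(H^{s+\frac{1}{2}};H^{s+\frac{1}{2}})$, whose operator norm is controlled by $\|\eta_{jk}\|_{L^\infty}\le C(\|\eta\|_{H^{s+\frac{1}{2}-}_R})$. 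For the Bony remainder $R(\eta_{jk},G^{jk}(\eta,\nabla_w\eta))$, Proposition \ref{prop-para:paralin} gives $G^{jk}(\eta,\nabla_w\eta)-G^{jk}(R,0)\in H^{s-\frac{1}{2}}$ with a tame bound, so the standard remainder estimate places the product in $H^{(s-\frac{3}{2})+(s-\frac{1}{2})-1}=H^{2s-3}$, which embeds strictly into $H^s$ for $s>3$.

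For \eqref{eq-paralin:paralin-of-H-4}, I would use the symbolic composition formula (Proposition \ref{prop-para:paradiff-cal-sym}) for $T_aT_b-T_{ab}$ with symbols $a=\eta_{jk}$ and $b=(\nabla_{u,v}G^{jk})(\eta,\nabla_w\eta)$ both independent of $\xi$: this operator gains essentially the common Hölder regularity of $a$ and $b$. Both factors lie in $C^{s-\frac{5}{2}-}$ with tame bounds (again by Proposition \ref{prop-para:paralin} and Sobolev embedding), so the bracket gains about $s-\frac{5}{2}-$ derivatives; applied to $\nabla_w\eta\in H^{s-\frac{1}{2}}$ the output sits in $H^{2s-3-}\subset H^s$. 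The only real subtlety — and the point I would be most careful with when writing out the full estimates — is enforcing tameness: in every line exactly one instance of a high-order derivative of $\eta$ must be placed at the endpoint space $H^{s+\frac{1}{2}}$ while every other factor is absorbed into $C(\|\eta\|_{H^{s+\frac{1}{2}-}_R})$. This is clean here because each of the three expressions is manifestly linear in a single high-derivative of $\eta$, but one must remember to strip off the constants $R$ and $G^{jk}(R,0)$ before invoking the paraproduct estimates in their perturbative form.
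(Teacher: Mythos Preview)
Your approach is correct and matches the paper's: all three equivalences are obtained by combining the basic paraproduct bound (Proposition~\ref{prop-para:paraprod-bound}), the Bony remainder estimate, and the symbolic-calculus gain for $T_aT_b-T_{ab}$ (Proposition~\ref{prop-para:paradiff-cal-sym}), using Proposition~\ref{prop-para:paralin} to control nonlinear functions of $(\eta,\nabla_w\eta)$.

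One minor point to tighten when you write it out in full: for the Bony remainder $R(\eta_{jk},G^{jk}(\eta,\nabla_w\eta))$ in \eqref{eq-paralin:paralin-of-H-3}, placing \emph{both} factors at their maximal regularity $H^{s-\frac32}\times H^{s-\frac12}$ yields a bound quadratic in $\|\eta\|_{H^{s+\frac12}_R}$, which is not of the required tame form $C(\|\eta\|_{H^{s+\frac12-}_R})\|\eta\|_{H^{s+\frac12}_R}$. You must distribute asymmetrically, e.g.\ $\|R(\eta_{jk},G^{jk})\|_{H^s}\lesssim\|\eta_{jk}\|_{H^{\frac32+}}\|G^{jk}\|_{H^{s-\frac12-}}$ (this is what the paper does), or equivalently put $\eta_{jk}$ at $H^{s-\frac32}$ and $G^{jk}$ at $H^{\frac52+}$; either way only one factor sees the endpoint norm. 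You flag tameness as the subtlety, but your phrase ``manifestly linear in a single high-derivative'' glosses over the fact that the remainder is genuinely bilinear and the asymmetric choice must be made explicitly. On \eqref{eq-paralin:paralin-of-H-4}, your identification of the Hölder gain as $s-\tfrac52-$ (so the output lands in $H^{2s-3-}\subset H^s$) is in fact more precise than the paper's stated $\Gamma^0_{3/2+}$, which would require $s>4$.
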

	\begin{proof}
		From \eqref{eq-para:paralin-cor}, we know that the composition of smooth functions with $(\eta,\nabla_w\eta)$ belong to $H^{s-\frac{1}{2}}$. Thus, from Proposition \ref{prop-para:paraprod-bound}, we have
		\begin{align*}
			&\| T_{F_x(\eta,\nabla_w\eta)}\eta \|_{H^{s}} \lesssim \|F_x(\eta,\nabla_w\eta)\|_{H^{1+}} \|\eta\|_{H^{s}_R} \le C\left(\|\eta\|_{H_R^{s+\frac{1}{2}-}}\right)\|\eta\|_{H^{s+\frac{1}{2}}_R}, \\[1ex]
			&\| T_{\eta_{jk}} T_{G^{jk}_x(\eta,\nabla_w\eta)}\eta \|_{H^{s}} + \| R\left( \eta_{jk},G^{jk}(\eta,\nabla_w\eta) \right) \|_{H^{s}} \\
			\lesssim& \|\eta_{jk}\|_{H^{1+}} \|G^{jk}_x(\eta,\nabla_w\eta)\|_{H^{1+}} \|\eta\|_{H^{s}} + \|\eta_{jk}\|_{H^{\frac{3}{2}+}} \|G^{jk}(\eta,\nabla_w\eta)\|_{H^{s-\frac{1}{2}-}} \\
			\le&C\left(\|\eta\|_{H_R^{s+\frac{1}{2}-}}\right)\|\eta\|_{H^{s+\frac{1}{2}-}_R}, \\[1ex]
			&\| \left(T_{\eta_{jk}} T_{\left( \nabla_{u,v}G^{jk} \right)(\eta,\nabla_w\eta)} - T_{\eta_{jk}\left( \nabla_{u,v}G^{jk} \right)(\eta,\nabla_w\eta)}\right)\cdot \nabla_w\eta \|_{H^{s}} \\
			\le& C\left(\|\eta\|_{H_R^{s+\frac{1}{2}-}}\right) \|\nabla_w\eta\|_{H^{s-\frac{3}{2}-}} \le C\left(\|\eta\|_{H_R^{s+\frac{1}{2}-}}\right)\|\eta\|_{H^{s+\frac{1}{2}}_R},
		\end{align*}
		where in the last inequality we apply Proposition \ref{prop-para:paradiff-cal-sym} with the fact that $\eta_{jk}$ and $\left( \nabla_{u,v}G^{jk} \right)(\eta,\nabla_w\eta)$ are both in $H^{s-\frac{3}{2}-} \subset \Gamma^0_{3/2+}$.
	\end{proof}
	
	To sum up, we have proved that
	\begin{equation}\label{eq-paralin:paralin-of-H-fin}
		\begin{aligned}
			H - \frac{1}{2R} \sim& T_{\left( \nabla_{u,v}F \right)(\eta,\nabla_w\eta)}\cdot\nabla_w\eta + T_{G^{jk}(\eta,\nabla_w\eta)}\eta_{jk} + T_{\eta_{jk}\left( \nabla_{u,v}G^{jk} \right)(\eta,\nabla_w\eta)}\cdot \nabla_w\eta \\
			=& T_{\left( \nabla_{u,v}F \right)(\eta,\nabla_w\eta)\cdot i\xi}\eta - T_{G^{jk}(\eta,\nabla_w\eta)\xi_j\xi_k}\eta + T_{\eta_{jk}\left( \nabla_{u,v}G^{jk} \right)(\eta,\nabla_w\eta)\cdot i\xi}\eta,
		\end{aligned}
	\end{equation}
	and we can define $\mu^{(2)}$, $\mu^{(1)}$ as
	\begin{equation}\label{eq-paralin:mu-compute}
		\begin{aligned}
			&-G^{jk}(\eta,\nabla_w\eta)\xi_j\xi_k = \mu^{(2)}, \\
			&\left( \nabla_{u,v}F \right)(\eta,\nabla_w\eta)\cdot i\xi + \eta_{jk}\left( \nabla_{u,v}G^{jk} \right)(\eta,\nabla_w\eta)\cdot i\xi = \mu^{(1)}.
		\end{aligned}
	\end{equation}
	One can check by direct computation that $\mu^{(2)}$ is given by \eqref{eq-paralin:def-mu-2}. As for $\mu^{(1)}$, we are only interested in its imaginary part, which will be useful in Section \ref{Sect:sym}. We claim that
	\begin{lemma}\label{lem-paralin:mu-1-cal}
		The symbol $\mu = \mu^{(2)} + \mu^{(1)}$ constructed in \eqref{eq-paralin:paralin-of-H} satisfies
		\begin{equation}\label{eq-paralin:mu-1-cal}
			\Imag\mu^{(1)} = -\frac{1}{2}\partial_w\cdot\partial_\xi\mu^{(2)} - \frac{1}{2}\frac{\partial_w\eta}{\eta}\cdot\partial_\xi\mu^{(2)}.
		\end{equation}
	\end{lemma}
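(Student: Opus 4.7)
The plan is to mirror the argument used for $\lambda$ in Lemma \ref{lem-paralin:lambda-0-cal}. By Proposition \ref{prop-paralin:homo-sym-cond-aa} applied to $a = \eta\mu \in \Sigma^2$ (whose principal part $\eta\mu^{(2)}$ is real since $\Imag\mu^{(2)} = 0$), the identity \eqref{eq-paralin:mu-1-cal} is equivalent to the assertion that $T_{\eta\mu} - T_{\eta\mu}^*$ is of order strictly less than $1$. Indeed, expanding the required condition $\Imag(\eta\mu^{(1)}) = -\tfrac{1}{2}\partial_w\cdot\partial_\xi(\eta\mu^{(2)})$ and dividing by $\eta$ yields exactly \eqref{eq-paralin:mu-1-cal}.

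To obtain such approximate self-adjointness for the principal operator, I would exhibit a genuinely self-adjoint operator that agrees with $T_{\eta\mu}$ modulo operators of order $<1$. The natural candidate is $\mathcal{M}(\eta) := D_\eta\bigl[\eta\bigl(H(\eta)-\tfrac{1}{2R}\bigr)\bigr]$, the Fr\'echet derivative of $\mathcal{N}: \eta \mapsto \eta(H(\eta)-\tfrac{1}{2R})$. By the variational identity \eqref{eq-pre:var-energy-pot}, $\mathcal{N}(\eta)$ is the $L^2$-gradient of the (normalized) area functional $A/2$, so $\mathcal{M}(\eta)$ is the Hessian of this real-valued functional at $\eta$; as a Hessian, it is a symmetric bilinear form in the variations, hence self-adjoint on $L^2(\T\times\R)$.

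Following the scheme of Lemma \ref{lem-paralin:lambda-0-cal}, I would temporarily assume $\eta \in H^{+\infty}_R$, which is legitimate because \eqref{eq-paralin:mu-1-cal} is an algebraic relation in $\eta$ and its derivatives through \eqref{eq-paralin:def-mu-2} and \eqref{eq-paralin:mu-compute}, so it descends to low-regularity $\eta$. Starting from the paralinearization \eqref{eq-paralin:paralin-of-H}, $\mathcal{N}(\eta) = \eta T_\mu \eta + \eta r_3$, differentiating in $\eta$ gives
\begin{equation*}
\mathcal{M}(\eta)[\delta\eta] = \delta\eta\,(T_\mu\eta + r_3) + \eta\,T_\mu\delta\eta + \eta\,T_{D\mu[\delta\eta]}\eta + \eta\,Dr_3[\delta\eta].
\end{equation*}
Paraproduct-decomposing $\eta T_\mu\delta\eta = T_\eta T_\mu\delta\eta + T_{T_\mu\delta\eta}\eta + R(\eta,T_\mu\delta\eta)$, Proposition \ref{prop-paralin:homo-sym-cal} gives $T_\eta T_\mu \approx T_{\eta\mu}$, while the other two paraproducts with the smooth function $\eta$ are smoothing in $\delta\eta$ by Proposition \ref{prop-para:paraprod-bound}. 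The first, third, and fourth terms of the display are smoothing or of order $0$: the first is multiplication by the smooth function $T_\mu\eta+r_3$, while the third and fourth apply paradifferential or nonlinear operators whose symbols/remainders depend on $\delta\eta$ (and its derivatives) but whose argument $\eta$ is smooth, so the map $f\mapsto T_{a(f)}\eta$ sends any $H^s$ into $H^N$ for any $N$. Altogether $\mathcal{M}(\eta) = T_{\eta\mu} + \mathcal{R}$ with $\mathcal{R}$ of order $<1$; combined with $\mathcal{M}(\eta) = \mathcal{M}(\eta)^*$, this forces $T_{\eta\mu} - T_{\eta\mu}^* = \mathcal{R}^* - \mathcal{R}$ to be of order $<1$, which is exactly the hypothesis needed to invoke Proposition \ref{prop-paralin:homo-sym-cond-aa}.

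The main obstacle will be the careful verification that $\eta\,T_{D\mu[\delta\eta]}\eta$ is truly of order strictly less than $1$ in $\delta\eta$. A naive count threatens order $2$, since $\mu^{(1)}$ contains second derivatives of $\eta$ and therefore $D\mu^{(1)}[\delta\eta]$ contains $\partial_w^2\delta\eta$; the rescue is the smoothness of $\eta$, which makes $T_a\eta$ smooth for any symbol $a$ with $L^\infty$-bounded coefficients regardless of its $\xi$-order, so the composition $\delta\eta \mapsto T_{D\mu[\delta\eta]}\eta$ is in fact smoothing once the coefficient symbol is merely bounded. As a cross-check, \eqref{eq-paralin:mu-1-cal} can in principle be verified directly from \eqref{eq-paralin:mu-compute}, but as in Lemma \ref{lem-paralin:lambda-0-cal}, such an algebraic calculation is combinatorially heavy and offers no further insight.
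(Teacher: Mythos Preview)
Your approach is correct and shares the paper's core idea: reduce \eqref{eq-paralin:mu-1-cal} via Proposition~\ref{prop-paralin:homo-sym-cond-aa} to showing $T_{\eta\mu}-T_{\eta\mu}^*$ has order $<1$, then exploit that the Fr\'echet derivative $\mathcal{M}(\eta)=D_\eta[\eta(H-\tfrac{1}{2R})]$ is the Hessian of the area functional and hence self-adjoint. The difference lies in how you compare $\mathcal{M}(\eta)$ with $T_{\eta\mu}$. The paper differentiates the \emph{explicit} formula $H-\tfrac{1}{2R}=F(\eta,\nabla_w\eta)+G^{jk}(\eta,\nabla_w\eta)\eta_{jk}$ directly, obtaining $\mathcal{M}(\eta)$ as a genuine differential operator with symbol $\eta\mu+F_0$, where $F_0$ is a real zeroth-order multiplication; self-adjointness of $\Op{\eta\mu}$ then follows immediately, and the passage to $T_{\eta\mu}$ is a one-line remark (differential and paradifferential operators with the same symbol differ by a smoothing operator when the coefficients are smooth). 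Your route---differentiating the paralinearized identity $\eta(H-\tfrac{1}{2R})=\eta T_\mu\eta+\eta r_3$---works but generates more residual terms to chase, notably $\eta\,Dr_3[\delta\eta]$. One small inaccuracy: this fourth term is not smoothing as you assert, because differentiating pieces like $T_{F_x(\eta,\nabla_w\eta)}\eta$ in the argument produces $T_{F_x}\delta\eta$, which is order~$0$; fortunately order~$0$ is still $<1$, so your conclusion survives. The paper's organization is cleaner precisely because it never opens up $r_3$.
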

	\begin{proof}
		The idea of proof is similar to that of Lemma \ref{lem-paralin:lambda-0-cal}. We may assume $\eta\in H^{+\infty}_R:=\cap_{s\in\R}H^{s}_R$ and reduce the problem to show that $T_{\eta\mu}^* - T_{\eta\mu}$ is of order $1-$ (we shall prove that it is a smoothing operator).
		
		To begin with, we recall that we can represent $H$ as
		\begin{equation*}
			H-\frac{1}{2R} = F(\eta,\eta_\theta,\eta_z) + \sum_{j,k\in\{\theta,z\}} G^{jk}(\eta,\eta_\theta,\eta_z)\eta_{jk},
		\end{equation*}
		where $F(x,u,v)$ and $G^{jk}(x,u,v)$ are smooth functions defined in \eqref{eq-paralin:def-FG-paralin-H}. Then by regarding $H-\frac{1}{2R}$ as a nonlinear operator of $\eta$ (not $(\eta,\eta_\theta,\eta_z)$), one may calculate the derivative w.r.t. $\eta$ of $\eta(H-\frac{1}{2R})$. Namely, for all $\delta\eta\in H^{s}$ with $s \gg 1$, we have
		\begin{align*}
			&\left.\frac{d}{d\epsilon}\right|_{\epsilon=0} \left[ (\eta+\epsilon\delta\eta)\left( H(\eta+\epsilon\delta\eta)-\frac{1}{2R} \right) \right] \\
			=& \left.\frac{d}{d\epsilon}\right|_{\epsilon=0} \left[ (\eta+\epsilon\delta\eta)F(\eta+\epsilon\delta\eta,\partial_\theta(\eta+\epsilon\delta\eta),\partial_z(\eta+\epsilon\delta\eta)) \right] \\ &+\sum_{j,k\in\{\theta,z\}} \left.\frac{d}{d\epsilon}\right|_{\epsilon=0} \left[ (\eta+\epsilon\delta\eta)G^{jk}(\eta+\epsilon\delta\eta,\partial_\theta(\eta+\epsilon\delta\eta),\partial_z(\eta+\epsilon\delta\eta))\partial_{j}\partial_{k}(\eta+\epsilon\delta\eta) \right] \\
			=& F(\eta,\nabla_w\eta) \delta\eta + \eta F_x(\eta,\nabla_w\eta) \delta\eta + \eta\left(\nabla_{u,v}F\right)(\eta,\nabla_w\eta)\cdot\nabla_w\delta\eta + \eta_{jk}G^{jk}(\eta,\nabla_w\eta)\delta\eta  \\
			&+ \eta\eta_{jk}G^{jk}_x(\eta,\nabla_w\eta)\delta\eta + \eta\eta_{jk}\left(\nabla_{u,v}G^{jk}\right)(\eta,\nabla_w\eta)\cdot\nabla_w\delta\eta + \eta G^{jk}(\eta,\nabla_w\eta)\partial_j\partial_k\delta\eta,
		\end{align*}
		where the right hand side is a differential operator acting on $\delta\eta$. By comparing it with \eqref{eq-paralin:mu-compute}, we can see that the symbol of this differential operator equals $\eta\mu + F_0(\eta,\nabla_w\eta,\nabla_w^2\eta)$, where $F_0$ is a smooth real-valued function. 
		
		Now, for any $\delta\eta_1,\delta\eta_2\in H^s_R$ with $s\gg 1$, we define
		\begin{equation*}
			h(\epsilon_1,\epsilon_2) := A(\eta+\epsilon_1\delta\eta_1 + \epsilon_2\delta\eta_2),
		\end{equation*}
		where $A$ is the normalized area of interface $\Sigma(t)$, which is defined in \eqref{eq-pre:def-energy-pot}. By \eqref{eq-pre:var-energy-pot}, we have
		\begin{equation*}
			\partial_{\epsilon_1} h|_{\epsilon_1=0} = \int (\eta+\epsilon_2\delta\eta_2)\left(H(\eta+\epsilon_2\delta\eta_2)-\frac{1}{2R}\right)\delta\eta_1 dw,
		\end{equation*}
		which, together with the calculus above, yields
		\begin{equation*}
			\partial_{\epsilon_2}\partial_{\epsilon_1} h|_{\epsilon_1=\epsilon_2=0} = \int \Op{\eta\mu + F_0(\eta,\nabla_w\eta,\nabla_w^2\eta)}\delta\eta_2 \delta\eta_1 dw.
		\end{equation*}
		Due to the regularity of $\eta$, $h$ is a smooth function of $(\epsilon_1,\epsilon_2)$ near $(0,0)$. Therefore, the order of derivative is not important, from which one can deduce that $\Op{\eta\mu + F_0(\eta,\nabla_w\eta,\nabla_w^2\eta)}$ is self-adjoint. Since $F_0$ is real-valued, the differential operator $\Op{\eta\mu}$ is self-adjoint. Form the fact that the difference between differential operator and paradifferential operators with the same symbol is a smoothing operator, we conclude that $T_{\eta\mu}- T_{\eta\mu}^*$ is a smoothing operator, which completes the proof.
 	\end{proof}
 	
 	Note that, as a by-product, we have re-expressed $H-(2R)^{-1}$ as
 	\begin{equation}
 		\begin{aligned}
 			H - \frac{1}{2R} =& T_\mu \eta + T_{F_x(\eta,\nabla_w\eta)}\eta + T_{\eta_{jk}} T_{G^{jk}_x(\eta,\nabla_w\eta)} \eta + R\left( \eta_{jk},G^{jk}(\eta,\nabla_w\eta) \right) \\
 			& + \left(T_{\eta_{jk}} T_{\left( \nabla_{u,v}G^{jk} \right)(\eta,\nabla_w\eta)} - T_{\eta_{jk}\left( \nabla_{u,v}G^{jk} \right)(\eta,\nabla_w\eta)}\right)\cdot \nabla_w\eta + r_4
 		\end{aligned}
 	\end{equation}
	which can be seen by inserting \eqref{eq-paralin:mu-compute} in \eqref{eq-paralin:H-decomp}. That is to say, the error $r_3$ appearing in \eqref{eq-paralin:paralin-of-H} admits an explicit formulation :
	\begin{equation}\label{eq-paralin:r-3-formula}
		\begin{aligned}
			r_3 =& T_{F_x(\eta,\nabla_w\eta)}\eta + T_{\eta_{jk}} T_{G^{jk}_x(\eta,\nabla_w\eta)} \eta + R\left( \eta_{jk},G^{jk}(\eta,\nabla_w\eta) \right) \\
			& + \left(T_{\eta_{jk}} T_{\left( \nabla_{u,v}G^{jk} \right)(\eta,\nabla_w\eta)} - T_{\eta_{jk}\left( \nabla_{u,v}G^{jk} \right)(\eta,\nabla_w\eta)}\right)\cdot \nabla_w\eta + r_4.
		\end{aligned}
	\end{equation}
	Recall that $F,G$ are defined in \eqref{eq-paralin:def-FG-paralin-H} and $r_4$ is given by \eqref{eq-paralin:def-r-4}.

	\subsection{Continuity of source term}\label{subsect:contin-in-eta}
	
	In previous sections, we manage to rewire \eqref{eq-intro:WW} in paralinear form \eqref{eq-paralin:WW}. Moreover, by combining estimates in Proposition \ref{prop-paralin:paralin-DtN} and Proposition \ref{prop-paralin:paralin-of-nonlin}, we are able to decompose the source term $f$ (see \eqref{eq-paralin:def-f} and \eqref{eq-paralin:def-f-2}) as
	\begin{equation}\label{eq-paralin:f-decomp}
		f = f^{(0)} + f^{(1)} + f^{(2)},
	\end{equation}
	where
	\begin{equation}\label{eq-paralin:f-decomp-def}
		f^{(0)} = \left(\begin{array}c
			0 \\
			-\sigma r_3(\eta)
		\end{array}\right),\ \ 
		f^{(1)} = \left(\begin{array}c
			f_1(\eta,\psi) \\
			0
		\end{array}\right),\ \ 
		f^{(2)} = \left(\begin{array}c
			0 \\
			T_{B(\eta,\psi)} f_1(\eta,\psi) - r_2(\eta;\psi,\psi)
		\end{array}\right),
	\end{equation}
	are respectively independent, linear, and quadratic in $\psi$, with estimates \eqref{eq-paralin:esti-f-1}, \eqref{eq-paralin:esti-r-2}, and \eqref{eq-paralin:esti-r-3}. In these estimates, by taking $s_0=s$, we have, for all $(\eta,\psi)\in H_R^{s+\frac{1}{2}}\times H^s$ with $s>3$, $B\in H^{s-1}$ (see \eqref{eq-paralin:esti-B-V-psi}) where $s-1>1$, which, thanks to Proposition \ref{prop-para:paraprod-bound}, yields 
	\begin{equation}\label{eq-paralin:source-esti}
		\|f(\eta,\psi)\|_{H^{s+\frac{1}{2}}\times H^{s}} \le C\left(\|\eta\|_{H_R^{s+\frac{1}{2}-}},\|\psi\|_{H^{s-}}\right) \left( \|\eta\|_{H_R^{s+\frac{1}{2}}}+\|\psi\|_{H^{s}} \right).
	\end{equation}
	Meanwhile, with $\frac{3}{2}<s_0<s-\frac{3}{2}$, we can prove the local Lipschitz regularity of $f(\eta,\psi)$ in $\psi$, i.e. for all $\eta\in H_R^{s+\frac{1}{2}}$ and $\psi_1,\psi_2\in H^s$ with $s>3$, 
	\begin{equation}\label{eq-paralin:source-Lip-in-psi}
		\|f(\eta,\psi_1) - f(\eta,\psi_2) \|_{H^{s_0+\frac{1}{2}}\times H^{s_0}} \le C\left(\|\eta\|_{H_R^{s+\frac{1}{2}}},\|\psi_1\|_{H^{s_0}},\|\psi_2\|_{H^{s_0}}\right) \|\psi_1-\psi_2\|_{H^{s_0}}.
	\end{equation}
	
	The goal of this section is to show that $f(\eta,\psi)$ is also locally Lipschitzian in $\eta$, which is important to prove the convergence of approximate solutions to the paralinear system \eqref{eq-paralin:WW} and the uniqueness (see Section \ref{subsect:conv-uni}).
	\begin{proposition}\label{prop-paralin:source-Lip}
		Let $(\eta_1,\psi_1),(\eta_2,\psi_2)\in H_R^{s+\frac{1}{2}}\times H^s$ with $s>3$, such that
		\begin{equation*}
			\|(\eta_1,\psi_1)\|_{H_R^{s+\frac{1}{2}}\times H^{s}} + \|(\eta_2,\psi_2)\|_{H_R^{s+\frac{1}{2}}\times H^{s}} \le M
		\end{equation*}
		for some $M>0$. Then we have, for all $\frac{3}{2}<s_0<s-\frac{3}{2}$
		\begin{equation}\label{eq-paralin:source-Lip}
			\|f(\eta_1,\psi_1) - f(\eta_2,\psi_2) \|_{H^{s_0+\frac{1}{2}}\times H^{s_0}} \le C(M) \|(\eta_1-\eta_2,\psi_1-\psi_2)\|_{H^{s_0+\frac{1}{2}}\times H^{s_0}}.
		\end{equation}
	\end{proposition}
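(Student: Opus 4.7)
The plan is to split the difference via the telescoping identity
\[
f(\eta_1,\psi_1) - f(\eta_2,\psi_2) = \bigl[f(\eta_1,\psi_1) - f(\eta_1,\psi_2)\bigr] + \bigl[f(\eta_1,\psi_2) - f(\eta_2,\psi_2)\bigr],
\]
so that the Lipschitz estimates in $\psi$ (at fixed $\eta$) and in $\eta$ (at fixed $\psi$) can be established separately and then combined. For each of these, I would use the decomposition $f = f^{(0)} + f^{(1)} + f^{(2)}$ given in \eqref{eq-paralin:f-decomp}--\eqref{eq-paralin:f-decomp-def} and treat the three summands independently. The control on the sizes $M$ of $\eta_i$ and $\psi_i$ will feed into all the tame constants $C(\|\eta\|_{H^{s+\frac{1}{2}-}_R},\|\psi\|_{H^{s-}})$ that already appear in \eqref{eq-paralin:esti-f-1}, \eqref{eq-paralin:esti-r-2}, \eqref{eq-paralin:esti-r-3}, giving uniform constants $C(M)$.

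For the Lipschitz dependence in $\psi$, one uses the fact that $f^{(0)}$ is independent of $\psi$, $f^{(1)}$ is linear in $\psi$ (so the difference is $f_1(\eta_1,\psi_1-\psi_2)$ and \eqref{eq-paralin:esti-f-1} applies), and $f^{(2)}$ is quadratic in $\psi$ with symmetrised bilinear form $f^{(2)}(\eta;\cdot,\cdot)$, so the difference factorises as $f^{(2)}(\eta;\psi_1+\psi_2,\psi_1-\psi_2)$ and \eqref{eq-paralin:esti-r-2} plus the paraproduct bound on $T_Bf_1$ give the desired estimate. Here the loss from $s$ to $s_0$ is used to absorb the factor $\|\psi_1\|_{H^{s-}}+\|\psi_2\|_{H^{s-}}$ into $C(M)$ while only $H^{s_0}$-norms of $\psi_1-\psi_2$ remain.

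For the Lipschitz dependence in $\eta$, I would interpolate $\eta_\tau := (1-\tau)\eta_2 + \tau \eta_1$, which lies in $H^{s+\frac{1}{2}}_R$ and, by convexity, satisfies \eqref{hyp-intro:bounds} uniformly in $\tau \in [0,1]$; then write
\[
f(\eta_1,\psi_2) - f(\eta_2,\psi_2) = \int_0^1 D_\eta f(\eta_\tau,\psi_2)\bigl[\eta_1-\eta_2\bigr]\, d\tau.
\]
The $\eta$-derivatives of the three pieces are handled as follows. For $f^{(0)} = -\sigma r_3(\eta)$, the explicit formula \eqref{eq-paralin:r-3-formula} represents $r_3$ as paraproducts and smooth remainders involving $(\eta,\nabla_w\eta,\nabla_w^2\eta)$; differentiating via the chain rule and applying the tame paraproduct bounds (Proposition \ref{prop-para:paraprod-bound}) and Proposition \ref{prop-para:paralin} yields $\|D_\eta r_3[\delta\eta]\|_{H^{s_0}}\le C(M)\|\delta\eta\|_{H^{s_0+\frac{1}{2}}}$, provided $s_0 + \frac{1}{2} < s$, which is guaranteed. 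For $f^{(1)} = f_1$ and the $T_Bf_1 - r_2$ part of $f^{(2)}$, the key ingredient is the shape derivative formula \eqref{eq-pre:shape-deri} from Proposition \ref{prop-pre:shape-deri}, which gives Lipschitz control of $G(\eta)\psi$, and hence (via \eqref{eq-paralin:B-formula}--\eqref{eq-paralin:N-formula}) of $B$, $V$, $N$, with respect to $\eta$; combining these with Lipschitz estimates for the symbols $\lambda$ and $\mu$ in $\eta$ (which are smooth functions of $\eta$ and its derivatives) produces the desired bound.

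The main obstacle is the bookkeeping in controlling the $\eta$-variation of the remainders $f_1$ and $r_2$, which were defined only implicitly via the paralinearization procedures of Sections \ref{subsect:paralin-ellip}--\ref{subsect:paralin-nonlin}. Rather than differentiating each intermediate step, a cleaner route is to redo the argument of Proposition \ref{prop-paralin:paralin-DtN} and \ref{prop-paralin:paralin-of-nonlin} with $\eta$ replaced by a one-parameter family $\eta_\tau$ and track the difference at each step: every equivalence $u \sim v$ proved there was in fact a tame estimate of the form $\|u-v\|_{H^{s_0+\frac{1}{2}+\epsilon}} \le C(\|\eta\|)\|\psi\|_{H^{s_0}}$, and replacing one of the factors by its Lipschitz variation in $\eta$ (using Proposition \ref{prop-para:paralin} for the smooth functions, the shape derivative for $B$, $V$, $G(\eta)\psi$, and bilinear paraproduct estimates for the products) yields the same bound with one half-derivative loss. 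The gap $s_0 < s - \tfrac{3}{2}$ is precisely what is needed so that the tame constants remain controlled by $\|\eta\|_{H^{s+\frac{1}{2}}_R}\le M$ while the Lipschitz variation $\|\eta_1-\eta_2\|$ is measured in the weaker $H^{s_0+\frac{1}{2}}$ norm, closing the estimate.
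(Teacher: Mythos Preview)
Your overall strategy---splitting $\psi$-variation from $\eta$-variation and reducing the latter to a derivative bound along the line $\eta_\tau$---matches the paper's exactly, and your treatment of $f^{(0)}$, $f^{(2)}$, and the $\psi$-Lipschitz part is fine. The gap is in your handling of $\delta f_1$.

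You write that the shape derivative formula ``gives Lipschitz control of $G(\eta)\psi$, and hence of $B$, $V$, $N$'', and that combining this with Lipschitz estimates for $\lambda$, $\mu$ ``produces the desired bound''. But term-by-term this does \emph{not} close: one needs $\delta f_1 \in H^{s_0+\frac12}$, yet from $f_1 = G(\eta)\psi - T_\lambda U + T_V\cdot\bar\nabla\eta$ the piece $\delta(G(\eta)\psi)$ contains, via \eqref{eq-pre:shape-deri}, the term $-G(\eta)(B\,\delta\eta)$. With $\delta\eta\in H^{s_0+\frac12}$ and $B\in H^{s-1}$ one has $B\,\delta\eta\in H^{s_0+\frac12}$, so $G(\eta)(B\,\delta\eta)$ is only in $H^{s_0-\frac12}$---a full derivative short. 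Likewise $\bar\nabla\cdot(V\,\delta\eta)\in H^{s_0-\frac12}$. These terms are \emph{individually} too rough; the bound holds only because they cancel to leading order against $T_\lambda T_B\,\delta\eta$ and $T_V\cdot\bar\nabla\delta\eta$ coming from differentiating $T_\lambda U$ and $T_V\cdot\bar\nabla\eta$.

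The paper makes this cancellation explicit (see \eqref{eq-paralin:f-1-deri-in-eta-reduce} and what follows): it applies the paralinearization \eqref{eq-paralin:paralin-DtN} a second time with $\psi$ replaced by $B\,\delta\eta$, obtaining $G(\eta)(B\,\delta\eta)\sim T_\lambda(B\,\delta\eta)$; then uses Proposition \ref{prop-pre:cancellation} (itself the shape derivative with $\delta\eta=1$) to replace $\bar\nabla\cdot V$ by $-G(\eta)B$, and paralinearizes once more with $\psi$ replaced by $B$ to get $G(\eta)B\sim T_\lambda B$. After these substitutions everything collapses to the commutator $[T_{\delta\eta},T_\lambda]B$, which is of order $\le 0$ since $\delta\eta\in\Gamma^0_{1+}$, and lands in $H^{s_0+\frac12}$ because $B\in H^{s-1}\subset H^{s_0+\frac12}$. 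Your alternative suggestion of ``redoing the paralinearization with a parameter family'' would not sidestep this: the gain of $\tfrac32$ derivatives in $f_1$ over $G(\eta)\psi$ comes from a structural cancellation, and tracking differences through Sections \ref{subsect:paralin-ellip}--\ref{subsect:paralin-DtN} would still leave you needing to identify where the $-G(\eta)(B\,\delta\eta)$ term is absorbed.
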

	
	The estimate \eqref{eq-paralin:source-Lip-in-psi} allows us to focus on the continuity in $\eta$ by taking $\psi:=\psi_1=\psi_2\in H^s$, and the desired result can be reduced to
	\begin{equation}\label{eq-paralin:source-Lip-in-eta}
		\|f(\eta_1,\psi) - f(\eta_2,\psi) \|_{H^{s_0+\frac{1}{2}}\times H^{s_0}} \le C\left(\|\eta\|_{H_R^{s+\frac{1}{2}}}, \|\psi\|_{H^{s}}\right) \|\eta_1-\eta_2\|_{H^{s_0+\frac{1}{2}}},
	\end{equation}
	which can be further reduced to the boundedness of derivation in $\eta$.
	\begin{lemma}\label{lem-paralin:source-deri-in-eta}
		Let $\eta\in H^{s+\frac{1}{2}}_R$, $\delta\eta\in H^{s+\frac{1}{2}}$, and $\psi\in H^s$ with $s>3$. Then the derivative of $f$ in $\eta$, defined by
		\begin{equation}\label{eq-paralin:def-deri-in-eta}
			\delta f := \frac{d}{d\eta}f(\eta,\psi)\cdot\delta\eta = \left.\frac{d}{d\epsilon}\right|_{\epsilon=0} f(\eta+\epsilon\delta\eta,\psi),
		\end{equation}
		satisfies, for all $\frac{3}{2}<s_0<s-\frac{3}{2}$,
		\begin{equation}\label{eq-paralin:source-deri-in-eta}
			\| \delta f(\eta,\psi) \|_{H^{s_0+\frac{1}{2}}\times H^{s_0}} \le C\left(\|\eta\|_{H_R^{s+\frac{1}{2}}}, \|\psi\|_{H^{s}}\right) \|\delta\eta\|_{H^{s_0+\frac{1}{2}}}.
		\end{equation}
	\end{lemma}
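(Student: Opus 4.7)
The plan is to differentiate each of the three summands $f^{(0)}, f^{(1)}, f^{(2)}$ of the decomposition \eqref{eq-paralin:f-decomp-def} separately in $\eta$, using the explicit formulas derived in Sections \ref{subsect:paralin-DtN}--\ref{subsect:paralin-nonlin} together with the shape derivative formula of Proposition \ref{prop-pre:shape-deri}. The guiding principle is that every ingredient of $f$ has been made explicit either as a paraproduct, as a residual $R(\cdot,\cdot)$, as a symbolic-calculus difference such as $T_aT_b-T_{a\sharp b}$, or as a paralinearization remainder handled by \eqref{eq-para:paralin}. Thus $\delta f$ reduces to a finite sum of expressions of the same types, where $\delta\eta$ and its $\bar\nabla$-derivatives replace one occurrence of $\eta$ (or its derivatives) in each term. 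The tame estimates of Appendix \ref{App:para} — which require only low Sobolev/Hölder norms in the symbol while being linear in the argument — then give \eqref{eq-paralin:source-deri-in-eta} at once, provided $s_0<s-\tfrac32$ leaves enough room to place $\delta\eta$ at regularity $s_0+\tfrac12$ and $\eta,\psi$ at their natural regularity $s+\tfrac12$, $s$.

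For $f^{(0)}$ I use the explicit formula \eqref{eq-paralin:r-3-formula} for $r_3$ and the definition \eqref{eq-paralin:def-r-4} of $r_4$. Every summand is a paraproduct or residual involving smooth functions of $(\eta,\nabla_w\eta)$ multiplied by $\eta_{jk}$ or $\eta$. Differentiating in $\eta$ via the chain rule produces analogous terms in which one factor is replaced by $\delta\eta$ (or $\nabla_w\delta\eta$, $\nabla_w^2\delta\eta$); then Proposition \ref{prop-para:paraprod-bound} and Corollary \ref{cor-para:product-law} bound each piece in $H^{s_0}$ by $C(\|\eta\|_{H^{s+\frac12}_R})\,\|\delta\eta\|_{H^{s_0+\frac12}}$, exactly as in Lemma \ref{lem-paralin:paralin-of-H}.

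For $f^{(1)}$ I start from $f_1 = G(\eta)\psi - T_\lambda U + T_V\cdot\bar\nabla\eta$, which yields
\begin{equation*}
\delta f_1 = \delta\bigl(G(\eta)\psi\bigr) - T_{\delta\lambda}U - T_\lambda\bigl(\delta\psi - T_{\delta B}\eta - T_B\delta\eta\bigr) + T_{\delta V}\cdot\bar\nabla\eta + T_V\cdot\delta(\bar\nabla\eta).
\end{equation*}
The first term is handled by the shape derivative formula \eqref{eq-pre:shape-deri}: up to controlled paraproducts, it is $-G(\eta)(B\delta\eta)$ plus a divergence in $\delta\eta$, whose $H^{s_0-1/2}$ norm is tame by Corollary \ref{cor-pre:bound-DtN} applied at regularity $s_0+\tfrac12$. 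The symbols $\lambda\in\Sigma^1$ and $\mu\in\Sigma^2$ depend smoothly on $\eta$ via \eqref{eq-paralin:def-lambda}, \eqref{eq-paralin:def-mu-2}, so $\delta\lambda, \delta V$ are explicit linear expressions in $\delta\eta, \nabla_w\delta\eta$ times smooth functions of $\eta$, and the operator $T_{\delta\lambda}$ is still of order $1$ with tame norm. The quantity $\delta B$ also requires a shape-derivative analysis (applied to the definition $B=\mathcal{B}(\eta)\psi$ via the elliptic equation \eqref{eq-paralin:ellip}), which produces an $H^{s_0-1}$ bound on $\delta B$ linear in $\|\delta\eta\|_{H^{s_0+\frac12}}$. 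All remaining products are estimated by Corollary \ref{cor-para:product-law}. For $f^{(2)}$ I apply the product rule to $T_{B}f_1$ — using the $\delta B$ bound above together with the already established control of $\delta f_1$ — and differentiate the formula for $r_2$ obtained inside the proof of Proposition \ref{prop-paralin:paralin-of-nonlin}, which consists of paraproducts of $B, V, G(\eta)\psi, \bar\nabla\eta$.

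The main obstacle is the shape derivative of $G(\eta)\psi$ and of $B=\mathcal{B}(\eta)\psi$: these are the only places where $\delta\eta$ feeds into $\varphi$ itself through the elliptic problem, so that the Lipschitz-in-$\eta$ control is not purely algebraic. The key point is that Proposition \ref{prop-pre:shape-deri} delivers the shape derivative only in the form \eqref{eq-pre:shape-deri-esti}, with a constant depending on $\|\eta\|_{H^{s+\frac12}_R}$ and $\|\delta\eta\|_{C^{s-\frac12}}$; in order to retain the tame character of \eqref{eq-paralin:source-deri-in-eta} (dependence on $\|\delta\eta\|_{H^{s_0+\frac12}}$ only, with $s_0<s-\tfrac32$), one must redo the elliptic estimate \eqref{eq-pre:shape-deri-esti-in-del-varphi} in a slightly lower regularity $s_0+\tfrac12$, using the Sobolev embedding $H^{s_0+\frac12}\hookrightarrow C^{s_0-\frac12}(\T\times\R)$ and the condition $s_0<s-\tfrac32$ to keep the coefficients of the elliptic equation inside the range where Proposition \ref{prop-pre:ellip-reg} applies. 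Once this refined shape derivative estimate is in place, the rest of the argument is a bookkeeping exercise combining the calculus of Appendix \ref{App:para} with the symbolic identities of Section \ref{subsect:paralin-pre}.
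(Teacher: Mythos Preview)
There is a genuine gap in your treatment of $\delta f_1$. You say that the shape-derivative piece $\delta\bigl(G(\eta)\psi\bigr)$ has tame $H^{s_0-\frac12}$ norm, and then propose to estimate the remaining paraproducts by Corollary~\ref{cor-para:product-law}. But the first component of $\delta f$ must land in $H^{s_0+\frac12}$, not $H^{s_0-\frac12}$. In fact four of the terms in your expansion --- namely $-G(\eta)(B\,\delta\eta)$ and $-\bar\nabla\!\cdot(V\,\delta\eta)$ coming from the shape-derivative formula, together with $T_\lambda T_B\,\delta\eta$ and $T_V\!\cdot\bar\nabla\,\delta\eta$ coming from the variation of $-T_\lambda U+T_V\!\cdot\bar\nabla\eta$ --- are each individually only in $H^{s_0-\frac12}$, since each places one derivative too many on $\delta\eta$. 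They \emph{cancel} to leading order, and exhibiting this cancellation is the actual content of the estimate for $\delta f_1$; it is not bookkeeping.

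The paper's mechanism is to apply the paralinearization \eqref{eq-paralin:paralin-DtN} a second time, now with $\psi$ replaced by $B\,\delta\eta\in H^{s_0+\frac12}$, which converts $G(\eta)(B\,\delta\eta)$ into $T_\lambda(B\,\delta\eta)$ plus admissible remainders. After this substitution the four bad terms reduce to the combination $(G(\eta)B+\bar\nabla\!\cdot V)\,\delta\eta$ together with essentially the commutator $[T_{\delta\eta},T_\lambda]B$. The first of these is tame precisely by the cancellation of Proposition~\ref{prop-pre:cancellation}; the second requires yet another application of \eqref{eq-paralin:paralin-DtN}, this time with $\psi$ replaced by $B\in H^{s-1}$, followed by a commutator bound. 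None of these steps is visible in your outline, and a ``refined shape derivative estimate'' alone cannot supply them: the shape derivative of $G(\eta)\psi$ is genuinely only one derivative smoother than $\delta\eta$, so the gain to $H^{s_0+\frac12}$ comes from the interaction with the other three terms, not from a sharper elliptic estimate. Your plans for $f^{(0)}$ and $f^{(2)}$ are essentially correct and match the paper.
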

	Since $f$ can be decomposed as \eqref{eq-paralin:f-decomp} with $f^(0)$, $f^{(1)}$, and $f^{(2)}$ given by \eqref{eq-paralin:f-decomp-def}, we need to study $\delta f_1$ and $\delta f_2$ (more precisely $\delta r_2$ and $\delta r_3$, see definition \eqref{eq-paralin:def-f-2} of $f_2$). The study of $f_1$ is the most difficult one, since the only formula for $f_1$ is \eqref{eq-paralin:paralin-DtN}, meaning that it is unavoidable to calculate the derivative-in-$\eta$ of $G(\eta)\psi$ (depending implicitly in $\eta$), which has been done in Proposition \ref{prop-pre:shape-deri}. As for the study of $r_2$, we shall rewrite it as a function of $B$, $V$, and $G(\eta)\psi$ and reduce the problem again to shape derivative (Proposition \ref{prop-pre:shape-deri}). The last term $r_3$, according to its definition \eqref{eq-paralin:paralin-of-H}, depends explicitly on $\eta$ and the corresponding variational calculus is direct.
	
	\subsubsection{Derivative of $f_1$}
	In this part, we shall prove \eqref{eq-paralin:source-deri-in-eta} for $f_1$ (defined in Proposition \ref{prop-paralin:paralin-DtN}), namely
	\begin{equation}\label{eq-paralin:f-1-deri-in-eta}
		\| \delta f_1(\eta,\psi) \|_{H^{s_0+\frac{1}{2}}} \le C\left(\|\eta\|_{H_R^{s+\frac{1}{2}}}, \|\psi\|_{H^{s}}\right) \|\delta\eta\|_{H^{s_0+\frac{1}{2}}}.
	\end{equation}
	
	Before starting the proof, we introduce several technical lemmas which will be frequently used:
	\begin{lemma}\label{lem-paralin:fct-of-eta-deri-in-eta}
		Let $\eta\in H^{s+\frac{1}{2}}_R$ and $\delta\eta\in H^{s_0}$ with $s>3$ and $\frac{3}{2}<s_0\le s$. All the functions $u$ taking the form $u = F(\eta,\nabla_w\eta,\dots\nabla_w^k\eta)$ ($k=0,1,2$) belong to $H^{s+\frac{1}{2}-k}_{F(R,0)}$ with 
		\begin{equation}\label{eq-paralin:fct-of-eta-deri-in-eta}
			\|\delta u\|_{H^{s_0+\frac{1}{2}-k}} \le C\left(\|\eta\|_{H_R^{s+\frac{1}{2}}}\right) \|\delta\eta\|_{H^{s_0+\frac{1}{2}}}.
		\end{equation}
	\end{lemma}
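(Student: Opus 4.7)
The plan is to combine the chain rule with the standard composition and product estimates in Sobolev spaces. First I set $F_0 := F(R,0,\dots,0)$ and write
\[
u - F_0 = F(\eta,\nabla_w\eta,\dots,\nabla_w^k\eta) - F(R,0,\dots,0),
\]
viewed as a smooth function applied to the perturbation $(\eta - R,\nabla_w\eta,\dots,\nabla_w^k\eta)$. The components of this perturbation lie in $H^{s+\frac{1}{2}}, H^{s-\frac{1}{2}},\dots, H^{s+\frac{1}{2}-k}$, and the weakest index $s+\frac{1}{2}-k$ is still strictly above $\frac{d}{2}=1$ since $s > 3$ and $k\leq 2$. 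Hence the Moser/composition estimate for smooth functions (cf. Proposition \ref{prop-para:paralin} and its corollary \eqref{eq-para:paralin-cor}) yields $u - F_0 \in H^{s+\frac{1}{2}-k}$ with norm bounded by a smooth increasing function of $\|\eta\|_{H^{s+\frac{1}{2}}_R}$.

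For the derivative estimate, the chain rule gives the explicit formula
\[
\delta u \;=\; \sum_{j=0}^{k}\,(\partial_j F)(\eta,\nabla_w\eta,\dots,\nabla_w^k\eta)\,\nabla_w^j(\delta\eta).
\]
Each coefficient $(\partial_j F)(\eta,\dots,\nabla_w^k\eta)$ lies, up to an additive constant, in $H^{s+\frac{1}{2}-k}$ by exactly the same composition argument as above, with norm bounded by $C(\|\eta\|_{H^{s+\frac{1}{2}}_R})$. Meanwhile, for $0\le j\le k$, $\nabla_w^j(\delta\eta) \in H^{s_0+\frac{1}{2}-j}$ with norm controlled by $\|\delta\eta\|_{H^{s_0+\frac{1}{2}}}$.

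It then suffices to apply the product rule in Sobolev spaces (Corollary \ref{cor-para:product-law}) in dimension $d=2$. With indices $a = s+\frac{1}{2}-k$ for the coefficient and $b = s_0+\frac{1}{2}-j$ for the derivative of $\delta\eta$, we wish to land in $H^{s_0+\frac{1}{2}-k}$, which requires $a \geq s_0+\frac{1}{2}-k$, $b\geq s_0+\frac{1}{2}-k$, and $a+b-1 \geq s_0+\frac{1}{2}-k$. These reduce to $s\geq s_0$, $k\geq j$, and $s - j \geq \frac{1}{2}$ respectively; all three are immediate from the hypotheses $s_0 \leq s$, $j\leq k$, and $s > 3$ together with $k\leq 2$.

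The only real obstacle is the bookkeeping of regularity indices and a careful check that the highest-order term (with $j=k$) still fits the product rule — this is precisely where the assumption $s>3$ is used, ensuring that even when one derivative of $\delta\eta$ reaches the top and the coefficient has lost $k$ derivatives, the product embedding holds without a deficit. Summing over $j=0,\dots,k$ yields the claimed bound \eqref{eq-paralin:fct-of-eta-deri-in-eta}.
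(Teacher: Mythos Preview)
Your proof is correct and follows essentially the same approach as the paper's own argument: both use the chain rule to expand $\delta u$, invoke Proposition~\ref{prop-para:paralin} to place the coefficients $(\partial_j F)(\eta,\dots,\nabla_w^k\eta)$ in $H^{s+\frac{1}{2}-k}$ up to constants, and then apply the product estimate (Corollary~\ref{cor-para:product-law}) term by term. Your explicit verification of the Sobolev index conditions is slightly more detailed than what the paper writes, but the substance is identical.
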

	\begin{proof}
		The fact that $u\in H^{s+\frac{1}{2}-k}_{F(R,0)}$ is no more than a consequence of Proposition \ref{prop-para:paralin}. For the proof of \eqref{eq-paralin:fct-of-eta-deri-in-eta}, we focus on the case $k=2$, while the other cases can be proved in the same way. By definition, the derivative of $u$ in $\eta$ reads
		\begin{equation*}
			\delta u = \sum_{j=0}^k \partial_j F(\eta,\nabla_w\eta,\dots\nabla_w^k\eta) \nabla_w^j\delta\eta,
		\end{equation*}
		where, again by Proposition \ref{prop-para:paralin}, $\partial_j F(\eta,\nabla_w\eta,\dots\nabla_w^k\eta) \in H^{s+\frac{1}{2}-k}$, up to constant normalization, and $\nabla_w^j\delta\eta \in H^{s_0+\frac{1}{2}-j} \subset H^{s_0+\frac{1}{2}-k}$. Thus, Corollary \ref{cor-para:product-law} gives the desired estimate,
		\begin{align*}
			\|\delta u\|_{H^{s_0+\frac{1}{2}-k}} \le& \sum_{j=0}^k \| \partial_j F(\eta,\nabla_w\eta,\dots\nabla_w^k\eta) \nabla_j^k\delta\eta \|_{H^{s_0+\frac{1}{2}-k}} \\
			\lesssim& \sum_{j=0}^k \| \partial_j F(\eta,\nabla_w\eta,\dots\nabla_w^k\eta) \|_{H^{s+\frac{1}{2}-k}} \| \nabla_j^k\delta\eta \|_{H^{s_0+\frac{1}{2}-k}} \\
			\le& C\left(\|\eta\|_{H_R^{s+\frac{1}{2}}}\right) \|\delta\eta\|_{H^{s_0+\frac{1}{2}}}.
		\end{align*}
	\end{proof}
	
	\begin{lemma}\label{lem-paralin:BV-deri-in-eta}
		Let $\eta\in H^{s+\frac{1}{2}}_R$, $\psi\in H^s$, and $\delta\eta\in H^{s_0}$ with $s>3$ and $\frac{3}{2}<s_0\le s$. Then the following estimate holds,
		\begin{equation}\label{eq-paralin:BV-deri-in-eta}
			\|\delta B\|_{H^{s_0-1}} + \|\delta V\|_{H^{s_0-1}} \le C\left(\|\eta\|_{H_R^{s+\frac{1}{2}}}, \|\psi\|_{H^{s}}\right) \|\delta\eta\|_{H^{s_0+\frac{1}{2}}}.
		\end{equation}
	\end{lemma}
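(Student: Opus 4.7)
The plan is to differentiate the explicit formulas \eqref{eq-paralin:B-formula} and \eqref{eq-paralin:V-formula}, reducing everything to (i) the shape derivative of $G(\eta)\psi$ from Proposition \ref{prop-pre:shape-deri}, and (ii) variations of smooth functions of $\eta$ and $\nabla_w\eta$, controlled by Lemma \ref{lem-paralin:fct-of-eta-deri-in-eta}. I shall bound products by Corollary \ref{cor-para:product-law} (together with Proposition \ref{prop-para:paralin} to handle $\eta^{-1}$ and the denominator $1+|\bar{\nabla}\eta|^2$, which is bounded away from zero and smoothly depends on $\eta,\nabla_w\eta$).

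First I would treat $\delta B$. From
\[
B = \frac{G(\eta)\psi + \bar{\nabla}\psi\cdot\bar{\nabla}\eta}{1+|\bar{\nabla}\eta|^2},
\]
the quotient rule gives $\delta B$ as a sum of three types of terms: (a) $(1+|\bar{\nabla}\eta|^2)^{-1}\,\delta(G(\eta)\psi)$; (b) $(1+|\bar{\nabla}\eta|^2)^{-1}(\delta\bar{\nabla}\psi\cdot\bar{\nabla}\eta+\bar{\nabla}\psi\cdot\delta\bar{\nabla}\eta)$, where $\delta\bar{\nabla}$ only sees the $\eta^{-1}$ factor in the $\theta$-component; and (c) a term proportional to $\delta(1+|\bar{\nabla}\eta|^2) = 2\bar{\nabla}\eta\cdot\delta\bar{\nabla}\eta$. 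For (a) I invoke the shape derivative formula \eqref{eq-pre:shape-deri}:
\[
\delta(G(\eta)\psi) = -G(\eta)(B\delta\eta) - \nabla_{\theta,z}\cdot\Bigl(\bigl(\tfrac{V^{\theta}}{\eta}e_\theta+V^z e_z\bigr)\delta\eta\Bigr) - \frac{B\delta\eta}{\eta}.
\]
Under our hypotheses, $B,V\in H^{s-1}\hookrightarrow L^\infty$ (Lemma \ref{lem-paralin:esti-B-V-psi}), and $\delta\eta\in H^{s_0+\frac{1}{2}}$, so the products $B\delta\eta$ and $(V/\eta)\delta\eta$ lie in $H^{s_0+\frac{1}{2}}$ by Corollary \ref{cor-para:product-law} (with norm $\le C(\|\eta\|_{H^{s+\frac{1}{2}}_R},\|\psi\|_{H^s})\|\delta\eta\|_{H^{s_0+\frac{1}{2}}}$). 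Then Corollary \ref{cor-pre:bound-DtN} and the tangential divergence give an $H^{s_0-\frac{1}{2}}\subset H^{s_0-1}$ bound. For (b) and (c), $\delta\bar{\nabla}\eta$ involves $\nabla_w\delta\eta\in H^{s_0-\frac{1}{2}}$ and $\eta^{-2}\delta\eta\,\nabla_w\eta\in H^{s_0-\frac{1}{2}}$, which multiplied by the $H^{s-1}$ factors $\bar{\nabla}\psi$, $\bar{\nabla}\eta$, $(1+|\bar{\nabla}\eta|^2)^{-1}$ again yield $H^{s_0-1}$ with the required norm.

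For $\delta V$, I differentiate $V=\bar{\nabla}\psi - B\bar{\nabla}\eta$ directly:
\[
\delta V = \delta\bar{\nabla}\psi - (\delta B)\,\bar{\nabla}\eta - B\,\delta\bar{\nabla}\eta.
\]
The first and third terms are handled as above; the middle term is bounded via the estimate on $\delta B$ just obtained, using $\bar{\nabla}\eta\in H^{s-\frac{1}{2}-}$ and Corollary \ref{cor-para:product-law}.

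The main obstacle is the first term in the shape derivative, namely $G(\eta)(B\delta\eta)$: it is the only contribution that forces us to pay a full derivative, and it is the reason why the estimate needs $\delta\eta\in H^{s_0+\frac{1}{2}}$ rather than $H^{s_0}$. The key check is that the product $B\delta\eta$ lies in $H^{s_0+\frac{1}{2}}$ with the tame constant $C(\|\eta\|_{H^{s+\frac{1}{2}}_R},\|\psi\|_{H^s})$; since $s>3$ gives $s-1>\max(s_0+\frac{1}{2}-s,1)$, the product estimate of Corollary \ref{cor-para:product-law} closes without loss, and Corollary \ref{cor-pre:bound-DtN} finishes the bound. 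The quadratic term $\nabla_{\theta,z}\cdot(\frac{V}{\eta}\delta\eta)$ is handled identically, and the lower-order $B\delta\eta/\eta$ poses no difficulty.
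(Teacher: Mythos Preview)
Your approach mirrors the paper's: both differentiate \eqref{eq-paralin:B-formula} and \eqref{eq-paralin:V-formula} and reduce the hard part to bounding $\delta(G(\eta)\psi)$ in $H^{s_0-1}$, with the remaining pieces handled by Lemma~\ref{lem-paralin:fct-of-eta-deri-in-eta} and product laws. The paper simply cites Proposition~\ref{prop-pre:shape-deri} for this bound (relying implicitly on the interior elliptic-regularity estimate \eqref{eq-pre:shape-deri-esti-in-del-varphi} for $\delta\varphi$), whereas you unpack it via the explicit formula \eqref{eq-pre:shape-deri} and estimate each term.

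There is, however, a gap in your product check. You claim $B\delta\eta\in H^{s_0+\frac{1}{2}}$, but since $B$ is only in $H^{s-1}$, the product $B\delta\eta$ lies in $H^{\min(s-1,\,s_0+\frac{1}{2})}$; for $s_0>s-\tfrac{3}{2}$ this is merely $H^{s-1}$, and after applying $G(\eta)$ one obtains $H^{s-2}$, which falls short of $H^{s_0-1}$ once $s_0>s-1$. (The inequality $s-1>\max(s_0+\tfrac{1}{2}-s,1)$ you verified is not the hypothesis required by Corollary~\ref{cor-para:product-law}; the relevant one is $s-1>s_0+\tfrac{1}{2}$.) Thus the explicit-formula route, as written, covers only $s_0\le s-1$. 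This is inconsequential for the applications in the paper (where one always has $s_0<s-\tfrac{3}{2}$), but to reach the full stated range $s_0\le s$ one should bound $\delta(G(\eta)\psi)$ via the interior argument from the proof of Proposition~\ref{prop-pre:shape-deri}: vary the elliptic equation \eqref{eq-pre:ellip} to obtain $\delta\varphi\in H^{s_0+\frac{1}{2}}(\D\times\R)$ directly, then read off $\delta(G(\eta)\psi)$ from the trace expression \eqref{eq-pre:DtN-chgt-of-var}.
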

	\begin{proof}
		Recall that, since $\delta\eta\in H^{s_0+\frac{1}{2}}$, the derivative in $\eta$ of $G(\eta)\psi$ lies in $H^{s_0-1}$ due to Proposition \ref{prop-pre:shape-deri}. The formula \eqref{eq-paralin:B-formula} allows us to estimate $\delta B$. In fact, from \eqref{eq-paralin:B-formula}, we have
		\begin{equation*}
			\delta B = \frac{\delta \left(G(\eta)\psi\right)}{1+ \left|\bar{\nabla}\eta\right|^2} + \frac{\delta\left(\bar{\nabla}\psi\cdot\bar{\nabla}\eta\right)}{1+ \left|\bar{\nabla}\eta\right|^2} + \left(G(\eta)\psi + \bar{\nabla}\psi\cdot\bar{\nabla}\eta\right)\delta\left(\left(1+ \left|\bar{\nabla}\eta\right|^2\right)^{-1}\right).
		\end{equation*}
		From $\delta\left(G(\eta)\psi\right) \in H^{s_0-1}$, the first term on the right hand side belongs to $H^{s_0-1}$, while the other terms belong to the same space $H^{s_0-1}$ due to Lemma \ref{lem-paralin:fct-of-eta-deri-in-eta} and Corollary \ref{cor-para:product-law}, which completes the proof for $\delta B$. As for $\delta V$, the formula \eqref{eq-paralin:V-formula} implies
		\begin{equation*}
			\delta V = \delta\left( \bar{\nabla}\psi \right) + \delta B \bar{\nabla}\eta + B \delta\left( \bar{\nabla}\eta \right).
		\end{equation*}
		The desired estimate then follows from that of $\delta B$ and Lemma \ref{lem-paralin:fct-of-eta-deri-in-eta}.
	\end{proof}
	
	\begin{lemma}\label{lem-paralin:sym-deri-in-eta}
		Let $\eta\in H^{s+\frac{1}{2}}_R$ and $\delta\eta\in H^{s_0+\frac{1}{2}}$ with $s>3$ and $\frac{3}{2}<s_0\le s$. Let $a = a^{(m)} + a^{(m-1)}$ be a symbol with $m\in\R$ and 
		\begin{equation*}
			a^{(m)} = F(\eta,\nabla_{\theta,z}\eta;\xi),\ \ a^{(m-1)} = \sum_{|\alpha|\le 2} G_{\alpha}(\eta,\nabla_{\theta,z}\eta;\xi)\partial_{\theta,z}^\alpha\eta,
		\end{equation*}
		where $F(x,u,v;\xi)$ and $G_\alpha(x,u,v;\xi)$'s are smooth function of order $m$ and $m-1$ in $\xi$, respectively. Namely, for all $\beta\in\N^3$ and $\gamma\in\N^2$
		\begin{equation*}
			\sup_{x,u,v}\left| \partial_{x,u,v}^\beta \partial_\xi^\gamma F(x,u,v;\xi) \right| \lesssim \langle\xi\rangle^{m-|\gamma|},\ \ \sup_{x,u,v}\left| \partial_{x,u,v}^\beta \partial_\xi^\gamma G_\alpha(x,u,v;\xi) \right| \lesssim \langle\xi\rangle^{m-1-|\gamma|}.
		\end{equation*}
		Then $T_a$ and $T_{\delta a}$ are of order $m$.
	\end{lemma}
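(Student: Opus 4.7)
The plan is to reduce both claims to the general paradifferential boundedness theorem (Proposition \ref{prop-para:paraprod-bound}) by verifying that $a$ and $\delta a$ each decompose into pieces whose symbols lie in Sobolev-based symbol classes with positive H\"older index in $w$. The key inputs are Proposition \ref{prop-para:paralin} (composition with smooth functions), Corollary \ref{cor-para:product-law} (product law in Sobolev spaces), and Sobolev embedding in dimension two, for which $d/2 = 1$.

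For $T_a$, one checks that $a$ belongs by construction to $\Sigma^m$. Under the assumption $\eta\in H^{s+\frac12}_R$ with $s>3$, Proposition \ref{prop-para:paralin} applied to the smooth map $F(\cdot,\cdot;\xi)$ gives $a^{(m)}(\cdot;\xi) = F(\eta,\nabla_{\theta,z}\eta;\xi) \in H^{s-\frac12}$ in the $w$-variable (with the prescribed $\xi$-derivative bounds), which embeds into $C^{3/2+}$, hence $a^{(m)}\in\Gamma^m_{3/2+}$; an analogous argument on the factors $G_\alpha$ together with $\partial_{\theta,z}^\alpha\eta \in H^{s-\frac32}\subset L^\infty$ for $|\alpha|\le 2$ gives $a^{(m-1)}\in\Gamma^{m-1}_{1/2+}$. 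This is precisely the content of Definition \ref{def-paralin:homo-sym}, so Proposition \ref{prop-para:paraprod-bound} produces the order-$m$ bound for $T_a$.

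For $T_{\delta a}$, the chain rule yields
\[
\delta a^{(m)} = F_x(\eta,\nabla_{\theta,z}\eta;\xi)\,\delta\eta + \nabla_{u,v}F(\eta,\nabla_{\theta,z}\eta;\xi)\cdot\nabla_{\theta,z}\delta\eta,
\]
together with a longer expression for $\delta a^{(m-1)}$ that additionally contains the terms $\sum_{|\alpha|\le 2} G_\alpha(\eta,\nabla_{\theta,z}\eta;\xi)\,\partial_{\theta,z}^\alpha\delta\eta$ coming from the explicit occurrence of $\partial_{\theta,z}^\alpha\eta$ in the definition of $a^{(m-1)}$. Each resulting coefficient (as a function of $w$) is the product of an $H^{s-\frac12}$ factor built from $F$, $G_\alpha$ and their derivatives via Proposition \ref{prop-para:paralin}, with a factor of the form $\partial_{\theta,z}^\beta\delta\eta\in H^{s_0+\frac12-|\beta|}$. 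When $|\beta|\le 1$, the second factor lies in $L^\infty\cap C^\rho$ with $\rho>0$, and Corollary \ref{cor-para:product-law} places these contributions in $\Gamma^m_\rho + \Gamma^{m-1}_\rho$ for some $\rho>0$, giving operators of order at most $m$.

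The only delicate case, and the point where the strict inequality $s_0>\tfrac32$ is used, is the term $G_\alpha(\eta,\nabla_{\theta,z}\eta;\xi)\,\partial_{\theta,z}^\alpha\delta\eta$ with $|\alpha|=2$: here $\partial_{\theta,z}^\alpha\delta\eta\in H^{s_0-\frac32}$ is not bounded in general. I would handle it through Bony's decomposition
\[
G_\alpha\,\partial_{\theta,z}^\alpha\delta\eta = T_{G_\alpha}\partial_{\theta,z}^\alpha\delta\eta + T_{\partial_{\theta,z}^\alpha\delta\eta}G_\alpha + R(G_\alpha,\partial_{\theta,z}^\alpha\delta\eta),
\]
combined with the standard fact that a paraproduct by an $H^{s_0-3/2}$ symbol loses at most $\tfrac52-s_0 <1$ Sobolev derivatives. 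Consequently the associated paradifferential operator has effective order at most $(m-1)+(\tfrac52-s_0)$, strictly smaller than $m$ precisely because $s_0>\tfrac32$. Summing all contributions gives that $T_{\delta a}$ is of order at most $m$, concluding the proof.
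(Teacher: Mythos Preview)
Your argument is essentially correct and follows the same route as the paper: decompose $\delta a$ by the chain rule, place the easy pieces in $\Gamma^m_{0+}+\Gamma^{m-1}_{0+}$, and treat the dangerous term $G_\alpha(\eta,\nabla\eta;\xi)\,\partial^\alpha\delta\eta$ with $|\alpha|=2$ separately by exploiting that its $w$-regularity $H^{s_0-3/2}$ costs only $\tfrac52-s_0<1$ derivatives, so that the resulting operator has order at most $(m-1)+(\tfrac52-s_0)<m$.

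Two minor corrections. First, the Bony decomposition you write for the \emph{symbol} $G_\alpha\cdot\partial^\alpha\delta\eta$ in the $w$-variable is a detour that does not actually help: the paraproduct pieces $T_{G_\alpha}\partial^\alpha\delta\eta$ and $R(G_\alpha,\partial^\alpha\delta\eta)$ still have only $H^{s_0-3/2}$ regularity in $w$, so nothing is gained by splitting. What does the work is exactly what you call the ``standard fact'': the paper invokes Corollary~\ref{cor-para:product-law} to place the full product in $H^{s_0-3/2}$ (uniformly in $\xi$, with the correct $\langle\xi\rangle^{m-1-|\gamma|}$ decay), and then applies Proposition~\ref{prop-para:paradiff-symbol-class-sobo} directly to conclude the order bound $m-(s_0-\tfrac32)$ when $s_0-\tfrac32<1$ (and order $m-1$ when $s_0-\tfrac32>1$). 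Second, the ``general paradifferential boundedness theorem'' you cite at the start is Proposition~\ref{prop-para:paradiff-bound}, not Proposition~\ref{prop-para:paraprod-bound} (the latter is only for paraproducts).
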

	\begin{proof}
		Clearly $a\in\Gamma^m_{3/2+} + \Gamma^{m-1}_{1/2+}$ and thus $T_a$ is of order $m$. As for $T_{\delta a}$, a direct calculus gives that
		\begin{equation*}
			\delta a = \delta \left( F(\eta,\nabla_{w}\eta;\xi) \right) + \sum_{|\alpha|\le 2} \delta\left(G_{\alpha}(\eta,\nabla_{w}\eta;\xi)\right)\partial_{w}^\alpha\eta + \sum_{|\alpha|\le 2} G_{\alpha}(\eta,\nabla_{w}\eta;\xi)\partial_{w}^\alpha\delta\eta.
		\end{equation*}
		By Lemma \ref{lem-paralin:fct-of-eta-deri-in-eta} and $s>3$, all the terms on the right hand side belong to $\Gamma^m_{0+}$ or $\Gamma^{m-1}_{0+}$ except for those $G_{\alpha}(\eta,\nabla_{w}\eta;\xi)\partial_{\theta,z}^\alpha\delta\eta$ with $|\alpha|=2$. To deal with these terms, we observe that $G_{\alpha}(\eta,\nabla_{w}\eta;\xi)$ has $H^{s-\frac{1}{2}}$ regularity in $w$ variable while the low regularity comes from $\partial_{\theta,z}^\alpha\delta\eta \in H^{s_0-\frac{3}{2}}$. By applying Corollary \ref{cor-para:product-law}, it is easy to check that, for all $|\xi|>\frac{1}{2}$ and $\gamma\in\N^2$,
		\begin{equation*}
			\|\partial_\xi^\gamma G_{\alpha}(\eta,\nabla_{w}\eta;\xi)\partial_{\theta,z}^\alpha\delta\eta\|_{H^{s_0-\frac{3}{2}}} \lesssim \|G_{\alpha}(\eta,\nabla_{w}\eta;\xi)\|_{H^{s-\frac{1}{2}}} \|\delta\eta\|_{H^{s_0+\frac{1}{2}}} \langle\xi\rangle^{m-1-|\gamma|}.
		\end{equation*}
		Then by Proposition \ref{prop-para:paradiff-symbol-class-sobo}, the associated paradifferential operators is of order $m-1$ when $s_0-\frac{3}{2}>1$, or
		\begin{equation*}
			m-1 + 1 - \left( s_0-\frac{3}{2} \right) = m - \left( s_0-\frac{3}{2} \right) < m,
		\end{equation*}
		when $s_0-\frac{3}{2}<1$ (in the critical case $s_0-\frac{3}{2}=1$, one may replace $s_0-\frac{3}{2}$ by $s_0-\frac{3}{2}-$), which completes the proof.
	\end{proof}
	
	Now we are ready to prove \eqref{eq-paralin:f-1-deri-in-eta}. As before, during this proof, we denote
	\begin{equation}\label{eq-paralin:equi-f-1-deri-in-eta}
		u \sim v \Leftrightarrow \|u-v\|_{H^{s_0+\frac{1}{2}}} \le C\left(\|\eta\|_{H_R^{s+\frac{1}{2}}}, \|\psi\|_{H^{s}}\right) \|\delta\eta\|_{H^{s_0+\frac{1}{2}}}.
	\end{equation} 
	The main idea of the proof is to use \eqref{eq-paralin:paralin-DtN} together with shape derivative formula \eqref{eq-pre:shape-deri} to obtain an explicit expression of $\delta f_1$, where most terms can be treated separately and the remaining ones (see \eqref{eq-paralin:f-1-deri-in-eta-reduce}) will be proved to admit some cancellation thanks to an application of paralinearization \eqref{eq-paralin:paralin-DtN} with $\psi$ replaced by $B\delta\eta$ and $B$.
	
	From \eqref{eq-paralin:paralin-DtN}, the derivative of $f_1$ in $\eta$ reads
	\begin{align*}
		\delta f_1 =& \delta \left( G(\eta)\psi - T_\lambda U + T_V \cdot \bar{\nabla}\eta \right) \\
		=& \delta G(\eta)\psi - T_{\delta \lambda} U - T_\lambda \delta U + T_{\delta V} \cdot \bar{\nabla}\eta + T_V \cdot \delta \left( \bar{\nabla}\eta \right) \\
		=& \delta G(\eta)\psi - T_{\delta \lambda} U - T_\lambda \delta U + T_{\delta V} \cdot \bar{\nabla}\eta + T_V \cdot \left( -\frac{\eta_\theta}{\eta^2}\delta\eta, 0 \right) + T_V \cdot \bar{\nabla}\delta\eta.
	\end{align*}
	By \eqref{eq-pre:shape-deri} and definition \eqref{eq-paralin:good-unknown} of $U$, the right hand side can be further written as
	\begin{align*}
		\delta f_1 =& - G(\eta)\left( B\delta\eta \right) - \nabla_{w} \cdot \left( \frac{V^\theta}{\eta}\delta\eta, V^z\delta\eta \right) - \frac{B\delta\eta}{\eta} - T_{\delta \lambda} U + T_\lambda \delta \left( T_B\eta \right) \\
		& + T_{\delta V} \cdot \bar{\nabla}\eta + T_V \cdot \left( -\frac{\eta_\theta}{\eta^2}\delta\eta, 0 \right) + T_V \cdot \bar{\nabla}\delta\eta \\
		=& - G(\eta)\left( B\delta\eta \right) - \bar{\nabla}\cdot(V\delta\eta) + \frac{\eta_\theta}{\eta^2}V^\theta\delta\eta - \frac{B\delta\eta}{\eta} - T_{\delta \lambda} U + T_\lambda T_{\delta B}\eta + T_\lambda T_B\delta\eta \\
		& + T_{\delta V} \cdot \bar{\nabla}\eta + T_V \cdot \left( -\frac{\eta_\theta}{\eta^2}\delta\eta, 0 \right) + T_V \cdot \bar{\nabla}\delta\eta.
	\end{align*}
	
	\begin{lemma}\label{lem-paralin:f-1-deri-in-eta-S1}
		Under the hypotheses of Lemma \ref{lem-paralin:source-deri-in-eta}, we have
		\begin{align}
			\frac{\eta_\theta}{\eta^2}V^\theta\delta\eta,\ \frac{B\delta\eta}{\eta},\  T_V \cdot \left( -\frac{\eta_\theta}{\eta^2}\delta\eta, 0 \right) & \sim 0, \label{eq-paralin:f-1-deri-in-eta-S1-1} \\
			T_\lambda T_{\delta B}\eta,\ T_{\delta V} \cdot \bar{\nabla}\eta & \sim 0, \label{eq-paralin:f-1-deri-in-eta-S1-2} \\
			T_{\delta\lambda} U & \sim 0, \label{eq-paralin:f-1-deri-in-eta-S1-3}
		\end{align}
		in the sense of \eqref{eq-paralin:equi-f-1-deri-in-eta}
	\end{lemma}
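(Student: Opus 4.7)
The plan is to verify each equivalence by a direct application of the paradifferential toolbox, exploiting that $\delta\eta$ enters only linearly at regularity $H^{s_0+\frac{1}{2}}$, while the factors built from $\eta$ and $\psi$ are allowed to carry their full Sobolev regularity into the implicit constant $C$.

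Part (1) is purely algebraic: each of the three terms is a product (or paraproduct) of $\delta\eta$ against factors formed from $\eta,\eta_\theta,B,V$, all of which lie in $H^{s-1-}\hookrightarrow W^{1,\infty}$ by Lemma \ref{lem-paralin:esti-B-V-psi} and $s>3$. Corollary \ref{cor-para:product-law}, applied with the high index $s-1-$ carrying the factors formed from $\eta,B,V$ and the low index $s_0+\frac{1}{2}$ carried by $\delta\eta$, yields the required bound, the admissibility following from $s_0+\frac{1}{2}<s-1$, itself a consequence of $s_0<s-\frac{3}{2}$. The term $T_V\cdot(-\tfrac{\eta_\theta}{\eta^2}\delta\eta,0)$ is treated identically since $V\in L^\infty$ makes $T_V$ bounded of order $0$. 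For part (2), Lemma \ref{lem-paralin:BV-deri-in-eta} gives $\|\delta B\|_{H^{s_0-1}}+\|\delta V\|_{H^{s_0-1}}\le C\|\delta\eta\|_{H^{s_0+\frac{1}{2}}}$, and a paraproduct estimate (Proposition \ref{prop-para:paraprod-bound}) then places $T_{\delta B}\eta$ in a Sobolev space of index near $s+s_0-\frac{3}{2}-$, the high regularity of $\eta\in H^{s+\frac{1}{2}-}_R$ compensating for the weak regularity of $\delta B$; applying the order-$1$ operator $T_\lambda$ costs one derivative, yielding $H^{s+s_0-\frac{5}{2}-}$, which embeds into $H^{s_0+\frac{1}{2}}$ thanks to $s>3$. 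The term $T_{\delta V}\cdot\bar\nabla\eta$ is handled identically, using $\bar\nabla\eta\in H^{s-\frac{1}{2}-}$.

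Part (3) is the crux; I shall show that $T_{\delta\lambda}$ is of order $1$ with operator norm controlled by $C(\|\eta\|_{H^{s+\frac{1}{2}}_R})\|\delta\eta\|_{H^{s_0+\frac{1}{2}}}$. Granting this, since $U\in H^s$ with norm bounded via \eqref{eq-paralin:esti-good-unknown}, one obtains $\|T_{\delta\lambda}U\|_{H^{s-1}}\le C\|\delta\eta\|_{H^{s_0+\frac{1}{2}}}$, and the Sobolev embedding $H^{s-1}\hookrightarrow H^{s_0+\frac{1}{2}}$ (valid because $s_0<s-\frac{3}{2}$) finishes the argument. To establish the operator-norm bound, I split $\delta\lambda=\delta\lambda^{(1)}+\delta\lambda^{(0)}$ by differentiating \eqref{eq-paralin:def-lambda} in $\eta$, obtaining a finite sum of terms of the form $F(\eta,\nabla_w\eta;\xi)\partial_w^\alpha\delta\eta$ with $|\alpha|\le 1$ contributing to $\delta\lambda^{(1)}$ (homogeneous of degree $1$ in $\xi$) and $|\alpha|\le 2$ contributing to $\delta\lambda^{(0)}$ (degree $0$). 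Since $s_0+\frac{1}{2}>2$ in dimension two, both $\delta\eta$ and $\nabla_w\delta\eta$ lie in $W^{1,\infty}$, so the $\delta\lambda^{(1)}$ contributions fit into $\Gamma^1_{1+}$ with seminorms bounded by $\|\delta\eta\|_{H^{s_0+\frac{1}{2}}}$. The subprincipal part $\delta\lambda^{(0)}$ is the main obstacle: its worst second-derivative piece lies only in $H^{s_0-\frac{3}{2}}$, below the Hölder threshold in dimension two, so the naive Hölder-based paradifferential calculus breaks down; one must instead appeal to the Sobolev version of Proposition \ref{prop-para:paradiff-symbol-class-sobo} together with Corollary \ref{cor-para:product-law}, using that the symbol is only of degree $0$ in $\xi$ to recover enough margin for the target space $H^{s_0+\frac{1}{2}}$ after the slack $s>3$ absorbs a small $\epsilon$-loss.
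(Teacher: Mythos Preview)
Your argument for \eqref{eq-paralin:f-1-deri-in-eta-S1-1} and \eqref{eq-paralin:f-1-deri-in-eta-S1-2} is correct and matches the paper's proof essentially line for line: product estimates against $\delta\eta\in H^{s_0+\frac12}$ for the first group, and Lemma~\ref{lem-paralin:BV-deri-in-eta} followed by a paraproduct bound $T_{\delta B}\eta\in H^{s_0+\frac32}$ (then $T_\lambda$ costs one derivative) for the second.

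For \eqref{eq-paralin:f-1-deri-in-eta-S1-3} the paper is much shorter than your sketch: it simply invokes Lemma~\ref{lem-paralin:sym-deri-in-eta}, which was proved earlier and says exactly that for $a\in\Sigma^m$ the operator $T_{\delta a}$ is again of order $m$, with norm linear in $\|\delta\eta\|_{H^{s_0+\frac12}}$. Combined with $U\in H^s\subset H^{s_0+\frac32}$ this gives $T_{\delta\lambda}U\in H^{s_0+\frac12}$ in one line. What you wrote is essentially an inline re-derivation of that lemma, so the approach is the same, but two points deserve correction. First, your claim that $\nabla_w\delta\eta\in W^{1,\infty}$ is false when $\tfrac32<s_0\le\tfrac52$: one only has $\nabla_w\delta\eta\in H^{s_0-\frac12}\hookrightarrow L^\infty$, placing $\delta\lambda^{(1)}$ in $\Gamma^1_{0+}$ rather than $\Gamma^1_{1+}$---still enough for order~$1$, but the stated regularity is wrong. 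Second, your treatment of $\delta\lambda^{(0)}$ ends vaguely (``the slack $s>3$ absorbs a small $\epsilon$-loss''); the precise statement, as in the proof of Lemma~\ref{lem-paralin:sym-deri-in-eta}, is that the $|\alpha|=2$ piece has $H^{s_0-\frac32}$ coefficients and degree~$0$ in $\xi$, so by Proposition~\ref{prop-para:paradiff-symbol-class-sobo} it contributes an operator of order $\max\bigl(0-1,\,0+1-(s_0-\tfrac32)\bigr)<1$, hence strictly subprincipal. No further slack from $s$ is needed at this step.
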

	\begin{proof}[Proof of Lemma \ref{lem-paralin:f-1-deri-in-eta-S1}]
		The proof of \eqref{eq-paralin:f-1-deri-in-eta-S1-1} is direct. Since $B,V\in H^{s-1}$, we are able to deduce the following estimates from Proposition \ref{prop-para:paraprod-bound}, Corollary \ref{cor-para:product-law}, and Proposition \ref{prop-para:paralin},
		\begin{align*}
			&\|\frac{\eta_\theta}{\eta^2}V^\theta\delta\eta\|_{H^{s_0+\frac{1}{2}}} \lesssim \|\frac{\eta_\theta}{\eta^2}V^\theta\|_{H^{s-1}} \|\delta\eta\|_{H^{s_0+\frac{1}{2}}} \le C\left(\|\eta\|_{H_R^{s+\frac{1}{2}}}, \|\psi\|_{H^{s}}\right) \|\delta\eta\|_{H^{s_0+\frac{1}{2}}}, \\
			&\|\frac{B\delta\eta}{\eta}\|_{H^{s_0+\frac{1}{2}}} \lesssim \|\frac{B}{\eta}\|_{H^{s-1}} \|\delta\eta\|_{H^{s_0+\frac{1}{2}}} \le C\left(\|\eta\|_{H_R^{s+\frac{1}{2}}}, \|\psi\|_{H^{s}}\right) \|\delta\eta\|_{H^{s_0+\frac{1}{2}}}, \\
			&\|T_V \cdot \left( -\frac{\eta_\theta}{\eta^2}\delta\eta, 0 \right)\|_{H^{s_0+\frac{1}{2}}} \lesssim \|V\|_{H^{1+}} \|\frac{\eta_\theta}{\eta^2}\delta\eta\|_{H^{s_0+\frac{1}{2}}} \lesssim \|V\|_{H^{1+}} \|\frac{\eta_\theta}{\eta^2}\|_{H^{s_0+\frac{1}{2}}} \|\delta\eta\|_{H^{s_0+\frac{1}{2}}} \\
			& \hspace{18em}\le C\left(\|\eta\|_{H_R^{s+\frac{1}{2}}}, \|\psi\|_{H^{s}}\right) \|\delta\eta\|_{H^{s_0+\frac{1}{2}}}.
		\end{align*}
		
		To prove \eqref{eq-paralin:f-1-deri-in-eta-S1-2}, we recall that, due to Lemma \ref{lem-paralin:BV-deri-in-eta}, $\delta B, \delta V\in H^{s_0-1}$ As a result,
		\begin{align*}
			\| T_\lambda T_{\delta B}\eta \|_{H^{s_0+\frac{1}{2}}} \le& C\left(\|\eta\|_{H_R^{s+\frac{1}{2}}}\right) \|T_{\delta B}\eta\|_{H^{s_0+\frac{3}{2}}} \lesssim \|\delta B\|_{H^{s_0-1}} \|\eta\|_{H^{\max(\frac{7}{2}+,s_0+\frac{3}{2})}} \\
			\le& C\left(\|\eta\|_{H_R^{s+\frac{1}{2}}}, \|\psi\|_{H^{s}}\right) \|\delta\eta\|_{H^{s_0+\frac{1}{2}}}, \\
			\| T_{\delta V} \cdot \bar{\nabla}\eta \|_{H^{s_0+\frac{1}{2}}} \lesssim& \|\delta V\|_{H^{s_0-1}} \|\bar{\nabla}\eta\|_{H^{\max(\frac{5}{2},s_0+\frac{1}{2})+}} \le C\left(\|\eta\|_{H_R^{s+\frac{1}{2}}}, \|\psi\|_{H^{s}}\right) \|\delta\eta\|_{H^{s_0+\frac{1}{2}}}.
		\end{align*}
		
		As for the last equivalence \eqref{eq-paralin:f-1-deri-in-eta-S1-3}, it is no more than a consequence of $U\in H^{s} \subset H^{s_0+\frac{3}{2}}$ and $\lambda\in \Sigma^{1}$ with Lemma \ref{lem-paralin:sym-deri-in-eta}.
	\end{proof}
	
	It remains to prove \eqref{eq-paralin:f-1-deri-in-eta}, namely that
	\begin{equation}\label{eq-paralin:f-1-deri-in-eta-reduce}
		\delta f_1 \sim - G(\eta)\left( B\delta\eta \right) - \bar{\nabla}\cdot\left( V\delta\eta \right) + T_\lambda T_B\delta\eta + T_V \cdot \bar{\nabla}\delta\eta
	\end{equation}
	is equivalent to zero in the sense of \eqref{eq-paralin:equi-f-1-deri-in-eta}. Note that $\delta\eta$ is assumed to be in $H^{s+\frac{1}{2}}$, permitting us to apply the paralinearization of Dirichlet-to-Neumann operator (Proposition \ref{prop-paralin:paralin-DtN}). More precisely, we have
	\begin{align*}
		&\left\| G(\eta)\left( B\delta\eta \right) - T_\lambda \left(B\delta\eta - T_{\mathcal{B}(\eta)(B\delta\eta)}\eta\right) + T_{\mathcal{V}(\eta)(B\delta\eta)}\cdot \bar{\nabla}\eta \right\|_{H^{s_0+\frac{1}{2}}} \\
		&\le C\left(\|\eta\|_{H^{s+\frac{1}{2}}_R}\right) \|B\delta\eta\|_{H^{s_0}} \le C\left(\|\eta\|_{H_R^{s+\frac{1}{2}}}, \|\psi\|_{H^{s}}\right) \|\delta\eta\|_{H^{s_0+\frac{1}{2}}}.
	\end{align*}
	Recall that linear operators $\mathcal{B}(\eta)$ and $\mathcal{V}(\eta)$ are defined in \eqref{eq-paralin:def-B-V-op} with estimate \eqref{eq-paralin:esti-B-V-op}. Then an application of Lemma \ref{lem-paralin:esti-B-V-psi}, together with $B\delta\eta\in H^{s_0+\frac{1}{2}}$, gives the following estimates
	\begin{align*}
		\left\| T_{\mathcal{V}(\eta)(B\delta\eta)}\cdot \bar{\nabla}\eta \right\|_{H^{s_0+\frac{1}{2}}} \le& \| \mathcal{V}(\eta)(B\delta\eta) \|_{H^{1+}} \left\| \bar{\nabla}\eta \right\|_{H^{s_0+\frac{1}{2}}} \le C\left(\|\eta\|_{H_R^{s+\frac{1}{2}}}, \|\psi\|_{H^{s}}\right) \|\delta\eta\|_{H^{s_0+\frac{1}{2}}}, \\
		\left\| T_{\mathcal{B}(\eta)(B\delta\eta)} \eta \right\|_{H^{s_0+\frac{3}{2}}} \le& \| \mathcal{B}(\eta)(B\delta\eta) \|_{H^{s_0-\frac{1}{2}}} \left\| \eta \right\|_{H^{s_0+\frac{3}{2}}} \le C\left(\|\eta\|_{H_R^{s+\frac{1}{2}}}, \|\psi\|_{H^{s}}\right) \|\delta\eta\|_{H^{s_0+\frac{1}{2}}}.
	\end{align*}
	Therefore, the following paradifferential calculus holds:
	\begin{align*}
		\delta f_1 \sim& -T_\lambda\left( B\delta\eta \right) + T_\lambda T_B\delta\eta - (\bar{\nabla}\cdot V) \delta\eta - V\cdot\bar{\nabla}\delta\eta + T_V \cdot \bar{\nabla}\delta\eta \\
		=& -T_\lambda \left( T_{\delta\eta}B + R(B,\delta\eta) \right) - (\bar{\nabla}\cdot V) \delta\eta - \left( T_{\bar{\nabla}\delta\eta}\cdot V + R(V,\bar{\nabla}\delta\eta) \right) \\
		\sim& -T_\lambda T_{\delta\eta}B + (G(\eta)B) \delta\eta,
	\end{align*}
	where the last equivalence is from Proposition \ref{prop-pre:cancellation} and the following estimate (recall that we assume $s>s_0+3/2$)
	\begin{align*}
		&\left\| R(B,\delta\eta) \right\|_{H^{s_0+\frac{3}{2}}} + \left\| T_{\bar{\nabla}\delta\eta}V \right\|_{H^{s_0+\frac{1}{2}}} + \left\| R(V,\bar{\nabla}\delta\eta) \right\|_{H^{s_0+\frac{1}{2}}} \\
		\lesssim& \|\delta\eta\|_{H^{s_0+\frac{1}{2}}} \|B\|_{H^{s-1}} +  \|\bar{\nabla}\delta\eta\|_{H^{s_0-\frac{1}{2}}} \|V\|_{H^{s-1}} \\
		\le& C\left(\|\eta\|_{H_R^{s+\frac{1}{2}}}, \|\psi\|_{H^{s}}\right) \|\delta\eta\|_{H^{s_0+\frac{1}{2}}}.
	\end{align*}
	Our problem is now reduced to
	\begin{equation*}
		 (G(\eta)B) \delta\eta - T_\lambda T_{\delta\eta}B \sim 0.
	\end{equation*}
	
	To prove this, we apply again Proposition \ref{prop-paralin:paralin-DtN} with $\psi$ replaced by $B\in H^{s-1}$, namely
	\begin{equation*}
		G(\eta)B \sim T_{\lambda}\left( B - T_{\mathcal{B}(\eta)B}\eta \right) - T_{\mathcal{V}(\eta)B} \cdot \bar{\nabla}\eta.
	\end{equation*}
	Recall that linear operators $\mathcal{B}(\eta)$ and $\mathcal{V}(\eta)$ are defined in \eqref{eq-paralin:def-B-V-op} and the estimate \eqref{eq-paralin:esti-B-V-op} ensures that $\mathcal{B}(\eta)B,\mathcal{V}(\eta)B \in H^{s-2}$. Since $\eta\in H^{s+\frac{1}{2}}_R$ and $\bar{\nabla}\eta\in H^{s-\frac{1}{2}}$ due to Corollary \ref{cor-para:product-law}, one can apply Proposition \ref{prop-para:paraprod-bound} to show that $T_\lambda T_{\mathcal{B}(\eta)B}\eta$ and $T_{\mathcal{V}(\eta)B} \cdot \bar{\nabla}\eta$ belong to $H^{s-\frac{1}{2}}$, which implies that
	\begin{equation*}
		G(\eta)B \sim T_\lambda B,
	\end{equation*}
	since $s-1/2>s_0+1/2$ (recall that in our assumption $s>s_0+3/2$). Consequently,
	\begin{equation*}
		(G(\eta)B) \delta\eta \sim (T_\lambda B) \delta\eta = T_{\delta\eta}T_\lambda B + T_{T_\lambda B} \delta\eta + R(T_\lambda B, \delta\eta) \sim T_{\delta\eta}T_\lambda B,
	\end{equation*}
	where the last equivalence is due to
	\begin{align*}
		&\left\| T_{T_\lambda B} \delta\eta \right\|_{H^{s_0+\frac{1}{2}}} + \left\| R(T_\lambda B, \delta\eta) \right\|_{H^{s_0+\frac{1}{2}}} \lesssim \|T_\lambda B\|_{H^{s-2}} \|\delta\eta\|_{H^{s_0+\frac{1}{2}}} \\ 
		&\hspace{4em}\le C\left(\|\eta\|_{H_R^{s+\frac{1}{2}}}\right) \|B\|_{H^{s-1}} \|\delta\eta\|_{H^{s_0+\frac{1}{2}}} \le C\left(\|\eta\|_{H_R^{s+\frac{1}{2}}}, \|\psi\|_{H^{s}}\right) \|\delta\eta\|_{H^{s_0+\frac{1}{2}}}.
	\end{align*}
	Thus,
	\begin{equation*}
		G(\eta)B \delta\eta - T_\lambda T_{\delta\eta}B \sim T_{\delta\eta}T_\lambda B- T_\lambda T_{\delta\eta}B = [T_{\delta\eta},T_{\lambda^{(1)}}] B + [T_{\delta\eta},T_{\lambda^{(0)}}] B.
	\end{equation*}
	Since $\delta\eta \in H^{s_0+\frac{1}{2}} \subset \Gamma_{1+}^0$, $\lambda^{(1)} \in \Gamma^1_{3/2+}$, and $\lambda^{(0)}\in\Gamma^0_{1/2+}$, the commutator estimate (Corollary \ref{cor-para:commu-esti}) implies that $[T_{\delta\eta},T_{\lambda^{(1)}}]$ and $[T_{\delta\eta},T_{\lambda^{(0)}}]$ are both of order less than $0$, and the proof of \eqref{eq-paralin:f-1-deri-in-eta} is completed by observing that $B\in H^{s-1} \subset H^{s_0+\frac{1}{2}}$.

	\subsubsection{Derivative of $f_2$}
	
	The purpose of this part is to show that the derivative in $\eta$ of $f_2$ (defined by \eqref{eq-paralin:def-f-2}) is bounded, namely
	\begin{equation}\label{eq-paralin:f-2-deri-in-eta}
		\| \delta f_2(\eta,\psi) \|_{H^{s_0}} \le C\left(\|\eta\|_{H_R^{s+\frac{1}{2}}}, \|\psi\|_{H^{s}}\right) \|\delta\eta\|_{H^{s_0+\frac{1}{2}}}.
	\end{equation}
	
	We first claim that this estimate implies the desired result \eqref{eq-paralin:source-deri-in-eta}. In fact, by \eqref{eq-paralin:f-1-deri-in-eta} and the definition \eqref{eq-paralin:def-f} of $f$, it suffices to show 
	\begin{equation}\label{eq-paralin:2nd-compo-of-f-deri-in-eta}
		\| \delta \left( T_B f_1 + f_2 \right) \|_{H^{s_0}} \le C\left(\|\eta\|_{H_R^{s+\frac{1}{2}}}, \|\psi\|_{H^{s}}\right) \|\delta\eta\|_{H^{s_0+\frac{1}{2}}},
	\end{equation}
	which can be reduced to
	\begin{equation*}
		\| \delta \left( T_B f_1\right) \|_{H^{s_0}} \le C\left(\|\eta\|_{H_R^{s+\frac{1}{2}}}, \|\psi\|_{H^{s}}\right) \|\delta\eta\|_{H^{s_0+\frac{1}{2}}},
	\end{equation*}
	provided \eqref{eq-paralin:f-2-deri-in-eta} is correct. In \eqref{eq-paralin:f-1-deri-in-eta} and Lemma \ref{lem-paralin:BV-deri-in-eta}, we have seen $\delta B\in H^{s_0-1}$ and $\delta f_1 \in H^{s_0+\frac{1}{2}}$, while $B\in H^{s-1}$ and $f_1\in H^{s+\frac{1}{2}}$. These estimates give
	\begin{align*}
		\| \delta \left( T_B f_1\right) \|_{H^{s_0}} \le& \| T_{\delta B}f_1 \|_{H^{s_0}} + \| T_B \delta f_1 \|_{H^{s_0}} \\
		\lesssim& \| \delta B \|_{H^{s_0-1}} \| f_1 \|_{H^{\max(s_0,2+)}} + \| B \|_{H^{1+}} \| \delta f_1 \|_{H^{s_0}} \\
		\lesssim& C\left(\|\eta\|_{H_R^{s+\frac{1}{2}}}, \|\psi\|_{H^{s}}\right) \|\delta\eta\|_{H^{s_0+\frac{1}{2}}},
	\end{align*}
	which completes the proof of \eqref{eq-paralin:2nd-compo-of-f-deri-in-eta}.
	
	As before, during the proof of \eqref{eq-paralin:f-2-deri-in-eta}, we denote
	\begin{equation}\label{eq-paralin:equi-f-2-deri-in-eta}
		u \sim v \Leftrightarrow \|u-v\|_{H^{s_0}} \le C\left(\|\eta\|_{H_R^{s+\frac{1}{2}}}, \|\psi\|_{H^{s}}\right) \|\delta\eta\|_{H^{s_0+\frac{1}{2}}}.
	\end{equation}
	From definition \eqref{eq-paralin:def-f-2}, it suffices to prove that $r_2$ (defined in \eqref{eq-paralin:paralin-of-N}) and $r_3$ (defined in \eqref{eq-paralin:paralin-of-H}) are equivalent to zero. We shall begin with the estimate of $r_2$.
	
	\subsubsection{Study of $r_2$.} 
	By \eqref{eq-paralin:paralin-of-N}, $r_2$ reads
	\begin{equation*}
		r_2 = N - T_V\cdot\bar{\nabla}\psi + T_B G(\eta)\psi + T_B T_V\cdot\bar{\nabla}\eta.
	\end{equation*}
	As in Section \ref{subsubsect:paralin-N}, we shall decompose it as a function of $B$, $V$, and $G(\eta)\psi$ and apply Proposition \ref{prop-pre:shape-deri}. In this part, the shape derivative formula \eqref{eq-pre:shape-deri} is not necessary, we only need the regularity of $\delta(G(\eta)\psi)$ (see also Lemma \ref{lem-paralin:BV-deri-in-eta}). 
	
	Via the same calculus as in \eqref{eq-paralin:N-first-paralin}, we can reformulate $r_2$ as 
	\begin{align*}
		r_2 =& \frac{|V|^2 + B^2}{2} - B G(\eta)\psi - T_V\cdot\left( V + B\bar{\nabla}\eta \right) + T_B G(\eta)\psi + T_B T_V\cdot\bar{\nabla}\eta \\
		=& T_B B - T_{G(\eta)\psi}B - T_V\cdot\left( B\bar{\nabla}\eta \right) + T_B T_V\cdot\bar{\nabla}\eta + R(V,V) + R(B,B) + R(B,G(\eta)\psi) \\
		=& T_{V\cdot\bar{\nabla}\eta} B - T_V\cdot\left( B\bar{\nabla}\eta \right) + T_B T_V\cdot\bar{\nabla}\eta + R(V,V) + R(B,B) + R(B,G(\eta)\psi) \\
		=& \left( T_{V\cdot\bar{\nabla}\eta} - T_{V}\cdot T_{\bar{\nabla}\eta} \right)B + [T_B,T_V]\cdot\bar{\nabla}\eta -T_V\cdot R(B,\bar{\nabla}\eta) \\
		&\hspace{12em}+ R(V,V) + R(B,B) + R(B,G(\eta)\psi).
	\end{align*}
	And the desired result $r_2\sim0$ is no more than a consequence of the following lemma:
	\begin{lemma}\label{lem-paralin:r-2-deri-in-eta}
		Under the hypotheses of Lemma \ref{lem-paralin:source-deri-in-eta}, we have the following equivalences in the sense of \eqref{eq-paralin:equi-f-2-deri-in-eta},
		\begin{align}
			&\delta\left(\left( T_{V\cdot\bar{\nabla}\eta} - T_{V}\cdot T_{\bar{\nabla}\eta} \right)B \right) \sim 0, \label{eq-paralin:r-2-deri-in-eta-1} \\
			&\delta\left( [T_B,T_V]\cdot\bar{\nabla}\eta \right) \sim 0, \label{eq-paralin:r-2-deri-in-eta-2} \\
			&\delta\left( T_V\cdot R(B,\bar{\nabla}\eta) \right) \sim 0, \label{eq-paralin:r-2-deri-in-eta-3} \\
			&\delta\left( R(V,V) + R(B,B) + R(B,G(\eta)\psi) \right) \sim 0. \label{eq-paralin:r-2-deri-in-eta-4}
		\end{align}
	\end{lemma}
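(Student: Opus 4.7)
The plan is to apply Leibniz's rule to each of the four quantities and then exploit the fact that every resulting term has the same algebraic structure as something already estimated in Section \ref{subsubsect:paralin-N} for the paralinearization of $N$. Specifically, in Section \ref{subsubsect:paralin-N} the authors proved equivalences landing in $H^{s}$ using the regularity $B,V\in H^{s-1}$ and $\bar\nabla\eta\in H^{s-1/2}$; here we want equivalences landing in the lower space $H^{s_0}$, and we have three sources of variations at our disposal: $\delta B,\delta V\in H^{s_0-1}$ (Lemma \ref{lem-paralin:BV-deri-in-eta}), $\delta(\bar\nabla\eta)\in H^{s_0-1/2}$ (follows trivially from $\delta\eta\in H^{s_0+1/2}$ via Proposition \ref{prop-para:paralin}), and $\delta(G(\eta)\psi)\in H^{s_0-1}$ (the shape derivative, Proposition \ref{prop-pre:shape-deri}). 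Since $s_0<s-3/2$, the ``budget'' in all paradifferential estimates will go through with room to spare.

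For \eqref{eq-paralin:r-2-deri-in-eta-1} and \eqref{eq-paralin:r-2-deri-in-eta-2}, after expanding by Leibniz, each summand is either (i) a term in which the implicit variable has been replaced by its $\delta$-counterpart while the structure $(T_{uv}-T_u T_v)$ or $[T_u,T_v]$ is preserved, or (ii) a term in which one of the outer factors $B,V,\bar\nabla\eta$ is replaced by $\delta B,\delta V, \delta\bar\nabla\eta$. In case (i) one reapplies Proposition \ref{prop-para:paradiff-cal-sym} (respectively Corollary \ref{cor-para:commu-esti}) noting that the composition/commutator estimate gains one derivative; in case (ii) one just applies Proposition \ref{prop-para:paraprod-bound} directly. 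The estimate \eqref{eq-paralin:r-2-deri-in-eta-3} is analogous but even simpler since $R(\cdot,\cdot)$ gains one derivative by Proposition \ref{prop-para:paraprod-bound}, and one distributes $\delta$ across the three factors $V,B,\bar\nabla\eta$.

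For \eqref{eq-paralin:r-2-deri-in-eta-4}, the terms $\delta(R(V,V))=2R(\delta V,V)$ and $\delta(R(B,B))=2R(\delta B,B)$ are immediate from Proposition \ref{prop-para:paraprod-bound} applied to $\delta V,\delta B\in H^{s_0-1}$ paired with $B,V\in H^{s-1}\subset H^{s_0+}$. The only genuinely non-algebraic contribution is $R(B,\delta(G(\eta)\psi))+R(\delta B,G(\eta)\psi)$, for which we invoke the shape derivative formula \eqref{eq-pre:shape-deri}: it produces $\delta(G(\eta)\psi)$ of regularity $H^{s_0-1}$ with norm controlled by $\|\delta\eta\|_{H^{s_0+1/2}}$. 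Then, since $B\in H^{s-1}\subset H^{1+}$ and $s-1+s_0-1-1>s_0$ (using $s>s_0+3/2$), Proposition \ref{prop-para:paraprod-bound} closes the estimate.

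The main obstacle is not any single inequality but the bookkeeping: in each equivalence one must choose correctly which factor carries the $\delta$ and verify that the sum of Sobolev indices entering the paraproduct/remainder/composition estimate exceeds $s_0+\text{(order of the operator)}$. The gap $s-s_0>3/2$ is exactly what makes all cases work uniformly; in the critical cases, where one factor is $\delta\bar\nabla\eta\in H^{s_0-1/2}$ paired with, say, $V\in H^{s-1}$ inside a $0$-th order operator, this gap is needed to ensure the product lies in $H^{s_0}$. No new tool beyond Proposition \ref{prop-pre:shape-deri}, Lemma \ref{lem-paralin:BV-deri-in-eta}, and the standard paradifferential toolbox (Propositions \ref{prop-para:paraprod-bound}, \ref{prop-para:paradiff-cal-sym}, Corollaries \ref{cor-para:product-law}, \ref{cor-para:commu-esti}) is required.
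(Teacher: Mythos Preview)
Your proposal is correct and follows essentially the same approach as the paper: expand by Leibniz, then treat separately the terms where $\delta$ hits the argument of a structured operator (use the composition/commutator gain from Proposition \ref{prop-para:paradiff-cal-sym} or Corollary \ref{cor-para:commu-esti}, since the remaining symbols $B,V,\bar\nabla\eta$ retain their high regularity) versus the terms where $\delta$ hits a symbol (estimate each paraproduct directly via Proposition \ref{prop-para:paraprod-bound}, since $\delta B,\delta V\in H^{s_0-1}$ need not lie in any positive H\"older class). The paper makes this dichotomy explicit term by term, and for \eqref{eq-paralin:r-2-deri-in-eta-4} likewise invokes only the regularity $\delta(G(\eta)\psi)\in H^{s_0-1}$ from Proposition \ref{prop-pre:shape-deri}, not the explicit shape-derivative formula. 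One minor point: the regularity $\delta(\bar\nabla\eta)\in H^{s_0-1/2}$ is better referenced via Lemma \ref{lem-paralin:fct-of-eta-deri-in-eta} than Proposition \ref{prop-para:paralin}.
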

	\begin{proof}
		The left hand side of \eqref{eq-paralin:r-2-deri-in-eta-1} is equal to
		\begin{align*}
			\delta\left(\left( T_{V\cdot\bar{\nabla}\eta} - T_{V}\cdot T_{\bar{\nabla}\eta} \right)B \right) 
			=& T_{\delta V \cdot \bar{\nabla}\eta}B - T_{\delta V} \cdot T_{\bar{\nabla}\eta} B + T_{V \cdot \delta\left( \bar{\nabla}\eta \right) }B - T_{V} \cdot T_{\delta\left(\bar{\nabla}\eta\right)} B \\
			& + \left( T_{V\cdot\bar{\nabla}\eta} - T_{V}\cdot T_{\bar{\nabla}\eta} \right)\delta B,
		\end{align*}
		where each term on the right hand side is equivalent to zero. In fact, since $\delta B,\delta V\in H^{s_0-1}$ (see Lemma \ref{lem-paralin:BV-deri-in-eta}) and $\delta\left( \bar{\nabla}\eta \right)\in H^{s_0-\frac{1}{2}}$ (see Lemma \ref{lem-paralin:fct-of-eta-deri-in-eta}), one may obtain from Proposition \ref{prop-para:paradiff-cal-sym}, \ref{prop-para:paraprod-bound}, and Corollary \ref{cor-para:product-law} that
		\begin{align*}
			&\| T_{\delta V \cdot \bar{\nabla}\eta}B \|_{H^{s_0}} + \| T_{\delta V} \cdot T_{\bar{\nabla}\eta} B \|_{H^{s_0}} \\
			\lesssim& \|\delta V \cdot \bar{\nabla}\eta\|_{H^{s_0-1}} \|B\|_{H^{\max(s_0,2+)}} + \|\delta V \|_{H^{s_0-1}} \| \bar{\nabla}\eta\|_{H^{1+}} \|B\|_{H^{\max(s_0,2+)}} \\
			\lesssim& \|\delta V \|_{H^{s_0-1}} \| \bar{\nabla}\eta\|_{H^{\max(1+,s_0-1)}} \|B\|_{H^{\max(s_0,2+)}} \le C\left(\|\eta\|_{H_R^{s+\frac{1}{2}}}, \|\psi\|_{H^{s}}\right) \|\delta\eta\|_{H^{s_0+\frac{1}{2}}}, \\[1ex]
			&\| T_{V \cdot \delta\left( \bar{\nabla}\eta \right) }B \|_{H^{s_0}} + \| T_{V} \cdot T_{\delta\left(\bar{\nabla}\eta\right)} B \|_{H^{s_0}} \\
			\lesssim& \|V \cdot \delta\left( \bar{\nabla}\eta \right)\|_{H^{1+}} \|B\|_{H^{s_0}} + \|V \|_{H^{1+}} \| \delta\left( \bar{\nabla}\eta \right)\|_{H^{1+}} \|B\|_{H^{s_0}}  \\
			\lesssim& \|V \|_{H^{1+}} \| \delta\left( \bar{\nabla}\eta \right)\|_{H^{1+}} \|B\|_{H^{s_0}} \le C\left(\|\eta\|_{H_R^{s+\frac{1}{2}}}, \|\psi\|_{H^{s}}\right) \|\delta\eta\|_{H^{s_0+\frac{1}{2}}}, \\[1ex]
			&\| \left( T_{V\cdot\bar{\nabla}\eta} - T_{V}\cdot T_{\bar{\nabla}\eta} \right)\delta B \|_{H^{s_0}} \\
			&\le C\left(\|\eta\|_{H_R^{s+\frac{1}{2}}}, \|\psi\|_{H^{s}}\right) \| \delta B \|_{H^{s_0-1}} \le C\left(\|\eta\|_{H_R^{s+\frac{1}{2}}}, \|\psi\|_{H^{s}}\right) \|\delta\eta\|_{H^{s_0+\frac{1}{2}}}.
		\end{align*}
		Recall that the regularity of $B$ and $V$ has been studied in Lemma \ref{lem-paralin:esti-B-V-psi}.
		
		The proof of \eqref{eq-paralin:r-2-deri-in-eta-2} is similar. We first write its left hand side as
		\begin{equation*}
			\delta\left( [T_B,T_V]\cdot\bar{\nabla}\eta \right) = [T_{\delta B},T_V]\cdot\bar{\nabla}\eta + [T_{B},T_{\delta V}]\cdot\bar{\nabla}\eta + [T_B,T_V]\cdot\delta\left( \bar{\nabla}\eta \right),
		\end{equation*}
		where we shall apply commutator estimate (Corollary \ref{cor-para:commu-esti}) only for the last term on the right hand side.
		\begin{align*}
			&\| [T_{\delta B},T_V]\cdot\bar{\nabla}\eta \|_{H^{s_0}} + \| [T_{B},T_{\delta V}]\cdot\bar{\nabla}\eta \|_{H^{s_0}} \\
			\le& \| T_{\delta B}T_V\cdot\bar{\nabla}\eta \|_{H^{s_0}} + \| T_V\cdot T_{\delta B}\bar{\nabla}\eta \|_{H^{s_0}} + \| T_{B}T_{\delta V}\cdot\bar{\nabla}\eta \|_{H^{s_0}} + \| T_{\delta V}\cdot T_{B}\bar{\nabla}\eta \|_{H^{s_0}} \\
			\lesssim& \left( \|B\|_{H^{1+}} + \|V\|_{H^{1+}} \right) \left( \|\delta B\|_{H^{s_0-1}} + \|\delta V\|_{H^{s_0-1}} \right) \|\bar{\nabla}\eta\|_{H^{\max(s_0,2+)}} \\
			\le& C\left(\|\eta\|_{H_R^{s+\frac{1}{2}}}, \|\psi\|_{H^{s}}\right) \|\delta\eta\|_{H^{s_0+\frac{1}{2}}}, \\
			&\| [T_B,T_V]\cdot\delta\left( \bar{\nabla}\eta \right) \|_{H^{s_0}} \\
			&\le C\left(\|\eta\|_{H_R^{s+\frac{1}{2}}}, \|\psi\|_{H^{s}}\right) \| \delta\left( \bar{\nabla}\eta \right) \|_{H^{s_0-\frac{1}{2}}} \le C\left(\|\eta\|_{H_R^{s+\frac{1}{2}}}, \|\psi\|_{H^{s}}\right) \|\delta\eta\|_{H^{s_0+\frac{1}{2}}}.
		\end{align*}
		
		As for the estimate \eqref{eq-paralin:r-2-deri-in-eta-3}, we shall apply Proposition \ref{prop-para:paraprod-bound}.
		\begin{align*}
			&\| \delta\left( T_V\cdot R(B,\bar{\nabla}\eta) \right) \|_{H^{s_0}} \\
			\le& \| T_{\delta V}\cdot R(B,\bar{\nabla}\eta) \|_{H^{s_0}} + \| T_V\cdot R(\delta B,\bar{\nabla}\eta) \|_{H^{s_0}} + \| T_V\cdot R\left(B,\delta\left(\bar{\nabla}\eta\right)\right) \|_{H^{s_0}} \\
			\lesssim& \|\delta V\|_{H^{s_0-1}} \|R(B,\bar{\nabla}\eta)\|_{H^{\max(s_0,2+)}} + \|V\|_{H^{1+}} \|R(\delta B,\bar{\nabla}\eta)\|_{H^{s_0}} \\
			& \hspace{20em} + \|V\|_{H^{1+}} \|R\left(B,\delta\left(\bar{\nabla}\eta\right)\right)\|_{H^{s_0}} \\
			\lesssim& \|\delta V\|_{H^{s_0-1}} \|B\|_{H^{s-1}} \|\bar{\nabla}\eta\|_{H^{s-\frac{1}{2}}} + \|V\|_{H^{1+}} \|\delta B\|_{H^{s_0-1}} \|\bar{\nabla}\eta\|_{H^{2+}} \\
			& \hspace{20em} + \|V\|_{H^{1+}} \|B\|_{H^{\frac{3}{2}+}} \|\delta\left(\bar{\nabla}\eta\right)\|_{H^{s_0-\frac{1}{2}}} \\
			\le& C\left(\|\eta\|_{H_R^{s+\frac{1}{2}}}, \|\psi\|_{H^{s}}\right) \|\delta\eta\|_{H^{s_0+\frac{1}{2}}}.
		\end{align*}
		
		The last equivalence \eqref{eq-paralin:r-2-deri-in-eta-4} is merely a consequence of Proposition \ref{prop-para:paraprod-bound}.
		\begin{align*}
			&\| \delta\left( R(V,V) + R(B,B) + R(B,G(\eta)\psi) \right) \|_{H^{s_0}} \\
			\le& \| R(\delta V, V) \|_{H^{s_0}} + \| R(\delta B,B) \|_{H^{s_0}} + \| R(\delta B, G(\eta)\psi) \|_{H^{s_0}} + \| R(B, \delta(G(\eta)\psi)) \|_{H^{s_0}} \\
			\lesssim& \|\delta V\|_{H^{s_0-1}} \|V\|_{H^{2+}} + \|\delta B\|_{H^{s_0-1}} \|B\|_{H^{2+}} + \|\delta B\|_{H^{s_0-1}} \|G(\eta)\psi\|_{H^{2+}} \\
			&\hspace{22em} + \|B\|_{H^{2+}} \|\delta(G(\eta)\psi)\|_{H^{s_0-1}} \\
			\le& C\left(\|\eta\|_{H_R^{s+\frac{1}{2}}}, \|\psi\|_{H^{s}}\right) \|\delta\eta\|_{H^{s_0+\frac{1}{2}}},
		\end{align*}
		where we use the fact that $G(\eta)\psi \in H^{s-1}$ and $\delta(G(\eta)\psi) \in H^{s_0-1}$ due to Proposition \ref{prop-pre:shape-deri}.
	\end{proof}

	\subsubsection{Study of $r_3$.} 
	In the end of Section \ref{subsubsect:paralin-H}, we have obtained an explicit formula \eqref{eq-paralin:r-3-formula} for $r_3$. Then it suffices to show that the derivative-in-$\eta$ of the right hand side of \eqref{eq-paralin:r-3-formula} is equivalent to zero in the sense of \eqref{eq-paralin:equi-f-2-deri-in-eta}, which can be reduced to the following lemma.
	\begin{lemma}\label{lem-paralin:r-3-deri-in-eta}
		Under the hypotheses of Lemma \ref{lem-paralin:source-deri-in-eta}, we have the following equivalences in the sense of \eqref{eq-paralin:equi-f-2-deri-in-eta},
		\begin{align}
			&\delta\left(F(\eta,\nabla_w\eta) - T_{\nabla_{x,u,v}F(\eta,\nabla_w\eta)}\cdot (\eta,\nabla_w\eta) \right) \sim 0, \label{eq-paralin:r-3-deri-in-eta-1}\\
			&\delta\left( T_{\eta_{jk}} \left( G^{jk}(\eta,\nabla_w\eta) - T_{\nabla_{x,u,v}G^{jk}(\eta,\nabla_w\eta)}\cdot (\eta,\nabla_w\eta) \right)\right) \sim 0, \label{eq-paralin:r-3-deri-in-eta-2}\\
			&\delta \left(T_{F_x(\eta,\nabla_w\eta)}\eta\right) \sim 0, \label{eq-paralin:r-3-deri-in-eta-3}\\
			&\delta \left(T_{\eta_{jk}} T_{G^{jk}_x(\eta,\nabla_w\eta)} \eta\right),\ \delta \left(R\left( \eta_{jk},G^{jk}(\eta,\nabla_w\eta) \right)\right) \sim 0, \label{eq-paralin:r-3-deri-in-eta-4}\\
			&\delta \left(\left(T_{\eta_{jk}} T_{\left( \nabla_{u,v}G^{jk} \right)(\eta,\nabla_w\eta)} - T_{\eta_{jk}\left( \nabla_{u,v}G^{jk} \right)(\eta,\nabla_w\eta)}\right)\cdot \nabla_w\eta\right) \sim 0. \label{eq-paralin:r-3-deri-in-eta-5}
		\end{align}
		Recall that $F,G^{jk}$ are smooth functions defined in \eqref{eq-paralin:def-FG-paralin-H}.
	\end{lemma}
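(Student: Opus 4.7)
The plan is to reduce all five equivalences to routine tame estimates by computing the variation term by term, using a Leibniz-type rule for paraproducts together with Lemma \ref{lem-paralin:fct-of-eta-deri-in-eta} (which gives us $\|\delta u\|_{H^{s_0+\frac{1}{2}-k}} \le C\|\delta\eta\|_{H^{s_0+\frac{1}{2}}}$ for $u=F(\eta,\nabla_w\eta,\dots,\nabla_w^k\eta)$ with $k\le 2$) and the paradifferential tool-kit from Appendix \ref{App:para}. The common principle is that every quantity appearing in \eqref{eq-paralin:r-3-formula} is a paraproduct, symmetric remainder or paralinearization remainder of composite expressions $F(\eta,\nabla_w\eta)$ and $G^{jk}(\eta,\nabla_w\eta)$, whose variations in $\eta$ are given explicitly by the chain rule and controlled via Lemma \ref{lem-paralin:fct-of-eta-deri-in-eta}. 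Since we assume $s_0<s-\frac{3}{2}$, there is a uniform margin of $\frac{3}{2}$ between the regularity of the ``fixed'' factors (namely $\eta_{jk}$, $G^{jk}(\eta,\nabla_w\eta)$ and their variations, which sit in $H^{s-\frac{3}{2}}$ or better) and the target regularity $H^{s_0}$, which is exactly what is needed to close the estimates.

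I would first handle \eqref{eq-paralin:r-3-deri-in-eta-3}, \eqref{eq-paralin:r-3-deri-in-eta-4} and \eqref{eq-paralin:r-3-deri-in-eta-5}, which each involve at most one paradifferential factor. Differentiating $T_{F_x(\eta,\nabla_w\eta)}\eta$ yields $T_{\delta F_x(\eta,\nabla_w\eta)}\eta+T_{F_x(\eta,\nabla_w\eta)}\delta\eta$; by Lemma \ref{lem-paralin:fct-of-eta-deri-in-eta} the symbol variation lies in $H^{s_0-\frac{1}{2}}$ while $F_x(\eta,\nabla_w\eta)\in H^{s-\frac{1}{2}}$, so two applications of Proposition \ref{prop-para:paraprod-bound} yield the $H^{s_0}$ bound. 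The two terms in \eqref{eq-paralin:r-3-deri-in-eta-4} are treated identically, after expanding Leibniz on the two paraproducts in $T_{\eta_{jk}}T_{G^{jk}_x(\eta,\nabla_w\eta)}\eta$ and on the symmetric remainder $R(\eta_{jk},G^{jk}(\eta,\nabla_w\eta))$ via the Bony estimate of Proposition \ref{prop-para:paraprod-bound}; every resulting product has at least one factor in $H^{s-\frac{3}{2}}\hookrightarrow L^\infty$ and another in $H^{s_0-\frac{1}{2}}$, both of which beat $H^{s_0}$. For \eqref{eq-paralin:r-3-deri-in-eta-5} the key fact is that the composed operator $T_{\eta_{jk}}T_{(\nabla_{u,v}G^{jk})(\eta,\nabla_w\eta)}-T_{\eta_{jk}(\nabla_{u,v}G^{jk})(\eta,\nabla_w\eta)}$ is of order $-\frac{1}{2}-$ by Proposition \ref{prop-para:paradiff-cal-sym}, and its variation in $\eta$ remains of the same order by the reasoning of Lemma \ref{lem-paralin:sym-deri-in-eta}; applied to $\nabla_w\eta$ or to $\nabla_w\delta\eta$ it lands in $H^{s_0}$ with the correct dependence.

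The slightly subtler pieces are \eqref{eq-paralin:r-3-deri-in-eta-1} and \eqref{eq-paralin:r-3-deri-in-eta-2}, which concern paralinearization remainders of the form
\begin{equation*}
\Phi(\eta,\nabla_w\eta):=F(\eta,\nabla_w\eta)-T_{\nabla_{x,u,v}F(\eta,\nabla_w\eta)}\cdot(\eta,\nabla_w\eta).
\end{equation*}
By Proposition \ref{prop-para:paralin}, $\Phi(\eta,\nabla_w\eta)\in H^{2(s-\frac{1}{2})-1}$, that is, two derivatives better than the naive linear-in-$\eta$ count. The plan is to take the variation and reorganize the resulting six terms (three from the chain rule applied to the outer $F$, and three from the chain rule applied to the factor $(\eta,\nabla_w\eta)$ inside the paraproduct and to the symbol $\nabla_{x,u,v}F(\eta,\nabla_w\eta)$) into two groups: first, a genuine paralinearization remainder of the same type applied to $\delta\eta$ and $\nabla_w\delta\eta$, which inherits from Proposition \ref{prop-para:paralin} the bound in $H^{(s-\frac{1}{2})+(s_0-\frac{1}{2})-1}\subset H^{s_0+\frac{1}{2}}$; second, symmetric Bony remainders of the form $R(\partial F(\eta,\nabla_w\eta),\nabla_w^j\delta\eta)$ with $j\le 1$, bounded in $H^{s-\frac{1}{2}+s_0+\frac{1}{2}-j-1}\subset H^{s_0}$ by Proposition \ref{prop-para:paraprod-bound}. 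For \eqref{eq-paralin:r-3-deri-in-eta-2} one precedes the same scheme by a Leibniz expansion of the outer $T_{\eta_{jk}}$, using that $\delta\eta_{jk}\in H^{s_0-\frac{3}{2}}$ together with $G^{jk}(\eta,\nabla_w\eta)\in H^{s-\frac{1}{2}}$ to pick up the extra derivative from $T_{\eta_{jk}}$.

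The main obstacle is the bookkeeping in the last paragraph: one must recognize which combinations of the six variation terms recombine into a genuine paralinearization remainder gaining two derivatives, and which only produce symmetric remainders gaining one derivative, and then check that both types remain strictly inside $H^{s_0}$ with tame constant. The cushion $s_0<s-\frac{3}{2}$ is precisely what makes this accounting work; without it, some of the Bony remainders would be borderline and one would need to track the $\epsilon$-losses in the endpoint estimates of Proposition \ref{prop-para:paraprod-bound}.
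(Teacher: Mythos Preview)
Your overall strategy matches the paper's: Leibniz expansion plus paraproduct estimates for \eqref{eq-paralin:r-3-deri-in-eta-3}--\eqref{eq-paralin:r-3-deri-in-eta-5}, and a separate treatment of the paralinearization remainder for \eqref{eq-paralin:r-3-deri-in-eta-1}--\eqref{eq-paralin:r-3-deri-in-eta-2}. Two points where your description diverges from what the paper actually does:

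\textbf{On \eqref{eq-paralin:r-3-deri-in-eta-1}--\eqref{eq-paralin:r-3-deri-in-eta-2}.} The paper isolates this into a separate Lemma \ref{lem-paralin:paralin-deri-in-eta}, and the decomposition there is simpler than your ``six terms / two groups'' picture. Writing $u=(\eta,\nabla_w\eta)$, the variation of $\Phi(u)=F(u)-T_{F'(u)}u$ is exactly three terms:
\[
\delta\Phi = F'(u)\cdot\delta u \;-\; T_{\delta(F'(u))}u \;-\; T_{F'(u)}\delta u.
\]
The middle term is estimated directly via Lemma \ref{lem-paralin:fct-of-eta-deri-in-eta} and Proposition \ref{prop-para:paraprod-bound}. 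The first and third combine, by Bony's decomposition $ab-T_ab=T_ba+R(a,b)$, into $T_{\delta u}F'(u)+R(F'(u),\delta u)$, both handled by Proposition \ref{prop-para:paraprod-bound}. There is no need to invoke Proposition \ref{prop-para:paralin} a second time ``applied to $\delta\eta$''; that is not the right structure (Proposition \ref{prop-para:paralin} concerns $F(u)-F(0)-T_{F'(u)}u$, which is quadratic in $u$, whereas the variation is linear in $\delta u$). Your numerics would close anyway, but the conceptual framing is cleaner as a pure Bony-remainder computation.

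\textbf{On \eqref{eq-paralin:r-3-deri-in-eta-5}.} Your appeal to Lemma \ref{lem-paralin:sym-deri-in-eta} to say that the varied operator ``remains of the same order'' is not quite licit when $\delta$ hits $\eta_{jk}$: then $\delta\eta_{jk}\in H^{s_0-\frac{3}{2}}$, which for small $s_0$ is below $L^\infty$, so the composition calculus of Proposition \ref{prop-para:paradiff-cal-sym} does not apply uniformly. The paper does not try to keep the difference operator intact here; instead it estimates each of the four terms $T_{\delta\eta_{jk}}T_{(\nabla_{u,v}G^{jk})}\nabla_w\eta$, $T_{\eta_{jk}}T_{\delta(\nabla_{u,v}G^{jk})}\nabla_w\eta$, $T_{\delta\eta_{jk}(\nabla_{u,v}G^{jk})}\nabla_w\eta$, $T_{\eta_{jk}\delta(\nabla_{u,v}G^{jk})}\nabla_w\eta$ separately with Proposition \ref{prop-para:paraprod-bound}, which tolerates low-regularity symbols. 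Only the case where $\delta$ hits the outer $\nabla_w\eta$ uses the $-\frac{3}{2}-$ gain from symbolic calculus.
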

	\begin{proof}[Proof of \eqref{eq-paralin:r-3-deri-in-eta-3}-\eqref{eq-paralin:r-3-deri-in-eta-5}]
		The proof of these equivalences are similar to the study of $r_2$. Due to Lemma \ref{lem-paralin:fct-of-eta-deri-in-eta}, the composition of smooth functions with $(\eta,\nabla_w\eta)$ belongs to $H^{s-\frac{1}{2}}$ and their derivative in $\eta$ lie in $H^{s_0-\frac{1}{2}}$. Then \eqref{eq-paralin:r-3-deri-in-eta-3} follows from
		\begin{align*}
			\| \delta \left(T_{F_x(\eta,\nabla_w\eta)}\eta\right) \|_{H^{s_0}} \le& \|T_{\delta\left(F_x(\eta,\nabla_w\eta)\right)}\eta\|_{H^{s_0}} + \|T_{F_x(\eta,\nabla_w\eta)}\delta \eta\|_{H^{s_0}} \\
			\lesssim& \|\delta\left(F_x(\eta,\nabla_w\eta)\right)\|_{H^{1+}} \|\eta\|_{H^{s_0}} + \|F_x(\eta,\nabla_w\eta)\|_{H^{1+}} \|\delta\eta\|_{H^{s_0}} \\
			\le& C\left(\|\eta\|_{H_R^{s+\frac{1}{2}}}\right) \|\delta\eta\|_{H^{s_0+\frac{1}{2}}},
		\end{align*}
		while \eqref{eq-paralin:r-3-deri-in-eta-4} is a consequence of 
		\begin{align*}
			&\| \delta \left(T_{\eta_{jk}} T_{G^{jk}_x(\eta,\nabla_w\eta)} \eta\right) \|_{H^{s_0}} + \| \delta \left(R\left( \eta_{jk},G^{jk}(\eta,\nabla_w\eta) \right)\right) \|_{H^{s_0}} \\
			\le& \| T_{\delta\eta_{jk}} T_{G^{jk}_x(\eta,\nabla_w\eta)} \eta \|_{H^{s_0}} + \| T_{\eta_{jk}} T_{\delta\left(G^{jk}_x(\eta,\nabla_w\eta)\right)} \eta \|_{H^{s_0}} + \| T_{\eta_{jk}} T_{G^{jk}_x(\eta,\nabla_w\eta)} \delta\eta \|_{H^{s_0}} \\
			& + \| R\left( \delta\eta_{jk},G^{jk}(\eta,\nabla_w\eta) \right) \|_{H^{s_0}} + \| R\left( \eta_{jk},\delta\left(G^{jk}(\eta,\nabla_w\eta)\right) \right) \|_{H^{s_0}} \\
			\lesssim& \|\delta\eta_{jk}\|_{H^{s_0-\frac{3}{2}}} \|G^{jk}_x(\eta,\nabla_w\eta)\|_{H^{1+}} \|\eta\|_{H^{\max(\frac{5}{2}+,s_0)}} + \|\eta_{jk}\|_{H^{1+}} \|\delta\left(G^{jk}_x(\eta,\nabla_w\eta)\right)\|_{H^{1+}} \|\eta\|_{H^{s_0}} \\
			&+ \|\eta_{jk}\|_{H^{1+}} \|G^{jk}_x(\eta,\nabla_w\eta)\|_{H^{1+}} \|\delta\eta\|_{H^{s_0}} + \|\delta\eta_{jk}\|_{H^{s_0-\frac{3}{2}}} \|G^{jk}_x(\eta,\nabla_w\eta)\|_{H^{\frac{5}{2}+}} \\
			&+ \|\eta_{jk}\|_{H^{\frac{3}{2}+}} \|\delta\left(G^{jk}_x(\eta,\nabla_w\eta)\right)\|_{H^{s_0-\frac{1}{2}}} \\
			\le& C\left(\|\eta\|_{H_R^{s+\frac{1}{2}}}\right) \|\delta\eta\|_{H^{s_0+\frac{1}{2}}}.
		\end{align*}
		
		As for \eqref{eq-paralin:r-3-deri-in-eta-5}, when the derivative $\delta$ acts on $\nabla_w\eta$, by Proposition \ref{prop-para:paradiff-cal-sym}, we have
		\begin{align*}
			&\|\left(T_{\eta_{jk}} T_{\left( \nabla_{u,v}G^{jk} \right)(\eta,\nabla_w\eta)} - T_{\eta_{jk}\left( \nabla_{u,v}G^{jk} \right)(\eta,\nabla_w\eta)}\right)\cdot \nabla_w\delta\eta\|_{H^{s_0}} \\
			\le& C\left(\|\eta\|_{H_R^{s+\frac{1}{2}}}\right) \|\delta\eta\|_{s_0-\frac{1}{2}} \le C\left(\|\eta\|_{H_R^{s+\frac{1}{2}}}\right) \|\delta\eta\|_{H^{s_0+\frac{1}{2}}},
		\end{align*}
		since the symbols $\eta_{jk}$ and $\left( \nabla_{u,v}G^{jk} \right)(\eta,\nabla_w\eta)$ belong to $H^{s-\frac{3}{2}} \subset \Gamma^0_{3/2+}$. When $\delta$ acts on the symbols, each terms are equivalent to zero,
		\begin{align*}
			\| T_{\delta\eta_{jk}} T_{\left( \nabla_{u,v}G^{jk} \right)(\eta,\nabla_w\eta)} \cdot\nabla_w\eta \|_{H^{s_0}} \lesssim& \|\delta\eta_{jk}\|_{H^{s_0-\frac{3}{2}}} \|\left( \nabla_{u,v}G^{jk} \right)(\eta,\nabla_w\eta)\|_{H^{1+}} \|\nabla_w\eta\|_{H^{\max(\frac{5}{2}+,s_0)}}, \\
			\| T_{\eta_{jk}} T_{\delta\left(\left( \nabla_{u,v}G^{jk} \right)(\eta,\nabla_w\eta)\right)} \cdot\nabla_w\eta \|_{H^{s_0}} \lesssim& \|\eta_{jk}\|_{H^{1+}} \|\delta\left(\left( \nabla_{u,v}G^{jk} \right)(\eta,\nabla_w\eta)\right)\|_{H^{s_0-\frac{1}{2}}} \|\nabla_w\eta\|_{H^{s_0}}, \\
			\| T_{\delta\eta_{jk}\left( \nabla_{u,v}G^{jk} \right)(\eta,\nabla_w\eta)}\cdot \nabla_w \eta \|_{H^{s_0}} \lesssim& \|\delta\eta_{jk}\left( \nabla_{u,v}G^{jk} \right)(\eta,\nabla_w\eta)\|_{H^{s_0-\frac{3}{2}}} \|\nabla_w \eta\|_{H^{\max(\frac{5}{2}+,s_0)}} \\
			\le& \|\delta\eta_{jk}\|_{H^{s_0-\frac{3}{2}}} \|\left( \nabla_{u,v}G^{jk} \right)(\eta,\nabla_w\eta)\|_{H^{s-1}} \|\nabla_w \eta\|_{H^{\max(\frac{5}{2}+,s_0)}}, \\
			\| T_{\eta_{jk}\delta\left(\left( \nabla_{u,v}G^{jk} \right)(\eta,\nabla_w\eta)\right)}\cdot \nabla_w \eta \|_{H^{s_0}} \lesssim& \|\eta_{jk}\delta\left(\left( \nabla_{u,v}G^{jk} \right)(\eta,\nabla_w\eta)\right)\|_{H^{s_0-\frac{1}{2}}} \|\nabla_w \eta\|_{H^{s_0}} \\
			\le& \|\eta_{jk}\|_{H^{s-\frac{3}{2}}} \|\delta\left(\left( \nabla_{u,v}G^{jk} \right)(\eta,\nabla_w\eta)\right)\|_{H^{s_0-\frac{1}{2}}} \|\nabla_w \eta\|_{H^{s_0}},
		\end{align*}
		the right hand side of which are all bounded by $C\left(\|\eta\|_{H_R^{s+\frac{1}{2}}}\right) \|\delta\eta\|_{H^{s_0+\frac{1}{2}}}$.
	\end{proof}
	
	To prove \eqref{eq-paralin:r-3-deri-in-eta-1} and \eqref{eq-paralin:r-3-deri-in-eta-2}, we need the following lemma,
	\begin{lemma}\label{lem-paralin:paralin-deri-in-eta}
		Let $\eta\in H^{s+\frac{1}{2}}_R$ and $\delta\eta\in H^{s_0}$ with $s>3$ and $\frac{3}{2}<s_0< s-\frac{3}{2}$. For all smooth function $F = F(x,u,v)$, we have
		\begin{equation}\label{eq-paralin:paralin-deri-in-eta}
			\|\delta\left( F(\eta,\nabla_w\eta) - T_{\left( \nabla_{x,u,v}F \right)(\eta,\nabla_w\eta)}\cdot(\eta,\nabla_w\eta) \right)\|_{H^{s_0}} \le C\left(\|\eta\|_{H_R^{s+\frac{1}{2}}}\right) \|\delta\eta\|_{H^{s_0+\frac{1}{2}}}.
		\end{equation}
		Recall that $\delta$ stands for the derivation in $\eta$.
	\end{lemma}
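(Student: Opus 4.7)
The plan is to compute the derivative of the paralinearization remainder explicitly, to uncover a Bony cancellation, and then to close the three remaining pieces using only the paraproduct and remainder bounds of Proposition \ref{prop-para:paraprod-bound} and Corollary \ref{cor-para:product-law}.

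Concretely, writing $u := (\eta,\nabla_w\eta)$, $\delta u := (\delta\eta,\nabla_w\delta\eta)$, $g := (\nabla_{x,u,v}F)(u)$, and $g' := (\nabla_{x,u,v}^2 F)(u)$, a direct differentiation gives
\begin{equation*}
\delta\bigl(F(u) - T_g\cdot u\bigr) = g\cdot \delta u - T_{g'\cdot\delta u}\cdot u - T_g\cdot\delta u.
\end{equation*}
Applying Bony's decomposition $g\cdot\delta u = T_g\cdot\delta u + T_{\delta u}\cdot g + R(g,\delta u)$ to the first summand cancels the paraproduct $T_g\cdot\delta u$ and reduces the estimate to
\begin{equation*}
\delta\bigl(F(u) - T_g\cdot u\bigr) = T_{\delta u}\cdot g + R(g,\delta u) - T_{g'\cdot\delta u}\cdot u.
\end{equation*}

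I would then estimate each piece in $H^{s_0}$ as follows. By Lemma \ref{lem-paralin:fct-of-eta-deri-in-eta} applied to the smooth functions $\nabla_{x,u,v}F$ and $\nabla_{x,u,v}^2 F$, both $g$ and $g'$ belong to $H^{s-\frac{1}{2}-}$ up to constants, with norms bounded by $C(\|\eta\|_{H^{s+1/2}_R})$. The worst component of $\delta u$ is $\nabla_w\delta\eta\in H^{s_0-\frac{1}{2}}$, which embeds into $H^{1+}\hookrightarrow L^\infty$ on the two-dimensional manifold $\T\times\R$ thanks to $s_0>3/2$. The paraproduct bound then gives $\|T_{\delta u}\cdot g\|_{H^{s_0}}\lesssim\|\delta u\|_{H^{1+}}\|g\|_{H^{s_0}}$, which closes because the hypothesis $s_0<s-\tfrac{3}{2}$ forces $s-\tfrac{1}{2}>s_0+1$. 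The Bony remainder satisfies $\|R(g,\delta u)\|_{H^{s+s_0-2-}}\lesssim\|g\|_{H^{s-1/2-}}\|\delta u\|_{H^{s_0-1/2}}$, and $s>3$ yields $s+s_0-2>s_0$. Finally, Corollary \ref{cor-para:product-law} controls the symbol $g'\cdot\delta u$ in $H^{s_0-1/2}$, and a second application of the paraproduct bound produces $\|T_{g'\cdot\delta u}\cdot u\|_{H^{s_0}}\lesssim\|g'\cdot\delta u\|_{H^{1+}}\|u\|_{H^{s_0}}$, completing the estimate.

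The main obstacle is arithmetic rather than conceptual: one must verify that every inequality survives inside the open interval $3/2<s_0<s-\tfrac{3}{2}$. The upper bound $s_0<s-\tfrac{3}{2}$ is crucial because $g$ and $g'$ only have Sobolev regularity $s-\tfrac{1}{2}-$, and must nevertheless act as $H^{s_0}$-multipliers; the gap of just over one derivative between $s-\tfrac{1}{2}$ and $s_0$ is exactly what enables the three pieces to close. There is no hidden cancellation beyond the initial Bony one, and in particular no shape-derivative machinery is needed here — the lemma is purely paradifferential.
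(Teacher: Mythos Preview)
Your proof is correct and matches the paper's own argument essentially step for step: both compute the derivative as $g\cdot\delta u - T_{g'\cdot\delta u}\cdot u - T_g\cdot\delta u$, apply Bony's decomposition to eliminate $T_g\cdot\delta u$, and then close the three pieces $T_{\delta u}\cdot g$, $R(g,\delta u)$, $T_{g'\cdot\delta u}\cdot u$ using Proposition~\ref{prop-para:paraprod-bound}. The only cosmetic difference is that the paper packages the product estimate for $g'\cdot\delta u$ inside Lemma~\ref{lem-paralin:fct-of-eta-deri-in-eta} rather than invoking Corollary~\ref{cor-para:product-law} directly, and phrases the paraproduct bounds in terms of $\|\delta u\|_{H^{s_0-1/2}}$ rather than passing through the $H^{1+}\hookrightarrow L^\infty$ embedding.
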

	\begin{proof}
		The function on the left hand side of \eqref{eq-paralin:paralin-deri-in-eta} reads
		\begin{equation*}
			\left( \nabla_{x,u,v}F \right)(\eta,\nabla_w\eta)\cdot(\delta\eta,\nabla_w\delta\eta) - T_{\delta\left(\left( \nabla_{x,u,v}F \right)(\eta,\nabla_w\eta)\right)}\cdot(\eta,\nabla_w\eta) - T_{\left( \nabla_{x,u,v}F \right)(\eta,\nabla_w\eta)}\cdot(\delta\eta,\nabla_w\delta\eta).
		\end{equation*}
		Recall that Lemma \ref{lem-paralin:fct-of-eta-deri-in-eta} ensures that $\delta\left( \left(\nabla_{x,u,v}F\right)(\eta,\nabla_w\eta) \right) \in H^{s_0-\frac{1}{2}}$. Then Proposition \ref{prop-para:paraprod-bound} implies
		\begin{align*}
			\|T_{\delta\left(\left( \nabla_{x,u,v}F \right)(\eta,\nabla_w\eta)\right)}\cdot(\eta,\nabla_w\eta)\|_{H^{s_0}} \lesssim& \|\delta\left(\left( \nabla_{x,u,v}F \right)(\eta,\nabla_w\eta)\right)\|_{H^{1+}} \|(\eta,\nabla_w\eta)\|_{H^{s_0}} \\
			\le& C\left(\|\eta\|_{H_R^{s+\frac{1}{2}}}\right) \|\delta\eta\|_{H^{s_0+\frac{1}{2}}}.
		\end{align*}
		The remaining term
		\begin{align*}
			&\left( \nabla_{x,u,v}F \right)(\eta,\nabla_w\eta)\cdot(\delta\eta,\nabla_w\delta\eta) - T_{\left( \nabla_{x,u,v}F \right)(\eta,\nabla_w\eta)}\cdot(\delta\eta,\nabla_w\delta\eta) \\
			=&T_{(\delta\eta,\nabla_w\delta\eta)} \cdot \left( \nabla_{x,u,v}F \right)(\eta,\nabla_w\eta) + R\left( \left( \nabla_{x,u,v}F \right)(\eta,\nabla_w\eta), (\delta\eta,\nabla_w\delta\eta) \right),
		\end{align*}
		which, due to Proposition \ref{prop-para:paraprod-bound}, can be bounded by
		\begin{align*}
			\|T_{(\delta\eta,\nabla_w\delta\eta)} \cdot \left( \nabla_{x,u,v}F \right)(\eta,\nabla_w\eta)\|_{H^{s_0}} \lesssim& \|(\delta\eta,\nabla_w\delta\eta)\|_{H^{s_0-\frac{1}{2}}} \|\left( \nabla_{x,u,v}F \right)(\eta,\nabla_w\eta)\|_{H^{s_0}} \\
			\le& C\left(\|\eta\|_{H_R^{s+\frac{1}{2}}}\right) \|\delta\eta\|_{H^{s_0+\frac{1}{2}}}, \\
			\|R\left( \left( \nabla_{x,u,v}F \right)(\eta,\nabla_w\eta), (\delta\eta,\nabla_w\delta\eta) \right)\|_{H^{s_0}} \lesssim& \|(\delta\eta,\nabla_w\delta\eta)\|_{H^{s_0-\frac{1}{2}}} \|\left( \nabla_{x,u,v}F \right)(\eta,\nabla_w\eta)\|_{H^{\frac{3}{2}+}} \\
			\le& C\left(\|\eta\|_{H_R^{s+\frac{1}{2}}}\right) \|\delta\eta\|_{H^{s_0+\frac{1}{2}}}.
		\end{align*}
	\end{proof}
	
	\begin{proof}[Proof of \eqref{eq-paralin:r-3-deri-in-eta-1} and \eqref{eq-paralin:r-3-deri-in-eta-2}]
		\eqref{eq-paralin:r-3-deri-in-eta-1} is a direct consequence of Lemma \ref{lem-paralin:paralin-deri-in-eta}. For \eqref{eq-paralin:r-3-deri-in-eta-2}, one may rewrite its left hand side as
		\begin{align*}
			&T_{\delta\eta_{jk}} \left( G^{jk}(\eta,\nabla_w\eta) - T_{\nabla_{x,u,v}G^{jk}(\eta,\nabla_w\eta)}\cdot (\eta,\nabla_w\eta) \right) \\
			&+ T_{\eta_{jk}}  \delta\left(\left( G^{jk}(\eta,\nabla_w\eta) - T_{\nabla_{x,u,v}G^{jk}(\eta,\nabla_w\eta)}\cdot (\eta,\nabla_w\eta) \right)\right),
		\end{align*}
		where the second term is equivalent to zero due to Lemma \ref{lem-paralin:paralin-deri-in-eta} and Proposition \ref{prop-para:paraprod-bound}, while the first term can be estimated via \eqref{eq-para:paraprod-bound-main} and \eqref{eq-para:paralin},
		\begin{align*}
			&\| T_{\delta\eta_{jk}} \left( G^{jk}(\eta,\nabla_w\eta) - T_{\nabla_{x,u,v}G^{jk}(\eta,\nabla_w\eta)}\cdot (\eta,\nabla_w\eta) \right) \|_{H^{s_0}} \\
			\lesssim& \|\delta\eta_{jk}\|_{H^{s_0-\frac{3}{2}}} \|G^{jk}(\eta,\nabla_w\eta) - T_{\nabla_{x,u,v}G^{jk}(\eta,\nabla_w\eta)}\cdot (\eta,\nabla_w\eta)\|_{H^{\max(\frac{5}{2}+,s_0)}} \\
			\le& C\left(\|\eta\|_{H_R^{s+\frac{1}{2}}}\right) \|\delta\eta\|_{H^{s_0+\frac{1}{2}}}.
		\end{align*}
	\end{proof}

	\section{Symmetrization of the system}\label{Sect:sym}
	
	In previous section, the main system \eqref{eq-intro:WW} has been reformulated into paralinear form \eqref{eq-paralin:WW}, which we recall here
	\begin{equation*}
		\partial_t 
		\left(\begin{array}{c}
			\eta \\
			\psi
		\end{array}\right) + \left(T_V\cdot\bar{\nabla} + 
		\mathcal{L}\right)\left(\begin{array}{c}
			\eta \\
			\psi
		\end{array}\right) = f,
	\end{equation*}
	where 
	\begin{equation*}
	\mathcal{L} = \left(\begin{array}{cc}
		I & 0 \\
		T_B & I
	\end{array}\right)
	\left(\begin{array}{cc}
		0 & -T_\lambda \\
		\sigma T_\mu & 0
	\end{array}\right)
	\left(\begin{array}{cc}
		I & 0 \\
		-T_B & I
	\end{array}\right).
	\end{equation*}
	The goal of this section is to check that this system is symmetrizable. Namely, the operator $T_V\cdot\bar{\nabla} + 
	\mathcal{L}$ will be shown to be anti-self-adjoint, up to remainders with proper regularity. Under the hypotheses of Theorem \ref{thm-intro:main}, $(\eta,\psi)\in H^{s+\frac{1}{2}}_R \times H^s$ and thus the vector $V$ and $\eta$ has Lipschitz regularity, which guarantees that $T_V\cdot\bar{\nabla}$ is anti-self-adjoint up to a remainder of order $0$ (see Lemma \ref{lem-cauchy:energy-esti-1}). The main difficulty is to prove that $\mathcal{L}$ is symmetrizable. In fact, the matrix of principal symbols associated to $\mathcal{L}$ reads
	\begin{equation*}
		\left(\begin{array}{cc}
			0 & -\lambda^{(1)} \\
			\sigma \mu^{(2)} & 0
		\end{array}\right),
	\end{equation*}
	where $\lambda^{(1)},\mu^{(2)}$ are real and elliptic with $\sigma>0$. When $\eta$ (the coefficients of $\lambda,\mu$) is regular enough, a simple calculation of matrices gives the desired symmetrization
	\begin{equation*}
		\left(\begin{array}{cc}
			\sqrt{\sigma\mu^{(2)}/\lambda^{(1)}} & 0 \\
			0 & 1
		\end{array}\right)^{-1}
		\left(\begin{array}{cc}
			0 & -\sqrt{\sigma\lambda^{(1)}\mu^{(2)}} \\
			\sqrt{\sigma\lambda^{(1)}\mu^{(2)}} & 0
		\end{array}\right)
		\left(\begin{array}{cc}
			\sqrt{\sigma\mu^{(2)}/\lambda^{(1)}} & 0 \\
			0 & 1
		\end{array}\right).
	\end{equation*}
	This heuristic calculation indicates us to try to construct symmetrizer $S$ such that
	\begin{equation*}
		S \left(\begin{array}{cc}
			0 & -T_\lambda \\
			\sigma T_\mu & 0
		\end{array}\right) S^{-1} 
	\end{equation*}
	is anti-self-adjoint up to remainders. Meanwhile, one could also see the proper order of symbols, which is stated in detail in Proposition \ref{prop-sym:main}. To rigorously construct the symmetrizer $S$, we shall repeat the method in \cite{alazard2011water} with the symbolic calculus introduced in Section \ref{subsect:paralin-pre} and Appendix \ref{subsect:paradiff}.
	
	As in previous section, for $a,b\in\Sigma^m$, we say that $T_a$ and $T_b$ are equivalent and write $T_a\approx T_b$ if their difference is of order $m-3/2-$ with operator norm bounded by $C\left( \|\eta\|_{H^{\frac{7}{2}+}} \right)$. Besides, for any symbol $a\in\Sigma^m \subset \Gamma^{m}_{3/2+} + \Gamma^{m-1}_{1/2+}$, we shall also use the notation $a^{(m)}\in \Gamma^{m}_{3/2+}$ and $a^{(m-1)}\in \Gamma^{m-1}_{1/2+}$ to present its principal and subprincipal component respectively (note that this decomposition is unique if we further assume that $a^{(m)}$ is homogeneous in $\xi$ of order $m$).
	
	\begin{proposition}\label{prop-sym:main}
		Let $(\eta,\psi)\in H_R^{s+\frac{1}{2}}\times H^s$ with $s>3$. There exist elliptic symbols $p\in\Sigma^{\frac{1}{2}}$ and $q\in\Sigma^0$ such that
		\begin{equation}\label{eq-sym:def-S}
			S:= \left(\begin{array}{cc}
				T_p & 0 \\
				0 & T_q
			\end{array}\right)
		\end{equation}\index{S@$S$ Symmtrizer of the system}
		satisfies, for some elliptic $\gamma\in\Sigma^{\frac{3}{2}}$,
		\begin{equation}\label{eq-sym:main}
			S\left(\begin{array}{cc}
				0 & -T_\lambda \\
				\sigma T_\mu & 0
			\end{array}\right)
			= \left(\begin{array}{cc}
				0 & -T_p T_\lambda \\
				\sigma T_q T_\mu & 0
			\end{array}\right)
			\approx \left(\begin{array}{cc}
				0 & -T_\gamma T_q \\
				T_\gamma^* T_p & 0
			\end{array}\right)
			=\left(\begin{array}{cc}
				0 & -T_\gamma \\
				T_\gamma^* & 0
			\end{array}\right)S
		\end{equation}
		in the sense that $T_p T_\lambda \approx T_\gamma T_q$ and $\sigma T_q T_\mu \approx T_\gamma^* T_p$.
		
		Furthermore, we have,
		\begin{equation}\label{eq-sym:main-equi}
			T_p T_\lambda \approx T_\gamma T_q,\ \ T_q T_{\sigma\mu} \approx T_\gamma T_p, \ \ T_\gamma^* \approx T_\gamma,
		\end{equation}
		which is stronger than \eqref{eq-sym:main}
	\end{proposition}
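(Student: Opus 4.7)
Take $q := 1 \in \Sigma^0$ and build $p \in \Sigma^{1/2}$ and $\gamma \in \Sigma^{3/2}$ by matching symbols degree-by-degree so that the symbolic identities $p \sharp \lambda = \gamma$ and $\sigma\mu = \gamma \sharp p$ hold \emph{exactly}. Applying~\eqref{eq-paralin:homo-sym-comp} then gives the first two equivalences of~\eqref{eq-sym:main-equi}, and the third, $T_\gamma^* \approx T_\gamma$, will be reduced via Proposition~\ref{prop-paralin:homo-sym-cond-aa} to a single real-symbol identity verifiable from Lemmas~\ref{lem-paralin:lambda-0-cal} and~\ref{lem-paralin:mu-1-cal}.

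Guided by the heuristic $2\times 2$ diagonalization of $\bigl(\begin{smallmatrix}0 & -\lambda^{(1)}\\ \sigma\mu^{(2)} & 0\end{smallmatrix}\bigr)$, I set
\[
p^{(1/2)} := \sqrt{\sigma\mu^{(2)}/\lambda^{(1)}},\qquad \gamma^{(3/2)} := \sqrt{\sigma\lambda^{(1)}\mu^{(2)}},
\]
which are real, positive, elliptic, of the correct order, and automatically satisfy the principal-level matchings $p^{(1/2)}\lambda^{(1)} = \gamma^{(3/2)}$ and $\gamma^{(3/2)} p^{(1/2)} = \sigma \mu^{(2)}$. The subprincipal level, read off from~\eqref{eq-paralin:homo-sym-comp-formula}, becomes the linear system
\[
\gamma^{(1/2)} - p^{(-1/2)}\lambda^{(1)} = A, \qquad \gamma^{(3/2)} p^{(-1/2)} + p^{(1/2)} \gamma^{(1/2)} = B,
\]
with $A := \partial_\xi p^{(1/2)} \cdot D_w \lambda^{(1)} + p^{(1/2)}\lambda^{(0)}$ and $B := \sigma\mu^{(1)} - \partial_\xi \gamma^{(3/2)}\cdot D_w p^{(1/2)}$. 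Since $\gamma^{(3/2)} + p^{(1/2)}\lambda^{(1)} = 2\gamma^{(3/2)}$ is elliptic, this uniquely determines
\[
p^{(-1/2)} = \frac{B - p^{(1/2)} A}{2\gamma^{(3/2)}},\qquad \gamma^{(1/2)} = \tfrac{1}{2}A + \frac{\lambda^{(1)} B}{2\gamma^{(3/2)}}.
\]
With these choices, \eqref{eq-paralin:homo-sym-comp} immediately delivers $T_p T_\lambda \approx T_{p\sharp\lambda} = T_\gamma \approx T_\gamma T_q$ and $T_q T_{\sigma\mu} = T_{\sigma\mu} \approx T_{\gamma\sharp p} \approx T_\gamma T_p$, which are the first two equivalences of~\eqref{eq-sym:main-equi}.

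The main obstacle is the self-adjointness $T_\gamma^* \approx T_\gamma$. Because $\gamma^{(3/2)}$ is real, Proposition~\ref{prop-paralin:homo-sym-cond-aa} reduces this to the algebraic identity
\[
\operatorname{Im} \gamma^{(1/2)} \;=\; -\tfrac{1}{2}\,\partial_w \cdot \partial_\xi \gamma^{(3/2)}.
\]
Substituting the formula $\gamma^{(1/2)} = \tfrac{1}{2} A + \tfrac{\lambda^{(1)} B}{2\gamma^{(3/2)}}$ and computing $\operatorname{Im} A$, $\operatorname{Im} B$ with the help of Lemmas~\ref{lem-paralin:lambda-0-cal} and~\ref{lem-paralin:mu-1-cal}, which supply
\[
\operatorname{Im}\lambda^{(0)} = -\tfrac{1}{2}\partial_w\cdot\partial_\xi\lambda^{(1)} - \tfrac{1}{2}\tfrac{\partial_w\eta}{\eta}\cdot\partial_\xi\lambda^{(1)},\qquad \operatorname{Im}\mu^{(1)} = -\tfrac{1}{2}\partial_w\cdot\partial_\xi\mu^{(2)} - \tfrac{1}{2}\tfrac{\partial_w\eta}{\eta}\cdot\partial_\xi\mu^{(2)},
\]
the problem becomes a purely algebraic verification among real-valued symbols. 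Expanding $\partial_w\cdot\partial_\xi \gamma^{(3/2)} = \partial_w\cdot\partial_\xi(p^{(1/2)}\lambda^{(1)})$ by Leibniz and exploiting the relations $(p^{(1/2)})^2 \lambda^{(1)} = \sigma\mu^{(2)}$ and $p^{(1/2)}\lambda^{(1)} = \gamma^{(3/2)}$, the "anomalous" contributions of the form $\tfrac{\partial_w\eta}{\eta}\cdot\partial_\xi(\cdot)$ produced by the two lemmas are expected to cancel telescopically, and the remaining terms reassemble into $-\tfrac{1}{2}\partial_w\cdot\partial_\xi \gamma^{(3/2)}$. This mechanical but non-trivial check is the only substantive step, and it encodes the structural consistency between the self-adjoint paradifferential realizations of $\eta\lambda$ and $\eta\mu$ already established in Lemmas~\ref{lem-paralin:lambda-0-cal} and~\ref{lem-paralin:mu-1-cal} and that of the constructed $T_\gamma$.
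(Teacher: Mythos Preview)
Your approach has a genuine gap: with the choice $q=1$ the final ``mechanical check'' $\Imag\gamma^{(1/2)}=-\tfrac12\partial_w\!\cdot\!\partial_\xi\gamma^{(3/2)}$ \emph{fails}. Carrying out your computation with $P:=p^{(1/2)}=\sqrt{\sigma\mu^{(2)}/\lambda^{(1)}}$, $L:=\lambda^{(1)}$, and $\gamma^{(3/2)}=PL$, one finds (after the cancellations you anticipate) the residual
\[
\Imag\gamma^{(1/2)}+\tfrac12\,\partial_w\!\cdot\!\partial_\xi\gamma^{(3/2)}
=\tfrac12\{L,P\}-\tfrac12\,\tfrac{\partial_w\eta}{\eta}\cdot\partial_\xi(PL).
\]
Using the structural relation $\mu^{(2)}=a^2(\lambda^{(1)})^2$ with $a=2^{-1/2}(1+|\bar\nabla\eta|^2)^{-3/4}$ (so $P=\sqrt{\sigma}\,aL^{1/2}$, $a$ independent of $\xi$), this residual equals
\[
\tfrac{\sqrt{\sigma}}{2}\,L^{1/2}\,\partial_\xi L\cdot\Bigl(\partial_w a-\tfrac32\,a\,\tfrac{\partial_w\eta}{\eta}\Bigr),
\]
which vanishes only when $a=c\,\eta^{3/2}$ for a constant $c$---false for generic $\eta$. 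The ``anomalous'' $\tfrac{\partial_w\eta}{\eta}$ contributions from Lemmas~\ref{lem-paralin:lambda-0-cal} and~\ref{lem-paralin:mu-1-cal} do not cancel telescopically; they leave exactly this obstruction.

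This is precisely why the paper does \emph{not} take $q=1$. In the paper's scheme, $\Imag\gamma^{(1/2)}$ is \emph{imposed} a priori to satisfy the self-adjointness condition \eqref{eq-sym:cond-im-gamma-sub}; the compatibility of this choice with the conjugation relations then forces a transport-type equation for $q^{(0)}$ (see \eqref{eq-sym:main-alt-3} and the equation following it), whose solution $q^{(0)}=\eta^{1/2}a^{-1/3}$ absorbs the very residual computed above. Your two subprincipal matching equations together with the self-adjointness constraint form an \emph{overdetermined} system in $(p^{(-1/2)},\gamma^{(1/2)})$ when $q$ is frozen; the extra unknown $q^{(0)}$ is what restores solvability.
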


	The construction of $p,q,\gamma$ from $\lambda,\mu$ is exactly the same as in Section 4.2 of \cite{alazard2011water}, which we recall here. To begin with, we investigate the conditions on $\gamma$ such that $T_\gamma^* \sim T_\gamma$. By choosing 
	\begin{equation}\label{eq-sym:cond-im-gamma-prin}
		\Imag \gamma^{(3/2)}=0,
	\end{equation}
	one may apply Propostion \ref{prop-paralin:homo-sym-cond-aa} to deduce that
	\begin{equation}\label{eq-sym:cond-im-gamma-sub}
		\Imag \gamma^{(1/2)} = -\frac{1}{2}\partial_w\cdot\partial_\xi \gamma^{(3/2)}.
	\end{equation}
	
	We shall next determine $\Real\gamma^{(3/2)}$, $\Real\gamma^{(1/2)}$ and $q$ such that
	\begin{equation}\label{eq-sym:main-alt}
		T_q T_{\sigma\mu} T_\lambda \approx T_\gamma T_\gamma T_q,\ \ \text{and }T_p T_\lambda \approx T_\gamma T_q.
	\end{equation}
	This will provide a solution to the first two equivalences in \eqref{eq-sym:main-equi}, plugging the last formula of \eqref{eq-sym:main-alt} inside the first one and using that $\lambda$ has a parametrix, since $\lambda$ is elliptic (see Proposition \ref{prop-paralin:homo-sym-ellip-inverse}). By applying symbolic calculus \eqref{eq-paralin:homo-sym-comp}, the first equivalence of \eqref{eq-sym:main-alt} follows from
	\begin{equation*}
		q\sharp\left(\sigma\mu\sharp\lambda\right) = \left( \gamma\sharp\gamma \right) \sharp q.
	\end{equation*}
	By \eqref{eq-paralin:homo-sym-comp-formula}, we can write this equality as
	\begin{equation}\label{eq-sym:main-alt-1}
		\begin{aligned}
			&q^{(0)} \left(\sigma\mu\sharp\lambda\right) + \partial_\xi q^{(0)} \cdot D_w \left(\sigma\mu^{(2)}\lambda^{(1)}\right) + q^{(-1)}\left(\sigma\mu\sharp\lambda\right)^{(2)} \\
			=&\left( \gamma\sharp\gamma \right)q^{(0)} + \partial_\xi \left(\gamma^{(3/2)}\gamma^{(3/2)}\right)\cdot D_w q^{(0)} + \left( \gamma\sharp\gamma \right)^{(2)} q^{(-1)}.
		\end{aligned}
	\end{equation}
	By comparing the principal part of both sides (terms of degree $3$ in $\xi$), we are able to determine
	\begin{equation}\label{eq-sym:cond-gamma-prin}
		\gamma^{(3/2)} = \sqrt{\sigma\mu^{(2)}\lambda^{(1)}}.
	\end{equation}
	To deal with the subprincipal terms of \eqref{eq-sym:main-alt-1}, one may set
	\begin{equation}\label{eq-sym:cond-q-sub}
		q^{(-1)} = 0
	\end{equation}
	and \eqref{eq-sym:main-alt-1} becomes
	\begin{equation}\label{eq-sym:main-alt-2}
		\begin{aligned}
			q^{(0)} \left( \sigma\mu\sharp\lambda - \gamma\sharp\gamma \right) =&  \partial_\xi \left(\gamma^{(3/2)}\gamma^{(3/2)}\right)\cdot D_w q^{(0)} - \partial_\xi q^{(0)} \cdot D_w \left(\sigma\mu^{(2)}\lambda^{(1)}\right) \\
			=& \partial_\xi \left(\sigma\mu^{(2)}\lambda^{(1)}\right)\cdot D_w q^{(0)} - \partial_\xi q^{(0)} \cdot D_w \left(\sigma\mu^{(2)}\lambda^{(1)}\right) \\
			=&\frac{1}{i}\{ \sigma\mu^{(2)}\lambda^{(1)}, q^{(0)} \},
		\end{aligned}
	\end{equation}
	where $\{\cdot,\cdot\}$ is Poisson bracket. By comparing real and imaginary part of both sides of \eqref{eq-sym:main-alt-2} and setting
	\begin{equation}\label{eq-sym:cond-im-q-prin}
		\Imag q^{(0)} = 0,
	\end{equation}
	we have
	\begin{equation}\label{eq-sym:main-alt-3}
		\Real \left( \sigma\mu\sharp\lambda - \gamma\sharp\gamma \right) = 0,\ \ q^{(0)} \Imag \left( \sigma\mu\sharp\lambda - \gamma\sharp\gamma \right) = -\{ \sigma\mu^{(2)}\lambda^{(1)}, q^{(0)} \}.
	\end{equation}
	From the first equation in \eqref{eq-sym:main-alt-3}, we are able to solve $\Real \gamma^{(1/2)}$,
	\begin{equation}\label{eq-sym:cond-re-gamma-sub}
		2\gamma^{(3/2)} \Real\gamma^{(1/2)} = \sigma \left( \mu^{(2)}\Real\lambda^{(0)} + \Real\mu^{(1)}\lambda^{(1)} \right) = \sigma\mu^{(2)}\Real\lambda^{(0)},
	\end{equation}
	where the first equality holds since $\gamma^{(3/2)}$, $\lambda^{(1)}$, and $\mu^{(2)}$ are real with \eqref{eq-sym:cond-gamma-prin}, and the second follows from $\Real\mu^{(1)}=0$, which can be seen from the second formula in \eqref{eq-paralin:mu-compute}. Then \eqref{eq-sym:cond-re-gamma-sub}, together with \eqref{eq-sym:cond-gamma-prin} and \eqref{eq-sym:cond-im-gamma-sub}, completes the definition of $\gamma$. Meanwhile, to solve the second equation in \eqref{eq-sym:main-alt-3}, we first calculate the coefficient before $q^{(0)}$ by applying \eqref{eq-paralin:lambda-0-cal}, \eqref{eq-paralin:mu-1-cal}, and \eqref{eq-sym:cond-im-gamma-sub},
	\begin{align*}
		&\Imag\left( \sigma\mu\sharp\lambda - \gamma\sharp\gamma \right) \\
		=& - \sigma \partial_\xi\mu^{(2)}\cdot\partial_w\lambda^{(1)} + \sigma \Imag\mu^{(1)} \lambda^{(1)} + \sigma \mu^{(2)} \Imag\lambda^{(0)}  \\
		& + \partial_\xi\gamma^{(3/2)} \cdot \partial_w\gamma^{(3/2)} - 2\gamma^{(3/2)} \Imag\gamma^{(1/2)} \\
		=& - \sigma \partial_\xi\mu^{(2)}\cdot\partial_w\lambda^{(1)} - \frac{\sigma}{2} \partial_w\cdot\partial_\xi\mu^{(2)} \lambda^{(1)} - \frac{\sigma}{2} \frac{\partial_w\eta}{\eta}\cdot\partial_\xi\mu^{(2)} \lambda^{(1)} - \frac{\sigma}{2} \mu^{(2)} \partial_w\cdot\partial_\xi\lambda^{(1)}  \\
		& - \frac{\sigma}{2} \mu^{(2)} \frac{\partial_w\eta}{\eta}\cdot\partial_\xi\lambda^{(1)} + \partial_\xi\gamma^{(3/2)} \cdot \partial_w\gamma^{(3/2)} + \gamma^{(3/2)} \partial_w\cdot\partial_\xi \gamma^{(3/2)} \\
		=& - \frac{\sigma}{2} \partial_\xi\mu^{(2)}\cdot\partial_w\lambda^{(1)} - \left( \frac{\sigma}{2} \partial_\xi\mu^{(2)}\cdot\partial_w\lambda^{(1)} + \frac{\sigma}{2}\partial_w\cdot\partial_\xi\mu^{(2)} \lambda^{(1)} + \frac{\sigma}{2} \mu^{(2)} \partial_w\cdot\partial_\xi\lambda^{(1)} \right) \\
		&- \left( \frac{\sigma}{2} \frac{\partial_w\eta}{\eta}\cdot\partial_\xi\mu^{(2)} \lambda^{(1)} + \frac{\sigma}{2} \mu^{(2)} \frac{\partial_w\eta}{\eta}\cdot\partial_\xi\lambda^{(1)} \right) + \left( \partial_\xi\gamma^{(3/2)} \cdot \partial_w\gamma^{(3/2)} + \gamma^{(3/2)} \partial_w\cdot\partial_\xi \gamma^{(3/2)} \right) \\
		=& - \frac{\sigma}{2} \partial_\xi\mu^{(2)}\cdot\partial_w\lambda^{(1)} + \frac{\sigma}{2} \partial_w\mu^{(2)}\cdot\partial_\xi\lambda^{(1)} - \frac{\sigma}{2}\partial_w\cdot\partial_\xi(\mu^{(2)}\lambda^{(1)}) - \frac{\sigma}{2} \frac{\partial_w\eta}{\eta}\cdot\partial_\xi\left(\mu^{(2)} \lambda^{(1)}\right) \\
		&+ \frac{1}{2}\partial_w\cdot\partial_\xi(\gamma^{(3/2)}\gamma^{(3/2)}) \\
		=& \frac{\sigma}{2}\{\lambda^{(1)},\mu^{(2)}\} - \frac{\sigma}{2} \frac{\partial_w\eta}{\eta}\cdot\partial_\xi\left(\mu^{(2)} \lambda^{(1)}\right) + \frac{1}{2}\partial_w\cdot\partial_\xi(\gamma^{(3/2)}\gamma^{(3/2)} - \sigma\mu^{(2)}\lambda^{(1)}) \\
		=& \frac{\sigma}{2}\{\lambda^{(1)},\mu^{(2)}\} - \frac{\sigma}{2} \frac{\partial_w\eta}{\eta}\cdot\partial_\xi\left(\mu^{(2)} \lambda^{(1)}\right),
	\end{align*}
	where we apply \eqref{eq-sym:cond-gamma-prin} to obtain the last equality. The equation for $q^{(0)}$ becomes
	\begin{equation*}
		\frac{1}{2}q^{(0)} \left[\{\lambda^{(1)},\mu^{(2)}\} - \frac{\partial_w\eta}{\eta}\cdot\partial_\xi\left(\mu^{(2)} \lambda^{(1)}\right)\right] = -\{ \mu^{(2)}\lambda^{(1)}, q^{(0)} \}.
	\end{equation*}
	From \eqref{eq-paralin:def-lambda} and \eqref{eq-paralin:def-mu-2}, we have $\mu^{(2)} = a^2 \left(\lambda^{(1)}\right)^2$ with
	\begin{equation}\label{eq-sym:def-a}
		a:= \frac{1}{\sqrt{2}} \left( 1+|\bar{\nabla}\eta|^2 \right)^{-\frac{3}{4}},
	\end{equation}
	which is positive and independent of $\xi$. Furthermore, we look for $q^{(0)}$ also independent of $\xi$. Then the equation for $q^{(0)}$ can be written as
	\begin{equation*}
		\frac{\partial_wq^{(0)}}{q^{(0)}} \cdot\partial_\xi\lambda^{(1)} = \frac{1}{2}\frac{\partial_w\eta}{\eta}\cdot\partial_\xi\lambda^{(1)} - \frac{1}{3}\frac{\partial_w a}{a}\cdot\partial_\xi\lambda^{(1)},
	\end{equation*}
	an obvious solution of which reads
	\begin{equation}\label{eq-sym:cond-q-subprin}
		q^{(0)} = \eta^{\frac{1}{2}}a^{-\frac{1}{3}} = 2^{\frac{1}{6}} \sqrt{\eta}\left( 1+|\bar{\nabla}\eta|^2 \right)^{\frac{1}{4}}.
	\end{equation}
	Clearly, $q = q^{(0)}$ is elliptic.
	
	Till now, we have constructed $\gamma\in\Sigma^2$ and $q\in\Sigma^0$ satisfying \eqref{eq-sym:main-alt}, according to our discussion before, it remains to construct $p\in\Sigma^{\frac{1}{2}}$ with either $T_p T_\lambda \approx T_\gamma T_q$ or $T_q T_{\sigma\mu} \approx T_\gamma T_p$. Here we focus on the former one. In fact, since $\lambda\in\Sigma^1$ is elliptic, by Proposition \ref{prop-paralin:homo-sym-ellip-inverse}, there exists $\tilde{\lambda}\in\Sigma^{-1}$ such that $T_\lambda T_{\tilde{\lambda}} \approx T_\lambda T_{\tilde{\lambda}} \approx id$. Thus, by symbolic calculus (Proposition \ref{prop-paralin:homo-sym-cal}), one has
	\begin{equation*}
		T_p T_\lambda \approx T_\gamma T_q \Leftrightarrow T_p T_\lambda T_{\tilde{\lambda}} \approx T_\gamma T_q T_{\tilde{\lambda}} \Leftrightarrow T_p \approx T_{(\gamma\sharp q)\sharp\tilde{\lambda}},
	\end{equation*}
	which gives a possible definition of $p$:
	\begin{equation}\label{eq-sym:cond-p}
		p := (\gamma\sharp q)\sharp\tilde{\lambda} \in \Sigma^{\frac{1}{2}},
	\end{equation}
	whose principal symbol equals
	\begin{equation}\label{eq-sym:cond-p-prin}
		p^{(1/2)} = \frac{\gamma^{(3/2)}q^{(0)}}{\lambda^{(1)}}.
	\end{equation}
	Hence, $p$ is elliptic. 
	
	From Proposition \ref{prop-paralin:homo-sym-ellip-inverse}, the ellipticity of $p,q$ implies that $S$ has a parametrix $\tilde{S}$, in the sense that
	\begin{equation}\label{eq-sym:inverse-S}
		S\tilde{S} \approx \tilde{S}S \approx id,
	\end{equation}
	where
	\begin{equation}\label{eq-sym:def-tilde-S}
		\tilde{S} := \left(\begin{array}{cc}
			T_{\tilde{p}} & 0 \\
			0 & T_{\tilde{q}}
		\end{array}\right)
	\end{equation}
	with $\tilde{p}\in\Sigma^{-1/2}$ and $\tilde{q}\in\Sigma^{0}$ the elliptic symbols constructed by Proposition \ref{prop-paralin:homo-sym-ellip-inverse}.

	To end this section, we introduce some commutator estimates involving $S$ (see also Section 2.4 of \cite{alazard2014cauchy} for the case of planar water-wave).
	\begin{proposition}\label{prop-sym:commu-S-partial-t}
		Let $(\eta,\psi)\in H_R^{s+\frac{1}{2}}\times H^s$ with $s>3$. For all symbol $a\in\Sigma^m$ with $m\in\R$, the commutator $[T_a,\partial_t]$ is of order $m$, with
		\begin{equation}\label{eq-sym:commu-S-partial-t}
			\|[T_a,\partial_t]\|_{\mathcal{L}(H^r;H^{r-m})} \le C\left( \|\eta\|_{H^{s+\frac{1}{2}-}_R} \right),\ \ \forall r\in\R.
		\end{equation}
		In particular, the commutator between $S$ and $\partial_t$ is equal to
		\begin{equation*}
			\left[ S,\partial_t \right] = -\left(\begin{array}{cc}
				T_{\partial_tp} & 0 \\
				0 & T_{\partial_t q}
			\end{array}\right),
		\end{equation*}
		where $T_{\partial_t p}$ is of order $\frac{1}{2}$ and $T_{\partial_t q}$ is of order $0$.
	\end{proposition}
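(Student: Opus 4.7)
The plan is to reduce the commutator to a single paradifferential operator whose symbol can be computed explicitly. Since the quantization $b\mapsto T_b$ is linear and the symbol $a$ depends on $t$ only through $\eta(t,\cdot)$ and its spatial derivatives, Leibniz's rule yields the basic identity
\begin{equation*}
[T_a,\partial_t]u = T_a\partial_t u - \partial_t(T_a u) = -T_{\partial_t a}u.
\end{equation*}
It therefore suffices to prove that $T_{\partial_t a}$ is of order $m$ with operator norm controlled by $C(\|\eta\|_{H^{s+\frac{1}{2}-}_R})$ (implicitly also by $\|\psi\|_{H^{s-}}$, via the evolution equation below); the matrix formula for $[S,\partial_t]$ then follows from the diagonal structure of $S$ by specializing to $a=p\in\Sigma^{1/2}$ and $a=q\in\Sigma^{0}$.

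To extract $\partial_t a$ I would apply the chain rule to each piece of the decomposition $a=a^{(m)}+a^{(m-1)}$ from Definition \ref{def-paralin:homo-sym}. Writing $a^{(m)}=F(\eta,\nabla_w\eta;\xi)$, one obtains $\partial_t a^{(m)}=F_x\eta_t + \nabla_u F\cdot\nabla_w\eta_t$, homogeneous of degree $m$ in $\xi$. For $a^{(m-1)}=\sum_{|\alpha|\le 2}G_\alpha(\eta,\nabla_w\eta;\xi)\partial_w^\alpha\eta$, Leibniz produces chain-rule contributions from $G_\alpha$ (involving $\eta_t,\nabla_w\eta_t$) together with the source terms $G_\alpha\cdot\partial_w^\alpha\eta_t$ for $|\alpha|\le 2$. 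At this stage I invoke the first equation of \eqref{eq-paralin:WW}, $\eta_t=G(\eta)\psi$: by Corollary \ref{cor-pre:bound-DtN} this puts $\eta_t$ in $H^{s-1}$ with norm controlled by $C(\|\eta\|_{H^{s+\frac{1}{2}-}_R})\|\psi\|_{H^{s-}}$, so $\partial_w^\beta\eta_t\in H^{s-1-|\beta|}$ for every $|\beta|\le 2$.

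The last step is to interpret $T_{\partial_t a}$ as an operator of order $m$. For the principal part and for subprincipal terms with $|\alpha|\le 1$, Sobolev embedding in dimension two (using $s-2>1$) places $\eta_t$ and $\nabla_w\eta_t$ in $L^\infty\cap C^{0+}$, so the corresponding coefficients belong to $\Gamma^m_{0+}$ or $\Gamma^{m-1}_{0+}$, and Proposition \ref{prop-para:paradiff-symbol-class-sobo} yields the desired $H^r\to H^{r-m}$ bound. The main obstacle is the $|\alpha|=2$ piece $G_\alpha\cdot\partial_w^2\eta_t$, because $\partial_w^2\eta_t\in H^{s-3}$ need not embed into $L^\infty$ when $s$ is only slightly above $3$. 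I plan to resolve this by paralinearizing the product, writing
\begin{equation*}
G_\alpha\cdot\partial_w^2\eta_t = T_{G_\alpha}\partial_w^2\eta_t + T_{\partial_w^2\eta_t}G_\alpha + R(G_\alpha,\partial_w^2\eta_t),
\end{equation*}
and then using the high spatial regularity of the coefficient $G_\alpha(\eta,\nabla_w\eta;\xi)\in H^{s-\frac{1}{2}}_w$ (inherited from $\eta\in H^{s+\frac{1}{2}}_R$) together with Proposition \ref{prop-para:paraprod-bound} and Corollary \ref{cor-para:product-law} to absorb the regularity loss; each piece then produces an operator of order at most $m$ with a tame bound of the required form. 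Specializing $a=p$ and $a=q$ finally gives $[S,\partial_t]=-\mathrm{diag}(T_{\partial_t p},T_{\partial_t q})$, with $T_{\partial_t p}$ of order $\tfrac{1}{2}$ and $T_{\partial_t q}$ of order $0$.
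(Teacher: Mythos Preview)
Your proposal is correct and takes essentially the same approach as the paper: both compute $[T_a,\partial_t]=-T_{\partial_t a}$, invoke $\eta_t=G(\eta)\psi\in H^{s-1}$ from the first equation of \eqref{eq-intro:WW}, and then control the order of $T_{\partial_t a}$ via the chain rule and the Sobolev symbol-class bounds. The paper simply quotes Lemma \ref{lem-paralin:sym-deri-in-eta} (which already packages exactly this chain-rule argument, including the $|\alpha|=2$ piece via the product law and Proposition \ref{prop-para:paradiff-symbol-class-sobo}) rather than re-deriving it; your paralinearization of $G_\alpha\cdot\partial_w^2\eta_t$ is a harmless but unnecessary detour, since Corollary \ref{cor-para:product-law} already gives the product in $H^{s_0-3/2}$, after which Proposition \ref{prop-para:paradiff-symbol-class-sobo} applies directly.
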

	\begin{proof}
		We intend to apply Lemma \ref{lem-paralin:sym-deri-in-eta}, which gives the invariance of order of $T_a$ under derivative in $\eta$ in a weaker space $H^{s_0+\frac{1}{2}}$ with $\frac{3}{2}<s_0\le s$. By Leibniz rule, it suffices to check that $\eta_t$ belongs to this space for some $s_0 \in ]\frac{3}{2},s]$, which is a consequence of the first equation of \eqref{eq-intro:WW} together with Corollary \ref{cor-pre:bound-DtN} (with $s_0=s$), namely
		\begin{equation*}
			\eta_t = G(\eta)\psi \in H^{s-1}.
		\end{equation*}
	\end{proof}

	\begin{proposition}\label{prop-sym:commu-S-T-V-nabla}
		Let $(\eta,\psi)\in H_R^{s+\frac{1}{2}}\times H^s$ with $s>3$. Let $a = a^{(m)} + a^{(m-1)}$ be a symbol in $\Gamma^m_{3/2+} + \Gamma^{m-1}_{1/2+}$ with $m\in\R$ and
		\begin{equation*}
			\| \partial_\xi^\alpha a^{(m)}(\xi) \|_{C^{\frac{3}{2}+}} \langle\xi\rangle^{-m+|\alpha|} + \| \partial_\xi^\alpha a^{(m-1)}(\xi) \|_{C^{\frac{1}{2}+}} \langle\xi\rangle^{-m-1+|\alpha|} \lesssim C\left( \|\eta\|_{H^{s+\frac{1}{2}-}_R} \right),
		\end{equation*}
		for all $\alpha\in\N^2$ and $\xi\in\R^2$. Then we have, for all $r\le \min(s+m+\frac{1}{2},s+\frac{3}{2}-)$,
		\begin{equation}\label{eq-sym:commu-S-T-V-nabla}
			\left\|\left[T_a, T_V\cdot\bar{\nabla}\right]\right\|_{\mathcal{L}(H^{r};H^{r-m})} \le C\left( \|\eta\|_{H^{s+\frac{1}{2}-}_R}, \|\psi\|_{H^{s-}} \right).
		\end{equation}
		In particular, we have, for all $r\le s+\frac{1}{2}$,
		\begin{align}
			&\left\|\left[T_p, T_V\cdot\bar{\nabla}\right]\right\|_{\mathcal{L}(H^{r};H^{r-\frac{1}{2}})} \le C\left( \|\eta\|_{H^{s+\frac{1}{2}-}_R}, \|\psi\|_{H^{s-}} \right),\label{eq-sym:commu-T-p-T-V-nabla}\\
			&\left\|\left[T_q, T_V\cdot\bar{\nabla}\right]\right\|_{\mathcal{L}(H^{r};H^{r})} \le C\left( \|\eta\|_{H^{s+\frac{1}{2}-}_R}, \|\psi\|_{H^{s-}} \right).\label{eq-sym:commu-T-q-T-V-nabla}
		\end{align}
	\end{proposition}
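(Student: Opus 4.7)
The strategy is to reduce $[T_a, T_V\cdot\bar\nabla]$ to commutators covered by the symbolic calculus of Section \ref{subsect:paralin-pre} and the general paradifferential commutator estimate (Corollary \ref{cor-para:commu-esti}). Unfolding $\bar\nabla = \eta^{-1}e_\theta\partial_\theta + e_z\partial_z$ one has
$$T_V\cdot\bar\nabla u = T_{V^\theta}\bigl(\eta^{-1}\partial_\theta u\bigr) + T_{V^z}\partial_z u.$$
I would then use Bony's decomposition $\eta^{-1}\partial_\theta u = T_{\eta^{-1}}\partial_\theta u + T_{\partial_\theta u}\eta^{-1} + R(\eta^{-1},\partial_\theta u)$ together with the composition formula $T_{V^\theta}T_{\eta^{-1}}\approx T_{V^\theta/\eta}$ (Proposition \ref{prop-paralin:homo-sym-cal}), valid because $V^\theta/\eta\in H^{s-1-}$ by Lemma \ref{lem-paralin:esti-B-V-psi} and Corollary \ref{cor-para:product-law}. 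This recasts the operator as
$$T_V\cdot\bar\nabla = T_b\partial_\theta + T_c\partial_z + E,\qquad b:=V^\theta/\eta,\ c:=V^z\in H^{s-1-},$$
with $E$ a finite sum of paraproduct remainders of the form $T_{V^\theta}T_{\partial_\theta u}\eta^{-1}$ and $T_{V^\theta}R(\eta^{-1},\partial_\theta u)$, each gaining essentially one full derivative by Proposition \ref{prop-para:paraprod-bound}.

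Next, I would expand $[T_a, T_b\partial_\theta] = [T_a,T_b]\partial_\theta + T_b[T_a,\partial_\theta]$ and analogously for $\partial_z$. Because $b,c\in H^{s-1-}\subset\Gamma^0_{1+}$ tamely in $\|\eta\|_{H^{s+\frac{1}{2}-}_R}$ and $\|\psi\|_{H^{s-}}$, Corollary \ref{cor-para:commu-esti} produces $[T_a,T_b]\in\mathcal{L}(H^r;H^{r-m+1})$ with a tame constant; composition with $\partial_\theta$ then yields an operator of order $m$ on the desired range. The remaining piece reduces via the identity $[T_a,\partial_j] = -T_{\partial_j a}$ (the derivative acting on $w$ in the symbol): since $\partial_j a\in\Gamma^m_{1/2+}+\Gamma^{m-1}_{-1/2+}$, this is of order $m$ tamely, and left-multiplication by $T_b$ or $T_c$ preserves this order. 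The remainder $E$ itself is of order strictly less than zero on the allowed range of $r$, so $[T_a,E]$ is a fortiori of order $m$ with constant controlled by the stated norms.

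The two constraints on $r$ reflect the two distinct regularity inputs. The bound $r\le s+m+\frac{1}{2}$ is the range on which paradifferential operators with symbols in $\Sigma^m$ (depending on $\eta\in H^{s+\frac{1}{2}-}_R$) enjoy sharp composition and commutator bounds. The bound $r\le s+\frac{3}{2}-$ is driven by the Sobolev regularity of $V\in H^{s-1-}$ and $\eta^{-1}\in H^{s+\frac{1}{2}-}_{R^{-1}}$: once the paraproducts $T_b, T_c$ are composed with $\partial_\theta$ or $\partial_z$, they act boundedly $H^r\to H^{r-1}$ only up to this threshold. Specializing to $a=p\in\Sigma^{1/2}$ and $a=q\in\Sigma^0$ and observing that $s+\frac{1}{2}\le\min(s+m+\frac{1}{2}, s+\frac{3}{2}-)$ in each case yields \eqref{eq-sym:commu-T-p-T-V-nabla} and \eqref{eq-sym:commu-T-q-T-V-nabla}.

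\textbf{Main obstacle.} The difficulty is not the order count but the tameness of every constant that appears: each invocation of symbolic calculus must yield a bound depending only on the strictly smaller norms $\|\eta\|_{H^{s+\frac{1}{2}-}_R}$ and $\|\psi\|_{H^{s-}}$, never on $\|\eta\|_{H^{s+\frac{1}{2}}_R}$ or $\|\psi\|_{H^{s}}$. This is possible because the symbols in $\Sigma^m$ involve at most second derivatives of $\eta$, and the hypothesis $s>3$ guarantees $\partial_w^2\eta\in H^{s-\frac{3}{2}-}\subset C^{\frac{1}{2}+}$, just meeting the regularity threshold required by Corollary \ref{cor-para:commu-esti}; the ``minus'' symbols in the statement are exactly what is needed to stay above this threshold uniformly in all of the subroutines above.
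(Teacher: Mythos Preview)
Your proposal is correct and follows essentially the same route as the paper. Both arguments decompose $T_V\cdot\bar\nabla$ via Bony's paraproduct on the $\eta^{-1}\partial_\theta$ factor, reduce the principal part to a commutator of the form $[T_a,T_{\eta^{-1}V^\theta}]\partial_\theta$ (respectively $[T_a,T_{V^z i\xi_z}]$) handled by Corollary~\ref{cor-para:commu-esti}, isolate the term $T_{\partial_\theta a}$ coming from $[\,T_a,\partial_\theta\,]$, and estimate the paraproduct remainders $T_{\partial_\theta u}\eta^{-1}$, $R(\eta^{-1},\partial_\theta u)$ (and their analogues with $T_a u$ in place of $u$) by Proposition~\ref{prop-para:paraprod-bound}. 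The paper groups the terms slightly differently---it keeps $T_{V^\theta}T_{\eta^{-1}}$ rather than collapsing to $T_{V^\theta/\eta}$ at the outset, and treats the $z$-component in one stroke as $[T_a,T_{V^z i\xi_z}]$ rather than via $[T_a,T_{V^z}]\partial_z+T_{V^z}[T_a,\partial_z]$---but these are cosmetic choices leading to the same estimates with the same tame constants.
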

	\begin{proof}
		Let $u$ be any function in $H^{r}$ with $r\le s+m+\frac{1}{2}$. One may write
		\begin{align*}
			\left[T_a, T_V\cdot\bar{\nabla}\right]u =& \left[T_a, T_{V^z i\xi_z}\right]u + T_a T_{V^\theta} \left( T_{\eta^{-1}}\partial_\theta u + T_{\partial_\theta u}\eta^{-1} + R(\eta^{-1},\partial_\theta u) \right) \\
			&- T_{V^\theta} \left( T_{\eta^{-1}}\partial_\theta (T_au) + T_{\partial_\theta (T_au)}\eta^{-1} + R(\eta^{-1},\partial_\theta (T_au)) \right) \\
			=& \left[T_a, T_{V^z i\xi_z}\right]u + [T_a, T_{V^\theta}T_{\eta^{-1}}]\partial_\theta u + T_a T_{V^\theta} \left( T_{\partial_\theta u}\eta^{-1} + R(\eta^{-1},\partial_\theta u) \right) \\
			&- T_{V^\theta} \left( T_{\eta^{-1}}T_{\partial_\theta a}u + T_{\partial_\theta (T_au)}\eta^{-1} + R(\eta^{-1},\partial_\theta (T_au)) \right).
		\end{align*}
		We shall check that each term on the right hand side lies in $H^{r-m}$. The first term $\left[T_a, T_{V^z i\xi_z}\right]u \in H^{r-m}$ is no more than a consequence of commutator estimate (Corollary \ref{cor-para:commu-esti}). For the second term $[T_a, T_{V^\theta}T_{\eta^{-1}}]\partial_\theta u$, one may replace $T_{V^\theta}T_{\eta^{-1}}$ by $T_{\eta^{-1}V^\theta}$, since their difference is of order $-1-$ leading to an error term in $H^{r-m+}$, and apply again Corollary \ref{cor-para:commu-esti}. The third term can be estimated via Proposition \ref{prop-para:paraprod-bound},
		\begin{align*}
			&\| T_a T_{V^\theta} \left( T_{\partial_\theta u}\eta^{-1} + R(\eta^{-1},\partial_\theta u) \right) \|_{H^{r-m}} \\
			\le& C\left( \|\eta\|_{H^{s+\frac{1}{2}-}_R}, \|\psi\|_{H^{s-}} \right) \| T_{\partial_\theta u}\eta^{-1} + R(\eta^{-1},\partial_\theta u) \|_{H^{r}} \\
			\le& C\left( \|\eta\|_{H^{s+\frac{1}{2}-}_R}, \|\psi\|_{H^{s-}} \right) \|u\|_{H^{r}} \|\eta^{-1}\|_{H^{s+\frac{1}{2}-}_{R^{-1}}} \le C\left( \|\eta\|_{H^{s+\frac{1}{2}-}_R}, \|\psi\|_{H^{s-}} \right) \|u\|_{H^{r}},
		\end{align*}
		which is bounded due to Proposition \ref{prop-para:paralin}. To deal with the last term, we observe that $T_au\in H^{r-m}$ ,and that $T_{\partial_\theta a}$ has the same order as $T_a$ due to Proposition \ref{prop-para:paradiff-symbol-class-sobo} and the fact that $\eta_\theta\in H^{s-\frac{1}{2}}$. Then, again by Proposition \ref{prop-para:paraprod-bound} and \ref{prop-para:paralin}, we have
		\begin{align*}
			&\| T_{V^\theta} \left( T_{\eta^{-1}}T_{\partial_\theta a}u + T_{\partial_\theta (T_au)}\eta^{-1} + R(\eta^{-1},\partial_\theta (T_au)) \right) \|_{H^{r-m}} \\
			\le& C\left( \|\eta\|_{H^{s+\frac{1}{2}-}_R}, \|\psi\|_{H^{s-}} \right) \| T_{\eta^{-1}}T_{\partial_\theta a}u + T_{\partial_\theta (T_au)}\eta^{-1} + R(\eta^{-1},\partial_\theta (T_au)) \|_{H^{r-m}} \\
			\le& C\left( \|\eta\|_{H^{s+\frac{1}{2}-}_R}, \|\psi\|_{H^{s-}} \right) \left( \|u\|_{H^{r}} + \|\partial_\theta (T_au)\|_{H^{r-m-1}} \|\eta^{-1}\|_{H^{s+\frac{1}{2}-}_{R^{-1}}} \right) \\
			\le& C\left( \|\eta\|_{H^{s+\frac{1}{2}-}_R}, \|\psi\|_{H^{s-}} \right) \|u\|_{H^{r}}.
		\end{align*}
		Note that in the second inequality, we need the assumption $r\le \min(s+m+\frac{1}{2},s+\frac{3}{2}-)$ to apply Proposition \ref{prop-para:paraprod-bound}.
	\end{proof}

	\section{Cauchy problem}\label{Sect:cauchy}
	
	In this section, we shall prove Theorem \ref{thm-intro:main} and \ref{thm-intro:flow-map}. Recall that in Section \ref{Sect:paralin}, we have managed to reformulate the equation \eqref{eq-intro:WW} as \eqref{eq-paralin:WW},
	\begin{equation*}
		\left(\partial_t + T_V\cdot\bar{\nabla}\right)
		\left(\begin{array}{c}
			\eta \\
			\psi
		\end{array}\right) + 
		\mathcal{L}\left(\begin{array}{c}
			\eta \\
			\psi
		\end{array}\right) = f,
	\end{equation*}
	where the linear operator $\mathcal{L}$ admits a symmetrizer $S$ defined by \eqref{eq-sym:def-S} (see Proposition \ref{prop-sym:main}). Moreover, provided that $(\eta,\psi)\in H_R^{s+\frac{1}{2}}\times H^s$ with $s>3$, $f =f(\eta,\psi)$ belongs to $H^{s+\frac{1}{2}}\times H^s$ and has local Lipschitz regularity from $H_R^{s_0+\frac{1}{2}}\times H^{s_0}$ to $H^{s_0+\frac{1}{2}}\times H^{s_0}$, for all $\frac{3}{2}<s_0<s-\frac{3}{2}$ (see Proposition \ref{prop-paralin:source-Lip}).
	
	Based on the definition \eqref{eq-paralin:def-L} of $\mathcal{L}$, we shall introduced a mollified version $\mathcal{L}^\epsilon$ by inserting proper smoothing operators (see \eqref{eq-cauchy:def-L-eps}). The resulting approximate system \eqref{eq-cauchy:WW-eps}, which is nothing else than an ODE, has a unique solution $(\eta^\epsilon,\psi^\epsilon)$ on the time interval $[0,T_\epsilon[$ due to Cauchy-Lipschitz Theorem. Energy estimates (Proposition \ref{prop-cauchy:energy-esti}) guarantee that the lifespan $T_\epsilon$ admits a uniform lower bound $T$ (see Corollary \ref{cor-cauchy:low-bd-lifespan}), and the approximate solution $(\eta^\epsilon,\psi^\epsilon)$ is bounded in $H_R^{s+\frac{1}{2}}\times H^{s}$ with $s>3$, and converges on $[0,T]$ in a weaker norm $H_R^{s_0+\frac{1}{2}}\times H^{s_0}$ where $\frac{3}{2}<s_0<s-\frac{3}{2}$. Consequently, $(\eta^\epsilon,\psi^\epsilon)$ tends to a solution $(\eta,\psi)$ to \eqref{eq-intro:WW} and the uniqueness follows from a similar argument used in the convergence of approximate solution. Note that, by classical interpolation argument, the limit of approximate solution has only $L^\infty$ regularity in time, which can be optimized to $C^0$ via nonlinear interpolation introduced in \cite{alazard2024nonlinear}, which completes the proof of Theorem \ref{thm-intro:main}. As a by-product, we can also deduce the continuity of flow map, i.e. Theorem \ref{thm-intro:flow-map}.
	
	Before starting the proof, we introduce some notations and conventions used in this section,
	\begin{definition}\label{def-cauchy:fct-space}
		For $s\in\R$, $R>0$, and $T>0$, $\mathcal{X}^s_R$ denotes the collection of all $(\eta,\psi)$ such that $\psi\in H^{s}$ and $\eta-R \in H^{s+\frac{1}{2}}$, where we further assume that $\eta$ satisfy hypotheses \eqref{hyp-intro:bounds} and \eqref{hyp-intro:perturb} (or \eqref{hyp-intro:period}). For all $(\eta_1,\psi_1),(\eta_2,\psi_2)\in\mathcal{X}^s$, we denote
		\begin{equation}\label{eq-cauchy:norm-dist}
			d_{\mathcal{X}^s_R}\left( (\eta_1,\psi_1),(\eta_2,\psi_2) \right) := \|\eta_1-\eta_2\|_{H^{s+\frac{1}{2}}} + \|\psi_1-\psi_2\|_{H^{s}}.
		\end{equation}
		It is clear that $(\mathcal{X}^s_R,d_{\mathcal{X}^s_R})$ is a complete metric space. Note that in periodic case (i.e. with hypothesis \eqref{hyp-intro:period}), the normalization $R$ should be omitted and $(\mathcal{X}^s,d_{\mathcal{X}^s})$ becomes a Banach space. Moreover, if $(\eta,\psi)$ depends on time $t\in[0,T[$, we denote by $L^\infty_T\mathcal{X}^s_R$ the space of $L^\infty$-functions with value in $\mathcal{X}^s_R$ such that
		\begin{equation*}
			\sup_{t\in[0,T[} \|(\eta(t)-R,\psi(t))\|_{H^{s+\frac{1}{2}}\times H^s} < +\infty.
		\end{equation*}
		Furthermore, we denote by $C_T\mathcal{X}^s$ the subspace of $L^\infty_T\mathcal{X}^s$ where $(\eta(t),\psi(t))$ is continuous in time w.r.t. the distance \eqref{eq-cauchy:norm-dist}.
	\end{definition}\index{X@$\mathcal{X}^s_R$ Data space} \index{L@$L^\infty_T\mathcal{X}^s_R$ Solution space (bounded in time)} \index{C@$C_T\mathcal{X}^s_R$ Solution space (continuous in time)}
	
	In the sequel, we focus on the perturbative case (with hypothesis \eqref{hyp-intro:perturb}) and the proof for periodic case can be obtained simply by deleting all the normalization.
	
	As in previous sections, for linear operators $A,B$ of order $m\in\R$, we write
	\begin{equation}\label{eq-cauchy:equi-paradiff}
		A\approx B \Leftrightarrow
		\begin{aligned}
			& A-B\text{ is of order }m-\frac{3}{2}-, \\
			&\text{with operator norm bounded by }C\left(\|(\eta,\psi)\|_{H^{s+\frac{1}{2}}_R\times H^{s}}\right).
		\end{aligned}
	\end{equation}

	\subsection{Construction of approximate solutions}\label{subsect:appro}
	
	Recall that $\mathcal{L}$ is defined by \eqref{eq-paralin:def-L},
	\begin{equation*}
		\mathcal{L} := \left(\begin{array}{cc}
			I & 0 \\
			T_B & I
		\end{array}\right)
		\left(\begin{array}{cc}
			0 & -T_\lambda \\
			\sigma T_\mu & 0
		\end{array}\right)
		\left(\begin{array}{cc}
			I & 0 \\
			-T_B & I
		\end{array}\right).
	\end{equation*}
	As in \cite{alazard2011water}, we introduce the mollifier $J_\epsilon := T_{j_\epsilon}$ ($0\le\epsilon\ll 1$), where the symbol $j_\epsilon$ is given by
	\begin{equation}\label{eq-cauchy:def-j-eps}
		j_\epsilon = j^{(0)}_\epsilon + j^{(-1)}_\epsilon,\ \ j^{(0)}_\epsilon := \exp\left(-\epsilon\gamma^{(3/2)}\right),\ \ j^{(-1)}_\epsilon:= -\frac{i}{2}\partial_w\cdot\partial_\xi j^{(0)}_\epsilon.
	\end{equation}
	Recall that $\gamma^{(3/2)}$ is constructed in Proposition \ref{prop-sym:main}. As a consequence of Corollary \ref{cor-paralin:homo-sym-commu} and \ref{prop-paralin:homo-sym-cond-aa}, all the symbols constructed as above satisfy the following property.
	\begin{lemma}\label{lem-cauchy:symbol-fct-of-gamma}
		Given $m\in\R$, we denote by $\mathcal{G}^{m}$ the collection of real functions $F = F(\epsilon,\rho)$ which are smooth on $\{\rho>0\}$ with parameter $\epsilon\in]0,1]$, satisfying that, for all $k\in\N$ and $\rho>0$,
		\begin{equation}\label{eq-cauchy:symbol-fct-of-gamma-cond}
			\sup_{\epsilon\in]0,1]} \left| \partial^k_\rho F(\epsilon,\rho) \right| \lesssim \rho^{m-k}.
		\end{equation}
		Let $\eta\in L^\infty_TH^{s+\frac{1}{2}-}_R$ with $s>3$ and $T>0$. For all $F\in\mathcal{G}^m$, we define symbol $a_{F,\epsilon}$ as
		\begin{equation}\label{eq-cauchy:symbol-fct-of-gamma-def}
			a_{F,\epsilon} := a_{F,\epsilon}^{(3m/2)} + a_{F,\epsilon}^{(3m/2-1)},\ a_{F,\epsilon}^{(3m/2)} = F(\epsilon,\gamma^{(3/2)}),\ a_{F,\epsilon}^{(3m/2-1)} = -\frac{i}{2}\partial_w\cdot\partial_\xi a_{F,\epsilon}^{(3m/2)}.
		\end{equation}
		Then $a_{F,\epsilon}$ belongs to $\Gamma^{3m/2}_{3/2+} + \Gamma^{3m/2-1}_{1/2+}$ uniformly in $\epsilon$ and $t\in[0,T[$ with 
		\begin{equation}\label{eq-cauchy:symbol-fct-of-gamma-aa}
			T_{a_{F,\epsilon}} \approx T_{a_{F,\epsilon}}^*.
		\end{equation}
		Moreover, for any $F,G\in\mathcal{G}^m$, we have
		\begin{equation}\label{eq-cauchy:symbol-fct-of-gamma-sommu}
			T_{a_{F,\epsilon}} T_{a_{G,\epsilon}} \approx T_{a_{G,\epsilon}} T_{a_{F,\epsilon}}.
		\end{equation}
	\end{lemma}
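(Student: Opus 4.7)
\emph{Plan of proof.} The three claims split naturally: (i) the symbol class membership, (ii) the self-adjointness $T_{a_{F,\epsilon}}\approx T_{a_{F,\epsilon}}^*$, and (iii) the commutativity \eqref{eq-cauchy:symbol-fct-of-gamma-sommu}. All three rest on the fact that $\gamma^{(3/2)}=\sqrt{\sigma\mu^{(2)}\lambda^{(1)}}$, constructed in Proposition \ref{prop-sym:main}, is real, elliptic of order $3/2$ in $\xi$, and has coefficients that are smooth functions of $(\eta,\nabla_w\eta)$; together with the assumption that the bounds on $F$ (and $G$) in \eqref{eq-cauchy:symbol-fct-of-gamma-cond} are uniform in $\epsilon\in\,]0,1]$.

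\emph{Step 1 (symbol class).} I will first verify that $a_{F,\epsilon}^{(3m/2)}=F(\epsilon,\gamma^{(3/2)})$ lies in $\Gamma^{3m/2}_{3/2+}$ uniformly in $\epsilon$ and $t$. The ellipticity of $\gamma^{(3/2)}$ (i.e.\ $\gamma^{(3/2)}\sim|\xi|^{3/2}$) combined with Fa\`a di Bruno and the bound $|\partial_\rho^k F(\epsilon,\rho)|\lesssim\rho^{m-k}$ gives, for any $\alpha\in\N^2$,
\begin{equation*}
  \bigl|\partial_\xi^\alpha\, F\bigl(\epsilon,\gamma^{(3/2)}\bigr)\bigr|\lesssim |\xi|^{3m/2-|\alpha|},
\end{equation*}
uniformly in $\epsilon$, since each derivative in $\xi$ lowers the order by one through $\partial_\xi\gamma^{(3/2)}$. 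The $C^{3/2+}_w$ regularity of the resulting symbol follows from the Sobolev embedding $H^{s-1/2-}\hookrightarrow C^{3/2+}$ applied to the coefficients of $\gamma^{(3/2)}$ (recall $s>3$), together with the chain rule. Hence $a_{F,\epsilon}^{(3m/2)}\in\Gamma^{3m/2}_{3/2+}$ with constants controlled by a smooth function of $\|\eta\|_{H^{s+\frac{1}{2}-}_R}$. The subprincipal term $a_{F,\epsilon}^{(3m/2-1)}=-\frac{i}{2}\partial_w\cdot\partial_\xi a_{F,\epsilon}^{(3m/2)}$ then belongs to $\Gamma^{3m/2-1}_{1/2+}$ by construction, since taking $\partial_w$ costs one Hölder derivative and $\partial_\xi$ one power of $|\xi|$.

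\emph{Step 2 (self-adjointness).} By construction $F$ is real and, by \eqref{eq-sym:cond-im-gamma-prin}, $\gamma^{(3/2)}$ is real, so $a_{F,\epsilon}^{(3m/2)}$ is real, i.e.\ $\Imag a_{F,\epsilon}^{(3m/2)}=0$. Moreover, the definition of $a_{F,\epsilon}^{(3m/2-1)}$ gives exactly
\begin{equation*}
  \Imag a_{F,\epsilon}^{(3m/2-1)}=-\tfrac{1}{2}\partial_w\cdot\partial_\xi a_{F,\epsilon}^{(3m/2)},
\end{equation*}
which is the third condition of \eqref{eq-paralin:homo-sym-cond-aa}. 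Thus Proposition \ref{prop-paralin:homo-sym-cond-aa} yields \eqref{eq-cauchy:symbol-fct-of-gamma-aa}.

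\emph{Step 3 (commutativity).} For two functions $F,G\in\mathcal{G}^m$, Corollary \ref{cor-paralin:homo-sym-commu} gives
\begin{equation*}
  [T_{a_{F,\epsilon}},T_{a_{G,\epsilon}}]\approx T_{-i\{a_{F,\epsilon}^{(3m/2)},a_{G,\epsilon}^{(3m/2)}\}}.
\end{equation*}
Writing $a_{F,\epsilon}^{(3m/2)}=F(\epsilon,\gamma^{(3/2)})$ and similarly for $G$, the chain rule gives
\begin{equation*}
  \{F(\epsilon,\gamma^{(3/2)}),G(\epsilon,\gamma^{(3/2)})\}=\partial_\rho F\,\partial_\rho G\,\bigl(\partial_\xi\gamma^{(3/2)}\cdot\partial_w\gamma^{(3/2)}-\partial_w\gamma^{(3/2)}\cdot\partial_\xi\gamma^{(3/2)}\bigr)=0.
\end{equation*}
Hence $[T_{a_{F,\epsilon}},T_{a_{G,\epsilon}}]$ is of order $3m-3/2-$, which is the content of \eqref{eq-cauchy:symbol-fct-of-gamma-sommu} with the equivalence relation \eqref{eq-cauchy:equi-paradiff}.

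The main technical point will be Step 1: verifying that the estimate $|\partial_\rho^k F(\epsilon,\rho)|\lesssim\rho^{m-k}$ — applied on the range $\rho=\gamma^{(3/2)}\gtrsim|\xi|^{3/2}\to\infty$ — indeed survives after composition with $\gamma^{(3/2)}$ and gives symbol estimates uniform in $\epsilon\in\,]0,1]$. This is exactly what is needed to handle the mollifier $j_\epsilon$ in \eqref{eq-cauchy:def-j-eps}: for $F(\epsilon,\rho)=e^{-\epsilon\rho}$ one has $|\partial_\rho^k F|=\epsilon^k e^{-\epsilon\rho}\le(\epsilon\rho)^k e^{-\epsilon\rho}\rho^{-k}\lesssim\rho^{-k}$, so $F\in\mathcal{G}^0$. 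The remaining regularity bookkeeping in $w$ is routine once the ellipticity of $\gamma^{(3/2)}$ and the Sobolev regularity of $\eta$ are used, and Steps 2 and 3 are then short applications of the symbolic calculus of Section \ref{subsect:paralin-pre}.
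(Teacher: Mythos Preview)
Your proposal is correct and follows essentially the same approach as the paper: the paper likewise checks the symbol class membership directly from the ellipticity of $\gamma^{(3/2)}$ and condition \eqref{eq-cauchy:symbol-fct-of-gamma-cond}, obtains \eqref{eq-cauchy:symbol-fct-of-gamma-aa} from Proposition \ref{prop-paralin:homo-sym-cond-aa} using that $F$ is real, and deduces \eqref{eq-cauchy:symbol-fct-of-gamma-sommu} by reducing (via \eqref{eq-paralin:homo-sym-comp}) to $a_{F,\epsilon}\sharp a_{G,\epsilon}=a_{G,\epsilon}\sharp a_{F,\epsilon}$, which is exactly your Poisson-bracket computation. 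One minor remark: Proposition \ref{prop-paralin:homo-sym-cond-aa} and Corollary \ref{cor-paralin:homo-sym-commu} are stated for the class $\Sigma^m$ of \emph{homogeneous} symbols, while $a_{F,\epsilon}$ is not homogeneous in $\xi$; the paper makes the same tacit extension, and it is harmless since those results only use the symbolic calculus of Proposition \ref{prop-para:paradiff-cal-sym}, which applies to the larger class $\Gamma^{3m/2}_{3/2+}+\Gamma^{3m/2-1}_{1/2+}$ you establish in Step~1.
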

	\begin{proof}
		The uniform-in-$(\epsilon,t)$ boundedness of $a_{F\epsilon}^{(3m/2)}$ and $a_{F\epsilon}^{(3m/2-1)}$ in $\Gamma^{3m/2}_{3/2+}$ and $\Gamma^{3m/2-1}_{1/2+}$, respectively, can be directly checked from the definition \eqref{eq-sym:cond-gamma-prin} of $\gamma^{(3/2)}$ and the condition \eqref{eq-cauchy:symbol-fct-of-gamma-cond} of $F$. Since $F$ is real-valued, $a_{F\epsilon}^{(3m/2)}$ is also real. Then an application of Proposition \ref{prop-paralin:homo-sym-cond-aa} gives \eqref{eq-cauchy:symbol-fct-of-gamma-aa}. To prove the last equivalence \eqref{eq-cauchy:symbol-fct-of-gamma-sommu}, we first reduce it to $a_{F,\epsilon}\sharp a_{G,\epsilon} = a_{G,\epsilon}\sharp a_{F,\epsilon}$, thanks to \eqref{eq-paralin:homo-sym-comp}. In fact, by formula \eqref{eq-paralin:homo-sym-comp-formula} and definition \eqref{eq-cauchy:symbol-fct-of-gamma-def}, we have
		\begin{align*}
			& a_{F,\epsilon}\sharp a_{G,\epsilon} - a_{G,\epsilon}\sharp a_{F,\epsilon} \\
			=& \partial_\xi a_{F,\epsilon}^{(3m/2)} \cdot D_w a_{G,\epsilon}^{(3m/2)} - \partial_\xi a_{G,\epsilon}^{(3m/2)} \cdot D_w a_{F,\epsilon}^{(3m/2)} \\
			=& F'(\epsilon,\gamma^{(3/2)}) G'(\epsilon,\gamma^{(3/2)}) \partial_\xi \gamma^{(3/2)}\cdot D_w\gamma^{(3/2)} - G'(\epsilon,\gamma^{(3/2)}) F'(\epsilon,\gamma^{(3/2)}) \partial_\xi \gamma^{(3/2)}\cdot D_w\gamma^{(3/2)} \\
			=&0.
		\end{align*}
	\end{proof}
	
	In particular, by choosing $F(\epsilon,\rho) = \exp(-\epsilon\rho)$ and $G(\epsilon,\rho)=\rho$, we have the following property on $j_\epsilon$ constructed in \eqref{eq-cauchy:def-j-eps},
	\begin{lemma}\label{lem-cauchy:esti-j-eps}
		Let $\eta\in L^\infty_TH^{s+\frac{1}{2}-}_R$ with $s>3$ and $T>0$. Then the symbol $j_\epsilon$ defined above is elliptic and belong to $\Gamma^0_{3/2+} + \Gamma^{-1}_{1/2+}$ uniformly in $\epsilon\ge 0$ and $t\in[0,T[$. Moreover, we have
		\begin{equation}\label{eq-cauchy:commu-j-eps}
			J_\epsilon T_\gamma \approx T_\gamma J_\epsilon,\ \ J_\epsilon^* \approx J_\epsilon,
		\end{equation}
		uniformly in $\epsilon,t$.\index{J@$J_\epsilon$ Smoothing opertaor}\index{j@$j_\epsilon$ Symbol of $J_\epsilon$}
	\end{lemma}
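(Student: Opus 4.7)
The plan is to reduce everything to Lemma~\ref{lem-cauchy:symbol-fct-of-gamma} by choosing appropriate test functions from the class $\mathcal{G}^m$. More precisely, I take $F(\epsilon,\rho)=e^{-\epsilon\rho}$ (which should correspond to $j_\epsilon$, yielding $m=0$) and $G(\epsilon,\rho)=\rho$ (which essentially recovers $\gamma^{(3/2)}$, yielding $m=1$). The three claims of the lemma then all reduce to the three conclusions \eqref{eq-cauchy:symbol-fct-of-gamma-cond}--\eqref{eq-cauchy:symbol-fct-of-gamma-sommu} of Lemma~\ref{lem-cauchy:symbol-fct-of-gamma}, modulo a small discrepancy between $\gamma^{(1/2)}$ and the canonical subprincipal symbol in \eqref{eq-cauchy:symbol-fct-of-gamma-def}, which I handle separately.

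First, I verify $F(\epsilon,\rho)=e^{-\epsilon\rho}\in\mathcal{G}^0$. For $k=0$ one has $|F|\le 1\lesssim\rho^0$, and for $k\ge 1$,
\begin{equation*}
\bigl|\partial_\rho^k F(\epsilon,\rho)\bigr|=\epsilon^k e^{-\epsilon\rho}=\rho^{-k}\bigl[(\epsilon\rho)^k e^{-\epsilon\rho}\bigr]\lesssim\rho^{-k},
\end{equation*}
since $x\mapsto x^k e^{-x}$ is bounded on $[0,\infty[$. Applying Lemma~\ref{lem-cauchy:symbol-fct-of-gamma} therefore yields $j_\epsilon\in\Gamma^{0}_{3/2+}+\Gamma^{-1}_{1/2+}$ uniformly in $\epsilon\ge 0$ and $t\in[0,T[$, together with the self-adjointness property $J_\epsilon\approx J_\epsilon^*$ from \eqref{eq-cauchy:symbol-fct-of-gamma-aa}. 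The ellipticity claim is the easiest point: at $\epsilon=0$ we have $j_0^{(0)}=1$, and for $\epsilon>0$ the principal symbol $j_\epsilon^{(0)}=e^{-\epsilon\gamma^{(3/2)}}$ is strictly positive, which is all that the parametrix construction in the sequel will use.

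For the commutation $J_\epsilon T_\gamma\approx T_\gamma J_\epsilon$, I first note that $G(\epsilon,\rho)=\rho\in\mathcal{G}^1$ (trivially), and the associated symbol $a_{G,\epsilon}$ produced by Lemma~\ref{lem-cauchy:symbol-fct-of-gamma} has principal part $\gamma^{(3/2)}$ and subprincipal part $-\tfrac{i}{2}\partial_w\cdot\partial_\xi\gamma^{(3/2)}$. Comparing with $\gamma$ constructed in Proposition~\ref{prop-sym:main}, the identity \eqref{eq-sym:cond-im-gamma-sub} shows that $\gamma=a_{G,\epsilon}+\Real\gamma^{(1/2)}$. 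Applying \eqref{eq-cauchy:symbol-fct-of-gamma-sommu} of Lemma~\ref{lem-cauchy:symbol-fct-of-gamma} gives $J_\epsilon T_{a_{G,\epsilon}}\approx T_{a_{G,\epsilon}}J_\epsilon$, so what remains is to show
\begin{equation*}
[J_\epsilon,T_{\Real\gamma^{(1/2)}}]\text{ is of order }0-
\end{equation*}
uniformly in $\epsilon$, which follows from the commutator estimate (Corollary~\ref{cor-para:commu-esti}) applied with $\Real\gamma^{(1/2)}\in\Gamma^{1/2}_{1/2+}$ and $j_\epsilon\in\Gamma^{0}_{3/2+}+\Gamma^{-1}_{1/2+}$, giving an operator of order $\tfrac12+0-(\tfrac12+)=0-$. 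This is precisely the error allowed by the relation $\approx$ at total order $3/2$.

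The main (and essentially only) obstacle is bookkeeping of the uniformity in $\epsilon\ge 0$: every constant entering through Propositions~\ref{prop-paralin:homo-sym-cal} and~\ref{prop-para:paradiff-cal-sym} must be independent of $\epsilon$. This is guaranteed precisely because the class $\mathcal{G}^0$ encoded the bound $\sup_\epsilon|\partial_\rho^k F(\epsilon,\rho)|\lesssim\rho^{-k}$, which translates, via the chain rule applied to $\gamma^{(3/2)}$, into seminorm bounds on $j_\epsilon^{(0)}$ that do not degenerate as $\epsilon\to 0$ or as $\epsilon\to\infty$. Once this uniformity is in place, the rest of the argument is a direct invocation of Lemma~\ref{lem-cauchy:symbol-fct-of-gamma}; no explicit manipulation of the Poisson bracket $\partial_\xi\gamma^{(3/2)}\cdot D_w j_\epsilon^{(0)}-\partial_\xi j_\epsilon^{(0)}\cdot D_w\gamma^{(3/2)}$ (which vanishes identically since $j_\epsilon^{(0)}$ is a scalar function of $\gamma^{(3/2)}$) needs to be redone.
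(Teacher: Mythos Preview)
Your proof is correct and follows exactly the approach indicated in the paper, namely applying Lemma~\ref{lem-cauchy:symbol-fct-of-gamma} with $F(\epsilon,\rho)=e^{-\epsilon\rho}\in\mathcal{G}^0$ and $G(\epsilon,\rho)=\rho\in\mathcal{G}^1$. You are in fact more careful than the paper on one point: Lemma~\ref{lem-cauchy:symbol-fct-of-gamma} only yields commutation of $J_\epsilon$ with $T_{a_{G,\epsilon}}$, whose subprincipal part is $-\tfrac{i}{2}\partial_w\cdot\partial_\xi\gamma^{(3/2)}$ rather than the full $\gamma^{(1/2)}$; your handling of the extra term $\Real\gamma^{(1/2)}$ via Corollary~\ref{cor-para:commu-esti} correctly closes this small gap that the paper leaves implicit.
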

	Note that when $\epsilon>0$, $j_\epsilon\in \Gamma^m_{3/2+} + \Gamma^{m-1}_{1/2+}$ for all $m<0$, and as a consequence, $J_\epsilon$ serves as a smoothing operator. Nevertheless, the estimates of $j_\epsilon$ are uniform in $\epsilon\ge 0$ only for $m\ge 0$. In the sequel, we shall regard $j_\epsilon$ as an element in $\Gamma^0_{3/2+} + \Gamma^{-1}_{1/2+}$ in symbolic calculus.
	
	Now, we define the mollification of $\mathcal{L}$ as
	\begin{equation}\label{eq-cauchy:def-L-eps}
		\mathcal{L}^\epsilon := \left(\begin{array}{cc}
			I & 0 \\
			T_B & I
		\end{array}\right)
		\left(\begin{array}{cc}
			0 & -T_\lambda \\
			\sigma T_\mu & 0
		\end{array}\right) \tilde{S}J_\epsilon S
		\left(\begin{array}{cc}
			I & 0 \\
			-T_B & I
		\end{array}\right),
	\end{equation}\index{L@$\mathcal{L}^\epsilon$ Approximate principal operator}
	where $S$ is the symmetrizer of $\mathcal{L}$ defined by \eqref{eq-sym:def-S} and $\tilde{S}$ is an inverse of $S$ (in the sense of \eqref{eq-sym:inverse-S}), which can be taken as \eqref{eq-sym:def-tilde-S}. The resulting approximate system reads
	\begin{equation}\label{eq-cauchy:WW-eps}
		\left\{
		\begin{aligned}
			&\left(\partial_t + T_V\cdot\bar{\nabla}J_\epsilon + \mathcal{L}^\epsilon\right)
			\left(\begin{array}{c}
				\eta \\
				\psi
			\end{array}\right) = f(J_\epsilon\eta,J_\epsilon\psi), \\
			&(\eta,\psi)|_{t=0} = (\eta_0,\psi_0) \in H^{s+\frac{1}{2}}_R\times H^s,
		\end{aligned}
		\right.
	\end{equation}
	where $s>3$.
	
	\begin{proposition}\label{prop-cauchy:WP-apprx-sys}
		Let $s>3$ and $\epsilon>0$. For all $(\eta_0,\psi_0) \in \mathcal{X}_R^s$, there exists $T'_\epsilon>0$ and a unique solution $(\eta^\epsilon,\psi^\epsilon)\in C_{T'_\epsilon}\mathcal{X}_R^s$ to the Cauchy problem \eqref{eq-cauchy:WW-eps}. Moreover, if $T_\epsilon$ denotes the supremum of all such $T'_\epsilon$, we have,
		\begin{equation}\label{eq-cauchy:blow-up-criteria}
			T_\epsilon=+\infty,\ \ \text{or}\ \ T_\epsilon<+\infty\ \ \text{with }\limsup_{t\rightarrow T_\epsilon-} \left( \|\eta^\epsilon(t)-R\|_{H^{s+\frac{1}{2}}} + \|\psi^\epsilon(t)\|_{H^s} \right) = +\infty.
		\end{equation}
	\end{proposition}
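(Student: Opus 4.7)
The plan is to view \eqref{eq-cauchy:WW-eps} as an ordinary differential equation in the Banach space $E := H^{s+\frac{1}{2}}(\T\times\R)\times H^{s}(\T\times\R)$ and apply the Cauchy--Lipschitz theorem. After subtracting the normalization $(R,0)$, the set $\mathcal{X}^s_R$ corresponds to an open subset of $E$ determined by the (open) lower and upper bounds on $\eta$ in \eqref{hyp-intro:bounds}. Writing $Y=(\eta,\psi)$, the system \eqref{eq-cauchy:WW-eps} takes the form
\begin{equation*}
\partial_t Y = F_\epsilon(Y) := -T_{V(Y)}\cdot\bar{\nabla}J_\epsilon(Y)\,Y - \mathcal{L}^\epsilon(Y)\,Y + f(J_\epsilon(Y)\eta, J_\epsilon(Y)\psi),
\end{equation*}
where the $Y$-dependence of $V$, $B$, $\lambda$, $\mu$, $S$, $\tilde S$, $j_\epsilon$ and $\mathcal{L}^\epsilon$ is made explicit.

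The core step is to prove that, for every fixed $\epsilon>0$, the map $F_\epsilon$ is locally Lipschitz from $\mathcal{X}^s_R$ into $E$. The key observation is that when $\epsilon>0$, the symbol $j_\epsilon$ decays faster than any polynomial in $|\xi|$ (with bounds degenerating as $\epsilon\to 0$, but this is irrelevant here), so $J_\epsilon$ is a smoothing operator of arbitrarily negative order on the scale of Sobolev spaces; the same holds for $\tilde S J_\epsilon S$, which appears inside $\mathcal{L}^\epsilon$ by definition \eqref{eq-cauchy:def-L-eps}. Consequently the two transport-type terms $T_V\cdot\bar\nabla J_\epsilon$ and $\mathcal{L}^\epsilon$ actually map $E$ into $H^{\infty}\times H^{\infty}\subset E$ continuously with operator norm controlled by a constant depending on $\epsilon$ and on $\|Y\|_E$. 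The Lipschitz dependence on $Y$ then follows by combining three ingredients: the explicit formulas \eqref{eq-paralin:B-formula}--\eqref{eq-paralin:V-formula} for $B,V$ together with Lemma \ref{lem-paralin:BV-deri-in-eta} (which yields local Lipschitz continuity of $B$ and $V$ in $Y$ at order $H^{s_0-1}$ for $s_0<s$, hence by interpolation at the required level thanks to the smoothing); Lemma \ref{lem-paralin:sym-deri-in-eta} (which yields the Lipschitz dependence in $Y$ of the symbols $\lambda,\mu,p,q,\tilde p,\tilde q,j_\epsilon$, hence of the paradifferential operators composing $\mathcal{L}^\epsilon$); and Proposition \ref{prop-paralin:source-Lip} (Lipschitz continuity of $f$). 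Because $J_\epsilon$ gains all derivatives, no loss of regularity occurs, and the worst constant multiplying $\|Y_1-Y_2\|_E$ remains finite and depends only on $\epsilon$ and a local bound $M$ on $\|Y\|_E$.

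Granted local Lipschitz continuity of $F_\epsilon$, the classical Picard--Lindel\"of theorem in the open subset $\mathcal{X}^s_R$ of $E$ yields the existence of some $T'_\epsilon>0$ and a unique solution $(\eta^\epsilon,\psi^\epsilon)\in C([0,T'_\epsilon];\mathcal{X}^s_R)$, which belongs to $C_{T'_\epsilon}\mathcal{X}^s_R$ by construction. Denoting by $T_\epsilon$ the supremum of all such $T'_\epsilon$, the standard blow-up alternative asserts that either $T_\epsilon=+\infty$ or else the solution leaves every compact subset of $\mathcal{X}^s_R$ as $t\to T_\epsilon^-$. To convert this into \eqref{eq-cauchy:blow-up-criteria}, I would argue by contradiction: suppose $T_\epsilon<+\infty$ and the $H^{s+\frac{1}{2}}\times H^{s}$ norm of $(\eta^\epsilon-R,\psi^\epsilon)$ remained bounded by some $M<\infty$ on $[0,T_\epsilon)$. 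Evaluating $F_\epsilon$ on this bounded set gives $\|\partial_t(\eta^\epsilon,\psi^\epsilon)\|_E\le C(\epsilon,M)$, so $\eta^\epsilon$ is uniformly Lipschitz in time with values in $L^\infty$ (by Sobolev embedding); consequently $\inf\eta^\epsilon(t)$ and $\sup\eta^\epsilon(t)$ cannot reach the boundary of the open range permitted by \eqref{hyp-intro:bounds} in finite time, and the hypothesis \eqref{hyp-intro:perturb} is preserved because $\eta^\epsilon-R$ stays in a bounded set of $H^{s+\frac{1}{2}}$. Hence the trajectory remains inside a compact subset of $\mathcal{X}^s_R$, contradicting maximality; this forces the Sobolev norm to blow up as in \eqref{eq-cauchy:blow-up-criteria}.

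The main obstacle is the first step, namely verifying uniform-in-$Y$ (but not uniform-in-$\epsilon$) Lipschitz bounds for all the ingredients of $F_\epsilon$ inside the open set $\mathcal{X}^s_R$. Once this is done, the rest is a routine application of ODE theory in a Banach space. I emphasize that no uniform control in $\epsilon$ is required here: uniform estimates are precisely the object of the subsequent sections, where energy estimates will be used to extract a common lifespan $T>0$ independent of $\epsilon$.
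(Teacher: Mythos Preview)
Your proposal is correct and follows essentially the same approach as the paper: apply Cauchy--Lipschitz in the Banach space $H^{s+\frac{1}{2}}\times H^s$, reduce to local Lipschitz continuity of the right-hand side, and obtain this via Lemma~\ref{lem-paralin:BV-deri-in-eta}, Lemma~\ref{lem-paralin:sym-deri-in-eta}, Proposition~\ref{prop-paralin:source-Lip}, and the crucial fact that $J_\epsilon$ is smoothing for $\epsilon>0$. The paper's proof singles out one point you leave implicit, namely the genuine product $\eta^{-1}\partial_\theta(J_\epsilon\cdot)$ inside $\bar{\nabla}$ (not a paraproduct), which it handles exactly by your observation that $\partial_\theta J_\epsilon$ lands in $H^{+\infty}$; conversely, you supply an explicit argument for the blow-up alternative \eqref{eq-cauchy:blow-up-criteria} (showing that a bounded norm forces the solution to stay away from the boundary of $\mathcal{X}^s_R$), which the paper leaves as standard ODE theory.
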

	
	In the following, we still denote the approximate solution by $(\eta,\psi)$ if there is no confusion.
	
	\begin{proof}
		To prove Proposition \ref{prop-cauchy:WP-apprx-sys}, we attempt to apply Cauchy-Lipschitz theorem, requiring the following quantities to be locally Lipschitizian on $\mathcal{X}_R^s$:
		\begin{equation*}
			T_V\cdot\bar{\nabla}J_\epsilon
			\left(\begin{array}{c}
				\eta \\
				\psi
			\end{array}\right),\ \ 
			\mathcal{L}^\epsilon
			\left(\begin{array}{c}
				\eta \\
				\psi
			\end{array}\right),\ \ f(J_\epsilon\eta,J_\epsilon\psi).
		\end{equation*}
		As in Section \ref{subsect:contin-in-eta}, it suffices to check that the derivative-in-$(\eta,\psi)$ of these quantities are bounded in $\mathcal{X}_R^s$ (in this case, we can assume $\delta\eta\in H^{s+\frac{1}{2}}$). This boundedness is no more than a consequence of Lemma \ref{lem-paralin:BV-deri-in-eta}, \ref{lem-paralin:sym-deri-in-eta}, and Proposition \ref{prop-paralin:source-Lip}, \ref{prop-para:paraprod-bound}. The only term that needs to be treated carefully is the $\eta^{-1}$ arising from the definition \eqref{eq-paralin:def-deri} of $\bar{\nabla}$, namely
		\begin{equation*}
			T_{V^\theta}\left(\frac{1}{\eta}\partial_\theta J_\epsilon
			\left(\begin{array}{c}
				\eta \\
				\psi
			\end{array}\right) \right).
		\end{equation*}
		After applying derivatives in $\eta$ or $\psi$, $\epsilon>0$ ensures that $\partial_\theta J_\epsilon\eta$, $\partial_\theta J_\epsilon\psi$, and their derivatives in $(\eta,\psi)$ lie in $H^{+\infty}:= \cap_{k\in\Z}H^k$. Therefore, their product with $\eta^{-1}$ or $\delta(\eta^{-1})$ has regularity $H^{s+\frac{1}{2}}$ due to Corollary \ref{cor-para:product-law}. Since $T_{V^\theta}$ is of order $0$, the contribution of these terms is still in $\mathcal{X}_R^s$.
	\end{proof}

	\subsection{Energy estimates}\label{subsect:energy-esti}
	
	In this part, we shall make energy estimates for approximate solutions constructed in Proposition \ref{prop-cauchy:WP-apprx-sys}, which also hold for the original system \eqref{eq-paralin:WW}, namely in the case $\epsilon=0$. One should keep in mind that all the estimates in this section are uniform for $\epsilon\in[0,1]$. 
	
	\begin{proposition}\label{prop-cauchy:energy-esti}
		Let $(\eta_0,\psi_0) \in \mathcal{X}_R^s$ with $s>3$. The approximate solution $(\eta^\epsilon,\psi^\epsilon)$ obtained in Proposition \ref{prop-cauchy:WP-apprx-sys} satisfies, for some $T_0$ depending only on $H^{s+\frac{1}{2}}_R\times H^{s}$-norm of initial data $(\eta_0,\psi_0)$,
		\begin{equation}\label{eq-cauchy:energy-esti}
			M_{T,\epsilon} \le C(M_{0,\epsilon})M_{0,\epsilon},\ \ \forall T\in[0,\min(T_0,T_\epsilon)[,\ \ \forall 0<\epsilon\le 1,
		\end{equation}
		where $C$ is a positive increasing function independent of $\epsilon$, $T_\epsilon>0$ is the lifespan of the approximate solution, and
		\begin{equation}\label{eq-cauchy:def-M-eps}
			M_{T,\epsilon} := \sup_{t\in[0,T[} \left( \|\eta^\epsilon(t)-R\|_{H^{s+\frac{1}{2}}} + \|\psi^\epsilon(t)\|_{H^s} \right).
		\end{equation}
	\end{proposition}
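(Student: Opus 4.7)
The plan is to perform an $H^s$ energy estimate on the symmetrized formulation of the approximate system, exploiting the antisymmetric structure supplied by Proposition \ref{prop-sym:main} and the uniform-in-$\epsilon$ symbolic calculus of Lemma \ref{lem-cauchy:esti-j-eps}. To do so, I introduce the Alinhac good unknown
$$\Theta^\epsilon := \left(\begin{array}{cc} I & 0 \\ -T_B & I \end{array}\right)\left(\begin{array}{c} \eta^\epsilon - R \\ \psi^\epsilon \end{array}\right), \qquad \Phi^\epsilon := S\Theta^\epsilon,$$
where $S$ is the elliptic symmetrizer of Proposition \ref{prop-sym:main}. Because $p\in\Sigma^{1/2}$ and $q\in\Sigma^0$ are elliptic, Proposition \ref{prop-paralin:homo-sym-ellip-esti} guarantees that $(\eta^\epsilon-R,\psi^\epsilon)\in H^{s+\frac{1}{2}}\times H^s$ if and only if $\Phi^\epsilon\in H^s\times H^s$, with norms comparable through a function of $M_{T,\epsilon}$. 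It thus suffices to bound $\|\Phi^\epsilon(t)\|_{H^s\times H^s}$.

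Conjugating \eqref{eq-cauchy:WW-eps} by $(I,0;-T_B,I)$ and then by $S$, and using the operator equivalence $S(0,-T_\lambda;\sigma T_\mu,0)\approx(0,-T_\gamma;T_\gamma^*,0)S$ from Proposition \ref{prop-sym:main}, I expect an equation of the form
$$\partial_t\Phi^\epsilon + T_V\cdot\bar{\nabla}J_\epsilon\,\Phi^\epsilon + \left(\begin{array}{cc} 0 & -T_\gamma \\ T_\gamma^* & 0 \end{array}\right)J_\epsilon\,\Phi^\epsilon = F^\epsilon,$$
where $F^\epsilon$ collects five kinds of remainders: the commutator $[S,\partial_t]$ controlled by Proposition \ref{prop-sym:commu-S-partial-t}; the commutator $[S,T_V\cdot\bar\nabla J_\epsilon]$, handled via Proposition \ref{prop-sym:commu-S-T-V-nabla} together with $J_\epsilon T_\gamma\approx T_\gamma J_\epsilon$ from \eqref{eq-cauchy:commu-j-eps}; the subprincipal remainder in $S\mathcal{L}^\epsilon-(0,-T_\gamma;T_\gamma^*,0)J_\epsilon S$; the commutator generated by the shift to the good unknown (in which $\partial_t B$ reappears through the shape derivative formula \eqref{eq-pre:shape-deri}); and the symmetrized source $Sf(J_\epsilon\eta^\epsilon,J_\epsilon\psi^\epsilon)$. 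Combining the tame bounds of Propositions \ref{prop-paralin:paralin-DtN}, \ref{prop-paralin:paralin-of-nonlin} and the Lipschitz control of Proposition \ref{prop-paralin:source-Lip} with the decomposition \eqref{eq-paralin:f-decomp} yields $\|F^\epsilon\|_{H^s\times H^s}\le C(M_{T,\epsilon})(1+M_{T,\epsilon})$, uniformly in $\epsilon\in\,]0,1]$.

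I then compute $\frac{d}{dt}\|\Lambda^s\Phi^\epsilon\|_{L^2}^2$ by pairing $\Lambda^s$ applied to the evolution equation with $\Lambda^s\Phi^\epsilon$, where $\Lambda=(I-\Delta_{\theta,z})^{1/2}$. Three cancellations drive the estimate. First, the principal block $(0,-T_\gamma;T_\gamma^*,0)J_\epsilon$ is anti-self-adjoint modulo an operator of order zero, because $T_\gamma^*\approx T_\gamma$ by \eqref{eq-sym:main-equi} and $J_\epsilon^*\approx J_\epsilon$ by \eqref{eq-cauchy:commu-j-eps}. Second, the transport $T_V\cdot\bar\nabla$ is anti-self-adjoint up to a zeroth-order remainder, since $V\in H^{s-1}$ is Lipschitz when $s>3$; composing with $J_\epsilon^*\approx J_\epsilon$ preserves this feature. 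Third, commuting $\Lambda^s$ past the principal operators produces only terms of order $s$, compensated by the gain from symbolic calculus. Together with the source bound this yields
$$\frac{d}{dt}\|\Phi^\epsilon(t)\|_{H^s\times H^s}^2 \le C(M_{T,\epsilon})\bigl(1+\|\Phi^\epsilon(t)\|_{H^s\times H^s}^2\bigr),$$
with $C$ independent of $\epsilon$. A Grönwall argument, combined with the norm equivalence $M_{t,\epsilon}\simeq_C \|\Phi^\epsilon(t)\|_{H^s\times H^s}$, then shows that $M_{t,\epsilon}$ stays bounded on some $[0,T_0]$ determined solely by $M_{0,\epsilon}$, which is \eqref{eq-cauchy:energy-esti}.

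The main obstacle will be uniformity in $\epsilon$: since $J_\epsilon$ is not a plain Fourier truncation, one must verify that every remainder estimate used above has operator norm controlled by $C(\|\eta\|_{H^{s+\frac{1}{2}-}_R},\|\psi\|_{H^{s-}})$ independently of $\epsilon\in\,]0,1]$, which is exactly why $j_\epsilon$ was designed to satisfy Lemma \ref{lem-cauchy:symbol-fct-of-gamma} and \eqref{eq-cauchy:commu-j-eps}. A secondary subtlety is the commutator $[S,\partial_t]$: through $\partial_t\eta^\epsilon$ (which, at $\epsilon>0$, involves the mollified nonlinearity rather than $G(\eta)\psi$ directly) it threatens to create a loss of one derivative, but Proposition \ref{prop-sym:commu-S-partial-t} shows it is of order $\tfrac{1}{2}$ on the first slot and $0$ on the second, precisely matching the two slots of $\Phi^\epsilon$.
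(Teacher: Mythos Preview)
Your overall architecture---pass to the good unknown, apply the symmetrizer $S$, derive an equation whose principal matrix is $(0,-T_\gamma J_\epsilon;(T_\gamma J_\epsilon)^*,0)$, then run an energy argument---is exactly the paper's. The gap is in your third cancellation, where you commute $\Lambda^s=(I-\Delta_{\theta,z})^{s/2}$ through the principal block and claim the resulting commutator is ``of order $s$, compensated by the gain from symbolic calculus.'' This is where the argument fails.

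The principal symbol $\gamma^{(3/2)}=\sqrt{\sigma\mu^{(2)}\lambda^{(1)}}$ depends on $w=(\theta,z)$ through $\eta$, so the Poisson bracket $\{\langle\xi\rangle^s,\gamma^{(3/2)}\}=\partial_\xi\langle\xi\rangle^s\cdot\partial_w\gamma^{(3/2)}$ does not vanish. By Corollary~\ref{cor-para:commu-esti} the commutator $[\Lambda^s,T_\gamma J_\epsilon]$ is therefore only of order $s+\tfrac{3}{2}-1=s+\tfrac{1}{2}$, so $[\Lambda^s,T_\gamma J_\epsilon]\Phi^\epsilon$ lands in $H^{-1/2}$ and cannot be paired against $\Lambda^s\Phi^\epsilon\in L^2$: you lose half a derivative and the inequality does not close. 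The paper's remedy (Lemma~\ref{lem-cauchy:esti-beta} and equation~\eqref{eq-cauchy:def-beta}) is to replace $\Lambda^{s_0}$ by the paradifferential operator $T_\beta$ with $\beta^{(s_0)}=(\gamma^{(3/2)})^{2s_0/3}$. Because $\beta$, $\gamma$ and $j_\epsilon$ are all functions of the single scalar $\gamma^{(3/2)}$, their mutual Poisson brackets vanish (Lemma~\ref{lem-cauchy:symbol-fct-of-gamma}), and one obtains $T_\beta T_\gamma\approx T_\gamma T_\beta$ and $T_\beta J_\epsilon\approx J_\epsilon T_\beta$ in the strong sense of \eqref{eq-cauchy:equi-paradiff}: the commutators are of order $s_0-$, not $s_0+\tfrac{1}{2}$. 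This extra half-derivative gain is precisely what makes the $L^2$ energy estimate for $T_\beta Y$ close (see \eqref{eq-cauchy:eq-T-beta-Y}--\eqref{eq-cauchy:esti-T-beta-Y}). Once you substitute $T_\beta$ for $\Lambda^s$ and invoke its ellipticity via Proposition~\ref{prop-paralin:homo-sym-ellip-esti} to return to $(\eta^\epsilon,\psi^\epsilon)$, the rest of your sketch goes through.
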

	
	As a corollary, the blow-up criteria \eqref{eq-cauchy:blow-up-criteria} implies
	\begin{corollary}\label{cor-cauchy:low-bd-lifespan}
		Under the hypotheses of Proposition \ref{prop-cauchy:WP-apprx-sys}, there exists $T_0>0$ depending only on $H^{s+\frac{1}{2}}_R\times H^{s}$-norm of initial data $(\eta_0,\psi_0)$, such that the lifespan $T_\epsilon$ satisfies,
		\begin{equation}\label{eq-cauchy:low-bd-lifespan}
			T_\epsilon \ge T_0,\ \ \forall \epsilon>0.
		\end{equation}
		As a result, the energy estimate \eqref{eq-cauchy:energy-esti} becomes
		\begin{equation}\label{eq-cauchy:energy-esti-main}
			M_{T,\epsilon} \le C(M_{0,\epsilon})M_{0,\epsilon},\ \ \forall T\in[0,T_0[,\ \ \forall 0<\epsilon\le 1.
		\end{equation}
	\end{corollary}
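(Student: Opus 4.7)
The plan is to argue by contradiction, combining the blow-up criterion \eqref{eq-cauchy:blow-up-criteria} with the energy estimate of Proposition \ref{prop-cauchy:energy-esti}. The first thing to notice is that evaluating $M_{T,\epsilon}$ at $T=0$ yields
\begin{equation*}
	M_{0,\epsilon} = \|\eta_0 - R\|_{H^{s+\frac{1}{2}}} + \|\psi_0\|_{H^s},
\end{equation*}
which is independent of $\epsilon$, since the initial data is the same across all approximate systems. Write this common value as $M_0$, and let $T_0 = T_0(M_0) > 0$ be the time produced by Proposition \ref{prop-cauchy:energy-esti}.

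Next, I would fix $\epsilon \in ]0,1]$ and assume for contradiction that $T_\epsilon < T_0$. Then $[0, T_\epsilon[ \subset [0, \min(T_0, T_\epsilon)[$, so the energy estimate \eqref{eq-cauchy:energy-esti} applies on every $[0,T[$ with $T < T_\epsilon$, yielding
\begin{equation*}
	\sup_{t\in[0,T[} \left( \|\eta^\epsilon(t) - R\|_{H^{s+\frac{1}{2}}} + \|\psi^\epsilon(t)\|_{H^s} \right) \le C(M_0)\, M_0.
\end{equation*}
Letting $T \to T_\epsilon^-$, this furnishes a uniform bound on $M_{t,\epsilon}$ for all $t < T_\epsilon$, which directly contradicts the blow-up criterion \eqref{eq-cauchy:blow-up-criteria}, according to which $\limsup_{t \to T_\epsilon^-} M_{t,\epsilon} = +\infty$. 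Hence $T_\epsilon \ge T_0$ for every $\epsilon \in ]0,1]$.

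Having established $T_\epsilon \ge T_0$, I would conclude by re-applying Proposition \ref{prop-cauchy:energy-esti} on the interval $[0,T_0[$, which now lies inside $[0,\min(T_0,T_\epsilon)[$, and this immediately gives the uniform estimate \eqref{eq-cauchy:energy-esti-main}. There is no genuine obstacle in this argument: the content is entirely encapsulated in Proposition \ref{prop-cauchy:energy-esti} (whose proof is where the real work lies, via the symmetrization of Section \ref{Sect:sym} and the paralinearization of Section \ref{Sect:paralin}), and the corollary is a straightforward bootstrap-type deduction. The only subtlety to highlight is the $\epsilon$-independence of $M_0$ and of the function $C(\cdot)$, which is precisely what makes the lower bound on the lifespan uniform in $\epsilon$ and thus allows the passage to the limit $\epsilon \to 0$ in Section \ref{subsect:conv-uni}.
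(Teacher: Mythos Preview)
Your proposal is correct and follows exactly the approach the paper intends: the paper states the corollary as an immediate consequence of the blow-up criterion \eqref{eq-cauchy:blow-up-criteria} combined with the energy estimate of Proposition~\ref{prop-cauchy:energy-esti}, without writing out an explicit proof. Your contradiction argument, together with the observation that $M_{0,\epsilon}$ is independent of $\epsilon$, fills in precisely the details the paper leaves implicit.
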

	\begin{remark}\label{rmk-cauchy:unif-bound}
		Note that, once we manage to prove the convergence of approximate solutions $(\eta^\epsilon,\psi^\epsilon)$ in $L^\infty_T\mathcal{X}_R^s$ or weaker space $L^\infty_T\mathcal{X}_R^{s_0}$ ($s_0<s$), the limit $(\eta,\psi)$ will inherit this uniform bound, namely,
		\begin{equation*}
			\|(\eta,\psi)\|_{L^\infty_T(H^{s+\frac{1}{2}}_R\times H^s)} \le C\left( \|(\eta_0,\psi_0)\|_{H^{s+\frac{1}{2}}_R\times H^s} \right) \|(\eta_0,\psi_0)\|_{H^{s+\frac{1}{2}}_R\times H^s},\ \ \forall T\in[0,T_0[.
		\end{equation*}
	\end{remark}
	
	Instead of proving Proposition \ref{prop-cauchy:energy-esti}, we shall consider a more general case. 
	\begin{proposition}\label{prop-cauchy:energy-esti-gen}
		Let $(\eta'_0,\psi'_0) \in H^{s+\frac{1}{2}}_{R'}\times H^{s}$ and $(\eta,\psi) \in L^\infty_T\mathcal{X}_R^s$ with $R'\in\R$, $R>0$, $T>0$, and $s>3$. Given $s_0\in ]\frac{3}{2},s]$, we assume that
		
		(1) The system
		\begin{equation}\label{eq-cauchy:WW-eps-gen}
			\left\{
			\begin{aligned}
				&\left(\partial_t + T_V\cdot\bar{\nabla}J_\epsilon + \mathcal{L}^\epsilon\right)
				\left(\begin{array}{c}
					\eta' \\
					\psi'
				\end{array}\right) = f, \\
				&(\eta',\psi')|_{t=0} = (\eta'_0,\psi'_0),
			\end{aligned}
			\right.
		\end{equation}
		admits a solution $(\eta',\psi')\in L^\infty_T(H^{s+\frac{1}{2}}_{R'}\times H^s)$, where the operators $T_V\cdot\bar{\nabla}J_\epsilon$ and $\mathcal{L}^\epsilon$ are associated to $(\eta,\psi)$;
		
		(2) The source term $f$ verifies, for all $t\in[0,T[$,
		\begin{equation}\label{eq-cauchy:energy-esti-gen-f-esti}
			\|f\|_{L^\infty_t(H^{s_1+\frac{1}{2}}\times H^{s_1})} \le C(N^{s-}_t) \left( M^{s_1}_t + c_f \right),
		\end{equation}
		where $c_f\in[0,1]$ is a constant,
		\begin{equation}\label{eq-cauchy:def-M-T}
			M_T^r := \sup_{t\in[0,T[} \left( \|\eta'(t)-R'\|_{H^{r+\frac{1}{2}}} + \|\psi'(t)\|_{H^r} \right),
		\end{equation}\index{M@$M_T^r$ Sobolev energy}
		and $N^r_T$ is defined as
		\begin{equation}\label{eq-cauchy:def-N-T}
			N_T^r := \sup_{t\in[0,T[} \left( \|\eta(t)-R\|_{H^{r+\frac{1}{2}}} + \|\psi(t)\|_{H^r} \right) + M^r_T,
		\end{equation}\index{N@$N_T^r$ Auxiliary Sobolev energy}
		
		(3) There exists $\frac{3}{2} < s_1 < \min(s-\frac{3}{2},s_0)$ such that $(\eta,\psi)$ satisfies for all $t\in[0,T[$,
		\begin{equation}\label{eq-cauchy:energy-esti-gen-deri-in-t}
			\|\eta_t\|_{H^{s_1+\frac{1}{2}}} + \|\psi_t\|_{H^{s_1}} \le C(N^{s-}_t)N^{s_1+\frac{3}{2}}_t.
		\end{equation}
		
		Then there exists $0<T_0 \le T$ depending only on $\|(\eta'_0,\psi'_0)\|_{H^{s+\frac{1}{2}}_{R'}\times H^{s}}$ and $c_f\in[0,1]$, such that for all $\epsilon\in[0,1]$ and $t\in[0,T_0]$,
		\begin{equation}\label{eq-cauchy:energy-esti-gen}
			(M^{s_0}_t)^2 \le C(N^{s-}_0)(M^{s_0}_0)^2 + t C(N^{s-}_t)\left( N^{s_0}_{t} + 1 \right) \left( (N^{s_0}_{t})^2 + c_f^2 \right),
		\end{equation}
		where the smooth increasing function $C>0$ is independent of $\epsilon\in[0,1]$.
		
		In particular, if $s_0<s$, the estimate above can be written as
		\begin{equation}\label{eq-cauchy:energy-esti-gen-weak}
			(M^{s_0}_t)^2 \le C(N^{s-}_0)(M^{s_0}_0)^2 + t C(N^{s-}_t)\left( (M^{s_0}_{t})^2 + c_f^2 \right),\ \ \forall t\in[0,T_0].
		\end{equation}
		Meanwhile, if $s_0\ge s-$, $c_f=0$, $R'=R$, and $(\eta',\psi') = (\eta,\psi)\in C_T\mathcal{X}^{s}_R$, we have $N^r_t=2M^r_t$ as well as
		\begin{equation}\label{eq-cauchy:energy-esti-gen-strong}
			N^{s_0}_t \le C(N^{s-}_0)N^{s_0}_0,\ \ \forall t\in[0,T_0].
		\end{equation}
	\end{proposition}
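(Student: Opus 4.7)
The plan is to run an $L^2$ energy estimate on the symmetrized unknown $\Phi := S(\eta'-R',\psi')^\top = (T_p(\eta'-R'), T_q\psi')^\top$, where $S$ is the symmetrizer built from $(\eta,\psi)$ in Proposition \ref{prop-sym:main}. Since $p\in\Sigma^{1/2}$ and $q\in\Sigma^0$ are elliptic, Proposition \ref{prop-paralin:homo-sym-ellip-esti} provides a two-sided equivalence
\begin{equation*}
\|\langle D\rangle^{s_0}\Phi(t)\|_{L^2\times L^2} + \|(\eta'(t)-R',\psi'(t))\|_{L^2\times L^2} \simeq M^{s_0}_t,
\end{equation*}
with constants controlled by $C(N^{s-}_t)$, so that controlling $\Psi := \langle D\rangle^{s_0}\Phi$ in $L^2$ is equivalent to controlling $M^{s_0}_t$.

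First, I would derive the evolution equation for $\Phi$. Applying $S$ to \eqref{eq-cauchy:WW-eps-gen}, commuting $\partial_t$ past $S$ via Proposition \ref{prop-sym:commu-S-partial-t}, commuting $S$ past $T_V\cdot\bar{\nabla}$ via Proposition \ref{prop-sym:commu-S-T-V-nabla}, and conjugating $\mathcal{L}^\epsilon$ by $S$ using Proposition \ref{prop-sym:main}, the parametrix \eqref{eq-sym:inverse-S}, and the uniform-in-$\epsilon$ commutation $J_\epsilon T_\gamma \approx T_\gamma J_\epsilon$ from Lemma \ref{lem-cauchy:esti-j-eps}, one arrives at
\begin{equation*}
\partial_t \Phi + T_V\cdot\bar{\nabla} J_\epsilon \Phi + \mathcal{Q} J_\epsilon \Phi = S f + \mathcal{R}, \qquad \mathcal{Q} := \begin{pmatrix} 0 & -T_\gamma \\ T_\gamma^* & 0 \end{pmatrix},
\end{equation*}
where the remainder $\mathcal{R}$ combines commutator errors of order $0$ applied to $\Phi$ with errors of order strictly below the principal order applied to $(\eta'-R',\psi')$ together with the contribution of $[\partial_t,S](\eta'-R',\psi')$; by hypothesis (3) and Proposition \ref{prop-sym:commu-S-partial-t}, its $L^2$-norm is bounded by $C(N^{s-}_t)(\|\Psi\|_{L^2}+N^{s_1+\frac{3}{2}}_t)$, uniformly in $\epsilon\in[0,1]$. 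Applying $\langle D\rangle^{s_0}$ and taking the $L^2$ inner product with $\Psi$ then kills the principal contributions: $\langle \mathcal{Q} J_\epsilon \Psi, \Psi\rangle$ is real only through the skew part $\mathcal{Q}-\mathcal{Q}^*$, which is of order $0$ by Proposition \ref{prop-sym:main}, while $\langle T_V\cdot\bar{\nabla} J_\epsilon \Psi, \Psi\rangle$ is controlled by $C(N^{s-}_t)\|\Psi\|_{L^2}^2$ using $J_\epsilon^*\approx J_\epsilon$ (Lemma \ref{lem-cauchy:esti-j-eps}) together with the Lipschitz regularity of $V\in H^{s-1-}$.

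The source contributes $|\langle \langle D\rangle^{s_0}Sf, \Psi\rangle| \lesssim \|f\|_{H^{s_0+\frac{1}{2}}\times H^{s_0}}\|\Psi\|_{L^2}$, which by hypothesis (2) is bounded by $C(N^{s-}_t)(M^{s_0}_t+c_f)\|\Psi\|_{L^2}$. Summing all contributions produces
\begin{equation*}
\frac{d}{dt}\|\Psi\|_{L^2}^2 \le C(N^{s-}_t)\bigl(N^{s_0}_t+1\bigr)\bigl((N^{s_0}_t)^2+c_f^2\bigr),
\end{equation*}
where the factor $N^{s_0}_t+1$ absorbs the $N^{s_1+\frac{3}{2}}_t$ term through the constraint $s_1<s-\frac{3}{2}$ and interpolation. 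Integrating on $[0,t]$ for $t\le T_0$ small, then inverting the ellipticity comparison, yields \eqref{eq-cauchy:energy-esti-gen}; the weaker \eqref{eq-cauchy:energy-esti-gen-weak} follows immediately when $s_0<s$ since the $N^{s_0}$ factors can be absorbed into $C(N^{s-}_t)$-dependent constants, and \eqref{eq-cauchy:energy-esti-gen-strong} follows from $(\eta',\psi')=(\eta,\psi)$, $c_f=0$, combined with a standard Gronwall argument. The principal obstacle, I expect, is showing that the symmetrization structure of Proposition \ref{prop-sym:main} is preserved \emph{uniformly in} $\epsilon$ after inserting $\tilde S J_\epsilon S$ inside $\mathcal{L}^\epsilon$; this requires careful bookkeeping with Lemmas \ref{lem-cauchy:esti-j-eps} and \ref{lem-cauchy:symbol-fct-of-gamma} to ensure that every remainder produced by moving $J_\epsilon$ through the various factors remains $L^2$-bounded with $\epsilon$-independent norm.
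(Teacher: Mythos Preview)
Your overall strategy is right, but there are two genuine gaps that would make the argument fail as written.

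First, your symmetrized variable $\Phi = S(\eta'-R',\psi')^\top$ omits the good-unknown matrix. The operator $\mathcal{L}^\epsilon$ is defined in \eqref{eq-cauchy:def-L-eps} as a conjugate by $\begin{pmatrix} I & 0 \\ T_B & I\end{pmatrix}$ of the block $\begin{pmatrix} 0 & -T_\lambda \\ \sigma T_\mu & 0\end{pmatrix}\tilde S J_\epsilon S$, and Proposition~\ref{prop-sym:main} only symmetrizes the inner block, not $\mathcal{L}^\epsilon$ itself. If you apply $S$ alone, the leftover factor $\begin{pmatrix} I & 0 \\ T_B & I\end{pmatrix}$ produces a nonzero $(2,2)$ entry $-T_qT_BT_\lambda$ of order~$1$ acting on $\psi'$, which destroys the antisymmetric structure and causes a genuine derivative loss in the energy estimate. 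The paper's Lemma~\ref{lem-cauchy:energy-esti-commu-1} makes this explicit: the correct variable is $Y = S\begin{pmatrix} I & 0 \\ -T_B & I\end{pmatrix}(\eta',\psi')^\top$, and one must also control the commutator of the $T_B$ block with $\partial_t + T_V\cdot\bar\nabla J_\epsilon$ (Lemma~\ref{lem-cauchy:energy-esti-commu-2}).

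Second, propagating to $H^{s_0}$ via the Fourier multiplier $\langle D\rangle^{s_0}$ does not work: the commutator $[\langle D\rangle^{s_0}, T_\gamma]$ has principal symbol $-i\{\langle\xi\rangle^{s_0},\gamma^{(3/2)}\} = -i\,\partial_\xi\langle\xi\rangle^{s_0}\cdot\partial_w\gamma^{(3/2)}$, which is of order $s_0+\tfrac12$ and nonzero (since $\gamma^{(3/2)}$ depends on $w$). Applied to $\Phi\in H^{s_0}$, this lands in $H^{-1/2}$, not $L^2$. The paper avoids this by using $T_\beta$ with $\beta^{(s_0)} = (\gamma^{(3/2)})^{2s_0/3}$ (see \eqref{eq-cauchy:def-beta}); since $\beta^{(s_0)}$, $\gamma^{(3/2)}$, and $j_\epsilon^{(0)}$ are all functions of $\gamma^{(3/2)}$, their mutual Poisson brackets vanish and Lemma~\ref{lem-cauchy:symbol-fct-of-gamma} gives $T_\beta T_\gamma \approx T_\gamma T_\beta$ and $T_\beta J_\epsilon \approx J_\epsilon T_\beta$ uniformly in $\epsilon$, so the commutator is of admissible order $s_0-$.
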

	
	\begin{remark}
		In application of this proposition, we always have that $(\eta,\psi)$ solves \eqref{eq-paralin:WW} or the approximate system \eqref{eq-cauchy:WW-eps}. Therefore, condition (3) is trivial. Besides, $(\eta',\psi')$ is equal to either $(\eta,\psi)$ or the difference of two solutions to \eqref{eq-paralin:WW} or \eqref{eq-cauchy:WW-eps}. In both cases, we can easily derive the equation \eqref{eq-cauchy:WW-eps-gen} in (1).
	\end{remark}
	
	When Proposition \ref{prop-cauchy:energy-esti-gen} is proved, Proposition \ref{prop-cauchy:energy-esti} follows from \eqref{eq-paralin:source-esti} and \eqref{eq-cauchy:energy-esti-gen-strong} with $s_0=s$, $c_f=0$, and $(\eta',\psi')=(\eta,\psi)$. In order to prove Proposition \ref{prop-cauchy:energy-esti-gen}, we begin with the following two lemmas.
	\begin{lemma}\label{lem-cauchy:energy-esti-commu-1}
		Let $(\eta,\psi)\in L^\infty_T\mathcal{X}_R^s$ with $s>3$ and $T>0$. Then the following estimate holds uniformly in $\epsilon\in[0,1]$ and $t\in[0,T[$,
		\begin{equation}\label{eq-cauchy:energy-esti-commu-1}
			S\left(\begin{array}{cc}
				I & 0 \\
				-T_B & I
			\end{array}\right) \mathcal{L}^\epsilon \approx
			\left(\begin{array}{cc}
				0 & -T_\gamma J_\epsilon \\
				(T_\gamma J_\epsilon)^* & 0
			\end{array}\right)S\left(\begin{array}{cc}
			I & 0 \\
			-T_B & I
			\end{array}\right).
		\end{equation}
		The meaning of this equivalence between matrix of paralinear operators is that the corresponding entries are equivalent in the sense of \eqref{eq-cauchy:equi-paradiff}.
	\end{lemma}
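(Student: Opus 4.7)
The strategy is to unfold the definition \eqref{eq-cauchy:def-L-eps} of $\mathcal{L}^\epsilon$, use the exact algebraic identity
\begin{equation*}
    \begin{pmatrix} I & 0 \\ -T_B & I \end{pmatrix}\begin{pmatrix} I & 0 \\ T_B & I \end{pmatrix} = \begin{pmatrix} I & 0 \\ 0 & I \end{pmatrix}
\end{equation*}
to collapse the outer conjugation (this is an equality, not just an equivalence, and introduces no remainder), and then move the symmetrizer $S$ through the principal part using Proposition \ref{prop-sym:main} and the parametrix identity \eqref{eq-sym:inverse-S}. After these reductions, the left hand side of \eqref{eq-cauchy:energy-esti-commu-1} becomes
\begin{equation*}
    \begin{pmatrix} 0 & -T_\gamma \\ T_\gamma^{*} & 0 \end{pmatrix} J_\epsilon \, S \begin{pmatrix} I & 0 \\ -T_B & I \end{pmatrix},
\end{equation*}
modulo operators whose entries are $\approx 0$, where I use $S\begin{pmatrix} 0 & -T_\lambda \\ \sigma T_\mu & 0 \end{pmatrix} \approx \begin{pmatrix} 0 & -T_\gamma \\ T_\gamma^{*} & 0 \end{pmatrix}S$ and $S\tilde S \approx I$.

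The remaining task is to match the entries with those of the right hand side of \eqref{eq-cauchy:energy-esti-commu-1}. The upper-right entry is immediate: $-T_\gamma J_\epsilon$ appears on both sides. The work is for the lower-left entry, where I must show
\begin{equation*}
    T_\gamma^{*} J_\epsilon \approx (T_\gamma J_\epsilon)^{*} = J_\epsilon^{*} T_\gamma^{*}.
\end{equation*}
This follows from three applications of earlier results: (i) $T_\gamma^{*} \approx T_\gamma$ (last identity in \eqref{eq-sym:main-equi}), (ii) $J_\epsilon T_\gamma \approx T_\gamma J_\epsilon$ (first identity in \eqref{eq-cauchy:commu-j-eps}), and (iii) $J_\epsilon^{*} \approx J_\epsilon$ (second identity in \eqref{eq-cauchy:commu-j-eps}). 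Chaining them gives $J_\epsilon^{*} T_\gamma^{*} \approx J_\epsilon T_\gamma \approx T_\gamma J_\epsilon \approx T_\gamma^{*} J_\epsilon$.

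The only genuinely delicate point is bookkeeping of orders, because the entries of the matrices have different homogeneities and the definition of $\approx$ is sensitive to this. The outer factor $\begin{pmatrix} I & 0 \\ -T_B & I \end{pmatrix}$ costs nothing since $B \in H^{s-1}$ and $s>3$, so post-composition preserves the order of every entry. The composition with $S = \mathrm{diag}(T_p, T_q)$, with $p \in \Sigma^{1/2}$ and $q \in \Sigma^0$, shifts the target Sobolev exponent in a way compatible with the definition of $\approx$ used in \eqref{eq-cauchy:equi-paradiff}, because each equivalence invoked above (from Proposition \ref{prop-sym:main}, \eqref{eq-sym:inverse-S}, and Lemma \ref{lem-cauchy:esti-j-eps}) is itself a $(\,\cdot\, -\tfrac{3}{2}-)$-order improvement with operator-norm constants controlled by $C(\|\eta\|_{H^{s+1/2-}_R})$. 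The expected obstacle is checking that $J_\epsilon$, which is only uniformly bounded of order $0$ as $\epsilon\to 0$, does not degrade these gains when inserted between $T_\gamma$ and its adjoint; this is guaranteed precisely by Lemma \ref{lem-cauchy:esti-j-eps}, which was designed so that $j_\epsilon \in \Gamma^0_{3/2+}+\Gamma^{-1}_{1/2+}$ uniformly in $\epsilon\in[0,1]$, hence symbolic calculus gives remainders of order $-\tfrac{3}{2}-$ uniformly. Once this uniformity is verified, the four equivalences combine to yield \eqref{eq-cauchy:energy-esti-commu-1} with constants independent of $\epsilon$ and $t$, as required.
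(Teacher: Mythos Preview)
Your proposal is correct and follows essentially the same approach as the paper: unfold the definition of $\mathcal{L}^\epsilon$, collapse the triangular matrices exactly, apply Proposition~\ref{prop-sym:main} to pass $S$ through the principal block, use $S\tilde{S}\approx id$, and finish by checking $T_\gamma^{*}J_\epsilon \approx (T_\gamma J_\epsilon)^{*}$ via the chain $J_\epsilon^{*}T_\gamma^{*} \approx J_\epsilon T_\gamma^{*} \approx J_\epsilon T_\gamma \approx T_\gamma J_\epsilon \approx T_\gamma^{*}J_\epsilon$. Your additional remarks on order bookkeeping and uniformity in $\epsilon$ are sound and make explicit what the paper leaves implicit.
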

	\begin{proof}
		By definition \eqref{eq-cauchy:def-L-eps} of $\mathcal{L}^\epsilon$, we have
		\begin{equation*}
			S\left(\begin{array}{cc}
				I & 0 \\
				-T_B & I
			\end{array}\right) \mathcal{L}^\epsilon = 
			S\left(\begin{array}{cc}
				0 & -T_\lambda \\
				\sigma T_\mu & 0
			\end{array}\right) \tilde{S}J_\epsilon S
			\left(\begin{array}{cc}
				I & 0 \\
				-T_B & I
			\end{array}\right).
		\end{equation*}
		Thanks to Proposition \ref{prop-sym:main}, the right hand side is equivalent to
		\begin{equation*}
			\left(\begin{array}{cc}
				0 & -T_\gamma \\
				T_\gamma^* & 0
			\end{array}\right)S\tilde{S}J_\epsilon S
			\left(\begin{array}{cc}
			I & 0 \\
			-T_B & I
			\end{array}\right) \approx
			\left(\begin{array}{cc}
				0 & -T_\gamma \\
				T_\gamma^* & 0
			\end{array}\right)J_\epsilon S
			\left(\begin{array}{cc}
				I & 0 \\
				-T_B & I
			\end{array}\right),
		\end{equation*}
		where we use $S\tilde{S}\approx id$ (see \eqref{eq-sym:inverse-S}). It remains to check that $T_\gamma^*J_\epsilon \approx (T_\gamma J_\epsilon)^*$. In fact, by \eqref{eq-sym:main-equi} and Lemma \ref{lem-cauchy:esti-j-eps},
		\begin{equation*}
			(T_\gamma J_\epsilon)^* = J_\epsilon^* T_\gamma^* \approx J_\epsilon T_\gamma^* \approx J_\epsilon T_\gamma \approx T_\gamma J_\epsilon \approx T_\gamma^*J_\epsilon.
		\end{equation*}
	\end{proof}
	
	\begin{lemma}\label{lem-cauchy:energy-esti-commu-2}
		Let $(\eta,\psi)\in L^\infty_T\mathcal{X}_R^s$ with $s>3$ and $T>0$. Then the following estimate holds uniformly in $\epsilon\in[0,1]$ and $t\in[0,T[$,
		\begin{equation}\label{eq-cauchy:energy-esti-commu-2}
			\| [T_B, \partial_t] \|_{\mathcal{L}(H^r;H^{r-\frac{1}{2}+})} + \| [T_B, T_V\cdot\bar{\nabla}] \|_{\mathcal{L}(H^r;H^{r-\frac{1}{2}+})} \le C(N^{s-}_T),\ \ \forall r\le s+\frac{1}{2}.
		\end{equation}
	\end{lemma}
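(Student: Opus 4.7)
The plan is to compute each commutator explicitly and reduce the estimate to a symbolic calculus bound on a paradifferential operator whose symbol can be placed in $L^\infty$ with norm controlled by $C(N^{s-}_T)$.

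For $[T_B, \partial_t]$, the direct computation
\begin{equation*}
[T_B, \partial_t]\, u = T_B \partial_t u - \partial_t(T_B u) = -T_{\partial_t B}\, u
\end{equation*}
reduces the claim to bounding the paramultiplier $T_{\partial_t B}$. Provided $\partial_t B \in H^{r_0}$ for some $r_0 > d/2 = 1$, the operator $T_{\partial_t B}$ is of order $0$, which is strictly better than the target order $(1/2-)$. To control $\partial_t B$, I would start from the explicit formula \eqref{eq-paralin:B-formula}, which expresses $B$ as a smooth function of $\eta, \bar\nabla\eta, \bar\nabla\psi$ and $G(\eta)\psi$; the chain rule combined with the shape derivative formula (Proposition \ref{prop-pre:shape-deri}) then writes $\partial_t B$ in terms of $\eta_t$, $\psi_t$, $\partial_t(\bar\nabla\eta)$, $\partial_t(\bar\nabla\psi)$ and $G(\eta)\psi_t$, multiplied by coefficients uniformly bounded in $L^\infty$ by $C(N^{s-}_T)$ (using $s > 3$, Lemma \ref{lem-paralin:esti-B-V-psi} and Corollary \ref{cor-pre:bound-DtN}). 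The bounds on $\eta_t, \psi_t$ are provided either directly by the paralinear equation \eqref{eq-paralin:WW} or, in the approximate setting, by \eqref{eq-cauchy:WW-eps} combined with the uniform-in-$\epsilon$ symbol estimates of Lemma \ref{lem-cauchy:esti-j-eps} for $J_\epsilon$, exactly as in the argument used for Proposition \ref{prop-sym:commu-S-partial-t}.

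For $[T_B, T_V \cdot \bar\nabla]$, I would expand $\bar\nabla$ via its definition \eqref{eq-paralin:def-deri} and paralinearize the product $V^\theta \eta^{-1}\partial_\theta u$ so that, up to a remainder $\mathcal{R}$ of strictly negative order controlled by Proposition \ref{prop-para:paraprod-bound} and Corollary \ref{cor-para:product-law},
\begin{equation*}
T_V \cdot \bar\nabla = T_{V^\theta}\, T_{\eta^{-1}}\, \partial_\theta + T_{V^z}\, \partial_z + \mathcal{R}.
\end{equation*}
Applying the Leibniz rule to the leading summand gives
\begin{equation*}
[T_B, T_{V^\theta} T_{\eta^{-1}}\partial_\theta] = [T_B, T_{V^\theta} T_{\eta^{-1}}]\,\partial_\theta \;-\; T_{V^\theta} T_{\eta^{-1}} T_{\partial_\theta B}.
\end{equation*}
Since $s > 3$, the functions $B, V, \eta^{-1}$ all lie in $H^{s-1} \subset C^{1+}$, so the commutator estimate of Corollary \ref{cor-para:commu-esti} shows that $[T_B, T_{V^\theta} T_{\eta^{-1}}]$ is of order $-1$ and the first term is therefore of order $0$; the second term is likewise of order $0$ because $\partial_\theta B \in H^{s-2} \subset L^\infty$. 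The $\partial_z$ piece and $[T_B, \mathcal{R}]$ are handled identically.

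The main obstacle is securing the $L^\infty$ bound on $\partial_t B$ uniformly in $\epsilon \in [0,1]$: in the approximate system \eqref{eq-cauchy:WW-eps} the time derivatives $\eta_t, \psi_t$ carry factors of $J_\epsilon$, and one must rely on the uniform-in-$\epsilon$ symbol estimates of Lemma \ref{lem-cauchy:esti-j-eps} to avoid any deterioration as $\epsilon \to 0$. The hypothesis $s > 3$ is also essential in order to place $B, V, \eta^{-1}, \partial_\theta B$ in $L^\infty$ and to invoke the paradifferential commutator estimates without exceeding the allowed half-derivative loss; once these ingredients are in place both commutators decompose into operators of order at most $0$, which is comfortably inside the target class $\mathcal{L}(H^r; H^{r-1/2+})$.
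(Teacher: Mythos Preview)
Your approach is essentially the paper's: compute $[T_B,\partial_t]=-T_{\partial_t B}$ and control $\partial_t B$ in a Sobolev space via the formula \eqref{eq-paralin:B-formula} together with the equations for $\eta_t,\psi_t$; for the second commutator the paper simply invokes Proposition~\ref{prop-sym:commu-S-T-V-nabla} with $a=B\in\Gamma^0_{2+}$, which is the abstract packaging of your direct Leibniz decomposition.

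One quantitative correction: with only $s>3$ you cannot place $\partial_t B$ above the critical exponent $H^{d/2}=H^{1}$. The bottleneck is $\partial_t(\bar\nabla\psi)$: since $\psi_t=-\sigma(H-\tfrac{1}{2R})-N\in H^{s-3/2}$ (the mean curvature costs two derivatives of $\eta\in H^{s+1/2}$), one only gets $\bar\nabla\psi_t\in H^{s-5/2}$, and likewise $\partial_t(G(\eta)\psi)\in H^{s-5/2}$. Hence $\partial_t B\in H^{s-5/2}\subset H^{1/2+}$, and no better in general. This is exactly what the paper proves, and it is just enough: by Proposition~\ref{prop-para:paradiff-symbol-class-sobo} (with $m=0$, $d=2$) a symbol in $H^{1/2+}$ gives a paramultiplier of order $\tfrac12-$, matching the target in \eqref{eq-cauchy:energy-esti-commu-2} with no room to spare. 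So your strategy is right, but the ``strictly better than the target'' claim should be dropped.
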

	\begin{proof}
		The commutator $T_B$ and $\partial_t$ reads
		\begin{equation*}
			[T_B, \partial_t] = T_{\partial_t B}.
		\end{equation*}
		By Proposition \ref{prop-para:paradiff-symbol-class-sobo}, it suffices to prove that $\partial_tB\in H^{\frac{1}{2}+}$. Thanks to formula \eqref{eq-paralin:B-formula} of $B$, one may regard $B$ as a smooth function of $G(\eta)\psi$, $(\eta,\nabla_w\eta)$, and $(\psi,\nabla_w\psi)$. By Proposition \ref{prop-para:paralin} and Corollary \ref{cor-para:product-law}, it suffices to check that the derivative in time of these quantities belongs to $H^{\frac{1}{2}+}$. From equation \eqref{eq-intro:WW}, we have
		\begin{equation*}
			\partial_t\eta = G(\eta)\psi\in H^{s-1},\ \ \partial_t\psi = -\sigma\left(H-\frac{1}{2R}\right) - N \in H^{s-\frac{3}{2}},
		\end{equation*}
		which is a consequence of Corollary \ref{cor-pre:bound-DtN}, \ref{cor-para:product-law}, and Proposition \ref{prop-para:paralin} (recall that $B,V\in H^{s-1}$ due to Lemma \ref{lem-paralin:esti-B-V-psi}). This observation implies that $\partial_t(\eta,\nabla_w\eta)$ and $\partial_t(\psi,\nabla_w\psi)$ have regularity $H^{\frac{1}{2}+}$ since $s>3$. As for the time derivative of $G(\eta)\psi$, we have
		\begin{equation*}
			\partial_t \left( G(\eta)\psi \right) = \frac{d}{d\eta}G(\eta)\psi \cdot \partial_t\eta + G(\eta)\partial_t\psi \in H^{s-\frac{5}{2}} \subset H^{\frac{1}{2}+},
		\end{equation*}
		thanks to Proposition \ref{prop-pre:shape-deri} and \ref{cor-pre:bound-DtN}.
		
		It remains to study the commutator $[T_B, T_V\cdot\bar{\nabla}]$, whose estimate follows from Proposition \ref{prop-sym:commu-S-T-V-nabla} with $a=B \in\Gamma^0_{2+}$ (see also \eqref{eq-paralin:esti-B-V-psi}).
	\end{proof}
	
	Now, we are ready to prove Proposition \ref{prop-cauchy:energy-esti-gen}. Recall that $(\eta',\psi')$ solves \eqref{eq-cauchy:WW-eps-gen}, namely
	\begin{equation*}
		\left(\partial_t + T_V\cdot\bar{\nabla}J_\epsilon + \mathcal{L}^\epsilon\right)
		\left(\begin{array}{c}
			\eta' \\
			\psi'
		\end{array}\right) = f\in L^\infty_T(H^{s+\frac{1}{2}}\times H^s).
	\end{equation*}
	By applying operator
	\begin{equation}\label{eq-cauchy:op-mod-main}
		S\left(\begin{array}{cc}
			I & 0 \\
			-T_B & I
		\end{array}\right)
	\end{equation}
	on both sides (recall that $S$ is defined in \eqref{eq-sym:def-S}), one has
	\begin{equation}\label{eq-cauchy:R-1-def}
		\begin{aligned}
			&S\left(\begin{array}{cc}
				I & 0 \\
				-T_B & I
			\end{array}\right)\left(\partial_t + T_V\cdot\bar{\nabla}J_\epsilon + \mathcal{L}^\epsilon\right)  \\
			=& \left(\partial_t + T_V\cdot\bar{\nabla}J_\epsilon \right)S\left(\begin{array}{cc}
				I & 0 \\
				-T_B & I
			\end{array}\right) + \left(\begin{array}{cc}
				0 & -T_\gamma J_\epsilon \\
				(T_\gamma J_\epsilon)^* & 0
			\end{array}\right)S\left(\begin{array}{cc}
				I & 0 \\
				T_B & I
			\end{array}\right) + R_1.
		\end{aligned}
	\end{equation}
	We claim that the remainder $R_1$ satisfies the following property.
	\begin{lemma}\label{lem-cauchy:energy-esti-commu-main}
		Under the assumption of Proposition \ref{prop-cauchy:energy-esti-gen}, $R_1$ defined by \eqref{eq-cauchy:R-1-def} satisfies, for all $r\in[0,s]$,
		\begin{equation}
			\|R_1\|_{\mathcal{L}(H^{r+\frac{1}{2}}_{R'}\times H^{r};H^{r}\times H^{r})} \le C(N^{s-}_t).
		\end{equation}
	\end{lemma}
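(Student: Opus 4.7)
The natural plan is to decompose $R_1$ into three pieces corresponding to the three reorderings that must be performed to pass the operator $\mathcal{A} := S\begin{pmatrix} I & 0 \\ -T_B & I \end{pmatrix}$ across $\partial_t$, across $T_V\cdot\bar{\nabla}J_\epsilon$, and across $\mathcal{L}^\epsilon$. Writing
$$R_1 = [\mathcal{A},\partial_t] + [\mathcal{A}, T_V\cdot\bar{\nabla}J_\epsilon] + \Bigl(\mathcal{A}\mathcal{L}^\epsilon - \begin{pmatrix} 0 & -T_\gamma J_\epsilon \\ (T_\gamma J_\epsilon)^* & 0 \end{pmatrix}\mathcal{A}\Bigr),$$
each piece will be estimated separately. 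In all three pieces, the target is to improve order by strictly more than $1/2$ compared with the principal operator $\mathcal{A}\mathcal{L}^\epsilon$, so that the resulting map sends $H^{r+\frac{1}{2}}_{R'}\times H^r$ into $H^r\times H^r$.

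The third piece is the easiest: by Lemma~\ref{lem-cauchy:energy-esti-commu-1} it is entrywise of order $\frac{3}{2}-\frac{3}{2}-=-\delta$ for some $\delta>0$, with operator norm bounded by $C(N^{s-}_t)$ uniformly in $\epsilon\in[0,1]$ (since $J_\epsilon$ is uniformly of order $0$ by Lemma~\ref{lem-cauchy:esti-j-eps}). This is therefore bounded from $H^r\times H^r$ into $H^{r+\delta}\times H^{r+\delta}$, hence a fortiori from $H^{r+\frac{1}{2}}_{R'}\times H^r$ into $H^r\times H^r$.

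For the first commutator, the Leibniz rule gives
$$[\mathcal{A},\partial_t] = [S,\partial_t]\begin{pmatrix} I & 0 \\ -T_B & I \end{pmatrix} + S\begin{pmatrix} 0 & 0 \\ -T_{\partial_t B} & 0 \end{pmatrix}.$$
Proposition~\ref{prop-sym:commu-S-partial-t} bounds the first term (entries of order $\frac{1}{2}$ and $0$), while Lemma~\ref{lem-cauchy:energy-esti-commu-2} shows $T_{\partial_t B}$ is of order $-\frac{1}{2}+$, so composing with $S$ yields an operator gaining regularity overall. The second commutator is treated analogously: splitting as
$$[\mathcal{A}, T_V\cdot\bar{\nabla}J_\epsilon] = [S, T_V\cdot\bar{\nabla}]J_\epsilon\begin{pmatrix} I & 0 \\ -T_B & I \end{pmatrix} + S\begin{pmatrix} 0 & 0 \\ -[T_B, T_V\cdot\bar{\nabla}]J_\epsilon & 0 \end{pmatrix} + T_V\cdot\bar{\nabla}[S,J_\epsilon]\begin{pmatrix} I & 0 \\ -T_B & I \end{pmatrix},$$
the first term is handled by Proposition~\ref{prop-sym:commu-S-T-V-nabla} applied to $p\in\Sigma^{1/2}$ and $q\in\Sigma^0$, the second by Lemma~\ref{lem-cauchy:energy-esti-commu-2}, and the third by standard symbolic calculus showing that $[T_p,J_\epsilon]$ and $[T_q,J_\epsilon]$ are of orders $-\frac{1}{2}$ and $-1$ respectively, uniformly in $\epsilon$ by Lemma~\ref{lem-cauchy:esti-j-eps}.

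The main obstacle is tracking \emph{uniformity in $\epsilon\in[0,1]$}. Since for $\epsilon>0$ the operator $J_\epsilon$ is smoothing, it is tempting to absorb factors of $J_\epsilon$ and obtain better bounds, but those would blow up as $\epsilon\to 0$. The resolution is to systematically regard $j_\epsilon$ as a symbol in $\Gamma^0_{3/2+}+\Gamma^{-1}_{1/2+}$ (Lemma~\ref{lem-cauchy:esti-j-eps}) and apply only those symbolic estimates that are uniform in this class; every commutator above is then bounded by $C(N^{s-}_t)$ independently of $\epsilon$, which is exactly what is needed to feed back into the energy estimate in Proposition~\ref{prop-cauchy:energy-esti-gen}.
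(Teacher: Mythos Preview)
Your approach is essentially the paper's: split $R_1$ into the commutators with $\partial_t$, with $T_V\cdot\bar{\nabla}J_\epsilon$, and the $\mathcal{L}^\epsilon$-piece handled by Lemma~\ref{lem-cauchy:energy-esti-commu-1}, then control each entry using Propositions~\ref{prop-sym:commu-S-partial-t}, \ref{prop-sym:commu-S-T-V-nabla} and Lemma~\ref{lem-cauchy:energy-esti-commu-2}, keeping all bounds uniform in $\epsilon$ via Lemma~\ref{lem-cauchy:esti-j-eps}.

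There is, however, an algebraic slip in your decomposition of $[\mathcal{A},T_V\cdot\bar{\nabla}J_\epsilon]$. Writing $\mathcal{A}=SM$ with $M=\begin{pmatrix}I&0\\-T_B&I\end{pmatrix}$, one has $[SM,T_V\cdot\bar{\nabla}J_\epsilon]=[S,T_V\cdot\bar{\nabla}J_\epsilon]M+S[M,T_V\cdot\bar{\nabla}J_\epsilon]$, and the $(2,1)$-entry of the second summand is $-[T_B,T_V\cdot\bar{\nabla}J_\epsilon]$, not $-[T_B,T_V\cdot\bar{\nabla}]J_\epsilon$. Your three displayed terms therefore miss
\[
S\begin{pmatrix}0&0\\-T_V\cdot\bar{\nabla}[T_B,J_\epsilon]&0\end{pmatrix}
=\begin{pmatrix}0&0\\-T_qT_V\cdot\bar{\nabla}[T_B,J_\epsilon]&0\end{pmatrix}.
\]
The paper treats this term explicitly: since $B\in H^{s-1}\subset\Gamma^0_{1+}$ and $j_\epsilon\in\Gamma^0_{3/2+}+\Gamma^{-1}_{1/2+}$ uniformly in $\epsilon$, Corollary~\ref{cor-para:commu-esti} gives $[T_B,J_\epsilon]$ of order $-1$, so $T_qT_V\cdot\bar{\nabla}[T_B,J_\epsilon]$ is of order $0$ and maps $H^{r+\frac{1}{2}}_{R'}$ into $H^{r+\frac{1}{2}}\subset H^r$. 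This is harmless but must be accounted for.

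A smaller point: Lemma~\ref{lem-cauchy:energy-esti-commu-2} gives $[T_B,\partial_t]=-T_{\partial_t B}$ of order $\frac{1}{2}-$ (it maps $H^r$ to $H^{r-\frac{1}{2}+}$), not $-\frac{1}{2}+$. Your conclusion still holds because acting on the first component $H^{r+\frac{1}{2}}$ this lands in $H^{r+}\subset H^r$, but the stated order is off by a sign.
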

	\begin{proof}
		We observe from the definition \eqref{eq-sym:def-S} of $S$, that
		\begin{align*}
			R_1 =& \left[ S\left(\begin{array}{cc}
				I & 0 \\
				-T_B & I
			\end{array}\right), \partial_t + T_V\cdot\bar{\nabla}J_\epsilon \right] \\
			&+ S\left(\begin{array}{cc}
				I & 0 \\
				-T_B & I
			\end{array}\right)\mathcal{L}^\epsilon - \left(\begin{array}{cc}
				0 & -T_\gamma J_\epsilon \\
				(T_\gamma J_\epsilon)^* & 0
			\end{array}\right)S\left(\begin{array}{cc}
				I & 0 \\
				T_B & I
			\end{array}\right) \\
			=& \left[ \left(\begin{array}{cc}
				T_p & 0 \\
				-T_q T_B & T_q
			\end{array}\right), \partial_t + T_V\cdot\bar{\nabla}J_\epsilon \right] + R_2'' = R_2' + R_2'',
		\end{align*}
		where
		\begin{align*}
			R_2' =& \left(\begin{array}{cc}
				\left[T_p, \partial_t + T_V\cdot\bar{\nabla}J_\epsilon\right] & 0 \\
				\left[-T_q T_B, \partial_t + T_V\cdot\bar{\nabla}J_\epsilon\right] & \left[T_q, \partial_t + T_V\cdot\bar{\nabla}J_\epsilon\right]
			\end{array}\right), \\
			R_2'' =& S\left(\begin{array}{cc}
				I & 0 \\
				-T_B & I
			\end{array}\right)\mathcal{L}^\epsilon - \left(\begin{array}{cc}
				0 & -T_\gamma J_\epsilon \\
				(T_\gamma J_\epsilon)^* & 0
			\end{array}\right)S\left(\begin{array}{cc}
				I & 0 \\
				T_B & I
			\end{array}\right).
		\end{align*}
		From Lemma \ref{lem-cauchy:energy-esti-commu-1}, $R_2''$ satisfies the desired estimate, so as the commutator between $T_p$ (and $T_q$) and $\partial_t + T_V\cdot\bar{\nabla}J_\epsilon$ (see Proposition \ref{prop-sym:commu-S-partial-t} and \ref{prop-sym:commu-S-T-V-nabla}). Thus, it remains to show that the commutator $\left[T_q T_B, \partial_t + T_V\cdot\bar{\nabla}J_\epsilon\right]$ is bounded from $H^{r+\frac{1}{2}}_{R'}$ to $H^r$, with operator norm bounded by $C(N^{s-}_t)$. In fact, we have
		\begin{align*}
			\left[T_q T_B, \partial_t + T_V\cdot\bar{\nabla}J_\epsilon\right] =& T_q \left[T_B, \partial_t + T_V\cdot\bar{\nabla}J_\epsilon\right] + \left[T_q, \partial_t + T_V\cdot\bar{\nabla}J_\epsilon\right]T_B \\
			=& T_q \left[T_B, \partial_t \right] + T_q [T_B,T_V\cdot\bar{\nabla}]J_\epsilon + T_qT_V\cdot\bar{\nabla}[T_B,J_\epsilon] \\
			&+ [T_q,\partial_t]T_B + [T_q, T_V\cdot\bar{\nabla}]J_\epsilon T_B + T_V\cdot\bar{\nabla}[T_q,J_\epsilon]T_B.
		\end{align*}
		Since $T_q$, $T_B$, and $J_\epsilon$ are of order zero, thanks to Proposition \ref{prop-sym:commu-S-partial-t}, \ref{prop-sym:commu-S-T-V-nabla}, and Lemma \ref{lem-cauchy:energy-esti-commu-2}, it suffices to study the term $T_qT_V\cdot\bar{\nabla}[T_B,J_\epsilon]$ and $T_V\cdot\bar{\nabla}[T_q,J_\epsilon]T_B$, which are bounded from $H^{r+\frac{1}{2}}_{R'}$ to $H^{r+\frac{1}{2}}$ since the commutators $[T_B,J_\epsilon]$, $[T_q,J_\epsilon]$ are of order $(-1)$ due to Corollary \ref{cor-para:commu-esti} and \ref{cor-paralin:homo-sym-commu}.
	\end{proof}
	
	As a consequence of Lemma \ref{lem-cauchy:energy-esti-commu-main}, the quantity
	\begin{equation}\label{eq-cauchy:def-Y}
		Y:= S\left(\begin{array}{cc}
			I & 0 \\
			T_B & I
		\end{array}\right)\left(\begin{array}{c}
			\eta' \\
			\psi'
		\end{array}\right) \in C_T(H^s\times H^s),
	\end{equation}
	verifying the direct estimate
	\begin{equation}\label{eq-cauchy:Y-esti}
		\|Y(t)\|_{H^{s_0}\times H^{s_0}} \le C(N^{s-}_t) M^{s_0}_t,\ \ \forall s_0\in[0,s],\ t\in[0,T[,
	\end{equation}
	solves the equation
	\begin{equation}\label{eq-cauchy:eq-Y}
		\partial_t Y + T_V\cdot\bar{\nabla}J_\epsilon Y + \left(\begin{array}{cc}
			0 & -T_\gamma J_\epsilon \\
			(T_\gamma J_\epsilon)^* & 0
		\end{array}\right)Y = F_1,
	\end{equation}
	with
	\begin{equation}\label{eq-cauchy:def-F-1}
		F_1 = S\left(\begin{array}{cc}
			I & 0 \\
			T_B & I
		\end{array}\right)f - R_1\left(\begin{array}{c}
		\eta' \\
		\psi'
		\end{array}\right).
	\end{equation}
	Hence, the new source term $F_1$ verifies, for all $0\le s_0\le s$ and $t\in[0,T[$,
	\begin{equation}\label{eq-cauchy:F-1-esti}
		\|F_1(t)\|_{C_T(H^{s_0}\times H^{s_0})} \le C(N^{s-}_t) \left( \|f(t)\|_{H^{s_0+\frac{1}{2}}\times H^{s_0}} + M^{s_0}_t \right).
	\end{equation}
	
	To obtain the $H^{s_0+\frac{1}{2}}_{R'} \times H^{s_0}$ estimate of $(\eta',\psi')$ (namely \eqref{eq-cauchy:energy-esti-gen}), we introduce a new symbol $\beta:= \beta^{(s_0)} + \beta^{(s_0-1)}\in\Sigma^s$, defined by
	\begin{equation}\label{eq-cauchy:def-beta}
		\beta^{(s_0)}:= \left( \gamma^{(3/2)} \right)^{\frac{2s_0}{3}},\ \ \beta^{(s_0-1)}=-\frac{i}{2}\partial_w\cdot\partial_\xi \beta^{(s_0)},
	\end{equation}
	where $\gamma^{(3/2)}$ is constructed in Proposition \ref{prop-sym:main}. By applying Lemma \ref{lem-cauchy:symbol-fct-of-gamma} with $F(\epsilon,\rho)=\rho^{2s_0/3}$, $G_1(\epsilon,\rho)=\exp(-\epsilon\rho)$, and $G_2(\epsilon,\rho)=\rho$, we obtain the following properties of $\beta$.
	\begin{lemma}\label{lem-cauchy:esti-beta}
		Let $\eta\in L^\infty_TH^{s+\frac{1}{2}}_R$ with $s>3$ and $T>0$. Then the symbol $\beta$ defined above is elliptic and belong to $\Gamma^{s_0}_{3/2+} + \Gamma^{s_0-1}_{1/2+}$ uniformly in $t\in[0,T[$. Moreover, we have
		\begin{equation}\label{eq-cauchy:commu-beta}
			T_\beta T_\gamma \approx T_\gamma T_\beta,\ \ T_\beta J_\epsilon \approx J_\epsilon T_\beta,
		\end{equation}
		uniformly in $\epsilon,t$, in the sense of \eqref{eq-cauchy:equi-paradiff}.
	\end{lemma}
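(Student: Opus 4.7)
The plan is to realize each of $\beta$, $\gamma$, and $j_\epsilon$ as an instance of the construction in Lemma \ref{lem-cauchy:symbol-fct-of-gamma}, applied to the three scalar functions singled out in the statement, and then to observe that the commutation part of that lemma extends verbatim to \emph{distinct} scalar functions of $\gamma^{(3/2)}$. Concretely, set $F(\epsilon,\rho)=\rho^{2s_0/3}$, $G_2(\epsilon,\rho)=\rho$ (both independent of $\epsilon$) and $G_1(\epsilon,\rho)=e^{-\epsilon\rho}$. The first two lie in $\mathcal{G}^{2s_0/3}$ and $\mathcal{G}^{1}$ by inspection; for $G_1$ the identity $\partial_\rho^k e^{-\epsilon\rho}=(-\epsilon)^k e^{-\epsilon\rho}$ together with the uniform bound $(\epsilon\rho)^k e^{-\epsilon\rho}\le C_k$ for $\epsilon,\rho\ge 0$ places $G_1$ in $\mathcal{G}^0$ uniformly in $\epsilon\in[0,1]$. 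The recipe \eqref{eq-cauchy:symbol-fct-of-gamma-def} then gives $\beta=a_{F,\cdot}$, $\gamma=a_{G_2,\cdot}$, and $j_\epsilon=a_{G_1,\epsilon}$ on the nose. Applying Lemma \ref{lem-cauchy:symbol-fct-of-gamma} provides uniform-in-$t$ class memberships, most notably $\beta\in\Gamma^{s_0}_{3/2+}+\Gamma^{s_0-1}_{1/2+}$; the ellipticity of $\beta^{(s_0)}=(\gamma^{(3/2)})^{2s_0/3}$ follows at once from that of $\gamma^{(3/2)}\gtrsim|\xi|^{3/2}$.

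To establish the commutation relations \eqref{eq-cauchy:commu-beta}, I would redo the final computation in the proof of Lemma \ref{lem-cauchy:symbol-fct-of-gamma} for functions of \emph{possibly different} orders. Using the composition formula \eqref{eq-paralin:homo-sym-comp-formula}, for $F\in\mathcal{G}^{m_1}$ and $G\in\mathcal{G}^{m_2}$,
\[
a_{F,\epsilon}\sharp a_{G,\epsilon}-a_{G,\epsilon}\sharp a_{F,\epsilon}=\partial_\xi a_{F,\epsilon}^{(3m_1/2)}\cdot D_w a_{G,\epsilon}^{(3m_2/2)}-\partial_\xi a_{G,\epsilon}^{(3m_2/2)}\cdot D_w a_{F,\epsilon}^{(3m_1/2)}.
\]
Both principal symbols are scalar functions of $\gamma^{(3/2)}$ alone, so the chain rule makes each summand on the right equal to $F'(\epsilon,\gamma^{(3/2)})\,G'(\epsilon,\gamma^{(3/2)})\,\partial_\xi\gamma^{(3/2)}\cdot D_w\gamma^{(3/2)}$, and their difference vanishes identically. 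Proposition \ref{prop-paralin:homo-sym-cal} then yields $T_{a_{F,\epsilon}}T_{a_{G,\epsilon}}\approx T_{a_{G,\epsilon}}T_{a_{F,\epsilon}}$ in the sense of \eqref{eq-cauchy:equi-paradiff}. Specializing to $(F,G_2)$ and $(F,G_1)$ gives the two desired equivalences $T_\beta T_\gamma\approx T_\gamma T_\beta$ and $T_\beta J_\epsilon\approx J_\epsilon T_\beta$, with uniformity in $\epsilon$ for the latter inherited from the uniform symbol bounds on $j_\epsilon$ established in Step 1.

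I do not anticipate any substantive obstacle: the only technical care needed is to track that all implicit constants stay uniform in $\epsilon\in[0,1]$ (the degenerate point $\epsilon=0$ is harmless since $j_0\equiv 1$), and to note that the Poisson-bracket cancellation is the same scalar computation as in the homogeneous-order case of Lemma \ref{lem-cauchy:symbol-fct-of-gamma}, nothing in that argument relying on $m_1=m_2$.
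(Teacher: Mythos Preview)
Your argument is correct and matches the paper's own proof, which consists of the single sentence preceding the lemma invoking Lemma~\ref{lem-cauchy:symbol-fct-of-gamma} with precisely your choices $F(\epsilon,\rho)=\rho^{2s_0/3}$, $G_1(\epsilon,\rho)=e^{-\epsilon\rho}$, $G_2(\epsilon,\rho)=\rho$; you have simply made explicit the verification that the commutation part of that lemma carries over to functions of different homogeneity orders.

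One small correction: the identification $\gamma=a_{G_2,\cdot}$ does \emph{not} hold on the nose, since $\gamma^{(1/2)}$ has a nonzero real part determined by \eqref{eq-sym:cond-re-gamma-sub}, whereas $a_{G_2,\cdot}^{(1/2)}=-\tfrac{i}{2}\partial_w\cdot\partial_\xi\gamma^{(3/2)}$ is purely imaginary. This is harmless for your argument, because the $\sharp$-difference you compute via \eqref{eq-paralin:homo-sym-comp-formula} depends only on the principal symbols, and $\gamma^{(3/2)}=a_{G_2,\cdot}^{(3/2)}$ does hold; equivalently, you can run your Poisson-bracket cancellation directly on $\beta$ and $\gamma$ without ever passing through $a_{G_2}$.
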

	As a result of Lemma \ref{lem-cauchy:esti-beta} together with \eqref{eq-sym:main-equi} and \eqref{eq-cauchy:commu-j-eps}, we have
	\begin{equation*}
		T_\beta \left(\begin{array}{cc}
			0 & -T_\gamma J_\epsilon \\
			(T_\gamma J_\epsilon)^* & 0
		\end{array}\right) \approx
		\left(\begin{array}{cc}
			0 & -T_\gamma J_\epsilon \\
			(T_\gamma J_\epsilon)^* & 0
		\end{array}\right) T_\beta.
	\end{equation*}
	Moreover, thanks to Proposition \ref{prop-sym:commu-S-partial-t}, \ref{prop-sym:commu-S-T-V-nabla}, and Corollary \ref{cor-para:commu-esti}, the commutator between $T_\beta$ and $(\partial_t + T_V\cdot\bar{\nabla}J_\epsilon)$ is bounded from $H^{r}$ to $H^{r-s_0}$ for all $r\in[0,s]$, whose operator norm is bounded by $C(N^{s-}_t)$. Now, we apply $T_\beta$ to \eqref{eq-cauchy:eq-Y} and write the resulting equation as
	\begin{equation}\label{eq-cauchy:eq-T-beta-Y}
		\partial_t T_\beta Y + T_V\cdot\bar{\nabla}J_\epsilon T_\beta Y + \left(\begin{array}{cc}
			0 & -T_\gamma J_\epsilon \\
			(T_\gamma J_\epsilon)^* & 0
		\end{array}\right)T_\beta Y = F_2,
	\end{equation}
	with
	\begin{equation}\label{eq-cauchy:def-F-2}
		F_2 = T_\beta F_1 + R_2 Y,
	\end{equation}
	where the remainder $R_2$ is bounded from $H^{r}\times H^{r}$ to $H^{r-s_0}\times H^{r-s_0}$ for all $r\in[0,s]$, whose operator norm is bounded by $C(N^{s-}_t)$, which, combined with \eqref{eq-cauchy:Y-esti} and \eqref{eq-cauchy:F-1-esti}, yields the following estimate for $F_2$,
	\begin{equation}\label{eq-cauchy:F-2-esti}
		\|F_2(t)\|_{C_T(L^2\times L^2)} \le C(N^{s-}_t) \left( \|f(t)\|_{H^{s_0+\frac{1}{2}}\times H^{s_0}} + M^{s_0}_t \right),\ \ \forall t\in[0,T[.
	\end{equation}
	
	The $L^2$-scalar product of \eqref{eq-cauchy:eq-T-beta-Y} with $T_\beta Y$ gives
	\begin{equation*}
		\frac{1}{2}\frac{d}{dt}\|T_\beta Y\|_{L^2\times L^2}^2 = -\Real\langle T_V\cdot\bar{\nabla}J_\epsilon T_\beta Y, T_\beta Y \rangle + \Real\langle F_2, T_{\beta}Y \rangle.
	\end{equation*}
	We claim that
	\begin{lemma}\label{lem-cauchy:energy-esti-1}
		Let $(\eta,\psi)\in L^\infty_T\mathcal{X}_R^s$ with $s>3$ and $T>0$. Then we have
		\begin{equation}\label{eq-cauchy:energy-esti-1}
			\left| \Real\langle T_V\cdot\bar{\nabla}J_\epsilon T_\beta Y, T_\beta Y \rangle \right| \le C(N^{s-}_t)\|T_\beta Y\|_{L^2\times L^2}^2,\ \ \forall t\in[0,T[,
		\end{equation}
		where $C>0$ is an $\epsilon$-independent smooth increasing function.
	\end{lemma}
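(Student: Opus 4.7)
The estimate will follow by symmetrization: writing $A_\epsilon := T_V\cdot\bar\nabla J_\epsilon$ and $u=T_\beta Y$, one has
\begin{equation*}
2\Real\langle A_\epsilon u, u\rangle_{L^2\times L^2} = \langle (A_\epsilon+A_\epsilon^*)u,u\rangle_{L^2\times L^2},
\end{equation*}
so it suffices to prove that $A_\epsilon+A_\epsilon^*$ is bounded on $L^2\times L^2$ with norm $\le C(N^{s-}_t)$ uniformly in $\epsilon\in[0,1]$, and then apply Cauchy--Schwarz. This is the analogue (in paradifferential form) of the standard fact that for a sufficiently regular real vector field $V$, the transport operator $V\cdot\bar\nabla$ equals its anti-self-adjoint part plus a zero-order multiplication operator $-\tfrac12\bar\nabla\cdot V$.

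The first step is to trade $J_\epsilon^*$ for $J_\epsilon$: by Lemma~5.4, $J_\epsilon^*-J_\epsilon$ is of order $-1$ uniformly in $\epsilon$, so
\begin{equation*}
A_\epsilon^* \;=\; J_\epsilon^*(T_V\cdot\bar\nabla)^* \;=\; J_\epsilon(T_V\cdot\bar\nabla)^* + R_\epsilon^{(1)},
\end{equation*}
where $R_\epsilon^{(1)}$ is bounded on $L^2$ with norm $\le C(N^{s-}_t)$ uniformly in $\epsilon$. Next, using Corollary~4.6 and Lemma~5.4, the commutator $[T_V\cdot\bar\nabla, J_\epsilon]$ is of order $0$ uniformly in $\epsilon$ (the first-order operator $T_V\cdot\bar\nabla$ has symbol in $\Gamma^1_{1+}$, and $J_\epsilon\in\Gamma^0_{3/2+}+\Gamma^{-1}_{1/2+}$), whence
\begin{equation*}
A_\epsilon+A_\epsilon^* \;=\; J_\epsilon\bigl( T_V\cdot\bar\nabla + (T_V\cdot\bar\nabla)^*\bigr) + R_\epsilon^{(2)},
\end{equation*}
with $R_\epsilon^{(2)}$ of order $0$, uniformly in $\epsilon$. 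Since $J_\epsilon$ itself is $L^2$-bounded uniformly in $\epsilon$, the problem reduces to showing that $T_V\cdot\bar\nabla+(T_V\cdot\bar\nabla)^*$ is of order $0$.

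For the latter, recall $T_V\cdot\bar\nabla=T_{V^\theta}\eta^{-1}\partial_\theta+T_{V^z}\partial_z$. Using Proposition~4.4 on the compositions $T_{V^\theta}\circ(\eta^{-1}\cdot)$ (here $\eta^{-1}\in H^{s-1/2}_{R^{-1}}$ by Proposition~A.3), one may replace $T_{V^\theta}\eta^{-1}$ and $T_{V^z}$ by $T_{V^\theta/\eta}$ and $T_{V^z}$ modulo operators of order $0$ controlled by $C(N^{s-}_t)$. Since $V^\theta,V^z$ are real and $V\in H^{s-1}$ with $s>3$ gives $V^\theta/\eta, V^z\in\Gamma^0_{1+}$, Proposition~4.4 yields $T_{V^j}^*=T_{V^j}$ up to order $-1-$; combined with the symbol identity $T_a\partial_j + (T_a\partial_j)^*\approx -T_{\partial_j a}$ via Corollary~4.6, we conclude
\begin{equation*}
T_V\cdot\bar\nabla + (T_V\cdot\bar\nabla)^* \;\approx\; -T_{\partial_\theta(V^\theta/\eta)+\partial_z V^z}\quad\text{mod order }0,
\end{equation*}
whose $L^2$-norm is controlled by $\|\partial_\theta(V^\theta/\eta)+\partial_z V^z\|_{L^\infty}\le C(N^{s-}_t)$ because $V\in H^{s-1}\subset C^{1+}$ for $s>3$ and $\eta^{-1}\in C^{1+}$. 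Putting everything together gives the desired bound.

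The only delicate point is that every paradifferential manipulation (adjoint, composition, and commutator with $J_\epsilon$) must be performed with $\epsilon$-uniform constants; this is exactly what Lemmas~5.3--5.4 guarantee for $J_\epsilon$, and the regularity threshold $s>3$ (so that $V$ is Lipschitz and $\eta_w$ is Lipschitz) is precisely what is needed to close the symbolic calculus at order $0$ without borrowing any derivative from $T_\beta Y$.
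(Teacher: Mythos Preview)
Your proof is correct and follows the same idea as the paper's: both reduce the estimate to showing that the operator is anti-self-adjoint up to order-$0$ remainders, using the purely imaginary principal symbol of the transport part together with the uniform-in-$\epsilon$ properties of $J_\epsilon$. The only organizational difference is that the paper first combines everything into a single symbol $h_\epsilon = j_\epsilon\bigl(V^z i\xi_z + \eta^{-1}V^\theta i\xi_\theta\bigr)\in\Gamma^1_{1+}$ and then computes $h_\epsilon + h_\epsilon^*$ directly, whereas you first commute $J_\epsilon$ out (using $J_\epsilon^*\approx J_\epsilon$ and the commutator bound for $[T_V\cdot\bar\nabla,J_\epsilon]$, which is exactly Proposition~\ref{prop-sym:commu-S-T-V-nabla}) and then treat $T_V\cdot\bar\nabla + (T_V\cdot\bar\nabla)^*$ separately. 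Both routes are valid and equally short.

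One step you should make more explicit is the replacement of the genuine multiplication $\eta^{-1}$ by the paraproduct $T_{\eta^{-1}}$: since $\bar\nabla$ contains $\eta^{-1}\partial_\theta$ rather than $T_{\eta^{-1}}\partial_\theta$, the claim that $T_{V^\theta}\eta^{-1}\partial_\theta - T_{V^\theta/\eta}\partial_\theta$ is $L^2$-bounded requires decomposing $\eta^{-1}\partial_\theta u = T_{\eta^{-1}}\partial_\theta u + T_{\partial_\theta u}\eta^{-1} + R(\eta^{-1},\partial_\theta u)$ and estimating the last two pieces in $L^2$ (via Proposition~\ref{prop-para:paraprod-bound} with $\partial_\theta u\in H^{-1}$ and $\eta^{-1}\in H^{2+}_{R^{-1}}$). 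This is exactly what the paper does in its proof, and the regularity $s>3$ is precisely what makes it work; your phrase ``modulo operators of order $0$'' covers this but hides the only place where the non-paradifferential factor $\eta^{-1}$ intervenes.
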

	Once Lemma \ref{lem-cauchy:energy-esti-1} is proved (the proof is deferred to the end of this section), we can conclude using \eqref{eq-cauchy:Y-esti} and \eqref{eq-cauchy:F-2-esti} that, for all $t\in[0,T[$,
	\begin{align*}
		&\|T_{\beta(t)} Y(t)\|_{L^2\times L^2}^2 \\
		\le& \|T_{\beta} Y(0)\|_{L^2\times L^2}^2 + \int_0^t \left( C(N^{s-}_{t'})\|T_\beta Y(t')\|_{L^2\times L^2}^2 + \|T_\beta Y(t')\|_{L^2\times L^2} \|F_2(t')\|_{L^2\times L^2} \right) dt' \\
		\le& C(N^{s-}_0)\|Y(0)\|_{H^{s_0}\times H^{s_0}}^2 + \int_0^t C(N^{s-}_{t'})\|Y(t')\|_{H^{s_0}\times H^{s_0}}\left( \|Y(t')\|_{H^{s_0}\times H^{s_0}} + \|F_2(t')\|_{L^2\times L^2} \right) dt' \\
		\le& C(N^{s-}_0)(M^{s_0}_0)^2 + \int_0^t C(N^{s-}_{t'})M^{s_0}_{t'} \left( M^{s_0}_{t'} + \|f(t')\|_{H^{s_0+\frac{1}{2}}\times H^{s_0}} \right) dt' \\
		\le& C(N^{s-}_0)(M^{s_0}_0)^2 + t C(N^{s-}_t) M^{s_0}_{t} \left( M^{s_0}_{t} + \|f\|_{L_t^\infty(H^{s_0+\frac{1}{2}}\times H^{s_0})} \right).
	\end{align*}
	An application of \eqref{eq-cauchy:energy-esti-gen-f-esti} leads to the following estimate of $T_\beta Y$,
	\begin{equation}\label{eq-cauchy:esti-T-beta-Y}
		\|T_{\beta(t)} Y(t)\|_{L^2\times L^2}^2 \le C(N^{s-}_0)(M^{s_0}_0)^2 + t C(N^{s-}_t) M^{s_0}_{t} \left( M^{s_0}_{t} + c_f \right).
	\end{equation}
	
	\begin{proof}[Proof of \eqref{eq-cauchy:energy-esti-gen}]
		Compared with the desired estimate \eqref{eq-cauchy:energy-esti}, we need to pass from the estimate of $T_\beta Y$ to that of $(\eta',\psi')$. Notice that $\beta$, $p$, and $q$ are elliptic symbols. Then, by Proposition \ref{prop-paralin:homo-sym-ellip-esti}, we have
		\begin{equation*}
			\|(\eta',\psi')(t)\|_{H^{s_0+\frac{1}{2}}_{R'}\times H^{s_0}} \le C(N^{\frac{3}{2}+}_t) \left( \|T_{\beta(t)} Y(t)\|_{L^2\times L^2} + \|(\eta',\psi')(t)\|_{H^{s_0-1}_{R'}\times H^{s_0-\frac{3}{2}}} \right).
		\end{equation*}
		Recall that, $\mathcal{L}^\epsilon$ (defined in \eqref{eq-cauchy:def-L-eps}) is bounded from $H^{r+\frac{1}{2}}_{R'}\times H^{r}$ to $H^{r-1}_{R'}\times H^{r-\frac{3}{2}}$ for all $r\in\R$ with operator norm controlled by $C(N^{s-}_t)$. Then, by \eqref{eq-cauchy:WW-eps-gen}, one can easily see that
		\begin{equation*}
			\|\eta'_t\|_{H^{s'-1}} + \|\psi'_t\|_{H^{s'-\frac{3}{2}}} \le C(N^{s-}_t)M^{s'}_t + \|f(t)\|_{H^{s'-1}\times H^{s'-\frac{3}{2}}},\ \ \forall t\in[0,T[, s'\in]\frac{3}{2},s-\frac{3}{2}[,
		\end{equation*}
		which yields, for all $t\in[0,T[$,
		\begin{align*}
			&\|T_\beta Y(t)\|_{L^2\times L^2}^2 + \|(\eta',\psi')(t)\|_{H^{s_0-1}_{R'}\times H^{s_0-\frac{3}{2}}}^2 \\
			\le& \|T_\beta Y(t)\|_{L^2\times L^2}^2 + \|(\eta'_0,\psi'_0)\|_{H^{s_0-1}_{R'}\times H^{s_0-\frac{3}{2}}}^2 \\
			&\hspace{8em} +2\int_0^t \|(\eta',\psi')(t')\|_{H^{s_0-1}_{R'}\times H^{s_0-\frac{3}{2}}} \left( \|\eta'_t\|_{H^{s_0-1}} + \|\psi'_t\|_{H^{s_0-\frac{3}{2}}} \right) dt' \\
			\le& \|T_\beta Y(t)\|_{L^2\times L^2}^2 + \|(\eta'_0,\psi'_0)\|_{H^{s_0+\frac{1}{2}}_{R'}\times H^{s_0}}^2 \\
			&\hspace{4em}+ 2\int_0^t \|(\eta',\psi')(t')\|_{H^{s_0-1}_{R'}\times H^{s_0-\frac{3}{2}}} \left( C(N^{s-}_{t'})M^{s_0}_{t'} + \|f(t')\|_{H^{s_0-1}\times H^{s_0-\frac{3}{2}}} \right) dt' \\
			\le& \|T_\beta Y(t)\|_{L^2\times L^2}^2 + (M^{s_0}_0)^2 + 2\int_0^t M^{s_0}_{t'} \left( C(N^{s-}_{t'})M^{s_0}_{t'} + \|f(t')\|_{H^{s_0-1}\times H^{s_0-\frac{3}{2}}} \right) dt' \\
			\le& \|T_\beta Y(t)\|_{L^2\times L^2}^2 + (M^{s_0}_0)^2 + t C(N^{s-}_{t}) M^{s_0}_{t} \left(M^{s_0}_{t} + \|f\|_{L^\infty_t(H^{s_0-1}\times H^{s_0-\frac{3}{2}})} \right) \\
			\le& \|T_\beta Y(t)\|_{L^2\times L^2}^2 + (M^{s_0}_0)^2 + t C(N^{s-}_{t}) M^{s_0}_{t} \left(M^{s_0}_{t} + c_f \right) \\
			\le& C(N^{s-}_0)(M^{s_0}_0)^2 + t C(N^{s-}_t) M^{s_0}_{t} \left( M^{s_0}_{t} + c_f \right),
		\end{align*}
		where we used again the assumption \eqref{eq-cauchy:energy-esti-gen-f-esti} and estimate \eqref{eq-cauchy:esti-T-beta-Y} of $T_\beta Y$.
		
		To sum up, we have proved that, for all $t\in[0,T[$,
		\begin{equation}\label{eq-cauchy:energy-esti-gen-alt}
			\|(\eta,\psi)(t)\|_{H^{s_0+\frac{1}{2}}_R\times H^{s_0}}^2 \le C\left( N^{\frac{3}{2}+}_t \right) \left( C(N^{s-}_0)(M^{s_0}_0)^2 + t C(N^{s-}_t) M^{s_0}_{t} \left( M^{s_0}_{t} + c_f \right) \right).
		\end{equation}
		Notice that, it is harmless to replace $C\left( N^{\frac{3}{2}+}_t \right)$ by $C\left( \left(N^{s_1}_t \right)^2\right)$ (recall that $\frac{3}{2}<s_1<\min(s-\frac{3}{2},s_0)$), where $C>0$ is smooth increasing function. From the definition \eqref{eq-cauchy:def-N-T} of $N^{s_1}_t$, equation \eqref{eq-cauchy:WW-eps-gen}, and assumption \eqref{eq-cauchy:energy-esti-gen-deri-in-t}, it is clear that, for all $t\in[0,T[$,
		\begin{align*}
			C\left( \left(N^{s_1}_t \right)^2\right) \le& C\left( \left(N^{s_1}_0 \right)^2\right) + \int_0^t C'\left( \left(N^{s_1}_{t'} \right)^2\right)\|(\eta,\psi)(t')\|_{H^{s_1+\frac{1}{2}}_R\times H^{s_1}} \|(\eta_t,\psi_t)(t')\|_{H^{s_1+\frac{1}{2}}\times H^{s_1}} dt' \\
			&+ \int_0^t C'\left( \left(N^{s_1}_{t'} \right)^2\right)\|(\eta',\psi')(t')\|_{H^{s_1+\frac{1}{2}}_{R'}\times H^{s_1}} \|(\eta'_t,\psi'_t)(t')\|_{H^{s_1+\frac{1}{2}}\times H^{s_1}} dt' \\
			\le& C\left( N^{s_1}_0 \right) + \int_0^t C\left( N^{s_1}_{t'}\right) N^{s_1}_{t'} C(N^{s-}_{t'}) N^{s_1+\frac{3}{2}}_{t'} dt' \\
			\le& C\left( N^{s-}_0 \right) + \int_0^t C\left( N^{s-}_{t'}\right) N^{s_0}_{t'} dt' \\
			\le& C\left( N^{s-}_0 \right) + t C\left( N^{s-}_{t}\right) N^{s_0}_{t},
		\end{align*}
		and the right hand side of \eqref{eq-cauchy:energy-esti-gen-alt} can be bounded by 
		\begin{align*}
			\|(\eta,\psi)(t)\|_{H^{s_0+\frac{1}{2}}_R\times H^{s_0}}^2 \le& \left( C\left( N^{s-}_0 \right) + t C\left( N^{s-}_{t}\right) N^{s_0}_{t}  \right) \\
			&\hspace{4em}\times \left( C(N^{s-}_0)(M^{s_0}_0)^2 + t C(N^{s-}_t) M^{s_0}_{t} \left( M^{s_0}_{t} + c_f \right) \right) \\
			\le& C(N^{s-}_0)(M^{s_0}_0)^2 + t C\left( N^{s-}_{t}\right) N^{s_0}_{t}(M^{s_0}_t)^2 + t C(N^{s-}_t) \left( (M^{s_0}_{t})^2 + c_f^2 \right) \\
			&+  t^2 C(N^{s-}_t) N^{s_0}_{t} \left( (M^{s_0}_{t})^2 + c_f^2 \right) \\
			\le& C(N^{s-}_0)(M^{s_0}_0)^2 + (t+t^2) C(N^{s-}_t)\left( N^{s_0}_{t} + 1 \right) \left( (M^{s_0}_{t})^2 + c_f^2 \right),
		\end{align*}
		which completes the proof of \eqref{eq-cauchy:energy-esti-gen} by taking $T_0<1$. 
	\end{proof}
	
	\begin{proof}[Proof of \eqref{eq-cauchy:energy-esti-gen-weak} and \eqref{eq-cauchy:energy-esti-gen-strong}]
		When $s_0<s$, \eqref{eq-cauchy:energy-esti-gen-weak} can be obtained by the trivial inequality $N^{s_0}_t \le N^{s-}_t$. Now, we assume $s_0\ge s-$, $c_f=0$, and prove \eqref{eq-cauchy:energy-esti-gen-strong} via a bootstrap argument. Let 
		\begin{equation*}
			B_\nu := \sqrt{2C(N^{s-}_0)(N^{s_0}_0)^2 + \nu^2} \ge \nu>0,
		\end{equation*}
		where $\nu>0$ is a small parameter. We fix $T_{0,\nu}\in]0,T[$ such that
		\begin{equation*}
			C(N^{s-}_0)(N^{s_0}_0)^2 + T_{0,\nu} C(B_\nu)(B_\nu+1)B_\nu^2 < B_\nu^2,
		\end{equation*}
		or equivalently,
		\begin{equation*}
			T_{0,\nu} C(B_\nu)(B_\nu+1)B_\nu^2 < \frac{B_\nu^2+\nu^2}{2}.
		\end{equation*}
		One may take $T_{0,\nu}$ as
		\begin{equation*}
			T_{0,\nu} = \frac{B_\nu^2+\nu^2}{4C(B_\nu)(B_\nu+1)B_\nu^2},
		\end{equation*}
		which depends only on initial data $(\eta,\psi)(0)$ and parameter $0<\nu \ll 1$. We claim that $N^{s_0}_{T_{0,\nu}}\le B_\nu$. Otherwise, there exists $t_0\in]0,T_{0,\nu}]$, such that
		\begin{equation*}
			t_0 = \sup \{ t\in]0,T_{0,\nu}[: N^{s_0}_t\le B_\nu \}.
		\end{equation*}
		The continuity-in-time of $(\eta,\psi)$ guarantees that $N^s_{t_0}=B_\nu$. As a result, \eqref{eq-cauchy:energy-esti-gen} gives
		\begin{equation*}
			B_\nu^2 = (N^{s_0}_{t_0})^2 \le C(N^{s-}_0)(N^{s_0}_0)^2 + t_0 C(B_\nu)\left( B_\nu + 1 \right) B_\nu^2 < B_\nu^2,
		\end{equation*}
		which is a contradiction. Now, by passing to the limit $\nu\rightarrow 0+0$, one obtains \eqref{eq-cauchy:energy-esti-gen-strong}. Note that $T_{0,\nu}$ tends to $T_0$, which is a positive smooth function of $N^{s_0}_0$ an $N^{s-}_0$.
	\end{proof}

	\begin{proof}[Proof of Lemma \ref{lem-cauchy:energy-esti-1}]
		We shall prove a general result covering the inequality \eqref{eq-cauchy:energy-esti-1}: for all $u\in L^2$ and $t\in[0,T[$,
		\begin{equation}\label{lem-cauchy-energy-esti-1.1}
			\left| \Real\langle T_V\cdot\bar{\nabla}J_\epsilon u, u \rangle \right| \lesssim C(N^{s-}_t) \|u\|_{L^2}^2.
		\end{equation}
		To begin with, a simple calculus gives
		\begin{align*}
			T_V\cdot\bar{\nabla}J_\epsilon u =& T_{V^z i\xi_z} T_{j_\epsilon} u + T_{V^\theta} T_{\eta^{-1}}\partial_\theta(J_\epsilon u) + T_{V^\theta} \left( T_{\partial_\theta(J_\epsilon u)}\eta^{-1} + R(\eta^{-1}, \partial_\theta(J_\epsilon u)) \right) \\
			=& T_{V^z i\xi_z} T_{j_\epsilon} u + T_{V^\theta} T_{\eta^{-1}} T_{\partial_\theta j_\epsilon} u + T_{V^\theta} T_{\eta^{-1}} T_{j_\epsilon i\xi_\theta} u \\
			& + T_{V^\theta} \left( T_{\partial_\theta(J_\epsilon u)}\eta^{-1} + R(\eta^{-1}, \partial_\theta(J_\epsilon u)) \right),
		\end{align*}
		where the last term can be omitted since, due to Proposition \ref{prop-para:paraprod-bound},
		\begin{align*}
			\| T_{V^\theta} \left( T_{\partial_\theta(J_\epsilon u)}\eta^{-1} + R(\eta^{-1}, \partial_\theta(J_\epsilon u)) \right) \|_{L^2} \lesssim& \|  T_{\partial_\theta(J_\epsilon u)}\eta^{-1} + R(\eta^{-1}, \partial_\theta(J_\epsilon u)) \|_{L^2} \\
			\lesssim& \|\partial_\theta(J_\epsilon u)\|_{H^{-1}} \|\eta^{-1}\|_{H^{2+}_{R^{-1}}} \lesssim C(N^{s-}_t) \|u\|_{L^2}.
		\end{align*}
		Moreover, by Lemma \ref{lem-paralin:sym-deri-in-eta} and the fact that $\eta_\theta\in H^{s-\frac{1}{2}}$, $T_{\partial_\theta j_\epsilon}$ is of order $\le 0$, and thus
		\begin{equation*}
			\| T_{V^\theta} T_{\eta^{-1}} T_{\partial_\theta j_\epsilon} u \|_{L^2} \lesssim C(N^{s-}_t) \|u\|_{L^2}.
		\end{equation*}
		This estimate is independent of $\epsilon$ since $j_\epsilon\in\Sigma^0$ uniformly in $\epsilon \ge 0$ (that is, all the involved estimates are uniform in $\epsilon$). Furthermore, by symbolic calculus (Proposition \ref{prop-para:paradiff-cal-sym}),
		\begin{align*}
			&\| T_{V^z i\xi_z} T_{j_\epsilon} u - T_{V^z j_\epsilon i\xi_z} u \|_{L^2} \lesssim C(N^{s-}_t) \|u\|_{L^2}, \\
			&\| T_{V^\theta} T_{\eta^{-1}} T_{j_\epsilon i\xi_\theta} u - T_{\eta^{-1}V^\theta j_\epsilon i\xi_\theta} u \|_{L^2} \lesssim C(N^{s-}_t) \|u\|_{L^2},
		\end{align*}
		which reduces \label{eq-cauchy-energy-esti-1.1} to 
		\begin{equation*}
			\left| \Real\langle T_{h_\epsilon} u, u \rangle \right| \lesssim C(N^{s-}_t) \|u\|_{L^2}^2,\ \ h_\epsilon = j_\epsilon \left( V^z i\xi_z + \eta^{-1}V^\theta i\xi_\theta \right) \in \Gamma^1_{1+}.
		\end{equation*}
		By Proposition \ref{prop-para:paradiff-cal-sym}, $T_{h_\epsilon}^* = T_{h_\epsilon^*}$, up to some operators of order $0-$, with
		\begin{equation*}
			h_\epsilon^* = \overline{h_\epsilon} + \partial_\xi\cdot D_w\overline{h_\epsilon} = -h_\epsilon + \partial_\xi\cdot D_w\overline{h_\epsilon}.
		\end{equation*}
		Consequently, 
		\begin{align*}
			\Real\langle T_{h_\epsilon} u, u \rangle = \Real\left\langle T_{\frac{h_\epsilon+h_\epsilon^*}{2}}u, u \right\rangle + \frac{1}{2}\Real \left\langle u, (T_{h_\epsilon}^*-T_{h_\epsilon^*}) \right\rangle,
		\end{align*}
		where both terms on the right hand side are bounded by $C(N^{s-}_t) \|u\|_{L^2}^2$.
	\end{proof}

	\subsection{Convergence of approximate solutions and uniqueness}\label{subsect:conv-uni}
	
	In previous sections, we have constructed a sequence of approximate solution $(\eta^\epsilon,\psi^\epsilon)$ solving the approximate system \eqref{eq-cauchy:WW-eps}, and that, for all $\epsilon>0$, $(\eta^\epsilon,\psi^\epsilon)$ is well-defined on time interval $[0,T_0]$ and uniformly (in $\epsilon>0$) bounded. The goal of this section is to check that these approximate solutions form a Cauchy sequence in a weaker topology $L^\infty_{T_1}\mathcal{X}_R^{s_0}$, where $s_0<s-\frac{3}{2}$ and $0<T_1 \le T_0$, which proves the existence in Theorem \ref{thm-intro:main}. As a by-product, one will see that the same argument allows us to compare two different solutions to the system \eqref{eq-paralin:WW} and deduce the uniqueness part of Theorem \ref{thm-intro:main}.
	
	\begin{proposition}\label{prop-cauchy:converg-apprx-sol}
		Under the hypotheses of Proposition \ref{prop-cauchy:WP-apprx-sys}, there exists $0<T_1\le T_0$, where $T_0$ is defined in Corollary \ref{cor-cauchy:low-bd-lifespan}, such that the sequence $\{(\eta^\epsilon,\psi^\epsilon)\}_{\epsilon\in]0,1[}$ is Cauchy in $C_{T_1}\mathcal{X}_R^{s_0}$ with $3/2<s_0<s-3/2$. More precisely, we have
		\begin{equation}\label{eq-cauchy:cauchy-apprx-sol}
			\lim_{\epsilon_2\rightarrow 0+} \sup_{0<\epsilon_1<\epsilon_2<1} \sup_{t\in [0,T_1[} \left( \|\eta^{\epsilon_1}(t)-\eta^{\epsilon_2}(t)\|_{H^{s_0+\frac{1}{2}}} + \|\psi^{\epsilon_1}(t)- \psi^{\epsilon_2}(t)\|_{H^{s_0}} \right) = 0.
		\end{equation}
	\end{proposition}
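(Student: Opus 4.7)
The plan is to apply the general energy estimate Proposition \ref{prop-cauchy:energy-esti-gen} to the difference $(\delta\eta,\delta\psi) := (\eta^{\epsilon_1}-\eta^{\epsilon_2}, \psi^{\epsilon_1}-\psi^{\epsilon_2})$ with initial data $(0,0)$. First I would subtract the equation \eqref{eq-cauchy:WW-eps} for $(\eta^{\epsilon_2},\psi^{\epsilon_2})$ from that for $(\eta^{\epsilon_1},\psi^{\epsilon_1})$, writing everything in terms of the operators built from $(\eta^{\epsilon_1},\psi^{\epsilon_1})$. This puts $(\delta\eta,\delta\psi)$ in the form \eqref{eq-cauchy:WW-eps-gen} with reference state $(\eta,\psi)=(\eta^{\epsilon_1},\psi^{\epsilon_1})\in L^\infty_{T_0}\mathcal{X}^s_R$ (uniformly bounded by Corollary \ref{cor-cauchy:low-bd-lifespan}) and a right-hand side
\[
F = \bigl(f(J_{\epsilon_1}\eta^{\epsilon_1},J_{\epsilon_1}\psi^{\epsilon_1})-f(J_{\epsilon_2}\eta^{\epsilon_2},J_{\epsilon_2}\psi^{\epsilon_2})\bigr) - (T_{V^{\epsilon_1}}\cdot\bar{\nabla}J_{\epsilon_1}-T_{V^{\epsilon_2}}\cdot\bar{\nabla}J_{\epsilon_2})\!\begin{pmatrix}\eta^{\epsilon_2}\\ \psi^{\epsilon_2}\end{pmatrix}-(\mathcal{L}^{\epsilon_1}-\mathcal{L}^{\epsilon_2})\!\begin{pmatrix}\eta^{\epsilon_2}\\ \psi^{\epsilon_2}\end{pmatrix}.
\]

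The core estimate to obtain is that $F$ satisfies \eqref{eq-cauchy:energy-esti-gen-f-esti} in $H^{s_0+1/2}\times H^{s_0}$ with a constant $c_f = c_f(\epsilon_1,\epsilon_2)$ that tends to $0$ as $\epsilon_2\to 0+$ uniformly in $\epsilon_1<\epsilon_2$. For the $f$-difference, I would split it as $[f(J_{\epsilon_1}\eta^{\epsilon_1},J_{\epsilon_1}\psi^{\epsilon_1})-f(J_{\epsilon_1}\eta^{\epsilon_2},J_{\epsilon_1}\psi^{\epsilon_2})]+[f(J_{\epsilon_1}\eta^{\epsilon_2},J_{\epsilon_1}\psi^{\epsilon_2})-f(J_{\epsilon_2}\eta^{\epsilon_2},J_{\epsilon_2}\psi^{\epsilon_2})]$; the first bracket is controlled by Proposition \ref{prop-paralin:source-Lip} (giving a factor $M^{s_0}_t$ on the right-hand side of the tame bound), while the second is absorbed into $c_f$ using the fact that $(J_{\epsilon_1}-J_{\epsilon_2})$ is of order $-1$ in a manner that vanishes as $\epsilon_2\to 0$ when tested against the uniformly-$H^{s+1/2}_R\times H^s$-bounded state $(\eta^{\epsilon_2},\psi^{\epsilon_2})$. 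The operator differences $(T_{V^{\epsilon_1}}-T_{V^{\epsilon_2}})\cdot\bar{\nabla}J_{\epsilon_1}$ and $\mathcal{L}^{\epsilon_1}-\mathcal{L}^{\epsilon_2}$ further split into a Lipschitz-in-state piece (shape derivative, Lemma \ref{lem-paralin:BV-deri-in-eta}, Lemma \ref{lem-paralin:sym-deri-in-eta}, and Proposition \ref{prop-paralin:source-Lip}) which contributes $C(M)\cdot M^{s_0}_t$, and a mollifier-difference piece of the form $T_{\ldots}(J_{\epsilon_1}-J_{\epsilon_2})(\text{regular})$ which feeds into $c_f$.

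Once this control is established, hypothesis (3) of Proposition \ref{prop-cauchy:energy-esti-gen} follows immediately from the approximate equation for $(\eta^{\epsilon_1},\psi^{\epsilon_1})$ and the $\epsilon$-independent bounds on $\mathcal{L}^{\epsilon_1}$, $T_{V^{\epsilon_1}}\cdot\bar{\nabla}J_{\epsilon_1}$ and $f$. Applying \eqref{eq-cauchy:energy-esti-gen-weak} with initial data zero gives
\[
(M^{s_0}_t)^2 \le tC(N^{s-}_t)\bigl((M^{s_0}_t)^2+c_f^2\bigr),\qquad t\in[0,T_0],
\]
and choosing $T_1\in(0,T_0]$ so small that $T_1C(N^{s-}_{T_0})\le 1/2$ yields $M^{s_0}_{T_1}\le C\, c_f$, which tends to $0$ as $\epsilon_2\to 0+$, proving \eqref{eq-cauchy:cauchy-apprx-sol}.

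The main obstacle I anticipate is the control of $(\mathcal{L}^{\epsilon_1}-\mathcal{L}^{\epsilon_2})(\eta^{\epsilon_2},\psi^{\epsilon_2})^T$ in $H^{s_0+1/2}\times H^{s_0}$: since $\mathcal{L}^\epsilon$ contains the composite $\tilde S J_\epsilon S$ sandwiched between $S$-conjugation matrices that themselves depend on $(\eta^{\epsilon_j},\psi^{\epsilon_j})$, one must carefully track how varying $\epsilon$ and varying the reference state contribute separately. I expect to isolate a term $\tilde S J_{\epsilon_1} S - \tilde S J_{\epsilon_2} S$ acting on a uniformly $H^s\times H^s$-bounded quantity, where $(J_{\epsilon_1}-J_{\epsilon_2})$ produces a loss of $3/2-$ derivatives that is amply absorbed by the $s-s_0>3/2$ margin—this is exactly the reason for the restriction $s_0<s-3/2$ in the statement.
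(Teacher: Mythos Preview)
Your approach is essentially the paper's: write the equation for $(\delta\eta,\delta\psi)$ with one of the two states as reference, split the right-hand side into a Lipschitz-in-state part (handled via Proposition~\ref{prop-paralin:source-Lip}, Lemma~\ref{lem-paralin:BV-deri-in-eta}, Lemma~\ref{lem-paralin:sym-deri-in-eta}) and a mollifier-difference part, then apply the weak energy estimate \eqref{eq-cauchy:energy-esti-gen-weak} with zero initial data and absorb by taking $T_1$ small. One point to sharpen: your description of $J_{\epsilon_1}-J_{\epsilon_2}$ as ``of order $-1$'' is not right---the paper's Lemma~\ref{lem-cauchy:lip-molli} shows it is of small \emph{positive} order $\tfrac{3}{2}\nu$ with operator norm $\le C(M)(\|(\delta\eta,\delta\psi)\|_{H^{s_0+1/2}\times H^{s_0}}+\epsilon_2^{\nu})$, which is what produces the explicit constant $c_f=\epsilon_2^{\nu}$ (and note that $J_\epsilon$ itself depends on the state through $\gamma^{(3/2)}$, so the mollifier difference already carries a Lipschitz-in-state contribution).
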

	
	During the proof of Proposition \eqref{prop-cauchy:converg-apprx-sol}, we emphasize that the appearance of mollifier $J_\epsilon$ has no impact in the estimates and one may delete all the $J_\epsilon$'s to obtain the proof of uniqueness in Theorem \ref{thm-intro:main}.
	
	Recall that, with $0<\epsilon_1<\epsilon_2<1$, the approximate solutions $(\eta^{\epsilon_j},\psi^{\epsilon_j})$ ($j=1,2$) satisfy \eqref{eq-cauchy:WW-eps}, namely
	\begin{equation*}
		\left\{
		\begin{aligned}
			&\left(\partial_t + T_{V^{\epsilon_j}}\cdot\bar{\nabla}^{\epsilon_j}J_{\epsilon_j} + \mathcal{L}^{\epsilon_j}\right)
			\left(\begin{array}{c}
				\eta^{\epsilon_j} \\
				\psi^{\epsilon_j}
			\end{array}\right) = f(J_{\epsilon_j}\eta^{\epsilon_j},J_{\epsilon_j}\psi^{\epsilon_j}), \\
			&(\eta^{\epsilon_j},\psi^{\epsilon_j})|_{t=0} = (\eta_0,\psi_0) \in H^{s+\frac{1}{2}}_R\times H^s,
		\end{aligned}
		\right.
	\end{equation*}
	where $V^{\epsilon_j}$ and $\bar{\nabla^{\epsilon_j}}$ stand for the $V$ and $\bar{\nabla}$ associated to $(\eta^{\epsilon_j},\psi^{\epsilon_j})$, respectively. The difference between approximate solutions
	\begin{equation}\label{eq-cauchy:def-diff-app-sol}
		(\delta\eta,\delta\psi) := (\eta^{\epsilon_2}-\eta^{\epsilon_1}, \psi^{\epsilon_2}-\psi^{\epsilon_1}) \in C_{T_0}(H^{s+\frac{1}{2}}\times H^s)
	\end{equation}
	satisfies
	\begin{equation}\label{eq-cauchy:WW-diff}
		\left\{
		\begin{aligned}
			&\left(\partial_t + T_{V^{\epsilon_2}}\cdot\bar{\nabla}^{\epsilon_2}J_{\epsilon_2} + \mathcal{L}^{\epsilon_2}\right)
			\left(\begin{array}{c}
				\delta\eta \\
				\delta\psi
			\end{array}\right) = h, \\
			&(\delta\eta,\delta\psi)|_{t=0} = 0,
		\end{aligned}
		\right.
	\end{equation}
	where $h$ equals
	\begin{align*}
		h=& h_1 + h_2 - h_3 - h_4 - h_5, \\
		h_1=& f(J_{\epsilon_2}\eta^{\epsilon_2},J_{\epsilon_2}\psi^{\epsilon_2}) - f(J_{\epsilon_2}\eta^{\epsilon_1},J_{\epsilon_2}\psi^{\epsilon_1}), \\
		h_2=& f(J_{\epsilon_2}\eta^{\epsilon_1},J_{\epsilon_2}\psi^{\epsilon_1}) - f(J_{\epsilon_1}\eta^{\epsilon_1},J_{\epsilon_1}\psi^{\epsilon_1}), \\
		h_3=& \left(T_{V^{\epsilon_2}}\cdot\bar{\nabla}^{\epsilon_2} - T_{V^{\epsilon_1}}\cdot\bar{\nabla}^{\epsilon_1}\right)J_{\epsilon_2}
		\left(\begin{array}{c}
			\eta^{\epsilon_1} \\
			\psi^{\epsilon_1}
		\end{array}\right), \\
		h_4=& \left(T_{V^{\epsilon_1}}\cdot\bar{\nabla}^{\epsilon_1}\right)(J_{\epsilon_2}-J_{\epsilon_1})
		\left(\begin{array}{c}
			\eta^{\epsilon_1} \\
			\psi^{\epsilon_1}
		\end{array}\right), \\
		h_5=& \left(\mathcal{L}^{\epsilon_2} - \mathcal{L}^{\epsilon_1}\right)
		\left(\begin{array}{c}
			\eta^{\epsilon_1} \\
			\psi^{\epsilon_1}
		\end{array}\right).
	\end{align*}
	Let us denote by $M$ the upper bound of $L^\infty_{T_0}(H^{s+\frac{1}{2}}_R\times H^{s})$-norm of approximate solutions $(\eta^{\epsilon},\psi^{\epsilon})$, namely
	\begin{equation}\label{eq-cauchy:def-M}
		M := \sup_{\epsilon\in]0,1]} \|(\eta^{\epsilon},\psi^{\epsilon})\|_{L^\infty_{T_0}(H^{s+\frac{1}{2}}_R\times H^{s})},
	\end{equation}
	which is finite due to Corollary \ref{cor-cauchy:low-bd-lifespan}. We claim that
	\begin{lemma}\label{lem-cauchy:lip-sys}
		Under the assumption of Proposition \ref{prop-cauchy:converg-apprx-sol}, we have the following estimates,
		\begin{align}
			&\|h_k(t)\|_{H^{s_0+\frac{1}{2}}\times H^{s_0}} \le C(M)\|(\delta\eta,\delta\psi)(t)\|_{H^{s_0+\frac{1}{2}}\times H^{s_0}},\ \ \forall t\in[0,T_0[,\ k=1,3,5,\label{eq-cauchy:lip-sys-1} \\
			&\|h_k(t)\|_{H^{s_0+\frac{1}{2}}\times H^{s_0}} \le C(M) \left( \|(\delta\eta,\delta\psi)(t)\|_{H^{s_0+\frac{1}{2}}\times H^{s_0}} + \epsilon_2^{\nu} \right),\ \ \forall t\in[0,T_0[,\ k=2,4,\label{eq-cauchy:lip-sys-2}
		\end{align}
		where $0<\nu\ll 1$ is a constant and $C>0$ is an increasing smooth function that does not depend on time $t$ and $\epsilon_j$'s.
	\end{lemma}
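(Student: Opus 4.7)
The plan is to treat each $h_k$ separately, grouping them according to whether the difference originates from a shift of the arguments $(\eta,\psi)$ (terms $h_1,h_3,h_5$) or from the mollifier parameter (terms $h_2,h_4$). For $h_1$, I will directly invoke the local Lipschitz estimate of the source term proved in Proposition \ref{prop-paralin:source-Lip}, using that $J_{\epsilon_2}$ is uniformly of order zero by Lemma \ref{lem-cauchy:esti-j-eps}, so its insertion only costs a factor $C(M)$. For $h_3$, I will split
\[
T_{V^{\epsilon_2}}\cdot\bar{\nabla}^{\epsilon_2}-T_{V^{\epsilon_1}}\cdot\bar{\nabla}^{\epsilon_1} = T_{V^{\epsilon_2}-V^{\epsilon_1}}\cdot\bar{\nabla}^{\epsilon_2} + T_{V^{\epsilon_1}}\cdot(\bar{\nabla}^{\epsilon_2}-\bar{\nabla}^{\epsilon_1}),
\]
use the Lipschitz estimate $\|V^{\epsilon_2}-V^{\epsilon_1}\|_{H^{s_0-1}}\le C(M)\|(\delta\eta,\delta\psi)\|_{\mathcal{X}_R^{s_0}}$ which comes from the shape-derivative analysis of Section \ref{subsect:contin-in-eta}, and bound $\bar{\nabla}^{\epsilon_2}-\bar{\nabla}^{\epsilon_1}$ via the Lipschitz regularity of $1/\eta$ in $\eta$ given by Proposition \ref{prop-para:paralin}; the extra factor $J_{\epsilon_2}$ acting on $(\eta^{\epsilon_1},\psi^{\epsilon_1})\in\mathcal{X}_R^{s}$ is harmless.

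For $h_2$ and $h_4$, the essential ingredient is the quantitative mollification estimate
\[
\|(J_{\epsilon_2}-J_{\epsilon_1})u\|_{H^{r-\nu}} \le C(M)\,\epsilon_2^{\nu}\,\|u\|_{H^{r}}, \qquad 0<\nu \ll 1,
\]
valid uniformly in $\epsilon_1<\epsilon_2\in(0,1]$: from the explicit form \eqref{eq-cauchy:def-j-eps} of $j_\epsilon$, the symbol $j_{\epsilon_2}-j_{\epsilon_1}$ trades a factor $\epsilon_2^{\nu}$ against a loss of $\nu$ derivatives, in the spirit of Lemma \ref{lem-cauchy:esti-j-eps}. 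For $h_2$, Proposition \ref{prop-paralin:source-Lip} reduces the bound to estimating $\|(J_{\epsilon_2}-J_{\epsilon_1})(\eta^{\epsilon_1}-R,\psi^{\epsilon_1})\|_{H^{s_0+1/2}\times H^{s_0}}$, which is of size $C(M)\epsilon_2^{\nu}$ for any $\nu\in(0,s-s_0-\tfrac{1}{2})$, thanks to the slack $s-s_0>3/2$. For $h_4$, the operator $T_{V^{\epsilon_1}}\cdot\bar{\nabla}^{\epsilon_1}$ is of order one with $C(M)$-bounded coefficients; consuming one additional derivative of slack, still available since $s-(s_0+3/2)>0$, the same interpolation delivers the factor $\epsilon_2^{\nu}$.

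The hard part will be $h_5$. I will split $\mathcal{L}^{\epsilon_2}-\mathcal{L}^{\epsilon_1} = D_{\mathrm{coef}}+D_{\mathrm{moll}}$, where $D_{\mathrm{moll}}$ keeps all coefficients at $(\eta^{\epsilon_1},\psi^{\epsilon_1})$ and shifts only the mollifier parameter from $\epsilon_1$ to $\epsilon_2$, while $D_{\mathrm{coef}}$ gathers the coefficient shifts by a telescoping sum over the five paradifferential factors appearing in \eqref{eq-cauchy:def-L-eps}. Each piece of $D_{\mathrm{coef}}$ is controlled by a Lipschitz-in-$(\eta,\psi)$ estimate for the corresponding factor $T_B$, $T_\lambda$, $T_\mu$, $S$, $\tilde{S}$ — and for the mollifier symbol $j_\epsilon$ itself, which depends on $(\eta,\psi)$ through $\gamma^{(3/2)}=\sqrt{\sigma\mu^{(2)}\lambda^{(1)}}$ — in analogy with Lemma \ref{lem-paralin:sym-deri-in-eta}; the piece $D_{\mathrm{moll}}$, tested on the regular function $(\eta^{\epsilon_1},\psi^{\epsilon_1})\in\mathcal{X}_R^{s}$, is bounded by $C(M)\epsilon_2^{\nu}$ exactly as for $h_4$. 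The main technical obstacle is the bookkeeping of orders across this five-factor composition: each Lipschitz factor can lose up to $3/2$ derivatives just as in Proposition \ref{prop-paralin:source-Lip}, so the composite operator maps $H^{s_0+1/2}\times H^{s_0}$ into itself only thanks to the standing hypothesis $3/2<s_0<s-3/2$, and one must exploit the conjugated structure of $\mathcal{L}^\epsilon$ — through $S$ and $\left(\begin{array}{cc}I&0\\-T_B&I\end{array}\right)$ — to prevent these losses from accumulating, using Proposition \ref{prop-sym:main} to trade the product $T_\mu T_\lambda$ for the better-balanced $T_\gamma^* T_\gamma$ whenever needed.
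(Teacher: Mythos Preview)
Your overall strategy matches the paper's, but two points deserve correction.

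First, for $h_2$ and $h_4$ your mollification estimate
\[
\|(J_{\epsilon_2}-J_{\epsilon_1})u\|_{H^{r-\nu}} \le C(M)\,\epsilon_2^{\nu}\,\|u\|_{H^{r}}
\]
is incomplete: $J_{\epsilon_j}$ depends on $\eta^{\epsilon_j}$ through $\gamma^{(3/2)}$, so the difference $J_{\epsilon_2}-J_{\epsilon_1}$ carries \emph{both} a parametric shift and a coefficient shift. The paper isolates this in Lemma~\ref{lem-cauchy:lip-molli}, which gives
\[
\|J_{\epsilon_2}-J_{\epsilon_1}\|_{\mathcal{L}(H^r;H^{r-\frac{3}{2}\nu})} \le C(M)\bigl(\|(\delta\eta,\delta\psi)\|_{H^{s_0+\frac12}\times H^{s_0}}+\epsilon_2^{\nu}\bigr),
\]
by splitting the principal symbol as $\exp(-\epsilon_2\gamma_2^{(3/2)})-\exp(-\epsilon_2\gamma_1^{(3/2)})$ plus $\exp(-\epsilon_1\gamma_1^{(3/2)})\bigl(\exp(-(\epsilon_2-\epsilon_1)\gamma_1^{(3/2)})-1\bigr)$. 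You clearly know about the coefficient dependence (you invoke it for $h_5$), so this is easy to fix; note also the loss is $\tfrac32\nu$, not $\nu$, since $\gamma^{(3/2)}$ has order $\tfrac32$.

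Second, and more substantively, your treatment of $h_5$ is considerably more elaborate than necessary. You worry that ``each Lipschitz factor can lose up to $3/2$ derivatives'' and propose invoking the symmetrization of Proposition~\ref{prop-sym:main} to prevent accumulation. But no such loss occurs. Every paradifferential factor appearing in $\mathcal{L}^\epsilon$ has symbol in $\Sigma^m$ for some $m$ (or equals $T_B$), and Lemma~\ref{lem-paralin:sym-deri-in-eta} says precisely that for $a\in\Sigma^m$ the operator $T_{\delta a}$ is \emph{still} of order $m$, with norm controlled by $\|\delta\eta\|_{H^{s_0+\frac12}}$ (and Lemma~\ref{lem-paralin:BV-deri-in-eta} handles $\delta B$). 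So in your telescoping sum $D_{\mathrm{coef}}$, each term is a composition of operators having exactly the same orders as the factors in $\mathcal{L}^\epsilon$ itself; the composite therefore maps $H^{s+\frac12}_R\times H^s$ to $H^{s-1}\times H^{s-\frac32}\subset H^{s_0+\frac12}\times H^{s_0}$ with no need for any cancellation. This is the paper's argument, and it avoids the symmetrization machinery entirely.

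A minor remark: the paper places $h_5$ under \eqref{eq-cauchy:lip-sys-1} (no $\epsilon_2^\nu$), asserting that ``the difference comes from that of $(\eta^{\epsilon_j},\psi^{\epsilon_j})$''. Your $D_{\mathrm{moll}}$ piece, coming from the explicit $\epsilon$ in $j_\epsilon=\exp(-\epsilon\gamma^{(3/2)})+\cdots$, does produce an $\epsilon_2^\nu$ contribution that the paper appears to overlook; but since only the aggregate bound $\sum_k\|h_k\|\le C(M)(\|(\delta\eta,\delta\psi)\|+\epsilon_2^\nu)$ is used downstream, this has no effect on the argument.
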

	
	Once Lemma \ref{lem-cauchy:lip-sys} is true, one may apply energy estimate \eqref{eq-cauchy:energy-esti-gen-weak}, with $\frac{3}{2}<s_0<s-\frac{3}{2}$ and $c_f = \epsilon_2^\nu\in[0,1]$, to conclude that, for all $t\in[0,T_0[$,
	\begin{equation*}
		\begin{aligned}
			\|(\delta\eta,\delta\psi)(t)\|_{H^{s_0+\frac{1}{2}}\times H^{s_0}}^2 \le& C\left( \|(\eta_0,\psi_0)\|_{H^{s+\frac{1}{2}}_R\times H^{s}} \right) \|(\delta\eta,\delta\psi)(0)\|_{H^{s+\frac{1}{2}\times H^s}}^2 \\
			&+ tC(M)\left(\|(\delta\eta,\delta\psi)\|_{L^\infty_t(H^{s_0+\frac{1}{2}}\times H^{s_0})}^2 + \epsilon_2^{2\nu}\right) \\
			=&tC(M)\left(\|(\delta\eta,\delta\psi)\|_{L^\infty_t(H^{s_0+\frac{1}{2}}\times H^{s_0})}^2 + \epsilon_2^{2\nu}\right),
		\end{aligned}
	\end{equation*}
	where we used the fact that $(\delta\eta,\delta\psi)|_{t=0}=0$. Then, by choosing $T_1>0$ such that $T_1 C(M)<\frac{1}{2}$, we have
	\begin{equation*}
		\|(\delta\eta,\delta\psi)\|_{L^\infty_{T_1}(H^{s_0+\frac{1}{2}}\times H^{s_0})} \le \epsilon_2^{\nu} \rightarrow 0,\ \ \text{as }\epsilon_2\rightarrow 0,
	\end{equation*}
	which completes the proof of \eqref{eq-cauchy:cauchy-apprx-sol}. 
	
	\begin{remark}\label{rmk-cauchy:uniqueness}
		In Lemma \ref{lem-cauchy:lip-sys}, $\epsilon_2^\nu$ appears solely in the estimates concerning $J_{\epsilon_2}-J_{\epsilon_1}$. Thus, in the proof of uniqueness, there is no $\epsilon_2^\nu$ and the above inequality becomes $(\delta\eta,\delta\psi)=0$ for all $t\in[0,T_1[$, which proves the uniqueness (see Proposition \ref{prop-cauchy:contraction}).
	\end{remark}
	
	By Proposition \ref{prop-cauchy:converg-apprx-sol}, the approximate solutions converges to $(\eta,\psi)\in L^\infty_{T_1}\mathcal{X}_R^{s_0}$ which is the unique solution to \eqref{eq-intro:WW}. Besides, from the uniform bound of approximate solutions in $L^\infty_{T_0}\mathcal{X}_R^s$, one may apply an interpolation argument to conclude that this solution $(\eta,\psi)$ has regularity $L^\infty_{T_1}\mathcal{X}_R^{s}$ and $C_{T_1}\mathcal{X}_R^{s-}$. The continuity in time and the continuous dependence in initial data $(\eta_0,\psi_0)$ (w.r.t. the topology of $\mathcal{X}_R^s$) will be given in the next section.
	
	\begin{proof}[Proof of \eqref{eq-cauchy:lip-sys-1}]
		In the term $h_1$, $h_3$, and $h_5$, the difference comes from that of $(\eta^{\epsilon_j},\psi^{\epsilon_j})$. Thus, this estimate is equivalent to boundedness of derivatives in $\eta,\psi$ of
		\begin{equation*}
			f(J_\epsilon\eta,J_\epsilon\psi),\ \ 
			T_{V^{\epsilon}}\cdot\bar{\nabla}^{\epsilon}J_{\epsilon}
			\left(\begin{array}{c}
				\eta^{\epsilon} \\
				\psi^{\epsilon}
			\end{array}\right),\ \ 
			\mathcal{L}^{\epsilon}
			\left(\begin{array}{c}
				\eta^{\epsilon} \\
				\psi^{\epsilon}
			\end{array}\right)
		\end{equation*}
		in $H^{s_0+\frac{1}{2}} \times H^{s_0}$. For the derivative of the first term, if $\epsilon=0$, the desired result has been given in Proposition \ref{prop-paralin:source-Lip}. Otherwise, the following extra terms will appear
		\begin{equation*}
			(\frac{\delta}{\delta\eta}f)(J_\epsilon\eta,J_\epsilon\psi) T_{\delta j_\epsilon}\eta,\ (\frac{\delta}{\delta\psi}f)(J_\epsilon\eta,J_\epsilon\psi) T_{\delta j_\epsilon}\psi,
		\end{equation*}
		where the extra factors $T_{\delta j_\epsilon}\eta$ and $T_{\delta j_\epsilon}\psi$ do not change the desired estimate thanks to Lemma \ref{lem-paralin:source-deri-in-eta}. As for the other two terms, we observe that they are written in terms of finitely many paradifferential operators (with symbol in $\Sigma^m$ for some $m\in\R$ or equal to $B$, $V$) acting on $\eta^\epsilon$ or $\psi^\epsilon$. Thus the desired estimates are no more than a consequence of Lemma \ref{lem-paralin:sym-deri-in-eta} and \ref{lem-paralin:BV-deri-in-eta}. The only case uncovered is when the derivative in $\eta$ acts on $(\eta^\epsilon)^{-1}$ from $\bar{\nabla}^\epsilon$, whose contribution reads
		\begin{equation*}
			-T_{\left(V^{\epsilon}\right)^\theta} \left[ \frac{\delta\eta}{(\eta^\epsilon)^2} \partial_\theta\left( J_{\epsilon}
			\left(\begin{array}{c}
				\eta^{\epsilon} \\
				\psi^{\epsilon}
			\end{array}\right) \right) \right].
		\end{equation*}
		The estimate of this term simply follows from Proposition \ref{prop-para:paraprod-bound} and Corollary \ref{cor-para:product-law}.
	\end{proof}
	
	By Proposition \ref{prop-paralin:source-Lip}, \ref{prop-para:paraprod-bound} and Corollary \ref{cor-para:product-law}, \eqref{eq-cauchy:lip-sys-2} can be reduced to the following lemma
	\begin{lemma}\label{lem-cauchy:lip-molli}
		Under the hypotheses of Proposition \ref{prop-cauchy:converg-apprx-sol}, for all $r\in\R$ and $0<\nu\ll 1$,
		\begin{equation}\label{eq-cauchy:lip-molli}
			\| J_{\epsilon_2} - J_{\epsilon_1} \|_{\mathcal{L}(H^r;H^{r-\frac{3}{2}\nu})} \le C(M) \left( \|(\delta\eta,\delta\psi)\|_{H^{s_0+\frac{1}{2}}\times H^{s_0}} + \epsilon_2^{\nu} \right),
		\end{equation}
		where $C>0$ is an increasing smooth function that does not depend on time $t$ and $\epsilon_j$'s.
	\end{lemma}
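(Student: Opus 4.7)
The plan is to split the difference into an $\epsilon$-variation piece and a coefficient-variation piece, and then estimate each via elementary pointwise bounds on the exponential symbol combined with the paradifferential calculus. Write $\gamma_j^{(3/2)}$ for the elliptic symbol associated to $(\eta^{\epsilon_j},\psi^{\epsilon_j})$ through Proposition \ref{prop-sym:main}, and $j_{\epsilon,\gamma}$ for the symbol built from a given exponent $\epsilon\ge 0$ and given real symbol $\gamma$ as in \eqref{eq-cauchy:def-j-eps}. Then I would write
\begin{equation*}
J_{\epsilon_2}-J_{\epsilon_1}
= \bigl(T_{j_{\epsilon_2,\gamma_2^{(3/2)}}} - T_{j_{\epsilon_2,\gamma_1^{(3/2)}}}\bigr)
+ \bigl(T_{j_{\epsilon_2,\gamma_1^{(3/2)}}} - T_{j_{\epsilon_1,\gamma_1^{(3/2)}}}\bigr)
\end{equation*}
and treat the two summands separately.

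For the $\epsilon$-piece, the principal symbol equals $e^{-\epsilon_2\gamma_1^{(3/2)}}-e^{-\epsilon_1\gamma_1^{(3/2)}}$. The elementary bound $|e^{-\epsilon_2\rho}-e^{-\epsilon_1\rho}|\le\min(2,(\epsilon_2-\epsilon_1)\rho)\le 2^{1-\nu}(\epsilon_2\rho)^\nu$ for $\rho\ge 0$ and $\nu\in[0,1]$, combined with the ellipticity $\gamma_1^{(3/2)}(w,\xi)\asymp|\xi|^{3/2}$ from Lemma \ref{lem-cauchy:symbol-fct-of-gamma}, yields pointwise $|j^{(0)}_{\epsilon_2,\gamma_1^{(3/2)}}-j^{(0)}_{\epsilon_1,\gamma_1^{(3/2)}}|\lesssim\epsilon_2^\nu\langle\xi\rangle^{3\nu/2}$. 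Differentiating in $\xi$ one uses that every $\partial_\xi$ on the exponential produces a factor of $\epsilon_2(\partial_\xi\gamma_1^{(3/2)})$ that is absorbed by $\epsilon_2 e^{-\epsilon_2\gamma_1^{(3/2)}}\lesssim\langle\xi\rangle^{-3/2}$, giving the expected loss of one power in $|\xi|$ per derivative; the subprincipal correction $-\tfrac{i}{2}\partial_w\cdot\partial_\xi$ of $j^{(0)}$ is handled identically and has symbol order $3\nu/2-1$. By Proposition \ref{prop-para:paradiff-symbol-class-sobo} this produces an operator of order $3\nu/2$ with norm $\le C(M)\epsilon_2^\nu$.

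For the coefficient piece, the Mean Value Theorem gives
\begin{equation*}
j^{(0)}_{\epsilon_2,\gamma_2^{(3/2)}}-j^{(0)}_{\epsilon_2,\gamma_1^{(3/2)}}
=-(\gamma_2^{(3/2)}-\gamma_1^{(3/2)})\int_0^1\epsilon_2\,e^{-\epsilon_2\gamma_s^{(3/2)}}\,ds,
\end{equation*}
with $\gamma_s^{(3/2)}=(1-s)\gamma_1^{(3/2)}+s\gamma_2^{(3/2)}$. Using the universal bound $\epsilon_2 e^{-\epsilon_2\gamma_s^{(3/2)}}\le(e\gamma_s^{(3/2)})^{-1}\lesssim\langle\xi\rangle^{-3/2}$ together with $|\gamma_2^{(3/2)}-\gamma_1^{(3/2)}|\lesssim C(M)\|\delta\eta\|_{C^1}\langle\xi\rangle^{3/2}$ (since $\gamma^{(3/2)}=\sqrt{\sigma\mu^{(2)}\lambda^{(1)}}$ is a smooth function of $(\eta,\bar\nabla\eta)$ positively homogeneous of degree $3/2$ in $\xi$), the resulting symbol is uniformly $O(\|\delta\eta\|_{C^1})$, and a straightforward induction on $|\alpha|$ shows that $\partial_\xi^\alpha$ preserves the shape of the bound, losing one power of $|\xi|$ each time. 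The subprincipal term $-\tfrac{i}{2}\partial_w\cdot\partial_\xi$ yields a further gain and, using that $\partial_{w_k}(\gamma_2^{(3/2)}-\gamma_1^{(3/2)})$ is bounded by $\|\delta\eta\|_{C^2}$ pointwise times $\langle\xi\rangle^{3/2}$, one obtains an order $-1$ contribution with the same Lipschitz control. Invoking Proposition \ref{prop-para:paradiff-symbol-class-sobo}, this piece maps $H^r\to H^r$ with norm $\le C(M)\|\delta\eta\|_{C^2}\le C(M)\|\delta\eta\|_{H^{s_0+1/2}}$ by Sobolev embedding (recall $s_0>3/2$). Combining with the $\epsilon$-piece, noting that $H^r\subset H^{r-3\nu/2}$ trivially, gives \eqref{eq-cauchy:lip-molli}.

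The main technical obstacle is the uniformity in $\epsilon\in(0,1]$ of the seminorm estimates of the difference symbols: one has to verify that every $\xi$-derivative falling on $e^{-\epsilon_2\gamma_s^{(3/2)}}$ is paid for by the $\langle\xi\rangle^{-3/2}$ buffer coming from $\epsilon_2 e^{-\epsilon_2\gamma_s^{(3/2)}}\lesssim\langle\xi\rangle^{-3/2}$, rather than producing unbounded factors of $\epsilon_2\langle\xi\rangle^{3/2}$. The key accounting is that successive differentiation produces products of factors of the form $(\epsilon_2\gamma_s^{(3/2)})^m e^{-\epsilon_2\gamma_s^{(3/2)}}$, each bounded by a dimensional constant, so the expected order of the symbol is preserved. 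Once this bookkeeping is carried out, the rest is routine.
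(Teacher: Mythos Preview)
Your decomposition and the handling of the $\epsilon$-variation piece are essentially identical to the paper's argument, and correct. The paper treats the coefficient-variation piece by a direct appeal to Lemma~\ref{lem-paralin:sym-deri-in-eta}, whereas you spell out the Mean Value Theorem; the two are morally the same.

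There is, however, a genuine gap in your treatment of the subprincipal part of the coefficient piece. You bound the symbol $-\tfrac{i}{2}\partial_w\cdot\partial_\xi\bigl(j^{(0)}_{\epsilon_2,\gamma_2}-j^{(0)}_{\epsilon_2,\gamma_1}\bigr)$ pointwise using $\|\delta\eta\|_{C^2}$ and then claim $\|\delta\eta\|_{C^2}\le C\|\delta\eta\|_{H^{s_0+1/2}}$ ``by Sobolev embedding (recall $s_0>3/2$).'' In dimension two this embedding requires $s_0+\tfrac12>3$, i.e.\ $s_0>\tfrac52$, which is \emph{not} guaranteed: the lemma must cover all $\tfrac32<s_0<s-\tfrac32$, and for $s$ only slightly larger than $3$ one is forced to take $s_0$ close to $\tfrac32$. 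In that regime your bound on the subprincipal yields only $C(M)$, not $C(M)\|\delta\eta\|_{H^{s_0+1/2}}$, and the convergence argument breaks.

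The fix is to avoid the pointwise $C^2$ route entirely for the term carrying $\nabla_w^2\delta\eta$. Write that term as $G(\eta^{\epsilon_1},\eta^{\epsilon_2},\nabla\eta^{\epsilon_1},\nabla\eta^{\epsilon_2};\xi)\,\nabla_w^2\delta\eta$ with $G$ of order $-1$ in $\xi$ and $H^{s-\frac12}$-regular in $w$; then $\nabla_w^2\delta\eta\in H^{s_0-3/2}$, and the \emph{Sobolev} half of Proposition~\ref{prop-para:paradiff-symbol-class-sobo} (not the H\"older half) gives an operator of order $(-1)+1-(s_0-\tfrac32)=\tfrac32-s_0<0$ with norm $\lesssim\|\delta\eta\|_{H^{s_0+1/2}}$. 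This is exactly what the proof of Lemma~\ref{lem-paralin:sym-deri-in-eta} does, and it is the reason the paper can simply invoke that lemma for the coefficient piece.
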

	\begin{proof}
		By definition \eqref{eq-cauchy:def-j-eps} of $j_\epsilon$, the principal symbol of $J_{\epsilon_2} - J_{\epsilon_1}$ reads (the subprincipal part can be treated in the same way)
		\begin{equation*}
			\exp(-\epsilon_2 \gamma_2^{(3/2)}) - \exp(-\epsilon_1 \gamma_1^{(3/2)}),
		\end{equation*}
		where $\gamma^{(3/2)}_j$ stands for the $\gamma$ (defined in Proposition \ref{prop-sym:main}) associated to $\eta^{\epsilon_j}$. One may decompose this symbol as
		\begin{equation*}
			\left(\exp(-\epsilon_2 \gamma_2^{(3/2)}) - \exp(-\epsilon_2 \gamma_1^{(3/2)})\right) + \exp(-\epsilon_1 \gamma_1^{(3/2)}) \left(\exp(-(\epsilon_2-\epsilon_1) \gamma_1^{(3/2)}) - 1\right).
		\end{equation*}
		The contribution of the first part is of order $0$ due to Lemma \ref{lem-paralin:sym-deri-in-eta}. For the second part, since $\gamma_1^{(3/2)}\in\Gamma^{\frac{3}{2}}_{3/2+}$, we have, 
		$$ \exp(-\epsilon_1 \gamma_1^{(3/2)}) \frac{\exp(-(\epsilon_2-\epsilon_1) \gamma_1^{(3/2)}) - 1}{(\epsilon_2-\epsilon_1)^{\nu} |\xi|^{\frac{3}{2}\nu}} \in \Gamma^{0}_{3/2+}, $$
		and consequently, 
		$$ (\epsilon_2-\epsilon_1)^{-\nu}\exp(-\epsilon_1 \gamma_1^{(3/2)}) \left(\exp(-(\epsilon_2-\epsilon_1) \gamma_1^{(3/2)}) - 1\right) \in \Gamma^{\frac{3}{2}\nu}_{3/2+}, $$
		which implies the desired estimate \eqref{eq-cauchy:lip-molli}.
	\end{proof}

	\subsection{Continuity in time and initial data}\label{subsect:contin}
	
	In this section, we finish the proof of Theorem \ref{thm-intro:main} and \ref{thm-intro:flow-map} by showing that the unique solution constructed in previous section is continuous in time $t$ and initial data $(\eta_0,\psi_0)$. To achieve this, we shall apply the nonlinear interpolation theorem recently proved in \cite{alazard2024nonlinear}. Recall that, in the statement of Theorem \ref{thm-intro:flow-map}, we have defined a small ball in $H^{s+\frac{1}{2}}\times H^s$, which we recall here
	\begin{equation*}
		B_s(\eta_0,\psi_0;r) := \{v_0=(\tilde{\zeta}_0,\tilde{\psi}_0) \in H^{s+\frac{1}{2}}\times H^s : \|(\eta_0 -R)- \tilde{\zeta}_0\|_{H^{s+\frac{1}{2}}} + \|\psi_0 - \tilde{\psi}_0\|_{H^{s}} < r \}.
	\end{equation*}
	And in previous section, we have proved that the following flow map (already defined in \eqref{eq-intro:def-flow-map}) is well-defined for all $s>3$,
	\begin{equation*}
		\begin{array}{lccc}
			\mathfrak{F} : & B_s\left(\eta_0,\psi_0;r\right) & \rightarrow & L^\infty_T(H^{s+\frac{1}{2}}(\T\times\R)\times H^s(\T\times\R)) \\[0.5ex]
			&(\tilde{\zeta}_0,\tilde{\psi}_0) & \mapsto & \left( \tilde{\eta}(t)-R,\tilde{\psi}(t) \right),
		\end{array}
	\end{equation*}
	where $(\tilde{\eta}(t),\tilde{\psi}(t))$ is the unique solution to \eqref{eq-intro:WW} with initial data $(\tilde{\zeta}_0+R,\tilde{\psi}_0)$. Note that, when $r>0$ is small enough, the life span of these solutions admits a minimum which is denoted by $T$ here. To prove this, it suffices to combine the energy estimate \eqref{eq-cauchy:energy-esti} and the same argument in Corollary \ref{cor-cauchy:low-bd-lifespan}.
	
	Then Theorem 1 of \cite{alazard2024nonlinear} can be stated as,
	\begin{theorem}\label{thm-cauchy:nonlin-interpol}
		Let $s_0,s,s_1,r\in\R$ with $s_0<s<s_1$, $r>0$, and $0<T\ll 1$. Assume that the flow map $\mathfrak{F}$ satisfies 
		
		(1)(contraction) for all $v_0,v_0'\in B_s(\eta_0,\psi_0;r) \cap H^{+\infty}\times H^{+\infty}$,
		\begin{equation}\label{eq-cauchy:nonlin-interpol-contraction}
			\| \mathfrak{F}(v_0) - \mathfrak{F}(v_0') \|_{L^\infty_T(H^{s_0+\frac{1}{2}}\times H^{s_0})} \le C\left( \|v_0\|_{H^{s+\frac{1}{2}}\times H^{s}}+\|v_0'\|_{H^{s+\frac{1}{2}}\times H^{s}}\right) \|v_0-v_0'\|_{H^{s_0+\frac{1}{2}}\times H^{s_0}};
		\end{equation}
		
		(2)(tame estimate) for all $v_0\in B_s(\eta_0,\psi_0;r) \cap H^{+\infty}\times H^{+\infty}$,
		\begin{equation}\label{eq-cauchy:nonlin-interpol-tame}
			\| \mathfrak{F}(v_0)\|_{L^\infty_T(H^{s_1+\frac{1}{2}}\times H^{s_1})} \le C\left( \|v_0\|_{H^{s+\frac{1}{2}}\times H^{s}} \right) \|v_0\|_{H^{s_1+\frac{1}{2}}\times H^{s_1}}.
		\end{equation}
		
		Then $\mathfrak{F}$ is continuous on the ball $B_s(\eta_0,\psi_0;r)$ and $\mathfrak{F}(v_0)\in C_T(H^{s+\frac{1}{2}}\times H^{s})$ for all $v_0\in B_s(\eta_0,\psi_0;r)$.
	\end{theorem}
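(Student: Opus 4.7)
The approach is a Bona-Smith regularization argument adapted to the abstract framework of the two hypotheses. Given an arbitrary $v_0 \in B_s(\eta_0,\psi_0;r)$, I would approximate it by $v_0^n := J_n v_0$ where $J_n$ is the Littlewood-Paley projector onto frequencies $\le 2^n$, so that $v_0^n \in H^{+\infty}\times H^{+\infty}$, with the standard Sobolev estimates
\begin{align*}
\|v_0 - v_0^n\|_{H^{s_0+\frac{1}{2}}\times H^{s_0}} &= o\bigl(2^{-n(s-s_0)}\bigr), \\
\|v_0^n\|_{H^{s_1+\frac{1}{2}}\times H^{s_1}} &\le C\, 2^{n(s_1-s)}\, \|v_0\|_{H^{s+\frac{1}{2}}\times H^s}.
\end{align*}
The central goal is to prove that $\{\mathfrak{F}(v_0^n)\}$ is Cauchy in $L^\infty_T(H^{s+\frac{1}{2}}\times H^s)$. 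Combining the contraction \eqref{eq-cauchy:nonlin-interpol-contraction}, the tame estimate \eqref{eq-cauchy:nonlin-interpol-tame}, and the convexity inequality
\[
\|u\|_{H^s} \le \|u\|_{H^{s_0}}^{\theta}\,\|u\|_{H^{s_1}}^{1-\theta}, \qquad \theta := \frac{s_1-s}{s_1-s_0},
\]
yields the desired Cauchy control. The limit must coincide with $\mathfrak{F}(v_0)$ (which is already known to exist in $L^\infty_T(H^{s+\frac{1}{2}}\times H^s)$ from Proposition~\ref{prop-cauchy:converg-apprx-sol}), since both objects share the initial data and are governed by the same equation, and uniqueness holds at the weaker regularity $s_0$ by the argument summarized in Remark~\ref{rmk-cauchy:uniqueness}.

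Once this Cauchy property is in hand, time continuity of $\mathfrak{F}(v_0)$ at the target regularity follows from a uniform-limit argument: each smooth-data flow $\mathfrak{F}(v_0^n)$ lies in $C_T(H^{+\infty}\times H^{+\infty}) \subset C_T(H^{s+\frac{1}{2}}\times H^s)$ by persistence of regularity applied to the local theory constructed in Sections~\ref{subsect:appro}--\ref{subsect:conv-uni} at the higher index $s_1$; uniform convergence in $L^\infty_T$ then promotes this to continuity of the limit. Continuity of $\mathfrak{F}$ on $B_s(\eta_0,\psi_0;r)$ is obtained by a triangle-inequality splitting
\[
\mathfrak{F}(v_0) - \mathfrak{F}(v_0') = \bigl[\mathfrak{F}(v_0) - \mathfrak{F}(v_0^n)\bigr] + \bigl[\mathfrak{F}(v_0^n) - \mathfrak{F}((v_0')^n)\bigr] + \bigl[\mathfrak{F}((v_0')^n) - \mathfrak{F}(v_0')\bigr],
\]
in which the outer brackets are made small by the preceding step (uniformly in $v_0,v_0'$ on a bounded ball), and the middle bracket is controlled in $H^{s_0}$ by the contraction applied to $v_0^n - (v_0')^n$, and in $H^{s_1}$ by the tame estimate, hence small in $H^s$ by the same interpolation inequality once $\|v_0 - v_0'\|_{H^{s+\frac{1}{2}}\times H^s}$ is taken small.

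\textbf{Main obstacle.} The principal difficulty lies in arranging the dyadic exponents so that the combination of contraction and tame estimate actually produces a \emph{summable} Cauchy sequence rather than a merely bounded one. Plugging the naive bounds into the interpolation inequality gives, for $m \le n$,
\[
\|\mathfrak{F}(v_0^n) - \mathfrak{F}(v_0^m)\|_{H^s} \lesssim 2^{-m(s-s_0)\theta}\cdot 2^{n(s_1-s)(1-\theta)},
\]
where the two exponents cancel exactly when $n = m$ but give no decay when $n-m$ grows. The improvement to genuine Cauchy decay relies on the refined fact that for $v_0 \in H^s$ one has the strong rate $\|v_0-v_0^n\|_{H^{s_0}} = o(2^{-n(s-s_0)})$ (tail of a convergent square-sum), not just $O(2^{-n(s-s_0)})$; this $o$-versus-$O$ distinction is precisely what separates boundedness from strong continuity in Bona-Smith type schemes, and is the technical heart of the argument. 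A secondary subtlety is the invocation of persistence of regularity at the level $s_1$, which must be justified by revisiting the local theory of Sections~\ref{subsect:appro}--\ref{subsect:conv-uni} with $s$ replaced by $s_1$ and checking that all the bounds are compatible with the tame estimate assumed in hypothesis~(2).
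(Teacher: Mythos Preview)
The paper does not prove this theorem: it is quoted verbatim as Theorem~1 of \cite{alazard2024nonlinear}, with no argument supplied beyond the remark that a more general Besov-space version appears as Theorem~18 of the same reference. The paper's own contribution in Section~\ref{subsect:contin} is solely to verify the two hypotheses in the jet setting---the tame estimate via \eqref{eq-cauchy:energy-esti-gen-strong} and the contraction via Proposition~\ref{prop-cauchy:contraction}.

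Your Bona--Smith regularization sketch is the standard mechanism behind the cited abstract result, and your diagnosis of the technical crux---the $o$-versus-$O$ tail estimate $\|v_0 - J_n v_0\|_{H^{s_0}} = o(2^{-n(s-s_0)})$ for $v_0 \in H^s$---is exactly right. One small refinement worth noting: rather than comparing arbitrary $m \le n$ as in your displayed bound (where the exponent indeed fails to decay in $n-m$), the clean route is to estimate consecutive differences $\mathfrak{F}(v_0^{n+1}) - \mathfrak{F}(v_0^n)$, for which both the $H^{s_0}$ contraction input $\|\Delta_n v_0\|_{H^{s_0}}$ and the $H^{s_1}$ tame input scale with the \emph{same} dyadic parameter $n$, so that the interpolation exponent vanishes and one is left with a factor $\|\Delta_n v_0\|_{H^s}^{\theta}$ that tends to zero. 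Summability of the telescoping series then follows from an $\ell^2 \hookrightarrow c_0$ argument combined with a geometric weight, or (as in \cite{alazard2024nonlinear}) from a slightly sharper choice of regularizing operator. Your plan is sound; the paper simply outsources this step.
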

	\begin{remark}
		This theorem can be generalized to more general case, for instant, Besov spaces. One may refer to Theorem 18 of \cite{alazard2024nonlinear} for the setting.
	\end{remark}

	\begin{proof}[Proof of tame estimate]
		The tame estimate \eqref{eq-cauchy:nonlin-interpol-tame} is a direct result of energy estimate \eqref{eq-cauchy:energy-esti-gen} (replacing $(s_0,s-)$ by $(s_1,s)$). In fact, let us consider any initial data $(\eta_0,\psi_0)$ such that $v_0=(\eta_0-R,\psi_0)\in B_{s_1}(\eta_0,\psi_0;r)$. The energy of the resulting solution $(\eta,\psi)$ is still denoted as
		\begin{equation*}
			N_T^{s_1} = \sup_{t\in[0,T[} \left( \|\eta(t)-R\|_{H^{s_1+\frac{1}{2}}} + \|\psi(t)\|_{H^{s_1}} \right) = \| \mathfrak{F}(v_0)\|_{L^\infty_T(H^{s_1+\frac{1}{2}}\times H^{s_1})}.
		\end{equation*}
		We recall that \eqref{eq-paralin:source-esti} gives
		\begin{equation*}
			\|f\|_{L^\infty_t(H^{s_1+\frac{1}{2}}\times H^{s_1})} \le C\left( \|(\eta,\psi)\|_{L^\infty_t(H^{s+\frac{1}{2}}_R\times H^{s})} \right) \|(\eta,\psi)\|_{L^\infty_t(H^{s_1+\frac{1}{2}}_R\times H^{s_1})},
		\end{equation*}
		which, combined with energy estimates \eqref{eq-cauchy:energy-esti-gen-strong} yields the desired estimate.
	\end{proof}
	
	The contraction condition \eqref{eq-cauchy:nonlin-interpol-contraction} can be obtained directly from the following proposition,
	\begin{proposition}\label{prop-cauchy:contraction}
		Given $s>3$, we take arbitrary $(\eta_0,\psi_0),(\eta_0',\psi_0')\in\mathcal{X}_R^s$. Let $(\eta,\psi),(\eta',\psi')\in L^\infty_T\mathcal{X}_R^s$ be the solutions to \eqref{eq-intro:WW} with initial data $(\eta_0,\psi_0),(\eta_0',\psi_0')$, respectively. Then, when $T\ll 1$, the following estimate holds:
		\begin{equation}\label{eq-cauchy:contraction}
			\|(\eta-\eta',\psi-\psi')\|_{L^\infty_T(H^{s_0+\frac{1}{2}}\times H^{s_0})} \le C\left( M \right) \|(\eta_0-\eta'_0,\psi_0-\psi'_0)\|_{H^{s_0+\frac{1}{2}}\times H^{s_0}},
		\end{equation}
		where $\frac{3}{2}<s_0<s-\frac{3}{2}$, and $M>0$ is the maximum of $\|(\eta_0,\psi_0)\|_{L^\infty_T(H^{s+\frac{1}{2}}_R\times H^{s})}$ and $\|(\eta'_0,\psi'_0)\|_{L^\infty_T(H^{s+\frac{1}{2}}_R\times H^{s})}$.
	\end{proposition}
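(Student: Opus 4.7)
The strategy mirrors the proof of Proposition \ref{prop-cauchy:converg-apprx-sol}, with the mollifier $J_\epsilon$ absent throughout, as already anticipated in Remark \ref{rmk-cauchy:uniqueness}. Set $(\delta\eta,\delta\psi):=(\eta-\eta',\psi-\psi')$. Subtracting the two copies of \eqref{eq-paralin:WW} shows that $(\delta\eta,\delta\psi)$ satisfies
\[
(\partial_t + T_V\cdot\bar\nabla + \mathcal L)\begin{pmatrix}\delta\eta\\\delta\psi\end{pmatrix}=h,
\]
in which the operators $T_V,\bar\nabla,\mathcal L$ are built from $(\eta,\psi)$, and the source $h$ collects four pieces,
\[
h = \bigl[f(\eta,\psi)-f(\eta',\psi')\bigr]
-(T_V-T_{V'})\cdot\bar\nabla'\!\begin{pmatrix}\eta'\\\psi'\end{pmatrix}
-T_V\cdot(\bar\nabla-\bar\nabla')\!\begin{pmatrix}\eta'\\\psi'\end{pmatrix}
-(\mathcal L-\mathcal L')\!\begin{pmatrix}\eta'\\\psi'\end{pmatrix},
\]
together with the initial condition $(\delta\eta,\delta\psi)|_{t=0}=(\eta_0-\eta_0',\psi_0-\psi_0')$.

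The first step is the Lipschitz bound
\[
\|h(t)\|_{H^{s_1+\frac12}\times H^{s_1}} \le C(M)\,\|(\delta\eta,\delta\psi)(t)\|_{H^{s_1+\frac12}\times H^{s_1}},
\]
for some intermediate $s_1\in (3/2,s_0)$ with $s_1<s-3/2$. For the $f$-difference this is Proposition \ref{prop-paralin:source-Lip}. For the coefficient differences, I reuse the argument sketched for Lemma \ref{lem-cauchy:lip-sys}(1): the maps $(\eta,\psi)\mapsto T_V\cdot\bar\nabla$ and $(\eta,\psi)\mapsto\mathcal L$ are smooth on $\mathcal X^s_R$, with derivatives controlled by Lemma \ref{lem-paralin:sym-deri-in-eta} for the paradifferential symbols (through $\lambda,\mu,p,q,\tilde p,\tilde q$) and by Lemma \ref{lem-paralin:BV-deri-in-eta} for the scalar factors $B,V$. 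The only delicate contribution is the factor $\eta^{-1}$ hidden in $\bar\nabla$; it is handled by Proposition \ref{prop-para:paraprod-bound} together with the uniform positive lower bound on $\eta$ granted by \eqref{hyp-intro:bounds}. Setting $J_\epsilon=\mathrm{Id}$ in the proof of Lemma \ref{lem-cauchy:lip-sys} suppresses the $\epsilon^\nu$ remainder and leaves exactly the estimate above.

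The second step is to apply Proposition \ref{prop-cauchy:energy-esti-gen} with $\epsilon=0$, reference solution $(\eta,\psi)\in L^\infty_T\mathcal X^s_R$, unknown $(\delta\eta,\delta\psi)$, source $h$ and $c_f=0$. Hypothesis (1) is the equation above; hypothesis (3) follows from the fact that $(\eta,\psi)$ solves \eqref{eq-intro:WW}, since $\eta_t=G(\eta)\psi$ and $\psi_t=-\sigma(H-\tfrac1{2R})-N$ lie in the required Sobolev space by Corollary \ref{cor-pre:bound-DtN} and Proposition \ref{prop-para:paralin}; hypothesis (2) is exactly the Lipschitz bound just established. The conclusion \eqref{eq-cauchy:energy-esti-gen-weak} reads
\[
(M_t^{s_0})^2 \le C(M)(M_0^{s_0})^2 + t\,C(M)(M_t^{s_0})^2,\qquad t\in[0,T_0],
\]
where $M_t^{s_0}=\|(\delta\eta,\delta\psi)\|_{L^\infty_t(H^{s_0+\frac12}\times H^{s_0})}$. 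Choosing $T\in(0,T_0]$ so small that $T\,C(M)\le 1/2$ absorbs the last term into the left-hand side and yields \eqref{eq-cauchy:contraction}.

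The main obstacle is checking the Lipschitz estimate for the coefficient differences $T_V-T_{V'}$ and $\mathcal L-\mathcal L'$ at the intermediate regularity $s_1$ rather than $s$: one has to verify that each derivative-in-$\eta$ of the symbols $\lambda,\mu,p,q,\tilde p,\tilde q$ appearing in $\mathcal L$, and of $B,V$, loses regularity at a controlled rate so that the result lands in $H^{s_1}$. Because $s_1<s_0<s-3/2$, this loss is absorbed by the extra derivatives enjoyed by $(\eta,\psi),(\eta',\psi')\in\mathcal X^s_R$, exactly as in the bookkeeping carried out in Section \ref{subsect:contin-in-eta}, so no new analytical difficulty arises beyond adapting Lemma \ref{lem-cauchy:lip-sys} to the unmollified setting.
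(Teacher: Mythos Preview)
Your approach is essentially identical to the paper's: derive the equation for $(\delta\eta,\delta\psi)$, bound the source term by the Lipschitz argument of Lemma \ref{lem-cauchy:lip-sys} with $J_\epsilon=\mathrm{id}$, invoke the energy estimate \eqref{eq-cauchy:energy-esti-gen-weak}, and absorb for $T$ small. The only cosmetic difference is that the paper groups the transport contribution as a single term $g_2=(T_V\cdot\bar\nabla-T_{V'}\cdot\bar\nabla')(\eta',\psi')^T$ rather than splitting it in two.

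One small point to tidy: you state the Lipschitz bound at an auxiliary level $s_1<s_0$, but the energy machinery in Proposition \ref{prop-cauchy:energy-esti-gen} actually consumes the source estimate at the target level $s_0$ (see the $\|f(t)\|_{H^{s_0+\frac12}\times H^{s_0}}$ terms in \eqref{eq-cauchy:F-1-esti}--\eqref{eq-cauchy:F-2-esti}). Since $s_0<s-\tfrac32$ already, the same Lipschitz argument you give works verbatim at $s_0$, so just replace $s_1$ by $s_0$ in your first step.
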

	\begin{proof}
		We shall only give the sketch of the proof, since the argument is the same as Proposition \ref{prop-cauchy:converg-apprx-sol}. By calculating the difference between the equations for $(\eta,\psi)$ and $(\eta',\psi')$, we are able to write the equation for
		\begin{equation*}
			(\delta\eta,\delta\psi) := (\eta-\eta',\psi-\psi') \in L^\infty_T(H^{s_0+\frac{1}{2}}\times H^{s_0}).
		\end{equation*}
		More precisely, 
		\begin{equation*}
			\left\{
			\begin{aligned}
				&\left(\partial_t + T_{V}\cdot\bar{\nabla} + \mathcal{L}\right)
				\left(\begin{array}{c}
					\delta\eta \\
					\delta\psi
				\end{array}\right) = g, \\
				&(\delta\eta,\delta\psi)|_{t=0} = (\eta_0-\eta_0',\psi_0-\psi_0') \in H^{s+\frac{1}{2}}\times H^s,
			\end{aligned}
			\right.
		\end{equation*}
		where the source term $g= g_1 - g_2 - g_3$ is equal to
		\begin{equation*}
			g_1= f(\eta,\psi) - f(\eta',\psi'),\ \ g_2= \left(T_{V}\cdot\bar{\nabla} - T_{V'}\cdot\bar{\nabla}'\right)
			\left(\begin{array}{c}
				\eta' \\
				\psi'
			\end{array}\right),\ \ g_3= \left(\mathcal{L} - \mathcal{L}'\right)
			\left(\begin{array}{c}
				\eta' \\
				\psi'
			\end{array}\right),
		\end{equation*}
		and $V'$, $\bar{\nabla}'$, $\mathcal{L}'$ are associated to $(\eta',\psi')$. By using the same proof as for \eqref{eq-cauchy:lip-sys-1}, one can show that,
		\begin{equation*}
			\begin{aligned}
				\|g\|_{L^\infty_t(H^{s_0+\frac{1}{2}}\times H^{s_0})} \le& C\left(\|(\eta,\psi)\|_{L^\infty_t(H^{s+\frac{1}{2}}_R\times H^{s})}+\|(\eta',\psi')\|_{L^\infty_t(H^{s+\frac{1}{2}}_R\times H^{s})}\right) \\
				&\hspace{14em}\times \|(\delta\eta,\delta\psi)(t)\|_{H^{s_0+\frac{1}{2}}\times H^{s_0}}.
			\end{aligned}
		\end{equation*}
		Then one may apply energy estimate \eqref{eq-cauchy:energy-esti-gen} with $s_0$ and obtain
		\begin{equation*}
			\|(\delta\eta,\delta\psi)\|_{L^\infty_T(H^{s_0+\frac{1}{2}}\times H^{s_0})} \le C(M) \|(\delta\eta,\delta\psi)(0)\|_{H^{s_0+\frac{1}{2}}\times H^{s_0}} + TC(M) \|(\delta\eta,\delta\psi)\|_{L^\infty_T(H^{s_0+\frac{1}{2}}\times H^{s_0})}.
		\end{equation*}
		Then the desired result follows by choosing $TC(M)<\frac{1}{2}$.
	\end{proof}

	\appendix
	
	\section{Fractional Sobolev spaces in regular domains}\label{App:sobo}
	
	In this appendix, we give a brief review of the fractional Sobolev spaces defined on regular domain $\Omega$. We focus on four types of domain: (1) Euclidean spaces, torus, and their products, namely $\mathbb{T}^{d_1}\times\mathbb{R}^{d_2}$ with $d_1,d_2\in\N$; (2) half spaces $\R^d_+:=\{(x',d_d)\in\R^{d-1}\times\R,x_d>0\}$; (3) bounded subdomains of $\mathbb{T}^{d_1}\times\mathbb{R}^{d_2}$ with smooth boundary; (4) cylindrical domains $\Omega_0\times\R$, where $\Omega_0$ is a bounded subdomain of $\mathbb{T}^{d_1}\times\mathbb{R}^{d_2}$ with smooth boundary. For simplicity, the properties below will be stated only for $\R^d$ and its subdomains, while the same results holds for $\mathbb{T}^{d_1}\times\mathbb{R}^{d_2}$ and its subdomains. We mainly refer to \cite{lions2012non,demengel2012functional,adams2003sobolev} for the detailed demonstrations.
	
	\begin{definition}\label{def-sobo:basic}
		Let $s\in\mathbb{R}$. When $s\in\mathbb{N}$, $H^s(\Omega)$ is the collection of distributions $u$ with
		\begin{equation*}
			\|u\|_{H^s(\Omega)}^2 := \sum_{|\alpha|\leqslant s} \|\partial^\alpha u\|_{L^2(\Omega)}^2 < +\infty.
		\end{equation*}
		When $s = n+\sigma$ with $n\in\mathbb{N}$ and $\sigma\in]0,1[$, the space $H^s(\Omega)$ is defined by interpolation (see Section 2.1 of \cite{lions2012non} for the definition and \cite{lions1960construction,lions1961proprietes} for general theory of interpolation spaces)
		\begin{equation*}
			H^s(\Omega) = [H^{n}(\Omega), H^{n+1}(\Omega)]_\sigma.
		\end{equation*}
		It is clear that the spaces defined above are Hilbert spaces, which allows us to define $H^s(\Omega)$ with $s<0$ by duality
		\begin{equation*}
			H^{s}(\Omega) = \left(H^{|s|}_0(\Omega)\right)',
		\end{equation*}
		where $H^{|s|}_0(\Omega)$ is the closure of $C^\infty_c(\Omega)$ w.r.t. the $H^{|s|}(\Omega)$-norm.
	\end{definition}
	
	In the case of full space $\R^d$, the Sobolev space can be characterized by Fourier transform.
	\begin{proposition}\label{prop-sobo:def-fourier}
		If $\Omega = \mathbb{R}^d$, for all $s\in\R$, we have the equivalence
		\begin{equation*}
			\|u\|_{H^s(\Omega)}^2 \sim \int_{\mathbb{R}^d} \langle\xi\rangle^{2s} \left|\hat{u}(\xi) \right|^2 d\xi.
		\end{equation*}
	\end{proposition}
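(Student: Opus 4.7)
The plan is to treat three regimes separately: nonnegative integer $s$, positive non-integer $s$, and negative $s$, following the order in which $H^s(\mathbb{R}^d)$ is defined.

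First, for $s \in \mathbb{N}$, I would apply Plancherel's theorem to each summand in the definition. Since $\widehat{\partial^\alpha u}(\xi) = (i\xi)^\alpha \hat u(\xi)$, one gets
$$
\|u\|_{H^s(\mathbb{R}^d)}^2 \;=\; \sum_{|\alpha|\le s}\int_{\mathbb{R}^d} |\xi^\alpha|^2\,|\hat u(\xi)|^2\,d\xi.
$$
The key elementary lemma is the two-sided bound
$$
c_s\,\langle\xi\rangle^{2s} \;\le\; \sum_{|\alpha|\le s}|\xi^\alpha|^2 \;\le\; C_s\,\langle\xi\rangle^{2s},\qquad \forall\, \xi\in\mathbb{R}^d,
$$
which is straightforward (expand $\langle\xi\rangle^{2s}=(1+|\xi|^2)^s$ by the binomial theorem and compare). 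This yields the equivalence for nonnegative integers.

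Next, for $s = n+\sigma$ with $n\in\mathbb{N}$ and $\sigma\in\,]0,1[$, I would use the interpolation identity defining $H^s(\mathbb{R}^d)$. Denote by $L^2_s$ the weighted $L^2$-space with norm $\int \langle\xi\rangle^{2s}|\hat u(\xi)|^2\,d\xi$. By the integer case, the Fourier transform is an isometric isomorphism (up to equivalence of norms) $H^n(\mathbb{R}^d)\to L^2_n$ and $H^{n+1}(\mathbb{R}^d)\to L^2_{n+1}$. The standard interpolation theorem for weighted $L^2$-spaces (see \cite{lions2012non}, Section 2.1) gives
$$
[L^2_n,\,L^2_{n+1}]_\sigma = L^2_{n+\sigma},
$$
since the weights are scalar positive functions and interpolation of such weighted Hilbert spaces multiplies the weights geometrically. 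Combining with the isomorphism on each endpoint yields the Fourier characterization for all $s>0$.

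Finally, for $s<0$, I would invoke duality. Since $C^\infty_c(\mathbb{R}^d)$ is dense in $H^{|s|}(\mathbb{R}^d)$ on the full space (a standard convolution/truncation argument, available because there is no boundary), one has $H^{|s|}_0(\mathbb{R}^d) = H^{|s|}(\mathbb{R}^d)$, hence $H^s(\mathbb{R}^d) = \bigl(H^{|s|}(\mathbb{R}^d)\bigr)'$. On the Fourier side, the dual of $L^2_{|s|}$ with respect to the Plancherel pairing $\langle u,v\rangle = \int \hat u\,\overline{\hat v}\,d\xi$ is isometrically $L^2_{-|s|}=L^2_s$: a linear form $\ell$ on $L^2_{|s|}$ corresponds to $g\in L^2_s$ via $\ell(v) = \int \hat v(\xi)\overline{g(\xi)}\,d\xi$, and
$$
\|\ell\|_{(L^2_{|s|})'} = \|g\|_{L^2_s}.
$$
Transporting this identification through the Fourier isomorphism of the positive case gives $\|u\|_{H^s(\mathbb{R}^d)}^2 \sim \int \langle\xi\rangle^{2s}|\hat u(\xi)|^2\,d\xi$ for $s<0$, completing the proof. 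The only mildly delicate point is verifying that the duality pairing on the Fourier side really matches the pairing underlying the definition of $H^s$; this is automatic because Plancherel identifies the $L^2$-pairing with the $\hat L^2$-pairing, and the reflexivity of $L^2_{|s|}$ means no care is required about weak-$*$ versus strong duals.
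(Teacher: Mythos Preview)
Your proposal is correct and follows the natural route dictated by Definition~\ref{def-sobo:basic}: Plancherel for integer $s$, interpolation of weighted $L^2$ spaces for fractional $s>0$, and duality for $s<0$. The paper itself states this proposition without proof, treating it as a classical fact (and referring to \cite{lions2012non} for the interpolation framework), so there is no approach to compare against; your argument is the standard one and each step is sound.
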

	
	From Definition \ref{def-sobo:basic}, one can deduce the boundedness of restriction operator.
	\begin{proposition}\label{prop-sobo:restri}
		For all $s\in\R$, the restriction operator
		\begin{equation*}
			\begin{array}{cccc}
				\mathcal{R} : & H^s(\R^d) & \rightarrow & H^s(\Omega) \\
				& u & \mapsto & u|_{\Omega}
			\end{array}
		\end{equation*}
		is bounded.
	\end{proposition}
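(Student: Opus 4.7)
The plan is to handle the three ranges of $s$ separately, using the definition of $H^s(\Omega)$ in each case. First I would dispose of the nonnegative integer case $s = n \in \N$: by Definition \ref{def-sobo:basic}, $\|\mathcal{R}u\|_{H^n(\Omega)}^2 = \sum_{|\alpha|\le n}\|\partial^\alpha u\|_{L^2(\Omega)}^2 \le \sum_{|\alpha|\le n}\|\partial^\alpha u\|_{L^2(\R^d)}^2 = \|u\|_{H^n(\R^d)}^2$, since the partial derivatives taken in the distributional sense on $\R^d$ restrict to the distributional derivatives on $\Omega$ and the $L^2(\Omega)$ norm is dominated by $L^2(\R^d)$.

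Next, for a noninteger $s = n+\sigma$ with $n\in\N$ and $\sigma\in(0,1)$, I would invoke interpolation. Since $\mathcal{R}$ is bounded from $H^n(\R^d)$ to $H^n(\Omega)$ and from $H^{n+1}(\R^d)$ to $H^{n+1}(\Omega)$, the general interpolation functor (Lions--Peetre, as used in Definition \ref{def-sobo:basic} via the reference \cite{lions2012non,lions1960construction,lions1961proprietes}) yields a bounded map
\begin{equation*}
    \mathcal{R} : [H^n(\R^d), H^{n+1}(\R^d)]_\sigma \longrightarrow [H^n(\Omega), H^{n+1}(\Omega)]_\sigma,
\end{equation*}
i.e.\ $\mathcal{R} : H^s(\R^d) \to H^s(\Omega)$, exactly as desired.

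Finally, for $s<0$ I would use the duality definition $H^s(\Omega) = (H^{|s|}_0(\Omega))'$. The natural candidate is to identify $\mathcal{R}$ as the adjoint of the trivial extension-by-zero operator
\begin{equation*}
    E : H^{|s|}_0(\Omega) \longrightarrow H^{|s|}(\R^d),\qquad E\varphi = \tilde{\varphi},
\end{equation*}
which is well-defined and bounded: for $\varphi\in C^\infty_c(\Omega)$ the extension $\tilde\varphi$ lies in $C^\infty_c(\R^d) \subset H^{|s|}(\R^d)$ with $\|\tilde\varphi\|_{H^{|s|}(\R^d)} = \|\varphi\|_{H^{|s|}(\R^d)}$ (all derivatives of $\tilde\varphi$ are supported in $\Omega$), and one extends by density from $C^\infty_c(\Omega)$ to $H^{|s|}_0(\Omega)$. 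For $u\in H^s(\R^d)$, I would then define $\mathcal{R}u \in (H^{|s|}_0(\Omega))'$ by the pairing $\langle \mathcal{R}u, \varphi\rangle := \langle u, E\varphi\rangle_{H^s(\R^d),H^{|s|}(\R^d)}$, giving
\begin{equation*}
    \|\mathcal{R}u\|_{H^s(\Omega)} \le \|E\|\,\|u\|_{H^s(\R^d)}.
\end{equation*}
The only point requiring care is that this distributional $\mathcal{R}u$ must coincide with the usual restriction when $u$ is a function: this is immediate by testing against $\varphi\in C^\infty_c(\Omega)$, since $\langle u, \tilde\varphi\rangle_{\R^d} = \int_\Omega u\varphi$ whenever $u$ is locally integrable.

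The main obstacle in this plan is the delicate point in the negative-regularity case: one must make sure that the definition $H^s(\Omega) = (H^{|s|}_0(\Omega))'$ and the distributional identification of $\mathcal{R}u$ are compatible, which requires invoking density of $C^\infty_c(\Omega)$ in $H^{|s|}_0(\Omega)$ and the naturalness of the extension-by-zero map on test functions. The positive integer and interpolation cases are essentially formal once the framework is in place.
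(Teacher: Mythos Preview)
Your proof is correct and follows the standard approach. The paper itself does not give a proof of this proposition: it is stated without demonstration, with the preamble to Appendix~\ref{App:sobo} simply referring the reader to \cite{lions2012non,demengel2012functional,adams2003sobolev} for all detailed arguments in this section. Your case-splitting argument (integer $s$ directly from the definition, non-integer positive $s$ by interpolation, negative $s$ by duality via extension-by-zero) is exactly the route those references take, and in fact supplies more detail than the paper chooses to include.
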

	In the mean time, any function in $H^s(\Omega)$ can be continuously extended to $\R^d$.
	\begin{proposition}\label{prop-sobo:extension}
		There exists an extension operator
		\begin{equation*}
			\begin{array}{cccc}
				\mathcal{E} : & H^s(\Omega) & \rightarrow & H^s(\R^d) \\
				& u & \mapsto & \tilde{u},
			\end{array}
		\end{equation*}
		such that $\tilde{u}|_{\Omega} = u$ and $\mathcal{E}\in\mathcal{L}(H^s(\Omega);H^s(\R^d))$ for all $s\in\R$.
	\end{proposition}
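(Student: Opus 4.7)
The plan is to construct $\mathcal{E}$ through three reductions: first, localization reduces the bounded domain (and cylinder) case to the half-space model; second, for the half-space, a Seeley-type reflection produces a single extension operator that is bounded from $H^s(\R^d_+)$ to $H^s(\R^d)$ for every integer $s \geq 0$, and interpolation covers the remaining non-negative exponents; third, the negative-$s$ range is handled either by a Rychkov-type refinement valid simultaneously for every $s \in \R$, or by representing negative-order elements as distributional derivatives of $L^2$ functions and extending the latter.

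For the localization step, cover $\partial\Omega$ by finitely many coordinate patches $(U_j, \Phi_j)$ with $\Phi_j(U_j \cap \Omega) \subset \R^d_+$ and $\Phi_j(U_j \cap \partial\Omega) \subset \{x_d=0\}$, complemented by an open set $U_0 \Subset \Omega$, and take a subordinate smooth partition of unity $\{\chi_j\}$. Write $u = \chi_0 u + \sum_j \chi_j u$. The piece $\chi_0 u$ is compactly supported in $\Omega$ and can be extended by zero, which is bounded on $H^s$ for every $s \in \R$ once the support stays away from $\partial\Omega$. Each $\chi_j u$ is pulled back via $\Phi_j$ to a function supported in a compact subset of $\overline{\R^d_+}$, to which the half-space extension applies; transporting back and summing produces $\mathcal{E}u$. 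The cylindrical case $\Omega_0 \times \R$ is handled by the same covering argument carried out uniformly in the $z$-variable.

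For the half-space, consider Seeley's operator
\begin{equation*}
\mathcal{S}u(x',x_d) = \begin{cases} u(x',x_d), & x_d \geq 0,\\ \displaystyle\sum_{k \geq 1} a_k\, \chi(-b_k x_d)\, u(x',-b_k x_d), & x_d < 0,\end{cases}
\end{equation*}
with $\chi \in C_c^\infty(\R)$ equal to $1$ near $0$, $b_k \to \infty$, and $a_k$ chosen so that $\sum_k a_k (-b_k)^m = 1$ for every $m \in \N$ and $\sum_k |a_k| b_k^m < \infty$ (see Adams--Fournier, Ch.~5). These moment conditions ensure that $\mathcal{S}u \in C^n(\R^d)$ whenever $u \in C^n(\overline{\R^d_+})$ for every $n$; a direct computation then gives $\|\mathcal{S}u\|_{H^m(\R^d)} \lesssim \|u\|_{H^m(\R^d_+)}$ for every integer $m \geq 0$, and the real interpolation definition of $H^s$ in Definition~\ref{def-sobo:basic} yields the claim for all $s \geq 0$.

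The main obstacle is the range $s < 0$, where extension by zero ceases to be bounded past $|s| = 1/2$ and the defining series for $\mathcal{S}u$ is not a priori meaningful on distributions. The cleanest resolution is Rychkov's universal extension operator: replace the reflection above by a convolution of $u$ against a compactly supported kernel whose polynomial moments vanish to arbitrary order, which yields a single continuous operator $\mathcal{E}\colon \mathcal{D}'(\R^d_+) \to \mathcal{D}'(\R^d)$ with $\mathcal{E} \in \mathcal{L}(H^s(\R^d_+);H^s(\R^d))$ for every $s \in \R$ simultaneously. An alternative, less uniform but entirely elementary route: given $u \in H^s(\Omega)$ with $s < 0$, decompose $u = \sum_{|\alpha| \leq N} \partial^\alpha u_\alpha$ with $u_\alpha \in L^2(\Omega)$ and $N = \lceil |s|\rceil$, apply the Seeley operator $\mathcal{S}$ (which is bounded on $L^2$) to each $u_\alpha$, and set $\mathcal{E}u := \sum_{|\alpha| \leq N} \partial^\alpha \mathcal{S}u_\alpha$; boundedness from $H^s(\Omega)$ to $H^s(\R^d)$ then follows from the $L^2$ continuity of $\mathcal{S}$ together with the standard characterization of $H^s$ via the $L^2$-norms of the $u_\alpha$'s. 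Either construction supplies the desired operator; the Rychkov route has the bonus of producing a single $\mathcal{E}$ independent of $s$, which matches the way $L_0$ is used in Lemma~\ref{lem-pre:prop-tilde-eta}.
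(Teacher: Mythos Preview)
The paper does not actually prove this proposition: it defers to Agranovich (for all $s\in\R$) and Stein (for $s\ge 0$), and only spells out the reduction of the cylindrical case $\Omega_0\times\R$ to the half-space by covering $\Omega_0$ with coordinate patches. Your proposal is a correct and considerably more explicit realization of that same scheme: the partition-of-unity localization you describe matches the paper's reduction, the Seeley reflection is the standard route for $s\ge 0$ (this is precisely what the Stein reference provides), and invoking Rychkov's universal extension for the full range $s\in\R$ is exactly in the spirit of the Agranovich reference. So the main line of your argument is sound and aligned with the paper.

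One small caveat on your ``alternative, elementary'' route for $s<0$: the representation $u=\sum_{|\alpha|\le N}\partial^\alpha u_\alpha$ with $u_\alpha\in L^2(\Omega)$ characterizes the $H^{-N}$ norm, not the $H^s$ norm for non-integer $s\in(-N,0)$, and since the resulting operator $\mathcal{E}$ depends on $N$ you cannot interpolate directly between $H^0$ and $H^{-N}$ to reach intermediate exponents. This is easily fixed (take $u_\alpha\in H^{s+N}(\Omega)$ and use that the Seeley operator is bounded on $H^{s+N}$ by the interpolation you already established), but as written it only covers integer negative orders. Since you present this merely as a backup to the Rychkov construction, it does not affect the correctness of the overall argument.
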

	The construction of operator $\mathcal{E}$ is not unique and we refer to Section 5.1 of \cite{agranovich2015sobolev} for one possible construction (see also Chapter 5 of \cite{stein1970singular} for non-negative index $s$). Note that, in these references, the extension operator is constructed for bounded domains and half-space. As for the case of cylindrical domains $\Omega=\Omega_0\times\R \subset\R^d$ (where $\Omega_0$ is bounded domain), it suffices to fix a finite cover $\{U_j\}$ of $\Omega_0$ such that $U_j\cap\Omega_0$ is diffeomorphic to some subset of half-space $\R^{d-1}_+$. Thus $U_j\times\R \cap\Omega$ is diffeomorphic to half-space $\R^d_+$ and the problem is reduced to the case of half-space. The boundedness of restriction and extension operator guarantees the following equivalence
	\begin{proposition}\label{prop-sobo:def-extension}
		For all $s\in\mathbb{R}$, we have the equivalence
		\begin{equation*}
			\|u\|_{H^s(\Omega)} \sim \inf_{\tilde{u}|_{\Omega}=u} \|\tilde{u}\|_{H^s(\mathbb{R}^d)}.
		\end{equation*}
	\end{proposition}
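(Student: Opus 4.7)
The proof is essentially a direct consequence of the two preceding propositions (restriction and extension), which already do the heavy lifting. So my plan is to establish the two inequalities of the equivalence separately, each in one short line.

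For the inequality $\|u\|_{H^s(\Omega)} \lesssim \inf_{\tilde{u}|_\Omega = u} \|\tilde{u}\|_{H^s(\mathbb{R}^d)}$, I would pick any extension $\tilde{u} \in H^s(\mathbb{R}^d)$ with $\tilde{u}|_\Omega = u$. By Proposition \ref{prop-sobo:restri}, the restriction operator $\mathcal{R}$ is bounded from $H^s(\mathbb{R}^d)$ to $H^s(\Omega)$, so
\begin{equation*}
\|u\|_{H^s(\Omega)} = \|\mathcal{R}\tilde{u}\|_{H^s(\Omega)} \leq \|\mathcal{R}\|_{\mathcal{L}(H^s(\mathbb{R}^d);H^s(\Omega))} \, \|\tilde{u}\|_{H^s(\mathbb{R}^d)}.
\end{equation*}
Taking the infimum over all extensions $\tilde{u}$ yields the desired bound.

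For the reverse inequality $\inf_{\tilde{u}|_\Omega = u} \|\tilde{u}\|_{H^s(\mathbb{R}^d)} \lesssim \|u\|_{H^s(\Omega)}$, I would use Proposition \ref{prop-sobo:extension} to produce one particular extension, namely $\mathcal{E}u \in H^s(\mathbb{R}^d)$, which by definition satisfies $(\mathcal{E}u)|_\Omega = u$. Since the infimum is bounded by the value at any particular admissible $\tilde{u}$, we get
\begin{equation*}
\inf_{\tilde{u}|_\Omega = u} \|\tilde{u}\|_{H^s(\mathbb{R}^d)} \leq \|\mathcal{E}u\|_{H^s(\mathbb{R}^d)} \leq \|\mathcal{E}\|_{\mathcal{L}(H^s(\Omega);H^s(\mathbb{R}^d))} \, \|u\|_{H^s(\Omega)}.
\end{equation*}

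There is no real obstacle here; the two nontrivial ingredients (boundedness of restriction and existence of a bounded linear extension operator, the latter being the deep result about domains with smooth boundary, proved e.g.\ in Stein or Agranovich as cited) are already granted. In particular, the extension operator must exist for the cylindrical domains $\Omega_0 \times \mathbb{R}$ considered in the paper, which the author has already reduced to the half-space case via a finite smooth cover and local charts.
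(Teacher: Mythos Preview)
Your proof is correct and matches the paper's approach exactly: the paper does not give an explicit proof but simply states that ``the boundedness of restriction and extension operator guarantees the following equivalence,'' which is precisely the two-line argument you wrote out.
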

	As a corollary, Sobolev space $H^s(\Omega)$ is invariant by multiplication with $C_b^\infty(\Omega)$ functions.
	\begin{corollary}\label{cor-sobo:prod-smooth-fct}
		Let $W\in C_b^\infty(\Omega)$. Then for all $s\in\R$, we have the following estimate,
		\begin{equation}\label{eq-sobo:prod-smooth-fct}
			\|Wu\|_{H^s(\Omega)} \le C \|W\|_{C^M(\Omega)} \|u\|_{H^s(\Omega)},
		\end{equation}
		where $C>0$ and $M\in\N$ are constants depending on $s$ and dimension $d$ and
		\begin{equation*}
			\|W\|_{C^M(\Omega)} = \sup_{|\alpha|\le M} \|\partial^\alpha W\|_{L^\infty(\Omega)}.
		\end{equation*}
	\end{corollary}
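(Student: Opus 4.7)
The plan is to treat three regimes of $s$ separately: non-negative integer, non-negative non-integer, and negative, corresponding to the three clauses in Definition \ref{def-sobo:basic}. Throughout, the strategy is to reduce every case to the elementary integer case, where Leibniz delivers the estimate directly, and then propagate it either by interpolation or by duality.

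First I would handle $s = n \in \mathbb{N}$. For $u \in H^n(\Omega)$ and $|\alpha| \le n$, Leibniz gives
\[
\partial^\alpha(Wu) = \sum_{\beta \le \alpha} \binom{\alpha}{\beta}\,\partial^\beta W \cdot \partial^{\alpha-\beta}u,
\]
and since each $\partial^\beta W$ is bounded, summing the $L^2(\Omega)$ norms and invoking the integer-order definition of $\|\cdot\|_{H^n(\Omega)}$ gives $\|Wu\|_{H^n(\Omega)} \le C\|W\|_{C^n(\Omega)}\|u\|_{H^n(\Omega)}$, i.e.\ $M = n$ in this regime. Next, for $s = n+\sigma$ with $\sigma \in (0,1)$, I would use Definition \ref{def-sobo:basic}: the map $u \mapsto Wu$ is bounded on $H^n(\Omega)$ and on $H^{n+1}(\Omega)$ with operator norms controlled by $\|W\|_{C^{n+1}(\Omega)}$, and the general interpolation theory of \cite{lions1960construction,lions1961proprietes} transfers this boundedness to the interpolation space $H^s(\Omega) = [H^n(\Omega),H^{n+1}(\Omega)]_\sigma$ with the same constant, so one may take $M = n+1$.

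For $s < 0$, I would proceed by duality. Writing $r = |s|$, every $u \in H^{s}(\Omega) = (H^r_0(\Omega))'$ satisfies
\[
\|Wu\|_{H^{s}(\Omega)} = \sup_{\|\phi\|_{H^r_0(\Omega)} \le 1} |\langle Wu,\phi\rangle| = \sup_{\|\phi\|_{H^r_0(\Omega)} \le 1} |\langle u,W\phi\rangle|,
\]
where the action of $W$ on distributions is defined in the natural way by this pairing. The key point is that $W\phi$ still belongs to $H^r_0(\Omega)$ with $\|W\phi\|_{H^r_0(\Omega)} \le C\|W\|_{C^M}\|\phi\|_{H^r_0(\Omega)}$: indeed, if $\phi_k \in C^\infty_c(\Omega)$ approximates $\phi$ in $H^r(\Omega)$, then $W\phi_k \in C^\infty_c(\Omega)$, and the positive-index estimate already proved applied to $W(\phi-\phi_k)$ shows that $W\phi_k \to W\phi$ in $H^r(\Omega)$, so $W\phi \in H^r_0(\Omega)$ with the claimed norm bound. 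Feeding this into the supremum above yields $\|Wu\|_{H^s(\Omega)} \le C\|W\|_{C^M(\Omega)}\|u\|_{H^s(\Omega)}$ with the same $M$ as for $r$.

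The main subtlety, and the only place that is not completely formal, is the duality step: one has to know that $W\phi$ belongs to $H^r_0(\Omega)$, not merely to $H^r(\Omega)$. The argument above uses the approximability by compactly supported smooth functions, which hinges crucially on the fact that multiplication by $W \in C_b^\infty$ preserves compact support and the positive-index estimate is already known. An alternative, essentially equivalent, route would avoid this point altogether by extending $u$ to $\tilde{u} \in H^s(\mathbb{R}^d)$ via Proposition \ref{prop-sobo:extension}, extending $W$ to some $\tilde W \in C_b^\infty(\mathbb{R}^d)$ with $\|\tilde W\|_{C^M(\mathbb{R}^d)} \lesssim \|W\|_{C^M(\Omega)}$ (using a Seeley/Stein-type extension), applying the multiplication estimate on $\mathbb{R}^d$ (established exactly as in the first two paragraphs, using Proposition \ref{prop-sobo:def-fourier} together with duality for negative $s$), and then restricting via Proposition \ref{prop-sobo:restri}. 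Either route yields the claim with $M \sim \lceil |s|\rceil +1$.
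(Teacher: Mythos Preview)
Your proof is correct. The paper does not give an explicit proof of this corollary; it simply presents it as an immediate consequence of the extension characterization in Proposition~\ref{prop-sobo:def-extension}, which is precisely your ``alternative route'' in the final paragraph: extend $u$ to $\tilde u\in H^s(\mathbb{R}^d)$, extend $W$ to $\tilde W\in C_b^\infty(\mathbb{R}^d)$, apply the full-space estimate, and restrict.

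Your main argument takes a different, more self-contained route: Leibniz for integer $s$, interpolation for non-integer $s\ge 0$, and duality for $s<0$, all carried out directly on $\Omega$. This has the minor advantage that you never need to extend $W$ itself to a $C_b^\infty$ function on $\mathbb{R}^d$ with controlled $C^M$ norm (a step the paper's implicit argument requires but does not spell out). On the other hand, the paper's route is shorter once the extension machinery of Propositions~\ref{prop-sobo:restri}--\ref{prop-sobo:def-extension} is in place, since the full-space case is immediate from the Fourier characterization. Your handling of the duality step---checking that multiplication by $W$ preserves $H^r_0(\Omega)$ via approximation by $C^\infty_c$ functions---is the one point that requires care, and you treat it correctly.
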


	\begin{proposition}\label{prop-sobo:chgt-of-var}
		Let $s\in\mathbb{R}$ and $\Omega,\Omega'\subset\mathbb{R}^d$ be two subdomains. $\chi:\Omega'\rightarrow\Omega$ is a smooth diffeomorphism such that $\chi$ and $\chi^{-1}$ are bounded as well as their derivatives. Then for all $u\in H^s(\Omega)$,
		\begin{equation*}
			\|u\|_{H^s(\Omega)} \sim \|u\circ\chi\|_{H^s(\Omega')}.
		\end{equation*}
	\end{proposition}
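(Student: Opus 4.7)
The plan is to prove the equivalence by reducing to one-sided bounds: by symmetry (swap the roles of $\chi$ and $\chi^{-1}$, $\Omega$ and $\Omega'$, using $u = (u\circ\chi)\circ\chi^{-1}$), it suffices to establish
\[
\|u\circ\chi\|_{H^s(\Omega')} \le C \|u\|_{H^s(\Omega)}
\]
for every $s\in\R$, with $C$ depending only on finitely many $L^\infty$-norms of the derivatives of $\chi$ and $\chi^{-1}$. I will then split into the three standard regimes: non-negative integer $s$, non-integer positive $s$ (via interpolation), and negative $s$ (via duality).

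First, for $s=k\in\N$, apply the Faà di Bruno formula to write $\partial^\alpha(u\circ\chi)$ as a finite sum of terms $((\partial^\beta u)\circ\chi)\cdot P_{\alpha,\beta}(\partial\chi,\dots,\partial^{|\alpha|}\chi)$ with $|\beta|\le|\alpha|$ and $P_{\alpha,\beta}$ polynomial. Since $\chi,\chi^{-1}\in C^\infty_b$, every $P_{\alpha,\beta}$ is uniformly bounded. A change of variables $y=\chi(x)$ with Jacobian $|\det D\chi^{-1}|\in L^\infty(\Omega)$ (which is bounded below away from $0$ because $\chi$ has bounded derivatives) then gives $\|(\partial^\beta u)\circ\chi\|_{L^2(\Omega')}\lesssim\|\partial^\beta u\|_{L^2(\Omega)}$, and summing over $|\alpha|\le k$ produces the desired estimate.

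Second, for $s=n+\sigma$ with $n\in\N$ and $\sigma\in]0,1[$, Definition~\ref{def-sobo:basic} gives $H^s(\Omega)=[H^n(\Omega),H^{n+1}(\Omega)]_\sigma$. The composition operator $u\mapsto u\circ\chi$ is bounded from $H^n(\Omega)$ to $H^n(\Omega')$ and from $H^{n+1}(\Omega)$ to $H^{n+1}(\Omega')$ by the previous step, so by the interpolation functor it is bounded from $H^s(\Omega)$ to $H^s(\Omega')$.

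Third, for $s<0$, I pass to duality. For $\phi\in C^\infty_c(\Omega')$, the change of variables $y=\chi(x)$ yields
\[
\langle u\circ\chi,\phi\rangle_{\Omega'} = \langle u,\psi\rangle_{\Omega},\qquad \psi := |\det D\chi^{-1}|\,(\phi\circ\chi^{-1}),
\]
and $\psi\in C^\infty_c(\Omega)$ because $\chi$ is a diffeomorphism of $\Omega'$ onto $\Omega$. Applying Steps 1--2 to $\chi^{-1}$ gives $\|\phi\circ\chi^{-1}\|_{H^{|s|}(\Omega)}\lesssim\|\phi\|_{H^{|s|}(\Omega')}$, and Corollary~\ref{cor-sobo:prod-smooth-fct} handles the multiplication by $|\det D\chi^{-1}|\in C^\infty_b(\Omega)$. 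Hence $\|\psi\|_{H^{|s|}(\Omega)}\lesssim\|\phi\|_{H^{|s|}(\Omega')}$, and the duality definition $H^s(\Omega')=(H^{|s|}_0(\Omega'))'$ gives $\|u\circ\chi\|_{H^s(\Omega')}\lesssim\|u\|_{H^s(\Omega)}$, provided we check $\psi\in H^{|s|}_0(\Omega)$, which is immediate from $\psi\in C^\infty_c(\Omega)$.

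No step presents a genuine obstacle; the only subtlety worth flagging is the duality argument for $s<0$, where one must verify that compactly supported test functions on $\Omega'$ are pushed forward under $\chi^{-1}$ (with the Jacobian factor) to compactly supported test functions on $\Omega$, and that the Jacobian bounds are uniform so that the resulting constants depend only on $\chi$ and $\chi^{-1}$ through their $C^\infty_b$-norms.
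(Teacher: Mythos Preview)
Your proof is correct and follows the standard route: integer $s$ via the chain rule and change of variables, non-integer positive $s$ via the interpolation definition in Definition~\ref{def-sobo:basic}, and negative $s$ via duality and the pushforward of test functions. The paper does not actually supply a proof of this proposition; it simply refers to Section~12.9 of Lions--Magenes and Chapter~3 of Adams, whose arguments are essentially the ones you have reproduced.

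One small remark on presentation: in the $L^2$ change-of-variables step you write that $|\det D\chi^{-1}|$ is ``bounded below away from $0$ because $\chi$ has bounded derivatives''. What you actually need for the direction $\|u\circ\chi\|_{L^2(\Omega')}\lesssim\|u\|_{L^2(\Omega)}$ is that $|\det D\chi^{-1}|$ is bounded \emph{above}, which follows from $\chi^{-1}\in C^\infty_b$; the lower bound (from $\chi\in C^\infty_b$) is what you use for the reverse inequality. This is purely cosmetic and does not affect the validity of the argument.
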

	We refer to Section 12.9 of \cite{lions2012non} for a proof of this invariance by change of coordinate (see also Chapter 3 of \cite{adams2003sobolev}). More precisely, we have the following refined version.
	\begin{proposition}\label{prop-sobo:chgt-of-var-refine}
		Let $s\in\mathbb{R}$ with $s>\frac{d}{2}+1$. $\chi:\R^d\rightarrow\R^d$ is a diffeomorphism such that $(\chi - id)$ or $(\chi^{-1}-id)\in H^{s}$. Then for all $u\in H^{s_0}(\R^d)$, $0\le s_0 \le s$,
		\begin{equation*}
			\|u\circ\chi\|_{H^{s_0}(\R^d)} \sim \|u\|_{H^{s_0}(\R^d)}.
		\end{equation*}
	\end{proposition}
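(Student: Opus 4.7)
The plan is to establish both bounds by symmetry, exploiting that the hypothesis $s>d/2+1$ places $\chi-\mathrm{id}$ in a H\"older space via Sobolev embedding. First, I would record two preliminary observations: (i) by the embedding $H^s(\R^d)\hookrightarrow C^{1,\alpha}_b(\R^d)$ with $\alpha=s-d/2-1>0$, the Jacobian $J\chi$ is continuous, bounded, and globally close to the identity at infinity, so $\det J\chi$ is bounded above and bounded away from zero; (ii) a contraction-mapping argument applied to $y\mapsto y+(y-\chi(y))$ (or its analogue near points of interest), together with $\chi-\mathrm{id}\in H^s$, gives that $\chi^{-1}-\mathrm{id}\in H^s$ as well, with norm controlled by $\|\chi-\mathrm{id}\|_{H^s}$. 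This reduces the problem to proving only $\|u\circ\chi\|_{H^{s_0}}\lesssim\|u\|_{H^{s_0}}$; the reverse inequality then follows by applying the same estimate to $v=u\circ\chi$ composed with $\chi^{-1}$.

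Second, I would prove the $L^2$ case ($s_0=0$) by the classical change-of-variables formula, using the lower bound on $|\det J\chi|$ from step (i). For integer $s_0=k$ with $1\le k\le s$, I would apply the Fa\`a di Bruno formula to write $\partial^\alpha(u\circ\chi)$ as a sum of terms of the form
\[
((\partial^\beta u)\circ\chi)\,\prod_{j}\partial^{\gamma_j}\chi,\qquad |\beta|\le k,\quad \sum_j|\gamma_j|=k,
\]
and estimate the product via tame multiplicative inequalities. The factor $(\partial^\beta u)\circ\chi$ is handled in $L^2$ by the $L^2$ bound of step one, and each derivative $\partial^{\gamma_j}\chi$ with $|\gamma_j|\le k\le s$ lies in a Sobolev space $H^{s-|\gamma_j|}$; the condition $s>d/2+1$ ensures that all but at most one factor lies in $L^\infty$, and standard Sobolev product estimates (of Moser/Kato--Ponce type) close the bound. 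For non-integer $s_0$, I would conclude by complex interpolation of the linear map $u\mapsto u\circ\chi$ between the endpoint integer cases, using the interpolation definition of $H^{s_0}$ from Definition \ref{def-sobo:basic}.

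The main obstacle will be the tame product estimate in the high-regularity regime $s_0$ close to $s$, where some multi-indices $\gamma_j$ may exceed the Sobolev-embedding threshold and the corresponding $\partial^{\gamma_j}\chi$ cannot be placed in $L^\infty$. Here I would carefully distribute Lebesgue exponents via Gagliardo--Nirenberg-type inequalities, relying on the fact that in any such product at most one factor carries top regularity, and on the slack afforded by $s>d/2+1$, to place that factor in $L^2$ while the remaining ones sit in $L^\infty$ or appropriate $L^p$. Once this multilinear estimate is in place, the rest (change of variables, interpolation, inversion of $\chi$) is essentially bookkeeping.
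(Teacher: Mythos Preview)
The paper does not supply its own proof here; it simply defers to Theorem~1.1 of \cite{inci2013regularity} (and \cite{bruverus2017completeness}), remarking that a local version also follows from the paracomposition machinery of Appendix~\ref{subsect:compo}. Your sketch is essentially the route taken in those references: change of variables for $L^2$, Fa\`a di Bruno plus Moser-type product estimates for integer orders, and interpolation in between.

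There is, however, a gap in your interpolation step. When $s$ is not an integer, your argument establishes boundedness of $u\mapsto u\circ\chi$ on $H^k$ only for integers $0\le k\le\lfloor s\rfloor$, and interpolation among these covers only $s_0\in[0,\lfloor s\rfloor]$. For $s_0\in(\lfloor s\rfloor,s]$ you cannot interpolate against the next integer $\lceil s\rceil$, since $\partial^\alpha(\chi-\mathrm{id})$ with $|\alpha|=\lceil s\rceil>s$ need not even lie in $L^2$, so the Fa\`a di Bruno bookkeeping breaks down there. In \cite{inci2013regularity} this is handled by a direct estimate at the endpoint $s_0=s$, treating the fractional part via the Sobolev--Slobodeckij seminorm; once that top endpoint is secured, interpolation between $0$ and $s$ fills the full range. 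A secondary remark: your contraction-mapping justification for $\chi^{-1}-\mathrm{id}\in H^s$ tacitly assumes a global smallness of $D\chi-I$ that is not part of the hypothesis. The cleaner argument uses the identity $\chi^{-1}-\mathrm{id}=-(\chi-\mathrm{id})\circ\chi^{-1}$ together with the composition bound applied to $\chi^{-1}$, whose Jacobian bounds follow algebraically from those of $\chi$ (here the assumption that $\chi$ is already a diffeomorphism is what replaces smallness).
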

	We refer to Theorem 1.1 of \cite{inci2013regularity} for the proof of this proposition (see also \cite{bruverus2017completeness}). Local result can also be obtained via paracomposition (see Proposition \ref{prop-para:paracomp-exist}).

	\section{Proof of elliptic regularity}\label{App:ellip}
	
	This appendix serves as a proof of Proposition \ref{prop-pre:ellip-reg} and Lemma \ref{lem-paralin:reg-of-pot}. The former one is no more than a special case of the following proposition,
	\begin{proposition}\label{prop-ellip:ellip-alt-reg}
		Let $\eta\in H^{s+\frac{1}{2}-}_R(\T\times\R)$ and $F\in H^{s_0-\frac{3}{2}}(\D\times\R)$ with s>$\frac{3}{2}$ and $\frac{1}{2}\le s_0 \le s$. Then the equation
		\begin{equation}\label{eq-ellip:ellip-alt}
			\left\{\begin{array}{l}
				\Delta_g\varphi = F \in H^{s_0-\frac{3}{2}}, \\[0.5ex]
				\varphi|_{\rho=1} = 0,
			\end{array}\right.
		\end{equation}
		admits a unique solution $\varphi$ in $H^{s_0+\frac{1}{2}}(\D\times\R)$ with
		\begin{equation}\label{eq-ellip:ellip-alt-reg}
			\|\varphi\|_{H^{s_0+\frac{1}{2}}(\D\times\R)} \le C\left(\|\eta\|_{H_R^{s+\frac{1}{2}-}(\T\times\R)}\right) \|F\|_{H^{s_0-\frac{3}{2}}(\D\times\R)}.
		\end{equation}
		where $C>0$ is an increasing smooth function.
	\end{proposition}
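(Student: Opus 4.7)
The plan is to combine the weak-solution theory from Lax--Milgram with a regularity bootstrap on $s_0$. For the base case $s_0 = 1/2$, the hypothesis $F \in H^{-1}(\mathbb{D}\times\mathbb{R})$ and the uniform ellipticity of $\Delta_g$ noted after \eqref{eq-pre:def-lap-g}, combined with the lower bound \eqref{eq-pre:jacobian-det-low-bd} on the Jacobian, allow the classical Lions--Lax--Milgram argument already carried out in the main text for the $F = 0$ case to yield a unique weak solution $\varphi \in H^1_0(\mathbb{D}\times\mathbb{R})$ with $\|\varphi\|_{H^1} \leq C(\|\eta\|_{H^{s+1/2-}_R})\|F\|_{H^{-1}}$. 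Uniqueness for larger $s_0$ follows immediately, since any $H^{s_0+1/2}$ solution with $s_0 \ge 1/2$ lies in $H^1_0$.

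For the regularity estimate I would argue by induction on $s_0$, raising the regularity by increments strictly less than one, so as to stay below the regularity threshold of the coefficients. Introduce a partition of unity on $\mathbb{D}\times\mathbb{R}$, translation-invariant in $z$ and finite in $\rho$, that separates an interior region $\{\rho < 1-\delta\}$ from a boundary strip $\{1-2\delta < \rho \le 1\}$. On any interior piece, the localized function $\chi\varphi$ solves $\Delta_g(\chi\varphi) = \chi F + [\Delta_g, \chi]\varphi$, and the commutator is a first-order operator with smooth, compactly supported coefficients, entirely handled by the induction hypothesis. Standard interior regularity for uniformly elliptic operators with Sobolev coefficients then gives the desired two-derivative gain $H^{s_0 - 3/2} \to H^{s_0 + 1/2}$: indeed $g^{\alpha\beta}, \sqrt{g} \in H^{s-1/2-}(\mathbb{D}\times\mathbb{R})$ by Proposition \ref{prop-pre:chgt-of-var} and the product laws of Appendix \ref{App:para}, and since $s - 1/2 > 1$ all the necessary bilinear Sobolev estimates close.

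On the boundary strip, the boundary $\{\rho = 1\}$ being already flat in these coordinates, I would apply the tangential Fourier multiplier $\Lambda^\sigma := \langle D_{\theta,z}\rangle^\sigma$ with $\sigma = s_0 - 1/2$ to the equation to obtain
\begin{equation*}
\Delta_g(\Lambda^\sigma\varphi) = \Lambda^\sigma F + [\Delta_g, \Lambda^\sigma]\varphi,
\end{equation*}
still with homogeneous Dirichlet data at $\rho = 1$. The key point is that the commutator $[\Lambda^\sigma, g^{\alpha\beta}\partial_\alpha\partial_\beta]$ loses one fewer tangential derivative than $\Lambda^\sigma \Delta_g$ itself, so its contribution to the right-hand side sits in $H^{-1}(\mathbb{D}\times\mathbb{R})$ with a tame constant, and the base-case Lax--Milgram estimate applied to $\Lambda^\sigma\varphi$ gives tangential $H^{s_0+1/2}$ control of $\varphi$. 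Normal regularity is then recovered by the classical trick of reading off $\partial_\rho^2\varphi$ from the equation: since $g^{\rho\rho}$ is bounded below by uniform ellipticity, solving $g^{\rho\rho}\partial_\rho^2\varphi = \sqrt{g}\,F + (\text{tangential and mixed terms})$ upgrades $\partial_\rho^2\varphi$ to $H^{s_0 - 3/2}$ and completes the induction step.

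The main obstacle I anticipate is the sharpness of the commutator bound for $[\Lambda^\sigma, g^{\alpha\beta}]$ when $s_0$ approaches $s$, where the coefficients are only marginally more regular than the quantity being estimated. This is precisely why the estimate is stated with $\|\eta\|_{H^{s+1/2-}_R}$ rather than $\|\eta\|_{H^{s+1/2}_R}$: the open-ended regularity of $\eta$ combined with the product and commutator estimates from Appendix \ref{App:para} supplies the extra fraction of a derivative needed to absorb the loss at each induction step, so that iterating from $s_0 = 1/2$ up to $s_0 = s$ in small increments yields \eqref{eq-ellip:ellip-alt-reg}.
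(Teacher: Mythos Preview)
Your proposal follows the same scheme as the paper's proof: Lax--Milgram for the base case $s_0=\tfrac12$, localization into interior and boundary pieces, a bootstrap via (tangential) Fourier multipliers commuted through the equation, and recovery of normal regularity by solving the equation for $\partial_\rho^2\varphi$. The only structural difference is in how the bootstrap is organized: the paper advances in \emph{small} increments $\delta$, applies $\Lambda^\delta$ (or the tangential $\Lambda_0^\delta$), and invokes the induction hypothesis at level $s_1$ to solve the resulting elliptic problem for $\Lambda^\delta(\chi\varphi)$; you instead apply the full $\Lambda^{s_0-1/2}$ in one shot and call only on the base-case $H^1$ estimate, relying on the induction hypothesis solely to place the commutator in $H^{-1}$. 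Both close, but your version requires a commutator bound for large $\sigma$ rather than the paper's small-$\delta$ estimate (Corollary~\ref{cor-para:paraprod-commu-bound}), and you should make explicit that the commutator control uses the previously obtained regularity of $\varphi$, since from the base case alone $[\Delta_g,\Lambda^\sigma]\varphi$ does not land in $H^{-1}$ once $\sigma>1$. One minor slip: the coefficients $g^{\alpha\beta},\sqrt g$ lie in $H^{s-}$ (up to $C^\infty_b$ normalization), not $H^{s-1/2-}$, since $\zeta\in H^{s+1-}$ by Proposition~\ref{prop-pre:chgt-of-var}.
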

	
	The idea of the proof of Proposition \ref{prop-ellip:ellip-alt-reg} comes from \cite{lannes2013water}, Chapter 2, where the author focus on the water-wave with a bottom, the regularity of which is the same as interface. In this case, the boundary condition at bottom should also be considered, which does not exist in our problem. For axis-symmetric jets, one may find a similar proof in \cite{huang2023wellposedness}.
	
	Note that the $s_0=\frac{1}{2}$ case can be proved by classical arguments via Lions–Lax–Milgram theorem (see Section \ref{subsect:ellip-reg}), which gives a unique solution. Therefore, for general case $s_0>\frac{1}{2}$, it remains to show that this solution is regular in the sense of \eqref{eq-ellip:ellip-alt-reg}. We shall apply an iteration in $s_0$. More precisely, we claim that
	\begin{equation}\label{eq-ellip:claim-iter}
		\text{If \eqref{eq-ellip:ellip-alt-reg} is true for all $s_0 \in\left[\frac{1}{2},s_1\right]$ with $s_1\le s-\delta$, it also holds for $s_0=s_1+\delta$.}
	\end{equation}
	Here $\delta>0$ is a small constant depending only on $s$, the choice of which will be precised during the proof. From now on, we assume the assumption in \eqref{eq-ellip:claim-iter} to be correct, and turn to prove \eqref{eq-ellip:ellip-alt-reg} with $s_0=s_1+\delta$.
	
	To do so, we first show that it is possible to add a cut-off before $\varphi$ via a commutator estimate, which allows us to study the equation \eqref{eq-ellip:ellip-alt} in interior part $\{\rho<\frac{1}{2}\}$ and boundary part $\{\frac{1}{2}<\rho<1\}$. 
	
	\subsection{Localization}\label{subsect:ellip-localization}
	Let $\chi\in C_b^\infty(\R^3)$ be any smooth truncation. Then \eqref{eq-ellip:ellip-alt} can be localized as
	\begin{equation}\label{eq-ellip:ellip-alt-loc}
		\left\{\begin{array}{l}
			\Delta_g(\chi\varphi) = F_1  :=\chi F + [\Delta_g,\chi]\varphi, \\[0.5ex]
			(\chi\varphi)|_{\rho=1} = 0.
		\end{array}\right.
	\end{equation}
	\begin{lemma}\label{lem-ellip:commu-loc}
		If $\chi\in C_b^\infty(\R^3)$ and $\eta\in H_R^{s+\frac{1}{2}-}(\T\times\R)$ with $s>\frac{3}{2}$, for all  $0\le \sigma\le s$, we have
		\begin{equation}\label{eq-ellip:commu-loc}
			\|[\Delta_g,\chi]w\|_{H^{\sigma-1}(\D\times\R)} \leqslant C\left(\|\eta\|_{H_R^{s+\frac{1}{2}-}(\T\times\R)}\right) \|w\|_{H^{\sigma}(\D\times\R)},\ \ \forall w\in H^{\sigma}(\D\times\R).
		\end{equation}
	\end{lemma}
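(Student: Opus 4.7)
My plan is to prove the lemma by a direct computation of the commutator followed by standard Sobolev product estimates, exploiting the regularity of the metric coefficients established in Section \ref{Sect:pre}.

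\medskip

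First, I would compute $[\Delta_g,\chi]$ explicitly. Using the divergence form \eqref{eq-pre:def-lap-g} of $\Delta_g$ and Leibniz's rule, a direct calculation gives
\begin{equation*}
    [\Delta_g,\chi]w = (\Delta_g\chi)\,w + 2g^{\alpha\beta}(\partial_\alpha\chi)(\partial_\beta w),
\end{equation*}
so that $[\Delta_g,\chi]$ is a first-order differential operator in $w$ whose coefficients are built from $\chi,\nabla\chi,\nabla^2\chi\in C_b^\infty(\R^3)$ and from the metric entries $g^{\alpha\beta}$ and $\sqrt{g}$.

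Next, I would quantify the regularity of these coefficients in terms of $\eta$. By Proposition \ref{prop-pre:chgt-of-var}, the extension $\zeta$ lies in $H^{s+1-}_{R_\epsilon}(\D\times\R)$, and the uniform positivity \eqref{eq-pre:jacobian-det-low-bd} of $\sqrt{g}$ together with Moser-type estimates (Proposition \ref{prop-para:paralin}) guarantees that each entry $g^{\alpha\beta}$, together with $1/\sqrt{g}$, belongs to $H^{s-}(\D\times\R)$ up to $C_b^\infty$ normalizations, with norms bounded by $C\left(\|\eta\|_{H^{s+1/2-}_R}\right)$. Differentiating once more one gets $\Delta_g\chi \in H^{s-1-}(\D\times\R)$ with an analogous bound.

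Finally, I would conclude by Sobolev product estimates. Since $s->3/2=\dim(\D\times\R)/2$, the space $H^{s-}$ embeds into $L^\infty\cap H^{s-}$ and satisfies the product law $\|fg\|_{H^\tau}\lesssim \|f\|_{H^{s-}}\|g\|_{H^\tau}$ for every $|\tau|\le s-$. Applying it with $\tau=\sigma-1\in[-1,s-1]$ to $f=g^{\alpha\beta}(\partial_\alpha\chi)$ and $g=\partial_\beta w\in H^{\sigma-1}$ bounds the first-order piece, while the zeroth-order piece $(\Delta_g\chi)w$ is treated by the same estimate (possibly re-expanded as a sum of products of an $H^{s-}$ and an $H^{s-1-}$ factor acting on $w$). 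In both cases one obtains the desired $H^{\sigma-1}$ bound with constant $C\left(\|\eta\|_{H^{s+1/2-}_R}\right)$.

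\medskip

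The proof is essentially routine; the only delicate point is the product law for $\tau=\sigma-1$ near the two endpoints $\sigma=0$ and $\sigma=s$, where one factor has negative regularity or the exponents are sharp. In those cases the standard argument is Bony's paraproduct decomposition (reviewed in Appendix \ref{App:para}): the paraproduct $T_fg$ contributes $\|f\|_{L^\infty}\|g\|_{H^\tau}$, the symmetric paraproduct $T_gf$ loses a controlled amount of regularity compensated by $L^\infty$ of $g$, and the resonant term is handled by the condition $s->3/2$. The small $-$ in $H^{s+1/2-}_R$ provides exactly the room needed to absorb the borderline loss.
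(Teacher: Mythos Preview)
Your proof is correct and follows essentially the same approach as the paper's: compute the commutator explicitly and apply the Sobolev product law (Corollary~\ref{cor-para:product-law}), using that the metric coefficients $g^{\alpha\beta}$, $\sqrt{g}$, $1/\sqrt{g}$ lie in $H^{s-}(\D\times\R)$ up to $C_b^\infty$ normalizations. The only cosmetic difference is that the paper keeps one piece in divergence form,
\[
[\Delta_g,\chi]w = (\partial_\alpha\chi)\, g^{\alpha\beta}\,\partial_\beta w \;+\; \tfrac{1}{\sqrt{g}}\partial_\alpha\!\bigl(g^{\alpha\beta}\sqrt{g}\,(\partial_\beta\chi)\,w\bigr),
\]
rather than expanding it into $(\Delta_g\chi)w + 2g^{\alpha\beta}(\partial_\alpha\chi)(\partial_\beta w)$ as you do; this spares the separate verification that $\Delta_g\chi\in H^{s-1-}$, but the two forms are equivalent and lead to the same bound.
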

	\begin{proof}
		A simple calculus gives that
		\begin{equation*}
			[\Delta_g,\chi]w = \partial_\alpha\chi g^{\alpha\beta} \partial_{\beta}w + \frac{1}{\sqrt{g}}\partial_\alpha\left( g^{\alpha\beta}\sqrt{g} \partial_\beta\chi w \right).
		\end{equation*}
		Recall that, from the construction of $(g^{\alpha\beta}) = J^{-1}J^{-T}$ and $g$ by \eqref{eq-pre:jacobian-mat-inverse} and \eqref{eq-pre:jacobian-det}, respectively, we have $g^{\alpha\beta},g\in H^{s-}(\D\times\R)$ up to $C_b^\infty(\D\times\R)$ normalizations, provided that $\eta\in H_R^{s+\frac{1}{2}-}(\T\times\R)$ (see Proposition \ref{prop-pre:chgt-of-var}). As a result, $\partial_\alpha\chi g^{\alpha\beta}, \frac{1}{\sqrt{g}}, g^{\alpha\beta}\sqrt{g} \partial_\beta\chi$ belong to $H^{s-}(\D\times\R)$ up to $C_b^\infty(\D\times\R)$ normalizations, while $w\in H^{\sigma}(\D\times\R)$ and $w\in H^{\sigma}(\D\times\R)$. Since $s>3/2$ and $\sigma\le s$, by Corollary \ref{cor-sobo:prod-smooth-fct} and \ref{cor-para:product-law}, each term on the right hand side is linear in $w$ and belongs to $H^{\sigma-1}(\D\times\R)$. Note that the estimate for product, Corollary \ref{cor-para:product-law} is stated for Euclidean spaces, which also holds for domains such as $\D\times\R$ via Proposition \ref{prop-sobo:def-extension}.
	\end{proof}
	As a result, under the hypotheses of Proposition \ref{prop-ellip:ellip-alt-reg}, the new source term $F_1$ verifies
	\begin{equation}\label{eq-ellip:commu-source}
		\begin{aligned}
			\|F_1\|_{H^{s_0-\frac{3}{2}}(\D\times\R)} \lesssim& \|F\|_{H^{s_0-\frac{3}{2}}(\D\times\R)} + \|\varphi\|_{H^{s_0-\frac{1}{2}}(\D\times\R)} \\
			\lesssim& \|F\|_{H^{s_0-\frac{3}{2}}(\D\times\R)} + \|\varphi\|_{H^{s_1}(\D\times\R)} \le C\left(\|\eta\|_{H_R^{s+\frac{1}{2}-}(\T\times\R)}\right) \|F\|_{H^{s_0-\frac{3}{2}}(\T\times\R)},
		\end{aligned}
	\end{equation}
	once $\delta>0$ is chosen to be strictly smaller than $1/2$, i.e. $s_0-1/2 = s_1+\delta-1/2 \le s_1$.
	
	In the following, we shall take $\chi$ to be smooth truncations near $\{\rho=0\}$ and $\{\rho=1\}$, respectively, and show \eqref{eq-ellip:ellip-alt-reg} with $s_0=s_1+\delta$ and $\varphi$ replaced by $\chi\varphi$.

	\subsection{Interior regularity}\label{subsect:ellip-int-reg}
	Let us fix $\chi\in C_b^\infty(\R^3)$ a smooth truncation near $\{\rho<1/2\}$. The goal of this paragraph is to prove the regularity of $\chi\varphi$ and deduce \eqref{eq-ellip:ellip-alt-reg} with $s_0=s_1+\delta \in ]1/2,s]$ and $\varphi$ replaced by $\chi\varphi$. With such localization, it is harmless to extend the boundary value problem \eqref{eq-ellip:ellip-alt-loc} to $\R^3$ by zero extension away from $\D\times\R$, since $\chi\varphi$ vanishes in a neighborhood of $\{\rho<1/2\}$.
	
	Let $\Lambda = \langle D_{y,z} \rangle$ be the Fourier multiplier of symbol $\langle \tilde{\xi} \rangle$, where $\tilde{\xi}\in\R^3$ is the Fourier variable associated to $(y,z)\in\R^3$. Now, we apply $\Lambda^\delta$ to both sides of \eqref{eq-ellip:ellip-alt-loc} and obtain
	\begin{equation}\label{eq-ellip:ellip-alt-loc-int}
		\left\{\begin{array}{l}
			\Delta_g \Lambda^\delta(\chi\varphi) = F_2  := \Lambda^\delta F_1 - [\Lambda^\delta,\Delta_g](\chi\varphi), \\[0.5ex]
			\Lambda^\delta(\chi\varphi)|_{\rho=1} = 0.
		\end{array}\right.
	\end{equation}
	\begin{lemma}\label{lem-ellip:commu-reg-int}
		Let $\eta\in H_R^{s+\frac{1}{2}-}(\T\times\R)$ and $\varphi\in H^{s_1+\frac{1}{2}}(\D\times\R)$ with $\frac{1}{2}\le s_1 \le s-\delta$ and $s>\frac{3}{2}$. Then, when $\delta>0$ is small enough, we have
		\begin{equation}\label{eq-ellip:commu-reg-int}
			\|[\Lambda^\delta,\Delta_g](\chi\varphi)\|_{H^{s_1-\frac{3}{2}}(\R^3)} \le C\left(\|\eta\|_{H_R^{s+\frac{1}{2}-}(\T\times\R)}\right) \|\varphi\|_{H^{s_1+\frac{1}{2}}(\D\times\R)}.
		\end{equation}
		Note that $\chi\in C^\infty_c(\R^3)$ is supported near $\{\rho<1/2\}$ and thus $\chi\varphi$ can be extended to $\R^3$ by taking zero value outside $\D\times\R$.
	\end{lemma}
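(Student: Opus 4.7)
The plan is to expand $\Delta_g$ into its principal and subprincipal parts, then reduce each commutator to terms controlled by standard paradifferential calculus via Bony's paraproduct decomposition. Writing $\Delta_g u = g^{\alpha\beta}\partial_\alpha\partial_\beta u + c^\beta \partial_\beta u$ with $c^\beta := \frac{1}{\sqrt{g}}\partial_\alpha(\sqrt{g}g^{\alpha\beta})$, Proposition \ref{prop-pre:chgt-of-var} together with the chain rule give $g^{\alpha\beta} \in H^{s-}(\D\times\R)$ and $c^\beta \in H^{(s-1)-}(\D\times\R)$, up to smooth normalizations. Since $\chi$ is a smooth truncation supported in $\{\rho < 1-\kappa\}$ for some $\kappa>0$, I will extend the coefficients to $\R^3$ via Proposition \ref{prop-sobo:extension} without changing their Sobolev regularity; the extensions agree with the originals on a neighborhood of the support of $\chi\varphi$, which after zero extension is the only region that matters. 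The commutator then splits on $\R^3$ as
\begin{equation*}
[\Lambda^\delta, \Delta_g](\chi\varphi) = [\Lambda^\delta, g^{\alpha\beta}]\partial_\alpha\partial_\beta(\chi\varphi) + [\Lambda^\delta, c^\beta]\partial_\beta(\chi\varphi).
\end{equation*}

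For each piece $[\Lambda^\delta, h]w$ I will use the decomposition $hw = T_h w + T_w h + R(h,w)$, which rewrites
\begin{equation*}
[\Lambda^\delta, h]w = [\Lambda^\delta, T_h]w + \Lambda^\delta T_w h + \Lambda^\delta R(h,w) - T_{\Lambda^\delta w} h - R(h, \Lambda^\delta w).
\end{equation*}
The first term is handled by the symbolic calculus of Proposition \ref{prop-para:paradiff-cal-sym}: Sobolev embedding promotes $h \in H^{s-}$ to $h \in C^{(s-3/2)-}$, so $[\Lambda^\delta, T_h]$ has order $\delta - (s-3/2)+$, which is strictly negative as soon as $\delta < s-3/2$. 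The remaining paraproducts and remainders are estimated term-by-term by Proposition \ref{prop-para:paraprod-bound}: each loss in Sobolev exponent produced by pairing a rough function against a distribution is compensated by the gap $s-3/2-\delta$, and the two ``symmetric'' pieces $\Lambda^\delta R(h,w) - R(h, \Lambda^\delta w)$ combine into a commutator with $\Lambda^\delta$ acting on a bilinear form that is of strictly negative order under the same condition on $\delta$. The first-order commutator involving $c^\beta \in H^{(s-1)-}$ and $w = \partial_\beta(\chi\varphi) \in H^{s_1-1/2}$ is treated by the same scheme, with the loss of one derivative in $c^\beta$ exactly balanced by the gain of one derivative in $w$.

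The main obstacle will be the low-regularity endpoint where both $s$ is close to $3/2$ and $s_1$ is close to $1/2$; then $w = \partial_\alpha\partial_\beta(\chi\varphi) \in H^{s_1-3/2}$ is a genuine distribution of negative order and the remainder $R(g^{\alpha\beta}, w)$ sits at the boundary of validity of the Bony estimate, which requires the sum of Sobolev exponents of its arguments to strictly exceed $3/2$. I plan to circumvent this by moving one derivative off $\chi\varphi$ onto $g^{\alpha\beta}$ via the Leibniz identity $g^{\alpha\beta}\partial_\alpha\partial_\beta f = \partial_\alpha(g^{\alpha\beta}\partial_\beta f) - (\partial_\alpha g^{\alpha\beta})\partial_\beta f$, which trades a borderline remainder for two well-defined paraproducts in the regime $H^{(s-1)-}\times H^{s_1-1/2}$ already treated for the first-order commutator. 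The smallness of $\delta$ is used throughout only to guarantee $(s-3/2)-\delta > 0$, which is the unique quantitative constraint on $\delta$ arising from the proof.
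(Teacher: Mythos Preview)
Your proposal is correct and close in spirit to the paper's argument, with two packaging differences. First, the paper works in divergence form from the outset, writing
\[
[\Lambda^\delta,\Delta_g](\chi\varphi)=\Bigl[\Lambda^\delta,\tfrac{1}{\sqrt{g}}\Bigr]\partial_\alpha\bigl(\sqrt{g}\,g^{\alpha\beta}\partial_\beta(\chi\varphi)\bigr)+\tfrac{1}{\sqrt{g}}\,\partial_\alpha\Bigl(\bigl[\Lambda^\delta,\sqrt{g}\,g^{\alpha\beta}\bigr]\partial_\beta(\chi\varphi)\Bigr),
\]
which is exactly the Leibniz rearrangement you propose as a ``fix'', and which keeps every commutator in the form $[\Lambda^\delta,h]$ with $h\in H^{s-}$ (no lower-order coefficient $c^\beta\in H^{(s-1)-}$ ever appears). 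Second, rather than inlining the five-term paraproduct splitting of $[\Lambda^\delta,h]$, the paper simply cites Corollary~\ref{cor-para:paraprod-commu-bound}, whose proof is precisely your decomposition.

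One correction: the endpoint obstruction you anticipate does not materialize. The Bony remainder estimate \eqref{eq-para:paraprod-bound-sobo-remainder} requires only $s+s'>0$, not $s+s'>3/2$; with $h=g^{\alpha\beta}\in H^{s-}$ and $w=\partial_\alpha\partial_\beta(\chi\varphi)\in H^{s_1-3/2}$ the sum $(s-)+(s_1-3/2)$ exceeds $1/2$ whenever $s>3/2$ and $s_1\ge 1/2$, so $R(h,w)\in H^{s+s_1-3-}$ is well defined and already lands in $H^{s_1-3/2}$ under your constraint $s-3/2>\delta$. The Leibniz rewrite is therefore unnecessary to close the non-divergence version, though it is harmless and recovers the paper's formulation.
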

	
	Once this lemma holds true, by using the assumption of iteration (\eqref{eq-ellip:ellip-alt-reg} holds for $s_0\in[1/2,s_1]$ and $F\in H^{s_0-\frac{3}{2}}(\D\times\R)$ with $s_0=s_1+\delta$), we have
	\begin{align*}
		\|F_2\|_{H^{s_1-\frac{3}{2}}(\D\times\R)} \le& \|\Lambda^\delta F_1\|_{H^{s_1-\frac{3}{2}}(\D\times\R)} + \|[\Lambda^\delta,\Delta_g](\chi\varphi)\|_{H^{s_1-\frac{3}{2}}(\D\times\R)} \\
		\lesssim& \|F_1\|_{H^{s_0-\frac{3}{2}}(\D\times\R)} + C\left(\|\eta\|_{H_R^{s+\frac{1}{2}-}(\T\times\R)}\right) \|\varphi\|_{H^{s_1+\frac{1}{2}}(\D\times\R)} \\
		\lesssim& C\left(\|\eta\|_{H_R^{s+\frac{1}{2}-}(\T\times\R)}\right) \|F\|_{H^{s_0-\frac{3}{2}}(\D\times\R)},
	\end{align*}
	where the last inequality can be seen from estimate \eqref{eq-ellip:commu-source} as well as \eqref{eq-ellip:ellip-alt-reg} with $s_0$ replaced by $s_1$. Then, for equation \eqref{eq-ellip:ellip-alt-loc-int}, the assumption of iteration ensures that $\Lambda^\delta(\chi\varphi)\in H^{s_1+\frac{3}{2}}(\D\times\R)$. Furthermore, the desires result $\chi\varphi\in H^{s_0+\frac{3}{2}}(\D\times\R)$ follows from the elliptic regularity of $\Lambda^\delta$ in $\R^3$ and Proposition \ref{prop-sobo:def-extension}.
	\begin{proof}[Proof of Lemma \ref{lem-ellip:commu-reg-int}]
		By definition, $\Lambda^\delta$ is commutable with derivatives, then
		\begin{equation*}
			[\Lambda^\delta,\Delta_g](\chi\varphi) = \left[\Lambda^\delta,\frac{1}{\sqrt{g}}\right]\partial_\alpha\left(g^{\alpha\beta}\sqrt{g}\partial_\beta(\chi\varphi)\right) + \frac{1}{\sqrt{g}}\partial_\alpha\left(\left[\Lambda^\delta,g^{\alpha\beta}\sqrt{g}\right]\partial_\beta(\chi\varphi)\right).
		\end{equation*}
		From $\varphi\in H^{s_1+\frac{1}{2}}(\D\times\R)$, where $s_1 \le s-\delta$, and $g^{\alpha\beta},\sqrt{g} \in H^{s-}(\D\times\R)$ up to $C^\infty_b$ normalization, it is easy to see that $\partial_\alpha\left(g^{\alpha\beta}\sqrt{g}\partial_\beta(\chi\varphi)\right) \in H^{s_1-\frac{3}{2}}(\R^3)$ with $s_1-3/2 \le (s-)-\delta$ due to Corollary \ref{cor-para:product-law}. Thus by Corollary \ref{cor-para:paraprod-commu-bound} and the fact that $1/\sqrt{g} \in H^{s-}(\D\times\R)$, we have
		\begin{equation*}
			\left[\Lambda^\delta,\frac{1}{\sqrt{g}}\right]\partial_\alpha\left(g^{\alpha\beta}\sqrt{g}\partial_\beta(\chi\varphi)\right) \in H^{s_1-\frac{3}{2}}(\R^3),
		\end{equation*}
		for some small $\delta>0$. Similarly, by noticing that $\partial_\beta(\chi\varphi) \in H^{s_1-\frac{1}{2}}$ where $s_1-\frac{1}{2} \le s-1-\delta <(s-)-\delta$ and that $g^{\alpha\beta}\sqrt{g} \in H^{s-}(\D\times\R)$ up to $C^\infty_b$ normalization, we apply again Corollary \ref{cor-para:paraprod-commu-bound} and obtain that
		\begin{equation*}
			\left[\Lambda^\delta,g^{\alpha\beta}\sqrt{g}\right]\partial_\beta(\chi\varphi) \in H^{s_1-\frac{1}{2}}(\R^3)
		\end{equation*}
		and the desired estimate follows due to Corollary \ref{cor-para:product-law}.
	\end{proof}

	\subsection{Boundary regularity}\label{subsect:ellip-bdy-reg}
	The proof of the case where $\chi$ is a smooth truncation near $\{\rho=1\}$ is similar to the interior case. The main difference is that, to maintain the boundary condition, it is impossible to apply $\Lambda^\delta$, which involves normal derivatives at boundary. Instead, we introduce tangential multiplier $\Lambda_0^\delta$. Let $\langle D_{\theta,z} \rangle$ be a Fourier multiplier on $\T\times\R$ with symbol $\langle \xi \rangle$, where $\xi$ is the Fourier variable associated to $(\theta,z)\in\T\times\R$. Then, with polar coordinate $\kappa:(\rho,\theta,z)\mapsto(y,z)$, we can define $\Lambda_0^\delta$ as
	\begin{equation*}
		\Lambda_0^\delta f := \left( \langle D_{\theta,z} \rangle^\delta f\circ\kappa\right) \circ \kappa^{-1},
	\end{equation*}
	for functions $f$ defined on $\D\times\R$. Note that, for any $0<\delta\ll 1$, $\varphi|_{\rho=1}=0$ implies $\Lambda_0^\delta\varphi|_{\rho=1}=0$, and thus \eqref{eq-ellip:ellip-alt-loc-int} becomes
	\begin{equation}\label{eq-ellip:ellip-alt-loc-bdy}
		\left\{\begin{array}{l}
			\Delta_g \Lambda_0^\delta(\chi\varphi) = F_3  := \Lambda_0^\delta F_1 - [\Lambda_0^\delta,\Delta_g](\chi\varphi), \\[0.5ex]
			\Lambda_0^\delta(\chi\varphi)|_{\rho=1} = 0,
		\end{array}\right.
	\end{equation}
	where the source term $F_3$ can be estimated as $F_2$ in interior case,
	\begin{align*}
		\|F_3\|_{H^{s_1-\frac{3}{2}}(\D\times\R)} \le& \|\Lambda_0^\delta F_1\|_{H^{s_1-\frac{3}{2}}(\D\times\R)} + \|[\Lambda_0^\delta,\Delta_g](\chi\varphi)\|_{H^{s_1-\frac{3}{2}}(\D\times\R)} \\
		\lesssim& \|F_1\|_{H^{s_0-\frac{3}{2}}(\D\times\R)} + C\left(\|\eta\|_{H_R^{s+\frac{1}{2}-}(\T\times\R)}\right) \|\varphi\|_{H^{s_1+\frac{1}{2}}(\D\times\R)} \\
		\lesssim& C\left(\|\eta\|_{H_R^{s+\frac{1}{2}-}(\T\times\R)}\right) \|F\|_{H^{s_0-\frac{3}{2}}(\D\times\R)},
	\end{align*}
	where the second inequality follows from Lemma \ref{lem-ellip:commu-reg-int} with $\Lambda$ replaced by $\Lambda_0$, which is possible since Corollary \ref{cor-para:paraprod-commu-bound}, the main step of Lemma \ref{lem-ellip:commu-reg-int}, is also stated for multipliers independent of the normal variable. As a result, the assumption of iteration ensures that $\Lambda_0^\delta(\chi\varphi) \in H^{s_1+\frac{1}{2}}$.
	
	Now we prove the regularity in normal direction. Recall that the coefficients of $\Delta_g$ lie in $H^{s-}(\D\times\R)$ (2-order terms) or $H^{s-1-}(\D\times\R)$ (1-order terms), up to $C^\infty_b(\D\times\R)$ normalizations. Then, in polar coordinate, $\Delta_g$ can be written as
	\begin{equation*}
		\alpha \partial_\rho^2 + \beta\cdot\nabla_{\theta,z}\partial_\rho + \gamma\partial_\rho + R_2 + R_1,
	\end{equation*}
	where $R_j$ is a $j$-order differential operator in $\theta,z$ with coefficients in $H^{s+j-2-}(\D\times\R)$, $\alpha,\beta\in H^{s-}(\D\times\R)$, and $\gamma\in H^{s-1-}(\D\times\R)$, up to $C^\infty_b(\D\times\R)$ normalizations. Moreover, $\alpha$ is strictly positive and its lower bound depends only on $c_0,C_0$ from \eqref{hyp-intro:bounds} and $\|\eta\|_{H_R^{s+\frac{1}{2}-}}$. Therefore, Corollary \ref{cor-para:product-law} and Proposition \ref{prop-para:paralin} gives that
	\begin{equation*}
		\partial_\rho^2 (\chi\varphi) = - \frac{\beta}{\alpha}\cdot\nabla_{\theta,z}\partial_\rho(\chi\varphi) - \frac{\gamma}{\alpha}\partial_\rho(\chi\varphi) - \frac{1}{\alpha}(R_2+R_1)(\chi\varphi) \in H^{s_1+\delta-\frac{3}{2}}(\D\times\R),
	\end{equation*}
	where we use the following regularities
	\begin{equation*}
		\nabla_{\theta,z}\partial_\rho(\chi\varphi), \nabla_{\theta,z}^2(\chi\varphi) \in H^{s_1+\delta-\frac{3}{2}}(\D\times\R),\ \ \nabla_{\rho,\theta,z}(\chi\varphi) \in H^{s_1+\delta-\frac{1}{2}}(\D\times\R).
	\end{equation*}
	Till now, we have showed $\nabla_{\rho,\theta,z}\partial_\rho(\chi\varphi) \in H^{s_1+\delta-\frac{3}{2}}(\D\times\R)$, i.e. $\partial_\rho(\chi\varphi) \in H^{s_1+\delta-\frac{1}{2}}(\D\times\R)$, which gives $\chi\varphi \in H^{s_1+\delta+\frac{1}{2}}(\D\times\R) = H^{s_0+\frac{1}{2}}(\D\times\R)$ due to $\nabla_{\theta,z}(\chi\varphi) \in H^{s_1+\delta-\frac{1}{2}}(\D\times\R)$. And the proof of Proposition \ref{prop-ellip:ellip-alt-reg} is finished.
	
	\subsection{Proof of Lemma \ref{lem-paralin:reg-of-pot}}\label{subsect:ellip-alt-reg}
	
	Recall that the goal of Lemma \ref{lem-paralin:reg-of-pot} is to show that, after a change of variable 
	\begin{equation*}
		(\bar{\rho},\bar{\theta},\bar{z}) = \bar{\iota}(\rho,\theta,z) := \left( \frac{\rho\zeta(\rho\theta,z)}{\eta(\theta,z)},\theta,z \right),
	\end{equation*}
	the new scalar potential $\bar{\varphi} := \varphi\circ \bar{\iota}^{-1}$ satisfies \eqref{eq-paralin:reg-of-pot-loc}. The proof is divided into four steps. Firstly, we check that $\bar{\iota}-id$ has $H^{s+\frac{1}{2}-}$-regularity away from $\rho=0$ (so as $\bar{\iota}^{-1}-id$). Secondly, we adopt Proposition \ref{prop-sobo:chgt-of-var-refine} to prove that $\bar{\varphi}$ belongs to $H^{s_0+\frac{1}{2}}([1-\bar{\delta},1]\times\T\times\R)$ for some $0<\bar{\delta} \ll 1$ where $s_0$ can be taken as any value in $]3/2,s[$. In the third step, we complete the case of $s_0=s$ by reducing the problem to Proposition \ref{prop-ellip:ellip-alt-reg}. The last step is the application of trace theorem so that one can obtain the continuity in $\bar{\rho}$. During the proof, we identify $(y,z)\in\D\times\R$ with polar coordinate $(\rho,\theta,z)\in[0,1]\times\T\times\R$, if the information near the axis $\{\rho=0\}$ is not important.
	
	\paragraph{Step 1: Regularity of $\bar{\iota}$} 
	\begin{lemma}\label{lem-ellip:reg-chgt-of-var}
		Let $\eta\in H^{s+\frac{1}{2}-}_R(\T\times\R)$ with $s>\frac{3}{2}$. Then $\bar{\iota}$ defined in \eqref{eq-paralin:alt-var} is a diffeomorphism from $[0,1]\times\T\times\R$ to $[0,1]\times\T\times\R$. Moreover, there exists $0<\delta,\bar{\delta}\ll 1$ such that $\bar{\iota} - id \in H^{s+\frac{1}{2}-}([1-\delta,1]\times\T\times\R)$ and $\bar{\iota}^{-1} - id \in H^{s+\frac{1}{2}-}([1-\bar{\delta},1]\times\T\times\R)$.
	\end{lemma}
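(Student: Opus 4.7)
The plan is to verify bijectivity via a one-dimensional monotonicity argument, establish the $H^{s+\frac12-}$ regularity of $\bar\iota-\mathrm{id}$ directly from the construction of $\zeta$, and then obtain the regularity of $\bar\iota^{-1}-\mathrm{id}$ through an implicit-function fixed-point argument that exploits the product structure in $(\theta,z)$.

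\emph{Step 1 (bijectivity).} Since $\bar\iota$ preserves $(\theta,z)$, it suffices to study, for each fixed $(\theta,z)$, the map $\rho\mapsto\bar\rho=\rho\zeta(\rho\theta,z)/\eta(\theta,z)$. The endpoints $\bar\rho(0)=0$ and $\bar\rho(1)=1$ follow from $\zeta|_{\rho=1}=\eta$, which uses $\chi(1)=0$ together with $\tilde\eta|_{\rho=1}=\eta$ from Lemma~\ref{lem-pre:prop-tilde-eta}. Strict monotonicity comes from the lower bound \eqref{eq-pre:low-bd-rhozeta} combined with $\eta\le C_0$, giving $\partial_\rho\bar\rho\ge c_0^{\,2}/(4C_0^{\,2})>0$.

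\emph{Step 2 (regularity of $\bar\iota-\mathrm{id}$).} Only the radial component is nontrivial,
$$
\bar\rho-\rho \;=\; \frac{\rho\,\bigl(\zeta(\rho\theta,z)-\eta(\theta,z)\bigr)}{\eta(\theta,z)}.
$$
By Proposition~\ref{prop-pre:chgt-of-var} we have $\zeta-R_\epsilon\in H^{s+1-}(\D\times\R)$, and the support condition on $\chi$ forces $R_\epsilon\equiv R$ on a neighborhood of $\rho=1$, so $\zeta-R\in H^{s+1-}$ on some strip $[1-\delta,1]\times\T\times\R$. On that strip the constant-in-$\rho$ extension of $\eta-R$ inherits $H^{s+\frac12-}$ regularity by a direct tangential-Fourier computation, and $1/\eta-1/R\in H^{s+\frac12-}$ by Proposition~\ref{prop-para:paralin}. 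Since $s+\tfrac12->2>3/2$ (the algebra threshold in three dimensions), the product estimates of Corollary~\ref{cor-para:product-law} then deliver $\bar\rho-\rho\in H^{s+\frac12-}$.

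\emph{Step 3 (regularity of $\bar\iota^{-1}-\mathrm{id}$).} Writing $\bar\iota^{-1}(\bar\rho,\bar\theta,\bar z)=(\bar\rho+k,\bar\theta,\bar z)$ and decomposing $\rho\zeta=\rho\eta+h$ with $h(\rho,\theta,z):=\rho(\zeta-\eta)$—which vanishes at $\rho=1$ and belongs to $H^{s+\frac12-}$ by Step~2—the defining equation $\bar\rho\,\eta=\rho\,\zeta(\rho\bar\theta,\bar z)$ becomes the fixed-point problem
$$
k \;=\; -h(\bar\rho+k,\bar\theta,\bar z)/\eta(\bar\theta,\bar z).
$$
The crucial observation is that $\partial_\rho h|_{\rho=1}=0$ as well, since $\partial_\rho\tilde\eta|_{\rho=1}=0$ by Lemma~\ref{lem-pre:prop-tilde-eta}(2); hence on a sufficiently small strip near $\bar\rho=1$ the right-hand side is a strict contraction in $L^\infty$, producing a unique $C^1$ solution $k$. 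To upgrade to $H^{s+\frac12-}$, I will iterate in that Sobolev space on $[1-\bar\delta,1]\times\T\times\R$, relying on tame composition/product estimates for $h\circ(\bar\rho+k,\bar\theta,\bar z)$ and on the smallness of $h$ (supplied by its vanishing at $\rho=1$) to close the contraction after shrinking $\bar\delta$.

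\emph{Main obstacle.} The core difficulty is Step~3. A direct appeal to a Sobolev inverse function theorem of Ebin--Marsden type would naively require $s+\tfrac12->d/2+1=5/2$, i.e.\ $s>2$, which is strictly stronger than the hypothesis $s>\tfrac32$. The expectation that the argument still closes is based on the product structure of $\bar\iota$—it is the identity in $(\bar\theta,\bar z)$—which reduces the nonlinear part of the inversion to a one-parameter family of one-dimensional compositions, effectively lowering the dimension entering the composition estimate; making this reduction rigorous at the stated threshold $s>3/2$ will be the bulk of the work.
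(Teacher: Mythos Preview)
Your Steps 1 and 2 align closely with the paper's proof: the paper also writes $\bar\iota - \mathrm{id} = (\rho(\zeta-\eta)/\eta, 0, 0)$, uses $\zeta \in H^{s+1-}_{R_\epsilon}$ from Proposition~\ref{prop-pre:chgt-of-var} together with $R_\epsilon \equiv R$ near $\rho=1$, and concludes via Proposition~\ref{prop-para:paralin} and Corollary~\ref{cor-para:product-law}.

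For Step 3 the paper takes a genuinely different route. Rather than a fixed-point argument, it extends $\bar\iota$ to a global diffeomorphism $\bar\iota_{\mathrm{ext}}$ on all of $\R\times\T\times\R$ by gluing in smooth monotone pieces $f_\pm$ away from $\rho=1$ via a partition $\chi_- + \chi_1 + \chi_+ = 1$, checks that $\bar\iota_{\mathrm{ext}} - \mathrm{id} \in H^{s+\frac12-}$ globally with Jacobian uniformly bounded below, and then invokes the Sobolev inverse-function theory of \cite{bruverus2017completeness}, Section~2, to conclude $\bar\iota_{\mathrm{ext}}^{-1} - \mathrm{id} \in H^{s+\frac12-}$. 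This sidesteps all composition estimates at the cost of constructing the extension and deferring to an external reference; your approach is more self-contained but must confront composition directly.

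Your threshold concern is well taken and in fact applies equally to the paper's cited route: the inverse-function results in \cite{bruverus2017completeness} also require the regularity index to exceed $d/2+1 = 5/2$, hence $s>2$, and the paper does not address the gap down to $s>3/2$ explicitly. Since the only use of this lemma is in the proof of Lemma~\ref{lem-paralin:reg-of-pot}, where $s>3$ is assumed, the issue is moot in practice. Your proposed refinement---exploiting that $\bar\iota$ acts as the identity in $(\theta,z)$ to lower the effective dimension in the composition estimate---is a natural way to push the threshold down and would make the lemma sharper than what the paper's proof actually delivers. If you only need the lemma for its downstream application, assuming $s>2$ closes your Step~3 immediately via the black-box route and matches the paper's level of rigor.
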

	\begin{proof}
		By definition \eqref{eq-paralin:alt-var} of $\bar{\iota}$, 
		\begin{equation*}
			\bar{\iota}(\rho,\theta,z) - id = \left( \rho\frac{\zeta(\rho\theta,z)-\eta(\theta,z)}{\eta(\theta,z)},0,0 \right),
		\end{equation*}
		which reduces our problem to showing that $(\zeta(\rho\theta,z)-\eta(\theta,z))/\eta(\theta,z)$ belongs to $H^{s+\frac{1}{2}-}$ near $\rho=1$. In Proposition \ref{prop-pre:chgt-of-var}, we have seen that $\zeta \in H^{s+1}_{R_\epsilon}(\D\times\R)$ with $R_\epsilon$ defined in \eqref{eq-pre:def-R-eps}, which indicates that $R_\epsilon$ equals $R$ when $\rho$ is close to $1$. Thus, by choosing $\delta>0$ small enough, we have $\zeta-R\in H^{s+1-}([1-\delta,1]\times\T\times\R)$. And $\eta\in H^{s+\frac{1}{2}-}_R(\T\times\R)$ implies  $\zeta-\eta\in H^{s+\frac{1}{2}-}([1-\delta,1]\times\T\times\R)$, from which one may conclude $\bar{\iota} - id \in H^{s+\frac{1}{2}-}([1-\delta,1]\times\T\times\R)$ by applying Proposition \ref{prop-para:paralin} and Corollary \ref{cor-para:product-law}. 
		
		To obtain the regularity of $\bar{\iota}^{-1}$ near $\{\bar{\rho}=1\}$, due to \cite{bruverus2017completeness}, Section 2, it suffices to extend $\bar{\iota}$ as a diffeomorphism on $\R\times\T\times\R$ and check that its Jacobian has strictly positive lower bound. One possible extension is as follow,
		\begin{equation*}
			\bar{\iota}_{ext}(\rho,\theta,z) := \left( f(\rho,\theta,z) , \theta, z \right),\ \ f(\rho,\theta,z)=\chi_-(\rho)f_-(\rho) + \chi_1(\rho)\frac{\rho\zeta(\rho\theta,z)}{\eta(\theta,z)} + \chi_+(\rho)f_+(\rho),
		\end{equation*} 		
		where $\chi_1$ is a smooth truncation near $\{\rho=1\}$ increasing on $\{\rho<1\}$ and decreasing on $\{\rho>1\}$, $\chi_-$ ($\chi_+$ resp.) is a smooth function supported in $\{\rho<1\}$ ($\{\rho>1\}$ resp.) with $\chi_- + \chi_1 + \chi_+=1$ for all $\rho\in\R$, and $f_\pm$ is a smooth increasing function such that $f_\pm(\rho)=\rho$ when $\rho$ is away from $1$ and $f_\pm(\rho)=l_\pm\rho$ for $\rho\in\Supp{\chi_1}$ with constants $l_\pm>0$ to be determined later. 
		
		Clearly $\bar{\iota}_{ext}$ coincides with the original one \eqref{eq-paralin:alt-var} when $\rho$ is close enough to $1$ and equals identity when $\rho$ is away from $1$. Moreover, since the extended parts are smooth and equal to identity when $\rho$ is away from $1$, we have $\bar{\iota}_{ext} - id \in H^{s-\frac{1}{2}-}(\R\times\T\times\R)$. The Jacobian of $\bar{\iota}_{ext}$ reads 
		\begin{align*}
			\partial_\rho f =& \chi_-(\rho) f_-'(\rho) - \chi_-'(\rho)\left( \frac{\rho\zeta}{\eta} - f_-(\rho) \right) + \chi_1(\rho) \frac{\partial_\rho(\rho\zeta)}{\eta} \\
			&+ \chi_+'(\rho)\left( f_+(\rho) - \frac{\rho\zeta}{\eta} \right) + \chi_+(\rho)f_+'(\rho) \\
			=& \chi_-(\rho) f_-'(\rho) - \chi_-'(\rho)\rho\left( \frac{\zeta}{\eta} - l_- \right) + \chi_1(\rho) \frac{\partial_\rho(\rho\zeta)}{\eta} \\
			&+ \chi_+'(\rho)\rho\left( l_+ - \frac{\zeta}{\eta} \right) + \chi_+(\rho)f_+'(\rho).
		\end{align*}
		Due to our construction, $f_\pm'$, $-\chi_-'$, and $\chi_+'$ are non-negative. Besides, due to Proposition \ref{prop-pre:chgt-of-var} (and estimate \eqref{eq-pre:low-bd-rhozeta}), $\zeta/\eta$ and $\partial_\rho(\rho\zeta)/\eta$ have upper and lower bounds depending only on constants $c_0,C_0$ appearing in hypothesis \eqref{hyp-intro:bounds}. Thus, by choosing $0<l_- \ll 1 \ll l_+$, the Jacobian of $\bar{\iota}_{ext}$ can be bounded from below by some positive constant.
	\end{proof}
	
	\paragraph{Step 2: The case $s_0<s$} Recall that $\bar{\varphi} = \varphi\circ\bar{\iota}^{-1}$ with $\varphi$ verifying \eqref{eq-pre:ellip-reg}. Note that, in polar coordinate $(\rho,\theta,z)\in[0,1]\times\T\times\R$, we can only deduce that $\varphi\in H^{s_0+\frac{1}{2}}([1-\delta,1]\times\T\times\R)$ for $\delta>0$. Then the condition $s_0<s$ guarantees $s_0+1/2 \le s+1/2-$, which allows us to apply Proposition \ref{prop-sobo:chgt-of-var-refine} and conclude that 
	\begin{equation}\label{eq-ellip:reg-bar-varphi-subcrit}
		\|\bar{\varphi}\|_{H^{s_0+\frac{1}{2}}([1-\bar{\delta},1]\times\T\times\R)} \le C\left( \|\eta\|_{H^{s+\frac{1}{2}-}_R(\T\times\R)} \right) \|\psi\|_{H^{s_0}(\T\times\R)},
	\end{equation}
	for some $0<\bar{\delta}\ll 1$ such that $[1-\bar{\delta},1]\times\T\times\R$ lies in $\bar{\iota}^{-1}([1-\delta,1]\times\T\times\R)$. 
	
	\paragraph{Step 3: The case $s_0=s$} Let $\epsilon>0$ be small enough (to be determined later). When $\psi\in H^{s}(\T\times\R)$, from Step 2, we have seen that $\bar{\varphi}\in H^{s+\frac{1}{2}-\epsilon}([1-\bar{\delta},1]\times\T\times\R)$. To obtain the $H^{s+\frac{1}{2}}$-regularity, we notice that $\bar{\varphi}$ is the solution to equation \eqref{eq-paralin:ellip}, which we recall here
	\begin{equation*}
		\left\{ \begin{array}{ll}
			L\bar{\varphi}:= \left(\alpha\partial_{\bar{\rho}}^2 + \beta\cdot\nabla_{\bar{\theta},\bar{z}}\partial_{\bar{\rho}} + \gamma\partial_{\bar{\rho}} + \frac{1}{\bar{\rho}^2\eta^2}\partial_{\bar{\theta}}^2 + \partial_{\bar{z}}^2 \right) \bar{\varphi} = 0, & \forall 1-\bar{\delta}\le\bar{\rho}\le 1, \\ [0.5ex]
			\varphi|_{\bar{\rho}=1} = \psi,
		\end{array}\right.
	\end{equation*}
	where $\alpha,\beta,\gamma$ are defined in \eqref{eq-paralin:def-alpha}-\eqref{eq-paralin:def-gamma}. Let $\chi_1 = \chi_1(\rho)$ be a smooth truncation near $\{\bar{\rho}=1\}$ supported in $[1-\bar{\delta},+\infty[$. By denoting $\Lambda_1:=\langle D_{\bar{\theta},\bar{z}}\rangle$, we have
	\begin{equation*}
		\left\{ \begin{array}{ll}
			L\Lambda_1^\epsilon(\chi_1\bar{\varphi}) = \bar{f} := -\Lambda_1^\epsilon[\chi_1,L]\bar{\varphi} - [\Lambda_1^\epsilon,L](\chi_1\bar{\varphi}), & \forall \bar{\rho}\in [0,1], \\ [0.5ex]
			\varphi|_{\bar{\rho}=1} = \Lambda_1^\epsilon\psi.
		\end{array}\right.
	\end{equation*}
	Note that the first equation is valid for all $\bar{\rho}\in [0,1]$ since both sides vanish when $\bar{\rho}<1-\bar{\delta}$, thanks to the truncation $\chi_1$. It is clear that $\bar{f} \in H^{s-\frac{3}{2}-\epsilon}([1-\bar{\delta},1]\times\T\times\R)$ (it suffices to apply Corollary \ref{cor-para:paraprod-commu-bound} to deal with $[\Lambda^\epsilon_1,L]$). Then, via change of variable $\bar{\iota}$, this equation can be rewritten as
	\begin{equation*}
		\left\{ \begin{array}{ll}
			\Delta_g\varphi_1 = \bar{f}\circ\bar{\iota}, & \text{on }\D\times\R, \\ [0.5ex]
			\varphi_1|_{\rho=1} = \Lambda_1^\epsilon\psi,
		\end{array}\right.
	\end{equation*}
	where $\varphi_1 := \left(\Lambda_1^\epsilon(\chi_1\bar{\varphi})\right) \circ\bar{\iota}$. Recall that we identify $(y,z)\in\D\times\R$ with polar coordinate $(\rho,\theta,z)\in[0,1]\times\T\times\R$, since all the involved functions are supported away from the axis $\{\rho=0\}$. By Proposition \ref{prop-sobo:chgt-of-var-refine} and Lemma \ref{lem-ellip:reg-chgt-of-var}, the source term $\bar{f}\circ\bar{\iota}$ belongs to $H^{s-\frac{3}{2}-\epsilon}(\D\times\R)$, while the boundary data $\Lambda_1^\epsilon\psi \in H^{s-\epsilon}(\T\times\R)$. Due to Proposition \ref{prop-ellip:ellip-alt-reg}, we have $\varphi_1 \in H^{s+\frac{1}{2}-\epsilon}(\D\times\R)$. Finally, we apply again Proposition \ref{prop-sobo:chgt-of-var-refine} and Lemma \ref{lem-ellip:reg-chgt-of-var} to deduce $\Lambda_1^\epsilon(\chi_1\bar{\varphi}) \in H^{s+\frac{1}{2}-\epsilon}([1-\bar{\delta},1]\times\T\times\R)$ and thus $\Lambda_1^\epsilon\bar{\varphi} \in H^{s+\frac{1}{2}-\epsilon}([1-\bar{\delta}',1]\times\T\times\R)$ for some $0<\bar{\delta}'<\bar{\delta}$. One may recover the regularity in $\bar{\rho}$ from elliptic operator $L$ as in the proof of Proposition \ref{prop-ellip:ellip-alt-reg} and conclude that $\bar{\varphi} \in H^{s+\frac{1}{2}}([1-\bar{\delta}',1]\times\T\times\R)$, i.e. estimate \eqref{eq-ellip:reg-bar-varphi-subcrit} also holds for $s_0=s$. Here the difference between $\bar{\delta}$ and $\bar{\delta}'$ can be ignored since we are solely interested in the behavior near $\{\bar{\rho}=1\}$.
	
	\paragraph{Step 4: Trace estimate} To complete the proof of Lemma \ref{lem-paralin:reg-of-pot}, it remains to deduce the estimate below from \eqref{eq-ellip:reg-bar-varphi-subcrit}. 
	\begin{equation*}
		\|\partial_{\bar{\rho}}^l\bar{\varphi}\|_{C^0([1-\bar{\delta},1];H^{s_0-l}(\T\times\R))} \le C\left(\|\eta\|_{H_R^{s+\frac{1}{2}-}}\right) \|\psi\|_{H^{s_0}},\ \ l=0,1,2,3.
	\end{equation*}
	Recall that in the assumption of Lemma \ref{lem-paralin:reg-of-pot}, we have $s>3$ and $3/2<s_0\le s$. Then the case $l=0,1$ follows directly from trace estimate. For $l=2$, we observe that
	\begin{equation*}
		\partial_{\bar{\rho}}^2 \bar{\varphi} = -\frac{1}{\alpha}\left( \beta\cdot\nabla_{\bar{\theta},\bar{z}}\partial_{\bar{\rho}} + \gamma\partial_{\bar{\rho}} + \frac{1}{\bar{\rho}^2\eta^2}\partial_{\bar{\theta}}^2 + \partial_{\bar{z}}^2 \right) \bar{\varphi}.
	\end{equation*}
	where $\alpha,\beta,\gamma$ are estimated in Lemma \ref{lem-paralin:esti-alpha-beta-gamma} with $\alpha>c$ for some constant $c>0$ (depending on $c_0,C_0$ appearing in hypothesis \eqref{hyp-intro:bounds}). An application of Lemma \ref{lem-paralin:esti-alpha-beta-gamma}, Proposition \ref{prop-para:paralin}, and Corollary \ref{cor-para:product-law} gives $\partial_{\bar{\rho}}^2 \bar{\varphi} \in C^0([1-\bar{\delta},1];H^{s_0-2}(\T\times\R))$. The last case $l=3$ can be obtained by applying an extra $\partial_{\bar{\rho}}$ and repeating this argument.

	\section{Paradifferential calculus}\label{App:para}
	
	In this section, we shall review the definition and properties of paradifferential operators on $\R^d$, $\T^d$, or $\T^{d_1}\times\R^{d_2}$ with $d_1+d_2=d$.
	
	\subsection{Pseudo-differential operator}\label{subsect:PDO}
	
	\begin{definition}\label{def-para:PDO}
		Let $a$ be a tempered distribution on $\R^d \times \R^d$. For all Schwartz function $u\in \mathcal{S}(\R^d)$, we define
		\begin{equation}\label{eq-para:PDO}
			\left\langle \Op{a}u, v\right\rangle_{\mathcal{S}'\times\mathcal{S}} := \frac{1}{(2\pi)^d} \iint e^{ix\cdot\xi}a(x,\xi)\hat{u}(\xi) \overline{v(x)}d\xi dx.
		\end{equation}\index{O@$\Op{a}$ Pseudo-differential operator}
		It is easy to check that $\Op{a}$ is a continuous application from Schwartz functions $\mathcal{S}(\R^d)$ to tempered distributions $\mathcal{S}'(\R^d)$. We say that $\Op{a}$ is a \textit{pseudo-differential operator} with \textit{symbol} $a$.
	\end{definition}
	
	\begin{definition}\label{def-para:PDO-symbol-class}
		Let $\rho,\delta\in [0,1]$ and $m\in\R$. The class of symbols $S^{m}_{\rho,\delta} = S^{m}_{\rho,\delta}(\R^d)$ is defined as the collection of all symbols in $C^\infty(\R^d\times\R^d)$, such that, for all $\alpha,\beta\in\N^d$,
		\begin{equation*}
			\left| \partial^\alpha_x \partial^\beta_\xi a(x,\xi) \right| \le C_{\alpha,\beta} \langle\xi\rangle^{m+\delta|\alpha|-\rho|\beta|}.
		\end{equation*}
	\end{definition}\index{S@$S^m_{\rho,\delta}$ Symbol class}
	The following results are classic, whose proof can be founded in \cite{hormander2007analysis,coifman1978operateur}.
	\begin{proposition}\label{prop-para:PDO-bound}
		Let $0\le \delta<\rho \le1$ or $0 \le \rho=\delta<1$. Then the pseudo-differential operator $\Op{a} \in \mathcal{L}(H^{s};H^{s-m})$ for all $a\in S^m_{\rho,\delta}$ and $s,m\in\R$.
	\end{proposition}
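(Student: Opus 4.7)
The plan is to reduce the general $H^s \to H^{s-m}$ statement to the $L^2 \to L^2$ case by symbolic calculus, and then prove the $L^2$ boundedness by two different arguments depending on whether $\delta<\rho$ or $\rho=\delta<1$. First, I would establish the composition and adjoint formulas within the classes $S^m_{\rho,\delta}$, namely that $\Op{a}\Op{b}$ and $\Op{a}^*$ are pseudo-differential operators whose principal symbols belong to $S^{m+m'}_{\rho,\delta}$ and $S^m_{\rho,\delta}$ respectively, with asymptotic expansions in powers of $\langle\xi\rangle^{-(\rho-\delta)}$ (or with gain $0$ when $\rho=\delta$, which still suffices because each correction term gains $\delta-\rho\le 0$ of differentiation against $\xi$ but in the borderline case one only needs the leading order bound). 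Writing $\langle D\rangle^{s-m}\Op{a}\langle D\rangle^{-s}$ and identifying its symbol modulo a smoothing remainder would produce a symbol in $S^0_{\rho,\delta}$, reducing the claim to: every $a\in S^0_{\rho,\delta}$ yields a bounded operator on $L^2$.

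For the case $0\le\delta<\rho\le 1$, I would prove the $L^2$ bound by a kernel/Schur-type argument. Write the Schwartz kernel of $\Op{a}$ as
\begin{equation*}
K(x,y)=\frac{1}{(2\pi)^d}\int e^{i(x-y)\cdot\xi}a(x,\xi)\,d\xi.
\end{equation*}
Integration by parts against $(1-\Delta_\xi)^N$ with the symbol estimate yields $|K(x,y)|\lesssim |x-y|^{-2N\rho}$ for $|x-y|\ge 1$, and a dyadic cut-off in $\xi$ combined with Peetre-type estimates handles $|x-y|\le 1$. After verifying that the Schur integrals $\sup_x\int|K(x,y)|dy$ and $\sup_y\int|K(x,y)|dx$ are finite (choosing $N$ with $2N\rho>d$), the $L^2$-boundedness follows. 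Alternatively, and perhaps more cleanly, I would argue via $TT^*$: the composition $\Op{a}\Op{a}^*$ has symbol in $S^0_{\rho,\delta}$, and iterating the procedure reduces regularity in the kernel indefinitely, at which point boundedness is immediate.

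For the borderline case $\rho=\delta<1$, the kernel argument above breaks down (one only gets gain $\rho-\delta=0$ per integration by parts), and the main obstacle is genuine: one needs the Calderón--Vaillancourt theorem. The natural tool is the Cotlar--Stein almost orthogonality lemma. I would decompose the symbol dyadically,
\begin{equation*}
a(x,\xi)=\sum_{j,k}a_{j,k}(x,\xi),
\end{equation*}
where $a_{j,k}$ is supported in a box of size $\sim\langle\xi\rangle^{\rho}$ in $x$ and size $\sim\langle\xi\rangle^{\rho}$ in $\xi$, localized at scale $2^j$ in $\xi$ and $2^k$ translation in $x$ (adapted to $\rho$); each piece $\Op{a_{j,k}}$ is uniformly bounded on $L^2$ by a direct Fourier estimate since its symbol is essentially a bump, and the products $\Op{a_{j,k}}^*\Op{a_{j',k'}}$ and $\Op{a_{j,k}}\Op{a_{j',k'}}^*$ have operator norm decaying rapidly in $|j-j'|+|k-k'|$ thanks to the non-stationary phase when the frequency/space localizations are disjoint. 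Summing via Cotlar--Stein then gives the desired $L^2$ bound. This symbolic-almost-orthogonal step is the hard part, since $\rho=\delta$ is scale-invariant and one must count supports carefully, but all the required ingredients are classical (see \cite{coifman1978operateur}), and I would simply cite this theorem rather than redo the decomposition.
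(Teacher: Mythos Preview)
The paper does not prove this proposition: immediately before it, the text says ``The following results are classic, whose proof can be founded in \cite{hormander2007analysis,coifman1978operateur}'', and no argument is given. So there is nothing in the paper to compare your sketch against beyond the choice of references, and on that score you land in the same place---your treatment of the borderline case $\rho=\delta<1$ via Calder\'on--Vaillancourt/Cotlar--Stein, citing Coifman--Meyer, is exactly what the paper points to.

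That said, your sketch for the case $\delta<\rho$ has a genuine gap. The kernel of $\Op{a}$ with $a\in S^0_{\rho,\delta}$ generically carries a diagonal singularity of strength $|x-y|^{-d}$ (already for Riesz-type symbols $\xi_j/|\xi|$), which is \emph{not} locally integrable. Integration by parts in $\xi$ produces rapid off-diagonal decay but does nothing for this on-diagonal blow-up, so the Schur integrals $\sup_x\int|K(x,y)|\,dy$ diverge and Schur's test fails outright. Your phrase ``a dyadic cut-off in $\xi$ combined with Peetre-type estimates handles $|x-y|\le 1$'' does not rescue this: each dyadic piece is individually $L^2$-bounded, but summing the pieces requires almost-orthogonality, which is Cotlar--Stein again.

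Your $TT^*$ alternative is closer to a correct argument, provided you mean H\"ormander's square-root trick: take $M>\sup|a|$, write $M^2-|a|^2=|b|^2$ with $b\in S^0_{\rho,\delta}$ (this factorization is where $\rho>\delta$ enters), deduce $\|\Op{a}u\|^2+\|\Op{b}u\|^2=M^2\|u\|^2+\langle Ru,u\rangle$ with $R$ of order $-(\rho-\delta)$, and iterate until the remainder has order $<-d$ and hence an integrable kernel. As you wrote it, though, ``iterating $\Op{a}\Op{a}^*$ reduces regularity in the kernel indefinitely'' is false---the composition stays in $S^0_{\rho,\delta}$ with no gain---so the square-root step must be made explicit for the argument to go through.
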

	\begin{proposition}\label{prop-para:PDO-symb-cal}
		Let $0\le \delta<\rho \le1$ and $m,m'\in\R$. For all symbols $a\in S^m_{\delta,\rho}$ and $b\in S^{m'}_{\delta,\rho}$, the composition $\Op{a}\Op{b}$ and the adjoint $\Op{a}^*$ are also pseudo-differential operators, with symbol $a\sharp b \in S^{m+m'}_{\delta,\rho}$, $a^*\in S^{m}_{\delta,\rho}$, respectively. Moreover, for all $N\in\N$, we have the following \textit{symbolic calculus},
		\begin{align}
			&a\sharp b - \sum_{|\alpha|< N} \frac{1}{\alpha!} \partial_\xi^\alpha a D_x^\alpha b \in S^{m+m'-N(\rho-\delta)}_{\rho,\delta}, \label{eq-para:PDO-symb-cal-composition} \\
			&a^* - \sum_{|\alpha|< N} \frac{1}{\alpha!} \partial_\xi^\alpha D_x^\alpha \overline{a} \in S^{m-N(\rho-\delta)}_{\rho,\delta},\label{eq-para:PDO-symb-cal-adjoint}
		\end{align}\index{c@$a\sharp b$ Composition of symbols}\index{a@$a^*$ Adjoint of symbol}
		where $D_x = -i\partial_x$.
	\end{proposition}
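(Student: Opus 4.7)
The proof is classical and I would follow the standard approach going back to Hörmander and Kohn--Nirenberg. The plan is to derive integral representations for the composition symbol and the adjoint symbol, then extract the asymptotic expansion by Taylor's formula combined with oscillatory integral estimates.

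First, I would treat the composition. Starting from Definition \ref{def-para:PDO}, for $u\in\mathcal{S}$ I would compute $\Op{a}\Op{b}u(x)$ as an iterated oscillatory integral and, after changes of variables ($\eta\mapsto\eta-\xi$, $y\mapsto x+y$), identify the symbol:
\begin{equation*}
(a\sharp b)(x,\xi) = \frac{1}{(2\pi)^d}\iint e^{-iy\cdot\eta}\, a(x,\xi+\eta)\, b(x+y,\xi)\, dy\, d\eta.
\end{equation*}
This integral has to be interpreted as an oscillatory integral; its convergence and smooth dependence on $(x,\xi)$ follow from the standard non-stationary phase trick, writing $e^{-iy\cdot\eta}=\langle y\rangle^{-2M}(1-\Delta_\eta)^M e^{-iy\cdot\eta}$ and integrating by parts, which transfers $\eta$-derivatives onto $a$ (gaining powers of $\langle\xi+\eta\rangle^{-\rho}$) and produces an integrable factor $\langle y\rangle^{-2M}$ for $M$ large enough.

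Next, to extract the asymptotic expansion, I would apply Taylor's formula with integral remainder in the $\eta$ variable:
\begin{equation*}
a(x,\xi+\eta) = \sum_{|\alpha|<N}\frac{\eta^\alpha}{\alpha!}\partial_\xi^\alpha a(x,\xi) + N\sum_{|\alpha|=N}\frac{\eta^\alpha}{\alpha!}\int_0^1(1-t)^{N-1}\partial_\xi^\alpha a(x,\xi+t\eta)\,dt.
\end{equation*}
Plugging this into the integral representation, each polynomial term $\eta^\alpha$ combines with $e^{-iy\cdot\eta}$ as a derivative $D_y^\alpha$; after integration by parts in $y$ this gives $\partial_\xi^\alpha a(x,\xi)\, D_x^\alpha b(x,\xi)/\alpha!$ evaluated at $y=0$ by Fourier inversion, producing the claimed leading terms. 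The remainder is an oscillatory integral involving $\partial_\xi^\alpha a(x,\xi+t\eta)$ and $D_x^\alpha b(x+y,\xi)$, and I would show it lies in $S^{m+m'-N(\rho-\delta)}_{\rho,\delta}$ by bounding it uniformly after repeated integration by parts in both $y$ and $\eta$, using the symbol bounds from Definition \ref{def-para:PDO-symbol-class}. For the adjoint, I would start from $\langle\Op{a}u,v\rangle=\langle u,\Op{a^*}v\rangle$, identify
\begin{equation*}
a^*(x,\xi) = \frac{1}{(2\pi)^d}\iint e^{-iy\cdot\eta}\,\overline{a(x+y,\xi+\eta)}\,dy\,d\eta,
\end{equation*}
and apply exactly the same Taylor expansion argument, which now only involves derivatives of a single symbol.

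The main obstacle is the remainder estimate: one must carefully count how many $x$-derivatives (cost $\langle\xi\rangle^{\delta}$) and $\xi$-derivatives (gain $\langle\xi\rangle^{-\rho}$) are produced by the double integration by parts needed to make the oscillatory integral absolutely convergent. The net bookkeeping reveals that each of the $N$ expansion terms costs $(\rho-\delta)$ in symbol order, giving precisely the stated improvement $S^{m+m'-N(\rho-\delta)}_{\rho,\delta}$. A technical subtlety is that one needs $\rho-\delta>0$ strictly to close the argument; the endpoint case $\rho=\delta$ is excluded from the hypothesis, consistent with the analogous restriction in Proposition \ref{prop-para:PDO-bound}. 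Since the proof is entirely standard and well documented (for instance in Hörmander's treatise cited after the statement), I would conclude by referring the reader to those references rather than reproducing all the estimates in detail.
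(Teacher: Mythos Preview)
Your proposal correctly outlines the classical proof, and in fact the paper does not give any proof of this proposition at all: it simply states before Propositions~\ref{prop-para:PDO-bound} and~\ref{prop-para:PDO-symb-cal} that ``the following results are classic, whose proof can be found in \cite{hormander2007analysis,coifman1978operateur}.'' Your sketch is therefore more detailed than what the paper provides, and your concluding remark that one would refer the reader to H\"ormander's treatise is exactly what the paper does.
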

	
	In periodic case, there are two ways to define $\Op{a}$. One is to regard the Fourier variable $\xi$ as an element in $\N^d$, and symbols as distributions on $\T^d\times\N^d$. The only difference with $\R^d$ case is that the derivative in $\xi$ should be understood as finite difference:
	\begin{equation*}
		\partial_\xi a(x,\xi) := a(x,\xi+1) - a(x,\xi).
	\end{equation*}
	Another method is to regard functions on $\T^d$ as periodic functions on $\R^d$. Thus, for symbols $a\in \mathcal{S}(\R^d\times\R^d)$, $\Op{a}u$ is a well-defined periodic distribution, which can be viewed as a distribution on $\T^d$. Then, for all $a\in S^{m}_{\rho,\delta}$, one may define $\Op{a}$ by density arguments. Via this method, the derivatives in $\xi$ is just the same as those on Euclidean space. These two methods are equivalent in the sense that the difference of pseudo-differential operators defined by two methods is of lower order than the itself. That is to say, if $a\in S^{m}_{\delta,\rho}$, the difference of two definition should be an operator of order $m-\rho$, i.e. belonging to $\mathcal{L}(H^s;H^{s-(m-\rho)})$ for all $s\in\R$. A rigorous study of these definitions can be found in \cite{ruzhansky2010quantization,said2023paracomposition}. For simplicity, we shall focus on $\R^d$ case in the sequel.
	
	One observes that all the results above requires the symbols to be smooth in $x$ and $\xi$. However, in applications, they are usually rough in $x$. For example, the $\Delta_g$ defined by \eqref{eq-pre:def-lap-g} has only Sobolev regularity in $(y,z)$ but smooth in Fourier variables. To overcome this difficulty, we turn to a refined version known as \textit{paradifferential calculus} developped by Bony \cite{bony1981calcul} in 1980s.
	
	\subsection{Paraproduct}\label{subsect:paraprod}
	
	To begin with, we consider the simplest case, multiplication operators. We fix a dyadic decomposition
	\begin{equation*}
		1 = \chi(\xi) + \sum_{j=0}^\infty \varphi\left(\frac{\xi}{2^j}\right),
	\end{equation*}
	where $\chi$, $\varphi$ are radial positive smooth truncations near $\{|\xi|\le 1\}$ and $\{1\le|\xi|\le 2\}$, respectively, such that
	\begin{equation*}
		\chi\left(\frac{\xi}{2^j}\right) = \chi\left(\xi\right) + \sum_{k=0}^j \varphi\left(\frac{\xi}{2^k}\right),\ \ \forall \xi\in\R^d,j\in\N.
	\end{equation*}
	Then we can define the following multipliers:
	\begin{equation*}
		\Delta_j := \varphi\left(\frac{D_x}{2^j}\right),\ \ S_j:=\chi\left(\frac{D_x}{2^j}\right).
	\end{equation*}
	\begin{definition}[Bony's decomposition]\label{def-para:paraprod}
		For all functions $a,b\in\mathcal{S}(\R^d)$,
		\begin{align*}
			T_a b &:= \sum_{j=2}^\infty S_{j-2} a \Delta_j b, \\
			R(a,b) &:= ab - T_a b - T_b a.
		\end{align*}\index{T@$T_a$ Paralinear operator (paraproduct)} \index{R@$R(a,b)$ Remainder in Bony's decomposition}
		The linear operator $T_a$ is known as \textit{paraproduct}.
	\end{definition}
	\begin{remark}\label{rmk-para:add-constant}
		In the definition of $T_a b$, the low frequency part of $b$ is eliminated. Thus, it makes no difference to replace $b$ by $b$ plus any function whose Fourier transform is supported near zero. In particular, we have
		\begin{equation*}
			T_a b = T_a (b-R),\ \ \forall R\in\R.
		\end{equation*}
	\end{remark}
	
	Now, we review the boundedness of $T_a b$ and $R(a,b)$ in Sobolev spaces,
	\begin{proposition}\label{prop-para:paraprod-bound-basic}
		Let $a\in H^s$, $b\in H^{s'}$ with $s,s'\in\R$. Then we have,
		\begin{align}
			&\|T_a b\|_{H^{\min(s',s+s'-\frac{d}{2})}} \lesssim \|a\|_{H^{s}} \|b\|_{H^{s'}}, \ \ \text{if } s\neq\frac{d}{2}; \label{eq-para:paraprod-bound-sobo-main} \\
			&\|R(a,b)\|_{H^{s+s'-\frac{d}{2}}} \lesssim \|a\|_{H^{s}} \|b\|_{H^{s'}},\ \ \text{if } s+s'>0. \label{eq-para:paraprod-bound-sobo-remainder}
		\end{align}
		
		Moreover, when $a\in C^\infty_b$ and $b\in H^{s'}$ with $s'\in\R$, we have
		\begin{align}
			&\|T_a b\|_{H^{s'}} \lesssim \|a\|_{L^\infty} \|b\|_{H^{s'}}, \label{eq-para:paraprod-bound-holder-main} \\
			&\|R(a,b)\|_{H^{s'}} \lesssim \|a\|_{C^M} \|b\|_{H^{s'}}, \label{eq-para:paraprod-bound-holder-remainder}
		\end{align}
		for some $M\in\N$ depending on $s'$ and dimension $d$. Recall that $\|a\|_{C^M} := \sup_{|\alpha|\le M} \|\partial^\alpha a\|_{L^\infty}$. And when $a\in H^s$ and $b\in C^\infty_b$, the estimate \eqref{eq-para:paraprod-bound-holder-main} above becomes
		\begin{equation}\label{eq-para:paraprod-bound-holder-alt-main}
			\|T_a b\|_{H^{s}} \lesssim \|a\|_{H^s} \|b\|_{C^N},
		\end{equation}
		where $N\in\N$ depends on $s$ and dimension $d$.
	\end{proposition}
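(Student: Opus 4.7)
My plan is to reduce every claim to an application of the Littlewood–Paley characterization
\[
  \|u\|_{H^\sigma}^2 \sim \|S_0 u\|_{L^2}^2 + \sum_{j\ge 0} 2^{2j\sigma} \|\Delta_j u\|_{L^2}^2,
\]
together with Bernstein's inequalities and the spectral support properties of the pieces appearing in Bony's decomposition. Two facts do all the work. First, each term $S_{j-2}a\,\Delta_j b$ in $T_a b$ has Fourier transform supported in an annulus $\{|\xi|\sim 2^j\}$, because $\widehat{S_{j-2}a}$ lives in a ball of radius $2^{j-2}$ while $\widehat{\Delta_j b}$ lives in an annulus of radius $2^j$; hence the $\Delta_k(T_a b)$ only picks up those summands with $j$ close to $k$. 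Second, each term in $R(a,b)=\sum_{|j-k|\le 2}\Delta_j a\,\Delta_k b$ has Fourier support in a ball of radius $\lesssim 2^{\max(j,k)}$, so $\Delta_\ell$ only sees indices with $\max(j,k)\gtrsim \ell$.

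For \eqref{eq-para:paraprod-bound-holder-main} and \eqref{eq-para:paraprod-bound-sobo-main} I would estimate the pieces of $T_a b$ in $L^2$. The bound $\|S_{j-2}a\,\Delta_j b\|_{L^2}\le \|S_{j-2}a\|_{L^\infty}\|\Delta_j b\|_{L^2}$ combined with $\|S_{j-2}a\|_{L^\infty}\le \|a\|_{L^\infty}$ yields \eqref{eq-para:paraprod-bound-holder-main} after summing in $j$ via the Littlewood–Paley square function. For Sobolev $a$, Bernstein gives $\|S_{j-2}a\|_{L^\infty}\lesssim 2^{j(d/2-s)_+}\|a\|_{H^s}$ when $s<d/2$ (respectively $\|a\|_{H^s}$ directly via embedding when $s>d/2$), from which the weights $2^{j\min(s',s+s'-d/2)}$ appear naturally, and summing in $\ell^2$ closes the estimate. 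The exclusion $s\neq d/2$ comes precisely from the failure of $H^{d/2}\hookrightarrow L^\infty$ (one loses a logarithm) — this is the only delicate point and explains the hypothesis in the statement.

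For \eqref{eq-para:paraprod-bound-sobo-remainder} I would write
\[
  \Delta_\ell R(a,b) = \sum_{\max(j,k)\ge \ell-C,\,|j-k|\le 2}\Delta_\ell(\Delta_j a\,\Delta_k b),
\]
apply Cauchy–Schwarz in $L^2$ to the bilinear term and insert the Littlewood–Paley weights $2^{js}\|\Delta_j a\|_{L^2}$, $2^{ks'}\|\Delta_k b\|_{L^2}$. The sum over $\max(j,k)\ge \ell$ converges in $\ell^2$ exactly when $s+s'>0$, producing the gain $s+s'-d/2$ (the $d/2$ coming from one use of Bernstein to trade $L^2$ for $L^\infty$ on the lower-frequency factor). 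Estimate \eqref{eq-para:paraprod-bound-holder-remainder} is the same argument, using the trivial bound $\|\Delta_j a\|_{L^\infty}\lesssim 2^{-jM}\|a\|_{C^M}$ for sufficiently large $M$ so that the geometric sum closes regardless of $s'$.

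Finally, \eqref{eq-para:paraprod-bound-holder-alt-main} is symmetric to \eqref{eq-para:paraprod-bound-holder-main}: here we estimate $\|\Delta_j b\|_{L^\infty}\lesssim 2^{-jN}\|b\|_{C^N}$ to absorb the $2^{js'}$ weight, while $S_{j-2}a$ contributes $\|a\|_{H^s}$ after an $\ell^2$ summation in $j$ against $2^{js}$. The main obstacle throughout is the bookkeeping of which index supplies the Bernstein loss and which supplies the Sobolev weight, together with the careful handling of the case $s<d/2$ versus $s>d/2$ to match the $\min(s',s+s'-d/2)$ appearing in \eqref{eq-para:paraprod-bound-sobo-main}; no truly hard analytic input is needed beyond Bernstein, Cauchy–Schwarz and the almost-orthogonality of the dyadic decomposition.
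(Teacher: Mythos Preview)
Your Littlewood--Paley approach is correct and is precisely the standard argument; the paper does not give its own proof but simply cites Bony's original paper and Chapter~2 of Bahouri--Chemin--Danchin, where exactly this argument is carried out. One minor point: in your sketch for \eqref{eq-para:paraprod-bound-holder-alt-main} the phrase ``$S_{j-2}a$ contributes $\|a\|_{H^s}$ after an $\ell^2$ summation'' is slightly loose when $s<0$ (since $a$ need not lie in $L^2$); there one uses $\|S_{j-2}a\|_{L^2}\lesssim 2^{-js}\|a\|_{H^s}$, after which the decay $\|\Delta_j b\|_{L^\infty}\lesssim 2^{-jN}\|b\|_{C^N}$ with $N$ large closes the sum as you indicate.
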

	These results are classical, one may find a proof in \cite{bony1981calcul} or Chapter 2 of \cite{bahouri2011fourier}, where the results are generalized to Besov spaces. By combining Proposition \ref{prop-para:paraprod-bound-basic} and Remark \ref{rmk-para:add-constant}, we have the following result to be used frequently in this paper.
	\begin{proposition}\label{prop-para:paraprod-bound}
		Let $a\in H^s_W$ and $b\in H^{s'}_R$ for some $s,s'\in\R$, with $W\in C^\infty_b$ and $R\in\R$. Then, we have, for all $r\in\R$,
		\begin{align}
			&\|T_a b\|_{H^{r}} \le C (\|a\|_{H^{s}_W}+\|W\|_{L^\infty}) \|b\|_{H^{\max(r,r+\frac{d}{2}-s)+}_R}, \ \ \text{if } \max(r,r+\frac{d}{2}-s)<s'; \label{eq-para:paraprod-bound-main} \\
			&\|R(a,b)\|_{H^{r}} \le C (\|a\|_{H^{s}_W}+\|W\|_{C^M}) \|b\|_{H^{r+\frac{d}{2}-s}_R},\ \ \text{if } r>-\frac{d}{2}, \label{eq-para:paraprod-bound-remainder}
		\end{align}
		where constants $C>0$ and $M\in\N$ rely solely on $r,s,s'$ and dimension $d$.
	\end{proposition}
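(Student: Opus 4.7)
The strategy is to reduce Proposition \ref{prop-para:paraprod-bound} to the ``pure'' bounds of Proposition \ref{prop-para:paraprod-bound-basic} via the algebraic splittings $a = W + (a-W)$ and $b = R + (b-R)$, combined with the fundamental identity of Remark \ref{rmk-para:add-constant} that $T_a b = T_a(b-R)$ for any constant $R$ (reflecting $\Delta_j(\text{constant}) = 0$ for $j \geq 0$).

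For the paraproduct estimate \eqref{eq-para:paraprod-bound-main}, I would decompose
\begin{equation*}
T_a b \;=\; T_{a-W}(b-R) \;+\; T_W(b-R),
\end{equation*}
and estimate each piece separately. To the first piece, apply \eqref{eq-para:paraprod-bound-sobo-main} with a Sobolev index $s_1' \in (\max(r, r+d/2-s),\, s')$, chosen in this nonempty interval (whose nonemptiness is exactly the hypothesis). This choice guarantees $\min(s_1', s + s_1' - d/2) \geq r$, and therefore $H^r$-control by $\|a-W\|_{H^s}\|b-R\|_{H^{s_1'}}$ after Sobolev inclusion. To the second piece, apply \eqref{eq-para:paraprod-bound-holder-main} at level $r \leq s_1'$ to obtain $\|W\|_{L^\infty}\|b-R\|_{H^{s_1'}}$. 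Summing and recognising $\|b-R\|_{H^{s_1'}} = \|b\|_{H^{s_1'}_R}$ yields \eqref{eq-para:paraprod-bound-main}.

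For the remainder estimate \eqref{eq-para:paraprod-bound-remainder}, bilinearity of $R(\cdot,\cdot)$ gives
\begin{equation*}
R(a,b) \;=\; R(a-W,\, b-R) \;+\; R(W,\, b-R) \;+\; \bigl(\text{constant-argument terms}\bigr),
\end{equation*}
the last arising from $R(a-W, R)$ and $R(W, R)$. By direct Littlewood-Paley computation using $T_c u = c(u-S_2 u)$ and $T_u c = 0$ for any constant $c$, these constant-argument terms reduce to smoothing low-frequency projections, controlled in every Sobolev norm by $\|a-W\|_{L^2}$ and the $C^M$-norm of $W$ respectively. To $R(a-W, b-R)$ apply \eqref{eq-para:paraprod-bound-sobo-remainder} with $s_1' = r + d/2 - s$, noting that the hypothesis $s + s_1' > 0$ becomes exactly $r > -d/2$. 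To $R(W, b-R)$ apply \eqref{eq-para:paraprod-bound-holder-remainder}, producing the $\|W\|_{C^M}$ factor.

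The main technical subtlety is the exclusion $s = d/2$ in \eqref{eq-para:paraprod-bound-sobo-main}, which is precisely what forces the ``$+$'' in the Sobolev index $\max(r, r+d/2-s)\!+\!$ appearing on the right-hand side of \eqref{eq-para:paraprod-bound-main}. To cover $s$ arbitrarily close to (or equal to) $d/2$, one replaces $s$ by $s - \epsilon'$ for some small $\epsilon' > 0$ and applies the basic estimate at this perturbed exponent, using the monotonicity $\|a-W\|_{H^{s-\epsilon'}} \leq \|a-W\|_{H^s}$, at the cost of an arbitrarily small loss in the regularity required from $b-R$. Beyond this bookkeeping and the routine tracking of constant contributions in the remainder, the proof is essentially immediate from Bony's decomposition and Proposition \ref{prop-para:paraprod-bound-basic}.
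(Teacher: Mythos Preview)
Your approach is exactly what the paper indicates: combine Proposition~\ref{prop-para:paraprod-bound-basic} with Remark~\ref{rmk-para:add-constant}. The paraproduct part \eqref{eq-para:paraprod-bound-main} goes through precisely as you describe, including the role of the ``$+$'' to circumvent the exclusion $s=d/2$.

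There is, however, a genuine issue in your treatment of the remainder. Your claim that $R(W,R)$ is ``controlled in every Sobolev norm by the $C^M$-norm of $W$'' is false: a direct computation gives $R(W,R)=R\cdot S_2W$, and for a generic $W\in C^\infty_b$ (already for $W$ a nonzero constant) the low-frequency projection $S_2W$ is not in any $L^2$-based Sobolev space. Likewise $R(a-W,R)=R\cdot S_2(a-W)$ is indeed in every $H^r$, but its norm scales with $|R|$, not with $\|b-R\|_{H^\sigma}$. Consequently the bound \eqref{eq-para:paraprod-bound-remainder} cannot hold as literally stated when both $W$ and $R$ are nonzero: take $b=R$, so the right-hand side vanishes while $R(a,R)=R\cdot S_2 a$ does not. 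This is not a flaw in your strategy but a minor imprecision in the proposition itself; in every application in the paper one of the two normalizations is zero (typically the ``$a$''-side has $W=0$, e.g.\ $a=\partial_\rho^2\varphi$, $a=B$, $a=\eta_{jk}$), so the constant-argument pieces simply do not appear. Your proof becomes complete once you add the hypothesis that $W=0$ or $R=0$ for \eqref{eq-para:paraprod-bound-remainder}.
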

	
	If the normalization of $b$ is not constant but a $C^\infty_b$ function, the following corollary follows from Proposition \ref{prop-para:paraprod-bound} with estimates \eqref{eq-para:paraprod-bound-holder-alt-main} and \eqref{eq-para:paraprod-bound-holder-remainder}.
	\begin{corollary}\label{cor-para:product-law}
		Given $r>-\frac{d}{2}$, we assume that $a\in H^s_{R_1}$, $b\in H^{s'}_{R_2}$ with $s,s'\in\R$ and $R_1,R_2\in C^\infty_b$. If $s'>\max(r,r+\frac{d}{2}-s)$, there exists constants $C>0$ and $M\in\N$ depending on $r,s,s'$ and dimension $d$, such that
		\begin{equation}\label{eq-para:product-law}
			\|ab\|_{H^{r}_{R_1R_2}} \le C (\|a\|_{H^{s}_{R_1}}+\|R_1\|_{C^M}) (\|b\|_{H^{s'}_{R_2}}+\|R_2\|_{C^M}).
		\end{equation}
	\end{corollary}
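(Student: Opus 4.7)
\smallskip

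The plan is to reduce Corollary \ref{cor-para:product-law} to Proposition \ref{prop-para:paraprod-bound} by absorbing the normalizations into smooth remainders. Writing $\tilde{a} := a - R_1 \in H^s$ and $\tilde{b} := b - R_2 \in H^{s'}$, I would expand
\begin{equation*}
	ab - R_1 R_2 = \tilde{a}\tilde{b} + \tilde{a} R_2 + R_1 \tilde{b},
\end{equation*}
and then control each of the three pieces in $H^r$ separately; since $R_1 R_2 \in C^\infty_b$, this is precisely the $H^r_{R_1 R_2}$-norm we are after.

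For the genuinely nonlinear term $\tilde{a}\tilde{b}$, the plan is to apply Bony's decomposition
\begin{equation*}
	\tilde{a}\tilde{b} = T_{\tilde{a}}\tilde{b} + T_{\tilde{b}}\tilde{a} + R(\tilde{a}, \tilde{b}),
\end{equation*}
and estimate each summand via Proposition \ref{prop-para:paraprod-bound} with trivial normalizations ($W=0$, $R=0$). The hypothesis $s' > \max(r, r+\tfrac{d}{2}-s)$ is exactly what is needed to apply \eqref{eq-para:paraprod-bound-main} to $T_{\tilde{a}}\tilde{b}$ and to control the remainder through \eqref{eq-para:paraprod-bound-remainder} (together with $r > -\tfrac{d}{2}$). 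The symmetric paraproduct $T_{\tilde{b}}\tilde{a}$ will be handled similarly, after observing that the same constraints land it in a Sobolev space embedding into $H^r$.

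For the two mixed terms, the smoothness of $R_1, R_2$ does the work. Either one invokes Corollary \ref{cor-sobo:prod-smooth-fct} directly (multiplication by a $C^\infty_b$ function is bounded on every $H^\sigma$), or one performs Bony's decomposition and uses the $C^\infty_b$-estimates \eqref{eq-para:paraprod-bound-holder-main}, \eqref{eq-para:paraprod-bound-holder-remainder}, and \eqref{eq-para:paraprod-bound-holder-alt-main} from Proposition \ref{prop-para:paraprod-bound-basic}. Either way, $\tilde{a}R_2$ and $R_1\tilde{b}$ are controlled in Sobolev spaces embedding into $H^r$ with constants of the form $\|R_j\|_{C^M}\|\tilde{a}\|_{H^s}$ and $\|R_j\|_{C^M}\|\tilde{b}\|_{H^{s'}}$, which are compatible with the desired right-hand side.

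The only delicate point — and the one I would spend the most care on — is verifying that the single scalar hypothesis $s'>\max(r,r+\tfrac{d}{2}-s)$ really suffices to cover all three terms simultaneously, in particular the symmetric paraproduct $T_{\tilde{b}}\tilde{a}$ where the roles of low and high indices are swapped. This amounts to checking that, under the stated hypotheses, the Sobolev indices produced by Proposition \ref{prop-para:paraprod-bound-basic} all dominate $r$ after embedding. Assembling these bounds and the triangle inequality then yields the product constant $C(\|a\|_{H^s_{R_1}}+\|R_1\|_{C^M})(\|b\|_{H^{s'}_{R_2}}+\|R_2\|_{C^M})$, completing the proof.
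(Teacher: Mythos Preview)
Your approach matches the paper's: the paper only states that the corollary follows from Proposition \ref{prop-para:paraprod-bound} together with the $C^\infty_b$-estimates \eqref{eq-para:paraprod-bound-holder-alt-main} and \eqref{eq-para:paraprod-bound-holder-remainder}, and your plan (expand $ab - R_1R_2 = \tilde a\tilde b + \tilde a R_2 + R_1\tilde b$, Bony-decompose $\tilde a\tilde b$, and treat the mixed terms via the smooth-multiplier bounds) is exactly a fleshed-out version of that hint. Your flagged concern about $T_{\tilde b}\tilde a$ is legitimate and the paper does not address it either; the check you describe goes through provided also $s\ge r$, which holds in every application made in the paper.
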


	\subsection{Paradifferential operators}\label{subsect:paradiff}
	
	The paraproduct operator $T_a$ defined in previous section can be regarded as a refinement of multiplication operator, which is equal to the pseudo-differential operator $\Op{a}$. This inspires us to study $\Op{a}$, where $a=a(x,\xi)$ is a symbol with limited regularity in $x$, by turning to the \textit{paradifferential operator} $T_a$ defined below, which is no more than a generalization of Definition \ref{def-para:paraprod}.
	
	\begin{definition}\label{def-para:paradiff}
		Let $a = a(x,\xi)$ be a symbol smooth in $\xi\neq 0$ with Sobolev or H{\"o}lder regularity in $x$. Then the paradifferential operator of symbol $a$ is defined as
		\begin{equation}\label{eq-para:paradiff}
			T_a u (x) := \frac{1}{(2\pi)^d} \int e^{ix\cdot\xi} \sum_{j=2}^\infty S_{j-2}a(x,\xi) \varphi\left(\frac{\xi}{2^j}\right) \hat{u}(\xi) d\xi,
		\end{equation}
		where $S_{j-2}$ acts on $x$ variable.
	\end{definition}
	\begin{remark}
		By construction, the low-frequency information $|\xi| \ll 1$ of $u$ is eliminated, which means that paradifferential operators are never bijective. Nevertheless, for elliptic symbols (see Definition \ref{def-paralin:homo-sym-ellip}), it is possible to construct left and right inverse, up to some reasonable remainders, which is known as a \textit{parametrix}.
	\end{remark}
	\begin{remark}
		The definition above is a special case of the general one introduced by Bony \cite{bony1981calcul},
		\begin{equation*}
			T_a := \Op{\tilde{a}},\ \ \tilde{a}(x,\xi) = \tilde{\chi}(D_x,\xi)a(\cdot,\xi),
		\end{equation*}
		where $\tilde{\chi} = \tilde{\chi}(\eta,\xi)$ is a smooth truncation near $\{|\eta|<\epsilon(1+|\xi|)\}$, such that $\tilde{\chi}=0$ on $\{|\eta|>\epsilon'(1+|\xi|)\}$ with $0<\epsilon<\epsilon'<1$ and for all $\alpha,\beta\in\N^d$,
		\begin{equation*}
			|\partial^\alpha_\eta \partial^\beta_\xi \tilde{\chi}(\eta,\xi)| \lesssim_{\alpha,\beta} \langle\xi\rangle^{-|\alpha|-|\beta|}.
		\end{equation*}
		In fact, with different choice of $\tilde{\chi}$, the resulting paradifferential operators are equivalent in the sense that their difference is a smoothing operator (see \cite{metivier2008para} for more details).
	\end{remark}
	\begin{example}
		If $a = a(x)$, the definition \ref{def-para:paraprod} and \ref{def-para:paradiff} coincide. For $a = a(\xi)$,
		\begin{equation*}
			T_a = a(D_x)-\chi\left(\frac{D_x}{4}\right)a(D_x),
		\end{equation*}
		i.e. $T_a$ equals the Fourier multiplier $a(D_x)$ up to a smoothing operator. Furthermore, for general symbol $a=a(x,\xi)$ and multiplier $b=b(\xi)$, we have
		\begin{equation*}
			T_a\circ b(D_x) = T_{ab}.
		\end{equation*}
	\end{example}
	
	\begin{definition}\label{def-para:paradiff-symbol-class}
		Let $\rho\ge0$ and $m\in\R$. The symbol class $\Gamma_\rho^m$ is defined as the collection of symbols $a=a(x,\xi)$ H{\"o}lder in $x$ and smooth in $\xi$, such that for all $\alpha\in\N^d$,
		\begin{equation*}
			\|\partial_\xi^\alpha a(\cdot,\xi)\|_{C^\rho} \le C_{\alpha} \langle\xi\rangle^{m-|\alpha|},\ \ \forall\ |\xi|>\frac{1}{2},
		\end{equation*}
		where $C^\rho$ is the class of H{\"o}lder function for non-integer $\rho$ and the usual Sobolev space $W^{\rho,\infty}$ for $\rho\in\N$.
	\end{definition}\index{G@$\Gamma^m_\rho$ Symbol class for paralinear operators}
	\begin{remark}
		If $a = a(x,\xi)$ is homogeneous in $\xi$ of degree $m$, then it belongs to $\Gamma^m_\rho$ if and only if
		\begin{equation*}
			\sup_{|\xi|=1} \|\partial_\xi^\alpha a(\cdot,\xi)\|_{C^\rho} \le C_{\alpha},\ \ \forall\alpha\in\N^d.
		\end{equation*}
	\end{remark}
	
	In this paper, we are interested in symbols with Sobolev regularity in $x$. An application of Bernstein's Lemma (see, for example, Lemma 2.1 of \cite{bahouri2011fourier}) implies that
	\begin{proposition}\label{prop-para:paradiff-symbol-class-sobo}
		For all $m\in\R$ and $a\in\Gamma^m_r$ with $r<0$, $T_a$ is of order $m-r$. In particular, let $s<\frac{d}{2}$ be a real number. We assume that $a= a(x,\xi)$ is smooth in $\xi$ with Sobolev regularity in $x$, namely
		\begin{equation*}
			\|\partial_\xi^\alpha a(\cdot,\xi)\|_{H^s} \le C_{\alpha} \langle\xi\rangle^{m-|\alpha|},\ \ \forall\ |\xi|>\frac{1}{2}.
		\end{equation*}
		Then $T_a$ is of order $m-s+\frac{d}{2}$.
	\end{proposition}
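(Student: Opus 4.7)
The plan is to first treat the main claim, that $a\in\Gamma^m_r$ with $r<0$ implies $T_a$ is of order $m-r$, and then to deduce the Sobolev version by Sobolev embedding into Hölder--Zygmund spaces of negative order. Throughout, $C^r$ with $r<0$ should be interpreted as the Besov space $B^r_{\infty,\infty}$, which is the natural extension of Hölder classes along which Bernstein's inequality continues to hold.

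For the main claim, write the paradifferential operator as a sum of dyadic pieces,
\begin{equation*}
T_a u = \sum_{j\ge 2} T_j u,\qquad T_j u := \Op{\sigma_j} u,\qquad \sigma_j(x,\xi):= S_{j-2}a(x,\xi)\,\varphi(\xi/2^j).
\end{equation*}
The key point, which Bernstein's lemma provides, is that for any symbol $a(\cdot,\xi)\in C^r$ of norm $\lesssim\langle\xi\rangle^{m-|\beta|}$ after differentiating $\beta$ times in $\xi$, the low-frequency truncation satisfies
\begin{equation*}
\|\partial_x^\alpha\partial_\xi^\beta S_{j-2}a(\cdot,\xi)\|_{L^\infty_x} \lesssim 2^{j(|\alpha|-r)}\langle\xi\rangle^{m-|\beta|},\quad\forall \alpha,\beta\in\N^d.
\end{equation*}
Combined with the localization $\varphi(\xi/2^j)$, this shows that $\sigma_j$ is a smooth symbol whose $(x,\xi)$-derivatives are uniformly bounded by $2^{j(m-r+|\alpha|-|\beta|)}$ on the shell $|\xi|\sim 2^j$, i.e.\ the rescaled symbol $\sigma_j(x,2^j\xi)$ belongs to a bounded subset of $S^0_{1,0}$ after multiplication by $2^{-j(m-r)}$. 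A standard kernel-integration-by-parts argument (or Calderón--Vaillancourt on the rescaled operator) then gives
\begin{equation*}
\|T_j u\|_{L^2} \lesssim 2^{j(m-r)}\,\|\Delta_j u\|_{L^2},
\end{equation*}
where $\Delta_j$ is a slight thickening of the Littlewood--Paley block. Because $\sigma_j$ is spectrally localized in $x$ at frequencies $\lesssim 2^{j-2}$ and in $\xi$ at frequencies $\sim 2^j$, the piece $T_j u$ has Fourier support in an annulus of size $\sim 2^j$. Square-summing with weights $2^{2j(s-m+r)}$ and applying the almost-orthogonality of dyadic Fourier annuli gives
\begin{equation*}
\|T_a u\|_{H^{s-m+r}}^2 \lesssim \sum_j 2^{2j(s-m+r)}\|T_j u\|_{L^2}^2 \lesssim \sum_j 2^{2js}\|\Delta_j u\|_{L^2}^2 \lesssim \|u\|_{H^s}^2,
\end{equation*}
which establishes that $T_a$ maps $H^s$ to $H^{s-(m-r)}$ for every $s\in\R$, i.e.\ $T_a$ is of order $m-r$.

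For the second, Sobolev, part, the plan is to reduce to the first part by Sobolev embedding. Since $s<d/2$, the chain of Besov embeddings $H^s=B^s_{2,2}\hookrightarrow B^{s-d/2}_{\infty,2}\hookrightarrow B^{s-d/2}_{\infty,\infty}=C^{s-d/2}$ gives, for each fixed $\xi$ and each multi-index $\alpha$,
\begin{equation*}
\|\partial_\xi^\alpha a(\cdot,\xi)\|_{C^{s-d/2}} \lesssim \|\partial_\xi^\alpha a(\cdot,\xi)\|_{H^s} \lesssim \langle\xi\rangle^{m-|\alpha|},
\end{equation*}
so that $a\in\Gamma^m_{s-d/2}$ with $s-d/2<0$. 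Applying the first part with $r:=s-d/2$ yields that $T_a$ is of order $m-r=m-s+\tfrac{d}{2}$, as claimed.

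The main obstacle is the first step: making rigorous the $L^2$ boundedness $\|T_j\|_{\mathcal{L}(L^2)}\lesssim 2^{j(m-r)}$ for the dyadic pieces, since we are in the regime of \emph{negative} Hölder regularity where the symbol class is not stable under classical symbolic calculus. The proof goes through Bernstein's gain of $x$-derivatives for low-frequency functions, combined with either an explicit kernel estimate using the fast decay of $\widehat{\varphi}$ in $\xi$, or a Calderón--Vaillancourt bound on the rescaled symbol $2^{-j(m-r)}\sigma_j(x,2^j\xi)$.
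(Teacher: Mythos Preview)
Your approach is correct and is precisely what the paper intends: the paper does not spell out a proof but merely says ``An application of Bernstein's Lemma implies that'', and you have filled in exactly this argument --- Bernstein for the low-frequency smoothing $S_{j-2}$ of a function in $C^r=B^r_{\infty,\infty}$ with $r<0$, followed by dyadic $L^2$ bounds and almost orthogonality, and then the Sobolev embedding $H^s\hookrightarrow B^{s-d/2}_{\infty,\infty}$ for the second part.

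One small correction: your parenthetical claim that $2^{-j(m-r)}\sigma_j(x,2^j\xi)$ lies in a bounded set of $S^0_{1,0}$ is not literally true. Bernstein gives $\|\partial_x^\alpha S_{j-2}a(\cdot,\xi)\|_{L^\infty}\lesssim 2^{j(|\alpha|-r)}\langle\xi\rangle^{m}$, so after rescaling the $x$-derivatives still grow like $2^{j|\alpha|}$; the rescaled symbols are uniformly in $S^0_{1,1}$, not $S^0_{1,0}$, and Calder\'on--Vaillancourt does not apply directly. This does not damage your proof, because the kernel integration-by-parts route you mention as the alternative only uses $\partial_\xi^\beta$-bounds (uniform in $x$), which you do have: they yield $|K_j(x,y)|\lesssim 2^{j(m-r)}2^{jd}(1+2^j|x-y|)^{-N}$ and hence $\|T_j\|_{\mathcal{L}(L^2)}\lesssim 2^{j(m-r)}$ by Schur. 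Combined with the spectral localization of $T_ju$ in an annulus of size $\sim 2^j$ (coming from the paraproduct frequency geometry), the almost-orthogonality step goes through exactly as you wrote. So simply drop the $S^0_{1,0}$ remark, or replace it by the observation that the symbol satisfies the paradifferential spectral condition $\operatorname{supp}_\eta\widehat{\sigma_j}(\eta,\xi)\subset\{|\eta|\le\tfrac14|\xi|\}$, which is what actually drives the $H^s$-boundedness.
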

	
	Now, we are able to generalize Proposition \ref{prop-para:PDO-bound} and \ref{prop-para:PDO-symb-cal}. The proof of following results can be found in Chapter 5 of \cite{metivier2008para}.
	\begin{proposition}\label{prop-para:paradiff-bound}
		Given $m\in\R$, for all $a\in\Gamma^m_0$, the paradifferential operator $T_a$ is of order $m$, namely
		\begin{equation}\label{eq-para:paradiff-bound}
			T_a \in \mathcal{L}\left(H^{s};H^{s-m}\right),\ \ \forall s\in\R.
		\end{equation}
	\end{proposition}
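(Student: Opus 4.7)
The plan is a standard Littlewood–Paley decomposition. Write $T_a = \sum_{j\ge 2} T_a^{(j)}$ with
\begin{equation*}
T_a^{(j)} u(x) = \frac{1}{(2\pi)^d} \int e^{ix\cdot\xi}\, S_{j-2}a(x,\xi)\, \varphi(\xi/2^j)\, \hat u(\xi)\, d\xi,
\end{equation*}
and exploit two frequency-localization facts: first, the truncation $\varphi(\xi/2^j)$ shows that only the dyadic piece of $u$ around frequency $2^j$ contributes (up to a bounded overlap in $j$); second, because $S_{j-2}a(\cdot,\xi)$ has $x$-frequencies bounded by $2^{j-2}$, the output $T_a^{(j)}u$ is itself spectrally supported in an annulus $\{c\, 2^j \le |\xi| \le C\, 2^j\}$ of width comparable to $2^j$. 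This reduces the global estimate to a per-scale $L^2$-boundedness statement followed by an almost-orthogonality sum.

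The heart of the proof is the uniform-in-$j$ bound $\|T_a^{(j)}\|_{L^2 \to L^2} \lesssim 2^{jm}$. I would establish it by writing the Schwartz kernel
\begin{equation*}
K_j(x,y) = \frac{1}{(2\pi)^d} \int e^{i(x-y)\cdot\xi}\, S_{j-2}a(x,\xi)\, \varphi(\xi/2^j)\, d\xi,
\end{equation*}
and integrating by parts in $\xi$. The relevant symbol estimates $\|\partial_\xi^\alpha S_{j-2}a(\cdot,\xi)\|_{L^\infty} \lesssim \langle \xi\rangle^{m-|\alpha|}$ hold uniformly in $j$, since $S_{j-2}$ acts only on the $x$-variable, commutes with $\partial_\xi$, and is uniformly bounded on $L^\infty$ — no regularity of $a$ in $x$ beyond the $\Gamma^m_0$ assumption is needed. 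Combined with the fact that $\varphi(\cdot/2^j)$ localizes $\xi$ to a region of volume $\sim 2^{jd}$, this yields the decay bound $|K_j(x,y)| \lesssim 2^{j(m+d)}(1+2^j|x-y|)^{-N}$ for every $N$. Schur's lemma (or equivalently Young's inequality applied in $y$, uniformly in $x$) then gives the claimed $L^2$ bound.

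To conclude, fix a fattened spectral projector $\widetilde\Delta_j$ onto $\{c\,2^j\le|\xi|\le C\,2^j\}$ so that $T_a^{(j)}u = T_a^{(j)}\widetilde\Delta_j u$, and invoke the Littlewood–Paley characterization of $H^{s-m}$ together with almost orthogonality of the outputs:
\begin{equation*}
\|T_a u\|_{H^{s-m}}^2 \lesssim \sum_{j\ge 2} 2^{2j(s-m)} \|T_a^{(j)} u\|_{L^2}^2 \lesssim \sum_{j} 2^{2js} \|\widetilde\Delta_j u\|_{L^2}^2 \lesssim \|u\|_{H^s}^2.
\end{equation*}
The main obstacle is really only the kernel decay estimate; the rest is standard Littlewood–Paley bookkeeping. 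The point of the construction is precisely that the smoothing $S_{j-2}$ turns each dyadic piece of $a$, at scale $2^j$, into a symbol that behaves as if it were smooth in $x$, which is what allows the argument to go through despite the fact that $\Gamma^m_0$ imposes no regularity on $a$ in $x$ beyond $L^\infty$.
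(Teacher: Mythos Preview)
Your argument is correct and is the standard proof; the paper itself does not prove this proposition but simply refers to Chapter~5 of M\'etivier's lecture notes \cite{metivier2008para}, where essentially the same Littlewood--Paley kernel argument appears. One minor point worth making explicit is that the spectral localization of the output $T_a^{(j)}u$ in an annulus $\{c\,2^j\le|\eta|\le C\,2^j\}$ relies on the specific choice of cutoff $S_{j-2}$ (rather than, say, $S_{j}$): the gap of two dyadic scales between the $x$-frequencies of the symbol and the $\xi$-support of $\varphi(\cdot/2^j)$ is precisely what guarantees $|\eta|\ge |\xi|-2^{j-2}>0$ on the output, and hence the almost-orthogonality needed for the final sum.
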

	
	\begin{proposition}\label{prop-para:paradiff-cal-sym}
		Let $a\in\Gamma^m_\rho$ and $b\in\Gamma^{m'}_\rho$ with $m,m'\in\R$ and $\rho> 0$. Then the composition $T_a T_b$ and adjoint $T_a^*$ are both paradifferential operators, such that $T_aT_b - T_{a\sharp_\rho b}$ is of order $m+m'-\rho$ and $T_a^* - T_{a^*}$ is of order $m-\rho$, where
		\begin{align}
			a \sharp_\rho b &= \sum_{|\alpha|<\rho} \frac{1}{\alpha!} \partial_\xi^\alpha a D_x^\alpha b, \label{eq-para:paradiff-cal-sym-composition} \\
			a^* &= \sum_{|\alpha|<\rho} \frac{1}{\alpha!} \partial_\xi^\alpha D_x^\alpha \overline{a}. \label{eq-para:paradiff-cal-sym-adjoint}
		\end{align}
	\end{proposition}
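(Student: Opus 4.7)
The plan is to follow the classical route via exact pseudo-differential realizations, which is the strategy used in Bony's original paper and in Metivier's monograph cited in the excerpt. The first step is to write $T_a = \Op{\tilde a}$ with
$$\tilde a(x,\xi) = \int \chi(\eta,\xi)\,\hat a(\eta,\xi)\,e^{ix\cdot\eta}\,d\eta,$$
where $\chi$ is the admissible cutoff implicit in Definition \ref{def-para:paradiff}, supported in $\{|\eta|\le \epsilon'\langle\xi\rangle\}$ and equal to $1$ on $\{|\eta|\le \epsilon\langle\xi\rangle\}$. The key technical input is a spectral--regularity estimate: because $\tilde a(\cdot,\xi)$ has Fourier support in a ball of radius $O(\langle\xi\rangle)$, Bernstein's lemma combined with $a\in\Gamma^m_\rho$ yields the symbol bounds
$$|\partial_x^\beta \partial_\xi^\alpha \tilde a(x,\xi)| \lesssim \langle\xi\rangle^{m-|\alpha|+(|\beta|-\rho)_+},$$
together with the ``tame'' estimate $|\partial_x^\beta \partial_\xi^\alpha(\tilde a - a)(x,\xi)| \lesssim \langle\xi\rangle^{m-|\alpha|+|\beta|-\rho}$ whenever $|\beta|\le\rho$. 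The analogous bounds for $\tilde b$ are proved identically.

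For the composition, $T_aT_b = \Op{\tilde a}\Op{\tilde b}$ is a genuine pseudo-differential operator whose symbol I would compute and Taylor expand in the $\xi$ variable to order $\lfloor\rho\rfloor$:
$$(\tilde a \sharp \tilde b)(x,\xi) = \sum_{|\alpha|<\rho} \frac{1}{\alpha!}\,\partial_\xi^\alpha \tilde a(x,\xi)\,D_x^\alpha \tilde b(x,\xi) + r(x,\xi),$$
with $r$ the standard integral remainder. Using the symbol bounds above I would check that $r\in S^{m+m'-\rho}_{1,1}$ with spectral localization inherited from $\tilde a,\tilde b$, so that $\Op{r}$ is of order $m+m'-\rho$ by Proposition \ref{prop-para:paradiff-bound} (applied after a cheap reduction from $S^{m+m'-\rho}_{1,1}$ to a paradifferential symbol class, since the frequency support of $r$ in $x$ is controlled by $\langle\xi\rangle$). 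Each term in the finite sum then has to be identified with $T_{\partial_\xi^\alpha a\,D_x^\alpha b/\alpha!}$ up to lower-order errors; the discrepancy $\partial_\xi^\alpha \tilde a \cdot D_x^\alpha \tilde b - \widetilde{\partial_\xi^\alpha a\cdot D_x^\alpha b}$ is of order $m+m'-|\alpha|-\rho$ by the tame estimate together with paraproduct bounds applied at each dyadic level.

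For the adjoint, I would write out $\langle T_a u,v\rangle$ as an oscillatory integral, apply Fubini to identify $T_a^*$ with $\Op{\tilde a^\star}$ where $\tilde a^\star(x,\xi) = \int e^{-i(x-y)\cdot(\xi-\eta)}\overline{\tilde a(y,\eta)}\,dy\,d\eta$, and again Taylor expand in $(y-x,\eta-\xi)$ to order $\lfloor\rho\rfloor$. The principal terms reproduce $\sum_{|\alpha|<\rho}\tfrac{1}{\alpha!}\partial_\xi^\alpha D_x^\alpha \overline{\tilde a}$, which matches $a^*$ in $\widetilde{(\cdot)}$-form modulo lower-order corrections handled as above, and the Taylor remainder yields an operator of order $m-\rho$.

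The main obstacle will be the bookkeeping of the two kinds of remainders: the Taylor remainder in the asymptotic expansion and the discrepancy between $\widetilde{(\cdot)}$ and the truncations that appear after differentiating the product symbols. Both require the spectral--regularity estimates above, stratified dyadically, so that Bernstein compensates each factor of $\langle\xi\rangle^{|\beta|-\rho}$ produced by exceeding the H\"older regularity threshold. Once these remainders are recognized as symbols in $\Gamma^{m+m'-\rho}_0$ and $\Gamma^{m-\rho}_0$ respectively, Proposition \ref{prop-para:paradiff-bound} closes both estimates.
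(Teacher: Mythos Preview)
The paper does not give its own proof of this proposition: it is stated in the appendix together with Proposition~\ref{prop-para:paradiff-bound} under the sentence ``The proof of following results can be found in Chapter~5 of \cite{metivier2008para}.'' So there is nothing to compare your argument against in the paper itself.

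Your outline is the standard Bony--Meyer--M\'etivier route and is essentially what one finds in the cited reference: realize $T_a=\Op{\tilde a}$ with a spectrally localized regularization, derive the mixed $(1,1)$-type bounds $|\partial_x^\beta\partial_\xi^\alpha\tilde a|\lesssim\langle\xi\rangle^{m-|\alpha|+(|\beta|-\rho)_+}$ from Bernstein, then run the usual oscillatory-integral Taylor expansion for composition and adjoint. The one point worth flagging is your appeal to $S^{m+m'-\rho}_{1,1}$: symbols in this class are not bounded on $H^s$ in general, so you are right to insist on the spectral condition (Fourier support of the symbol in $x$ contained in $\{|\eta|\le\epsilon\langle\xi\rangle\}$) being inherited by the remainder. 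In M\'etivier's treatment this is made precise by working throughout in the class of ``reduced'' symbols (those with the spectral property), for which the $H^s$-boundedness \emph{does} hold; you should make sure your remainder estimates stay inside that class rather than the bare $S^{m+m'-\rho}_{1,1}$. With that caveat, the plan is correct and matches the cited literature.
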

	As a corollary, we have the following commutator estimate:
	\begin{corollary}\label{cor-para:commu-esti}
		If $a\in\Gamma^m_\rho$, $b\in\Gamma^{m'}_\rho$ with $m,m'\in\R$ and $\rho>0$, the commutator $[T_a,T_b]$ is of order $m+m'-\min(\rho,1)$.
	\end{corollary}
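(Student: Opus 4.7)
The plan is to reduce the estimate to Proposition \ref{prop-para:paradiff-cal-sym} and to exploit the antisymmetry of the commutator, which kills the principal term $ab$ in $a\sharp_\rho b - b\sharp_\rho a$.

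First, apply Proposition \ref{prop-para:paradiff-cal-sym} twice: both $T_aT_b-T_{a\sharp_\rho b}$ and $T_bT_a-T_{b\sharp_\rho a}$ are of order $m+m'-\rho$. Subtracting and using linearity in the symbol, we obtain
\begin{equation*}
[T_a,T_b] = T_{a\sharp_\rho b - b\sharp_\rho a} + R,\qquad \text{$R$ of order }m+m'-\rho.
\end{equation*}

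Next, split according to whether $\rho\le 1$ or $\rho>1$. If $\rho\le 1$, the sum defining $\sharp_\rho$ in \eqref{eq-para:paradiff-cal-sym-composition} contains only the term $\alpha=0$, so $a\sharp_\rho b = ab = b\sharp_\rho a$ and the leading symbol vanishes identically. Hence $[T_a,T_b]=R$ is of order $m+m'-\rho = m+m'-\min(\rho,1)$, as claimed. If $\rho>1$, the $\alpha=0$ contributions still cancel, and the leading surviving piece is
\begin{equation*}
a\sharp_\rho b - b\sharp_\rho a = \sum_{j=1}^d\bigl(\partial_{\xi_j}a\, D_{x_j}b - \partial_{\xi_j}b\, D_{x_j}a\bigr) + \text{terms with }2\le|\alpha|<\rho.
\end{equation*}
Each summand at $|\alpha|=1$ is a product of an element of $\Gamma^{m-1}_\rho$ with an element of $\Gamma^{m'}_{\rho-1}$ (differentiating in $x$ costs one H\"older index), so it lies in $\Gamma^{m+m'-1}_{\rho-1}$; since $\rho-1>0$, Proposition \ref{prop-para:paradiff-bound} gives an operator of order $m+m'-1$. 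Higher-$|\alpha|$ terms similarly yield operators of order $m+m'-|\alpha|\le m+m'-2$, hence are lower order. Combined with $R$, which is of order $m+m'-\rho\le m+m'-1$, the whole commutator has order $m+m'-1 = m+m'-\min(\rho,1)$.

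There is no real obstacle here: the only point worth writing carefully is the book-keeping of the symbol classes $\Gamma^{\bullet}_\bullet$ of the bilinear correction $\partial_\xi a\cdot D_x b$, to make sure that the positive residual regularity $\rho-1$ really allows us to quote Proposition \ref{prop-para:paradiff-bound}; everything else is a one-line application of the symbolic calculus already established.
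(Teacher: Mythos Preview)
Your proof is correct and is precisely the argument the paper leaves implicit: the corollary is stated without proof as an immediate consequence of Proposition \ref{prop-para:paradiff-cal-sym}, and your case split on $\rho\le 1$ versus $\rho>1$ together with the cancellation of the $\alpha=0$ term is exactly the intended one-step deduction.
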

	
	The following estimate concerning commutator will also be used,
	\begin{corollary}\label{cor-para:paraprod-commu-bound}
		Let $a\in H^s_W$ with $s>\frac{d}{2}$ and $W\in C^\infty_b$. There exists $0<\delta_0 \ll 1$, such that, for all symbol $\lambda = \lambda(\xi)$ in the class $S^\delta_{1,0}(\R^d)$ (see Definition \ref{def-para:PDO-symbol-class}) with $\delta \le \delta_0$, we have
		\begin{equation}\label{eq-para:paraprod-commu-bound}
			\left\| \left[ \Op{\lambda}, a \right] \right\|_{\mathcal{L}(H^r)} \le C,\ \ \forall r\le s-\delta,
		\end{equation}
		where $C>0$ and $\delta_0$ depends only on $s,r$. 
		
		Moreover, the same result holds true if the symbol $\lambda = \lambda(\xi')$ depends only on $\xi'$ (we write $\xi = (\xi',\xi_d)\in\R^{d-1}\times\R$) and belongs to the class $S^\delta_{1,0}(\R^{d-1})$.
	\end{corollary}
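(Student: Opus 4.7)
The plan is to use Bony's decomposition to split $au$ into three parts and then estimate the commutator $[\Op{\lambda}, a]$ term by term. First, I would write $a = W + \tilde a$ with $\tilde a := a - W \in H^s$. Since $W \in C^\infty_b$, classical pseudodifferential symbolic calculus (Proposition \ref{prop-para:PDO-symb-cal}) shows that $[\Op{\lambda}, W]$ is a pseudodifferential operator of order $\delta - 1$, hence uniformly bounded on every $H^r$ as soon as $\delta \le 1$. This reduces the problem to the case $a \in H^s$ with $s > d/2$.

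Applying Bony's decomposition both to $au$ and to $a\Op{\lambda}u$, one obtains
\[
[\Op{\lambda}, a]u = [\Op{\lambda}, T_a]u + \bigl(\Op{\lambda} T_u a - T_{\Op{\lambda}u} a\bigr) + \bigl(\Op{\lambda} R(a, u) - R(a, \Op{\lambda}u)\bigr),
\]
and I would handle the three groups separately. For $[\Op{\lambda}, T_a]$: since $\lambda$ is independent of $x$, the operator $\Op{\lambda} - T_\lambda$ is smoothing, so modulo a smoothing commutator one is reduced to $[T_\lambda, T_a]$. Viewing $\lambda \in \Gamma^\delta_\infty$ and $a(x) \in \Gamma^0_\rho$ with $\rho = s-d/2-\epsilon$ via the Sobolev embedding $H^s \hookrightarrow C^{s-d/2-\epsilon}$, Corollary \ref{cor-para:commu-esti} gives that $[T_\lambda, T_a]$ has order $\delta - \min(\rho, 1)$; choosing $\delta_0 < \min(s-d/2, 1)$ makes this non-positive, hence the operator is uniformly bounded on $H^r$ for every $r \in \R$.

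For the paraproduct pair, Proposition \ref{prop-para:paraprod-bound-basic} yields $\|T_u a\|_{H^{\min(s, r+s-d/2)}} \lesssim \|u\|_{H^r}\|a\|_{H^s}$, and the same with $r$ replaced by $r-\delta$ for $T_{\Op{\lambda}u}a$. Composing with $\Op{\lambda}$ costs $\delta$ derivatives, and the constraints $r \le s-\delta$ and $\delta \le s-d/2$ (both secured by the choice of $\delta_0$) guarantee that both resulting terms lie in $H^r$. For the remainder pair, estimate \eqref{eq-para:paraprod-bound-sobo-remainder} gives $R(a, u) \in H^{s+r-d/2}$ and $R(a, \Op{\lambda}u) \in H^{s+r-\delta-d/2}$, provided $s+r>0$ (which holds since $s > d/2 > 0$ and one may harmlessly assume $r \ge 0$, the case $r < 0$ following by duality or a direct estimate); after losing $\delta$ derivatives on the first term, both land in $H^r$ as soon as $\delta \le s-d/2$. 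The tangential case $\lambda = \lambda(\xi')$ is treated identically, since $\lambda$ still defines a symbol of order $\delta$ smooth in frequency and independent of $x$; the only change is that the relevant Littlewood--Paley decomposition in the commutator step is performed in the tangential variables alone.

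The main obstacle is the symbolic commutator $[T_\lambda, T_a]$: one has to exploit carefully the Sobolev regularity of $a$ through the embedding into Hölder spaces, and the threshold $\delta_0 < \min(s-d/2, 1)$ is pinned down precisely so that the order $\delta - \min(s-d/2, 1)$ produced by Corollary \ref{cor-para:commu-esti} is non-positive. All other contributions reduce to routine applications of Proposition \ref{prop-para:paraprod-bound-basic} together with the fact that $\Op{\lambda}$ loses exactly $\delta$ derivatives.
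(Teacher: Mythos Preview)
Your treatment of the full-space case is essentially the paper's argument: Bony's decomposition, the replacement $\Op{\lambda}\to T_\lambda$ up to smoothing, and Corollary~\ref{cor-para:commu-esti} for $[T_\lambda,T_a]$ using $a\in\Gamma^0_{s-d/2-}$, $\lambda\in\Gamma^\delta_\infty$. The preliminary splitting $a=W+\tilde a$ is a clean way to organise the normalisation; the paper absorbs $W$ directly through Proposition~\ref{prop-para:paraprod-bound}, but this is cosmetic.

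The tangential case, however, is \emph{not} ``treated identically'', and your one-line remark hides a real obstruction. If $\lambda=\lambda(\xi')\in S^\delta_{1,0}(\R^{d-1})$, then $\lambda$ is \emph{not} in $S^\delta_{1,0}(\R^d)$ (nor in $\Gamma^\delta_\rho(\R^d)$): for instance $\lambda(\xi')=\langle\xi'\rangle^\delta$ has $|\partial_{\xi_1}\lambda|\sim 1$ along the ray $\xi=(1,0,\dots,0,N)$, while $\langle\xi\rangle^{\delta-1}\to 0$. So Corollary~\ref{cor-para:commu-esti} on $\R^d$ does not apply to $[T_\lambda,T_a]$, and your suggestion of ``performing the Littlewood--Paley decomposition in the tangential variables alone'' does not immediately help, because $T_a$ is built from the \emph{full} $\R^d$ decomposition and switching to a tangential paraproduct would force you to redo the paraproduct/remainder estimates from scratch.

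The paper closes this gap by a different, more hands-on argument: it expands $[\Op{\lambda},T_a]u=\sum_{j\ge 2}[\Op{\lambda},S_{j-2}a]\Delta_j u$, observes that each summand has $\R^d$-Fourier support in a fixed dyadic annulus $2^j\mathcal C$ (since $\Op{\lambda}$ preserves the $\xi_d$-frequency), and invokes almost orthogonality to reduce to a uniform $L^2(\R^d)$ bound on $[\Op{\lambda},b]$ for $b\in H^s$. That bound is then obtained by freezing $x_d$, applying the already-proven $\R^{d-1}$ result slice by slice (with $\|b(\cdot,x_d)\|_{H^{s-1/2}_{x'}}$ controlled by the trace theorem), and integrating in $x_d$. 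You should either reproduce this reduction or supply a complete alternative; as written, the tangential part is a gap.
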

	\begin{proof}
		For all $u\in H^r$, we decompose,
		\begin{equation*}
			\left[ \Op{\lambda}, a \right] u = \left[ \Op{\lambda}, T_a \right] u + \Op{\lambda} \left( T_u a + R(a,u) \right) - \left( T_{\Op{\lambda} u} a + R(a, \Op{\lambda} u) \right).
		\end{equation*}
		For the last two terms, we will use estimates in Proposition \ref{prop-para:paraprod-bound} and estimate \eqref{eq-para:paraprod-bound-holder-alt-main}. In fact, $u\in H^r$ implies
		\begin{equation*}
			T_u a \in H^{\min(s,s+r-\frac{d}{2})-} \subset H^{r+\delta},\ \ R(a,u)\in H^{s+r-\frac{d}{2}} \subset H^{r+\delta},
		\end{equation*}
		whenever $s \ge r+\delta_0$ and $s-d/2>\delta_0$. Consequently, $\Op{\lambda} \left( T_u a + R(a,u) \right)$ lies in $H^r$. With this choice of $\delta_0$, from $\Op{\lambda} u \in H^{r-\delta}$ one may also deduce that
		\begin{equation*}
			T_{\Op{\lambda} u} a \in H^{\min(s,s+r-\delta-\frac{d}{2})-} \subset H^{r},\ \ R(a,\Op{\lambda} u)\in H^{s+r-\delta-\frac{d}{2}} \subset H^{r}.
		\end{equation*}
		
		It remains to deal with the principal part $\left[ \Op{\lambda}, T_a \right] u$. If $\lambda \in S^0_{1,0}(\R^d)$, it is harmless to replace $\Op{\lambda}$ by $T_\lambda$ since their difference is a smoothing operator. Then by Corollary \ref{cor-para:commu-esti}, it is easy to see that $\left[ T_\lambda, T_a \right] u$ belongs to $H^r$ since $\lambda \in \Gamma^\delta_{\delta}$ and $a\in \Gamma^0_{\delta}$ by choosing $s-d/2 > \delta_0$. The proof for the case of full space ($\lambda\in S^0_{1,0}(\R^d)$) is completed.
		
		If $\lambda$ depends only on $\xi'$ and belongs to $S^0_{1,0}(\R^{d-1})$, we observe from Definition \ref{def-para:paraprod} that
		\begin{equation*}
			\left[ \Op{\lambda}, T_a \right] u = \sum_{j\ge 2} [\Op{\lambda},S_ja]\Delta_j u,
		\end{equation*}
		It is easy to see that, for each $j\ge 2$, the Fourier transform of $[\Op{\lambda},S_ja]\Delta_j u$ is supported in $2^j \mathcal{C}$, where $\mathcal{C}$ is an annulus. Thus, by almost orthogonality, it suffices to check that $[\Op{\lambda},S_ja]$ is bounded from $L^2(\R^d)$ to $L^2(\R^d)$ uniformly in $j$, and the problem can be reduced to the following commutator estimate
		\begin{equation}\label{eq-para:paraprod-commu-bound-tangent}
			\left\| [\Op{\lambda},b] \right\|_{\mathcal{L}(L^2(\R^d))} \lesssim \|b\|_{H^s}.
		\end{equation}
		For any $u\in L^2(\R^d)$, we fix arbitrary $x_d\in\R$ and apply the result proved above (full space case) on $\R^{d-1}$ (recall that we write $x=(x',x_d)\in\R^{d-1}\times\R$).
		\begin{equation*}
			\left\| [\Op{\lambda},b(\cdot,x_d)] u(\cdot,x_d) \right\|_{L^2_{x'}} \lesssim \|b\|_{L^\infty_{x_d}H_{x'}^{s-\frac{1}{2}}} \|u(\cdot,x_d)\|_{L^2_{x'}} \lesssim \|b\|_{H^s(\R^d)} \|u(\cdot,x_d)\|_{L^2_{x'}},
		\end{equation*}
		where the second inequality is due to the classical trace theorem. The desired estimate \eqref{eq-para:paraprod-commu-bound-tangent} follows by taking $L^2_{x_d}$ norm on both sides.
	\end{proof}
	
	Another useful corollary is
	\begin{corollary}\label{cor-para:quadra-prod}
		Let $a\in H^{s}_R$ and $u\in H^{s'}$ with $\frac{d}{2}<s' \le s$ and $R\in\R$ constant. Then there exists $C>0$ such that
		\begin{equation}\label{eq-para:quadra-prod}
			\|au^2 - 2T_{au}u\|_{H^{\min(s,2s'-\frac{d}{2})}} \le C\left(\|a\|_{H^s_R}\right) \|u\|_{H^{s'}}^2.
		\end{equation}
	\end{corollary}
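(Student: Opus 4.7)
The plan is to start from Bony's decomposition of the product $au^2 = (au)\cdot u$,
\begin{equation*}
    au^2 = T_{au}u + T_u(au) + R(au,u),
\end{equation*}
which rearranges into
\begin{equation*}
    au^2 - 2T_{au}u = \bigl(T_u(au) - T_{au}u\bigr) + R(au,u).
\end{equation*}
I would first dispose of $R(au,u)$: Corollary \ref{cor-para:product-law} gives $au \in H^{s'}$ with a tame bound of the form $C(\|a\|_{H^s_R})\|u\|_{H^{s'}}$ (using $s'\le s$ and $s,s' > d/2$), and then \eqref{eq-para:paraprod-bound-sobo-remainder} yields $R(au,u) \in H^{2s'-d/2}$ with the required quadratic-in-$\|u\|_{H^{s'}}$ bound.

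For the bracketed term I would expand $au = T_au + T_ua + R(a,u)$ once more and substitute into $T_u(au)$, producing
\begin{equation*}
    T_u(au) - T_{au}u = (T_uT_a - T_{au})u + T_uT_ua + T_uR(a,u).
\end{equation*}
The heart of the cancellation is $(T_uT_a - T_{au})u$: since $u$ and $a$ depend only on $x$, their $\xi$-derivatives vanish, so the symbolic calculus of Proposition \ref{prop-para:paradiff-cal-sym}, applied to the symbols $u,a \in \Gamma^0_{s'-d/2}$ (via the Sobolev embedding $H^{s'} \hookrightarrow C^{s'-d/2}$, guaranteed by $s' > d/2$), reduces to $u\sharp_\rho a = ua$, and $T_uT_a - T_{au}$ is of order $-(s'-d/2)$. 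Hence $(T_uT_a - T_{au})u \in H^{2s'-d/2}$ with a quadratic bound in $\|u\|_{H^{s'}}$.

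The two residuals $T_uT_ua$ and $T_uR(a,u)$ I would handle directly with Proposition \ref{prop-para:paraprod-bound-basic}: first $T_ua \in H^s$ (the low-regularity constraint being trivial since $s'>d/2$), hence $T_uT_ua \in H^s$; and $R(a,u) \in H^{s+s'-d/2}$, whence $T_uR(a,u) \in H^{s+s'-d/2} \subset H^{\min(s,2s'-d/2)}$. Summing the four contributions and using Remark \ref{rmk-para:add-constant} to absorb the constant normalization $R$ of $a$ into the paraproduct yields the claimed $H^{\min(s,2s'-d/2)}$ bound. The main obstacle will be the cancellation step: one must verify that Proposition \ref{prop-para:paradiff-cal-sym} delivers the full order gain $-(s'-d/2)$ (and not merely $-1$, which would be too lossy near the critical regularity $s' = d/2+$), and that the implicit constant depends only on $\|a\|_{H^s_R}$ while the dependence on $u$ remains genuinely quadratic in $\|u\|_{H^{s'}}$.
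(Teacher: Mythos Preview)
Your proposal is correct and follows essentially the same route as the paper: Bony-decompose $au^2=(au)u$, then expand $au=T_au+T_ua+R(a,u)$ inside $T_u(au)$, isolate the key cancellation $(T_uT_a-T_{au})u$ via symbolic calculus with $u,a\in\Gamma^0_{s'-d/2}$ (noting $u\sharp_\rho a=ua$ since the symbols are $\xi$-independent), and estimate the three residual pieces $T_uT_ua$, $T_uR(a,u)$, $R(au,u)$ exactly as you describe. The concern you flag about the order gain is handled precisely as you anticipate: Proposition~\ref{prop-para:paradiff-cal-sym} gives order $-(s'-d/2)$, not merely $-1$.
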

	\begin{proof}
		We may write $au^2$ as
		\begin{align*}
			au^2 = T_{au}u + T_u au + R(au,u) =& T_{au}u + T_u\left( T_au + T_u a + R(a,u) \right) + R(au,u) \\
			=& 2T_{au}u + \left( T_uT_a - T_{au} \right)u + T_u^2a + T_uR(a,u) + R(au,u).
		\end{align*}
		Since $a,u$ can be regarded as symbols in $\Gamma^0_{s'-d/2}$, respectively, the operator $T_uT_a - T_{au}$ is of order $-(s'-d/2)$ and thus $\left( T_uT_a - T_{au} \right)u\in H^{s+s'-\frac{d}{2}}$ (note that when $a,u$ are independent of $\xi$, $a\sharp_\rho u=au$ for all $\rho>0$). By applying Proposition \ref{prop-para:paraprod-bound} and Corollary \ref{cor-para:product-law}, we have $T_u^2a\in H^{s}$, $T_uR(a,u)\in H^{s+s'-\frac{d}{2}}$, and $R(au,u)\in H^{2s'-\frac{d}{2}}$, which completes the proof. 
	\end{proof}

	\subsection{Composition}\label{subsect:compo}
	
	To end this section, we review the paradifferential calculus concerning composition. 
	
	\begin{proposition}\label{prop-para:paralin}
		Let $F$ be a smooth function and $u\in H^s$ with $s>\frac{d}{2}$. Then $F(u)\in H^{s}_{F(0)}$ with
		\begin{equation}\label{eq-para:paralin}
			\|F(u) - F(0) - T_{F'(u)}u\|_{H^{2s-\frac{d}{2}}} \le C\left(\|u\|_{H^s}\right)\|u\|_{H^s}.
		\end{equation}
		
		Consequently, when $u\in H^s_R$ with $s>\frac{d}{2}$ and $R\in C^\infty_b$, we have
		\begin{equation}\label{eq-para:paralin-cor}
			\|F(u)\|_{H^{s}_{F(R)}} \leqslant C\left(\|u\|_{H^s_R}\right)\|u\|_{H^s_R},
		\end{equation}
		where $C>0$ is a smooth increasing function depending on $F$ and $R$.
	\end{proposition}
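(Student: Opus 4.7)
The plan is to prove the main estimate \eqref{eq-para:paralin} by the classical telescoping argument of Meyer--Bony and then deduce the corollary \eqref{eq-para:paralin-cor} by a shift of variables. First I would write, with the convention $S_{-1}u := 0$,
\[
F(u) - F(0) = \sum_{j \ge -1} \bigl( F(S_{j+1}u) - F(S_j u) \bigr),
\]
where the series converges in $L^\infty$ since $s > d/2$ ensures $u \in L^\infty$ and $S_j u \to u$ uniformly. The mean-value theorem then gives $F(S_{j+1}u) - F(S_j u) = m_j \, \Delta_{j+1} u$ with $m_j := \int_0^1 F'(S_j u + t \Delta_{j+1} u) \, dt$. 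Comparing with $T_{F'(u)} u = \sum_{j \ge 2} S_{j-2}(F'(u)) \, \Delta_j u$, the problem reduces to estimating
\[
R := \sum_{j} \bigl( m_{j-1} - S_{j-2}(F'(u)) \bigr) \, \Delta_j u
\]
in $H^{2s - d/2}$.

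Next, the key pointwise bound is
\[
\| m_{j-1} - S_{j-2}(F'(u)) \|_{L^\infty} \le C(\|u\|_{H^s}) \, 2^{-j(s-d/2)},
\]
obtained by splitting the difference as $(m_{j-1} - F'(u)) + (F'(u) - S_{j-2} F'(u))$ and applying the standard high-frequency estimate $\|u - S_j u\|_{L^\infty} \lesssim 2^{-j(s-d/2)} \|u\|_{H^s}$ (valid when $s > d/2$) together with the composition bound $F'(u) \in H^s_{F'(0)}$, which in turn follows from a simpler telescoping together with the fact that $F' \in C^\infty$ is locally Lipschitz on bounded sets. Combined with $\|\Delta_j u\|_{L^2} \le c_j 2^{-js} \|u\|_{H^s}$ for some $\{c_j\} \in \ell^2$, this yields
\[
\bigl\| (m_{j-1} - S_{j-2} F'(u)) \, \Delta_j u \bigr\|_{L^2} \le C(\|u\|_{H^s}) \, c_j \, 2^{-j(2s - d/2)} \, \|u\|_{H^s}.
\]

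Each summand has Fourier support in a ball of radius $\sim 2^j$ (not an annulus), so the $H^{2s-d/2}$-norm of $R$ is recovered by the standard argument: $\Delta_k R$ involves only $j \ge k-2$, and since $2s - d/2 > 0$, a discrete Young inequality of $\ell^2$ with the geometric sequence $\{2^{-j(2s - d/2)}\}$ gives $R \in H^{2s - d/2}$ with the desired tame bound. The tame structure $C(\|u\|_{H^s})\|u\|_{H^s}$ (rather than a quadratic one) is kept by absorbing all but one factor of $\|u\|_{H^s}$ into the smooth increasing constant.

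For the corollary \eqref{eq-para:paralin-cor}, I would argue by localization-shift: set $\tilde F(x, v) := F(v + R(x))$ and apply the main estimate to $v = u - R \in H^s$, treating the $x$-dependence of $\tilde F$ as a harmless $C^\infty_b$ parameter (which requires revisiting the telescoping with an $x$-dependent $F$, but all estimates go through since $R$ and its derivatives are bounded). One gets $F(u) - F(R) - T_{F'(u)}(u - R) \in H^{2s - d/2} \subset H^s$, and Proposition \ref{prop-para:paraprod-bound} together with the composition bound $F'(u) \in L^\infty$ controls $T_{F'(u)}(u - R)$ in $H^s$. The main obstacle will be keeping the estimate tame, in particular ensuring that the constant depending on $\|u\|_{H^s}$ is a single smooth increasing function and correctly handling the $C^\infty_b$ normalization $R$ so that the chain of Sobolev embeddings and paraproduct bounds remain uniform.
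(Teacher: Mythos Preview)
The paper does not give its own proof of this proposition: it simply states that ``the proof of this proposition can be found in \cite{bony1981calcul} or \cite{bahouri2011fourier}, Chapter 2.'' Your proposal reconstructs precisely the classical Meyer--Bony telescoping argument that appears in those references, and the steps (mean-value expansion, the $2^{-j(s-d/2)}$ decay of $m_{j-1}-S_{j-2}F'(u)$ in $L^\infty$, and the ball-support summation using $2s-d/2>0$) are all correct. One small logical point: you invoke $F'(u)\in H^s_{F'(0)}$ inside the proof of \eqref{eq-para:paralin}, which looks circular, but as you implicitly note this is resolved by first proving the Moser-type bound $G(u)-G(0)\in H^s$ via the same telescoping without the paraproduct comparison, and then applying it to $G=F'$; this is the standard ordering in the cited texts. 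Your treatment of the corollary via the shift $v=u-R$ and an $x$-dependent $\tilde F$ is also the natural route and works as stated.
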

	The proof of this proposition can be found in \cite{bony1981calcul} or \cite{bahouri2011fourier}, Chapter 2.
	
	Now we focus on the study of $u\circ\chi$, where $u$ and $\chi$ both have limited regularity. In this case, the singularity concentrates on two terms, $T_{u'\circ\chi}\chi$ and the \textit{paracomposition} $X^*u$, which is firstly studied by Alinhac in \cite{alinhac1986paracomposition}. In the sequel, we assume that $\chi:\Omega'\rightarrow\Omega$ is a diffeomorphism, where $\Omega$, $\Omega'$ are bounded domains in $\R^d$ with smooth boundary, and $u$ is a function on $\Omega$ with Sobolev regularity. The following results can be found in \cite{alinhac1986paracomposition} (see also \cite{alinhac1985paracomposition}), while the case of $\R^d$ is studied in \cite{taylor2000tools} (see also \cite{said2023paracomposition}).
	\begin{proposition}\label{prop-para:paracomp-exist}
		Let $\chi\in H_{loc}^{1+\frac{d}{2}+\sigma}$ and $\sigma\in\R_+\backslash\N$. Then there exists a paracomposition operator $X^*$, such that, for all $u\in H^{s}_{loc}$, $s>\frac{d}{2}+1$,
		\begin{equation}\label{eq-para:paracomp-main}
			u\circ\chi = T_{u'\circ\chi}\chi + X^*u + Ru,
		\end{equation}
		where the remainder $R\in H^{1+\sigma+\min(1+\sigma+\frac{d}{2},s-1)}_{loc}$.\index{X@$X^*$ Paracomposition operator}
	\end{proposition}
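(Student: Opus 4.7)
The plan is to follow Alinhac's original construction of paracomposition, organized around a Littlewood-Paley decomposition of $u$. First, write $u = S_0 u + \sum_{k \geq 1} \Delta_k u$, so that
\[
u \circ \chi = (S_0 u) \circ \chi + \sum_{k \geq 1} (\Delta_k u) \circ \chi.
\]
The piece $(S_0 u) \circ \chi$ is harmless: $S_0 u$ is smooth and bounded, so its composition with $\chi \in H^{1+d/2+\sigma}_{loc}$ inherits that regularity and can be absorbed into $R u$. The core of the argument is to analyze $(\Delta_k u) \circ \chi$ for each $k \geq 1$.

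For each dyadic block, introduce the low-frequency truncation $\chi_k := S_{k - N_0} \chi$ (with $N_0$ a fixed large integer) and perform a second-order Taylor expansion:
\[
(\Delta_k u)(\chi(x)) = (\Delta_k u)(\chi_k(x)) + (\nabla \Delta_k u)(\chi_k(x)) \cdot (\chi - \chi_k)(x) + Q_k(x),
\]
where $Q_k$ is the quadratic remainder. I would then define the paracomposition by the zeroth-order terms,
\[
X^* u := (S_0 u) \circ \chi + \sum_{k \geq 1} (\Delta_k u) \circ \chi_k,
\]
and recognize the sum of the first-order terms as $T_{u' \circ \chi} \chi$ modulo harmless errors: indeed, $(\nabla \Delta_k u)(\chi_k) \sim \Delta_k(u' \circ \chi)$ up to a commutator of lower order (this uses that $\chi$ is a bi-Lipschitz diffeomorphism so that composing with $\chi$ essentially preserves the dyadic scale $2^k$), and $\chi - \chi_k \sim \Delta_k \chi$; pairing these gives precisely the paraproduct $T_{u' \circ \chi} \chi$ after collecting terms and reshuffling indices. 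The remainder $R u$ absorbs (a) the quadratic pieces $\sum_k Q_k$, (b) the low-frequency piece, and (c) the commutator errors from identifying $(\nabla \Delta_k u) \circ \chi_k$ with $\Delta_k(u' \circ \chi)$.

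The heart of the proof -- and where I expect the main obstacle -- is the quantitative regularity count on $R u$. The key estimates are: $\|\chi - \chi_k\|_{L^\infty} \lesssim 2^{-k(1+\sigma+d/2)} \|\chi\|_{H^{1+d/2+\sigma}}$ (from Bernstein, since $1+d/2+\sigma > d/2$ thanks to $\sigma>0$), and $\|\nabla^2 \Delta_k u\|_{L^\infty} \lesssim 2^{k(2 + d/2 - s)} \|u\|_{H^s}$ when $s < d/2 + 2$, or $\lesssim 2^{2k} \|u\|_{H^s}$ with a tame gain otherwise. Multiplying and summing gives a quadratic remainder $Q_k$ with Sobolev regularity reflecting the competition between the loss from $\chi$ (capped at $1+\sigma+d/2$) and the loss from $u$ (capped at $s-1$), which is exactly the $\min(1+\sigma+d/2, s-1)$ in the statement, with an extra gain of $1+\sigma$ coming from pairing two factors of $\chi - \chi_k$ versus one factor of $\nabla^2 \Delta_k u$ and using almost-orthogonality of the dyadic sum. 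The commutator errors in identifying the first-order terms with $T_{u'\circ\chi}\chi$ can be handled via the same dyadic calculus, using that $\chi$ is a Lipschitz diffeomorphism to transfer the Littlewood-Paley projectors across $\chi$ modulo an operator of the required order; this transfer is where one most naturally invokes $\sigma \notin \mathbb{N}$, in order to freely use Hölder-type estimates.

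Finally, the definition of $X^*$ must be shown independent of the choice of truncation (and of $N_0$) modulo operators of the same or better regularity than $R$; this is a straightforward telescoping argument once the basic estimates above are in place. I would then verify the claimed global mapping property of $X^*$ on local Sobolev spaces, which follows from the almost-orthogonality of the blocks $(\Delta_k u) \circ \chi_k$ (whose frequencies stay in a shell of size $\sim 2^k$ because $\chi_k$ is smooth and bi-Lipschitz at scale $2^k$) combined with the change-of-variables bound for $\chi_k$.
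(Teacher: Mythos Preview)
The paper does not prove this proposition at all: it is stated in the appendix with the sentence ``The following results can be found in \cite{alinhac1986paracomposition} (see also \cite{alinhac1985paracomposition}), while the case of $\R^d$ is studied in \cite{taylor2000tools} (see also \cite{said2023paracomposition}).'' So there is no in-paper proof to compare against; the result is imported from Alinhac.

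Your sketch is indeed in the spirit of Alinhac's construction, but note one discrepancy with the explicit formula the paper records in the remark following the proposition:
\[
X^*u := \sum_{j}\Delta_j\bigl(\Delta_j u\circ\chi\bigr),
\]
which carries an \emph{outer} Littlewood--Paley projector and uses $\chi$ itself, whereas you write $X^*u = \sum_k (\Delta_k u)\circ\chi_k$ with no relocalization. Your final paragraph asserts that the blocks $(\Delta_k u)\circ\chi_k$ have frequencies in a shell of size $\sim 2^k$; this is only approximately true, and making the almost-orthogonality rigorous is exactly what the outer $\Delta_j$ in Alinhac's definition accomplishes. Without it (or an equivalent device), the $H^s$-boundedness of your $X^*$ is not immediate. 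The two definitions agree modulo operators of the same order as $R$, so the decomposition \eqref{eq-para:paracomp-main} is unaffected, but if you want a self-contained proof you should either add the relocalization or supply the quantitative spreading estimate for the Fourier support of $(\Delta_k u)\circ\chi_k$.
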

	An important property of paracomposition operator $\chi^*$ is the following conjugation formula,
	\begin{proposition}\label{prop-para:paracomp-conj}
		Let $\chi\in H_{loc}^{1+\frac{d}{2}+\sigma}$ be as in Proposition \ref{prop-para:paracomp-exist} and $a\in\Gamma^m_r$ with $m\in\R$ and $r\ge 0$. The $X^*$ defined in Proposition \ref{prop-para:paracomp-exist} satisfies
		\begin{equation}\label{eq-para:paracomp-conj}
			X^*T_a = T_{\chi^*a}X^* + R,
		\end{equation}
		where $\chi^*$ is the pull-back by $\chi$ and the remainder $R$ is also a paradifferential operator with symbol in $\Gamma_0^{m-r}$.
	\end{proposition}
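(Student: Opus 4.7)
The plan is to follow Alinhac's original approach from \cite{alinhac1986paracomposition}, exploiting the explicit Littlewood--Paley construction of $X^*$ already invoked in Proposition \ref{prop-para:paracomp-exist}. Recall that, in such a construction, one writes $X^*u = \sum_{k} \tilde{\Delta}_k \bigl[(\Delta_k u)\circ\chi\bigr]$, where $\tilde{\Delta}_k$ is a Littlewood--Paley block localizing at frequencies of order $2^k$ (the frequency scale into which $\Delta_k u$ is carried by $\chi$, to leading order). The fundamental heuristic is that, for a smooth diffeomorphism, a change of variable in the Fourier integral representing $T_a u$ and a Taylor expansion of the phase at the reference point $y_0$ produce the pull-back symbol $(\chi^* a)(y,\eta) = a(\chi(y), (\chi'(y))^{-T}\eta)$, modulo an asymptotic expansion involving higher derivatives of $\chi$ and of $a$ in $x$. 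So one should prove the identity \eqref{eq-para:paracomp-conj} by implementing this heuristic at the level of each dyadic block and tracking the regularity gains.

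First I would reduce to a symbol $a$ that is frequency-localized at a dyadic scale $2^j$, since by linearity and summation it suffices to prove the commutation for each block $T_{a_j}$ with $a_j$ supported on $\{|\xi|\simeq 2^j\}$ and then reassemble using almost-orthogonality of the $\tilde\Delta_k$'s (matching $k$ to $j$ via the Lipschitz constant of $\chi$). For a fixed block, one writes
\begin{equation*}
X^*(T_{a_j} u)(y) = \sum_k \tilde\Delta_k\bigl[(T_{a_j}u)\circ\chi\bigr](y),
\end{equation*}
inserts the integral definition \eqref{eq-para:paradiff} of $T_{a_j}$, and performs the substitution $\xi = (\chi'(y_0))^{-T}\eta$ in the Fourier integral. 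After Taylor-expanding $\chi(y) - \chi(y_0) - \chi'(y_0)(y-y_0)$ in the phase to order $N = \lceil r\rceil$, the leading term reproduces $T_{\chi^* a_j}(X^*u)$, and the remainders are paradifferential operators whose symbols gain $r$ orders of regularity from the $C^r$-control of $a$ in $x$ (via Definition \ref{def-para:paradiff-symbol-class}), translating to $r$ powers of $\langle\eta\rangle^{-1}$ by Proposition \ref{prop-para:paradiff-cal-sym}.

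The bookkeeping has two independent error sources that must both be absorbed into $R \in \mathrm{Op}(\Gamma^{m-r}_0)$: errors from approximating $\chi$ by its linearization (controlled by $\chi \in H^{1+d/2+\sigma}_{loc}$, hence $\chi \in C^{1+\sigma}_{loc}$ by Sobolev embedding, which gives at least $\sigma$ orders of smallness), and errors from the finite-order Taylor expansion of the symbol $a(\cdot,\xi)$ itself (which are of order $m - r$ by hypothesis). The composition of the two, combined with the boundedness of $X^*$ itself (from Proposition \ref{prop-para:paracomp-exist}) and with standard composition estimates (Proposition \ref{prop-para:paradiff-cal-sym}), shows that the full residual operator belongs to $\mathrm{Op}(\Gamma^{m-r}_0)$.

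The main obstacle, and the step I expect to occupy most of the work, is the bilinear estimate controlling the symbol of $R$: one must show that the discrepancy between $\chi^*a$ computed pointwise and its paradifferential action after conjugation by $X^*$ loses at most $r$ derivatives relative to the principal order $m$. This hinges on Proposition \ref{prop-para:paracomp-exist} together with the paralinearization formula \eqref{eq-para:paralin} applied to the nonlinear functional $y\mapsto a(\chi(y),\cdot)$, where the H\"older regularity $\sigma$ of $\chi$ must be large enough (which is guaranteed by $\sigma\in\R_+\setminus\N$ with $\sigma > 0$) to ensure the remainder controls are uniform in the dyadic parameter. Once all the dyadic remainders are summed using almost-orthogonality at the target frequency scale, the resulting operator $R$ has the required symbol class $\Gamma^{m-r}_0$ and the identity \eqref{eq-para:paracomp-conj} follows.
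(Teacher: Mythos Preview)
The paper does not actually prove this proposition: it is stated in the appendix as a quotation from the literature, with the sentence ``The following results can be found in \cite{alinhac1986paracomposition} (see also \cite{alinhac1985paracomposition}), while the case of $\R^d$ is studied in \cite{taylor2000tools} (see also \cite{said2023paracomposition})'' covering both Proposition~\ref{prop-para:paracomp-exist} and Proposition~\ref{prop-para:paracomp-conj}. There is therefore no in-paper proof to compare against; the relevant comparison is with Alinhac's original argument, which is precisely the route you sketch.

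Your outline is faithful to Alinhac's strategy and captures the two essential mechanisms: the linearization of the phase under the change of variable $\xi=(\chi'(y_0))^{-T}\eta$, and the Taylor expansion of $a(\cdot,\xi)$ in $x$ to extract the gain of $r$ orders. Two points deserve tightening if you intend this as a full proof rather than a plan. First, the pull-back $\chi^*a$ in the statement is not merely the principal part $a(\chi(y),(\chi'(y))^{-T}\eta)$ but the full asymptotic expansion in the sense of the remark following the proposition (cf.\ H\"ormander's Theorem~18.1.17); your sketch implicitly treats only the leading term, and the subprincipal corrections must be accounted for to land in $\Gamma^{m-r}_0$ rather than $\Gamma^{m-1}_0$. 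Second, the interaction between the two error sources is not additive: the $\sigma$-regularity of $\chi$ limits how many terms of the symbolic expansion are meaningful, so the effective remainder order is $m-\min(r,\sigma)$ in Alinhac's theorem, and the statement as written in the paper tacitly assumes $\sigma\ge r$ (or restricts to the case where $r$ is bounded by the available regularity of $\chi$). Your sentence about $\sigma>0$ being ``large enough'' gestures at this but does not make the constraint explicit.
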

	
	\begin{remark}
		The rigorous definition of the pull-back $\chi^*a$ depends on a delicate study of pseudo-differential operators on domains (or smooth manifolds), which will not be precised here (refer to \cite{hormander2007analysis}, Theorem 18.1.17). In the case where $a$ is a differential operator $a(s,\xi) = \sum_{\alpha}a_{\alpha}(x)\xi^{\alpha}$ (Laplacian operator, for example), $\chi^*a$ equals the symbol of the pull-back of $\Op{a}$, which is clearly a differential operator and can be calculated simply by usual change of variable.
	\end{remark}
	
	\begin{remark}
		An explicit definition of paracomposition operator $X^*$ is 
		\begin{equation*}
			X^*u := \sum_{j}\Delta_j\left(\Delta_ju\circ\chi\right),
		\end{equation*}
		which is hard to use in applications. Usually, we may define $X^*u$ via \eqref{eq-para:paracomp-main} when the remainder is not important.
	\end{remark}

	\section{Variation in metric}\label{App:var-metric}
	
	This section is devoted to the proof of identity \eqref{eq-pre:var-in-metric}, which can be obtained via a direct but complicated calculus.

	\begin{lemma}\label{lem-tech:variation-of-metric}
		Under the hypotheses of Proposition \ref{prop-pre:Hamiltonian-formulation}, we have
		\begin{equation}\label{eq-tech:variation-of-metric}
			\begin{aligned}
				\frac{1}{2}\delta(\sqrt{g}g^{\alpha\beta})\partial_\alpha\varphi\partial_\beta\varphi =& \frac{1}{2} \nabla_{y,z}^T \left( J^{-1}Y \left|\nabla_g\varphi\right|^2 \sqrt{g}\delta\zeta \right) - \nabla_{y,z}^T\left( \sqrt{g}J^{-1}\nabla_g\varphi\ Y^T\nabla_g\varphi \delta\zeta \right) \\
				& + \sqrt{g}\Delta_g\varphi Y\cdot\nabla_g\varphi \delta\zeta.
			\end{aligned}
		\end{equation}
		Recall that, in the coordinate $(y_1,y_2,z)$, Greek letters are indices for $\{1,2,z\}$, while Latin letters correspond to $\{1,2\}$.
	\end{lemma}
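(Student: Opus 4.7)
The plan is to recognize \eqref{eq-tech:variation-of-metric} as the coordinate expression of a classical pointwise ``stress-energy'' divergence identity on the Euclidean target $\Omega(t)$, pulled back through $\iota$. I will set $X := Y\delta\zeta$ (so $X=\delta\iota$ is the infinitesimal variation of the map), define the Eulerian vector field $\tilde X := X\circ\iota^{-1}$ on $\Omega(t)$, and the Eulerian potential $\phi := \varphi\circ\iota^{-1}$. Under $\iota$ one has $\nabla_g\varphi = (\nabla_x\phi)\circ\iota$, $\Delta_g\varphi = (\Delta_x\phi)\circ\iota$, and $Y\cdot\nabla_g\varphi\,\delta\zeta = (\tilde X\cdot\nabla_x\phi)\circ\iota$, so the geometric content of the identity will become transparent.

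The first step will be to verify the pointwise identity
\begin{equation*}
\tfrac{1}{2}\delta(\sqrt{g}\,g^{\alpha\beta})\partial_\alpha\varphi\partial_\beta\varphi = \sqrt{g}\Bigl[\tfrac{1}{2}|\nabla_x\phi|^2\,\nabla_x\!\cdot\!\tilde X - (\partial_\mu^x\tilde X^\nu)(\partial_\mu^x\phi)(\partial_\nu^x\phi)\Bigr]\circ\iota,
\end{equation*}
which reduces to two routine calculations. For $\delta\sqrt{g}$, the relation $(\delta J)_{\alpha\beta}=\partial_\beta X^\alpha$ combined with Jacobi's formula $\delta\det J = \det J\,\mathrm{tr}(J^{-1}\delta J)$ yields $\delta\sqrt{g}=\sqrt{g}\,(\nabla_x\!\cdot\!\tilde X)\circ\iota$. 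For $\delta g^{\alpha\beta}$, writing $L:=\delta J\,J^{-1}$ gives $\delta g^{-1} = -J^{-1}(L+L^T)J^{-T}$, and sandwiching this between $\nabla_{y,z}\varphi$ produces exactly $-2\sqrt{g}(\partial_\mu^x\tilde X^\nu)(\partial_\mu^x\phi)(\partial_\nu^x\phi)\circ\iota$ once one uses $J^{-T}\nabla_{y,z}\varphi=\nabla_g\varphi$.

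The second step invokes the standard pointwise Euclidean identity, valid for any smooth vector field $\tilde X$ and function $\phi$:
\begin{equation*}
\tfrac{1}{2}|\nabla\phi|^2\,\nabla\!\cdot\!\tilde X - (\partial_\mu\tilde X^\nu)(\partial_\mu\phi)(\partial_\nu\phi) = \tfrac{1}{2}\nabla\!\cdot\!(\tilde X|\nabla\phi|^2) - \nabla\!\cdot\!\bigl(\nabla\phi\,(\tilde X\!\cdot\!\nabla\phi)\bigr) + \Delta\phi\,(\tilde X\!\cdot\!\nabla\phi),
\end{equation*}
verified by Leibniz, with the mixed $\partial_\mu\partial_\nu\phi$ terms cancelling by symmetry of second derivatives.

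The final step will convert the two Euclidean divergences back to $(y,z)$-coordinates via the geometric rule $\sqrt{g}\,(\nabla_x\!\cdot\!\tilde W)\circ\iota = \nabla_{y,z}^T\bigl(\sqrt{g}\,J^{-1}(\tilde W\circ\iota)\bigr)$, applied with $\tilde W_1 = \tilde X|\nabla\phi|^2$ and $\tilde W_2 = \nabla\phi\,(\tilde X\!\cdot\!\nabla\phi)$. This yields exactly the two divergence terms of \eqref{eq-tech:variation-of-metric}, while $\Delta\phi(\tilde X\cdot\nabla\phi)$ pulls back directly to the advertised $\sqrt{g}\Delta_g\varphi\,Y\cdot\nabla_g\varphi\,\delta\zeta$. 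The main obstacle will be the bookkeeping of indices and transposes in Step~1: one must carefully use the contraction $(J^{-1})_{\alpha\mu}(J^{-1})_{\beta\mu}=g^{\alpha\beta}$ and recognize at which stage $\partial X$ is to be read as the Euclidean derivative of $\tilde X$ pulled back by $\iota$. Beyond this, the argument is purely algebraic, and no regularity beyond what is assumed in Proposition~\ref{prop-pre:Hamiltonian-formulation} enters.
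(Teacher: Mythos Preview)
Your proof is correct and takes a genuinely different, more conceptual route than the paper.

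The paper proceeds by direct index computation: it expands $\tfrac{1}{2}\delta(\sqrt{g}g^{\alpha\beta})\varphi_\alpha\varphi_\beta$ into a $\tr{J^{-1}\delta J}$ piece and a $\nabla_g^T\varphi\,\delta J\,J^{-1}\nabla_g\varphi$ piece, then from each piece peels off one of the two divergence terms by the Leibniz rule, leaving scalar remainders $r_1,r_2$. The bulk of the work is then verifying $r_1=r_2$ via two explicit algebraic identities for the Jacobian entries (equations \eqref{eq-tech:var-of-met-claim-1} and \eqref{eq-tech:var-of-met-claim-2}), checked by expanding $\partial_\alpha a_{i\beta}$ and matching indices. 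Your approach instead recognizes that $X=Y\delta\zeta=\delta\iota$ is the variation of the embedding, pushes everything forward to the Euclidean target, and reduces the whole lemma to the standard Pohozaev/stress-energy identity for $\phi$ and the vector field $\tilde X$, pulled back via the Piola identity $\partial_\alpha(\det J\,(J^{-1})_{\alpha\mu})=0$. What you gain is transparency and brevity: the cancellation that the paper laboriously verifies as $r_1=r_2$ is exactly the symmetry $\partial_\mu\partial_\nu\phi=\partial_\nu\partial_\mu\phi$ in your Step~2. What the paper's computation buys is self-containment: it invokes no named identities (Piola, stress-energy), at the cost of a longer and more opaque index chase. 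Both arguments are purely algebraic at the pointwise level and use the same regularity.
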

	
	\begin{proof}
		To begin with, by using $(g^{\alpha\beta}) = J^{-1}J^{-T}$, one could right the left hand side as
		\begin{align*}
			\frac{1}{2}\delta(\sqrt{g}g^{\alpha\beta})\partial_\alpha\varphi\partial_\beta\varphi =& \frac{1}{2}\delta\left(\sqrt{g}\right)g^{\alpha\beta}\varphi_\alpha\varphi_\beta + \frac{1}{2} \nabla_{y,z}^T\varphi\delta\left(J^{-1}J^{-T}\right)\nabla_{y,z}\varphi\sqrt{g} \\
			=& \frac{1}{2} \delta\left(\det J\right)|\nabla_g\varphi|^2 - \nabla_{y,z}^T\varphi J^{-1}\delta J J^{-1} J^{-T} \nabla_{y,z}\varphi \sqrt{g} \\
			=& \frac{1}{2} \tr{J^{-1}\delta J}|\nabla_g\varphi|^2 \sqrt{g} - \nabla_g^T\varphi \delta J J^{-1} \nabla_g\varphi \sqrt{g},
		\end{align*}
		where we use formulas \eqref{eq-pre:jacobian-det}, \eqref{eq-pre:def-nabla-g}, and
		\begin{equation*}
			\delta\left(\det J\right) = \tr{J^{-1}\delta J}\det J\ \ \text{and}\ \ \delta\left(J^{-1}\right) = -J^{-1}\delta J J^{-1}.
		\end{equation*}
		From the definition \eqref{eq-pre:jacobian-mat}, one may compute
		\begin{equation*}
			J^{-1} \delta J = 
			\left(\begin{array}{cc}
				J_0^{-1}\delta\zeta & 0 \\[0.5ex]
				0 & 0
			\end{array}\right)+
			\left(\begin{array}{cc}
				J_0^{-1}y \nabla_y^T\delta\zeta & J_0^{-1}y \delta\zeta_z \\[0.5ex]
				0 & 0
			\end{array}\right),
		\end{equation*}
		with $J_0 = (\zeta\delta_{ij} + y_i\zeta_j)_{ij}$. As a result, 
		\begin{align*}
			&\frac{1}{2} \tr{J^{-1}\delta J}|\nabla_g\varphi|^2 \sqrt{g} \\
			=& \frac{1}{2} \tr{J_0^{-1}}|\nabla_g\varphi|^2 \sqrt{g}\delta\zeta + \frac{1}{2} \tr{J_0^{-1}y\nabla_y^T\delta\zeta}|\nabla_g\varphi|^2 \sqrt{g} \\
			=& \frac{1}{2} \tr{J_0^{-1}}|\nabla_g\varphi|^2 \sqrt{g}\delta\zeta + \frac{1}{2} J_0^{-1}y\cdot \nabla_y\delta\zeta |\nabla_g\varphi|^2 \sqrt{g} \\
			=& \frac{1}{2} \tr{J_0^{-1}}|\nabla_g\varphi|^2 \sqrt{g}\delta\zeta + \frac{1}{2} \nabla_y \cdot\left( J_0^{-1}y \delta\zeta |\nabla_g\varphi|^2 \sqrt{g} \right) - \frac{1}{2} \nabla_y \cdot\left( J_0^{-1}y  |\nabla_g\varphi|^2 \sqrt{g} \right)\delta\zeta \\
			=& \frac{1}{2} \tr{J_0^{-1}}|\nabla_g\varphi|^2 \sqrt{g}\delta\zeta + \frac{1}{2} \nabla_y \cdot\left( J_0^{-1}y \delta\zeta |\nabla_g\varphi|^2 \sqrt{g} \right) - \frac{1}{2} \nabla_y \cdot\left( J_0^{-1}y  |\nabla_g\varphi|^2 \right)\sqrt{g}\delta\zeta \\
			& - \frac{1}{2} |\nabla_g\varphi|^2 J_0^{-1}y\cdot\left(\nabla_y\sqrt{g}\right)\delta\zeta \\
			=& \frac{1}{2} \tr{J_0^{-1}}|\nabla_g\varphi|^2 \sqrt{g}\delta\zeta + \frac{1}{2} \nabla_y \cdot\left( J_0^{-1}y \delta\zeta |\nabla_g\varphi|^2 \sqrt{g} \right) - \frac{1}{2} \nabla_y \cdot\left( J_0^{-1}y  |\nabla_g\varphi|^2 \right)\sqrt{g}\delta\zeta \\
			& - \frac{1}{2} |\nabla_g\varphi|^2 J_0^{-1}y\cdot\tr{J_0^{-1}\nabla_y J_0} \sqrt{g}\delta\zeta.
		\end{align*}
		Note that the last equality is a consequence of 
		\begin{equation*}
			\nabla_y \sqrt{g} = \nabla_y \det J_0 = \tr{J_0^{-1}\nabla_y J_0} \det J_0 = \tr{J_0^{-1}\nabla_y J_0} \sqrt{g}.
		\end{equation*}
		Therefore, we obtain the identity
		\begin{equation*}
			\frac{1}{2} \tr{J^{-1}\delta J}|\nabla_g\varphi|^2 \sqrt{g} = r_1 \sqrt{g} \delta\zeta + \frac{1}{2} \nabla_{y,z}^T\left( J^{-1}Y \delta\zeta |\nabla_g\varphi|^2 \sqrt{g} \right),
		\end{equation*}
		where
		\begin{equation}\label{eq-tech:var-of-met-r-1}
			r_1 = \frac{1}{2} \tr{J_0^{-1}}|\nabla_g\varphi|^2  - \frac{1}{2} \nabla_y \cdot\left( J_0^{-1}y  |\nabla_g\varphi|^2 \right) - \frac{1}{2} |\nabla_g\varphi|^2 J_0^{-1}y\cdot\tr{J_0^{-1}\nabla_y J_0}.
		\end{equation}
		
		In the mean time, it is clear that 
		\begin{equation*}
			\delta J = \left(\begin{array}{cc}
				\delta\zeta + y\nabla_y^T\delta\zeta & y\delta\zeta_z \\[0.5ex]
				0 & 0
			\end{array}\right) = 
			\left(\begin{array}{cc}
				\delta\zeta  & 0 \\[0.5ex]
				0 & 0
			\end{array}\right) + 
			\left(\begin{array}{cc}
				 y\nabla_y^T\delta\zeta & y\delta\zeta_z \\[0.5ex]
				0 & 0
			\end{array}\right),
		\end{equation*}
		which yields
		\begin{align*}
			&\nabla_g^T\varphi \delta J J^{-1} \nabla_g\varphi \sqrt{g} \\
			=& \nabla_g^i\varphi \left(J^{-1}\nabla_g\varphi\right)^i \sqrt{g} \delta\zeta +  y_i\nabla_g^i\varphi \delta\zeta_\alpha \left( J^{-1}\nabla_g\varphi \right)^{\alpha} \sqrt{g} \\
			=& \nabla_g^i\varphi \left(J^{-1}\nabla_g\varphi\right)^i \sqrt{g} \delta\zeta + \nabla_{y,z}\cdot\left( J^{-1}\nabla_g\varphi y_i\nabla_g^i\varphi \delta\zeta  \sqrt{g}\right) - \nabla_{y,z}\cdot\left( \sqrt{g} J^{-1}\nabla_g\varphi y_i\nabla_g^i\varphi \right)\delta\zeta \\
			=& \nabla_g^i\varphi \left(J^{-1}\nabla_g\varphi\right)^i \sqrt{g} \delta\zeta + \nabla_{y,z}^T\left( J^{-1}\nabla_g\varphi Y^T\nabla_g\varphi \delta\zeta  \sqrt{g}\right) - \sqrt{g} J^{-1}\nabla_g\varphi \cdot\nabla_{y,z}\left( y_i\nabla_g^i\varphi \right)\delta\zeta \\
			& - \sqrt{g}\Delta_g\varphi Y^T\nabla_g\varphi\delta\zeta,
		\end{align*}
		where the last equality is due to \eqref{eq-pre:def-nabla-g} and \eqref{eq-pre:def-lap-g}. Then the following identity holds true,
		\begin{equation*}
			\nabla_g^T\varphi \delta J J^{-1} \nabla_g\varphi \sqrt{g} = r_2 \sqrt{g} \delta\zeta + \nabla_{y,z}^T\left( J^{-1}\nabla_g\varphi Y^T\nabla_g\varphi \delta\zeta\right) - \sqrt{g}\Delta_g\varphi Y^T\nabla_g\varphi\delta\zeta,
		\end{equation*}
		with
		\begin{equation}\label{eq-tech:var-of-met-r-2}
			r_2 = \nabla_g^i\varphi \left(J^{-1}\nabla_g\varphi\right)^i  -  \left(J^{-1}\nabla_g\varphi\cdot\nabla_{y,z}\right)\left( y_i\nabla_g^i\varphi \right).
		\end{equation}
		It follows that the desired result \eqref{eq-tech:variation-of-metric} is equivalent to 
		\begin{equation*}
			r_1 - r_2 = 0.
		\end{equation*}
		
		We notice that
		\begin{equation*}
			r_1 = - \left(\left(J_0^{-1}y\cdot \nabla_y\right)\nabla_g^\alpha\varphi\right) \nabla_g^\alpha\varphi - \frac{1}{2}\partial_ia^{ij}y_j |\nabla_g\varphi|^2 - \frac{1}{2} J_0^{-1}y\cdot\tr{J_0^{-1}\nabla_y J_0} |\nabla_g\varphi|^2
		\end{equation*}
		(recall that $(a^{ij})$ is defined in \eqref{eq-pre:jacobian-mat-inverse} with $i,j$ taken in $\{y_1,y_2\}$), and
		\begin{equation*}
			r_2 = -   y_i\left(J^{-1}\nabla_g\varphi\cdot\nabla_{y,z}\right)\nabla_g^i\varphi.
		\end{equation*}
		Then the problem is reduced the following identities
		\begin{align}
			-\partial_ia^{ij}y_j &= J_0^{-1}y\cdot\tr{J_0^{-1}\nabla_y J_0}, \label{eq-tech:var-of-met-claim-1} \\
			\left(\left(J_0^{-1}y\cdot \nabla_y\right)\nabla_g^\alpha\varphi\right) \nabla_g^\alpha\varphi &= y_i\left(J^{-1}\nabla_g\varphi\cdot\nabla_{y,z}\right)\nabla_g^i\varphi. \label{eq-tech:var-of-met-claim-2}
		\end{align}
		To prove these, we shall use the following formulas which can be checked easily from \eqref{eq-pre:jacobian-mat},
		\begin{align}
			&\partial_\alpha a^{\beta\gamma} = - a^{\beta\beta'}\partial_\alpha a_{\beta'\gamma'} a^{\gamma'\gamma}, \label{eq-tech:var-of-met-deri-J-1} \\
			& \partial_\alpha a_{z\beta} = 0, \label{eq-tech:var-of-met-deri-z-J} \\
			& \partial_{\alpha} a_{i\beta} = \delta_{i\beta}\zeta_\alpha + \delta_{\alpha i} \zeta_\beta + y_{i}\zeta_{\alpha\beta}. \label{eq-tech:var-of-met-deri-y-J}
		\end{align}
		Note that we write the version for $J$, while the same formulas hold for $J_0$, which means one could replace the Greek letters by Latin ones.
		
		We first check \eqref{eq-tech:var-of-met-claim-1}. By \eqref{eq-tech:var-of-met-deri-y-J}, the left hand side is equal to
		\begin{align*}
			-\partial_ia^{ij}y_j =& a^{ik} \partial_i a_{kl} a^{lj} y_j \\
			=& a^{ik} \left( \delta_{kl}\zeta_i + \delta_{ik}\zeta_l + y_k \zeta_{il} \right) a^{lj} y_j \\
			=& \zeta_i a^{ik}a^{kj} y_j + \zeta_l a^{lj} y_j a^{ii} + a^{ik}y_k a^{lj}y_j \zeta_{il},
		\end{align*}
		while the right hand side reads
		\begin{align*}
			J_0^{-1}y\cdot\tr{J_0^{-1}\nabla_y J_0}
			=& a^{ij}y_j a^{kl}\partial_{i}a_{lk} \\
			=& a^{ij}y_j a^{kl} (\delta_{lk}\zeta_i + \delta_{il}\zeta_k + y_l\zeta_{ik}) \\
			=& \zeta_i a^{ij}y_j a^{kk} + \zeta_ka^{ki}a^{ij}y_j + a^{ij}y_j a^{kl}y_l \zeta_{ik},
		\end{align*}
		which proves \eqref{eq-tech:var-of-met-claim-1}.
		
		As for \eqref{eq-tech:var-of-met-claim-2}, the left hand side equals
		\begin{align*}
			&a^{ij}y_j \partial_i(a^{\beta\alpha}\varphi_\beta) \nabla_g^\alpha\varphi \\
			=& a^{ij}y_j \partial_ia^{\beta\alpha}\varphi_\beta \nabla_g^\alpha\varphi + a^{ij}y_ja^{\beta\alpha}\varphi_{i\beta}  \nabla_g^\alpha\varphi \\
			=& - a^{ij}y_j a^{\beta\lambda} \partial_i a_{\lambda\mu} a^{\mu\alpha}\varphi_\beta \nabla_g^\alpha\varphi + a^{ij}y_ja^{\beta\alpha}\varphi_{i\beta}  \nabla_g^\alpha\varphi \\
			=& - a^{ij}y_j a^{\beta k} \partial_ia_{k\mu} a^{\mu\alpha}\varphi_\beta \nabla_g^\alpha\varphi + a^{ij}y_ja^{\beta\alpha}\varphi_{i\beta}  \nabla_g^\alpha\varphi \\
			=& - a^{ij}y_j a^{\beta k} ( \delta_{k\mu}\zeta_i + \delta_{ik}\zeta_\mu + y_k \zeta_{i\mu} ) a^{\mu\alpha}\varphi_\beta \nabla_g^\alpha\varphi + a^{ij}y_ja^{\beta\alpha}\varphi_{i\beta}  \nabla_g^\alpha\varphi \\
			=& - \zeta_i a^{ij}y_j \varphi_\beta a^{\beta k} a^{k\alpha} \nabla_g^\alpha\varphi - \varphi_\beta a^{\beta i} a^{ij}y_j \zeta_\mu a^{\mu\alpha} \nabla_g^\alpha\varphi - \varphi_\beta a^{\beta k} y_k a^{ij}y_j \zeta_{i\mu} a^{\mu\alpha} \nabla_g^\alpha\varphi \\
			& + a^{ij}y_ja^{\beta\alpha}\varphi_{i\beta}  \nabla_g^\alpha\varphi \\
			=& - \zeta_i a^{ij}y_j \varphi_l a^{l k} a^{k\alpha} \nabla_g^\alpha\varphi - \varphi_l a^{l i} a^{ij}y_j \zeta_\mu a^{\mu\alpha} \nabla_g^\alpha\varphi - \varphi_l a^{lk} y_k a^{ij}y_j \zeta_{i\mu} a^{\mu\alpha} \nabla_g^\alpha\varphi \\
			& + a^{ij}y_ja^{\beta\alpha}\varphi_{i\beta}  \nabla_g^\alpha\varphi,
		\end{align*}
		while the right hand side can be written as
		\begin{align*}
			&y_i a^{\beta\alpha}\nabla_g^\alpha\varphi \partial_\beta (a^{\gamma i}\varphi_\gamma) \\
			=& y_i a^{\beta\alpha}\nabla_g^\alpha\varphi \partial_\beta (a^{j i}\varphi_j) \\
			=&  y_i a^{\beta\alpha}\partial_\beta a^{ji} \varphi_j\nabla_g^\alpha\varphi + a^{ji}y_i  a^{\beta\alpha} \varphi_{j\beta}\nabla_g^\alpha\varphi \\
			=& -y_i a^{\beta\alpha}a^{jk}\partial_\beta a_{kl} a^{li} \varphi_j\nabla_g^\alpha\varphi + a^{ji}y_i  a^{\beta\alpha} \varphi_{j\beta}\nabla_g^\alpha\varphi \\
			=&-y_i a^{\beta\alpha}a^{jk} ( \delta_{kl}\zeta_\beta + \delta_{k\beta}\zeta_l + y_k \zeta_{\beta l} ) a^{li} \varphi_j\nabla_g^\alpha\varphi + a^{ji}y_i  a^{\beta\alpha} \varphi_{j\beta}\nabla_g^\alpha\varphi \\
			=&-\varphi_ja^{jk}a^{ki}y_i \zeta_\beta a^{\beta\alpha} \nabla_g^\alpha\varphi - \zeta_l a^{li} y_i \varphi_j a^{jk}a^{k\alpha} \nabla_g^\alpha\varphi - \varphi_j a^{jk} y_k a^{li} y_i \zeta_{\beta l} a^{\beta\alpha} \nabla_g^\alpha\varphi  \\
			&+ a^{ji}y_i  a^{\beta\alpha} \varphi_{j\beta}\nabla_g^\alpha\varphi,
		\end{align*}
		which gives \eqref{eq-tech:var-of-met-claim-2}.
	\end{proof}

	\printindex
	
	\printbibliography[heading=bibliography,title=References]
\end{document}